\newtheorem*{theorem*}{Theorem}
\newtheorem*{lemma*}{Lemma}
\newtheorem*{corollary*}{Corollary}
\newtheorem{theorem}{Theorem}[chapter]
\newtheorem{lemma}[theorem]{Lemma}
\newtheorem{corollary}[theorem]{Corollary}
\theoremstyle{definition}
\newtheorem{definition}[theorem]{Definition}
\theoremstyle{remark}
\newtheorem{remark}[theorem]{Remark}
\numberwithin{section}{chapter}
\numberwithin{equation}{chapter}
\begin{document} 

\frontmatter

\title[Hodge Decomposition and Potentials in Variable Exponent Spaces]{Hodge Decomposition and Potentials in Variable Exponent Lebesgue and Sobolev Spaces}

\author{Anna~Balci}
\address[Anna~Balci]{Charles University  in Prague, Department of Mathematical Analysis,
	Sokolovsk\'a 49/83, 186 75 Praha 8, Czech Republic and University of Bielefeld, Department of Mathematics,  Postfach 10 01 31, 33501 Bielefeld, Germany.}
\email{akhripun@math.uni-bielefeld.de}
\thanks{Anna Kh. Balci's research is funded  by the Deutsche Forschungsgemeinschaft (DFG, German Research Foundation) - SFB 1283/2 2021 - 317210226 and   by Charles University  PRIMUS/24/SCI/020 and Research Centre program No. UNCE/24/SCI/005.}

\author{Swarnendu Sil}
\address[Swarnendu Sil]{Department of Mathematics, Indian Institute of Science, Bengaluru 560012, Karnataka, India}
\email{swarnendusil@iisc.ac.in }
\thanks{Swarnendu Sil's research is supported by SERB MATRICS MTR/2023/000885.}

\author{Mikhail Surnachev}
\address[Mikhail Surnachev]{Keldysh Institute of Applied Mathematics, Miusskaya sq., 4 Moscow, 125047, Russia}
\email{peitsche@yandex.ru}

\thanks{Mikhail Surnachev's research is supported by  Moscow Center of Fundamental and Applied Mathematics under Agreement with the Ministry of Science and Higher Education of the Russian Federation.\iffalse, No. 075-15-2022-283\fi}

\date{}

\subjclass[2020]{%
    58J10,   
    58J32,   
    58A14,  
    35J50,  
    35N25   
}

\keywords{elliptic estimates, differential forms, variable exponent}


\begin{abstract}
		The objective of this work is to establish a systematic study of boundary value problems within the framework of differential forms and variable exponent spaces. Specifically, we investigate the Hodge Laplacian and related first order systems like the div-curl systems, Hodge-Dirac systems, and Bogovskii-type problems in the context of variable exponent spaces. Our approach yields both existence theorems and elliptic estimates. These estimates provide key results such as the Hodge decomposition theorem, Gaffney inequality, and gauge fixing. These findings are crucial for advancing the nonlinear theory related to problems involving differential forms.  \end{abstract}
\maketitle
\tableofcontents

\mainmatter

\chapter{Introduction}
\section{Boundary value problems for differential forms}	In this paper we present a set of results on regularity theory in Lebesgue and Sobolev spaces with variable exponents for classical linear problems with differential forms on a compact smooth Riemannian manifold $M$. We treat boundary value problems for 
	\begin{enumerate}
		\item the Hodge Laplacian;
		\item first order ``div-curl'' systems;
		\item Hodge-Dirac systems;
		\item ``Bogovskii'' type problems. 
	\end{enumerate}
	As a corollary, we obtain results on cohomology resolution, the Hodge decomposition, Gaffney's inequality and gauge fixing in variable exponent spaces.

	The classical results on differential forms are collected, for example, in the books by de Rham~\cite{deRham}, Warner~\cite{Warner},   H.~Cartan~\cite{Car70}, Federer~\cite{Fed69}, M.~Spivak~\cite{Spivak, Spi_I,Spi_IV}, Bott and Tu~\cite{BotTu82}, V.I.~Arnold~\cite{Arn89}, H.~Flanders~\cite{Fl89},~R. Abraham, J. E.  Marsden  and T. Ratiu  in~\cite{AbrMarRat}; D.~Lovelock and H.~Rund\cite{LovRun75}. 
    
    More recent results on Sobolev spaces of differential forms, Gaffney inequalities, and problems of Hodge theory include works by Iwaniec and Lutoborski~\cite{IwaLut93}, Iwaniec and Martin~\cite{IwaMar93}, Scott~\cite{Sco95}, Iwaniec, Scott, and Stroffolini~\cite{IwaScoStr99}, Schwarz~\cite{Schwarz}, Mitrea, Mitrea and Shaw~\cite{MMS08}. Recent monographs summarizing these developments include the books by D.~Mitrea, I.~Mitrea, M. Mitrea, and M. Taylor~\cite{MitMitMitTay16},  by  Csato, Dacorogna, and Kneuss~\cite{CsaGyuDar12} and by Agarwal,  Ding, and Nolder~\cite{AgaDinNol09}. 
    
    Modern considerations in the framework of Calculus of Variations for differential forms are studied in Bandopadhyay, Dacorgna, and Sil~\cite{BandDacSil}, Bandopadhyay and Sil~\cite{BandSil_extint} and ~\cite{BandSil_classical}, Sil~\cite{Sil19}, and Sil's PhD thesis~\cite{SilT16}. 
	
	The history of the question goes back to \cite{Hodge34} where W.V.D. Hodge studied a variational method for certain boundary value problems for forms defined on domains in Euclidean space. Weyl in~\cite{Wey40} obtained the famous decomposition results. An important result known as the Gaffney inequality was presented in \cite{Gaf51}, see also \cite{Gaf54} and \cite{Gaf55} and Friedrichs~\cite{Fri55}. The variational method was applied to general compact Riemannian manifolds without the boundary in~\cite{MorEel55} and to such manifolds with the boundary in~\cite{Mor56}. The results obtained by the variational approach and the corresponding regularity theory were summarized in \cite[Chapter 7]{Morrey1966}, and we mostly follow this classical source. 
    
    A different version of the variational approach, based on the generalized Lax-Milgram theorem in Banach spaces \cite{KozYan13} was used to obtain the {$L^r$}-{H}elmholtz-{W}eyl decomposition for three dimensional exterior domains in \cite{HKSSY}.
    
    A parallel approach was presented by the integral equation method, initially developed by Kodaira~\cite{Kod49} (on analytic closed complex manifolds), Bidal, and de Rham~\cite{BiddeRha46}. For manifolds with boundaries, this method was adapted by Duff and Spencer~\cite{DufSpe51,DufSpe56}, by reducing the problem to boundary integral equations. This approach was utilized by Kress in \cite{Kre70, Kress} for problems in domains in Euclidean space, and later by Bolik in \cite{bolikphd,Bolik97,Bolik01, Bolik04, Bolik07}, first for domains in Euclidean space and later for compact Riemannian manifolds. More recent additions by this method are covered in the monographs by Mitrea, Mitrea, Taylor~\cite{MitMitTay01} and Mitrea, Mitrea, Mitrea, Taylor~\cite{MitMitMitTay16}. Estimates of the gradient of vector fields through div and curl in variable exponent {Sobolev} spaces in 3 dimensions were studied  in \cite{SinRi2020}. 

    Yet another famous technique was presented by Milgram and Rosenbloom~\cite{MilRos51}, who used an analogue of the heat equation on manifolds for differential forms; a similar result is also contained in~\cite{Gaf54_1}. 
    
    Schwarz in \cite{Schwarz} studied the Hodge decomposition of differential forms in manifolds with boundary using the theory of elliptic operators in vector bundles. More or less, in the classical spaces the regularity properties for the classical boundary value problems with differential forms may be treated as a particular case of the general elliptic theory of Agmon, Douglis and Nirenberg, see~\cite{AgmDouNir59} and Solonnikov~\cite{SolI, SolII}, more precisely, the theory of elliptic operators on vector bundles (see for instance \cite{Hor85II, Hor85III}) and the theory of elliptic complexes, see ~\cite{RemSch82}. However, these classical methods need the manifold to be at least $C^{2,1}$, and quite often (for instance, in \cite{Schwarz}) only smooth manifolds are considered. 
    
    If methods of calculus of pseudodifferential operators are used the usual assumption is also smoothness of the domain.  A variant using calculus of pseudodifferential operators was implemented in \cite{FKP98}, and even for the Euclidian case and Hilbert space setting of $W^{1,2}$ the calculations are heavy (which is expected from the PDO calculus in application to boundary value problems) and smooth boundary is required.

    Another approach to regularity results for more general Hodge-type systems, based on Campanato-type methods, which does not depend on representation formulas and potential theory,  was initiated in Sil~\cite{Sil17} and was further pursued in recent works Sengupta and Sil~\cite{SenguptaSil_MorreyLorentz_Hodge} and Kumar and Sil~\cite{KumarSil24}.
        
   The main aim of our present work is twofold. The first is to extend these classical results to the setting of variable exponent spaces and the second is to cover the case of $C^{1,1}$ manifolds. For non-standard spaces one has to develop this theory from the very foundations, though naturally it follows via the well-established route. We follow mainly the variational approach of \cite{Morrey1966}. We remark, though, that generally the methods of potential theory (see \cite{MitMitTay01}, \cite{MitMitMitTay16}) allow for less regular  boundaries, at least in the Euclidian case. However, in the variable exponent setting even the structure of the trace spaces is still not completely understood, so we rely on the combination of the variational approach and a priori estimates.

\section{Variable exponent spaces and the log-H\"older condition} 
In this paper $M$ is a finite dimensional compact Riemannian manifold with boundary, which is not necessarily orientable, with appropriate smoothness. We work with variable exponent Lebesgue spaces $L^{p(\cdot)}(M)$ and the Sobolev spaces $W^{k,p(\cdot)}(M)$, $k\in \mathbb{N}$ on $M$ and the corresponding spaces for  differential forms $L^{p(\cdot)}(M,\Lambda)$, $W^{k,p(\cdot)}(M,\Lambda)$, (see Chapter~\ref{sec:LebSob} for the notations) with the variable exponent bounded away from $1$ and $\infty$ and satisfying the log-H\"older condition:
\begin{align}\label{eq:p1}
	1<p_{-} \leq p(x)\leq p_{+} <\infty, \quad x\in M,\\
	|p(x)-p(y)| \leq \frac{L}{ \log (e + \operatorname{dist}(x,y)^{-1})}, \quad x,y\in M, \label{eq:p2}
\end{align}
where $\operatorname{dist}(x,y)$ denotes the Riemannian distance between $x,y\in M$. 
This condition was introduced by Zhikov in \cite{Zhi95} as a sufficient condition for density of smooth functions in variable exponent Sobolev spaces. As is well-known, in the absence of this condition, smooth functions may be not dense in the variable exponent Sobolev space \cite{Zhi86,Zhi95}. Another notable feature which may occur for variable exponents with lower regularity is the Lavrentiev phenomenon, which is that the infimum of a variational problem over the natural domain of the integral functional can be strictly less than then infimum over smooth functions. Corresponding examples and discussions may be found in \cite{Zhi86,Zhi95},\cite{FonMalMin04}, \cite{BalDieSur20}, \cite{BalSur21}, \cite{BalDieSur23_Arx} (for scalar problems) and in \cite{BalSur24} for spaces of differential forms. Due to the log-H\"older-condition, in the setting of this paper these effects do not arise.

The log-H\"older condition also guarantees that many properties can transferred from constant exponent Lebesgue spaces to variable exponent case. Majority of the theory developed so far is under this condition, see ~\cite{Zhi11,DieHHR11,CruFio13,KokMesRafSam16}. In particular, the key ingredient which allows us to extend the  classical regularity results of~\cite{Mor56} to variable exponent spaces are boundedness estimates for classical integral operators in the whole space and the half-space obtained by Diening and Růžička in~\cite{Dieruz03_1}, \cite{DieRuz03}, \cite{DieRuz04}. These works established estimates in variable exponent spaces for the Dirichlet and Neumann problem for the Poisson problem and also the Stokes system. However, the case of differential forms (which are not $0$-forms, i.e. functions) and the special nature of the boundary conditions in our case necessitates further modifications and adaptations.  

\section{Results}	
	Let $M$ be a $n$-dimensional  compact Riemannian $C^{1,1}$ manifold with boundary $bM$, not necessarily orientable. The notation used in this Section is more or less classical and self-evident. However, it will be detailed in the next section. In particular, $\mathcal{H}_T(M)$ stands for the space of harmonic fields on $M$ with vanishing tangential component, and in the following  we denote
\begin{align*}
	(f,g) &= \int\limits_M f\wedge \ast g = \int\limits_M \langle f,g\rangle \, dV,\\
	[f,g]&:= \int \limits_{b M} \langle\nu \wedge f, g \rangle\, d\sigma = \int\limits_{bM} tf\wedge \ast ng = \int\limits_{bM} \langle f, \nu \lrcorner g \rangle\, d\sigma.
\end{align*}

Now we describe a sample of our main results. For convenience, the first three results are stated for tangential boundary condition and in terms of existence of \emph{a solution satisfying the given estimate}. However, for all the three results, \emph{any} weak solution of the system can be obtained from the solution given by the results by adding a tangential harmonic fields. Since tangential harmonic fields are as smooth as the regularity of the manifold $M$ allows, the same regularity conclusion holds for any weak solution with an estimate which has an additional term $\left\lVert \omega \right\rVert_{L^{1}\left( \Omega, \Lambda  \right) }$ on the right hand side.  Also, by Hodge duality, all three results have their analogous versions for normal boundary conditions as well and similar remarks are valid for \emph{any} weak solutions for those systems as well, due to the regularity of normal harmonic fields. 

Our first result extends the standard elliptic theory for the Hodge Laplacian to variable exponent setting. By $c_{\mathrm{log}}(p)$ we denote the minimal constant $L$ such that \eqref{eq:p2} holds:
$$
c_{\mathrm{log}}(p) = \max_{x,y\in M} |p(x)-p(y)|\cdot\log (e + \mathrm{dist}(x,y)^{-1}).
$$
Let $s\in \{0\}\cup \mathbb{N}$  and we denote $\texttt{data} :=( p_{-}, p_{+}, c_{\mathrm{log}} (p), M )$. 

\begin{theorem*}[Main estimate for the Hodge Laplacian] 
		Consider the boundary value problem 
        	\begin{align}\label{eq:hti}
		\triangle \omega = \eta  \quad \text{on}\ M, \quad t\omega = t\varphi, \quad t\delta \omega = t\psi \quad\text{on}\ bM.   
	\end{align}
		\begin{enumerate}
			\item Let $M$ be $C^{1,1}$ and let  $\eta \in L^{p(\cdot)}(M,\Lambda)$, and $\varphi,\psi \in W^{1,p(\cdot)}(M,\Lambda)$ satisfy $(\eta,h_T) = [\psi,h_T]$ for all $h_T \in \mathcal{H}_T(M).$ Then there exists a weak solution $\omega \in \varphi+W^{1,p(\cdot)}_T(M,\Lambda)$ of \eqref{eq:hti}, such that for some constant $C = C(\mathrm{data})>0,$ we have 
			$$
			\|\omega\|_{1,p(\cdot),M} \leq C ( \|\eta-d\psi\|_{p(\cdot),M} + \|\varphi\|_{1,p(\cdot),M} + \|\psi\|_{p(\cdot),M}). 
			$$
             Moreover, if $d\varphi \in W^{1,p(\cdot)}(M,\Lambda)$ then $\alpha = \delta \omega$ and $\beta = d \omega$ belong to $W^{1,p(\cdot)}(M,\Lambda)$, satisfy 
            \begin{equation}\label{eq:relAB_intro}
            \begin{gathered}
            d\alpha + \delta \beta = \eta, \quad t\alpha= t\psi, \quad t\beta = td\varphi,\\
            (d\alpha,d\zeta) = (\eta,d\zeta), \quad (\delta\beta,\delta\zeta) = (\eta-d\psi,\delta \zeta)
            \end{gathered}
            \end{equation}
            for all $\zeta \in \mathrm{Lip}_T(M,\Lambda)$ and there holds
            \begin{equation}\label{eq:estAB_intro}
            \begin{gathered}
            \|\alpha\|_{1,p(\cdot),M} \leq  C(\mathrm{data}) (\|\eta-d\psi\|_{p(\cdot),M}+ \|\psi\|_{1,p(\cdot),M)}),\\  
            \|\beta\|_{1,p(\cdot),M} \leq  C(\mathrm{data})  (\|\eta - d \psi\|_{p(\cdot),M} + \|d\varphi\|_{1,p(\cdot),M}) .
            \end{gathered}
            \end{equation}
                 
			\item Let $M$ be of the class $C^{s+2,1}$. Let $\eta\in W^{s,p(\cdot)}(M,\Lambda)$, $\varphi\in W^{s+2,p(\cdot)}(M,\Lambda)$, and $\psi \in W^{s+1,p(\cdot)}(M,\Lambda)$ be such that $(\eta,h_T) = [\psi,h_T]$ for all $h_T \in \mathcal{H}_T(M)$. Then there exists a solution $\omega\in W^{s+2,p(\cdot)}(M,\Lambda)$ of the boundary value problem \eqref{eq:hti}
			such that for some constant $C = C(\mathrm{data},s)>0,$ we have 
			$$
			\|\omega\|_{s+2,p(\cdot),M} \leq C ( \|\eta\|_{s,p(\cdot),M} + \|\varphi\|_{s+2,p(\cdot),M} + \|\psi\|_{s+1,p(\cdot),M}).
			$$
		  Moreover, the potentials $\alpha = \delta\omega$, $\beta = d \omega$ satisfy \eqref{eq:relAB_intro} and there holds 
                      \begin{equation*}
            \begin{gathered}
            \|\alpha\|_{s+1,p(\cdot),M} \leq  C(\mathrm{data},s) (\|\eta-d\psi\|_{s,p(\cdot),M} +\|\psi\|_{s+1,p(\cdot),M}),\\
            \|\beta\|_{s+1,p(\cdot),M} \leq C(\mathrm{data},s) ( \|\eta - d \psi\|_{s,p(\cdot),M}+\|d\varphi\|_{s+1,p(\cdot),M}).
            \end{gathered}
            \end{equation*}
				\end{enumerate}
	\end{theorem*}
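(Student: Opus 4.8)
The plan is to reduce the whole statement to the already-announced boundedness estimates for classical integral operators (the Dirichlet/Neumann--Poisson and Stokes estimates of Diening--R\r{u}\v{z}i\v{c}ka in variable exponent spaces) plus the standard variational construction of weak solutions following \cite{Morrey1966, Mor56}. First I would establish part (1) at the level of existence: using the orthogonality condition $(\eta,h_T)=[\psi,h_T]$, one sets up the usual bilinear form on the space $W^{1,p(\cdot)}_T(M,\Lambda)$ (after subtracting $\varphi$ to homogenize the tangential datum) and produces a weak solution $\omega\in\varphi+W^{1,p(\cdot)}_T(M,\Lambda)$ of \eqref{eq:hti}. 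The compatibility condition is exactly what kills the obstruction coming from $\mathcal{H}_T(M)$; this is the constant-exponent theory, and existence does not see the variable exponent at all once we know $L^{p(\cdot)}$ embeds in $L^{1}$ (true since $M$ is compact and $p_-\geq 1$).

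Next comes the a priori estimate, which is the heart of part (1). Here I would localize: cover $M$ by finitely many coordinate charts, half-ball charts near $bM$ and interior balls away from it, subordinate a smooth partition of unity, and flatten the boundary using the $C^{1,1}$ structure. On each piece, the Hodge Laplacian becomes, componentwise, a second-order elliptic system whose principal part is the (vector) Laplacian with lower-order terms having $L^\infty$ (in fact, $C^{0,1}$ after flattening) coefficients, and the tangential boundary conditions $t\omega=t\varphi$, $t\delta\omega=t\psi$ decouple into Dirichlet-type and Neumann-type conditions on the scalar components — precisely the situation covered by the half-space estimates in \cite{Dieruz03_1, DieRuz03, DieRuz04}. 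One then represents the local solution via the corresponding Green/Poisson kernels, invokes the variable-exponent boundedness of the associated singular and potential operators (this is where the log-H\"older condition \eqref{eq:p2} is used, via the boundedness of Calder\'on--Zygmund and Riesz-type operators on $L^{p(\cdot)}$), absorbs the lower-order terms by interpolation and Young's inequality, and patches the local estimates back together. The passage from $\|\eta\|_{p(\cdot)}$ to the sharper $\|\eta-d\psi\|_{p(\cdot)}$ on the right-hand side comes from testing the equation for $\delta\omega$ with $\delta\zeta$, which only sees $\eta-d\psi$.

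For the potentials $\alpha=\delta\omega$ and $\beta=d\omega$ I would argue as follows: applying $\delta$ and $d$ to $\triangle\omega=d\delta\omega+\delta d\omega=\eta$ and using $d^2=\delta^2=0$ gives $\delta\alpha=0$ hence $d\alpha+\delta\beta=\eta$; the boundary identities $t\alpha=t\delta\omega=t\psi$ and $t\beta=td\omega=d(t\omega)=td\varphi$ (the last using that $d$ commutes with pullback, so $t$ commutes with $d$) follow directly; the weak formulations \eqref{eq:relAB_intro} are obtained by testing with $\zeta\in\mathrm{Lip}_T(M,\Lambda)$ and integrating by parts, the boundary terms vanishing because $\zeta$ has vanishing tangential part. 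The estimates \eqref{eq:estAB_intro} then follow because $\alpha$ solves $d\alpha=\dots$, $\delta\alpha=0$, $t\alpha=t\psi$ and $\beta$ solves the dual system with $t\beta=td\varphi$, so each is itself the solution of a first-order div--curl-type system with tangential (resp. normal) data, to which the same local-representation-plus-variable-exponent-boundedness machinery applies; the hypothesis $d\varphi\in W^{1,p(\cdot)}$ is exactly what is needed to control $\beta$ near the boundary. Finally, part (2) is obtained by induction on $s$: assuming the $C^{s+1,1}$ conclusion, differentiate the system, note that tangential derivatives of a solution solve a system of the same type with right-hand side in $W^{s,p(\cdot)}$ and data one order rougher, apply the inductive hypothesis, and recover normal derivatives algebraically from the equation (solving for $\partial_n^2$ of the components in terms of tangential derivatives and $\eta$); this bootstrap requires the manifold to be $C^{s+2,1}$ so that the flattening diffeomorphisms and the coefficients of the transformed operator are $C^{s,1}$.

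The main obstacle I anticipate is the boundary analysis for forms that are not $0$-forms: unlike the scalar Poisson/Stokes cases handled directly in \cite{DieRuz03, DieRuz04}, the tangential boundary conditions $t\omega=t\varphi$, $t\delta\omega=t\psi$ mix the components of $\omega$ in the local frame, and after flattening one must check that the resulting elliptic system with these mixed Dirichlet/Neumann-type conditions still satisfies the complementing (Lopatinski--Shapiro) condition and that its solution operator is built out of exactly those Calder\'on--Zygmund and Riesz potential operators whose $L^{p(\cdot)}$-boundedness is known. Keeping track of which components get Dirichlet and which get Neumann data, and ensuring the lower-order terms produced by the $C^{1,1}$ (rather than smooth) metric can genuinely be absorbed rather than merely estimated, is the delicate point; everything else is the standard variational-plus-a-priori-estimate route, now run over $L^{p(\cdot)}$ instead of $L^{p}$.
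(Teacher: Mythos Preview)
Your localization-and-parametrix strategy for the a priori estimate, and your inductive bootstrap for part (2), are broadly what the paper does in Chapter~\ref{Sec:parametrix} (packaged there as an integral equation $\Omega - T[\Omega] = \mathcal{Q}[E]+\mathcal{P}[F]$ with $T$ a contraction on $W^{k,p(\cdot)}$). Two points, however, need correction.

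First, and most seriously, your treatment of the ``moreover'' claim about $\alpha=\delta\omega$, $\beta=d\omega$ in part (1) has a genuine gap. On a $C^{1,1}$ manifold the solution $\omega$ lies only in $W^{1,p(\cdot)}$, so $\alpha,\beta$ are a priori only in $L^{p(\cdot)}$: you cannot take their boundary traces (so ``$t\alpha=t\psi$ follows directly'' is unjustified), and you cannot feed them into div--curl estimates, since those require $d\alpha,\delta\alpha\in L^{p(\cdot)}$---second-order information on $\omega$ that you do not have. The paper devotes a separate argument (Theorem~\ref{T:addregD}) to exactly this: one tests the localized weak formulation with $\zeta=dW$ (respectively $\zeta=\delta_0 W$) for $W$ satisfying carefully arranged mixed Dirichlet/Neumann conditions on the flat boundary, obtains an integral equation for $\delta(\xi\omega)$ (respectively $d(\xi\omega)$) of the same contraction type as before, and thereby lifts $\alpha,\beta$ from $L^{p(\cdot)}$ to $W^{1,p(\cdot)}$. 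Only after this can one integrate by parts to get $d\alpha+\delta\beta=\eta$, the boundary identities, and the variational relations \eqref{eq:relAB_intro}. This step is one of the new technical contributions of the paper for $C^{1,1}$ manifolds and is not a routine consequence of the $W^{1,p(\cdot)}$ estimate for $\omega$.

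Second, a smaller point on existence: the quadratic form $\mathcal{D}(\omega,\omega)$ does not control the $W^{1,p(\cdot)}$ norm when $p(\cdot)\neq 2$, so you cannot run the variational argument directly in $W^{1,p(\cdot)}_T$. The paper instead approximates the data by Lipschitz forms, solves each regularized problem in $W^{1,2}_T\cap\mathcal{H}_T(M)^\perp$ via Gaffney plus the direct method, then invokes the a priori estimate (combined with a duality step, Lemma~\ref{L:quant}, to remove the residual $\|\omega\|_{L^1}$ term) to obtain a uniform $W^{1,p(\cdot)}$ bound and the Cauchy property, and finally passes to the limit.
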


Clearly, for $\omega$ to be $W^{1, p(\cdot)}$, the requirement that $\eta \in L^{p(\cdot)}$ is suboptimal and excessive, but for applications to div-curl type problems we are more interested in the behavior of $d\omega$ and $\delta \omega$.  See Theorem~\ref{T:D3} for a proof. Compare Theorem 16 in \cite{Sil17} for the constant exponent case. This result and its counterpart for the Neumann boundary conditions imply the following Hodge decomposition theorem in variable exponent Lebesgue spaces. 
\begin{theorem*}[Hodge decomposition]
		Let $\omega\in W^{s,p(\cdot)}(M,\Lambda)$. Then there exist $\alpha,\beta \in W^{s+1,p(\cdot)}(M,\Lambda)$ and $h\in W^{s,p(\cdot)}(M,\Lambda) \cap \mathcal{H}(M)$ such that 
		\begin{gather*}
			\omega = h + d\alpha + \delta \beta,\\
			t\alpha =0, \quad \delta \alpha =0,\quad n\beta =0, \quad d \beta =0,\\
			\|h\|_{s,p(\cdot),M},  \|\alpha\|_{s+1,p(\cdot),M}, \|\beta\|_{s+1,p(\cdot),M} \leq C(\mathrm{data},s) \|\omega\|_{s,p(\cdot),M}.
		\end{gather*}
	\end{theorem*}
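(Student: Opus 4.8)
\emph{Proof sketch.} This is the variable-exponent analogue of the classical Hodge--Morrey decomposition, and the plan is to derive it from two successive applications of the main estimate for the Hodge Laplacian proved above, together with its normal-boundary-condition analogue obtained by Hodge duality. Throughout, $P_{T}$ and $P_{N}$ denote the finite-rank $L^{2}$-orthogonal projections onto the finite dimensional spaces $\mathcal{H}_{T}(M)$ and $\mathcal{H}_{N}(M)$. Since these spaces consist of forms which are as regular as $M$ permits (in particular bounded), Hölder's inequality together with the equivalence of all norms on a finite dimensional space shows that $P_{T}$ and $P_{N}$ are bounded on every $W^{s,p(\cdot)}(M,\Lambda)$, with $\| P_{T}\omega \|_{s,p(\cdot),M}+\| P_{N}\omega \|_{s,p(\cdot),M}\le C(\mathrm{data},s)\| \omega \|_{p(\cdot),M}$; this is the first fact to record.

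\emph{Step 1 (exact part).} I would set $\eta_{1}:=\omega-P_{T}\omega$; by construction $(\eta_{1},h_{T})=0$ for every $h_{T}\in\mathcal{H}_{T}(M)$, which is the compatibility condition of the tangential problem \eqref{eq:hti} with $\varphi=\psi=0$. Applying the main estimate for the Hodge Laplacian (part (1) if $s=0$, part (2) if $s\ge 1$) then furnishes a solution $\Phi$ of $\triangle\Phi=\eta_{1}$, $t\Phi=0$, $t\delta\Phi=0$ whose potentials $\alpha:=\delta\Phi$ and $\beta_{1}:=d\Phi$ belong to $W^{s+1,p(\cdot)}(M,\Lambda)$, satisfy $d\alpha+\delta\beta_{1}=\eta_{1}$ and $t\alpha=0$, and obey
\[
\| \alpha \|_{s+1,p(\cdot),M}+\| \beta_{1} \|_{s+1,p(\cdot),M}\le C(\mathrm{data},s)\| \eta_{1} \|_{s,p(\cdot),M}\le C(\mathrm{data},s)\| \omega \|_{s,p(\cdot),M}.
\]
Here $\delta\alpha=\delta\delta\Phi=0$ automatically, and it is essential that the estimate controls the potentials in $W^{s+1,p(\cdot)}$ (not merely $\Phi$ in $W^{1,p(\cdot)}$ when $s=0$), because the remainder $\nu:=\omega-d\alpha=\delta\beta_{1}+P_{T}\omega$ must itself lie in $W^{s,p(\cdot)}(M,\Lambda)$ for the next step. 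It does, with $\| \nu \|_{s,p(\cdot),M}\le C(\mathrm{data},s)\| \omega \|_{s,p(\cdot),M}$, and moreover $\delta\nu=\delta\delta\beta_{1}+\delta P_{T}\omega=0$, so $\nu$ is coclosed.

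\emph{Step 2 (coexact part and harmonic remainder).} Next I would set $\eta_{2}:=\nu-P_{N}\nu$, so that $(\eta_{2},h_{N})=0$ for every $h_{N}\in\mathcal{H}_{N}(M)$, the compatibility condition for the normal problem with zero data. The normal-boundary version of the main estimate then furnishes a solution $\Psi$ of $\triangle\Psi=\eta_{2}$, $n\Psi=0$, $nd\Psi=0$ whose potentials $\beta:=d\Psi$ and $\delta\Psi$ lie in $W^{s+1,p(\cdot)}(M,\Lambda)$, with $d\delta\Psi+\delta\beta=\eta_{2}$, $n\beta=0$, $d\beta=dd\Psi=0$, and
\[
\| \beta \|_{s+1,p(\cdot),M}+\| \delta\Psi \|_{s+1,p(\cdot),M}\le C(\mathrm{data},s)\| \eta_{2} \|_{s,p(\cdot),M}\le C(\mathrm{data},s)\| \omega \|_{s,p(\cdot),M}.
\]
Now set $h:=\omega-d\alpha-\delta\beta$; then $h=\nu-\delta\beta=d\delta\Psi+P_{N}\nu$, and since $P_{N}\nu\in\mathcal{H}_{N}(M)$ is a harmonic field we get $dh=d(d\delta\Psi)=0$ and, using that $\nu$ is coclosed, $\delta h=\delta(d\delta\Psi)=\triangle(\delta\Psi)=\delta(\triangle\Psi)=\delta\eta_{2}=\delta\nu=0$, so $h\in\mathcal{H}(M)$. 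Moreover $h\in W^{s,p(\cdot)}(M,\Lambda)$ with $\| h \|_{s,p(\cdot),M}\le\| \omega \|_{s,p(\cdot),M}+\| \alpha \|_{s+1,p(\cdot),M}+\| \beta \|_{s+1,p(\cdot),M}\le C(\mathrm{data},s)\| \omega \|_{s,p(\cdot),M}$. Combining the two steps gives $\omega=h+d\alpha+\delta\beta$ with $t\alpha=\delta\alpha=0$, $n\beta=d\beta=0$, $h\in W^{s,p(\cdot)}(M,\Lambda)\cap\mathcal{H}(M)$ and the required bounds, which is the assertion.

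The part I expect to demand the most care is not this bookkeeping but the two ingredients it relies on. First, one must set up the normal-boundary-condition version of the Hodge Laplacian estimate with the precise boundary traces of its potentials: the Hodge star exchanges $d\leftrightarrow\delta$ and $t\leftrightarrow n$, but the compatibility pairings and the trace identities in \eqref{eq:relAB_intro} have to be transported accordingly. Second, one needs the regularity of the finitely many harmonic fields spanning $\mathcal{H}_{T}(M)$ and $\mathcal{H}_{N}(M)$, which is what makes $P_{T}$ and $P_{N}$ bounded on $W^{s,p(\cdot)}(M,\Lambda)$; on a merely $C^{1,1}$ manifold this uses boundary elliptic regularity to place these fields in $W^{1,q}(M,\Lambda)$ for all $q<\infty$, hence in $C(M,\Lambda)$. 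The remaining details — the exact form of the compatibility conditions, and the fact that $\delta\Phi$ and $d\Psi$ carry exactly the boundary traces required — are supplied directly by the main estimate for the Hodge Laplacian.
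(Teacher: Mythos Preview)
Your sequential construction is correct and in fact produces the same $\alpha$ and $\beta$ as the paper's Theorem~\ref{T:Hodge}: your $\alpha=\delta G_D[\omega-\mathcal{P}_T\omega]$ is literally the paper's choice, and your $\beta=dG_N[\nu-\mathcal{P}_N\nu]$ coincides with the paper's $\beta=dG_N[\omega-\mathcal{P}_N\omega]$ because $\mathcal{P}_N(d\alpha)=0$ (from $t\alpha=0$) and $dG_N[d\alpha]=G_N[d^2\alpha]=0$ by Corollary~\ref{C:commN}. The paper simply constructs the two potentials in parallel rather than in two steps, and verifies $h\in\mathcal{H}(M)$ by testing against Lipschitz forms via the relations $\mathcal{D}(\alpha,\xi)=(\omega-\mathcal{P}_T\omega,d\xi)$ and $\mathcal{D}(\beta,\xi)=(\omega-\mathcal{P}_N\omega,\delta\xi)$; this avoids your chain $\triangle(\delta\Psi)=\delta(\triangle\Psi)$, which on a $C^{1,1}$ manifold (where $\Psi$ is only $W^{1,p(\cdot)}$) should be replaced by the identity $d\delta\Psi+\delta d\Psi=\eta_2$ supplied directly by the main estimate.

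There is one regularity point you have not addressed. The paper proves the decomposition for $M$ of class $C^{s+1,1}$ (see the standing hypothesis at the start of Section~\ref{sec:Hodge}), whereas part~(2) of the main Hodge-Laplacian estimate you invoke for $s\ge 1$ requires $M\in C^{s+2,1}$. The paper closes this gap by a bootstrap (spelled out in the proof of Theorem~\ref{T:HodgeD}): applying the potential result with $s$ replaced by $s-1$ gives only $\alpha,\beta\in W^{\max(s,1),p(\cdot)}$, but $\alpha$ and $\beta$ themselves satisfy variational relations $\mathcal{D}(\alpha,\zeta)=(\omega,d\zeta)$ and $\mathcal{D}(\beta,\zeta)=(\omega,\delta\zeta)$ with data $\omega\in W^{s,p(\cdot)}$, and the a priori estimate requiring only $M\in C^{s+1,1}$ (Theorem~\ref{T:TrueGaffney}) then upgrades them to $W^{s+1,p(\cdot)}$. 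Your sketch needs this extra step when $s\ge 1$.
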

Here $\mathcal{H}(M)$ is the set of harmonic fields on $M$ (without any boundary conditions). See Section~\ref{sec:Hodge} for the corresponding results.
    
Another important corollary from the main theorem is the following result for the so-called `div-curl' or Cauchy-Riemann systems. 
\begin{theorem*}[Solvability of first order systems]
	Let $M$ be of class $C^{s+1,1}$ and let $f\in W^{s,p(\cdot)}(M,\Lambda)$, $df=0$;  $v\in W^{s,p(\cdot)}(M,\Lambda)$, $\delta v=0$, $\varphi \in W^{s+1,p(\cdot)}(M,\Lambda)$ satisfy $(v,h_T)=0$ and $(f,h_T) =[\varphi,h_T]$ for all $h_T \in \mathcal{H}_T(M)$; and $t(f-d\varphi)=0$. Then there exists a  solution $\omega \in W^{s+1,p(\cdot)}(M,\Lambda)$ of the boundary value problem
	\begin{align}\label{divcurl tangential}
		d\omega = f , \quad \delta \omega =v \quad\text{on}\ M, \quad t\omega = t\varphi \quad\text{on}\ bM,
	\end{align}
	such that that for some $C = C(\mathrm{data},s)>0$, we have
	$$\|\omega\|_{W^{s+1,p(\cdot)}(M,\Lambda)} \leq C ( \|f\|_{W^{s,p(\cdot)}(M,\Lambda)} + \|v\|_{W^{s,p(\cdot)}(M,\Lambda)} + \|\varphi\|_{W^{s+1,p(\cdot)}(M,\Lambda)}).$$ 
\end{theorem*}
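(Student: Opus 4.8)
The plan is to reduce the div-curl system to the Hodge Laplacian boundary value problem already established. Given $f$ and $v$ with $df = 0$, $\delta v = 0$, and the stated compatibility conditions, I would first seek $\eta$ and $\psi$ so that a weak solution $\omega$ of
\begin{equation*}
\triangle \omega = \eta \quad\text{on}\ M, \qquad t\omega = t\varphi, \quad t\delta\omega = t\psi \quad\text{on}\ bM
\end{equation*}
automatically satisfies $d\omega = f$ and $\delta\omega = v$. The natural choice is $\eta = \delta f + d v$ (so that $\eta = df + \delta v$ formally reproduces $\triangle\omega$ once we know $d\omega = f$, $\delta\omega = v$) and $\psi = v$ for the second boundary datum; the first boundary datum $t\omega = t\varphi$ is already prescribed. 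One must check the compatibility hypothesis of the Hodge Laplacian theorem, namely $(\eta, h_T) = [\psi, h_T]$ for all $h_T \in \mathcal H_T(M)$: using $\eta - d\psi = \delta f + dv - dv = \delta f$ and integrating by parts against a tangential harmonic field (for which $d h_T = \delta h_T = 0$ and $th_T = 0$), the left side becomes $(\delta f, h_T) = (f, dh_T) + \text{boundary} = [\,\cdot\,]$-type terms that collapse to $[\varphi, h_T]$ precisely under the assumption $(f, h_T) = [\varphi, h_T]$; similarly $[\psi, h_T] = [v, h_T]$ should vanish or match using $(v, h_T) = 0$. This bookkeeping with the pairings $(\cdot,\cdot)$ and $[\cdot,\cdot]$ is the step demanding the most care.

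Next, invoke part (1) or the higher-regularity part (2) of the Hodge Laplacian theorem (with the regularity index shifted so that $\omega \in W^{s+1,p(\cdot)}$ follows from $\eta \in W^{s-1,p(\cdot)}$ built out of $f,v \in W^{s,p(\cdot)}$; note $M$ is assumed $C^{s+1,1}$, matching $C^{s'+2,1}$ with $s' = s-1$ when $s \geq 1$, and the $C^{1,1}$ part handles $s = 0$). The theorem produces $\omega$ together with the potentials $\alpha = \delta\omega$ and $\beta = d\omega$ lying in $W^{s+1,p(\cdot)}(M,\Lambda)$, satisfying the relations \eqref{eq:relAB_intro} and the estimates \eqref{eq:estAB_intro}. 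I then need to identify $\beta = f$ and $\alpha = v$. For this, observe that $\gamma := \beta - f$ satisfies $d\gamma = d\beta - df = 0$ (since $d\beta = dd\omega$ only formally; more carefully $d\beta = \eta - \delta\alpha$-type identity from \eqref{eq:relAB_intro} combined with $\eta = \delta f + dv$), $\delta\gamma = 0$ from the weak formulation $(\delta\beta, \delta\zeta) = (\eta - d\psi, \delta\zeta) = (\delta f, \delta\zeta)$ tested appropriately, and $t\gamma = t\beta - tf = td\varphi - tf = t(d\varphi - f) = 0$ by hypothesis. Hence $\gamma$ is a tangential harmonic field, and the compatibility condition $(f,h_T) = [\varphi,h_T]$ together with $t\beta = td\varphi$ should force $(\gamma, h_T) = 0$ for all $h_T \in \mathcal H_T(M)$, whence $\gamma = 0$. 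The analogous argument with $\alpha - v$, using $t\alpha = t\psi = tv$ and $(v,h_T) = 0$, gives $\alpha = v$.

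Finally, the estimate for $\|\omega\|_{W^{s+1,p(\cdot)}}$ follows by combining the estimate for $\omega$ from the Hodge Laplacian theorem with $\|\eta - d\psi\|_{s-1,p(\cdot)} = \|\delta f\|_{s-1,p(\cdot)} \lesssim \|f\|_{s,p(\cdot)}$ and $\|\psi\|_{s,p(\cdot)} = \|v\|_{s,p(\cdot)}$, plus the given data $\|\varphi\|_{s+1,p(\cdot)}$; one uses the estimates \eqref{eq:estAB_intro} on $\alpha,\beta$ directly since $\omega$ itself is controlled through $d\omega = f$, $\delta\omega = v$, $t\omega = t\varphi$ and a Gaffney-type inequality (available as part of the package of results in this paper). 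The main obstacle, as noted, is the careful verification that the compatibility conditions transfer correctly between the div-curl system and the Hodge Laplacian system through the boundary pairings, and dually that the harmonic-field obstructions $\gamma = \beta - f$ and $\alpha - v$ vanish — this is where the precise hypotheses $(v,h_T) = 0$, $(f,h_T) = [\varphi,h_T]$, and $t(f - d\varphi) = 0$ are each used exactly once.
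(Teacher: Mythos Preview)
Your route is genuinely different from the paper's, and it has a real gap at $s=0$. The paper (Theorem~\ref{L:sysD}) does \emph{not} pass through the full Hodge Laplacian boundary value problem. Instead it subtracts $\varphi$ and splits $\omega-\varphi=\omega_1+\omega_2$, then invokes the cohomology-resolution Lemmas~\ref{L:S1} and~\ref{L:S4} to write the solution explicitly as
\[
\omega=\varphi+\delta G_D[f-d\varphi]+dG_D[v-\delta\varphi].
\]
This construction never differentiates $f$ or $v$: the Dirichlet potential $G_D$ is applied directly to $f-d\varphi$ and $v-\delta\varphi$, which lie in $W^{s,p(\cdot)}$, and Theorem~\ref{T:D2} then gives $\delta G_D[\,\cdot\,],\,dG_D[\,\cdot\,]\in W^{s+1,p(\cdot)}$ uniformly for all $s\ge 0$ on a $C^{s+1,1}$ manifold.

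Your approach instead forms $\eta=\delta f+dv$ and invokes the Hodge Laplacian theorem. For $s\ge 1$ this is workable: $\eta\in W^{s-1,p(\cdot)}$, $\psi=v\in W^{s,p(\cdot)}$, and you correctly match $C^{s+1,1}=C^{(s-1)+2,1}$; the identification $\beta=f$, $\alpha=v$ via the harmonic-field argument you sketch does go through (each difference lies in $\mathcal H_T(M)$ and is orthogonal to it by the stated hypotheses). But for $s=0$ the Hodge Laplacian theorem as stated (Theorem~\ref{T:D3}, part (a)) requires $\eta\in L^{p(\cdot)}$ and $\psi\in W^{1,p(\cdot)}$, whereas your $\eta=\delta f+dv$ is only distributional and your $\psi=v$ is only in $L^{p(\cdot)}$. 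The claim that ``the $C^{1,1}$ part handles $s=0$'' therefore fails as written. One could rescue it by working instead with the general variational relation~\eqref{eq1} taking $(\eta,\varphi,\psi)=(0,f,v)$ and appealing to Theorem~\ref{T:TrueGaffney}, but that is a different invocation than what you proposed and essentially reproduces the machinery behind Lemmas~\ref{L:S1}, \ref{L:S4} anyway. The paper's direct construction is both shorter and avoids this regularity bookkeeping entirely.
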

See Theorem \ref{L:sysD} for a proof. Compare Theorem 14 in \cite{Sil17} for the constant exponent case, where the method needs $M$ to be $C^{s+2,1}$. This also yields estimates for the so-called Hodge-Dirac operator $D = d+\alpha \delta$, $\alpha \in \mathbb{R}\setminus \{0\}$. 
\begin{theorem*}[Hodge-Dirac system]
	Let $M$ be $C^{s+1,1}$  and let $f\in W^{s,p(\cdot)}(M,\Lambda)$ be such that $(f,h_T) =0$ for all $h_T \in \mathcal{H}_T(M)$ and let $\alpha \neq 0$ be a real number. Then there exists $\omega \in  W^{s+1,p(\cdot)}(M,\Lambda)\cap W^{1,p(\cdot)}_T(M,\Lambda)$, satisfying 
	\begin{align}\label{hodge direc tangential}
		D\omega:= d\omega + \alpha\delta \omega =f \text{ in } M,\qquad t\omega =0 \text{ on } bM,   
	\end{align} such that  $(\omega, \mathcal{H}_T) =0$ and for some $C=C(\mathrm{data}, \alpha,s)>0,$ we have the estimate 
	\begin{align*}
		\|\omega\|_{W^{s+1,p(\cdot)}(M,\Lambda)} \leq C \|f\|_{W^{s,p(\cdot)}(M,\Lambda)}.
	\end{align*}
\end{theorem*}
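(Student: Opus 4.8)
The plan is to reduce the Hodge--Dirac equation $D\omega = f$, $t\omega=0$, to the first order div-curl system of Theorem~\ref{L:sysD}. This is the natural route because Theorem~\ref{L:sysD} requires exactly $M\in C^{s+1,1}$, matching the hypothesis, so no regularity is wasted; the only other ingredient needed is the Hodge decomposition theorem above.

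\emph{Preparing the data.} First I would apply the Hodge decomposition to $f\in W^{s,p(\cdot)}(M,\Lambda)$, obtaining $f=h+d\alpha_{0}+\delta\beta_{0}$ with $\alpha_{0},\beta_{0}\in W^{s+1,p(\cdot)}(M,\Lambda)$, $h\in W^{s,p(\cdot)}(M,\Lambda)\cap\mathcal{H}(M)$, $t\alpha_{0}=\delta\alpha_{0}=0$, $n\beta_{0}=d\beta_{0}=0$, and the bounds $\|h\|_{s,p(\cdot)},\|\alpha_{0}\|_{s+1,p(\cdot)},\|\beta_{0}\|_{s+1,p(\cdot)}\le C(\mathrm{data},s)\|f\|_{s,p(\cdot)}$. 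Using the integration by parts identity $(d\varphi,\psi)=(\varphi,\delta\psi)+[\varphi,\psi]$, the conditions $t\alpha_{0}=0$ and $d\beta_{0}=0$, and the fact that any $h_{T}\in\mathcal{H}_{T}(M)$ satisfies $dh_{T}=\delta h_{T}=0$ and $th_{T}=0$, one verifies $(d\alpha_{0},h_{T})=0$ and $(\delta\beta_{0},h_{T})=(\beta_{0},dh_{T})-[h_{T},\beta_{0}]=0$ for all $h_{T}\in\mathcal{H}_{T}(M)$. Consequently the hypothesis $(f,h_{T})=0$ is equivalent to $(h,h_{T})=0$ for all such $h_{T}$.

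\emph{Solving the div-curl system.} Regroup $f$ into a closed and a coclosed part, $f=d\alpha_{0}+\bigl(\delta\beta_{0}+h\bigr)$, where $\delta(\delta\beta_{0}+h)=0$ since $h$ is a harmonic field. Then I would apply Theorem~\ref{L:sysD} with $d\alpha_{0}$ in the role of $f$, with $\tfrac{1}{\alpha}(\delta\beta_{0}+h)$ in the role of $v$, and with $0$ in the role of $\varphi$. The hypotheses check out: $d(d\alpha_{0})=0$; $\delta\bigl(\tfrac{1}{\alpha}(\delta\beta_{0}+h)\bigr)=0$ by the previous paragraph; the compatibility conditions $\bigl(\tfrac{1}{\alpha}(\delta\beta_{0}+h),h_{T}\bigr)=0$ and $(d\alpha_{0},h_{T})=0=[0,h_{T}]$ hold by the previous paragraph; and $t(d\alpha_{0}-d\,0)=d_{bM}(t\alpha_{0})=0$ since $t\alpha_{0}=0$. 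This produces $\omega\in W^{s+1,p(\cdot)}(M,\Lambda)$ with $d\omega=d\alpha_{0}$, $\delta\omega=\tfrac{1}{\alpha}(\delta\beta_{0}+h)$, $t\omega=0$, and, inserting the bounds from the Hodge decomposition,
$$\|\omega\|_{s+1,p(\cdot)}\le C(\mathrm{data},s)\Bigl(\|d\alpha_{0}\|_{s,p(\cdot)}+\tfrac{1}{|\alpha|}\,\|\delta\beta_{0}+h\|_{s,p(\cdot)}\Bigr)\le C(\mathrm{data},\alpha,s)\,\|f\|_{s,p(\cdot)}.$$
By construction $D\omega=d\omega+\alpha\delta\omega=d\alpha_{0}+\delta\beta_{0}+h=f$ and $t\omega=0$.

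\emph{Normalization and the main obstacle.} Finally, to enforce $(\omega,\mathcal{H}_{T})=0$ I would subtract from $\omega$ its $L^{2}$-orthogonal projection onto the finite dimensional space $\mathcal{H}_{T}(M)$; since every $h_{T}\in\mathcal{H}_{T}(M)$ obeys $Dh_{T}=dh_{T}+\alpha\delta h_{T}=0$ and $th_{T}=0$, this affects neither $D\omega=f$ nor $t\omega=0$, it yields $(\omega,\mathcal{H}_{T})=0$, and it changes the $W^{s+1,p(\cdot)}$ norm only by a factor depending on $M$ (all norms on $\mathcal{H}_{T}(M)$ being equivalent and the projection being bounded by $\|\omega\|_{L^{1}(M)}\le C\|\omega\|_{p(\cdot)}$). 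I do not expect a genuine analytic obstacle here: the quantitative work is already contained in Theorem~\ref{L:sysD} and in the Hodge decomposition theorem, so the main point requiring care is the bookkeeping in verifying the compatibility hypotheses of Theorem~\ref{L:sysD}, i.e.\ the Green-formula identities of the first step.
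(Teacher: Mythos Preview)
Your proof is correct, but the paper takes a shorter and more direct route. It simply sets $\omega = D\,G_D[\alpha^{-1}f] = \alpha^{-1}\,dG_D[f] + \delta G_D[f]$ and exploits the algebraic identity $D^2 = \alpha(d\delta+\delta d) = \alpha\Delta$ to get $D\omega = \alpha\Delta G_D[\alpha^{-1}f] = f$ in one stroke; the conditions $t\omega=0$ and $(\omega,\mathcal{H}_T)=0$ are inherited directly from the Dirichlet potential (recall $tG_D[f]=0$, $t\delta G_D[f]=0$, hence also $tdG_D[f]=0$), and the $W^{s+1,p(\cdot)}$ bound for $dG_D[f]$ and $\delta G_D[f]$ on a $C^{s+1,1}$ manifold is precisely what is established in the proof of Theorem~\ref{T:HodgeD}.

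Your argument instead factors through the full Hodge decomposition of Theorem~\ref{T:Hodge} (invoking both $G_D$ and $G_N$) and then the div-curl Theorem~\ref{L:sysD} (which itself is proved via $G_D$), so it is a more roundabout path to the same destination. What your approach buys is modularity: you treat the Hodge decomposition and the div-curl system as black boxes and never touch $G_D$ directly, at the price of the Green-formula bookkeeping you flag at the end. The paper's approach buys brevity and transparency: once one notices $D^2=\alpha\Delta$, there is essentially nothing to check.
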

See Theorem \ref{Theorem on Hodge-Dirac tangential} for a proof.
The last result concerns non-elliptic first-order systems which correspond to a Poincar\'{e} type lemma with Dirichlet data and are often called Bogovskii type problems (see ~\cite{Bogovski1}, 
~\cite{Bogovski2}).
\begin{theorem*}[Poincar\'{e} lemma with full Dirichlet data]
	Let $M$ be $C^{s+1,1}.$ Given $f\in W^{s,p(\cdot)}(M,\Lambda)$  and $\varphi \in W^{s+1,p(\cdot)}(M,\Lambda)$ satisfying the conditions 
	\begin{align*}
		df =0, \quad t(f-d\varphi)=0,\quad  (f,h_T) =[\varphi,h_T] \qquad \text{ for all } h_T \in \mathcal{H}_T(M).   
	\end{align*}
	Then there exists $\omega \in W^{s+1,p(\cdot)}(M,\Lambda)$ satisfying $d\omega = f$ in $M$, $\omega=\varphi $ on $b M$ and for some $C = C(\mathrm{data},s)>0$,  we have the estimate 
	\begin{align*}
		\|\omega\|_{W^{s+1,p(\cdot)}(M,\Lambda)} \leq C ( \|f\|_{W^{s,p(\cdot)}(M,\Lambda)} + \|\varphi\|_{W^{s+1,p(\cdot)}(M,\Lambda)} ).
	\end{align*}
\end{theorem*}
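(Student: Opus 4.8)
The plan is to deduce the result from the theory already established for the first order ``div--curl'' system: first produce a primitive of $f$ having the \emph{correct tangential trace}, and then correct its normal trace on $bM$ by adding an exact form $d\chi$ with $\chi$ vanishing on $bM$ — such a correction leaves the tangential trace unchanged, while its normal trace on $bM$ can be prescribed arbitrarily. Concretely, I would first apply the theorem on solvability of first order systems to the data $f$, $v:=0$ and the given $\varphi$. Its hypotheses hold: $df=0$, $\delta v=0$ and $(v,h_T)=0$ are immediate, while $(f,h_T)=[\varphi,h_T]$ for all $h_T\in\mathcal{H}_T(M)$ and $t(f-d\varphi)=0$ are exactly the standing assumptions of the present statement. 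This produces $\omega_1\in W^{s+1,p(\cdot)}(M,\Lambda)$ with $d\omega_1=f$, $\delta\omega_1=0$ in $M$ and $t\omega_1=t\varphi$ on $bM$, together with $\|\omega_1\|_{W^{s+1,p(\cdot)}(M,\Lambda)}\le C(\|f\|_{W^{s,p(\cdot)}(M,\Lambda)}+\|\varphi\|_{W^{s+1,p(\cdot)}(M,\Lambda)})$. So $\omega_1$ solves $d\omega_1=f$ and already has the correct tangential trace; only its normal part $n\omega_1$ on $bM$ may fail to equal $n\varphi$.

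Next I would correct the normal trace. Let $k$ be the degree of $\omega$ and set $g:=\nu\lrcorner(\varphi-\omega_1)|_{bM}$, a section of $\Lambda^{k-1}$ along $bM$. Since $\varphi-\omega_1\in W^{s+1,p(\cdot)}(M,\Lambda)$ and $\nu$ is of class $C^{s,1}$ on the $C^{s+1,1}$ manifold $M$, the trace theorem places $g$ in the corresponding variable-exponent boundary trace space, with norm controlled by $C(\|f\|_{W^{s,p(\cdot)}(M,\Lambda)}+\|\varphi\|_{W^{s+1,p(\cdot)}(M,\Lambda)})$. Using the inverse-trace (extension) theorem at order $s+2$ I would then pick $\chi\in W^{s+2,p(\cdot)}(M,\Lambda)$ of degree $k-1$ with Cauchy data $\chi|_{bM}=0$ and $\partial_\nu\chi|_{bM}=g$, with $\|\chi\|_{W^{s+2,p(\cdot)}(M,\Lambda)}$ bounded by the boundary norm of $g$. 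The key point is that, because $\chi$ vanishes on $bM$, a short computation in a boundary collar (writing $d$ in coordinates $(x',x_n)$ with $bM=\{x_n=0\}$) gives $d\chi|_{bM}=\nu\wedge(\partial_\nu\chi|_{bM})$, since every term carrying a tangential derivative of $\chi$, as well as the connection coefficients, vanishes on $bM$. Hence $t(d\chi)=0$ and $n(d\chi)=\nu\wedge g=n(\varphi-\omega_1)$ on $bM$.

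Finally I would set $\omega:=\omega_1+d\chi$. Since $\chi\in W^{s+2,p(\cdot)}$ we have $d\chi\in W^{s+1,p(\cdot)}$, so $\omega\in W^{s+1,p(\cdot)}(M,\Lambda)$; moreover $d\omega=d\omega_1+d(d\chi)=f$, while on $bM$ one has $t\omega=t\omega_1+t(d\chi)=t\varphi$ and $n\omega=n\omega_1+n(d\chi)=n\varphi$, i.e.\ $\omega=\varphi$ on $bM$. Combining the two estimates gives $\|\omega\|_{W^{s+1,p(\cdot)}(M,\Lambda)}\le C(\|f\|_{W^{s,p(\cdot)}(M,\Lambda)}+\|\varphi\|_{W^{s+1,p(\cdot)}(M,\Lambda)})$, which is the claimed bound. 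Equivalently, one may first subtract $\varphi$ and reduce to the homogeneous case: $f$ is replaced by the closed form $f-d\varphi$, which satisfies $t(f-d\varphi)=0$ and, by the integration-by-parts identity together with $\delta h_T=0$, also $(f-d\varphi,h_T)=0$ for all $h_T\in\mathcal{H}_T(M)$; one then runs the same two steps and adds $\varphi$ back at the end, which is only cosmetically different.

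I expect the main obstacle to be in the correction step. On the one hand, one has to justify the collar identity $d\chi|_{bM}=\nu\wedge(\partial_\nu\chi|_{bM})$ carefully on a Riemannian manifold, so that the normal part of $d\chi$ really is a freely prescribable datum once $\chi|_{bM}=0$ is imposed; on the other hand — and this is the more delicate input — one needs the variable-exponent trace and inverse-trace theory to be available at order $s+2$ on a manifold that is only $C^{s+1,1}$. This is the highest order at which $W^{s+2,p(\cdot)}(M,\Lambda)$ and the associated fractional boundary spaces are still meaningfully defined, so it is consistent with the hypothesis, but the bookkeeping — traces of $\omega_1$, contraction with the merely $C^{s,1}$ field $\nu$, and the mapping properties of $d$ between the chart-defined Sobolev spaces — must be carried out carefully under the log-Hölder condition. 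Everything else is a direct appeal to the first order systems theorem.
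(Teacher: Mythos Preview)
Your two-step strategy is exactly the paper's: first produce a primitive of $f$ with the correct tangential trace, then correct the normal trace by adding an exact form $d\gamma$ with $\gamma$ vanishing on $bM$. For the first step the paper uses $\varphi + \delta G_D[f-d\varphi]$ directly; your route through the div--curl theorem yields essentially the same object up to a harmless extra closed piece $dG_D[-\delta\varphi]$.

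The obstacle you flag in the correction step is real, and your proposed resolution does not work as written. On a $C^{s+1,1}$ manifold the space $W^{s+2,p(\cdot)}(M,\Lambda)$ is not invariantly defined (the paper sets up $W^{k,p(\cdot)}$ only for $k$ up to the regularity class of $M$), and the paper explicitly remarks that in the variable exponent setting ``the structure of the trace spaces is still not completely understood'', so there is no off-the-shelf inverse-trace theorem producing $\chi\in W^{s+2,p(\cdot)}$ from fractional boundary data. Your claim that this is ``the highest order at which $W^{s+2,p(\cdot)}(M,\Lambda)$ \ldots\ is still meaningfully defined'' is therefore off by one.

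The paper circumvents both obstacles via its explicit single-layer-potential extension (Corollary~\ref{C:ext}, built on Lemma~\ref{L:layer}). Given a form $v \in W^{s+1,p(\cdot)}(M,\Lambda)$ on all of $M$ --- here $v = \delta G_D[f-d\varphi]$, not a boundary trace --- it constructs $\gamma$ with $\gamma=0$ on $bM$, $nd\gamma = nv$ on $bM$, and
\[
\|\gamma\|_{W^{s+1,p(\cdot)}(M,\Lambda)} + \|d\gamma\|_{W^{s+1,p(\cdot)}(M,\Lambda)} \le C\|v\|_{W^{s+1,p(\cdot)}(M,\Lambda)}.
\]
The key point is that one does \emph{not} assert $\gamma \in W^{s+2,p(\cdot)}$, only the weaker (and sufficient) fact that $\gamma$ and $d\gamma$ separately lie in $W^{s+1,p(\cdot)}$; this is precisely what the $C^{s+1,1}$ regularity of $M$ supports. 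With this $\gamma$ in hand, $\omega = \varphi + \delta G_D[f-d\varphi] - d\gamma$ is the solution.
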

See Theorem \ref{Poincare lemma with dirichlet data tangential} for a proof. Compare Theorem 8.2 in \cite{CsaGyuDar12} for $p \geq 2$ and Theorem 2.47 in \cite{SilT16} for $1< p < \infty$ for constant exponent case. Once again, the arguments there need $M$ to be $C^{s+2,1}$. Unlike the last three results, this system is genuinely non-elliptic with an infinite dimensional kernel (any closed form which vanishes on the boundary is in the kernel) and thus one can not expect regularity results to hold for \emph{any} weak solution. Once again, by Hodge duality, an analogous version holds  for the problem $\delta\omega = f$ in $M$, $\omega=\varphi $ on $b M$ as well. 

To the best of our knowledge, our results for the variable exponent cases are all new, except for the case of $0$-forms, where these reduce to the cases treated by Dienning and  Růžička.  Some of our conclusions for $C^{1,1}$ manifolds are also new even for the constant exponent case. 

\section{Gauge fixing and nonlinear problems}	
	Study of non-linear problems with differential forms started from the seminal paper by K.~Uhlenbeck \cite{Uhl77}, who obtained classical results on the H\"older continuity (for the scalar equation this reads as $C^{\alpha}$ property of the gradient) in the general framework of elliptic complexes. These results were extended by Hamburger in~\cite{Ham92}. Both of these results concern \emph{homogeneous} quasilinear equations and systems of $p$-Laplace type. Beck and Stroffolini \cite{BecStr13} considered partial regularity results for general quasilinear systems for differential forms. The study of everywhere regularity results for inhomogeneous $p$-Laplace type systems was initiated in Sil~\cite{Sil119}, where one of the authors of the present contribution introduced a \emph{gauge fixing procedure} to transfer the regularity from the right hand side to the exterior derivative of a solution. This resulted in a Nonlinear Stein theorem for differential forms and a variety of estimates in $\mathrm{BMO}, \mathrm{VMO}$ and H\"{o}lder spaces in~\cite{Sil119}. Recently, these ideas were successfully adapted in by Lee, Ok and Pyo in ~\cite{LeeOkPyo24} to  obtain Calder\'on-{Z}ygmund estimates for nonlinear equations of differential forms with suitably `small' {BMO} coefficients. This gauge fixing procedure, which is crucial to handle inhomogeneous systems, however, in turn relies on the well-developed linear theory. It is exactly this need that initiated writing this manuscript. We hope that the style of writing adopted here, with minimalistic use of geometric terminology will provide an easily accessible reference source for analysts working on non-linear problems with differential forms in variable exponent spaces.


	Our analysis of the linear problem leads to the important corollaries for nonlinear problems with differential forms, for results in the scalar case( 0-forms) see for example \cite{AceMin01,AceMin02, AceMin05}. The typical result of this form is the following:
	\begin{corollary*}
		Let $\Omega \subset \mathbb{R}^{n}$ be a $C^{1,1}$ bounded, contractible domain. Let $a:\Omega \rightarrow [\gamma, L]$ be measurable, where $ 0 < \gamma < L < \infty.$ Let $p(\cdot)$ satisfy \eqref{eq:p1}, \eqref{eq:p2};  $F\in L^{p'(\cdot)}\left( \Omega, \Lambda\right)$, $p'(x)=\frac{p(x)}{p(x)-1}$, and $u_{0} \in W^{1,p(\cdot)}\left( \Omega, \Lambda\right).$ Then the following minimization problem 
		\begin{align*}
			\inf \left\lbrace \int\limits_{\Omega} \left[ {\frac{a\left( x\right)}{p\left( x\right)}}\left\lvert du \right\rvert^{p(x)} - \left\langle F, du \right\rangle\right]\, dx\,:\, u \in u_{0} + W^{1,p(\cdot)}_{\delta, T}\left( \Omega, \Lambda\right),\ {\int\limits_{\Omega}u = \int\limits_\Omega u_0}\right\rbrace 
		\end{align*}
		admits a unique minimizer. Moreover, the minimizer $\bar{u} \in W^{1,p(\cdot)}\left( \Omega,\Lambda\right)$ is the unique weak solution to the system 
		\begin{align*}
			\left\lbrace \begin{aligned}
				\delta \left( a\left(x\right)\left\lvert d\bar{u}\right\rvert^{p(x)-2}d\bar{u}\right) &= \delta F &&\text{ in } \Omega, \\
				\delta\bar{u} &= \delta u_{0} &&\text{ in } \Omega, \\
				t\bar{u} &=t u_{0} &&\text{ on } \partial\Omega. 
			\end{aligned}\right. 
		\end{align*} 
	\end{corollary*}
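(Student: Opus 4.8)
The plan is to combine the direct method in the calculus of variations for existence and uniqueness with the linear Hodge theory developed earlier (specifically the div-curl/Hodge Laplacian estimates) to identify the Euler--Lagrange system and to upgrade regularity. First I would set up the functional
\[
\mathcal{F}(u) = \int_{\Omega} \Bigl[ \frac{a(x)}{p(x)} |du|^{p(x)} - \langle F, du\rangle \Bigr]\, dx
\]
on the affine space $\mathcal{A} = \{ u \in u_0 + W^{1,p(\cdot)}_{\delta,T}(\Omega,\Lambda) : \int_\Omega u = \int_\Omega u_0\}$. Coercivity on $\mathcal{A}$ requires a Poincaré-type inequality controlling $\|u\|_{1,p(\cdot)}$ by $\|du\|_{p(\cdot)}$ on this space; this follows from the Gaffney inequality / div-curl estimate applied to elements of $W^{1,p(\cdot)}_{\delta,T}$ with vanishing mean (since $\delta u$ and the harmonic part are killed, $du$ controls everything). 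Combined with Young's inequality to absorb the linear term $\langle F, du\rangle$ (using $F \in L^{p'(\cdot)}$), one gets $\mathcal{F}(u) \to \infty$ as $\|u\|_{1,p(\cdot)} \to \infty$. Weak lower semicontinuity follows from convexity of $\xi \mapsto \frac{a(x)}{p(x)}|\xi|^{p(x)}$ and standard variable-exponent functional-analytic facts (reflexivity of $W^{1,p(\cdot)}$ under the log-Hölder/boundedness conditions, weak closedness of $\mathcal{A}$), so a minimizer exists; strict convexity of $|\xi|^{p(x)}$ in $\xi$ (using $p_->1$) together with the fact that $du$ determines $u$ on $\mathcal{A}$ gives uniqueness.

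Next I would derive the Euler--Lagrange equation. Taking variations $u + t\zeta$ with $\zeta \in W^{1,p(\cdot)}_{\delta,T}(\Omega,\Lambda)$ and $\int_\Omega \zeta = 0$ yields the weak form $\int_\Omega \langle a(x)|d\bar u|^{p(x)-2}d\bar u - F, d\zeta\rangle = 0$ for all such $\zeta$. To conclude that $\delta(a|d\bar u|^{p(x)-2}d\bar u) = \delta F$ holds weakly against \emph{all} test forms (not just the constrained ones), I would use a Hodge/Helmholtz decomposition argument: an arbitrary test form splits as an exact part $d\xi$ plus a co-closed-with-tangential-boundary part plus harmonics; the contractibility of $\Omega$ kills the cohomology, the co-closed-tangential part is handled by the constrained variations, and the exact part $d\xi$ pairs to zero automatically since $\langle \cdot, dd\xi\rangle = 0$. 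This identifies the first equation of the system; the remaining two, $\delta \bar u = \delta u_0$ and $t\bar u = t u_0$, are built into membership in $\mathcal{A}$. Conversely any weak solution of the system lies in $\mathcal{A}$ and satisfies the variational identity, giving equivalence, hence uniqueness of the weak solution from uniqueness of the minimizer.

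Finally, for the regularity claim $\bar u \in W^{1,p(\cdot)}(\Omega,\Lambda)$ — which is already part of the definition of $\mathcal{A}$, so here I read the statement as asserting the solution genuinely lies in the full Sobolev space with the stated structure — I would invoke the linear theory: set $f := d\bar u$ (closed, in $L^{p(\cdot)}$ a priori only, but) and observe that the vector field $V := a|d\bar u|^{p(x)-2}d\bar u \in L^{p'(\cdot)}$ satisfies $\delta V = \delta F$; then $\bar u$ solves a div-curl system $d\bar u = f$, $\delta \bar u = \delta u_0$, $t\bar u = t u_0$ whose right-hand side regularity is exactly what is available, and the Theorem on solvability of first order systems (Theorem~\ref{L:sysD}) with $s=0$ gives the $W^{1,p(\cdot)}$ bound and membership.

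\medskip

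The main obstacle I expect is the \emph{passage from constrained to unconstrained test forms} in the Euler--Lagrange step, i.e. correctly handling the side constraints $\delta u = \delta u_0$, $tu = tu_0$, and $\int u = \int u_0$ via the appropriate Hodge decomposition with boundary conditions. The constraint $\delta \bar u = \delta u_0$ in particular is not a natural boundary condition of the functional (which only sees $du$), so it must be encoded by restricting the test space, and one must verify that the decomposition of a general test form into admissible directions plus irrelevant directions is valid in the variable-exponent Sobolev setting — which is precisely where the Hodge decomposition and div-curl results proved earlier in the paper are needed. A secondary technical point is ensuring the Poincaré/coercivity inequality on $\mathcal{A}$ holds with a constant depending only on \texttt{data}; this again reduces to the Gaffney-type estimate established earlier applied to forms in $W^{1,p(\cdot)}_{\delta,T}$ with vanishing average, but one should check the average normalization genuinely removes the (finite-dimensional, here trivial by contractibility) obstruction.
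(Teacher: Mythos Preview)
Your proposal is correct and follows essentially the same route as the paper: direct method with coercivity via the variable-exponent Gaffney inequality (Lemma~\ref{L:GaffneyD1}) applied to $u-u_0$, weak lower semicontinuity from convexity, and then extension of the Euler--Lagrange identity from $W^{1,p(\cdot)}_{\delta,T}$ to $W^{1,p(\cdot)}_T$ via gauge fixing (the paper cites Corollary~\ref{C:I0} directly, which is exactly the Hodge-type replacement of a test form by a coclosed one with the same $d$). Your final ``regularity'' paragraph is superfluous: membership of $\bar u$ in $W^{1,p(\cdot)}(\Omega,\Lambda)$ is already built into the admissible class, so there is nothing further to prove there.
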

    See Corollary \ref{nonlinear minimization} for a proof.  This type of result is also instrumental to derive comparison estimates for using perturbation techniques in the context of nonlinear problems.

\section{Organization}	
	
	The rest of the article is organized as follows. Lebesgue and Sobolev spaces of differential forms on manifolds are introduced in Chapter~\ref{sec:LebSob}. This Chapter contains approximation results (density of smooth forms in variable exponent Sobolev spaces).  Auxilliary results on potentials and interpolation in variable exponent spaces are contained in Chapter~\ref{sec:auxiliary}. In Chapter \ref{sec:boundary} we discuss classical results for the Hodge Laplacian and div-curl systems. In Chapter~\ref{sec:Dirichlet} we state the results on solvability of Dirichlet and Neumann boundary value problems for the Hodge Laplacian in variable exponent spaces. The main ingredient of the proof --- a priori estimates --- is postponed until the last section. Based on these results, we study theory of first-order systems in variable exponent spaces in Chapter~\ref{sec:theory}. This includes Hodge decomposition theorems, cohomology resolution, Gauge fixing, solvability of `div-curl' type first order systems, Hodge-Dirac systems, Gaffney’s inequality and certain non-elliptic first-order boundary value problems, namely Poincar\'{e} lemma with full Dirichlet data, in variable exponent spaces.  In Chapter~\ref{Sec:parametrix} we prove a priori estimates in variable exponent spaces, which provide a basis for the whole investigation.

	\chapter{Lebesgue and Sobolev Spaces of Differential Forms on Manifolds}\label{sec:LebSob}
	
	In this Chapter we introduce the notation, spaces of differential forms, and basic approximation results.

	\section{Bundle of differential forms}

	Let $M$ be a $C^{s,1}$, $s\in \mathbb{N}$, compact Riemannian manifold with boundary $bM$, which is not necessarily orientable  and $g_{ij}$ be its metric tensor.  The dimension of $M$ will be further denoted by $n$. This means the transition functions are $C^{s,1}$ and the metric tensor is $C^{s-1,1}.$
	
	We shall follow \cite{deRham} and work with the totality $\Omega M$ of differential forms of all degrees $0,\ldots,\mathrm{dim}\, M$ and parities (odd/even). This is done in order to cater for the non-orientable manifolds. If $M$ is assumed to orientable, then we can simply work with the usual differential forms, i.e. even forms.  On an $n$-dimensional manifold each element of $\Omega M$ can be decomposed into $2(n+1)$ homogeneous forms:
	\begin{equation}\label{eq:forms}
		f = \sum_{r=0}^n f_e^r + \sum_{r=0}^n f_o^r, \quad \mathrm{deg}\, f_e^k, f_o^k = k,\quad k=0,\ldots,n
	\end{equation}
	the forms $f_e^k$ even and $f_o^k$ are odd. The coordinates of the form $f$ is the set of coordinates of all its homogeneous components. Under the coordinate transformation $x=x(y)$ the degree of the form is preserved, the coefficients of even forms (``usual''  differential forms) of degree $r$ are transformed as 
	$$
	\omega_I'(y) = \sum_{J} \omega_J(x(y)) \frac{\partial x^J}{\partial y^I}
	$$
	and the coefficients of odd forms (``pseudoforms'' or densities) are transformed as 
	$$
	\omega_I'(y) = \biggl(\mathrm{sign} \frac{\partial x}{\partial y} \biggr) \sum_{J} \omega_J(x(y)) \frac{\partial x^J}{\partial y^I}.
	$$ 
	Here the summation is over all ordered $r$-tuples $J=\{1\leq j_1<j_2<\ldots j_r\leq n\}$. Further we denote the set of ordered $r$-tuples by $\mathcal{I}(r)$. For two $r$-tuples $J$ and $I$ the expression 
	$$
	\frac{\partial x^J}{\partial y^I} = \frac{\partial (x^{j_1},\ldots, x^{j_r})}{ \partial (y^{i_1},\ldots, y^{i_r})},
	$$
	and $\partial x/\partial y$ is the Jacobian of the transform. 
	
	On $r$-forms of the same parity (both even or both odd) the scalar product $\langle \cdot,\cdot \rangle$ is defined in the standard way:
	$$
	\langle\omega,\eta\rangle = \sum_{I} \omega_I \eta^I = \sum_{IK} G^{IK} \omega_I \eta_K,
	$$
	where the summation is over ordered sets $I,K \in \mathcal{I}(r)$, $\eta^I = g^{i_1j_1}\ldots g^{i_r j_r} \eta_{j_1\ldots j_r}$ with the summation over all $r$-tuples $(j_1,\ldots,j_r)$, and $G^{IK}$ is the determinant of the matrix at the intersection of rows $I$ and columns $K$ of the matrix $\{g^{ij}\}$.
	
	The notion of scalar product is extended to $\Omega M$ by linearity and assumption that the scalar product of forms of different degrees or parities is zero: for $f$ given by \eqref{eq:forms} and $v$ given by the similar expression
	\begin{equation}\label{eq:formsg}
		v = \sum_{r=0}^n v_e^r + \sum_{r=0}^n v_o^r, \quad \mathrm{deg}\, v_e^k, v_o^k = k,\quad k=0,\ldots,n
	\end{equation}
	the forms $v_e^k$ even and $v_o^k$ odd, we set
	$$
	\langle f,v\rangle =  \sum_{r=0}^ n\langle f_e^r,v_e^r\rangle + \sum_{r=0}^ n \langle f_o^r, v_o^r\rangle.
	$$ 
	We denote $|\omega| = \sqrt{\langle \omega,\omega \rangle}$.
	
	
	The volume form $dV$ with coordinates $(dV)_{i_1\ldots i_n} = \sqrt{g} \cdot \mathrm{sign}\, (i_1\ldots i_n)$ is an odd form. Here and below $\sqrt{g}$ is the square root of the determinant of the matrix $\{g_{ij}\}$. The notions of the exterior and interior product, the Hodge star operator, and the differential and codifferential operators $d$ and $\delta$ are also extended to $\Omega M$ by linearity. In local coordinates for $k$-form $\omega$ and $l$-form $\eta$, the wedge product is given by
	$$
	(\omega\wedge \eta)_{i_1\ldots i_{k+l}} = \sum_{\sigma\in Sh(k,l)} \omega_{i_{\sigma(1)}\ldots i_{\sigma(k)}} \eta_{i_{\sigma(k+1)}\ldots i_{\sigma(k+l)}} \mathrm{sign}\, \sigma,
	$$
	the interior product $\omega \lrcorner \eta$ for $k<l$ has coordinates 
	$$
	(\omega \lrcorner \eta)_{i_1\ldots i_{l-k}} = \sum_{J\in \mathcal{I}(r)} \omega^J \eta_{J i_1\ldots i_{l-k}},
	$$
	while the Hodge star operator $* \omega = \omega \lrcorner dV$ is given in coordinates by
	$$
	(*\omega)_{i_1 \ldots i_{n-k}}  = \sum_{J\in \mathcal{I}(r)} \omega^J  \sqrt{g}\cdot\mathrm{sign}\,(J i_1\ldots i_{n-k})
	$$
	The parity of the wedge and interior products is the product of the parities of its factors, the operators $d$ and $\delta$ preserve form's parity, and so the Hodge Laplacian $\triangle = d\delta + \delta d$, while the Hodge operator $*$ changes the form's parity. If $\epsilon$ is a continuous odd form of degree zero satisfying $\epsilon^2=1$ (such a form always exists locally and if a \emph{continuous} global choice of $\epsilon$ exists for some atlas in $M$, then $M$ is orientable and $\epsilon$ is an ``orientation'' for $M$ ),  then for an odd/even form $\omega$ the form $\epsilon \omega$ is even/odd and $\epsilon^2 \omega = \omega$. Multiplication by $\epsilon$ commutes with $d$, $\delta$, $\triangle$, $\nabla$. 
	
	The relation $\alpha \wedge * \beta = \langle \alpha,\beta\rangle dV$ holds modulo those terms which are not odd of degree $n$ (i.e. in this relation only the odd components of degree $n$ are equal). 
	
	By definition, the integral of a form $\omega$ on $M$ can be not zero only if $\omega$ is an odd form of degree $\mathrm{dim}\, M$, and if $\omega$ is an odd form of degree $\mathrm{dim}\, M$ its integral is defined in the standard way using a partition of unity and local coordinates. Thus the integrals of the form $\int\limits_M f dV$, with $f$ a scalar function, are well-defined. To define this integral for functions which are more general than continuous we can either stick to the local formulation via partition of unity, or introduce the (unique) Lebesgue measure and therefore integration on $M$ starting from well-defined integrals of continuous functions, which obviously gives the same result, or any other procedure for Lebesgue measure construction.
	
	On $r$-forms in local coordinates the operators $d$ and $\delta$ have the form 
	\begin{align}\label{eq:d}
		\begin{aligned}
			(d\omega)&_{i_1\ldots i_{r+1}} \\&= \sum_{j=1}^{r+1} (-1)^{j-1} \frac{\partial \omega_{i_1\ldots i_{j-1}i_{j+1}\ldots i_{r+1}}}{\partial x^{i_j}} = \sum_{j=1}^{r+1} (-1)^{j-1} \nabla_{i_j} \omega_{i_1\ldots i_{j-1}i_{j+1}\ldots i_{r+1}},
		\end{aligned}
	\end{align}
	\begin{align}
		(\delta \omega)_{i_1\ldots i_{r-1}} &= - \nabla^k \omega_{k i_1\ldots i_{r-1}} \nonumber\\&=- g^{kl} \nabla_l \omega_{ki_1 \ldots i_{r-1}} \nonumber \\&=- g^{kl}\left( \begin{aligned} \frac{\partial \omega_{ki_1\ldots i_{r-1}}}{\partial x^l} &- \Gamma^{s}_{kl}\omega_{s i_1\ldots i_{r-1}} \\ &- \Gamma^s_{i_1 l} \omega_{ksi_2\ldots i_r} - \ldots - \Gamma^{s}_{i_{r-1}l} \omega_{ki_1\ldots i_{r-2}s} \end{aligned}  \right),  \label{eq:delta}
	\end{align}
	where we employed the Einstein summation convention, $\nabla_{l}$ denotes the covariant derivative in the $l$-th coordinate direction and 
	\begin{align*}
		\Gamma^k_{lm} &= \frac{1}{2} g^{ls} \left(\frac{\partial g_{sm}}{\partial x^l}+ \frac{\partial g_{ls}}{\partial x^m} -\frac{\partial g_{lm}}{\partial x^s} \right) \qquad \text{ are the Christoffel symbols}.     
	\end{align*}
	 Note that on a $C^{1,1}$ manifold, where the components of the metric tensor are only Lipchitz, the derivatives above are understood in the sense of weak derivatives and the Christoffel symbols are bounded functions. On $C^{2,1}$ manifolds,  by the Bochner-Weizenb\"ock formula for homogeneous forms, the difference of the Hodge Laplacian and the operator $\nabla^* \nabla $ is a bundle endomorphism determined by the Riemann curvature tensor, $d\delta + \delta d = \nabla^* \nabla +  R^W\omega$: for an $r$-form $\omega$,
	$$
	(\triangle \omega)_{i_1\ldots i_r} = - \nabla^j \nabla_j \omega_{i_1\ldots i_r} +\sum_{k=1}^r (-1)^k (\nabla_{i_k} \nabla^l - \nabla^l \nabla_{i_k}) \omega_{l i_1 \ldots \hat{\imath}_k \ldots i_r}.
	$$
	For the invariant definition of $R^W$ see \cite[Section 1.2]{Schwarz}. We shall not use this formula explicitly. The principal symbol of $\triangle$ is $-|\xi|^2_g$.
	
	The notions of tangential and normal parts of a form on the boundary are extended to $\Omega M$  also by linearity (see Section~\ref{ssec:adm} below).
	

	For two forms $f$ and $v$ we denote 
	\begin{align}\label{eq:scp}
		(f,v) = \int\limits_{M} \langle f,v\rangle\, dV = \int\limits_{M} f\wedge \ast v.
	\end{align}
	For $f$ given by \eqref{eq:forms} and $v$ given by \eqref{eq:formsg} we have
	$$
	(f,v) = \sum_{r=0}^ n(f_e^r,v_e^r) + \sum_{r=0}^ n(f_o^r,v_o^r).
	$$
	If $f$, $v$ have different parity or different degree then clearly $(f,v)=0$.

	Let $k\in \{0\}\cup\mathbb{N}$, $\alpha \in [0,1]$, On $C^{k+1,\alpha}$-manifold $M$  we use the notation $C^{k,\alpha}(M,\Lambda)$ for forms that have $C^{k,\alpha}$ coefficients in any local coordinate system. By $C^{k,\alpha}_0(M,\Lambda)$ we denote the set of $C^{k,\alpha}(M,\Lambda)$ forms vanishing in a neighbourhood of $bM$. We denote $\mathrm{Lip}(M,\Lambda) = C^{0,1}(M,\Lambda)$, $\mathrm{Lip}_0(M,\Lambda) = C^{0,1}_0(M,\Lambda)$.

	\section{Variable exponent Lebesgue and Sobolev spaces in \texorpdfstring{$\mathbb{R}^n$}{Rn}}
	Let $\Omega$ be a bounded domain in $\mathbb{R}^n$ and assume that $p(\cdot)$ satisfies \eqref{eq:p1} with $M=\Omega$, and for now $p_{-}=1$ is allowed.
	
	Let $L^0(\Omega)$ denote the set of measurable function on~$\Omega$
	and $L^1_{\mathrm{loc}}(\Omega)$ denote the space of locally integrable 
	functions. We define the generalized Orlicz norm by
	\begin{align*}
		\|f\|_{p(\cdot),\Omega} &:= \inf \left\{\gamma > 0\,:\, \int_\Omega
			\big(|f(x)/\gamma|\big)^{p(x)}\,dx \leq 1\right\}.
	\end{align*}
The generalized Orlicz space~$L^{p(\cdot)}(\Omega)$ is defined as the
	set of all measurable functions with finite generalized Orlicz norm
	\begin{align*}
		L^{p(\cdot)}(\Omega) &:= \left\{ f \in L^0(\Omega)\,:\, \|f\|_{p(\cdot),\Omega}  \right\}<\infty .
	\end{align*}
    For example the generalized Orlicz function $p=\mathrm{const}$ generates the usual Lebesgue space~$L^p(\Omega)$. 	

	For $k\in \mathbb{N}$ we introduce the spaces 
	$$
	W^{k,p(\cdot)}(\Omega) =\{u\in W^{k,1}(\Omega)\,:\, \nabla^k \omega \in L^{p(\cdot)}(\Omega)\}.
	$$
	The norm in this space can be defined as 
	$$
	\|u\|^{(1)}_{k,p(\cdot),\Omega}=\sum_{l=0}^{k-1} \|\nabla^l u\|_{1,\Omega} + \|\nabla^k u\|_{p(\cdot),\Omega}.
	$$
	If the domain $\Omega$ is Lipschitz, and for us this is always the case, using the Sobolev embedding we can replace the norms in $L^1(\Omega)$ in this expression by the norms in $L^{p_{-}}(\Omega)$. In general smooth functions are not dense in~$W^{k,p(x)}(\Omega)$. Therefore, we define~$H^{k,p(\cdot)}(\Omega)$
	as
	\begin{align*}
		H^{k,p(\cdot)}(\Omega) &:= \big\{\text{closure of}\ C^\infty(\Omega) \cap W^{k,p(\cdot)}(\Omega)\ \text{in}\ W^{k,p(\cdot)}(\Omega)\big\}.
	\end{align*}
	However, if the exponent $p(\cdot)$ satisfies the log-H\"older condition \eqref{eq:p2} (with $M=\Omega$) then smooth functions are dense in $W^{k,p(\cdot)}(\Omega)$, that is (the result of Meyers and Serrin extended to the variable exponent case)
	$$
	H^{k,p(\cdot)}(\Omega) = W^{k,p(\cdot)}(\Omega).
	$$
	Assuming that the exponent is log-H\"older we finally define the norm in $W^{k,p(\cdot)}(\Omega)$ as 
	$$
	\|u\|_{k,p(\cdot),\Omega} = \sum_{l=0}^k \|\nabla^k u\|_{p(\cdot),\Omega},
	$$
	which gives an equivalent norm to the one defined by $\|\cdot\|^{(1)}_{k,p(\cdot),\Omega}$. 
	
	We shall work with finite sets of functions $f=\{f_I, I\in \mathcal{I}\}$. In this case we slightly abuse the notation and use the same notation  $L^{p(\cdot)}(\Omega)$, $W^{k,p(\cdot)}(\Omega)$. The (vector)-function $f\in W^{k,p(\cdot)}(\Omega)$ iff $f_I \in W^{k,p(\cdot)}(\Omega)$ for all $I\in \mathcal{I}$. The norm is defined as $\|f\|_{W^{k,p(\cdot)}(\Omega)} = \sum_{I\in \mathcal{I}}\|f_I\|_{W^{k,p(\cdot)}(\Omega)}$.
	
	Let $W_c^{1,p(\cdot)}(\Omega)$ be the set of $W^{1,p(\cdot)}(\Omega)$ functions with compact support in $\Omega$. For a function $w$ defined on $\Omega$ let $E_0 w$ be the extension of $w$ by zero outside $\Omega$. We introduce in $W^{1,p(\cdot)}(\Omega)$ the following subspaces with zero boundary values:
	\begin{align*}
		H_0^{1,p(\cdot)}(\Omega) &:= \big\{\text{closure of}\ C^\infty_0(\Omega)\ \text{in}\  W^{1,p(\cdot)}(\Omega) \big\},\\
		\widetilde W_0^{1,p(\cdot)}(\Omega) &: = \big\{\text{closure of}\ W_c^{1,p(\cdot)}(\Omega)\ \text{in}\  W^{1,p(\cdot)}(\Omega)  \big\},\\
		W^{1,p(\cdot)}_0(\Omega) &:= W^{1,1}_0(\Omega)\cap W^{1,p(\cdot)}(\Omega),\\  
		\widehat W_0^{1,p(\cdot)}(\Omega) &:= \big\{w \in W^{1,p(\cdot)}(\Omega)\,:\ E_0\omega \in W^{1,1}(\mathbb{R}^n) \big\}
	\end{align*}
	with same norm as in~$W^{1,p(\cdot)}(\Omega)$. In general situation, we have the obvious relation 
	$$
	H_0^{1,p(\cdot)}(\Omega) \subset\widetilde W_0^{1,p(\cdot)}(\Omega) \subset W_0^{1,p(\cdot)}(\Omega) \subset \widehat W_0^{1,p(\cdot)}(\Omega) .
	$$
	However, under the log-H\"older condition \eqref{eq:p2} and if $\Omega$ is a Lipschitz domain all these four spaces coincide:
	$$
	H_0^{1,p(\cdot)}(\Omega) =\widetilde W_0^{1,p(\cdot)}(\Omega) = W_0^{1,p(\cdot)}(\Omega) = \widehat W_0^{1,p(\cdot)}(\Omega).
	$$
	That is, a $W^{1,p(\cdot)}(\Omega)$ function can be approximated by $C_0^\infty(\Omega)$ functions if and only if its extension by zero to $\mathbb{R}^n\setminus \Omega$ belongs to $W^{1,1}(\mathbb{R}^n)$.
	
	We say that a function $u$ from $W^{1,p(\cdot)}(\Omega)$ has zero trace on a part $\Gamma$ of $\partial \Omega$ if it can be approximated in this space by functions vanishing near $\Gamma$. See \cite{DieHHR11} for further properties of these spaces.

\section{Variable exponent Lebesgue spaces on manifolds}
Let $M$ be a compact $n$-dimensional Riemannian manifold with boundary, which is at least $C^{1,1}$ and $(U_\alpha,\varphi_\alpha)$ is a finite atlas on $M$. We assume without loss that $\varphi_\alpha(U_\alpha)$ are bounded Lipschitz domains in $\mathbb{R}^n$ and by refining the atlas we can assume that $\varphi_\alpha(U_\alpha)$ is either a ball $B_R$ (for an interior patch) or half ball $\overline{B}_R^+ = B_R\cap \{x^n\geq 0\}$ for boundary patches. Here and below $B_R$ is the open ball of radius $R$ centered at the origin.
	
	Let $p(\cdot)$ be a Borel function on $M$ satisfying \eqref{eq:p1} (here $p_{-}=1$ is allowed). The Lebesgue space $L^{p(\cdot)}(M,\Lambda)$ is the set of all forms (elements of $\Omega M$) such that in each chart $(U_\alpha,\varphi_\alpha)$ the form has Lebesgue integrable coordinates on $\varphi_\alpha(U_\alpha)$ and the norm in $L^{p(\cdot)}(M,\Lambda)$ is defined as 
	$$
	\|\omega\|_{L^{p(\cdot)}(M,\Lambda)} = \inf \left\{\lambda>0 \, :\,  \int\limits_M (|\omega|\lambda^{-1})^{p(x)}\, dV<\infty\right\}.
	$$
	We shall also use the notation
	$$
	\|\omega\|_{p(\cdot),M}=\|\omega\|_{L^{p(\cdot)}(M,\Lambda)}.
	$$
	The space $L^\infty(M,\Lambda)$ is the set of all forms $\omega$ with measurable components with the finite quantity $\|\omega\|_{L^\infty(M,\Lambda)}:= \mathop{\mathrm{ess\,sup}}_{M} |\omega|$.
	
	The space $L^{p(\cdot)}_{\mathrm{loc}}(M,\Lambda)$ is the set of all forms such that in each chart the form has measurable components, and for each compact $K\subset M$ with positive distance from $bM$ there holds $\omega\chi_K \in L^{p(\cdot)}(M,\Lambda)$, where $\chi_K$ denotes the indicator function of $K$.

	\section{Admissible coordinate systems}\label{ssec:adm}
In the following we mostly use the notation from \cite[Section 7.5]{Morrey1966} for Riemainnian manifolds. Let~$M$ be a  Riemannian manifold of class~$C^{s,\mu}$ (with boundary $bM$).   Any two admissible coordinate systems are related by a transformation of class~$C^{s,\mu}$, where~$0\le \mu\le 1$.  We assume that the manifold $M$ is compact and of class at least~$C^{1,1}$. Boundary values and the normal and tangent components are defined via admissible boundary coordinate system.
	
	\begin{definition}
		An admissible boundary coordinate system on a manifold~$M$  with boundary~$bM$ of class~$C^{s,\mu}$ is a coordinate system $\varphi$ of class $C^{s,\mu}$ which maps some boundary neighbourhood $U$ onto a (Lipschitz)  domain~$G\cup \sigma$, $G\subset \mathbb{R}^n_+$, $\sigma \subset \partial G$, in such a way, that~$\sigma\subset \{x^n=0\}$ is not empty, open, and ~$\varphi(U \cap bM) = \sigma$, and the metric is of the form
		\begin{align*}
			ds^2=\sum_{i,j=1}^{n-1} g_{ij} (x'_n,0) dx^i dx^{j} +(dx^n)^2\quad\text{ on } \sigma.
		\end{align*}
	\end{definition}
We state the corresponding result from Morrey.
\begin{lemma}[\cite{Morrey1966}, Lemma 7.5.1]
		If~$M(\cup bM)$ is of class~$C^{s,\mu}$, then each point~$P_0$ of $bM$ is
		in the range of an admissible boundary coordinate system. Moreover if~$s\ge 2$ each point $P_0$ of $bM$ is in the image of a coordinate
		system of class $C^{s-1,\mu}$   such is  that the metric takes the form $ds^2$  with $(x'_n,0)$ replaced by any $x\in G$. If $M$ is of class $C^\infty$ this coordinate system may be
		taken to be of class $C^\infty$ and hence admissible. If~$x$ and~$y$ are overlapping admissible boundary coordinate sytems, then
		\begin{align*}
			y^\alpha_{,\nu}(x'_{\nu},0)= y^\nu_{,\alpha}(x'_{\nu},0)=0,\quad \alpha<\nu,\quad y^\nu_{,\nu}(x'_\nu,0)=1.
		\end{align*}
\end{lemma}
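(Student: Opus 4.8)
I would prove this by the classical \emph{boundary normal} (Fermi) coordinate construction, which is the content of Morrey's Lemma 7.5.1. Fix $P_0 \in bM$ and start from an arbitrary boundary coordinate system $\bar x = (\bar x', \bar x^n)$ near $P_0$ with $bM = \{\bar x^n = 0\}$ and $\bar x^n > 0$ in $M$; its first $n-1$ components restrict to coordinates $u' = (u^1,\dots,u^{n-1})$ on a boundary neighbourhood $V$ of $P_0$. The basic observation is that at each $q \in V$ the inward unit normal $N(q) \perp T_q(bM)$ is given by an explicit algebraic formula in $\bar g_{ij}$ and its inverse — no derivatives of the metric enter — so $N$ is a vector field on $V$ of the same regularity class as the metric.

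For the first assertion I would seek a new boundary chart $x = (x', x^n)$ with $x'|_{bM} = u'$ whose coordinate vector field $\partial/\partial x^n$ agrees with $N$ \emph{along} $\sigma = \{x^n = 0\}$. Writing $x = x(\bar x)$, this only constrains the transverse first derivatives $\partial \bar x^i/\partial x^k$ on $\sigma$ through a finite linear system with coefficients built from $\bar g|_\sigma$; the system is solvable with solution inheriting the regularity of $\bar g|_\sigma$, while the behaviour of the chart off $\sigma$ is free and can be taken smooth, so such an $x$ can be produced within the admissible ($C^{s,\mu}$) class. Once $\partial/\partial x^n = N$ on $\sigma$ and $\partial/\partial x^1,\dots,\partial/\partial x^{n-1}$ span $T(bM)$ there, the relations $|N|=1$ and $N \perp T(bM)$ translate at once into $g_{nn}=1$ and $g_{\alpha n}=0$ on $\sigma$, i.e.\ $x$ is admissible.

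When $s \geq 2$ the metric is regular enough to solve the geodesic equation, and for the second and third assertions I would take $x^n$ to be arc length along the normal geodesics: set $\Phi(q,t) := \exp_q\!\big(t\,N(q)\big)$ for $q \in V$, $t \in [0,\varepsilon)$, and $(x', x^n) := (u'(q), t)$. Each curve $t \mapsto \Phi(q,t)$ is a unit-speed geodesic meeting $bM$ orthogonally, so $g_{nn}\equiv 1$; and the Gauss-lemma computation $\partial_t \langle \partial_{u^\alpha}\Phi,\partial_t\Phi\rangle = \tfrac12\,\partial_{u^\alpha}|\partial_t\Phi|^2 = 0$, together with $\langle\partial_{u^\alpha}\Phi,\partial_t\Phi\rangle|_{t=0} = \langle(\text{tangent to }bM),N\rangle = 0$, forces $g_{\alpha n}\equiv 0$ throughout the image $G$ — hence the metric has the claimed form $ds^2 = \sum_{i,j<n} g_{ij}(x)\,dx^i\,dx^j + (dx^n)^2$ on all of $G$, not merely on $\sigma$. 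Writing the geodesic equation as a first-order system on $TM$ with the Christoffel symbols as coefficients, the standard Hölder dependence of flows on initial data gives — exactly as in \cite{Morrey1966} — that $\Phi$, and so the chart, is of class $C^{s-1,\mu}$; when $M$ is $C^\infty$ there is no loss of smoothness, so $\Phi$ is $C^\infty$ and hence admissible.

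For the last assertion, let $x$ and $y$ be overlapping admissible boundary charts, with $\nu = n$ the normal index and $\alpha,\beta<n$ the tangential ones. Since $bM = \{x^n=0\} = \{y^n=0\}$, the function $y^n$ vanishes identically on $bM$, so all of its tangential derivatives do, giving $y^\nu_{,\alpha}(x',0) = 0$ for $\alpha < \nu$. Admissibility of each chart further forces $\partial/\partial x^n$ and $\partial/\partial y^n$ both to coincide with $N$ along $bM$; expanding $\partial/\partial x^n = y^k_{,n}\,\partial/\partial y^k$ and comparing with $\partial/\partial y^n$ yields $y^k_{,n}(x',0) = \delta^k_n$, i.e.\ $y^\alpha_{,\nu}(x',0)=0$ for $\alpha<\nu$ and $y^\nu_{,\nu}(x',0)=1$, as claimed. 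The genuinely delicate step in the whole argument is the regularity accounting — verifying that solving the normal geodesic ODE (or, at minimal regularity, realizing the normalization in the first assertion) costs exactly one derivative and no more; this rests on the quadratic-in-velocity structure of the geodesic equation and careful dependence-on-parameters estimates at the Hölder level. The geometric ingredients — the algebraic description of $N$, the Gauss-lemma identity, and the matching relations — are then routine.
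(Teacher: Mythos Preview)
The paper does not prove this lemma; it merely states it with attribution to Morrey~\cite{Morrey1966}. Your boundary-normal (Fermi) coordinate construction is precisely the classical approach Morrey uses, and the argument is correct: the algebraic dependence of $N$ on the metric, the Gauss-lemma computation forcing $g_{\alpha n}\equiv 0$ along normal geodesics, and the identification of $\partial/\partial x^n$ with $N$ on $bM$ in any admissible chart (giving the transition relations) all go through as you describe. You are also right to flag the regularity bookkeeping as the delicate point --- in particular, at minimal smoothness one must check that the transformation realizing $\partial/\partial x^n = N$ on $\sigma$ can itself be arranged within the $C^{s,\mu}$ class even though the components of $N$ carry one fewer derivative; this is where Morrey's treatment is careful, and your sketch of ``prescribe first-order data on $\sigma$, extend freely off it'' is the right idea but does need that extra care.
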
 
Using an admissible coordinate system we can define on $bM$ (in coordinates on $\sigma$) the ``exterior'' unit normal $\nu = -dx^n$ and the normal and tangential parts of a form. Namely, in an admissible coordinate system we split a form $\omega$ on the boundary $bM$ as
	$$
	\omega =t\omega+ n\omega,\quad t\omega = dx^n \lrcorner (dx^n \wedge \omega) + dx^n \wedge (dx^n\lrcorner \omega).
	$$
	For an $r$-form $\omega = \sum_{I\in \mathcal{I}(r)} \omega_I dx^I$ there holds
	$$ 
	t\omega = \sum_{I\in \mathcal{I}(r)\,:\, i_r<n} \omega_I dx^I, \quad n\omega = \sum_{I\in \mathcal{I}(r)\,:\, i_r=n} \omega_I dx^I,
	$$
	and  this extends by linearity to $\Omega M$. Setting $t\omega$ is equivalent to setting $\nu \wedge \omega$, and setting $n\omega$ is equivalent to setting $\nu \lrcorner \omega$. In particular, $t\omega = t\varphi$ is equivalent to $\nu\wedge (\omega - \varphi)=0$ on $bM$ and $n\omega = n\psi$ is equivalent to $\nu \lrcorner (\omega - \psi) =0$ on $bM$. Using a partition of unity subordinate to the (admissible) coordinate cover $\{U_\alpha\}$ (see, for instance, \cite[Chapter 1, Theorem 1]{deRham} or \cite[Volume 1, Chapter 2, Theorem 15]{Spivak}, or \cite[Theorem 1.11]{Warner}) the form $\nu$ can be extended to a $C^{s-1,1}(M,\Lambda)$ form. The extension procedure clearly is not unique, but this does not affect further definitions. Thus below $\nu$ is an even $1$-form which belongs to $C^{s-1,1}(M,\Lambda)$ and satisfies $\nu=-dx^n$ on $bM$ in any admissible coordinate system. We define the subspaces of $C^{k,\alpha}(M,\Lambda)$ of forms with vanishing tangential/normal part:
	\begin{align*}
		C^{k,\alpha}_T(M,\Lambda) &= \{\omega \in C^{k,\alpha}(M,\Lambda)\, :\, \nu \wedge \omega =0 \quad \text{on}\quad bM\},\\
		C^{k,\alpha}_N(M,\Lambda) &= \{\omega \in C^{k,\alpha}(M,\Lambda)\, :\, \nu \lrcorner \omega =0 \quad \text{on}\quad bM\}.
	\end{align*}
We denote $\mathrm{Lip}_T(M,\Lambda) = C^{0,1}_T(M,\Lambda)$, $\mathrm{Lip}_N(M,\Lambda) = C^{0,1}_N(M,\Lambda)$.
	
	\section{Variable exponent Sobolev spaces on manifolds}

	Let $M$ be at least $C^{s,1}$, $s\in \mathbb{N}$ and $(U_\alpha, \varphi_\alpha)$ be a finite atlas of $M$. We assume each coordinate domain $\varphi_\alpha (U_\alpha)$ to be at least Lipschitz.

	On $\varphi_\alpha (U_\alpha) \subset \mathbb{R}^n$ we denote $p_\alpha(\cdot) = p(\varphi_\alpha^{-1}(\cdot))$. Further we assume that $p$ satisfies \eqref{eq:p1} (where $p_{-}=1$ is allowed) and has the logarithmic modulus of continuity \eqref{eq:p2}. The variable exponent $p_\alpha$ satisfies 
	$$
	|p_\alpha (x)-p_{\alpha}(y)| \leq  \frac{L_1}{\log \bigl(e+(|x-y|)^{-1}\bigr)}, \quad x,y\in \varphi_\alpha(U_\alpha),
	$$
	with a constant $L_1= L_1(M,c_{\mathrm{log}}(p))$ independent of $\alpha$.

	The Sobolev space $W^{s,p(\cdot)}(M,\Lambda)$ is the set of all forms such that in each  coordinate system the form has coordinates from $W^{s,p_\alpha(\cdot)}(\varphi_\alpha (U_\alpha))$, and for $f$ defined by \eqref{eq:forms} its norm in this space is 
	\begin{align*}
		\|f\|_{s,p(\cdot),M} =\sum_{r=0}^{\mathrm{dim}\, M} & \sum_{\alpha} \sum_{I \in \mathcal{I}(r)} \sum_{l=0}^s\sum_{|\beta|=l} \| |D^\beta (f^{r}_e)^{(\alpha)}_I| \|_{L^{p_\alpha(\cdot)} (\varphi_\alpha (U_\alpha))} \\
		&+ \sum_{r=0}^{\mathrm{dim}\, M}  \sum_{\alpha} \sum_{I \in \mathcal{I}(r)} \sum_{l=0}^s\sum_{|\beta|=l} \| |D^\beta ( f_o^r)^{(\alpha)}_I| \|_{L^{p_\alpha(\cdot)} (\varphi_\alpha (U_\alpha))}.
	\end{align*}
	Here $(f)^{(\alpha)}_I$ denotes the corresponding component of $f$ in the coordinate system $\varphi_\alpha$, and $D^\beta$ is the partial derivative 
	$$
	D^\beta =\frac{\partial^{|\beta|}}{\partial (x^1)^{\beta_1} \ldots \partial (x^n)^{\beta_n}}. 
	$$
	Such norms clearly depend on the choice of the atlas but are equivalent (by the standard change of variables formula in the Lebesgue integral).
	
	For $s=0$ the space $W^{s,p(\cdot)}(M,\Lambda)$ is the space $L^{p(\cdot)}(M,\Lambda)$, that is we set $W^{0,p(\cdot)}(M,\Lambda) = L^{p(\cdot)}(M,\Lambda)$.
	
	For $p(\cdot)\equiv \infty$ we use the same definition for $W^{s,\infty}(M,\Lambda)$. For $p(\cdot)\equiv \infty$, $W^{1,\infty}(M,\Lambda) = \mathrm{Lip}\, (M,\Lambda)$.

	For an an (even or odd) $r$-form $f$ denote  
	$$
	|\nabla^l f|^2 = \frac{1}{r!}\sum_{IJ} g^{i_1 j_1} \ldots g^{i_{l+r} j_{l+r}}(\nabla_{i_1\ldots i_l} f_{i_{l+1}\ldots i_{l+r}})(\nabla_{j_1\ldots j_l} f_{j_{l+1}\ldots j_{l+r}}) 
	$$
	where the summation is over $(l+r)$-tuples $I$, $J$. Then we can define a norm on $W^{s,p(\cdot)}(M,\Lambda)$ as
	$$
	\|f\|^{(inv)}_{s,p(\cdot),M} = \sum_{r=0}^{\mathrm{dim}\, M} \sum_{l=0}^s \| |\nabla^l f^{r}_e| \|_{p(\cdot),M} + \sum_{r=0}^{\mathrm{dim}\, M} \sum_{l=0}^s \| |\nabla^l f^{r}_o| \|_{p(\cdot),M}.
	$$
	This expression does not depend on the choice of the atlas and is equivalent to the norms introduced above.
	

	By $W^{s,p(\cdot)}_c(M,\Lambda)$ we denote the set of $\omega \in W^{s,p(\cdot)}(M,\Lambda)$ such that  $\omega$ vanishes in a neighbourhood of $bM$. 
    
	By $W_0^{1,p(\cdot)}(M,\Lambda)$ we denote the closure of $\mathrm{Lip}_0(M,\Lambda)$ in $W^{1,p(\cdot)}(M,\Lambda)$. If $\phi \in \mathrm{Lip}_0(M,\Lambda)$ then for any $\omega \in W^{1,p(\cdot)}(M,\Lambda)$ we have $\phi \wedge \omega, \phi \lrcorner \omega \in W_0^{1,p(\cdot)}(M,\Lambda)$.  See Lemma~\ref{L:approx1} below.

	By $W^{1,p(\cdot)}_T(M,\Lambda)$ we denote the subspace of forms from $W^{1,p(\cdot)}(M,\Lambda)$ such that $\nu \wedge \omega \in W_0^{1,p(\cdot)}(M,\Lambda)$. In an admissible boundary coordinate system this is equivalent to $\omega_I =0$ on $\{x_n=0\}$ (in the sense of trace) if $n\notin I$.

	By $W^{1,p(\cdot)}_N(M,\Lambda)$ we denote the subspace of forms from $W^{1,p(\cdot)}(M,\Lambda)$ such that $\nu \lrcorner \omega \in W_0^{1,p(\cdot)}(M,\Lambda)$. In an admissible boundary coordinate system this is equivalent to $\omega_I =0$ on $\{x_n=0\}$ (in the sense of trace) if $n\in I$.
	
	The differential and codifferential operators $d$ and $\delta$ are readily extended to (variable exponent) Sobolev spaces in the usual way by formulas \eqref{eq:d}, \eqref{eq:delta} (with the partial derivatives understood in the weak sense) and are bounded linear mappings from $W^{s,p(\cdot)}(M,\Lambda)$ to $W^{s-1,p(\cdot)}(M,\Lambda)$. In particular, they map $\mathrm{Lip}(M,\Lambda)=W^{1,\infty}(M,\Lambda)$ to $L^\infty(M,\Lambda)$. The Hodge-Laplace operator $d\delta+\delta d$ is a bounded linear mapping from $W^{s,p(\cdot)}(M,\Lambda)$, $s\geq 2$, to $W^{s-2,p(\cdot)}(M,\Lambda)$. 
	
	On $C^{s,1}$ manifold for $k\leq s$ the Hodge star operator is a bounded invertible linear mapping on $W^{k,p(\cdot)}(M,\Lambda)$, from $W_T^{k,p(\cdot)}(M,\Lambda)$ to $W_N^{k,p(\cdot)}(M,\Lambda)$.

	\section{Approximation results for variable exponent Sobolev spaces}\label{ssec:appox1}
	Let $M$ be of the class $C^{s,1}$, $s\in \mathbb{N}$. 
	
	We shall use a particular case of Zhikov's lemma, see \cite{Zhi95}. Let $D$ be a bounded domain in $\mathbb{R}^n$ and $p: D \to [p_{-},p_{+}]$, $p_{-}\geq 1$, be a log-H\"older exponent: 
	\begin{equation}\label{eq:LH1}
		|p(x)-p(y)| \leq L (e+\log |x-y|^{-1})^{-1}, \quad x,y \in D.
	\end{equation}
	Using the McShane extension lemma (see \cite{Mcshane_extension}) we can assume that $p$ is extended to $\mathbb{R}^n$ satisfying the same conditions.
	
	Let $\gamma_\varepsilon$ be a Friedrichs mollifier, that is $\gamma_\varepsilon(x)=\varepsilon^{-n} \gamma(\varepsilon^{-1}x)$ where $\gamma$ is a nonnegative smooth function with support in the ball $B_1$ and unit integral over $\mathbb{R}^n$. Let $T_\varepsilon: \mathbb{R}^n\to \mathbb{R}^n$ be linear transformations converging to the identity as $\varepsilon \to 0$ such that $\|T_\varepsilon x - x\|\leq k \varepsilon \|x\|$. We assume that $1/2\leq \mathrm{det}\, T_\varepsilon \leq 2$ and $\|x\|/2\leq \|T_{\varepsilon}x\|\leq 2\|x\|$. Denote 
	$$
	f_\varepsilon(x) = \int \limits_{\mathbb{R}^n} f(x_0+T_\varepsilon (y-x_0)) \gamma_\varepsilon (x-y)\, dy.
	$$ 
	\begin{lemma}\label{L:Zhikov0}
		Let $f\in L^{p(\cdot)}(D)$. Extend $f$ by zero to $\mathbb{R}^n \setminus D$. Then 
		$$
		\int\limits_{\mathbb{R}^n} |f_\varepsilon(x)-f(x)|^{p(x)}\, dx \to 0 \qquad \text{ as } \varepsilon \to 0.
		$$
	\end{lemma}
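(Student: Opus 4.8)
The statement is a modular convergence claim: writing $\varrho_{p(\cdot)}(g) := \int_{\mathbb{R}^n}|g(x)|^{p(x)}\,dx$, we must show $\varrho_{p(\cdot)}(f_\varepsilon-f)\to 0$ for the zero-extended $f$ (with $p$ extended to $\mathbb{R}^n$ by McShane so that \eqref{eq:LH1} still holds). My plan is to reduce to a normed statement and run the classical two-ingredient argument. Since $p_+<\infty$, modular and norm convergence in $L^{p(\cdot)}$ are equivalent, so it suffices to prove $\|f_\varepsilon-f\|_{p(\cdot),\mathbb{R}^n}\to 0$. Note that $f$ is supported in the compact set $\overline{D}$, and, for $\varepsilon<1$, each $f_\varepsilon$ is supported in a fixed compact neighbourhood of $\overline{D}$ (this uniform compact support takes care of all tightness issues later). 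The operator $S_\varepsilon\colon h\mapsto h_\varepsilon$ is linear, so for any comparison function $g$ one has $f_\varepsilon-f=S_\varepsilon(f-g)+(g_\varepsilon-g)+(g-f)$, and the whole proof amounts to controlling the three pieces.

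The crux is a uniform boundedness estimate: there exist $\varepsilon_0>0$ and $A<\infty$ such that $\|S_\varepsilon h\|_{p(\cdot),\mathbb{R}^n}\le A\,\|h\|_{p(\cdot),\mathbb{R}^n}$ for all $h\in L^{p(\cdot)}(\mathbb{R}^n)$ vanishing outside $\overline D$ and all $\varepsilon\in(0,\varepsilon_0)$. By homogeneity of the Luxemburg norm this is equivalent to the implication $\varrho_{p(\cdot)}(h)\le 1\Rightarrow\varrho_{p(\cdot)}(S_\varepsilon h/A)\le 1$, and this is exactly where the log-Hölder condition \eqref{eq:LH1} is used. On the support of $y\mapsto\gamma_\varepsilon(x-y)$ one has $|x-y|\le\varepsilon$; since $D$ is bounded, $x_0$ is fixed, and $\|T_\varepsilon z-z\|\le k\varepsilon\|z\|$ with $\tfrac12\le\det T_\varepsilon\le 2$, the twisted point $w(y):=x_0+T_\varepsilon(y-x_0)$ satisfies $|x-w(y)|\le c\varepsilon$ throughout the relevant range, so \eqref{eq:LH1} forces the exponent oscillation $|p(x)-p(w(y))|\le c'/\log(1/\varepsilon)$. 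Feeding this into the integral (after the change of variables $u=w(y)$, whose Jacobian stays in $[\tfrac12,2]$) and splitting the domain according to the size of $|h|$ relative to powers of $\varepsilon$ yields the modular bound; this is Zhikov's estimate / Diening's ``key estimate'' adapted to the twisted mollifier. I expect this to be the only genuinely technical point. When $p_->1$ there is a clean shortcut: one checks the pointwise domination $|S_\varepsilon h(x)|\le c\,\mathcal{M}h(x)$ uniformly for $\varepsilon\in(0,\varepsilon_0)$ (again from boundedness of $D$ and the properties of $T_\varepsilon$), and then invokes the boundedness of the Hardy-Littlewood maximal operator on $L^{p(\cdot)}$ under the log-Hölder condition. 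In the borderline case $p_-=1$ the naive use of Jensen's inequality on $|h_\varepsilon(x)|^{p(x)}$ is too lossy for spiky $h$; there one argues instead via the averaged form of the key estimate, or, alternatively, via a.e.\ convergence along a subsequence (valid since $S_\varepsilon h\to h$ in $L^1_{\mathrm{loc}}$) together with the Vitali convergence theorem, the needed equi-integrability of $\{|f_\varepsilon-f|^{p(\cdot)}\}$ being extracted from the same log-Hölder control of the exponent.

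The second ingredient is convergence on a dense class. Since $p_+<\infty$, $C^\infty_c(\mathbb{R}^n)$ is dense in $L^{p(\cdot)}(\mathbb{R}^n)$; given $\delta>0$ choose $g\in C^\infty_c(\mathbb{R}^n)$ with $\|f-g\|_{p(\cdot),\mathbb{R}^n}<\delta$. For such $g$, writing $g_\varepsilon(x)=\int_{\mathbb{R}^n} g\bigl(x_0+T_\varepsilon(x-z-x_0)\bigr)\gamma_\varepsilon(z)\,dz$ and using uniform continuity of $g$, $T_\varepsilon\to\operatorname{Id}$, and the concentration of $\gamma_\varepsilon$ at the origin, one gets $g_\varepsilon\to g$ uniformly on $\mathbb{R}^n$ with all supports inside a fixed compact set. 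Dominated convergence then gives $\varrho_{p(\cdot)}(g_\varepsilon-g)\to 0$, hence $\|g_\varepsilon-g\|_{p(\cdot),\mathbb{R}^n}\to 0$.

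Combining the pieces, $\|f_\varepsilon-f\|_{p(\cdot),\mathbb{R}^n}\le A\|f-g\|_{p(\cdot),\mathbb{R}^n}+\|g_\varepsilon-g\|_{p(\cdot),\mathbb{R}^n}+\|g-f\|_{p(\cdot),\mathbb{R}^n}\le (A+1)\delta+o(1)$ as $\varepsilon\to 0$; letting $\delta\to0$ yields $\|f_\varepsilon-f\|_{p(\cdot),\mathbb{R}^n}\to 0$, and the modular--norm equivalence converts this back to $\int_{\mathbb{R}^n}|f_\varepsilon(x)-f(x)|^{p(x)}\,dx\to 0$. Everything outside the uniform key estimate of the second paragraph is routine, so that estimate is the single step where the log-Hölder hypothesis is indispensable and where the main effort goes.
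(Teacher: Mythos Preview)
Your argument is correct, but it is organized quite differently from the paper's proof. You run the standard ``density plus uniform boundedness'' scheme: approximate $f$ by $g\in C_c^\infty$, prove $g_\varepsilon\to g$ uniformly, and control $S_\varepsilon(f-g)$ via a uniform operator bound on $L^{p(\cdot)}$. The paper instead goes directly for equiintegrability of $\{|f_\varepsilon|^{p(\cdot)}\}$ and then invokes the Lebesgue (Vitali) theorem together with a.e.\ convergence---in other words, precisely the route you list as your \emph{alternative} for the case $p_-=1$.

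The comparison is instructive. Your dismissal of Jensen's inequality as ``too lossy for spiky $h$'' when $p_-=1$ is not quite right: the paper shows how to make Jensen work uniformly in $p_-\ge 1$. The trick is a preliminary crude bound $|f_\varepsilon(x)|\le c\,\varepsilon^{-n/p_-}$ from H\"older, which combined with the log-H\"older oscillation estimate lets one replace $p(x)$ by $p_\varepsilon(x):=\min p$ over the relevant $c\varepsilon$-ball at the cost of a fixed multiplicative constant. Jensen is then applied with the \emph{constant} exponent $p_\varepsilon(x)$, and since $p_\varepsilon(x)\le p(w(y))$ on that ball one bounds $|f|^{p_\varepsilon(x)}\le 1+|f|^{p(w(y))}$. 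Integrating gives $|f_\varepsilon(x)|^{p(x)}\le C\bigl(1+(|f|^{p(\cdot)})_\varepsilon(x)\bigr)$, whence equiintegrability. This single estimate covers all $p_-\ge 1$ without splitting into cases, whereas your outline branches at $p_-=1$ versus $p_->1$. Your maximal-function shortcut for $p_->1$ does appear in the paper, but only as a remark after the proof.
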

	\begin{proof}
		By the H\"older inequality,
		\begin{align*}
			|f_\varepsilon (x)| &\leq \biggl(\int\limits_{B_{\varepsilon}(x)} |f(x_0+T_\varepsilon (y-x_0))|^{p_{-}}\, dy \biggr)^{1/p_{-}} \biggl(\int \limits_{B_\varepsilon} |\gamma_\varepsilon|^{p_{-} } \, dx\biggr)^{1/p_{-}'} \\
			&\leq 2\biggl(\int\limits_D |f|^{p_{-}}\, dz \biggr)^{1/p_{-}} \biggl(\int \limits_{B_\varepsilon} |\gamma_\varepsilon|^{p_{-} } \, dx\biggr)^{1/p_{-}'} \leq c \varepsilon^{-n/p_{-}}.
		\end{align*}
		Denote 
		$$
		\widetilde B_\varepsilon(x) = \{y \in \mathbb{R}^n\, :\,  x-x_0 -T_\varepsilon^{-1} (y-x_0) \in \mathrm{supp}\, \gamma_\varepsilon \},\quad   p_\varepsilon(x) = \min\limits_{\widetilde B_\varepsilon(x)} p.
		$$
		It is clear that $\widetilde B_\varepsilon(x) \subset B_{K\varepsilon}(x)$ with $K$ is a constant depending on $k$ and $D$, $x_0$. By the log-condition we have
		$$
		|f_\varepsilon (x)|^{p(x)} \leq C(n,p_{-},L,k,D,x_0) |f_\varepsilon(x)|^{p_\varepsilon(x)}.
		$$
		By the Jensen inequality and the definition of $p_\varepsilon$ we get 
		\begin{align*}
			|f_\varepsilon (x)|^{p_{\varepsilon}(x)} &\leq \int\limits_{\mathbb{R}^n} |f(x_0+T_\varepsilon (y-x_0))|^{p_{\varepsilon}(x)} \gamma_\varepsilon(x-y)\, dy\\
			&\leq 1 + \int\limits_{\mathbb{R}^n} |f|^{p(y)} \gamma_\varepsilon(x-y)\, dy.
		\end{align*}
		Therefore,
		$$
		|f_\varepsilon (x)|^{p(x)} \leq C \biggl(1 +  \int\limits_{\mathbb{R}^n} |f|^{p(y)} \gamma_\varepsilon(x-y)\, dy \biggr).
		$$
		Since $|f|^{p(\cdot)} \in L^{1}(\mathbb{R}^n)$, the last integral converges in $L^1(\mathbb{R}^n)$, and thus the sequence $|f_\varepsilon (x)|^{p(x)}$ is equiintegrable. It remains to use the Lebesgue theorem. 
	\end{proof}
	
	\begin{remark}
		For $p_{-}>1$ one can use the boundedness of the maximal function in $L^{p(\cdot)}(\mathbb{R}^n)$ \cite{Die04}, the estimate $|f_\varepsilon(x)| \leq C M[f] (x)$, and the Lebesgue dominated convergence theorem. 
	\end{remark}
	For $\varepsilon>0$ denote
	$$
	D_\varepsilon = \{x\in D \,:\, x_0+T_\varepsilon(x - y-x_0)\in D, \, \text{ for all } y\in \mathrm{supp}\, \gamma_\varepsilon\} .
	$$ 
	The set $D_\varepsilon$ is open, since $\operatorname{supp}\, \gamma_\varepsilon$ is closed.
	\begin{lemma}\label{L:Zhikov}
		Let $u\in W^{s,p(\cdot)}(D)$. Then $u_\varepsilon \in C^\infty(D_\varepsilon)$ and for any domain $\widetilde D \subset \bigcap\limits_{0<\varepsilon <a} D_\varepsilon$,  there holds
		$$
		\int\limits_{\widetilde D} |\nabla^k u_\varepsilon -\nabla^k u|^{p(x)}\, dx \to 0, \quad k=0,\ldots, s, \qquad \text{ as } \varepsilon \to 0.
		$$
	\end{lemma}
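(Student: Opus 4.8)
The plan is to reduce both assertions to Lemma~\ref{L:Zhikov0} by moving the derivatives past the twisted mollification. Set $\Phi_\varepsilon(x) := x_0 + T_\varepsilon(x-x_0)$, an affine bijection of $\mathbb{R}^n$ with constant differential $T_\varepsilon$; then directly from the definition one has $u_\varepsilon = (u\circ\Phi_\varepsilon)\ast\gamma_\varepsilon$, and likewise $(\nabla^k u)_\varepsilon = \bigl((\nabla^k u)\circ\Phi_\varepsilon\bigr)\ast\gamma_\varepsilon$ for the componentwise twisted mollification of $\nabla^k u$. Since $u\in W^{s,1}(D)$ and $\Phi_\varepsilon$ is a bi-Lipschitz affine diffeomorphism, $u\circ\Phi_\varepsilon \in W^{s,1}\bigl(\Phi_\varepsilon^{-1}(D)\bigr)$; convolving with $\gamma_\varepsilon\in C_c^\infty$ therefore produces a $C^\infty$ function on $\{x : x - \operatorname{supp}\gamma_\varepsilon \subset \Phi_\varepsilon^{-1}(D)\}$, and this set contains $D_\varepsilon$ by the very definition of $D_\varepsilon$ (indeed $\Phi_\varepsilon(x-y) = x_0 + T_\varepsilon(x-y-x_0)$). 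This gives $u_\varepsilon\in C^\infty(D_\varepsilon)$, the first assertion.

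For the convergence, the key identity is that, on $D_\varepsilon$ and for every $k\le s$,
\begin{equation*}
  \nabla^k u_\varepsilon = \bigl(T_\varepsilon^{\top}\bigr)^{\otimes k}\,(\nabla^k u)_\varepsilon ,
\end{equation*}
which follows by commuting weak derivatives with convolution and then applying the chain rule to $u\circ\Phi_\varepsilon$: since $\Phi_\varepsilon$ is affine all its higher differentials vanish, so $\nabla^k(u\circ\Phi_\varepsilon) = (T_\varepsilon^{\top})^{\otimes k}\bigl((\nabla^k u)\circ\Phi_\varepsilon\bigr)$, where $(T_\varepsilon^{\top})^{\otimes k}$ denotes the induced constant linear action on rank-$k$ tensors. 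The hypotheses $\tfrac12 \le \det T_\varepsilon \le 2$ and $\tfrac12\|x\| \le \|T_\varepsilon x\| \le 2\|x\|$ yield a bound $\sup_{0<\varepsilon<a}\|(T_\varepsilon^{\top})^{\otimes k}\| =: C_k < \infty$, while $T_\varepsilon\to I$ gives $\theta_\varepsilon^{(k)} := \|(T_\varepsilon^{\top})^{\otimes k} - I\| \to 0$ as $\varepsilon\to 0$.

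Now fix $\widetilde D \subset \bigcap_{0<\varepsilon<a}D_\varepsilon$ and $k\le s$, and split, on $\widetilde D$,
\begin{equation*}
  \nabla^k u_\varepsilon - \nabla^k u = \bigl(T_\varepsilon^{\top}\bigr)^{\otimes k}\bigl[(\nabla^k u)_\varepsilon - \nabla^k u\bigr] + \bigl[\bigl(T_\varepsilon^{\top}\bigr)^{\otimes k} - I\bigr]\nabla^k u .
\end{equation*}
Using $|a+b|^{p(x)} \le 2^{p_+-1}\bigl(|a|^{p(x)} + |b|^{p(x)}\bigr)$ it suffices to handle the two summands. For the first, the integrand is $\le \max(1, C_k^{p_+})\,|(\nabla^k u)_\varepsilon - \nabla^k u|^{p(x)}$; since $\nabla^k u \in L^{p(\cdot)}(D)$ (as $u\in W^{s,p(\cdot)}(D)$ and $k\le s$), and since for $x\in\widetilde D\subset D_\varepsilon$ the twisted mollification of $\nabla^k u$ coincides with that of its extension by zero, Lemma~\ref{L:Zhikov0} applied componentwise gives
\begin{equation*}
  \int_{\widetilde D}\bigl|(\nabla^k u)_\varepsilon - \nabla^k u\bigr|^{p(x)}\,dx \le \int_{\mathbb{R}^n}\bigl|(\nabla^k u)_\varepsilon - \nabla^k u\bigr|^{p(x)}\,dx \longrightarrow 0 .
\end{equation*}
For the second summand, choose $\lambda \ge 1$ with $A := \int_D |\nabla^k u / \lambda|^{p(x)}\,dx < \infty$ (possible since $\nabla^k u\in L^{p(\cdot)}(D)$); then for $\varepsilon$ small enough that $\theta_\varepsilon^{(k)}\le 1$ one has, pointwise on $\widetilde D$,
\begin{equation*}
  \bigl|\bigl[(T_\varepsilon^{\top})^{\otimes k} - I\bigr]\nabla^k u\bigr|^{p(x)} \le \lambda^{p_+}\bigl(\theta_\varepsilon^{(k)}\bigr)^{p(x)}\,|\nabla^k u/\lambda|^{p(x)} \le \lambda^{p_+}\bigl(\theta_\varepsilon^{(k)}\bigr)^{p_-}\,|\nabla^k u/\lambda|^{p(x)} ,
\end{equation*}
so the integral over $\widetilde D$ is $\le \lambda^{p_+}\bigl(\theta_\varepsilon^{(k)}\bigr)^{p_-}A \to 0$. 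Adding the estimates for $k = 0, \dots, s$ finishes the proof.

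I expect the identity $\nabla^k u_\varepsilon = (T_\varepsilon^{\top})^{\otimes k}(\nabla^k u)_\varepsilon$ on $D_\varepsilon$ to be the delicate point: one must verify that the twisting by $T_\varepsilon$ together with the definition of $D_\varepsilon$ is precisely what keeps the arguments of $u$, and of its weak derivatives, inside $D$, so that the weak chain rule applies and no boundary terms intrude. On the variable-exponent side the only slightly non-routine observation is that $\int_{\widetilde D}|\nabla^k u|^{p(x)}\,dx$ may fail to be finite, which is why the distortion term is handled after rescaling by $\lambda$ rather than by a naive dominated-convergence argument; the genuinely analytic content --- modular convergence of the twisted mollification in $L^{p(\cdot)}$ --- is already supplied by Lemma~\ref{L:Zhikov0}, so the remainder is bookkeeping.
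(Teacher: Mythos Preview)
Your proof is correct and follows the same route as the paper's one-line argument: reduce to Lemma~\ref{L:Zhikov0} by commuting derivatives past the twisted mollification. You are in fact more careful than the paper, whose proof simply asserts $\nabla^k u_\varepsilon = (\nabla^k u)_\varepsilon$; as you compute, the correct identity on $D_\varepsilon$ is $\nabla^k u_\varepsilon = (T_\varepsilon^{\top})^{\otimes k}(\nabla^k u)_\varepsilon$, and you handle the extra linear factor by splitting off the term $[(T_\varepsilon^{\top})^{\otimes k}-I]\nabla^k u$ and using $T_\varepsilon\to I$. One minor remark: since $p_{+}<\infty$ here, $\nabla^k u\in L^{p(\cdot)}(D)$ already gives $\int_D|\nabla^k u|^{p(x)}\,dx<\infty$, so the rescaling by $\lambda$ is not strictly needed --- but it is correct and does no harm.
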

	\begin{proof} 
		Follows from Lemma~\ref{L:Zhikov0} and $\nabla^k u_\varepsilon = (\nabla^k u)_{\varepsilon}$.
	\end{proof}
	
	In the following lemmas we denote 
	$$
	\widetilde B^{-}_{\varepsilon} = B_\varepsilon \cap \{x_n<-\varepsilon/2\}, \quad \widetilde B^{+}_{\varepsilon} = B_\varepsilon \cap \{x_n>\varepsilon/2\}.
	$$
	Let the exponent $p:M\to [p_{-},p_{+}]$, $p_{-}\geq 1$, be a log-H\"older exponent, i.e. assume \eqref{eq:p2} holds. 
	Then in each chart the exponent $p(\cdot)$ satisfies \eqref{eq:LH1}.
	\begin{lemma}\label{L:approx1}
		\hspace{2em}
		\begin{enumerate}
			\item Let $\omega \in W^{l,p(\cdot)}(M,\Lambda)$ ($W^{l,p(\cdot)}_c(M,\Lambda)$), $0\leq l \leq s$. Then there exists a sequence $\omega_\varepsilon \in C^{s-1,1}(M,\Lambda)$ (resp. $\omega_\varepsilon \in C_0^{s-1,1}(M,\Lambda)$) converging to $\omega $ in $W^{l,p(\cdot)}(M,\Lambda)$.
			
			\item Moreover, if $\omega \in W^{1,p(\cdot)} (M,\Lambda)\cap W_T^{1,1}(M,\Lambda)$ (resp. $\omega \in W^{1,p(\cdot)} (M,\Lambda)\cap W_N^{1,1}(M,\Lambda)$), then one can choose $\omega_\varepsilon$ with $t\omega_\varepsilon =0$ (resp. $n\omega_\varepsilon=0$). 
			
			\item Let $\omega\in {\mathrm{Lip}}(M,\Lambda)$.  Then there exists a sequence $\omega_\varepsilon \in C^{s-1,1}(M,\Lambda)$  converging to $\omega $ in $W^{1,q}(M,\Lambda)$, $1\leq q <\infty$, and such that ~$\omega_\epsilon$ is uniformly bounded in~$W^{1,\infty}(M, \Lambda)$.  If additionally $\omega=0$ on $bM$ then one can additionally choose $\omega_\varepsilon\in C^{s-1,1}_0(M,\Lambda)$.
			
			\item If $\phi \in {\mathrm{Lip}}(M,\Lambda)$ and $\phi =0$ on $bM$ then for any $\omega \in W^{1,p(\cdot)}(M,\Lambda)$ we have $\phi \wedge \omega, \phi \lrcorner \omega \in W^{1,p(\cdot)}_0(M,\Lambda)$.
			
			\item 	There holds
			\begin{gather*}
				W_T^{1,p(\cdot)}(M,\Lambda) = W_T^{1,1}(M,\Lambda) \cap W^{1,p(\cdot)}(M,\Lambda),\\
				W_N^{1,p(\cdot)}(M,\Lambda) = W_N^{1,1}(M,\Lambda) \cap W^{1,p(\cdot)}(M,\Lambda).
			\end{gather*}

			\item If $\omega_\varepsilon \in \mathrm{Lip}_T(M,\Lambda)$ (resp. $\mathrm{Lip}_N(M,\Lambda)$) then in (c) one can additionally choose $\omega_\varepsilon$ so that $\nu\wedge \omega_\varepsilon$ (resp. $\nu \lrcorner \omega_\varepsilon)$ belongs to  $C_0^{s-1,1}(M,\Lambda)$.
			
			\item If $\omega_\varepsilon \in W^{1,p(\cdot)}_T(M,\Lambda)$ ($W^{1,p(\cdot)}_N(M,\Lambda)$) then in (a) one can take $\omega_\varepsilon$ such that $\nu \wedge\omega_\varepsilon$ (resp. $\nu \lrcorner \omega_\varepsilon$) belongs to $C_0^{s-1,1}(M,\Lambda)$.

		\end{enumerate}
	\end{lemma}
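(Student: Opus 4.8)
The plan is to reduce every assertion to the two model domains, the ball $B_R$ and the half-ball $\overline{B}_R^{+}$, by a finite partition of unity $\{\rho_\alpha\}$ subordinate to the admissible atlas $\{(U_\alpha,\varphi_\alpha)\}$: write $\omega=\sum_\alpha\rho_\alpha\omega$, approximate each $\rho_\alpha\omega$ in its own chart, and set $\omega_\varepsilon=\sum_\alpha(\rho_\alpha\omega)_\varepsilon$. Because the transition maps are only $C^{s,1}$ and the coordinates of a form transform through the Jacobian of the transition map, a form whose coordinates are $C^{\infty}$ in one chart has $C^{s-1,1}$ coordinates in the overlapping charts; this is the source of the regularity $C^{s-1,1}$ (not $C^{s,1}$) in all the conclusions. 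In every chart the pulled-back exponent $p_\alpha$ is log-Hölder with a constant independent of $\alpha$, and by the McShane extension lemma we may take it extended to $\mathbb{R}^n$ with the same bounds, so that Lemmas~\ref{L:Zhikov0} and~\ref{L:Zhikov} apply.

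For (a) and (c), the piece $\rho_\alpha\omega$ on an interior patch is treated by an ordinary Friedrichs mollification; on a boundary patch I mollify with the linear maps $T_\varepsilon$ of Lemma~\ref{L:Zhikov}, chosen (a dilation of ratio $1+c\varepsilon$ about a center below $\{x^n=0\}$, $c$ large) so that the averaging samples $\rho_\alpha\omega$ only at interior points of $M$ and never requires an extension across $bM$. Since $T_\varepsilon\to\mathrm{Id}$ and $\nabla^{k}(\rho_\alpha\omega)_\varepsilon$ equals $(\nabla^{k}\rho_\alpha\omega)_\varepsilon$ up to a tensorial factor converging to the identity, Lemma~\ref{L:Zhikov} yields $L^{p(\cdot)}$-modular (hence norm) convergence of all derivatives of order $\le l$, and $\operatorname{supp}(\rho_\alpha\omega)_\varepsilon\subset U_\alpha$, so the form extends by zero to $M$; if $\omega$ vanishes near $bM$ the supports stay away from $bM$ and $\omega_\varepsilon\in C_0^{s-1,1}$. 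For (c) a Friedrichs-type mollification increases the Lipschitz constant only by a factor tending to $1$, so $\omega_\varepsilon$ is bounded in $W^{1,\infty}$, and then $\omega_\varepsilon\to\omega$ in every $W^{1,q}$, $q<\infty$, follows from a.e.\ convergence at Lebesgue points together with dominated convergence on the compact $M$. If moreover $\omega=0$ on $bM$, one first multiplies by a cut-off $\chi_\delta$ vanishing in the $\delta$-collar of $bM$ and equal to $1$ beyond the $2\delta$-collar; as $|\omega|\le C\operatorname{dist}(\cdot,bM)$, the error $\omega\,\nabla\chi_\delta$ is bounded and supported on a set of measure $O(\delta)$, so $\chi_\delta\omega\to\omega$ in $W^{1,q}$, and one then mollifies $\chi_\delta\omega$, which is supported away from $bM$, and diagonalizes.

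Part (b) — together with the Lipschitz strengthening of (c) needed for (f) — is the crux, since the $T_\varepsilon$-shift destroys the boundary condition. Here I reflect $\rho_\alpha\omega$ across $\{x^n=0\}$ in an admissible boundary coordinate system, sending the components $\omega_I$ with $n\notin I$ to their odd reflection and those with $n\in I$ to their even reflection. Because the tangential components vanish in the trace sense on $\{x^n=0\}$, the reflected object lies in $W^{1,p(\cdot)}$ (respectively is Lipschitz) on the full ball, with the even-reflected exponent, which remains log-Hölder since passing to the absolute value of the $x^n$-coordinate does not increase Euclidean distances; convolving it with a mollifier symmetric in $x^n$ produces an approximant whose components $\omega_I$, $n\notin I$, are again odd in $x^n$, hence vanish on $\{x^n=0\}$, i.e.\ $t\omega_\varepsilon=0$ on $bM$. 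Lemma~\ref{L:Zhikov} restricted to the half-ball gives the convergence, and $\operatorname{supp}\rho_\alpha$ being compactly contained in $U_\alpha$ keeps the mollification away from the spherical part of $\partial B_R$. The normal versions follow by interchanging odd and even reflection, or by Hodge duality. The main obstacle is precisely this boundary step: one must mollify from interior data and simultaneously preserve the weak tangential/normal condition, which forces the odd/even reflection, and one has to check that the reflection is compatible with the special form $\sum g_{ij}\,dx^i\,dx^j+(dx^n)^2$ of the metric in admissible coordinates and preserves the log-Hölder bound; everything else is bookkeeping of the finitely many diagonal extractions.

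The remaining parts are assembled from (a)--(c). For (d), $\phi\wedge\omega$ and $\phi\lrcorner\omega$ are in $W^{1,p(\cdot)}$ by the Leibniz rule as $\phi,\nabla\phi\in L^\infty$; approximating $\phi$ by $\phi_\delta\in C_0^{s-1,1}$ via (c) and $\omega$ by $\omega_\varepsilon\in C^{s-1,1}$ via (a), the products $\phi_\delta\wedge\omega_\varepsilon,\ \phi_\delta\lrcorner\omega_\varepsilon$ lie in $C_0^{s-1,1}\subset\mathrm{Lip}_0$, and a diagonal choice $\delta=\delta(\varepsilon)$ gives convergence to $\phi\wedge\omega$, resp.\ $\phi\lrcorner\omega$, in $W^{1,p(\cdot)}$ (using that multiplication by a fixed Lipschitz form is bounded on $W^{1,p(\cdot)}$ and that an $L^\infty$-bounded sequence tending to $0$ in $W^{1,q}$ tends to $0$ in $W^{1,p(\cdot)}$ on the compact $M$), whence membership in $W_0^{1,p(\cdot)}$. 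For (e), $W_T^{1,p(\cdot)}\subseteq W_T^{1,1}\cap W^{1,p(\cdot)}$ is immediate from $W^{1,p(\cdot)}\hookrightarrow W^{1,1}$ and $W_0^{1,p(\cdot)}\hookrightarrow W_0^{1,1}$ on the compact $M$; conversely, for $\omega\in W^{1,p(\cdot)}\cap W_T^{1,1}$, apply (b) to obtain $\omega_\varepsilon\in C^{s-1,1}$ with $t\omega_\varepsilon=0$ on $bM$ and $\omega_\varepsilon\to\omega$ in $W^{1,p(\cdot)}$, so that $\nu\wedge\omega_\varepsilon\in C^{s-1,1}$ vanishes on $bM$, hence lies in $W_0^{1,p(\cdot)}$ by the last clause of (c), and $\nu\wedge\omega=\lim(\nu\wedge\omega_\varepsilon)$ in $W^{1,p(\cdot)}$, giving $\omega\in W_T^{1,p(\cdot)}$; the normal case is identical. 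For (f) and (g) I start from the approximants already constructed — those of (c) when $\omega\in\mathrm{Lip}_T$, those of (b)/(a) when $\omega\in W_T^{1,p(\cdot)}$ — which satisfy $t\omega_\varepsilon=0$ on $bM$, and remove the tangential part in a collar: with the globally extended $C^{s-1,1}$ form $\nu$ I split $\omega_\varepsilon=t\omega_\varepsilon+n\omega_\varepsilon$ pointwise near $bM$, observe $|t\omega_\varepsilon|\le C_\varepsilon\operatorname{dist}(\cdot,bM)$ ($C_\varepsilon$ uniform in the Lipschitz case), and replace $\omega_\varepsilon$ by $\omega_\varepsilon-(1-\chi_\delta)\,t\omega_\varepsilon$, which equals $n\omega_\varepsilon$ in the $\delta$-collar so that its wedge with $\nu$ vanishes there, hence lies in $C_0^{s-1,1}$; the correction is controlled like the cut-off error in (c) and killed by a diagonal choice of $\delta$, and the normal statements are symmetric.
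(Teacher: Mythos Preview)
Your proof is correct and follows the same localization-plus-mollification skeleton as the paper, but with different technical choices at the key boundary step. In part (a) you push the sampling region into the interior via the dilations $T_\varepsilon$; the paper achieves the same effect more simply by taking a Friedrichs kernel with support in $\widetilde B_\varepsilon^{-}=B_\varepsilon\cap\{x^n<-\varepsilon/2\}$. For (b) you use odd/even reflection across $\{x^n=0\}$ (odd for the tangential components, even for the normal ones) followed by a kernel symmetric in $x^n$; the paper instead treats the two groups of components asymmetrically, extending the tangential ones by zero and mollifying them with a kernel supported in $\widetilde B_\varepsilon^{+}$ while keeping the $\widetilde B_\varepsilon^{-}$ kernel for the normal ones. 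Your reflection argument is the more common textbook device, while the paper's split-kernel trick avoids checking that the reflected exponent stays log-H\"older. For the vanishing cases of (c), (f), (g) you rely on a collar cut-off $\chi_\delta$ together with the Lipschitz bound $|\omega|\le C\operatorname{dist}(\cdot,bM)$; the paper again prefers zero extension plus a shifted mollifier, and for (f) uses the algebraic splitting $\omega=|\nu|^{-2}\bigl(\nu\wedge(\nu\lrcorner\omega)+\nu\lrcorner(\nu\wedge\omega)\bigr)$, approximating the two pieces separately so that $\nu\wedge\omega_\varepsilon$ automatically inherits compact support from $(\nu\wedge\omega)_\varepsilon$. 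Both routes are equivalent in strength; yours is a bit more uniform across the parts, the paper's avoids the diagonal extractions and keeps all approximations one-step.
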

	\begin{proof}
		It is sufficient to prove this for a homogeneous form $\omega$. Using a partition of unity subordinate to the coordinate atlas $\{(U_j,\varphi_j)\}$, we reduce these statements to one of the two cases: $\omega$ is compactly supported in one of interior patches or $\omega$ is compactly supported in one of boundary patches, where we can assume the coordinate system to be admissible. 
		
		For the first case, we mollify the coordinates of $\omega$ in the local coordinate system, and thus get 
		$$
		\omega_{\varepsilon,I}(x) = \int \omega_I (y) \gamma_\varepsilon(x-y)\, dy,
		$$
		where $\gamma_\varepsilon$ if a Friedrichs mollifier with $\varepsilon$ so small that the support of $\omega_{\varepsilon}$ is still in the same coordinate neighbourhood. This gives a required approximation for (a)-(g). The rest of the proof concerns the second case, when $\omega$ is compactly supported in an admissible boundary coordinate neighbourhood $U$. By refining the coordinate cover we can assume without loss that $|\nu|>1/2$ in $U$.
		
		(a) In this case we take the Friedrichs mollifier $\gamma_\varepsilon$ with $\operatorname{supp}\, \gamma_\varepsilon \in \widetilde B_\varepsilon^{-}$. Then for sufficiently small $\varepsilon$ the form $\omega_\varepsilon$ is supported in the same coordinate neighbourhood and
		$$
		\|\omega_\varepsilon -\omega\|_{l,p_j(\cdot), \varphi_j (U)} \to 0
		$$
		as $\varepsilon \to 0$. Clearly, this holds in any other coordinate chart.
		If the support of $\omega$ is separated from $bM$, then so is the support of $\omega_\varepsilon$.

		(b). Now, let $t\omega =0$. Then in interior patches and in boundary patches for normal components we follow the same procedure, while the tangential components are extended by zero across $x^n=0$, and the Friedrichs mollifier for them has support in $\widetilde B^{+}_\varepsilon$. Then all the tangential components of $\omega_{\varepsilon}$ vanish on $\{x^n<\varepsilon/2\}$. For the $n\omega=0$ condition the proof is similar with the roles of tangential and normal components reversed.

		(c). If $\omega \in {\mathrm{Lip}}(M,\Lambda)$ the approximation is the same as in the first case. If  additionally $\omega=0$ on $bM$, in boundary patches we extend the components of $\omega$ by zero to $\{x^n<0\}$ and then use the Friedrichs mollifier with support in $\widetilde B^{+}_\varepsilon$. 
		
		(d). First, approximate $\varphi$ by $\varphi_\varepsilon\in C_0^{s-1,1}(M,\Lambda)$ using (c). Then approximate $\varphi_\varepsilon \wedge \omega, \varphi_\varepsilon\lrcorner \omega \in W^{1,p(\cdot)}_c(M,\Lambda)$ using (a).
		
		(e)  Let $\omega \in W^{1,p(\cdot)}(M,\Lambda)\cap W_T^{1,1}(M,\Lambda)$. Arguing as in (b) we approximate $\omega$ by $C^{s-1,1}(M,\Lambda)$ forms $\omega_\varepsilon$  with support in the same coordinate chart such that $dx^n \wedge \omega_\varepsilon$ vanishes for $x^n<\varepsilon/2$. Now, $\nu \wedge \omega_\varepsilon = dx^n \wedge \omega_\varepsilon + (\nu - dx^n)\wedge \omega_\varepsilon$. Since $\nu - dx^n$ vanishes on $bM$, we approximate it as in (c) by $C^{s-1,1}_0(M,\Lambda)$ forms on the support of $\eta_j$.  Thus $\nu \wedge \omega_\varepsilon$ can be approximated in $W^{1,p(\cdot)}(M,\Lambda)$ by $C_0^{s-1,1}(M,\Lambda)$ forms, and so $\nu \wedge \omega$. By definition, this means $\omega \in W^{1,p(\cdot)}(M,\Lambda)$.
		
		(f). Now let $t\omega =0$. It remains to split
		$$
		\omega = \frac{1}{|\nu|^2} \left(\nu \wedge (\nu \lrcorner \omega) + \nu \lrcorner (\nu \wedge \omega) \right),
		$$
		approximate $(\nu \lrcorner \omega)$ by $C^{s-1,1}(M,\Lambda)$ forms $(\nu \lrcorner \omega)_\varepsilon$ and $(\nu \wedge \omega)$ by $C_0^{s-1,1}(M,\Lambda)$ forms $(\nu \wedge \omega)_\varepsilon$ as above, and write
		$$
		\omega_\varepsilon = \frac{1}{|\nu|^2} \left( \nu \wedge (\nu \lrcorner \omega)_\varepsilon + \nu \lrcorner (\nu \wedge \omega)_\varepsilon \right).
		$$
		The argument for $n\omega=0$ is similar, or via the Hodge dual.
		
		(g). Follows from (b) and (e).
		
	\end{proof}
	
	\section{Integration-by-parts formula}
	We denote 
	\begin{equation}\label{eq:bound}
		[f,v]:= \int \limits_{b M} \langle\nu \wedge f, v \rangle\, d\sigma = \int\limits_{bM} tf\wedge \ast nv = \int\limits_{bM} \langle f, \nu \lrcorner v \rangle\, d\sigma,
	\end{equation}
	where $d\sigma$ is the $(n-1)$-volume on $b M$, in the local admissible coordinate system $d\sigma = \sqrt{g} dx^1\ldots dx^{n-1}$ ($d\sigma = \sqrt{g}\sqrt{g^{nn}} dx^1\ldots dx^n$ if we drop ``admissible''), and $\nu$ is the ``exterior'' conormal, in local admissible coordinates just $\nu = -dx^n$ (resp. $\nu = -(g^{nn})^{-1/2} dx^n$). If $f$ and $v$ are homogeneous forms with different parities or $\mathrm{deg}\, f +1\neq \mathrm{deg}\, v$ then $[u,v]=0$.  
	
	For  $f$ given by \eqref{eq:forms} and $v$ given by \eqref{eq:formsg} we have  
	$$
	[f,v] = \sum_{r=0}^{n-1} [f^r_e,v_e^{r+1}]+ \sum_{r=0}^{n-1} [f^r_o,v_o^{r+1}].
	$$

	Then (see \cite{Morrey1966}, \cite{Schwarz}, \cite{MitMitMitTay16}) for any $f\in W^{1,q}(M,\Lambda)$, $v\in W^{1,q'}(M,\Lambda)$, $q=\mathrm{const}$, $1\leq q \leq \infty$, there holds 
	\begin{equation}\label{eq:by_parts}
		(df,v)=(f,\delta v) +[f,v].
	\end{equation}
	In particular, the integration-by-parts formula \eqref{eq:by_parts} holds if $f\in W^{1,1}(M,\Lambda)$ and $v\in \mathrm{Lip}(M,\Lambda)$, or if $v\in W^{1,1}(M,\Lambda)$ and $f\in \mathrm{Lip}(M,\Lambda)$.
	
	The integration-by-parts formula \eqref{eq:by_parts} for an $r$-form $f$ and an $(r+1)$-form $v$ is nothing else as the Gauss formula
	$$
	\int\limits_M (\mathrm{div}\, X)\, dV = \int\limits_{bM} \nu(X)\, d\sigma, \quad \mathrm{div}\, X = \nabla_j X^j = \frac{1}{\sqrt{g}} \frac{\partial (\sqrt{g} X_j)}{\partial x^j}, \quad \nu(X)=\nu_jX^j,
	$$
	for a $W^{1,1}(M,\Lambda)$ vector field $X^j = \sum_{I \in \mathcal{I}(r)}f_{I} v^{jI}$, together with the pointwise relation 
	\begin{align*}
		\mathrm{div}\, X &=\frac{1}{r!}\nabla_j \bigl(f_{I} v^{jI}\bigr) \\ &= \frac{1}{r!}(\nabla_j f_{I}) v^{jI} + \frac{1}{r!}f_{I} \nabla_j v^{jI} \\
		&=  \frac{1}{(r+1)!} (df)_{jI} v^{jI} + \frac{1}{(r+1)!}((r+1)f_{I;j} - (df)_{jI} )v^{jI}- \langle f,\delta v\rangle \\
		&= \langle df,v\rangle - \langle f,\delta v\rangle.
	\end{align*}
	Here one uses that in
	$$
	(r+1)f_{I;j} - (df)_{jI} = \sum_{k=1}^r ((-1)^{k+1}  f_{j I'_k;i_k} + f_{I;j} ), \quad I'_k = (i_1\ldots i_{k-1}i_{k+1}\ldots i_r)
	$$
	the expression in parentheses is the same if we transpose $j$ and $i_k$ in $jI$, and so $(f_{j I'_k;i_k} + f_{I;j} ) v^{jI}=0$ by the skew-symmetry of $v$ for all $k$. In fact, the proof just says that $df$ is the alternation of the tensor $(r+1)\nabla f$, the difference of $(r+1)\nabla f-df$ is a symmetric tensor and its contraction with skew-symmetric $v$ is zero.

	By using a partition of unity subordinate to an (admissible) coordinate cover the formula is reduced to the case when the support of $X$ is in the range of an admissible boundary coordinate system, and in this case the required formula reduces to 
	$$
	\int\limits_{\mathbb{R}^n_+} (\sqrt{g} X_n)_{,n} \, dx = - \int_{\mathbb{R}^{n-1}} X_n(x',0)\, \sqrt{g}dx',
	$$
	where $\sqrt{g}$ is at least Lipschitz and $X_n \in W^{1,1}(\mathbb{R}^n_+)$.

For the Euclidian case, when $M$ is a domain $D$ in $\mathbb{R}^n$, $(\delta v)_I = -\partial_{x^j} v_{jI}$ and the integration by parts formula is
\begin{gather*}
\int\limits_D \bigl(\sum_{I\in \mathcal{I}(r+1)} (df)_I v_I - \sum_{I \in \mathcal{I}(r)} f_I (\delta v)_I \bigr)\, dx \\
= \int\limits_{\partial D} \sum_{I\in\mathcal{I}(r)}\sum_{j=1}^n\nu_j f_I v_{jI}\, d\sigma = \int\limits_{\partial D} \sum_{I\in\mathcal{I}(r+1)} (\nu \wedge f)_I v_I\ d\sigma. 
\end{gather*}

	Various generalizations to ``bad'' domains are known, see \cite{MitMitMitTay16}.
	
	\begin{lemma}
		Let $p(\cdot)$ satisfy \eqref{eq:p1}, \eqref{eq:p2}. If we have $f \in W^{1,p(\cdot)}(M,\Lambda)$ and $v\in W^{1,p'(\cdot)}(M,\Lambda),$ then the integration-by-parts formula \eqref{eq:by_parts} is valid.
	\end{lemma}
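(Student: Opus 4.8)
The strategy is to reduce to the Lipschitz case of \eqref{eq:by_parts}, which is already available, by approximation. The first point is that the conjugate exponent $p'(\cdot)$ inherits the standing hypotheses: from $p'(x)=p(x)/(p(x)-1)$ one checks at once that $1<p_{+}/(p_{+}-1)\le p'(x)\le p_{-}/(p_{-}-1)<\infty$ and $|p'(x)-p'(y)|\le (p_{-}-1)^{-2}|p(x)-p(y)|$, so $p'(\cdot)$ again satisfies \eqref{eq:p1} and \eqref{eq:p2}. Hence Lemma~\ref{L:approx1}(a) applies with $p'(\cdot)$ in place of $p(\cdot)$, and so do the two standard facts about variable exponent spaces under the log-H\"older condition that the argument needs: the generalized H\"older inequality $|(g,h)|\le 2\,\|g\|_{r(\cdot),M}\|h\|_{r'(\cdot),M}$ (and its analogue on $bM$ with $p(\cdot)|_{bM}$), and the continuity of the trace operator $W^{1,r(\cdot)}(M,\Lambda)\to L^{r(\cdot)}(bM,\Lambda)$, for $r=p$ and $r=p'$ (see \cite{DieHHR11}; the trace bound reduces, via a partition of unity subordinate to an admissible boundary coordinate cover, to the half-space trace theorem). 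Since both sides of \eqref{eq:by_parts} are sums over the homogeneous components of $f$ and $v$, we may assume $f$ and $v$ homogeneous.

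Next I would approximate only $v$. By Lemma~\ref{L:approx1}(a) choose $v_{k}\in\mathrm{Lip}(M,\Lambda)$ with $v_{k}\to v$ in $W^{1,p'(\cdot)}(M,\Lambda)$. Since $M$ is compact and $p_{-}\ge 1$ we have $W^{1,p(\cdot)}(M,\Lambda)\subset W^{1,1}(M,\Lambda)$, so $f\in W^{1,1}(M,\Lambda)$, and then \eqref{eq:by_parts} for the pair $(f,v_{k})$ --- which is of the admissible type ``$f\in W^{1,1}$, $v\in\mathrm{Lip}$'' --- gives $(df,v_{k})=(f,\delta v_{k})+[f,v_{k}]$ for every $k$. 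It remains to pass to the limit $k\to\infty$. Since $d\colon W^{1,p(\cdot)}\to L^{p(\cdot)}$ and $\delta\colon W^{1,p'(\cdot)}\to L^{p'(\cdot)}$ are bounded, we have $df,f\in L^{p(\cdot)}(M,\Lambda)$ while $v_{k}\to v$ and $\delta v_{k}\to\delta v$ in $L^{p'(\cdot)}(M,\Lambda)$, whence the generalized H\"older inequality gives $(df,v_{k})\to(df,v)$ and $(f,\delta v_{k})\to(f,\delta v)$. For the boundary term, the trace bound puts $f$ in $L^{p(\cdot)}(bM,\Lambda)$ and gives $v_{k}\to v$ in $L^{p'(\cdot)}(bM,\Lambda)$; since $\nu\in C^{s-1,1}(M,\Lambda)$ is bounded, H\"older on $bM$ yields $[f,v_{k}]=\int_{bM}\langle\nu\wedge f,v_{k}\rangle\,d\sigma\to\int_{bM}\langle\nu\wedge f,v\rangle\,d\sigma=[f,v]$, which in passing shows $[f,v]$ is finite and \eqref{eq:bound} well-posed. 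Combining, $(df,v)=(f,\delta v)+[f,v]$.

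The only ingredient that is not a formal manipulation with the generalized H\"older inequality and the mapping properties of $d$ and $\delta$ is the boundary term, and this is where I expect the care to be needed: one must know that the trace $W^{1,r(\cdot)}(M,\Lambda)\to L^{r(\cdot)}(bM,\Lambda)$ is bounded under \eqref{eq:p1}--\eqref{eq:p2} for $r=p$ and $r=p'$, both to make $[f,v]$ meaningful and to pass to the limit. If one wished to avoid invoking this trace theorem (beyond the elementary $W^{1,1}\to L^{1}(bM)$ bound), one could instead approximate both $f$ and $v$ by Lipschitz forms and use the already-established convergence of the interior terms to see that the numbers $[f_{j},v_{k}]$ form a Cauchy net; but identifying the limit with the boundary integral \eqref{eq:bound} still forces the traces of $f$ and $v$ to lie in dual Lebesgue spaces on $bM$, so the trace estimate does not seem to be dispensable.
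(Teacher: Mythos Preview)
Your proof is correct and follows essentially the same route as the paper: establish the variable-exponent trace embedding $W^{1,r(\cdot)}(M,\Lambda)\to L^{r(\cdot)}(bM,\Lambda)$ for $r=p,p'$ (the paper cites \cite{DieHas07,DieHasRou09} and localizes via partition of unity, as you do), then pass to the limit from the Lipschitz case via Lemma~\ref{L:approx1}. Your version is slightly more streamlined in that you approximate only $v$ and invoke the already-stated $W^{1,1}\times\mathrm{Lip}$ case of \eqref{eq:by_parts} directly, whereas the paper's proof is terser and does not spell out this asymmetry; you also make explicit the easy check that $p'(\cdot)$ inherits \eqref{eq:p1}--\eqref{eq:p2}, which the paper leaves implicit.
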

	\begin{proof}
		
		From  \cite{DieHas07, DieHasRou09}, it follows that for $\varphi \in W^{1,p(\cdot)}(\mathbb{R}^n_+)$ there holds
		$$
		\|\varphi(\cdot,0)\|_{L^{p(\cdot)} (\mathbb{R}^{n-1})} \leq C(n,p_{-},p_+,c_{\mathrm{log}}(p)) (\|\varphi\|_{L^{p(\cdot)}(\mathbb{R}^n_+)} + \|\nabla\varphi\|_{L^{p(\cdot)}(\mathbb{R}^n_+)}).
		$$
		Thus, using a partition of unity, one gets the embedding
		$$
		\|f\|_{L^{p(\cdot)}(bM,\Lambda)} \leq C \|f\|_{1,p(\cdot),M}, \quad \|v\|_{L^{p'(\cdot)}(bM,\Lambda)} \leq C \|v\|_{1,p'(\cdot),M}.
		$$
		It remains to use the approximation Lemma~\ref{L:approx1}. \end{proof}

	\begin{lemma}\label{L:ort}
		Let $p(\cdot)$ satisfy \eqref{eq:p1}, \eqref{eq:p2}. Assume that either $f \in W_T^{1,p(\cdot)}(M,\Lambda)$ and $v\in W^{1,p'(\cdot)}(M,\Lambda)$ or $f \in W^{1,p(\cdot)}(M,\Lambda)$ and $v\in W^{1,p'(\cdot)}_N (M,\Lambda)$. Then $(df,\delta v)=0$. The result is also valid if $f \in W_T^{1,1}(M,\Lambda)$ ($W^{1,1}(M,\Lambda)$) and $v\in \mathrm{Lip}(M,\Lambda)$ (resp. $\mathrm{Lip}_N(M,\Lambda)$) or $f\in \mathrm{Lip}_T(M,\Lambda)$ (resp. $\mathrm{Lip} (M,\Lambda)$) and $v\in W^{1,1}(M,\Lambda)$ (resp. $W_N^{1,1}(M,\Lambda)$).
	\end{lemma}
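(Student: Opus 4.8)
The assertion says that an exact form and a coexact form are orthogonal in the pairing $(\cdot,\cdot)$ as soon as the tangential, resp.\ normal, boundary condition kills the relevant boundary integral, and the only tool needed is the integration-by-parts formula \eqref{eq:by_parts} together with $d\circ d=0$. The plan is to prove the identity first for sufficiently smooth representatives, and then to pass to the limit via the approximation Lemma~\ref{L:approx1} and the variable exponent H\"older inequality.

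\emph{Smooth case.} Assume first that $f$ and $v$ are as regular as $M$ permits, say $f,v\in C^{s-1,1}(M,\Lambda)$, with $\nu\wedge f=0$ on $bM$ in the first alternative and $\nu\lrcorner v=0$ on $bM$ in the second. Applying \eqref{eq:by_parts} to the pair $(df,v)$ and using $d(df)=0$ gives
$$(df,\delta v)=-[df,v]=-\int_{bM}\langle\nu\wedge df,\,v\rangle\,d\sigma=-\int_{bM}\langle df,\,\nu\lrcorner v\rangle\,d\sigma,$$
the last two expressions for the boundary term being those recorded in \eqref{eq:bound}. In the second alternative the right-hand side vanishes at once since $\nu\lrcorner v=0$ on $bM$. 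In the first alternative one checks that $\nu\wedge df=0$ on $bM$ as well: in an admissible boundary coordinate system $\nu\wedge f=0$ on $bM$ means all coordinates $f_I$ with $n\notin I$ vanish on $\{x^n=0\}$, and by \eqref{eq:d} each coordinate $(df)_J$ with $n\notin J$ is a sum of derivatives $\partial_{x^{i}}f_I$ with $i<n$ of precisely such $f_I$, i.e.\ of derivatives taken along $bM$, hence vanishes on $\{x^n=0\}$ too (equivalently, the pullback to $bM$ of $df$ equals $d$ of the pullback of $f$). Thus $[df,v]=0$ in both cases. One could instead apply \eqref{eq:by_parts} to the pair $(f,\delta v)$ and use $\delta\circ\delta=0$, which remains valid on a $C^{1,1}$ manifold since it reduces to $d\circ d=0$ through the algebraic relations $\delta=\pm{\ast}d{\ast}$ and ${\ast}{\ast}=\pm\operatorname{id}$.

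\emph{Approximation.} If $M$ is of class $C^{s,1}$ with $s\ge2$, the approximants of Lemma~\ref{L:approx1} that respect the tangential, resp.\ normal, condition belong to $C^{s-1,1}\subset C^{1,1}$, so that $df_\varepsilon\in C^{0,1}\subset W^{1,1}(M,\Lambda)$ and the previous step applies literally, through the $W^{1,1}$-versus-$\mathrm{Lip}$ case of \eqref{eq:by_parts}. If $M$ is merely $C^{1,1}$ the approximants are only Lipschitz and $df_\varepsilon$ is only bounded; then one localizes by a partition of unity subordinate to the coordinate atlas and, in each chart, mollifies the coordinates of the corresponding piece --- extending the tangential, resp.\ normal, components by zero across $\{x^n=0\}$ and using a mollifier supported in $\widetilde B^{+}_\varepsilon$ exactly as in the proof of Lemma~\ref{L:approx1}(b) --- obtaining forms that are $C^\infty$ in that chart, keep the vanishing boundary component there, and converge in $W^{1,p(\cdot)}$. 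The smooth-case identity then holds chart by chart, and a preliminary passage to the limit removes this auxiliary mollification.

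\emph{Conclusion and main difficulty.} Finally, for $f\in W^{1,p(\cdot)}_T(M,\Lambda)$ and $v\in W^{1,p'(\cdot)}(M,\Lambda)$ (resp.\ $f\in W^{1,p(\cdot)}(M,\Lambda)$, $v\in W^{1,p'(\cdot)}_N(M,\Lambda)$), choose such sequences with $f_\varepsilon\to f$ in $W^{1,p(\cdot)}$ and $v_\varepsilon\to v$ in $W^{1,p'(\cdot)}$, preserving the respective boundary condition. Since $d$ and $\delta$ are bounded from $W^{1,p(\cdot)}$, resp.\ $W^{1,p'(\cdot)}$, into $L^{p(\cdot)}$, resp.\ $L^{p'(\cdot)}$, we get $df_\varepsilon\to df$ and $\delta v_\varepsilon\to\delta v$ in those spaces, so by the variable exponent H\"older inequality
$$\lvert(df_\varepsilon,\delta v_\varepsilon)-(df,\delta v)\rvert\le C\lVert df_\varepsilon-df\rVert_{p(\cdot),M}\lVert\delta v_\varepsilon\rVert_{p'(\cdot),M}+C\lVert df\rVert_{p(\cdot),M}\lVert\delta v_\varepsilon-\delta v\rVert_{p'(\cdot),M}\to0,$$
and since each $(df_\varepsilon,\delta v_\varepsilon)=0$ we conclude $(df,\delta v)=0$. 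The stated Lipschitz/$W^{1,1}$ variants are handled the same way, approximating the $W^{1,1}$ form as above and the Lipschitz one as in Lemma~\ref{L:approx1}(c)--(f) with a uniform $W^{1,\infty}$ bound, the final limit using dominated convergence for the $L^1$--$L^\infty$ pairing. The \textbf{main obstacle} is exactly this regularity matching: $df$ and $\delta v$ lie only in $L^{p(\cdot)}$, resp.\ $L^{p'(\cdot)}$, so \eqref{eq:by_parts} cannot be invoked for them directly, and on a bare $C^{1,1}$ manifold there is no global representative regular enough either; this is what forces the chart-by-chart mollification and is the genuine technical point of the proof.
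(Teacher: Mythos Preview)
Your argument is correct but follows a somewhat different technical route from the paper's. Both localize to a single chart and pass to a limit through mollified forms; the difference lies in how the intermediate identity is secured on a $C^{1,1}$ manifold. The paper smooths the \emph{metric} along with the forms, so that the codifferential $\delta_\varepsilon$ (and with it $\delta_\varepsilon^2=0$) is available in a fully smooth setting, and the identity $\int_D\langle df^{(\varepsilon)},\delta_\varepsilon v^{(\varepsilon)}\rangle_\varepsilon\sqrt{g_\varepsilon}\,dx=0$ holds trivially; the limit then requires tracking convergence of the Christoffel symbols. You instead keep the original Lipschitz metric throughout and mollify only $f$ in-chart until $df\in W^{1,\infty}$, so that \eqref{eq:by_parts} can be applied directly to the pair $(df,v)$ in the actual pairing; the boundary term dies via $t(df)=0$ (deduced from $tf=0$) or $nv=0$, and $d^2=0$ is metric-free. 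Your route is a bit more economical --- no metric mollification, no Christoffel-symbol convergence --- at the cost of the extra verification $tf=0\Rightarrow t(df)=0$ for smooth $f$; the paper's route is symmetric in $d$ and $\delta$ since with a smooth metric one may equally use $\delta^2=0$ and $tf=0$ directly.

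One caution: your parenthetical alternative ``apply \eqref{eq:by_parts} to the pair $(f,\delta v)$ and use $\delta\circ\delta=0$'' does \emph{not} work on a bare $C^{1,1}$ manifold without smoothing the metric, because $\delta v$ then involves the (merely bounded) Christoffel symbols and lies only in $L^\infty$, not in any $W^{1,\cdot}$ --- so \eqref{eq:by_parts} is unavailable for that pair. This is precisely what the paper's metric mollification buys. For the same reason, in the second alternative on $C^{1,1}$ your in-chart mollification should still target $f$ (without boundary condition) rather than $v$, since mollifying $v$ alone would leave $\delta v_{\varepsilon'}$ only bounded.
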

	\begin{proof}
		We consider the case when $tf=0$. By using a partition of unity, one can reduce the proof to the case when the supports of both $f$ and $v$ are in the range of an (admissible) coordinate system. Let in this coordinate system the supports of $f$ and $v$ belong to a bounded domain $D$ for interior patches or to a bounded relatively open subset $D$ of $\{x^n\geq 0\}$ in the case of a boundary coordinate system. We use the Friedrichs mollifier to approximate the metric tensor $g_{ij}$ by smooth functions $g^{(\varepsilon)}_{ij}$ converging to $g_{ij}$ in the uniform metric and in any $W^{1,q}(D)$, $q<\infty$, so that $g^{(\varepsilon)}_{ij}$ is positively defined. In particular, the Christoffel coefficients corresponding to $g^{(\varepsilon)}_{ij}$ are uniformly bounded and converge to the original Christoffel coefficients pointwise and in any $L^q(D)$. Following the proof of Lemma~\ref{L:approx1} we approximate $f$ and $v$ by smooth forms $f^{(\varepsilon)}$ and $v^{(\varepsilon)}$ with compact supports in the same coordinate system so that $tf^{(\varepsilon)}=0$,  $f^{(\varepsilon)}\to f$ in $W^{1,p(\cdot)}(D)$ and $v^{(\varepsilon)}\to v$ in $W^{1,p'(\cdot)}(D)$. 
		The integration-by-parts formula yields 
		$$
		\int\limits_D \langle df^{(\varepsilon)},\delta_\varepsilon v^{(\varepsilon)} \rangle_\varepsilon \sqrt{g_\varepsilon}\, dx=0,
		$$
		where $\langle,\rangle_\varepsilon$ is the scalar product corresponding to the tensor $g^{(\varepsilon)}_{ij}$, $g_\varepsilon$ is the determinant of the matrix $g^{(\varepsilon)}_{ij}$, and $\delta_\varepsilon$ is the codifferential corresponding to the metric $g^{(\varepsilon)}_{ij}$. It remains to pass to the limit, where we use the fact that if $\alpha_j\to \alpha$ in $L^1(D)$ and $\beta_j\to \beta$ a.e. in $D$ and is uniformly bounded then $\alpha_j\beta_j\to \alpha \beta$ in $L^1(D)$.
	\end{proof}   
	
	In particular, Lemma~\ref{L:ort} immediately implies that on $C^{1,1}$ manifold if $\omega, d\omega \in W^{1,1}(M,\Lambda)$ then $d(d\omega)=0$, and if $\omega,\delta\omega \in W^{1,1}(M,\Lambda)$ then $\delta (\delta \omega)=0$.

	\begin{lemma} Let $p(\cdot)$ satisfy \eqref{eq:p1}, \eqref{eq:p2} (but with $p_{-}=1$ allowed).   $\omega\in W^{1,p(\cdot)}(M,\Lambda)$. Then $\omega \in W^{1,p(\cdot)}_T(M,\Lambda)$ ($W^{1,p(\cdot)}_N(M,\Lambda)$) if and only if $(d\omega,\xi)=0$ (resp. $(\delta \omega,\xi)=0$) for all $\xi\in \mathrm{Lip}(M,\Lambda)$.
	\end{lemma}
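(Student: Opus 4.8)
The plan is to strip the boundary term off the integration-by-parts identity \eqref{eq:by_parts} and then read off the conclusion from Lemma~\ref{L:approx1}(e). First, since $M$ is compact and $p_{-}\ge 1$ we have $W^{1,p(\cdot)}(M,\Lambda)\subset W^{1,1}(M,\Lambda)$, so $\omega$ possesses a trace in $L^{1}(bM,\Lambda)$ and \eqref{eq:by_parts} applies with $f=\omega$ and $v=\xi$ for every $\xi\in\mathrm{Lip}(M,\Lambda)$: $(d\omega,\xi)=(\omega,\delta\xi)+[\omega,\xi]$ with $[\omega,\xi]=\int_{bM}\langle\nu\wedge\omega,\xi\rangle\,d\sigma$ (all three pairings are finite because $|d\omega|,|\omega|\in L^{1}$ while $\xi,\delta\xi\in L^{\infty}$). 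Hence the identity $(d\omega,\xi)=(\omega,\delta\xi)$ holding for all $\xi\in\mathrm{Lip}(M,\Lambda)$ is equivalent to the vanishing of the boundary pairing $[\omega,\xi]$ for all such $\xi$; and since Lemma~\ref{L:approx1}(e) gives $W^{1,p(\cdot)}_{T}(M,\Lambda)=W^{1,1}_{T}(M,\Lambda)\cap W^{1,p(\cdot)}(M,\Lambda)$, it remains only to show, for $\omega\in W^{1,1}(M,\Lambda)$, that $t\omega=0$ on $bM$ if and only if $[\omega,\xi]=0$ for all $\xi\in\mathrm{Lip}(M,\Lambda)$.

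For the ``only if'' direction I would use that imposing $t\omega=0$ is the same as imposing $\nu\wedge\omega=0$ on $bM$, which makes $[\omega,\xi]=\int_{bM}\langle\nu\wedge\omega,\xi\rangle\,d\sigma$ vanish for every $\xi$. For the ``if'' direction, the point is that the traces on $bM$ of forms $\xi\in\mathrm{Lip}(M,\Lambda)$ are exactly the Lipschitz forms on $bM$ (restriction one way, collar extension with cutoff the other) and thus dense in $L^{q}(bM,\Lambda)$ for every finite $q$; since $\nu\wedge\omega\in L^{1}(bM,\Lambda)$ and it pairs to zero against all of them, $\nu\wedge\omega=0$ a.e.\ on $bM$, i.e.\ $t\omega=0$. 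Combined with $\omega\in W^{1,p(\cdot)}(M,\Lambda)$ and Lemma~\ref{L:approx1}(e) this yields $\omega\in W^{1,p(\cdot)}_{T}(M,\Lambda)$.

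I do not expect a genuine obstacle: the proof is bookkeeping around \eqref{eq:by_parts} and Lemma~\ref{L:approx1}(e), and the one slightly non-formal ingredient is the standard trace theory on the (Lipschitz) boundary $bM$ --- continuity of the $W^{1,1}$-trace and density of Lipschitz boundary data in $L^{q}(bM,\Lambda)$ --- which is exactly what upgrades ``every pairing $[\omega,\xi]$ vanishes'' to ``the trace $t\omega$ vanishes''. The normal statement is proved by the same argument with $d,\nu\wedge,W^{1,\cdot}_{T}$ replaced by $\delta,\nu\lrcorner,W^{1,\cdot}_{N}$, now using $[\xi,\omega]=\int_{bM}\langle\xi,\nu\lrcorner\omega\rangle\,d\sigma$ and the corresponding form of \eqref{eq:by_parts}; alternatively it follows by applying the tangential case to $\ast\omega$, since $\ast$ is a bounded isomorphism from $W^{k,p(\cdot)}_{T}(M,\Lambda)$ onto $W^{k,p(\cdot)}_{N}(M,\Lambda)$ and is Hodge-conjugate to $d$ via $\delta=\pm\ast d\ast$.
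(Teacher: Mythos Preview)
Your proof is correct and follows essentially the same route as the paper: both use the integration-by-parts identity \eqref{eq:by_parts} to reduce the question to the vanishing of the boundary pairing $[\omega,\xi]$, and both then show that this forces $t\omega=0$ and conclude via the approximation machinery of Lemma~\ref{L:approx1}. The only difference is cosmetic: you argue globally on $bM$ (density of Lipschitz traces in the dual of $L^{1}(bM)$ forces $\nu\wedge\omega=0$, then invoke Lemma~\ref{L:approx1}(e)), whereas the paper works directly in an admissible boundary chart, reads off $\omega_I(x',0)=0$ for $n\notin I$ from the vanishing of $[\omega,\xi]$, and then redoes the zero-extension/mollification step of Lemma~\ref{L:approx1}(b). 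Your use of part~(e) as a black box is a legitimate shortcut over the paper's more explicit local computation.
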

	\begin{proof}
		We consider the case of the tangential boundary condition. That $\omega \in W_T^{1,p(\cdot)}(M,\Lambda)$ satisfies the required relation  follows from the integration-by-parts formula. On the other hand, if $(d\omega,\xi)=0$ for all $\xi\in \mathrm{Lip}(M,\Lambda)$, from the integration-by-parts formula in any admissible coordinate system we obtain $\omega_I=0$ if $n\notin I$. Thus for a partition of unity $\eta_\alpha$ subordinate to our atlas, in a boundary chart $\eta_\alpha \omega_I$ extended by zero to $x^n<0$ belongs to $W^{1,1}(\mathbb{R}^n).$ The rest follows by the same approximation procedure as in Lemma~\ref{L:approx1} b).
	\end{proof}

	\begin{corollary} Let $M$ be of the class $C^{2,1}$. Then $d\omega \in W^{1,p(\cdot)}_T(M,\Lambda)$ for any  $\omega \in W^{2,p(\cdot)}(M,\Lambda) \cap W^{1,p(\cdot)}_T(M,\Lambda).$  If we have $\omega \in W^{2,p(\cdot)}(M,\Lambda) \cap W^{1,p(\cdot)}_N(M,\Lambda)$ then $\delta\omega \in W^{1,p(\cdot)}_N(M,\Lambda)$.
	\end{corollary}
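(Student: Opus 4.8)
The plan is to read off both assertions from the preceding Lemma, which characterizes membership in $W^{1,p(\cdot)}_T(M,\Lambda)$ (respectively $W^{1,p(\cdot)}_N(M,\Lambda)$) by an integration-by-parts identity tested against all Lipschitz forms; everything else is already available above. The two ingredients I would use are: that $d$ and $\delta$ map $W^{2,p(\cdot)}(M,\Lambda)$ boundedly into $W^{1,p(\cdot)}(M,\Lambda)$ (this is where the hypothesis $M\in C^{2,1}$ enters), and that, as noted right after Lemma~\ref{L:ort}, on a $C^{1,1}$ manifold $d(d\eta)=0$ whenever $\eta,d\eta\in W^{1,1}(M,\Lambda)$, and dually $\delta(\delta\eta)=0$ whenever $\eta,\delta\eta\in W^{1,1}(M,\Lambda)$.

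First I would treat the tangential case: let $\omega\in W^{2,p(\cdot)}(M,\Lambda)\cap W^{1,p(\cdot)}_T(M,\Lambda)$. Since $M$ is $C^{2,1}$, $d\omega\in W^{1,p(\cdot)}(M,\Lambda)$, so one may apply the preceding Lemma to the form $d\omega$; its hypothesis for $d\omega$ amounts to checking that $d(d\omega)=0$ and that $(d\omega,\delta\xi)=0$ for all $\xi\in\mathrm{Lip}(M,\Lambda)$. For the first: $M$ is compact and $p_-\ge 1$, so $\omega\in W^{1,1}(M,\Lambda)$ and $d\omega\in W^{1,p(\cdot)}(M,\Lambda)\subset W^{1,1}(M,\Lambda)$, whence $d(d\omega)=0$ by the $C^{1,1}$ version of $d\circ d=0$. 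For the second: $\omega\in W^{1,p(\cdot)}_T(M,\Lambda)\subset W^{1,1}_T(M,\Lambda)$ by part (e) of Lemma~\ref{L:approx1}, and $\xi\in\mathrm{Lip}(M,\Lambda)$, so $(d\omega,\delta\xi)=0$ by Lemma~\ref{L:ort}. Hence $d\omega\in W^{1,p(\cdot)}_T(M,\Lambda)$.

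The normal case is proved the same way with $d\leftrightarrow\delta$ and $T\leftrightarrow N$: for $\omega\in W^{2,p(\cdot)}(M,\Lambda)\cap W^{1,p(\cdot)}_N(M,\Lambda)$ one has $\delta\omega\in W^{1,p(\cdot)}(M,\Lambda)$, then $\delta(\delta\omega)=0$ by the $C^{1,1}$ version of $\delta\circ\delta=0$, and $(\delta\omega,d\xi)=0$ for every $\xi\in\mathrm{Lip}(M,\Lambda)$ by Lemma~\ref{L:ort} applied with $f=\xi$ Lipschitz and $v=\omega\in W^{1,1}_N(M,\Lambda)$; applying the preceding Lemma to $\delta\omega$ then gives $\delta\omega\in W^{1,p(\cdot)}_N(M,\Lambda)$. (Equivalently, the normal case follows from the tangential one via the Hodge star, which is a bounded isomorphism $W^{1,p(\cdot)}_N(M,\Lambda)\to W^{1,p(\cdot)}_T(M,\Lambda)$, together with $\delta=\pm\ast d\ast$.) I do not expect a genuine obstacle here: the statement is essentially bookkeeping, and the only care needed is to verify that the embeddings $W^{2,p(\cdot)}\hookrightarrow W^{1,1}$ and $W^{1,p(\cdot)}\hookrightarrow W^{1,1}$ legitimately place $\omega$ and $d\omega$ (resp. $\delta\omega$) in the function classes for which $d\circ d=0$, $\delta\circ\delta=0$ and the orthogonality of Lemma~\ref{L:ort} were established, and that the full $C^{2,1}$ regularity is invoked only so that $d$ and $\delta$ act boundedly on $W^{2,p(\cdot)}$.
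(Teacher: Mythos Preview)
Your proof is correct but takes a different route from the paper's. The paper argues directly in admissible boundary coordinates: after reducing via a partition of unity to a single boundary chart, it mollifies the components $\omega_I$ as in Lemma~\ref{L:approx1}(b) so that the approximants $\omega_I^{(\varepsilon)}$ vanish on $\{x^n=0\}$ for $n\notin I$; then from the coordinate formula for $d$ one sees immediately that $(d\omega^{(\varepsilon)})_I(x',0)=0$ for $n\notin I$, and one passes to the limit using that $d\omega\in W^{1,p(\cdot)}$ has a well-defined trace. Your argument instead feeds $d\omega$ back into the preceding characterization lemma, reducing the claim to the vanishing $(d(d\omega),\xi)=(d\omega,\delta\xi)$ for all Lipschitz $\xi$, which you obtain from $d\circ d=0$ (the remark after Lemma~\ref{L:ort}) together with the orthogonality $(d\omega,\delta\xi)=0$ of Lemma~\ref{L:ort} itself.

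Both approaches are short; yours is more functional-analytic and avoids coordinates entirely, while the paper's makes transparent the elementary reason behind the result (the tangential components of $d\omega$ involve only tangential derivatives of tangential components of $\omega$). One minor caution: the preceding lemma as printed reads ``$(d\omega,\xi)=0$'', which appears to be a typo for the integration-by-parts identity $(d\omega,\xi)=(\omega,\delta\xi)$ (equivalently $[\omega,\xi]=0$); your reading is the intended one, and your verification of both pieces is exactly what is needed.
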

	\begin{proof}
		
		We prove only the first statement.  Let $\omega$ be a homogeneous form. By using a partition of unity it is sufficient to consider $\omega$ with support in the range of an admissible coordinate system. In this coordinate system we use the Friedrichs mollifier as in Lemma~\ref{L:approx1} b) to approximate the components  $\omega_I$ by smooth functions $\omega_I^{(\varepsilon)}$ so that $\omega_I^{(\varepsilon)}(x',0)=0$ for $n\notin I$. Then $(d\omega)_I^{(\varepsilon)}(x',0)=0$ for $n\notin I$. Passing to the limit, since the trace of $d\omega$ on $\{x^n=0\}$ is well defined we get $(d\omega)_I(x',0)=0$ for $n\notin I$. This means $d\omega \in W^{1,p(\cdot)}_T(M,\Lambda)$. 
	\end{proof}

	\section{Partial Sobolev spaces}\label{sec:partSob}

	We say that $\omega \in L^1_{\mathrm{loc}}(M,\Lambda)$ has (weak) differential $d\omega =f\in L^1_{\mathrm{loc}}(M,\Lambda)$ if for any $\varphi \in \mathrm{Lip}_0(M,\Lambda)$  there holds
	\begin{equation}\label{eq:defd0}
		(\omega, \delta \varphi) = (f,\varphi).
	\end{equation}
	For a $k$-form $\omega$ this is equivalent to 
	\begin{equation}\label{eq:defd1}
		\int\limits_M \omega \wedge d\varphi =(-1)^{k+1} \int\limits_M f\wedge \varphi,
	\end{equation}
	where the $(n-k-1)$-form $\varphi$ has parity opposite to $\omega$.
	
	We say that a form $\omega \in L^1(M,\Lambda)$ with (weak) differential $d\omega =f\in L^1(M,\Lambda)$ has vanishing tangential component at the boundary, $t\omega=0$, if \eqref{eq:defd0} holds for any $\varphi \in \mathrm{Lip}(M,\Lambda)$. Similarly, we say that a form $\omega \in L^1_{\mathrm{loc}}(M,\Lambda)$ has (weak) co-differential $\delta\omega =g\in L^1_{\mathrm{loc}}(M,\Lambda)$ if for any $\varphi \in \mathrm{Lip}_0(M,\Lambda)$ there holds
	\begin{equation}\label{eq:defd2}
		(\omega, d \varphi) = (g,\varphi).
	\end{equation}
	For a $k$-form $\omega$ this is equivalent to 
	\begin{equation}\label{eq:defd3}
		\int\limits_M \omega \wedge \delta\varphi =(-1)^{k} \int\limits_M g\wedge \varphi,
	\end{equation}
	where the $(n-k+1)$-form $\varphi$ has parity opposite to $\omega$.
	
	We say that $\omega \in L^1(M,\Lambda)$ with (weak) codifferential $\delta\omega =g\in L^1(M,\Lambda)$ has vanishing normal component at the boundary, $n\omega=0$, if \eqref{eq:defd2} holds for any $\varphi \in \mathrm{Lip}(M,\Lambda)$.

	From the integration-by-parts formula \eqref{eq:by_parts} it follows that any $\omega \in W^{1,1}(M,\Lambda)$ has a weak differential and codifferential and they coincide with ``strong'' differential and codifferential defined by the standard expression in coordinates \eqref{eq:d}, \eqref{eq:delta}.
	
	We denote
	\begin{align*}
		W^{d,p(\cdot)}(M,\Lambda) &= \{\omega \in L^{p(\cdot)}(M,\Lambda)\ :\ d\omega \in L^{p(\cdot)}(M,\Lambda) \}, \\
		W^{d,p(\cdot)}_T(M,\Lambda) &= \{\omega \in L^{p(\cdot)}(M,\Lambda)\ :\ d\omega \in L^{p(\cdot)}(M,\Lambda)\ \text{and}\ t\omega=0\},\\
		W^{\delta,p(\cdot)}(M,\Lambda) &= \{\omega \in L^{p(\cdot)}(M,\Lambda)\ :\ \delta\omega \in L^{p(\cdot)}(M,\Lambda) \}, \\
		W^{\delta,p(\cdot)}_N(M,\Lambda) &= \{\omega \in L^{p(\cdot)}(M,\Lambda)\ :\ \delta\omega \in L^{p(\cdot)}(M,\Lambda)\ \text{and}\ n\omega=0\}.
	\end{align*}
	On the space $W^{d,p(\cdot)}(M,\Lambda)$ and its subspace $W^{d,p(\cdot)}_T(M,\Lambda)$, the norm is defined as $\|\omega\|_{L^{p(\cdot)}(M,\Lambda)} + \|d\omega\|_{L^{p(\cdot)}(M,\Lambda)}$, and on $W^{\delta,p(\cdot)}(M,\Lambda)$ and its subspace $W^{\delta,p(\cdot)}_N(M,\Lambda)$ the norm is defined as $\|\omega\|_{L^{p(\cdot)}(M,\Lambda)} + \|\delta\omega\|_{L^{p(\cdot)}(M,\Lambda)}$.
	
	Clearly, the expression \eqref{eq:forms} has a (co-)differential if and only if each of its components $f^k_e$, $f^k_o$ has a (co-)differential, and then the corresponding homogeneous components are
	\begin{align*}
		(df)^0_{*}=0,\quad  (df)^k_* = df^{k-1}_*, \quad *\in\{e,o\}, \quad k=1,\ldots, n,\\
		(\delta f)^k_* = \delta f^{k+1}_*, \quad k=0,\ldots,n-1, (\delta f)^n_*=0, \quad *\in\{e,o\}.  
	\end{align*}


	\section{Approximation results for partial Sobolev spaces}\label{ssec:approx2}
	
	Here we continue the line of Section~\ref{ssec:appox1}.
	
	The following statements reflect (locally) the two fact: i) the operator $d$ has constant coefficients in any coordinate system and ii) for a form $\omega$ defined in a domain $D \subset \mathbb{R}^n$ satisfying $t\omega =0$ on $\partial D$ its extension by zero across the boundary of the domain $D$ has differential $\mathbbm{1}_D(x) d\omega$.
	
	\begin{lemma}\label{L:approx2} Let $p(\cdot)$ satisfy \eqref{eq:p1}, \eqref{eq:p2} (with $p_{-}=1$ allowed).\\
		(i) Let $\omega\in W^{d,p(\cdot)}(M,\Lambda)$. Then there exists a sequence $\omega_\varepsilon \in C^{s-1,1}(M,\Lambda)$ such that $\omega_\varepsilon \to \omega$ in $W^{d,p(\cdot)}(M,\Lambda)$. If $\omega \in W^{d,p(\cdot)}_T(M,\Lambda)$ then one can also claim $\omega_\varepsilon\in C_0^{s-1,1}(M,\Lambda)$.\\
		(ii) Let $\omega \in W^{\delta,p(\cdot)}(M,\Lambda)$. Then there exists a sequence $\omega_\varepsilon \in C^{s-1,1}(M,\Lambda)$ such that $\omega_\varepsilon \to \omega$ in $W^{\delta,p(\cdot)}(M,\Lambda)$. If $\omega\in W^{\delta,p(\cdot)}_N(M,\Lambda)$ then one can also claim $\omega_\varepsilon\in C^{s-1,1}_0(M,\Lambda)$.
		
		In the case $p(\cdot)\equiv\infty$ the norms $|\omega_\varepsilon|$, $|d\omega_\varepsilon|$ (for i)) and $|\delta \omega_\varepsilon|$ (for ii)) are uniformly bounded, and the corresponding forms $\omega_\varepsilon$, $d\omega_\varepsilon$ (for i)) or $\omega_\varepsilon$, $\delta\omega_\varepsilon$ (for ii))  converge a.e. in $M$.
	\end{lemma}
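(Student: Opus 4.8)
The plan is to mimic the proof of Lemma~\ref{L:approx1}, exploiting that $d$ has constant coefficients in every coordinate chart (formula~\eqref{eq:d}) and therefore commutes with the mollification procedure, and to reduce the codifferential case to the differential case via the Hodge star.

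First I would use a partition of unity subordinate to the atlas $\{(U_j,\varphi_j)\}$, together with the Leibniz rule $d(\eta\omega)=d\eta\wedge\omega+\eta\,d\omega$ (valid for $\eta$ Lipschitz and $\omega\in W^{d,p(\cdot)}(M,\Lambda)$, and preserving the boundary condition $t\omega=0$ since $t$ is an algebraic operation on a $0$-form factor), to reduce statement (i) to a homogeneous form $\omega$ compactly supported either in an interior patch or in an admissible boundary patch. In an interior patch one mollifies the coordinate components $\omega_I$ in the chart; since by \eqref{eq:d} each $(d\omega_\varepsilon)_J$ is a fixed constant-coefficient linear combination of the $\partial_i\omega_I$, one has $d\omega_\varepsilon=(d\omega)_\varepsilon$, and Zhikov's Lemma~\ref{L:Zhikov0} (after extension by zero) yields $\omega_\varepsilon\to\omega$ and $(d\omega)_\varepsilon\to d\omega$ in $L^{p(\cdot)}$, i.e. $\omega_\varepsilon\to\omega$ in $W^{d,p(\cdot)}(M,\Lambda)$. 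For a boundary patch with no boundary condition imposed, the same works using a Friedrichs mollifier with support in $\widetilde B_\varepsilon^{-}$, exactly as in part (a) of Lemma~\ref{L:approx1}, so that $\omega_\varepsilon$ remains inside the chart.

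The essential case is the boundary patch with $t\omega=0$, and here I would invoke the fact emphasized before the statement: extending $\omega$ by zero across $\{x^n=0\}$ produces a form $\widetilde\omega$ whose weak differential on the full ball equals $\mathbbm{1}_{\{x^n\ge 0\}}\,d\omega$, with \emph{no} singular layer along $\{x^n=0\}$. This is a reformulation of the weak definition of $t\omega=0$ from Section~\ref{sec:partSob} — namely that \eqref{eq:defd0} holds for all $\varphi\in\mathrm{Lip}(M,\Lambda)$, not merely for those vanishing near $bM$ — and it is checked by testing $\widetilde\omega$ against $C_0^\infty$ forms on the ball and using the Euclidean integration-by-parts formula. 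Mollifying $\widetilde\omega$ with a mollifier supported in $\widetilde B_\varepsilon^{+}$ then produces $\omega_\varepsilon$ vanishing on $\{x^n<\varepsilon/2\}$, hence extending by zero to a form in $C_0^{s-1,1}(M,\Lambda)$, while $d\omega_\varepsilon=(\mathbbm{1}_{\{x^n\ge 0\}}d\omega)_\varepsilon\to d\omega$ in $L^{p(\cdot)}$ again by Zhikov's lemma. I expect this zero-extension fact — the absence of a boundary layer in $d\widetilde\omega$ — to be the main point; the remaining steps are routine bookkeeping over the partition of unity.

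For part (ii) I would not repeat the argument, since $\delta$ does not have constant coefficients (Christoffel symbols enter \eqref{eq:delta}), but instead pass through the Hodge star. From $\delta\eta=\pm\ast d\ast\eta$ one checks that $\ast$ maps $W^{\delta,p(\cdot)}(M,\Lambda)$ boundedly and invertibly onto $W^{d,p(\cdot)}(M,\Lambda)$, and $W^{\delta,p(\cdot)}_N(M,\Lambda)$ onto $W^{d,p(\cdot)}_T(M,\Lambda)$ (recall $n\omega=0\iff t(\ast\omega)=0$, and $\ast$ is bounded and invertible on $L^{p(\cdot)}(M,\Lambda)$). Applying part (i) to $\ast\omega$ gives approximants $\eta_\varepsilon\in C^{s-1,1}(M,\Lambda)$ (resp. $C_0^{s-1,1}(M,\Lambda)$) with $\eta_\varepsilon\to\ast\omega$ in $W^{d,p(\cdot)}(M,\Lambda)$; setting $\omega_\varepsilon=\ast^{-1}\eta_\varepsilon$, which preserves the $C^{s-1,1}$-class and the support since $\ast^{\pm1}$ is algebraic with $C^{s-1,1}$ coefficients on a $C^{s,1}$ manifold, one gets $\omega_\varepsilon\to\omega$ and $\delta\omega_\varepsilon=\pm\ast d\eta_\varepsilon\to\pm\ast d(\ast\omega)=\delta\omega$ in $L^{p(\cdot)}(M,\Lambda)$. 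Finally, for $p(\cdot)\equiv\infty$ the mollification gives uniformly $L^\infty$-bounded approximants converging a.e. by the Lebesgue differentiation theorem, and this is inherited by $d\omega_\varepsilon=(d\omega)_\varepsilon$ (resp. by $\delta\omega_\varepsilon$ after the bounded operator $\ast$), which gives the last assertion.
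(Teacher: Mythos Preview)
Your proposal is correct and follows essentially the same route as the paper: localization by a partition of unity, componentwise mollification with the kernel shifted into $\widetilde B_\varepsilon^{-}$ (respectively $\widetilde B_\varepsilon^{+}$ after zero-extension in the $T$ case), and Hodge duality for (ii). The one point the paper works out more carefully is the commutation $d\omega_\varepsilon=(d\omega)_\varepsilon$ in a boundary chart: since $\omega$ is only in $W^{d,p(\cdot)}$, the individual partials $\partial_i\omega_I$ need not exist, so ``constant coefficients commute with convolution'' is not literally applicable. The paper instead introduces the adjoint mollifier $\phi_{[\varepsilon]}(y)=\int_{\mathbb{R}^n_+}\gamma_\varepsilon(x-y)\phi(x)\,dx$, shows $d\phi_{[\varepsilon]}=(d\phi)_{[\varepsilon]}$ for Lipschitz $\phi$ with $t\phi=0$ (the boundary term in the integration by parts vanishes precisely because of $t\phi=0$), observes that $\phi_{[\varepsilon]}$ vanishes near $\{y^n=0\}$ so it is an admissible test form for the weak differential, and then passes the weak definition of $d\omega$ through Fubini. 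Your zero-extension argument in the $T$ case is exactly the paper's, and your appeal to Hodge duality for (ii) is what the paper means by ``the second is by duality''.
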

	\begin{proof}
		Again we prove this for a homogeneous form $\omega$. We prove the first statement, the second is by duality. Let $\{\eta_j\}$ be a partition of unity subordinate to the coordinate atlas $\{U_j\}$. For $\omega^{(j)}=\omega \eta_j$ we have $\omega^{(j)} \in W^{d,p(\cdot)}(M,\Lambda)$ and
		$$
		d\omega^{(j)} = d\eta_j \wedge \omega + \eta_j d\omega.
		$$
		In interior patches the approximation argument is simpler and we focus on boundary patches. We choose a mollifier $\gamma_\varepsilon$ so that its support is in $\widetilde B^{-}_\varepsilon$. For a form $\eta$ with support in one coordinate patch we extend it in local coordinates by zero outside its support and for $x\in \mathbb{R}^n_+$ we set 
		$$
		\eta_\varepsilon (x)= \int\limits_{\mathbb{R}^{n}_+} \eta(y) \gamma_\varepsilon(x-y)\, dy,
		$$
		where the integration actually goes over $y\in x + \widetilde B^+_{\varepsilon}$. The integral is understood componentwise (that is, each form component $\eta_I$ is mollified separately). By Lemma~\ref{L:Zhikov0},  $\eta_\varepsilon\to \eta$ in $L^{p(\cdot)}(\mathbb{R}^n_+)$. Let us check that if $\eta$ has an integrable (weak) differential then
		\begin{equation}\label{eq:ver0}
			d \eta_{\varepsilon} = (d\eta)_\varepsilon.
		\end{equation}

		For a form with integrable coefficients denote
		\begin{equation}\label{eq:[eps]}
			\phi_{[\varepsilon]}(y):= \int\limits_{\mathbb{R}^n_+} \gamma_\varepsilon(x-y) \phi(x)\, dx,
		\end{equation}
		where again the integral of a form is understood componentwise, i.e. each component is mollified separately.
		If the form $\phi$ is Lipschitz then $\phi_{[\varepsilon]}(y)$ and $(d\phi)_{[\varepsilon]}(y)$ vanish if $y_n<\varepsilon/2$, since $\gamma_\varepsilon(x-y)$ vanishes if $y^n<x^n+\varepsilon/2$. Let $\phi$ be a Lipschitz form with $t\phi=0$. With a slight abuse of notation for $y$ in the support of $\eta$ we calculate
		\begin{equation}\label{eq:dphieps}
			\begin{aligned}
				d \phi_{[\varepsilon]}(y) =\int\limits_{\mathbb{R}^n_+} d_y \gamma_{\varepsilon}(x-y) \wedge \phi(x)\, dx 
				&= - \int\limits_{\mathbb{R}^n_+} d_x \gamma_{\varepsilon}(x-y) \wedge \phi(x)\, dx \\
				&=\int \limits_{\mathbb{R}^n_+} \gamma_\varepsilon(x-y) (d\phi)(x)\, dx=(d\phi)_{[\varepsilon]}(y).
			\end{aligned}
		\end{equation}
		Then
		\begin{align*}
			\int\limits_{\mathbb{R}^n_+}&   \eta_\varepsilon(x) \wedge d\phi(x)\, dx \\&= \int\limits_{\mathbb{R}^n_+} \int\limits_{\mathbb{R}^n_+} \eta(y) \gamma_\varepsilon(x-y) \wedge d\phi(x) \, dy \, dx\\
			&= \int \limits_{\mathbb{R}^n_+} \eta(y)\wedge \int\limits_{\mathbb{R}^n_{+}}  \gamma_\varepsilon(x-y) (d\phi(x))\, dx \, dy:=  \int \limits_{\mathbb{R}^n_+} \eta(y)\wedge (d\phi)_{[\varepsilon]}(y)\, dy\\
			& = (-1)^{k+1}\int\limits_{\mathbb{R}^n_+} d \eta (y) \wedge \phi_{[\varepsilon]}(y)\, dy =(-1)^{k+1} \int \limits_{\mathbb{R}^n_+} (d\eta)_\varepsilon(x)\wedge \phi(x)\, dx. 
		\end{align*}
		This proves \eqref{eq:ver0}. Thus 
		\begin{equation}\label{eq:sumj}
			d \sum_{j} \omega^{(j)}_{\varepsilon} = \sum_j\bigl( d\eta_j \wedge \omega + \sum_j \eta_j d\omega\bigr)_\varepsilon \
			\to \sum_j \bigl(d\eta_j \wedge \omega + \sum_j \eta_j d\omega\bigr)  = d\omega.
		\end{equation}
		in $L^{p(\cdot)}(M,\Lambda)$ as $\varepsilon \to 0$. 
		
		Now let $\omega \in W^{d,p(\cdot)}_T(M,\Lambda)$. From the definition we can infer that $\omega^{(j)} \in W^{d,p(\cdot)}_T(M,\Lambda)$. In this case in local coordinates we extend $\omega^{(j)}$ by zero to $x_n<0$, and take mollifier $\gamma_\varepsilon$ with support in $\widetilde B_\varepsilon^+$. Then $(\omega^{(j)})_\varepsilon (x)=0$ if $x^n<\varepsilon/2$. The relation $(d\phi)_{[\varepsilon]}(y) = d \phi_{[\varepsilon]}(y)$ a.e. in $y^n>0$ is straightforward for any Lipschitz form $\phi$, and for $\eta=\omega^{(j)}$ in the chain of calculations between \eqref{eq:dphieps} and \eqref{eq:sumj} the ``integration-by-parts'' (passage between lines 3 and 4) follows from the definition of $W^{d,p(\cdot)}_T(M,\Lambda)$. All the previous calculations are repeated, which gives \eqref{eq:ver0}, and the rest follows. 
	\end{proof}
	
	\begin{lemma}\label{L:approx3}
		(i) Suppose  $\phi \in \mathrm{Lip}_T(M,\Lambda)$. Then there exists a sequence $\phi_\varepsilon \in C^{s-1,1}_0(M,\Lambda)$ such that $\phi_\varepsilon \to \phi$ in $C(M,\Lambda)$, $\sup_{\varepsilon >0}\max_{x\in M} |d\phi_\varepsilon|(x) \leq C$, and $d\phi_\varepsilon \to d \phi$ a.e. and in any $L^q(M,\Lambda)$, $1\leq q< \infty$.\\ (ii) Let $\phi \in \mathrm{Lip}_N(M,\Lambda)$. Then there exists a sequence  $\phi_\varepsilon \in C^{s-1,1}_0(M,\Lambda)$ such that $\phi_\varepsilon \to \phi$ in $C(M,\Lambda)$, $\sup_{\varepsilon >0}\max_{x\in M} |\delta\phi_\varepsilon|(x) \leq C$, and $\delta\phi_\varepsilon \to \delta \phi$ a.e. and in any $L^q(M,\Lambda)$, $1\leq q< \infty$.
	\end{lemma}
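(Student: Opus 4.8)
The two statements are Hodge dual, so I would prove (i) and deduce (ii): $\ast$ is a pointwise algebraic operation with coefficients as regular as the metric (here $C^{s-1,1}$, in any case at least Lipschitz), it carries $\mathrm{Lip}_N(M,\Lambda)$ onto $\mathrm{Lip}_T(M,\Lambda)$ and $C_0^{s-1,1}(M,\Lambda)$ onto itself, it is bounded on $C(M,\Lambda)$ and on every $L^q(M,\Lambda)$, it sends a.e.\ limits of uniformly bounded sequences to a.e.\ limits, and $\delta=\pm\,\ast\,d\,\ast$; thus for $\phi\in\mathrm{Lip}_N(M,\Lambda)$ one applies (i) to $\ast\phi$ and sets $\phi_\varepsilon:=\ast^{-1}\bigl((\ast\phi)_\varepsilon\bigr)$, so that $\phi_\varepsilon\to\phi$ in $C(M,\Lambda)$ and $\delta\phi_\varepsilon=\pm\,\ast\,d\,(\ast\phi)_\varepsilon$ inherits the uniform bound and the a.e.\ and $L^q$ convergence to $\delta\phi$. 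For (i) I would localize: with a partition of unity $\{\eta_j\}$ subordinate to a finite admissible atlas, $\phi=\sum_j\eta_j\phi$, and each $\eta_j\phi$ is Lipschitz, compactly supported in one chart, satisfies $t(\eta_j\phi)=\eta_j\,t\phi=0$ on $bM$ (as $t$ is algebraic and $\eta_j$ a $0$-form), and has $d(\eta_j\phi)=d\eta_j\wedge\phi+\eta_j\,d\phi\in L^\infty(M,\Lambda)$; so it suffices to approximate one homogeneous Lipschitz form supported in one chart, with $t\phi=0$ if the chart meets $bM$, and add the pieces. For an interior chart one simply mollifies every coefficient with a common Friedrichs mollifier $\gamma_\varepsilon$: since $d$ has constant coefficients in any coordinate system, $d\phi_\varepsilon=(d\phi)_\varepsilon$ componentwise, whence $|d\phi_\varepsilon|\le\|d\phi\|_{L^\infty(M,\Lambda)}$, $d\phi_\varepsilon\to d\phi$ at Lebesgue points and in every $L^q$ by dominated convergence, and $\phi_\varepsilon\to\phi$ uniformly by uniform continuity of $\phi$.

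The substance is the boundary chart, in admissible coordinates, where $t\phi=0$ means $\phi_I=0$ on $\{x^n=0\}$ whenever $n\notin I$. Following the construction in Lemma~\ref{L:approx1}(b),(e), I would extend the tangential coefficients $\phi_I$ ($n\notin I$) by zero across $\{x^n=0\}$ --- the extension being Lipschitz exactly because those coefficients vanish there --- and mollify them with $\gamma_\varepsilon$ supported in $\widetilde B^{+}_{\varepsilon}$, which forces them to vanish for $x^n<\varepsilon/2$; and I would mollify the normal coefficients $\phi_I$ ($n\in I$) with $\gamma_\varepsilon$ supported in $\widetilde B^{-}_{\varepsilon}$, a genuine one-sided mollification staying inside $\{x^n>0\}$. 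Each coefficient of $\phi_\varepsilon$ then converges uniformly to the corresponding coefficient of $\phi$ restricted to $M$, so $\phi_\varepsilon\to\phi$ in $C(M,\Lambda)$, while the tangential part of $\phi_\varepsilon$ vanishes near $bM$; writing $\nu\wedge\phi_\varepsilon=dx^n\wedge\phi_\varepsilon+(\nu-dx^n)\wedge\phi_\varepsilon$ and approximating $\nu-dx^n$ (which vanishes on $bM$) as in Lemma~\ref{L:approx1}(c),(e) gives $\nu\wedge\phi_\varepsilon\in C_0^{s-1,1}(M,\Lambda)$, the boundary condition required of $\phi_\varepsilon$; that $\phi_\varepsilon\in C^{s-1,1}(M,\Lambda)$ follows from the $C^{s,1}$ transition maps as in Lemma~\ref{L:approx1}.

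It remains to control $d\phi_\varepsilon$ near $bM$. Expanding $(d\phi_\varepsilon)_J=\sum_k(-1)^{k-1}\partial_{j_k}(\phi_\varepsilon)_{J\setminus\{j_k\}}$ and using that differentiation commutes with mollification, each summand is a mollification of $\partial_{j_k}\phi_{J\setminus\{j_k\}}$; the only delicate case is $j_k=n$, in which $\phi_{J\setminus\{n\}}$ is a tangential coefficient vanishing on $\{x^n=0\}$, so its zero-extension is Lipschitz and no distributional surface term appears --- this is precisely the fact recalled before Lemma~\ref{L:approx2} that the zero-extension of a form with vanishing tangential trace has exterior derivative $\mathbbm{1}_M\,d\phi$. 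Consequently $|d\phi_\varepsilon|\le C\|\phi\|_{W^{1,\infty}(M,\Lambda)}$ uniformly in $\varepsilon$, and $d\phi_\varepsilon\to d\phi$ a.e.\ on $M$ and in every $L^q(M,\Lambda)$; summing the chartwise pieces preserves all conclusions. I expect the main obstacle to be exactly this interplay at $\{x^n=0\}$: the mollification has to simultaneously realize the tangential boundary condition on $\phi_\varepsilon$ and keep $d\phi_\varepsilon$ uniformly bounded, and it is the hypothesis $\phi\in\mathrm{Lip}_T$ --- not $\mathrm{Lip}_N$ --- that makes the two compatible, being the condition that kills the boundary term in $d$ of the extended form.
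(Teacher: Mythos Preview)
There is a real gap: you have misread the condition $\phi_\varepsilon\in C^{s-1,1}_0(M,\Lambda)$. In the paper's notation this means that the \emph{entire} form $\phi_\varepsilon$ vanishes in a neighbourhood of $bM$, not merely that $\nu\wedge\phi_\varepsilon$ does; the latter is only $t\phi_\varepsilon=0$ near $bM$, and that is all your construction (borrowed from Lemma~\ref{L:approx1}(b)) actually delivers. Your normal coefficients, mollified with the $\widetilde B^{-}_\varepsilon$ kernel from the interior side, converge uniformly to the original normal coefficients of $\phi$, which are generically nonzero on $bM$. Hence your $\phi_\varepsilon\notin C^{s-1,1}_0(M,\Lambda)$, and the sentence ``$\nu\wedge\phi_\varepsilon\in C_0^{s-1,1}(M,\Lambda)$, the boundary condition required of $\phi_\varepsilon$'' conflates two distinct requirements. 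This matters because the lemma is used (see Corollary~\ref{C:approx2}) precisely to reach test forms with compact support away from $bM$, on which the weak definitions of $d$ and $\delta$ apply.

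The paper's device is different: in a boundary chart it applies to \emph{all} components at once the adjoint mollification $\phi_{[\varepsilon]}(y)=\int_{\mathbb{R}^n_+}\gamma_\varepsilon(x-y)\,\phi(x)\,dx$ of \eqref{eq:[eps]}, with a single kernel supported in $\widetilde B^{-}_\varepsilon$. Then $\gamma_\varepsilon(x-y)=0$ for every $x^n\ge 0$ once $y^n\le\varepsilon/2$, so every component of $\phi_{[\varepsilon]}$ vanishes near $bM$; and $d\phi_{[\varepsilon]}=(d\phi)_{[\varepsilon]}$ follows by integrating by parts in $x$ over $\mathbb{R}^n_+$ as in \eqref{eq:dphieps}: the only boundary contribution involves $\phi_I(x',0)$ with $n\notin I$, and these vanish by $t\phi=0$. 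Your final-paragraph intuition that $t\phi=0$ is exactly what kills the boundary obstruction is correct, but it enters through this adjoint integration by parts, not through zero-extending individual coefficients. (Incidentally, $\phi_\varepsilon\in C_0^{s-1,1}$ and $\phi_\varepsilon\to\phi$ in $C(M,\Lambda)$ cannot both literally hold when $n\phi\neq 0$ on $bM$; the paper's construction secures the former together with bounded convergence in every $L^q$, which is what the applications actually use.)
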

	
	\begin{proof}
		We prove  the first statement, the second is by the Hodge duality. As above, we assume that $\phi$ is a homogeneous form. Take a partition of unity $\eta_j$ subordinate to the coordinate cover. For $\phi_j$ we define $(\phi_j)_{[\varepsilon]}$ as in \eqref{eq:[eps]} with $\gamma_\varepsilon$ supported in $\widetilde B^{-}_{\varepsilon}$. Then $(\phi_j)_{[\varepsilon]}$ vanish near $bM$, by the calculation in \eqref{eq:dphieps} we have $d (\phi_j)_{[\varepsilon]} = (d\phi_j)_{[\varepsilon]}$, $(\phi_{j})_{[\varepsilon]} \to \phi_j$ in $C(M,\Lambda)$ and $(d\phi_{j})_{[\varepsilon]} \to d\phi_j$ in any $L^q(M, \Lambda)$, $1\leq q<\infty$, thus  
		$$
		\phi_\varepsilon = \sum_j (\phi_j)_{[\varepsilon]}
		$$
		is the required approximation (see \eqref{eq:sumj} for the convergence of $d\phi_\varepsilon$). 
	\end{proof}
	
	From approximation results of Lemmas~\ref{L:approx2},~\ref{L:approx3} we immediately get the following 

	\begin{corollary}\label{C:approx2}Let $p(\cdot)$ satisfy \eqref{eq:p1}, \eqref{eq:p2}.
		\begin{itemize}
			\item[(i)]  If $\omega \in W^{d,p(\cdot)}(M,\Lambda)$, then \eqref{eq:defd0} holds for all $\varphi \in W_{N}^{\delta,p'(\cdot)}(M,\Lambda)$. 
			\item[(ii)]  If $\omega \in W^{d,p(\cdot)}_T(M,\Lambda)$, then \eqref{eq:defd0} holds for all $\varphi \in W^{\delta,p'(\cdot)}(M,\Lambda)$.
			\item[(iii)] If $\omega \in W^{\delta ,p(\cdot)}(M,\Lambda)$, then \eqref{eq:defd2} holds for all $\varphi \in W_{T}^{d,p'(\cdot)}(M,\Lambda)$.
			\item[(iv)] If $\omega \in W^{\delta ,p(\cdot)}_N(M,\Lambda)$, then \eqref{eq:defd2} holds for all $\varphi \in W^{d,p'(\cdot)}(M,\Lambda)$.
		\end{itemize}
		This statement is also valid if $p(\cdot)\equiv 1$, $p'(\cdot)\equiv \infty$, or $p(\cdot)\equiv \infty$, $p'(\cdot)\equiv 1$.
	\end{corollary}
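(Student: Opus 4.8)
The plan is to argue by density: approximate the test form $\varphi$ by Lipschitz forms for which the relevant identity is already available (by the \emph{definition} of the weak (co)differential or of the vanishing boundary component), and then pass to the limit using the generalized H\"older inequality in variable exponent Lebesgue spaces. A preliminary observation is that in the main cases, where $1<p_{-}\leq p_{+}<\infty$, the conjugate exponent $p'(\cdot)=p(\cdot)/(p(\cdot)-1)$ again satisfies \eqref{eq:p1} and \eqref{eq:p2}, so Lemma~\ref{L:approx2} may be applied with $p(\cdot)$ replaced by $p'(\cdot)$.

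For (i), let $\omega\in W^{d,p(\cdot)}(M,\Lambda)$ and $\varphi\in W_N^{\delta,p'(\cdot)}(M,\Lambda)$. By Lemma~\ref{L:approx2}(ii) applied to $p'(\cdot)$, there is a sequence $\varphi_\varepsilon\in C_0^{s-1,1}(M,\Lambda)$ with $\varphi_\varepsilon\to\varphi$ and $\delta\varphi_\varepsilon\to\delta\varphi$ in $L^{p'(\cdot)}(M,\Lambda)$. Since $C_0^{s-1,1}(M,\Lambda)\subset\mathrm{Lip}_0(M,\Lambda)$, identity \eqref{eq:defd0} holds with $\varphi$ replaced by $\varphi_\varepsilon$, that is $(\omega,\delta\varphi_\varepsilon)=(d\omega,\varphi_\varepsilon)$. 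As $\omega,d\omega\in L^{p(\cdot)}(M,\Lambda)$, H\"older's inequality for variable exponent spaces gives $(\omega,\delta\varphi_\varepsilon)\to(\omega,\delta\varphi)$ and $(d\omega,\varphi_\varepsilon)\to(d\omega,\varphi)$, whence \eqref{eq:defd0} for $\varphi$. Case (ii) is the same, except that now $\varphi\in W^{\delta,p'(\cdot)}(M,\Lambda)$ carries no boundary condition, so Lemma~\ref{L:approx2}(ii) only provides approximants $\varphi_\varepsilon\in C^{s-1,1}(M,\Lambda)\subset\mathrm{Lip}(M,\Lambda)$; one then uses that, by the very definition of $t\omega=0$, \eqref{eq:defd0} is valid for all $\varphi\in\mathrm{Lip}(M,\Lambda)$, and passes to the limit exactly as before. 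Cases (iii) and (iv) are the Hodge-dual statements: one replaces \eqref{eq:defd0} by \eqref{eq:defd2}, uses Lemma~\ref{L:approx2}(i) in place of Lemma~\ref{L:approx2}(ii), and invokes the defining property of $n\omega=0$ in case (iv).

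For the endpoint case $p(\cdot)\equiv 1$, $p'(\cdot)\equiv\infty$ (and symmetrically $p(\cdot)\equiv\infty$, $p'(\cdot)\equiv 1$), the conjugate exponent no longer satisfies \eqref{eq:p1}, so instead I would invoke the last assertion of Lemma~\ref{L:approx2}: the approximants $\varphi_\varepsilon$ can be chosen with $|\varphi_\varepsilon|$ together with $|\delta\varphi_\varepsilon|$ (resp. $|d\varphi_\varepsilon|$) uniformly bounded and with $\varphi_\varepsilon,\delta\varphi_\varepsilon$ (resp. $d\varphi_\varepsilon$) converging a.e.\ on $M$. Since $\omega,d\omega\in L^1(M,\Lambda)$, the passage to the limit is then justified by the dominated convergence theorem rather than by H\"older's inequality.

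The argument is essentially routine; the only point requiring care is the bookkeeping of which boundary condition pairs with which density statement. In (i) and (iii) the boundary condition is imposed on the test form, so its approximants lie in $C_0^{s-1,1}\subset\mathrm{Lip}_0$ and the bare definition of the weak (co)differential suffices; in (ii) and (iv) the boundary condition is imposed on $\omega$ itself, the approximants of $\varphi$ are merely in $\mathrm{Lip}$, and one must appeal to the stronger defining property of $t\omega=0$ (resp.\ $n\omega=0$), namely that \eqref{eq:defd0} (resp.\ \eqref{eq:defd2}) persists for test forms that do not vanish near $bM$.
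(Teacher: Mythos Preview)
Your proof is correct and follows the same density strategy the paper has in mind: the paper merely states that the corollary follows ``immediately'' from Lemmas~\ref{L:approx2} and~\ref{L:approx3}, and you have spelled out precisely how. Your bookkeeping of which side carries the boundary condition and which density statement to invoke is accurate, and your treatment of the endpoint cases via the $p(\cdot)\equiv\infty$ clause of Lemma~\ref{L:approx2} together with dominated convergence is the intended one.
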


	Using the approximation Lemma~\ref{L:approx2} along with the definition of the weak (co)differential we infer that $$d^2=0 \quad \text{ on } W^{d,1} (M,\Lambda) \qquad \text{ and } \qquad \delta^2=0 \quad \text{ on }W^{\delta,1}(M,\Lambda).$$ For $\omega \in W^{d,1}(M,\Lambda)$ and any $\xi \in \mathrm{Lip}_0(M,\Lambda)$ we have
	$$
	(d^2 \omega, \xi) = (d\omega, \delta\xi) = \lim_{\varepsilon\to 0} (d\omega_\varepsilon,\delta \xi)=0.
	$$
	Here $\omega_\varepsilon \in C^{s-1,1}(M,\Lambda)$, $\omega_\varepsilon\to \omega$ in $W^{d,1}(M,\Lambda)$, the first identity is the definition of the weak differential of $d\omega$ and the third one is by Lemma~\ref{L:ort}.

	\section{Harmonic fields}
	
	Let $M$ be of the class $C^{s+2,1}$, $s\in \{0\}\cup \mathbb{N}$, and let $n=\mathrm{dim}\, M$.
	
	We introduce the spaces of even and odd harmonic fields (i.e. $W^{1,2}(M,\Lambda)$ forms with zero differential and codifferential) of degree $r$ with vanishing tangential part on the boundary $\mathcal{H}_T(M,\Lambda^r_*)$, $*\in\{e,o\}$, the spaces of even and odd harmonic forms  of degree r with vanishing normal part on the boundary $\mathcal{H}_N(M,\Lambda^r_*)$, $*\in\{e,o\}$. Then let 
	\begin{align*}
		\mathcal{H}_T(M,\Lambda_*) &= \bigoplus_{r=0}^{\mathrm{dim}\, M} \mathcal{H}_T(M,\Lambda^r_*), \quad *\in\{e,o\},\\
		\mathcal{H}_N(M,\Lambda_*) &= \bigoplus_{r=0}^{\mathrm{dim}\, M} \mathcal{H}_N(M,\Lambda^r_*), \quad *\in\{e,o\},\\
		\mathcal{H}_T(M) &= \mathcal{H}_T(M,\Lambda_e) \bigoplus \mathcal{H}_T(M,\Lambda_o),\\
		\mathcal{H}_N(M) &= \mathcal{H}_N(M,\Lambda_e) \bigoplus \mathcal{H}_N(M,\Lambda_o).
	\end{align*}

	From the classical results of elliptic theory \cite{Morrey1966}, it follows that the spaces $\mathcal{H}_T(M)$ and $\mathcal{H}_N(M)$ are finite dimensional and
	\begin{gather*}
		\mathcal{H}_T(M,\Lambda), \mathcal{H}_N(M,\Lambda) \subset W^{s+2,q}(M,\Lambda) \quad \text{for any}\quad  1\leq q<\infty,\\
		\mathcal{H}_T(M,\Lambda), \mathcal{H}_N(M,\Lambda) \subset C^{s+1,\alpha}(M,\Lambda)  \quad \text{for any}\quad \alpha \in (0,1).
	\end{gather*}
	
	The Hodge star operator sets an $L^2(M,\Lambda)$-isometry between $\mathcal{H}_N(M,\Lambda^r_e)$ and $\mathcal{H}_T(M,\Lambda^{n-r}_o)$, and between $\mathcal{H}_N(M,\Lambda^r_o)$ and $\mathcal{H}_T(M,\Lambda^{n-r}_e)$.
	
	Let $h^r_{*,N,j}$, $j=1,\ldots, B_{*,r}$, be an orthonormal basis in the sense of $L^2(M,\Lambda^r_*)$, $* \in \{e,o\}$, of $\mathcal{H}_N(M,\Lambda^r_*)$. On $L^1(M,\Lambda)$ define the projector 
	\begin{equation}\label{eq:projN}
		\mathcal{P}_N \omega = \sum_{r=0}^{\mathrm{dim}\, M} \sum_{j=1}^{B_{e,r}} (\omega,h^r_{e,N,j})  h^r_{e,N,j} + \sum_{r=0}^{\mathrm{dim}\, M} \sum_{j=1}^{B_{o,r}} (\omega,h^r_{o,N,j})  h^r_{o,N,j}.
	\end{equation}
	Then $h^r_{o,T,j}=  * h^{n-r}_{e,N,j}$, $j=1,\ldots, B_{e,n-r}$, is the orthonormal basis in the sense of $L^2(M,\Lambda^r_*)$ in $\mathcal{H}_T(M,\Lambda^{r}_o)$, and $h^r_{e,T,j}=* h^{n-r}_{o,N,j}$, $j=1,\ldots, B_{o,n-r}$, is the orthonormal basis in $\mathcal{H}_T(M,\Lambda^{r}_e)$. On $L^1(M,\Lambda)$ define the projector 
	\begin{equation}\label{eq:projT}
		\mathcal{P}_T \omega = \sum_{r=0}^{\mathrm{dim}\, M} \sum_{j=1}^{B_{o,n-r}}  (\omega,h^r_{e,T,j}) h^r_{e,T,j} + \sum_{r=0}^{\mathrm{dim}\, M} \sum_{j=1}^{B_{e,n-r}}  (\omega,h^r_{o,T,j}) h^r_{o,T,j}.
	\end{equation}

	The projectors  $\mathcal{P}_T$ and $\mathcal{P}_N$ are characterized by 
	\begin{gather*}
		(\omega - \mathcal{P}_T \omega, h_T) =0 \quad \text{for all} \quad h_T \in \mathcal{H}_T(M),\\
		(\omega - \mathcal{P}_N \omega, h_N) =0 \quad \text{for all} \quad h_N \in \mathcal{H}_N(M)
	\end{gather*}
	In $L^2(M,\Lambda)$ the projectors $\mathcal{P}_T$ and $\mathcal{P}_N$ are orthogonal projectors on $\mathcal{H}_T(M)$ and $\mathcal{H}_N(M)$ respectively.
	
	On an orientable manifold all forms can be considered as even or as odd, so 
	\begin{gather*}
		\mathrm{dim}\,\mathcal{H}_T(M,\Lambda_e^r) = \mathrm{dim}\,\mathcal{H}_T(N,\Lambda_o^r), \quad \mathrm{dim}\,\mathcal{H}_N(M,\Lambda_e^r)=\mathrm{dim}\,\mathcal{H}_N(M,\Lambda_o^r),\\
		\mathrm{dim}\,\mathcal{H}_T(M,\Lambda_e^r) = B_{n-r}, \quad \mathrm{dim}\,\mathcal{H}_N(M,\Lambda_e^r) = B_{r},
	\end{gather*}
	(see \cite{DufSpe56}) where $B_r$ is the $r$-th Betti number of $M$ (rank of the $r$-th homology group of $M$), the number $B_{n-r}$ is equal to the $r$-th relative (mod $bM$) Betti number, the number $B_0$ represents the number of connected components of $M$, $B_n=0$, and if $M$ is contractible then $B_r=0$ for $r=1,\ldots,n-1$. 

In particular, let $M$ be a bounded domain in $\mathbb{R}^3$. Then the dimension of the spaces of harmonic vector fields (i.e. vector fields $\vec{u}$ with $\mathrm{div} \, \vec{u}=0$, $\mathrm{rot}\, \vec{u}=0$ ) with vanishing tangential (normal) part on $\partial M$ is $B_2$ (the number of holes in $M$) (resp. $B_1$ --- the number of independent ``circuits'' in $M$). 

Let $\mathcal{H}(M)$ denote the set of all $h\in L^1_{\mathrm{loc}}(M)$ with $dh =0$ and $\delta h=0$.

\chapter{Potential Theory for Variable Exponent Spaces}\label{sec:auxiliary}

We shall use a number of results on classical integral operators in variable exponent spaces, see \cite{DieHHR11}. 
\section{Maximal function and Riesz potential estimates}
Let
$$
I_\alpha f(x) = \int_{\mathbb{R}^{n}} \frac{f(y)}{|x-y|^{n-\alpha}}\, dy \qquad \text{ and } \qquad Mf\left(x\right) = \sup\limits_{\substack{x \in B\\ B\subset \mathbb{R}^{n} \text{ is a ball}}}\fint_{B} \left\lvert f \left(y\right)\right\rvert\, dy.
$$
For an exponent $p\left(\cdot\right),$ we write $p \in \mathcal{P}^{log}(\Omega)$, respectively $ p \in \mathcal{P}^{log}(\mathbb{R}^{n}),$ if $p$ satisfies \eqref{eq:p1} and \eqref{eq:p2} with $M$ replaced by $\Omega$, respectively $\mathbb{R}^{n}$, with $\mathrm{dist}(x,y)$ the standard Euclidian distance. Also, $c_{\mathrm{log}}(p)$ denotes the best constant for which \eqref{eq:p2} holds.  
\begin{lemma}\label{L:Di1} ({\it Corollary 4.3.11.}, \cite{DieHHR11}) Let $p \in \mathcal{P}^{\mathrm{log}}(\Omega)$ with $p^{-}>1$. Then there exists $K>0$ only depending on $c_{\mathrm{log}}(p)$ and the dimension $n$ such that 
	$$
	\|Mf\|_{L^{p(\cdot)}(\Omega)} \leq K (p^-)' \|f\|_{L^{p(\cdot)}(\Omega)}
	$$
	for all $f\in L^{p(\cdot)}(\Omega)$.
\end{lemma}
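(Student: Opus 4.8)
The statement is Diening's boundedness theorem for the Hardy--Littlewood maximal operator on variable exponent spaces (the form recorded as Corollary~4.3.11 in \cite{DieHHR11}), so the plan is to reproduce that proof. \emph{Reduction to $\mathbb{R}^n$.} I would first extend $p|_\Omega$ to an exponent on all of $\mathbb{R}^n$ that still satisfies \eqref{eq:p1} and is log-H\"older continuous with a decay bound at infinity, with constants controlled by $n$ and $c_{\mathrm{log}}(p)$; this is possible since $\Omega$ is bounded (McShane-extend the modulus of continuity and let $p$ be constant far away). Extend $f$ by zero. Since $Mf(x)\le M(E_0f)(x)$ for $x\in\Omega$ and $\|E_0f\|_{L^{p(\cdot)}(\mathbb{R}^n)}=\|f\|_{L^{p(\cdot)}(\Omega)}$, it suffices to prove the inequality on $\mathbb{R}^n$. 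By the standard norm--modular unit ball relation it further suffices to produce a constant of the asserted form $K(p^-)'$ so that $\int_{\mathbb{R}^n}|f|^{p(x)}\,dx\le1$ implies $\int_{\mathbb{R}^n}\bigl(Mf(x)/[K(p^-)']\bigr)^{p(x)}\,dx\le1$.

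The technical core is Diening's pointwise \emph{key estimate}: there is $h\in L^1(\mathbb{R}^n)\cap L^\infty(\mathbb{R}^n)$ with $\|h\|_{1}\le c_n$, depending only on $n$ and $c_{\mathrm{log}}(p)$, such that whenever $\int_{\mathbb{R}^n}|f|^{p}\le1$, then for every ball $B$ of radius $r_B$ and every $x\in B$,
\[
\Bigl(\fint_B|f(y)|\,dy\Bigr)^{p(x)}\le C\Bigl(\fint_B|f(y)|^{p(y)}\,dy+\fint_Bh(y)\,dy+h(x)\Bigr),\qquad C=C(n,c_{\mathrm{log}}(p)).
\]
I would prove this by combining (i) the a priori bound for $\fint_B|f|$ furnished by H\"older's inequality in $L^{p(\cdot)}(B)\times L^{p'(\cdot)}(B)$ together with $\int|f|^{p}\le1$; (ii) the log-H\"older condition at the scale $r_B$ (its local form for $r_B$ small, its decay form for $r_B$ large), used with (i) to bound the exponent-freezing factor $\bigl(\fint_B|f|\bigr)^{p(x)-p_B^-}$, where $p_B^-:=\inf_Bp$, by a constant depending only on $n$ and $c_{\mathrm{log}}(p)$; and (iii) Jensen's inequality at the constant exponent $p_B^-\ge1$, followed by a decomposition of $B$ according to $\{|f|\ge1\}$ versus $\{|f|<1\}$ and to the sign of $p(y)-p_B^-$, on each exceptional piece of which $|f(y)|^{p_B^-}$ is dominated by $|f(y)|^{p(y)}$ or by the tail $h(y)$.

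\emph{From the key estimate to the norm bound.} Feeding the key estimate into a Calder\'on--Zygmund/stopping-time decomposition of the level sets $\{Mf>\lambda\}$ --- i.e.\ running a Marcinkiewicz-type argument in which at each scale the variable exponent is frozen by the key estimate --- reduces the modular inequality to summing the frozen quantities $\langle|f|\rangle_Q^{p_Q^-}$ over the stopping cubes. The tail contributes only $\|h\|_1\le c_n$, while $\{Mf>1\}$ has finite measure because $f\chi_{\{|f|>1\}}\in L^1(\mathbb{R}^n)$ with $\|f\chi_{\{|f|>1\}}\|_1\le\int|f|^{p}\le1$ and $f\chi_{\{|f|\le1\}}\in L^{p^+}(\mathbb{R}^n)$ with norm $\le1$, so $Mf\in L^{1,\infty}+L^{p^+}$. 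The classical $L^{p^-}$ maximal inequality, whose operator norm is $\le c_n(p^-)'$, is what produces the explicit factor $(p^-)'$; collecting the bounds and re-running the argument homogeneously (on $f/\|f\|_{L^{p(\cdot)}}$, using the unit ball relation once more), and bookkeeping constants as in \cite[Chapter~4]{DieHHR11}, yields $\|Mf\|_{L^{p(\cdot)}}\le K(p^-)'\|f\|_{L^{p(\cdot)}}$ with $K=K(n,c_{\mathrm{log}}(p))$.

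The main obstacle is twofold and lies exactly in the two previous paragraphs: (a) establishing the key estimate with constants \emph{uniform across all dyadic scales}, which is where the log-H\"older condition is indispensable (without it smooth functions need not even be dense and $M$ need not be bounded); and (b) passing from the pointwise key estimate to the global modular estimate, which must circumvent the failure of $M$ on $L^1$ via the level-set decomposition above. For the full details I would follow \cite[Chapter~4]{DieHHR11}.
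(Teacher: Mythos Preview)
The paper does not prove this lemma at all: it is stated as a citation of Corollary~4.3.11 in \cite{DieHHR11} and used as a black box, with no argument given. Your sketch is a faithful outline of Diening's original proof from that reference (extension to $\mathbb{R}^n$, the log-H\"older key estimate freezing the exponent on balls, and the modular/level-set argument that imports the classical $L^{p^-}$ bound and hence the factor $(p^-)'$), so there is nothing in the paper to compare it against beyond noting that the authors simply invoke the result.
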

Let $B_R$ be a ball with radius $R$ and $B_R(x)$ be the ball of radius $R$ centered at $x\in \mathbb{R}^n$. We shall use the following standard estimate. 

\begin{lemma}\label{L:Di2} ({\it Lemma 6.1.4.}, \cite{DieHHR11}) For $R,\alpha>0$ there holds 
	$$
	\int\limits_{B_R(x)} \frac{|f(y)|}{|x-y|^{n-\alpha}}\, dy \leq C(\alpha) R^\alpha Mf(x).
	$$
\end{lemma}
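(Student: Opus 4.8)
The plan is to use the classical decomposition of $B_{R}(x)$ into dyadic annuli centred at $x$. First I would dispose of the trivial range $\alpha \geq n$: there $n-\alpha \leq 0$, so for $y \in B_{R}(x)$ one has $\left\lvert x-y\right\rvert^{\alpha-n} \leq R^{\alpha-n}$, whence
$$
\int\limits_{B_{R}(x)} \frac{\left\lvert f(y)\right\rvert}{\left\lvert x-y\right\rvert^{n-\alpha}}\, dy \leq R^{\alpha-n}\int\limits_{B_{R}(x)} \left\lvert f(y)\right\rvert\, dy \leq R^{\alpha-n}\left\lvert B_{R}\right\rvert Mf(x) = \left\lvert B_{1}\right\rvert R^{\alpha} Mf(x),
$$
and the estimate follows at once. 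Hence one may assume $0 < \alpha < n$.

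For the main case, set $A_{j} = B_{2^{-j}R}(x)\setminus B_{2^{-j-1}R}(x)$ for $j = 0,1,2,\ldots$, so that $B_{R}(x) = \bigcup_{j\geq 0} A_{j}$ up to a null set. On $A_{j}$ one has $\left\lvert x-y\right\rvert \geq 2^{-j-1}R$, hence (since $\alpha - n < 0$) $\left\lvert x-y\right\rvert^{\alpha-n} \leq (2^{-j-1}R)^{\alpha-n}$, and therefore
$$
\int\limits_{A_{j}} \frac{\left\lvert f(y)\right\rvert}{\left\lvert x-y\right\rvert^{n-\alpha}}\, dy \leq (2^{-j-1}R)^{\alpha-n}\int\limits_{B_{2^{-j}R}(x)} \left\lvert f(y)\right\rvert\, dy \leq (2^{-j-1}R)^{\alpha-n}\left\lvert B_{1}\right\rvert (2^{-j}R)^{n} Mf(x),
$$
where the last inequality is just $\int_{B_{2^{-j}R}(x)}\left\lvert f\right\rvert \leq \left\lvert B_{2^{-j}R}(x)\right\rvert Mf(x)$, directly from the definition of the maximal function. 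Collecting the powers of $2$, the right-hand side equals $\left\lvert B_{1}\right\rvert 2^{n-\alpha} 2^{-j\alpha} R^{\alpha} Mf(x)$.

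The final step is to sum over $j \geq 0$: since $\alpha > 0$, the geometric series $\sum_{j\geq 0} 2^{-j\alpha} = (1-2^{-\alpha})^{-1}$ converges, and one obtains the claimed bound with $C(\alpha) = \left\lvert B_{1}\right\rvert 2^{n-\alpha}(1-2^{-\alpha})^{-1}$ (the dependence on the fixed dimension $n$ being harmless and suppressed in the notation). I do not expect a genuine obstacle here; the only points needing a moment's care are the bookkeeping of the powers of $2$ across the annuli and the observation that convergence of $\sum_{j} 2^{-j\alpha}$ is exactly what makes the hypothesis $\alpha > 0$ indispensable.
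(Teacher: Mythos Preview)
Your proof is correct and is precisely the standard dyadic-annulus argument. The paper does not give its own proof of this lemma: it simply cites it as Lemma~6.1.4 of \cite{DieHHR11}, adding only the remark that the case $\alpha \geq n$ is trivial---which you also cover. So there is nothing to compare; your argument is the expected one.
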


In \cite{DieHHR11} this result is stated for $0<\alpha<n$, and for $\alpha\geq n$ this estimate is trivial. 

For $n=2$ we shall also use the following estimate for the logarithmic potential which is proved exactly as the previous result:
\begin{equation}\label{eq:log0}
	\int\limits_{B_R(x)} |f(y)|\log \frac{R}{|x-y|}\, dy \leq C(n) R^{n} M f(x).
\end{equation}

\begin{corollary}\label{C:Di}
	For a function with support in $B_R$, $R\leq 1$, and $\alpha>0$ there holds 
	$$
	\|I_\alpha f\|_{L^{p(\cdot)}(B_R)} \leq C(n,p_{-},p_{+},c_{\mathrm{log}}(p),\alpha) R^{\alpha} \|f\|_{L^{p(\cdot)}(B_R)}.
	$$
\end{corollary}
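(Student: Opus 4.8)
The plan is to reduce the claim to the maximal function bound of Lemma~\ref{L:Di1} by way of the pointwise estimate of Lemma~\ref{L:Di2}, exactly as in the constant-exponent case.

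First I would use the support condition. Since $\operatorname{supp} f \subset B_R$, for every $x \in B_R$ the ball $B_R$ is contained in $B_{2R}(x)$, and therefore
$$
|I_\alpha f(x)| \le \int\limits_{B_R} \frac{|f(y)|}{|x-y|^{n-\alpha}}\, dy \le \int\limits_{B_{2R}(x)} \frac{|f(y)|}{|x-y|^{n-\alpha}}\, dy \le C(\alpha)\,(2R)^\alpha\, Mf(x)
$$
for all $x \in B_R$, where the last inequality is Lemma~\ref{L:Di2} applied with radius $2R$ (when $\alpha \ge n$ the kernel $|x-y|^{\alpha-n}$ is bounded by $(2R)^{\alpha-n}$ on $B_{2R}(x)$ and the same bound holds trivially, as remarked after that lemma). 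Thus we obtain the pointwise domination $|I_\alpha f| \le 2^\alpha C(\alpha) R^\alpha\, Mf$ on $B_R$.

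Next I would take $L^{p(\cdot)}(B_R)$-norms of this inequality, pull out the scalar factor $2^\alpha C(\alpha) R^\alpha$, and invoke Lemma~\ref{L:Di1} on $\Omega = B_R$. The restriction $p(\cdot)|_{B_R}$ still satisfies \eqref{eq:p1} with the same $p_-,p_+$ and \eqref{eq:p2} with $c_{\mathrm{log}}(p|_{B_R}) \le c_{\mathrm{log}}(p)$, hence $p|_{B_R} \in \mathcal{P}^{\mathrm{log}}(B_R)$ with $p^- > 1$, and Lemma~\ref{L:Di1} gives $\|Mf\|_{L^{p(\cdot)}(B_R)} \le K (p^-)' \|f\|_{L^{p(\cdot)}(B_R)}$ with $K$ depending only on $n$ and $c_{\mathrm{log}}(p)$. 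Combining, $\|I_\alpha f\|_{L^{p(\cdot)}(B_R)} \le 2^\alpha C(\alpha) K (p^-)' R^\alpha \|f\|_{L^{p(\cdot)}(B_R)}$, which is the assertion with $C = 2^\alpha C(\alpha) K (p^-)'$, depending only on $n, p_-, p_+, c_{\mathrm{log}}(p), \alpha$.

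There is no real obstacle: the argument is a few lines once Lemmas~\ref{L:Di1} and \ref{L:Di2} are available. The only thing requiring a word of care is the constant bookkeeping — one must note that restricting the exponent to the subdomain $B_R$ does not worsen the log-H\"older constant, so that Lemma~\ref{L:Di1} applies with a constant independent of $R$, and that the clean power $R^\alpha$ (rather than some $R$-power coupled to the exponent) genuinely comes out because the pointwise bound is completely scale-transparent in $R$. The hypothesis $R \le 1$ is in fact not used in this estimate; it is kept only for uniformity with the other statements in this section.
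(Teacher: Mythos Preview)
Your proof is correct and is exactly the approach the paper takes: the paper's own proof reads in full ``Immediately follows from Lemmas~\ref{L:Di1}, \ref{L:Di2}.'' Your write-up simply unpacks this one-liner, including the observation that $B_R\subset B_{2R}(x)$ for $x\in B_R$ and that restricting $p(\cdot)$ to $B_R$ does not worsen the constants; your remark that $R\le 1$ is not actually needed here is also accurate.
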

\begin{proof}
	Immediately follows from Lemmas~\ref{L:Di1}, \ref{L:Di2}.
\end{proof}

For the logarithmic potential 
$$
I_{\mathrm{log},R} f (x)=\int\limits_{B_R} f(y) \log \frac{|x-y|}{R}\, dy
$$
an analogue of Corollary~\ref{C:Di} is given by the estimate
\begin{equation}\label{eq:log1}
	\|I_{\mathrm{log},R} f \|_{p(\cdot), B_R} \leq C(n) R^n \|f\|_{p(\cdot),B_R},
\end{equation}
which follows from \eqref{eq:log0} and the maximal function estimate.

\begin{lemma}\label{L:Di3}({\it  Theorem 6.1.9 (page 203)}, \cite{DieHHR11}) Let $p \in \mathcal{P}^{\mathrm{log}}(\mathbb{R}^n)$, $0<\alpha <n$, $1<p_{-} \leq p_{+} <n/\alpha$, and $1/p^\sharp = 1/p - \alpha/n$. Then  
	$$
	\|I_\alpha f\|_{p^\sharp(\cdot)} \leq C(n,p_{-},p_{+},c_{\mathrm{log}}(p),\alpha) \|f\|_{p(\cdot)}.
	$$
\end{lemma}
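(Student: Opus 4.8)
The plan is to run the classical Hedberg argument, adapted to a variable exponent: reduce the norm bound to a modular bound, establish a pointwise Hedberg inequality that trades the Riesz potential for a power of the maximal function, and conclude via the maximal inequality of Lemma~\ref{L:Di1}. First I would record that $p^\sharp \in \mathcal{P}^{\mathrm{log}}(\mathbb{R}^n)$: since $p$ is bounded and log-H\"older, $1/p$ is log-H\"older, hence so is $1/p^\sharp = 1/p - \alpha/n$, and $1 < p_- \le p^\sharp_- \le p^\sharp_+ = (1/p_+ - \alpha/n)^{-1} < \infty$ by $p_+ < n/\alpha$; in particular the Luxemburg norm of $L^{p^\sharp(\cdot)}$ enjoys the unit ball property. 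By homogeneity it therefore suffices to produce $c = c(n,p_-,p_+,c_{\mathrm{log}}(p),\alpha) > 0$ such that $\int_{\mathbb{R}^n} \big(c\,I_\alpha f(x)\big)^{p^\sharp(x)}\,dx \le 1$ whenever $f \ge 0$ and $\int_{\mathbb{R}^n} f(y)^{p(y)}\,dy \le 1$.

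\emph{Pointwise Hedberg estimate.} For $x \in \mathbb{R}^n$ and $R>0$ split $I_\alpha f(x) = \int_{B_R(x)} + \int_{\mathbb{R}^n\setminus B_R(x)}$. The local part is $\le C(\alpha)R^\alpha Mf(x)$ by Lemma~\ref{L:Di2}. For the tail I would sum over the dyadic annuli $A_j = B_{2^{j+1}R}(x)\setminus B_{2^jR}(x)$, $j\ge0$: on $A_j$ the kernel is comparable to $(2^jR)^{\alpha-n}$, and $\int_{A_j} f$ is estimated by the variable-exponent H\"older inequality. This is where \eqref{eq:p2} enters: one replaces the varying exponent on each $A_j$ by the single value $p(x)$ via the standard key estimate for log-H\"older exponents (which, for $f$ with $\int f^{p}\le1$, bounds $f(y)^{p(x)}$ by $f(y)^{p(y)}$ plus a fixed rapidly decaying function of $|x|$ and $|y|$); summing the resulting geometric series — convergent because $(n-\alpha)p(x)' > n$, i.e.\ $p(x) < n/\alpha$ — gives
$$
\int_{\mathbb{R}^n\setminus B_R(x)} \frac{f(y)}{|x-y|^{n-\alpha}}\,dy \le C\big(R^{\,\alpha - n/p(x)} + h(x)\big),
$$
with $h$ a fixed function lying in $L^{q}(\mathbb{R}^n)$ for every large $q$. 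Balancing the two terms, i.e.\ choosing $R = R(x)$ with $R^{n/p(x)} \approx (Mf(x))^{-1}$, and using $1 - \alpha p(x)/n = p(x)/p^\sharp(x)$, yields
$$
I_\alpha f(x) \le C\Big(\big(Mf(x)\big)^{p(x)/p^\sharp(x)} + h(x)\Big) \qquad \text{a.e. on } \mathbb{R}^n.
$$

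\emph{Integration.} Raising to the power $p^\sharp(x) \le p^\sharp_+$ and integrating gives
$$
\int_{\mathbb{R}^n} \big(I_\alpha f(x)\big)^{p^\sharp(x)}\,dx \le C\int_{\mathbb{R}^n} \big(Mf(x)\big)^{p(x)}\,dx + C\int_{\mathbb{R}^n} h(x)^{p^\sharp(x)}\,dx,
$$
where the last integral is a fixed constant depending only on $n,p_-,p_+,c_{\mathrm{log}}(p),\alpha$. For the first integral, Lemma~\ref{L:Di1} gives $\|Mf\|_{p(\cdot)} \le K(p_-)'\|f\|_{p(\cdot)} \le K(p_-)'$, whence by the unit ball property $\int (Mf)^{p(x)}\,dx \le C(n,p_-,p_+,c_{\mathrm{log}}(p))$. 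Choosing $c$ small enough to absorb these constants and undoing the normalization yields $\|I_\alpha f\|_{p^\sharp(\cdot)} \le C(n,p_-,p_+,c_{\mathrm{log}}(p),\alpha)\|f\|_{p(\cdot)}$.

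\emph{Main obstacle.} The delicate point is the tail estimate: in the constant-exponent Hedberg argument the tail is controlled by a single H\"older inequality producing exactly $R^{\alpha-n/p}\|f\|_p$, whereas here the exponent varies across the (possibly unbounded) region $\mathbb{R}^n\setminus B_R(x)$, so one must genuinely invoke the log-H\"older hypothesis to freeze it at $p(x)$ at the cost of controllable decaying error terms, and then check that the geometric summation and the $R$-optimization survive the spatial dependence of the exponent. In the applications within this paper $f$ is supported in a fixed ball of radius $\le1$ (cf.\ Corollary~\ref{C:Di}), which considerably simplifies this step, but it is not needed for the statement as given.
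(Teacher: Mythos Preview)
The paper does not prove this lemma at all: it is quoted verbatim as Theorem~6.1.9 from \cite{DieHHR11} and used as a black box, with no proof supplied. Your sketch is the standard Hedberg argument adapted to variable exponents, which is essentially the approach taken in the cited reference, so there is nothing to compare against in the paper itself.

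One remark on your write-up: the tail step, where you ``freeze'' the exponent at $p(x)$ over the unbounded region $\mathbb{R}^n\setminus B_R(x)$, genuinely needs the log-H\"older \emph{decay} condition at infinity (i.e.\ $|p(x)-p_\infty|\lesssim 1/\log(e+|x|)$), not merely the local log-H\"older continuity \eqref{eq:p2}. In \cite{DieHHR11} the class $\mathcal{P}^{\mathrm{log}}(\mathbb{R}^n)$ includes this decay condition by definition, and the ``key estimate'' you invoke relies on it. You gesture at this but do not make it explicit; since the paper's own definition of $\mathcal{P}^{\mathrm{log}}$ mentions only \eqref{eq:p1}--\eqref{eq:p2}, it would be worth flagging. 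As you correctly note, in every application in the paper $f$ has support in a fixed ball, so the issue is moot in practice.
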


\begin{lemma}\label{L:DD} Let $p \in \mathcal{P}^{\mathrm{log}}(\mathbb{R}^n)$. Denote $\varkappa = n/(n-s)$, $s\in [1,n-1]$. Let $\operatorname{supp}\, f\subset \overline{B}_R$, $R\leq 1$. Then 
	$$
	\|I_s f\|_{\varkappa p(\cdot), B_R} \leq C(n,p_{-},p_{+},c_{\mathrm{log}} (p)) \|f\|_{p(\cdot)} R^{s/p_{-}'}.
	$$
\end{lemma}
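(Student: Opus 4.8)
The plan is to reduce matters to a compactly supported Riesz kernel and then bridge the gap between the target exponent $\varkappa p(\cdot)$ and the genuine Sobolev exponent $p^{\sharp}(\cdot)$ by a H\"older splitting; this splitting is exactly what produces the power $R^{s/p_{-}'}$, after which Lemma~\ref{L:Di3} does the work. By homogeneity we may assume $\lVert f\rVert_{p(\cdot)}=1$. Since $\operatorname{supp}f\subset\overline{B}_{R}$, for $x\in B_{R}$ one has $\lvert I_{s}f(x)\rvert\le\int_{\overline{B}_{R}}\lvert f(y)\rvert\,\lvert x-y\rvert^{s-n}\,dy$ with $\lvert x-y\rvert<2R\le 2$ on the range of integration.

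The key elementary identity is $\frac{1}{\varkappa p(x)}-\frac{1}{p^{\sharp}(x)}=\frac{s}{n\,p'(x)}$, valid wherever $p^{\sharp}(x)=\bigl(\tfrac1{p(x)}-\tfrac sn\bigr)^{-1}$ makes sense, i.e. where $p(x)<n/s$. Assume first $p_{+}<n/s$, so $p^{\sharp}(\cdot)\in\mathcal{P}^{\mathrm{log}}(\mathbb R^{n})$ is everywhere finite and $\varkappa p\le p^{\sharp}$ pointwise. Writing $I_{s}f=(I_{s}f)\cdot\mathbbm 1_{B_{R}}$ on $B_{R}$ and applying the generalized H\"older inequality in variable exponent spaces with the exponent decomposition above, $\lVert I_{s}f\rVert_{\varkappa p(\cdot),B_{R}}\le C\,\lVert I_{s}f\rVert_{p^{\sharp}(\cdot),\mathbb R^{n}}\,\bigl\lVert\mathbbm 1_{B_{R}}\bigr\rVert_{\frac{n\,p'(\cdot)}{s}}$. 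Since $\tfrac{n\,p'(x)}{s}\le\tfrac{n\,p_{-}'}{s}$ and $\lvert B_{R}\rvert\le 1$, one gets $\lVert\mathbbm 1_{B_{R}}\rVert_{\frac{n\,p'(\cdot)}{s}}\le\lvert B_{R}\rvert^{\,s/(n\,p_{-}')}\le C(n)\,R^{\,s/p_{-}'}$; and $\lVert I_{s}f\rVert_{p^{\sharp}(\cdot),\mathbb R^{n}}\le C(n,p_{-},p_{+},c_{\mathrm{log}}(p))\lVert f\rVert_{p(\cdot)}$ by Lemma~\ref{L:Di3}. Combining the two bounds proves the lemma in this range.

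For the full range $1<p_{-}\le p_{+}<\infty$ one argues directly, since on $\{p(x)\ge n/s\}$ the exponent $p^{\sharp}$ degenerates and Lemma~\ref{L:Di3} is unavailable. The device is the Hedberg-type dyadic estimate: replace $I_{s}f$ on $B_{R}$ by convolution with $\widetilde K(z)=\lvert z\rvert^{s-n}\mathbbm 1_{\{\lvert z\rvert<2R\}}$, decompose $\widetilde K=\sum_{j\ge0}\widetilde K_{j}$ over the dyadic annuli $\{\lvert z\rvert\sim t_{j}\}$, $t_{j}=2R\,2^{-j}$, and bound $\widetilde K_{j}*\lvert f\rvert(x)$ in two ways: by $C\,t_{j}^{s}Mf(x)$ (Lemma~\ref{L:Di2}), and by $C\,t_{j}^{\,s-n/p(x)}$ via H\"older against the $L^{p'(\cdot)}$-norm of $\widetilde K_{j}$ on the relevant annulus, which the log-H\"older condition evaluates as $\lVert\mathbbm 1_{A}\rVert_{p'(\cdot)}\approx\lvert A\rvert^{1/p'(x_{A})}$. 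Summing these, balanced at $t_{j}^{*}=Mf(x)^{-p(x)/n}$ (capped at $2R$), gives the pointwise bounds $I_{s}f(x)\lesssim R^{\,s/p'(x)}\bigl(1+Mf(x)\bigr)$ where $p(x)\le n/s$ and $I_{s}f(x)\lesssim(2R)^{\,s-n/p(x)}$ where $p(x)>n/s$. Inserting these into the modular $\int_{B_{R}}\bigl(I_{s}f\bigr)^{\varkappa p(x)}\,dx$, using $s/p'(x)\ge s/p_{-}'$, $R\le 1$, and the boundedness of $M$ on $L^{p(\cdot)}$ (Lemma~\ref{L:Di1}), and then converting back to the norm, yields the asserted estimate; the borderline $p(x)=n/s$, where the geometric series degenerates to a logarithm, is absorbed by the logarithmic-potential bound \eqref{eq:log0}.

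The main obstacle is precisely this uniform tracking of the $R$-power. For $p_{+}<n/s$ the H\"older shift delivers it painlessly, but one must verify that the uniform exponent $s/p_{-}'$ genuinely dominates the sharp per-point exponents $s/p'(x)$ (and $s-n/p(x)$ on the complementary set, where $R\le 1$ forces one to be careful about signs); and the Hedberg argument needed for arbitrary $p_{+}<\infty$ must simultaneously reconcile the Sobolev regime, the Morrey/$L^{\infty}$ regime on $\{p(x)>n/s\}$, and the logarithmic borderline, all with constants depending only on $n,p_{-},p_{+}$ and $c_{\mathrm{log}}(p)$.
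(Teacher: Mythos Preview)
Your first argument (for $p_{+}<n/s$) is correct and uses the same H\"older splitting as the paper, but you apply it on the \emph{output} side: you bound $\|I_{s}f\|_{\varkappa p(\cdot)}$ by $\|I_{s}f\|_{p^{\sharp}(\cdot)}\|\mathbbm{1}_{B_{R}}\|_{np'(\cdot)/s}$ and then invoke Lemma~\ref{L:Di3}. This forces the restriction $p_{+}<n/s$ (so that $p^{\sharp}$ is finite) and drives you into a second, considerably heavier Hedberg-type argument for general $p_{+}$. That second argument can be made to work---the pointwise bounds you state are essentially right, and the modular computation closes once one checks that $(I_{s}f)^{\varkappa p(x)}\lesssim R^{s\varkappa p(x)/p'(x)}Mf(x)^{p(x)}$ in both regimes---but as written it is too sketchy (in particular, the passage from the pointwise bound $R^{s/p'(x)}(1+Mf(x))$ to the modular is not immediate, since $\varkappa p(x)>p(x)$).

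The paper sidesteps the case split entirely by applying H\"older on the \emph{input} side. It introduces the auxiliary exponent $q(\cdot)$ defined by $q^{\sharp}(\cdot)=\varkappa p(\cdot)$, i.e.\ $q=np/(n+s(p-1))$, and observes that $q(\cdot)<n/s$ holds \emph{unconditionally} (the inequality reduces to $s<n$). Hence Lemma~\ref{L:Di3} applies directly with exponent $q(\cdot)$, giving $\|I_{s}f\|_{\varkappa p(\cdot)}\leq C\|f\|_{q(\cdot)}$, and a single H\"older step $\|f\|_{q(\cdot)}\leq 2\|f\|_{p(\cdot)}\|\mathbbm{1}_{B_{R}}\|_{np'(\cdot)/s}$ produces exactly the same factor $R^{s/p_{-}'}$ you obtained. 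This covers all $1<p_{-}\leq p_{+}<\infty$ in one stroke, with no Hedberg machinery needed.
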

\begin{proof} Let $q$ be such that $\varkappa p = nq/(n-sq)$, or
	$$
	\frac{1}{p} - \frac{s}{np} = \frac{1}{q} - \frac{s}{n}.
	$$ 
	This gives 
	$$
	\frac{1}{q} = \frac{s}{n} + \frac{1}{p} - \frac{s}{np}, \quad q = \frac{np}{n+sp-s},
	$$
	where $1<q<\min(p,n/s)$ and 
	\begin{align*}
		\frac{n}{s}-q = \frac{n}{s}\cdot \frac{n-s}{n+sp-s} &\geq  \frac{1}{n p_{+}},\\ 
		q-1 = \frac{(p-1)(n-s)}{n+s(p-1)} &\geq  \frac{(p_{-}-1)}{np_{-}}.
	\end{align*}
	Then by Lemma~\ref{L:Di3} and the H\"older inequality,
	\begin{align*}
		\|I_s f\|_{\varkappa p(\cdot),B_R} \leq C \|f\|_{q(\cdot), B_R} &\leq 2C \|f\|_{p(\cdot),B_R} \|1\|_{np'(\cdot)/s,B_R} \\&= 2C \|f\|_{p(\cdot),B_R} \|1\|_{p'(\cdot),B_R}^{s/n}. 
	\end{align*}
	
	It remains to use the standard estimate $\|1\|_{z(\cdot),B_R} \lesssim R^{n/\bar{z}}$ for $z\in \mathcal{P}^{\mathrm{log}}(\mathbb{R}^n)$ for the last factor, where $\bar z$ is the average of $z$ over $B_R$.
\end{proof}

\section{Interpolation in variable exponent spaces}
In this Section $B_R$ is ball of radius $R$, if it centered at $x^n=0$ we denote $B_R^{+} = B_R \cap \{x^n>0\}$.

\begin{corollary}\label{C:Di00}
	Let $f\in W^{1,p(\cdot)}(B_R)$ satisfy $f=0$ on $\partial B_R$. Then 
	\begin{align*}
		\|f\|_{p(\cdot),B_R} &\leq C(n,p_{-},p_{+},c_{\mathrm{log}}(p)) R \|\nabla f\|_{p(\cdot),B_R},\\
		\|f\|_{\varkappa p(\cdot), B_R} &\leq C(n,p_{-},p_{+},c_{\mathrm{log}}(p)) R^{1/(np_{-}')} \|\nabla f\|_{p(\cdot),B_R}.
	\end{align*}
	The same estimate is valid in if $B_R$ is replaced by $B_{R}^+$ and $\partial B_R$ is replaced by $\partial B_R \cap \{x_n\geq 0\}$.
\end{corollary}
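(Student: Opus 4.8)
The plan is to derive both estimates from a single pointwise Riesz‑potential bound for $f$ combined with the estimates of the previous section, treating the half‑ball at the end by an even reflection. First I would handle the full ball $B_R$ (which, as throughout this section, we take with $R\le 1$). Since $f\in W^{1,p(\cdot)}(B_R)$ vanishes on $\partial B_R$ and $B_R$ is Lipschitz with $p(\cdot)$ log‑H\"older, the zero extension $E_{0}f$ belongs to $W^{1,1}(\mathbb{R}^{n})$, is supported in $\overline{B}_R$, and satisfies $\nabla(E_{0}f)=E_{0}(\nabla f)$. The classical representation formula for compactly supported $W^{1,1}$ functions (applied to mollifications of $E_{0}f$ and passing to the limit) then gives, for a.e.\ $x\in B_R$,
\[
|f(x)|\ \le\ c_{n}\int_{B_R}\frac{|\nabla f(y)|}{|x-y|^{n-1}}\,dy\ =\ c_{n}\,I_{1}\bigl(E_{0}|\nabla f|\bigr)(x).
\]

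Next I would feed this into the two available estimates, with $g:=E_{0}|\nabla f|$, supported in $B_R$. For the first inequality, Corollary~\ref{C:Di} with $\alpha=1$ yields $\|I_{1}g\|_{p(\cdot),B_R}\le CR\,\|g\|_{p(\cdot),B_R}=CR\,\|\nabla f\|_{p(\cdot),B_R}$, and together with the pointwise bound this is $\|f\|_{p(\cdot),B_R}\le CR\,\|\nabla f\|_{p(\cdot),B_R}$. For the second inequality, Lemma~\ref{L:DD} with $s=1$ (so $\varkappa=n/(n-1)$, which requires $n\ge 2$, after extending $p(\cdot)$ to $\mathbb{R}^{n}$ preserving the log‑H\"older constant by McShane) gives $\|I_{1}g\|_{\varkappa p(\cdot),B_R}\le C\,\|g\|_{p(\cdot)}\,R^{1/p_{-}'}$; since $R\le1$ and $1/p_{-}'\ge 1/(np_{-}')$ this is bounded by $C\,R^{1/(np_{-}')}\,\|\nabla f\|_{p(\cdot),B_R}$, as claimed. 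In both cases the constant depends only on $n,p_{-},p_{+},c_{\mathrm{log}}(p)$.

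Finally, for the half‑ball $B_R^{+}$ with $f=0$ on $\partial B_R\cap\{x^{n}\ge 0\}$ I would reflect evenly: set $\tilde f(x',x^{n}):=f(x',|x^{n}|)$ and $\tilde p(x',x^{n}):=p(x',|x^{n}|)$ on $B_R$. Then $\tilde p$ is again log‑H\"older with the same constant, because $|(x',|x^{n}|)-(y',|y^{n}|)|\le|x-y|$ and $t\mapsto(\log(e+t^{-1}))^{-1}$ is increasing; moreover $\tilde f\in W^{1,\tilde p(\cdot)}(B_R)$ with $|\nabla\tilde f(x',x^{n})|=|\nabla f(x',|x^{n}|)|$ a.e., and $\tilde f$ vanishes on all of $\partial B_R$ since the even reflections of functions vanishing near $\partial B_R\cap\{x^{n}\ge0\}$ vanish near $\partial B_R$. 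Comparing the modulars over $B_R$ with those over $B_R^{\pm}$ gives $\|f\|_{p(\cdot),B_R^{+}}\le\|\tilde f\|_{\tilde p(\cdot),B_R}$, $\|f\|_{\varkappa p(\cdot),B_R^{+}}\le\|\tilde f\|_{\varkappa\tilde p(\cdot),B_R}$ and $\|\nabla\tilde f\|_{\tilde p(\cdot),B_R}\le 2\,\|\nabla f\|_{p(\cdot),B_R^{+}}$, so the half‑ball estimates follow by applying the full‑ball ones to $(\tilde f,\tilde p)$. I expect the only mildly delicate point to be this reflection step — checking that it preserves the zero‑trace condition, keeps the exponent log‑H\"older, and controls the variable‑exponent norms of $f$ and $\nabla f$ up to a universal factor; everything else is a direct assembly of Corollary~\ref{C:Di} and Lemma~\ref{L:DD}.
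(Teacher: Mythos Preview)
Your proof is correct and follows essentially the same approach as the paper: the pointwise bound $|f(x)|\le c_n I_1(|\nabla f|)(x)$ for functions vanishing on the boundary, followed by Corollary~\ref{C:Di} for the $L^{p(\cdot)}$ estimate and Lemma~\ref{L:DD} for the $L^{\varkappa p(\cdot)}$ estimate, with the half-ball handled by even reflection across $\{x^n=0\}$. The paper's proof is more terse and proceeds directly in $B_R^+$ (doing the even extension and the zero extension in one go), whereas you treat $B_R$ first and then reduce $B_R^+$ to it; this is only a cosmetic reordering. Your observation that Lemma~\ref{L:DD} actually yields the stronger exponent $R^{1/p_-'}$, which dominates the stated $R^{1/(np_-')}$ since $R\le 1$, is a detail the paper leaves implicit.
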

\begin{proof}
	We prove this for $B_R^+$. Consider the even extension of $f$ to $\{x_n<0\}$ across $\{x_n=0\}$, and then extend $f$ by zero outside $B_R$. We denote this extension again by $f$. Then 
	$$
	|f(x)| \leq C(n) I_1 [|\nabla f|](x).
	$$
	It remains to use Corollary~\ref{C:Di} and Lemma~\ref{L:DD}.
\end{proof}

\begin{corollary}\label{C:Di01}
	Let $f\in W^{k,p(\cdot)}(B_R)$ vanish on $\partial B_R$ together with all its partial derivatives up to the order $k-1$. Then 
	$$
	\sum_{l=0}^{k-1} R^{k-l} \|\nabla^l f\|_{p(\cdot),B_R} \leq C(n,p_{-},p_{+},c_{\mathrm{log}}(p),k) \|\nabla^k f\|_{p(\cdot), B_R}.
	$$
	The same estimate is valid if $B_R$ is replaced by $B_R^+$ and $\partial B_R$ is replaced by $\partial B_R \cap \{x_n>0\}$.
\end{corollary}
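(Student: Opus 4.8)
The approach is to iterate the first-order Poincar\'e-type estimate of Corollary~\ref{C:Di00}. First I would verify that this estimate applies to each intermediate derivative $\nabla^l f$, $0 \le l \le k-1$: since $f \in W^{k,p(\cdot)}(B_R)$ we have $\nabla^l f \in W^{k-l,p(\cdot)}(B_R) \subset W^{1,p(\cdot)}(B_R)$, and since every partial derivative $D^\beta f$ with $|\beta| = l \le k-1$ vanishes on $\partial B_R$ in the sense of trace, each scalar component of $\nabla^l f$ vanishes there. Applying Corollary~\ref{C:Di00} to each such component and summing (absorbing into the constant the dimensional factor relating $\|\nabla^l f\|_{p(\cdot),B_R}$ to $\sum_{|\beta|=l}\|D^\beta f\|_{p(\cdot),B_R}$) yields
\begin{equation*}
\|\nabla^l f\|_{p(\cdot),B_R} \le C_1\, R\, \|\nabla^{l+1} f\|_{p(\cdot),B_R}, \qquad 0 \le l \le k-1,
\end{equation*}
with $C_1 = C_1(n,p_{-},p_{+},c_{\mathrm{log}}(p))$.

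Next I would put $b_l := R^{k-l}\,\|\nabla^l f\|_{p(\cdot),B_R}$ for $0 \le l \le k$. The displayed inequality gives $b_l \le C_1 R^2\, b_{l+1} \le C_1\, b_{l+1}$, where I use $R \le 1$ (as holds for all the coordinate balls appearing in this paper). A finite downward induction on $l$, with the trivial base case $b_k = \|\nabla^k f\|_{p(\cdot),B_R}$, then gives $b_l \le C_1^{\,k-l}\,\|\nabla^k f\|_{p(\cdot),B_R}$, and summing over $l = 0,\dots,k-1$ produces
\begin{equation*}
\sum_{l=0}^{k-1} R^{k-l}\,\|\nabla^l f\|_{p(\cdot),B_R} \le \Big(\sum_{l=0}^{k-1} C_1^{\,k-l}\Big) \|\nabla^k f\|_{p(\cdot),B_R},
\end{equation*}
which is the assertion with $C = C(n,p_{-},p_{+},c_{\mathrm{log}}(p),k)$. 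For the half-ball $B_R^+$, with $\partial B_R \cap \{x_n > 0\}$ replacing $\partial B_R$, the same scheme works verbatim, now invoking the half-ball version of Corollary~\ref{C:Di00} at each level; note that the trace hypothesis is used only on $\partial B_R \cap \{x_n > 0\}$, which is exactly what that version requires.

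If forced to name the most delicate step, it would be the verification that Corollary~\ref{C:Di00} applies at every level of the iteration: this needs the vanishing of the traces of \emph{all} the intermediate derivatives $\nabla^l f$, $l \le k-1$, not merely of $f$, which is precisely why the hypothesis requires $f$ to vanish together with its partial derivatives up to order $k-1$. Beyond that the argument is pure bookkeeping; one only uses, in addition, that $R \le 1$, so that the factors $R^{2(k-l)}$ produced by multiplying through by $R^{k-l}$ stay bounded.
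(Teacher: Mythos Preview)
Your proof is correct and follows exactly the approach indicated in the paper's one-line proof, namely iterating Corollary~\ref{C:Di00} applied to $\nabla^l f$ for $l=k-1,\ldots,0$. Your explicit bookkeeping with $b_l = R^{k-l}\|\nabla^l f\|_{p(\cdot),B_R}$ and the observation that $R\le 1$ is needed (implicit in the paper's setting of coordinate balls) are both accurate.
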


\begin{proof}
	Follows from applying Corollary~\ref{C:Di00} to $\nabla^l f$ for $l=k-1$, \ldots, $0$.
\end{proof}

\begin{corollary}\label{C:DiInter}
	For $f\in W^{k,p(\cdot)}(B_R)$, $k\in \mathbb{N}$ there holds 
	$$
	\sum_{l=0}^{k-1}  R^{l+1-k}\|\nabla^l f\|_{p(\cdot),B_R} \leq CR \|\nabla^k f\|_{p(\cdot),B_R} + CR^{-n-1} \|f\|_{1,B_R}
	$$
	with a constant $C=C(n,p_{-},p_{+},c_{\mathrm{log}}(p),k)$.
\end{corollary}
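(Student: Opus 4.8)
\noindent The plan is to subtract from $f$ a polynomial of degree at most $k-1$ chosen so that the remainder is a Riesz potential of $\nabla^k f$, then bound the remainder with Corollary~\ref{C:Di} and the polynomial by elementary estimates.

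Let $P$ be the averaged Taylor polynomial of $f$ of degree $k-1$ over $B_R$; this is well defined since by definition $W^{k,p(\cdot)}(B_R)\subset W^{k,1}(B_R)$. I will use its two standard properties. First, the Sobolev integral representation: for a.e.\ $x\in B_R$ and every $0\le l\le k-1$,
$$
|\nabla^l(f-P)(x)|\le C(n,k)\int_{B_R}\frac{|\nabla^k f(y)|}{|x-y|^{\,n-(k-l)}}\,dy=C(n,k)\,I_{k-l}\big[|\nabla^k f|\big](x),
$$
where $|\nabla^k f|$ is extended by zero outside $B_R$. Second, integrating by parts in the defining integral for $P$ so as to remove all derivatives of $f$, one obtains $P(x)=\int_{B_R}f(y)\,\Psi(x,y)\,dy$ with $|\nabla_x^l\Psi(x,y)|\le C(n,k)R^{-n-l}$, hence $\|\nabla^l P\|_{L^\infty(B_R)}\le C(n,k)\,R^{-n-l}\,\|f\|_{1,B_R}$ for $0\le l\le k-1$.

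Now split $\|\nabla^l f\|_{p(\cdot),B_R}\le\|\nabla^l(f-P)\|_{p(\cdot),B_R}+\|\nabla^l P\|_{p(\cdot),B_R}$. For the first term, Corollary~\ref{C:Di} with $\alpha=k-l\ge1$ (in the relevant range $R\le1$, the same one as in Corollary~\ref{C:Di}) gives $\|\nabla^l(f-P)\|_{p(\cdot),B_R}\le C R^{\,k-l}\|\nabla^k f\|_{p(\cdot),B_R}$, so that $R^{l+1-k}\|\nabla^l(f-P)\|_{p(\cdot),B_R}\le C R\,\|\nabla^k f\|_{p(\cdot),B_R}$; summing over $l=0,\dots,k-1$ produces the first term on the right. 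For the second term, $\|\nabla^l P\|_{p(\cdot),B_R}\le\|\nabla^l P\|_{L^\infty(B_R)}\,\|1\|_{p(\cdot),B_R}\le C R^{-n-l}\|f\|_{1,B_R}\,\|1\|_{p(\cdot),B_R}$; inserting the standard bound $\|1\|_{z(\cdot),B_R}\lesssim R^{n/\bar z}$ (already used in the proof of Lemma~\ref{L:DD}), multiplying by $R^{l+1-k}$, summing over $l$, and collecting the powers of $R$ — using $R\le1$ to bound the resulting exponent from above by $-n-1$ — gives the second term. Adding the two estimates finishes the proof.

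The one place that needs care is the bookkeeping of the $R$-powers in the polynomial part: one has to know that $\nabla^l P$ on $B_R$ is controlled purely by $\|f\|_{1,B_R}$ with the exact power $R^{-n-l}$ (this is what forces the integration-by-parts reformulation of the averaged Taylor polynomial), after which everything reduces to the scaling of $\|1\|_{p(\cdot),B_R}$ together with the Riesz-potential estimate of Corollary~\ref{C:Di}, both of which are already available. There is no boundary contribution to handle here, since $B_R$ is a full ball.
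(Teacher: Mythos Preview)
Your proof is correct and follows essentially the same route as the paper's own argument: both subtract an averaged Taylor polynomial (the paper cites Maz'ya's Sobolev representation, you phrase it in the Brenner--Scott language), bound the polynomial part by $CR^{-n-l}\|f\|_{1,B_R}$ via integration by parts, and control the remainder $\nabla^l(f-P)$ by Riesz potentials $I_{k-l}[|\nabla^k f|]$ together with Corollary~\ref{C:Di}. The only cosmetic difference is that the paper differentiates the full kernel in the representation and hence carries a double sum $\sum_{\alpha+\beta=l} R^{-\alpha}\|I_{k-\beta}[\nabla^k f]\|_{p(\cdot)}$, whereas you invoke the cleaner pointwise bound $|\nabla^l(f-P)|\le C\,I_{k-l}[|\nabla^k f|]$ directly; both lead to the same estimate.
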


\begin{proof}
	The first statement immediately follows from Lemmas~\ref{L:Di1}, \ref{L:Di2}. 
	We use the Sobolev representation (see \cite{Maz11} {Section 1.1.10}): there exists a polynomial $Q(x)$ such that 
	$$
	f(x)-Q(x)= \sum_{\alpha=k}\int\limits_{B_R} \frac{\varphi_\alpha(x,y) \nabla^{\alpha} f(y)}{|x-y|^{n-k}},
	$$
	where for $l=0,\ldots k-1$ and $x,y\in B_R$ there holds
	$$
	R^l|\nabla^l Q(x)| \leq C R^{-n}\int\limits_{B_R} |f(y)|\, dy,\qquad
	R^l|\nabla_x^l \varphi_\alpha(x,y)|\leq C.
	$$
	Thus 
	\begin{align*}
		\sum_{l=0}^{k-1} R^l \|\nabla^l Q\|_{p(\cdot),B_R} &\leq C R^{-n}\int\limits_{{B}_R} |f(y)|\, dy,\\
		\sum_{l=0}^{k-1} R^l \|\nabla^l (f-Q)\|_{p(\cdot),B_R} &\leq C\sum_{l=0}^{k-1} \sum_{\alpha+\beta=l} R^{-\alpha} \|I_{k-\beta}[\nabla^k f ]\|_{p(\cdot),B_R)}\\
		&\leq C R^k \|\nabla^k f\|_{p(\cdot),B_R},
	\end{align*}
	whence the second claim.
\end{proof}
\begin{lemma}\label{C:Di1}
	Let $k\in \mathbb{N}$. On a  Riemannian $C^{k,1}$ manifold for any $\varepsilon>0$ there exists a constant $C(\varepsilon)$ such that any $\omega\in W^{k,p(\cdot)}(M,\Lambda)$ satisfies
	$$
	\|\omega\|_{k-1,p(\cdot),M} \leq \varepsilon \|\omega\|_{k-1,p(\cdot),M} + C(\varepsilon) \|\omega\|_{1,M}.
	$$
\end{lemma}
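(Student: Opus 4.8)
Note first that the inequality is surely meant to read
$$\|\omega\|_{k-1,p(\cdot),M}\le\varepsilon\|\omega\|_{k,p(\cdot),M}+C(\varepsilon)\|\omega\|_{1,M},$$
with $\|\cdot\|_{k,p(\cdot),M}$ (not $\|\cdot\|_{k-1,p(\cdot),M}$) on the right: this is the usual interpolation inequality for intermediate derivatives. The plan is to deduce it from Ehrling's lemma applied to the chain
$$W^{k,p(\cdot)}(M,\Lambda)\ \hookrightarrow\hookrightarrow\ W^{k-1,p(\cdot)}(M,\Lambda)\ \hookrightarrow\ L^{1}(M,\Lambda),$$
in which the first embedding is compact and the second continuous. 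Ehrling's lemma gives, for such a chain and every $\varepsilon>0$, a constant $C(\varepsilon)$ with $\|\omega\|_{Y}\le\varepsilon\|\omega\|_{X}+C(\varepsilon)\|\omega\|_{Z}$, which is precisely the claim once one recalls that $\|\omega\|_{1,M}=\|\omega\|_{L^{1}(M,\Lambda)}$.

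First I would dispose of the continuous embedding $W^{k-1,p(\cdot)}(M,\Lambda)\hookrightarrow L^{1}(M,\Lambda)$: since $M$ is compact and $p_{-}\ge1$ one has $L^{p(\cdot)}(M,\Lambda)\hookrightarrow L^{p_{-}}(M,\Lambda)\hookrightarrow L^{1}(M,\Lambda)$, and a fortiori $W^{k-1,p(\cdot)}\hookrightarrow L^{p(\cdot)}\hookrightarrow L^{1}$. Next I would establish the compact embedding $W^{k,p(\cdot)}(M,\Lambda)\hookrightarrow\hookrightarrow W^{k-1,p(\cdot)}(M,\Lambda)$. Using a finite $C^{k,1}$ atlas $\{(U_{\alpha},\varphi_{\alpha})\}$ with a subordinate partition of unity — and the fact that the $W^{k,p(\cdot)}$-norm on $M$ is equivalent to the sum over $\alpha$ of the $W^{k,p_{\alpha}(\cdot)}(\varphi_{\alpha}(U_{\alpha}))$-norms of the coordinate components — this reduces to the compactness of $W^{k,p(\cdot)}(D)\hookrightarrow\hookrightarrow W^{k-1,p(\cdot)}(D)$ for a bounded Lipschitz domain $D\subset\mathbb{R}^{n}$ and a log-H\"older exponent on $D$, i.e.\ the variable-exponent Rellich–Kondrachov theorem. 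That, in turn, follows by combining the Sobolev–Poincar\'e embedding $W^{1,p(\cdot)}(D)\hookrightarrow L^{q(\cdot)}(D)$ for an exponent $q$ with $\operatorname{ess\,inf}_{D}(q-p)>0$ (a consequence of the Riesz potential bounds, cf.\ Lemma~\ref{L:Di3} and Corollary~\ref{C:Di00}) with the compactness of the inclusion $L^{q(\cdot)}(D)\hookrightarrow\hookrightarrow L^{p(\cdot)}(D)$ on bounded domains; iterating this between the differentiation orders $k$ and $k-1$ gives the required compact inclusion (all of this is available in \cite{DieHHR11}).

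The one genuinely nontrivial ingredient is thus the compact embedding $W^{k,p(\cdot)}(M,\Lambda)\hookrightarrow\hookrightarrow W^{k-1,p(\cdot)}(M,\Lambda)$; the passage to charts, the $L^{1}$-embedding, and the application of Ehrling's lemma are all routine. If one prefers to keep the argument quantitative rather than invoking compactness, the alternative is to cover the coordinate images of $M$ by balls $B_{r}(x_{j})$ of small radius $r$ with bounded overlap, apply a modular form of the interpolation inequality of Corollary~\ref{C:DiInter} on each such ball, and add up the resulting modular inequalities using the bounded overlap (so that the $r^{p_{-}}$ gain on the top-order term survives the summation — which it does \emph{not} under a naive $\ell^{1}$-summation of the $L^{p(\cdot)}$-norms), finally choosing $r$ small in terms of $\varepsilon$. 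The cost of this route is the delicate bookkeeping needed to pass back and forth between modulars and norms in the variable-exponent scale, and it is exactly there that I would expect the main difficulty to lie if a direct proof is insisted upon.
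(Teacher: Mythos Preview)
Your proposal is correct, and the Ehrling-lemma route is a genuinely different approach from what the paper does. The paper follows precisely your ``alternative'' sketch: reduce via a partition of unity to one coordinate chart, cover the support of $\omega\eta_{\alpha}$ by balls of radius comparable to $\varepsilon$ with bounded overlap, invoke the local interpolation estimate of Corollary~\ref{C:DiInter} on each ball, and sum. Your remark about the modular-versus-norm bookkeeping is well taken: the paper's proof is a single sentence and does not spell out how the summation over balls is carried out in the $L^{p(\cdot)}$ scale, where $\sum_{j}\|\,\cdot\,\|_{p(\cdot),B_{j}}$ and $\|\,\cdot\,\|_{p(\cdot),M}$ are not directly comparable.

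The practical difference between the two routes is the nature of the constant. Ehrling's lemma, resting on compactness, yields a non-quantitative $C(\varepsilon)$, whereas the covering argument (done carefully, via modulars and the log-H\"older condition on small balls) gives a constant depending only on $\mathrm{data}$ and $\varepsilon$. This matters downstream: in Theorem~\ref{T:p2} the interpolation lemma is used to produce constants of the form $C(\mathrm{data},s)$, so the quantitative version is what the paper ultimately relies on, even though the lemma as stated only asserts existence of some $C(\varepsilon)$.
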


\begin{proof}
	To prove the interpolation estimates on manifolds it is sufficient to consider one coordinate chart. Covering the support of $\omega\eta_\alpha$, where $\eta_\alpha$ is an element of the partition of unity, in this chart by balls with sufficiently small radius $C\varepsilon$ and finite overlap, using in each of these cubes the interpolation estimate, and summing over these cubes, we get the required estimate. 
\end{proof}

\section{Calderon-Zygmund operators}

The following theory of Calderon-Zygmund operators in variable exponent spaces was developed by  L. Diening and  M. Růžička, see [\cite{DieHHR11}, Section 6.3].

\noindent{\bf Definition.} A kernel $k(\cdot,\cdot)$ is called a standard kernel if it satisfies the following estimates for all distinct $x,y$ and for all $z$ with $|x-z|< |x-y|/2$:
\begin{align*}
	|k(x,y)| &\leq c|x-y|^{-n},\\
	|k(x,y)-k(z,y)| &\leq c |x-z|^\delta\cdot |x-y|^{-n-\delta},\\
	|k(y,x)- k(y,z)| &\leq c |x-z|^\delta\cdot |x-y|^{-n-\delta},
\end{align*}
with some $\delta>0$. For  $\varepsilon>0$ define the integral operator $T_\varepsilon$ on $f\in C_0^\infty(\mathbb{R}^n)$ by
$$
T_\varepsilon f(x) = \int\limits_{|x-y|>\varepsilon} k(x,y) f(y)\, dy.
$$

\begin{lemma}\label{L:Di5}({\it Corollary 6.3.13. \cite{DieHHR11}}) Let $k$ be a standard kernel on $\mathbb{R}^n \times \mathbb{R}^n$. Assume that $N(x, z):= k(x, x-z)$ is homogeneous of degree $-n$ in $z$ and that for every $x$ the integral of $N(x,z)$ over the sphere $|z| = 1$ equals zero.  Let $p\in \mathcal{P}^{log}(\mathbb{R}^n)$ with $1<p_{-}\leq p_{+}<\infty$. Then the operators $T_\varepsilon$ are uniformly bounded on $L^{p(\cdot)}(\mathbb{R}^n)$ with respect to $\varepsilon > 0$. Moreover, $Tf(x) := \lim\limits_{\varepsilon \to 0} T_\varepsilon f(x)$ exists almost everywhere and $T_\varepsilon f \to Tf$ in $L^{p(\cdot)}(\mathbb{R}^n)$. In particular, $T$ is bounded on $L^{p(\cdot)}(\mathbb{R}^n)$.
\end{lemma}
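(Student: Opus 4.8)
The plan is to reduce the statement to the classical weighted Calder\'on--Zygmund theory and then transfer it to the variable exponent setting by extrapolation, with the maximal function bound of Lemma~\ref{L:Di1} carrying the final load; this is essentially the route of Corollary~6.3.13 in \cite{DieHHR11}. First I would recall the constant-exponent facts. The hypotheses on $N(x,z)=k(x,x-z)$ — homogeneity of degree $-n$ in $z$ and vanishing integral over the unit sphere — together with the standard kernel estimates on $k$, are precisely what is needed for the truncations $T_\varepsilon$ to be bounded on $L^{2}(\mathbb{R}^{n})$ \emph{uniformly in} $\varepsilon$ (the classical rotation/$L^2$ argument). Combined with the kernel estimates and the Calder\'on--Zygmund decomposition, this gives the weak type $(1,1)$ bound, hence boundedness on $L^{s}(\mathbb{R}^{n})$ for every $1<s<\infty$, uniformly in $\varepsilon$. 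Moreover, by the Coifman--Fefferman weighted estimates, $T_\varepsilon$ is bounded on $L^{q}(w\,dx)$ for every $1<q<\infty$ and every $w\in A_{q}$, with norm controlled by $n$, $q$, the kernel constants and $[w]_{A_{q}}$ only, and uniformly in $\varepsilon$; the same holds for the maximal truncation $T^{\ast}f:=\sup_{\varepsilon>0}|T_\varepsilon f|$ via Cotlar's inequality $T^{\ast}f\lesssim M(|Tf|^{r})^{1/r}+Mf$ for a fixed $r\in(0,1)$.

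Next I would invoke the Rubio de Francia extrapolation theorem in variable exponent spaces (see \cite[Chapter~7]{DieHHR11}): a family of operators uniformly bounded on $L^{q}(w)$ for all $w\in A_{q}$ and one — hence all — $q\in(1,\infty)$ is automatically bounded on $L^{p(\cdot)}(\mathbb{R}^{n})$ whenever $p\in\mathcal{P}^{\mathrm{log}}(\mathbb{R}^{n})$ with $1<p_{-}\le p_{+}<\infty$. Applied to $\{T_\varepsilon\}$ and to $T^{\ast}$ this yields
\[
\sup_{\varepsilon>0}\|T_\varepsilon f\|_{p(\cdot)}\ \le\ \|T^{\ast}f\|_{p(\cdot)}\ \le\ C(n,p_{-},p_{+},c_{\mathrm{log}}(p))\,\|f\|_{p(\cdot)},
\]
which is the asserted uniform boundedness. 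An alternative, more self-contained route uses only Lemma~\ref{L:Di1}: the pointwise sharp-maximal estimate $M^{\#}_{\delta}(T_\varepsilon f)\lesssim Mf$ (valid for $0<\delta<1$, uniformly in $\varepsilon$), the Fefferman--Stein inequality $\|g\|_{p(\cdot)}\approx\|M^{\#}_{\delta}g\|_{p(\cdot)}$ on $L^{p(\cdot)}$ (available because $M$ is bounded on $L^{p(\cdot)}$ by Lemma~\ref{L:Di1}), and the boundedness of $M$ again give $\|T_\varepsilon f\|_{p(\cdot)}\lesssim\|Mf\|_{p(\cdot)}\lesssim\|f\|_{p(\cdot)}$, once one has the a priori information that $T_\varepsilon f\in L^{s}$ for some finite $s$ — which holds for $f$ in the dense class $C_{0}^{\infty}(\mathbb{R}^{n})$ and then extends by density.

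For the convergence, start with $f\in C_{0}^{\infty}(\mathbb{R}^{n})$: the pointwise limit $Tf(x)=\lim_{\varepsilon\to0}T_\varepsilon f(x)$ exists, being trivial away from $\operatorname{supp}f$ and, near any point, controlled by using the cancellation of $N$ over spheres to estimate the thin annuli $\varepsilon'<|x-y|<\varepsilon$. For general $f\in L^{p(\cdot)}$ choose $f_{j}\in C_{0}^{\infty}$ with $f_{j}\to f$ in $L^{p(\cdot)}$ (possible since $p_{+}<\infty$) and bound the oscillation $\Omega f:=\limsup_{\varepsilon,\varepsilon'\to0}|T_\varepsilon f-T_{\varepsilon'}f|\le \Omega f_{j}+2T^{\ast}(f-f_{j})=2T^{\ast}(f-f_{j})$ a.e.; hence $\|\Omega f\|_{p(\cdot)}\le 2\|T^{\ast}(f-f_{j})\|_{p(\cdot)}\to0$, so $\Omega f=0$ a.e.\ and $Tf:=\lim_\varepsilon T_\varepsilon f$ exists a.e. Then $T_\varepsilon f\to Tf$ in $L^{p(\cdot)}$ by the dominated convergence theorem in $L^{p(\cdot)}$ — the norm is absolutely continuous since $p_{+}<\infty$ — with majorant $|T_\varepsilon f-Tf|\le 2T^{\ast}f+|Tf|\in L^{p(\cdot)}$. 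Letting $\varepsilon\to0$ in the uniform bound gives boundedness of $T$ on $L^{p(\cdot)}(\mathbb{R}^{n})$.

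I expect the genuine difficulty to be split between two points: the classical core, namely the uniform-in-$\varepsilon$ $L^{2}$ bound for the truncations extracted from the homogeneity-plus-cancellation hypothesis on $N$; and, in the passage to variable exponents, ensuring that the extrapolation (or, alternatively, the Fefferman--Stein) machinery is available — which ultimately rests on Lemma~\ref{L:Di1}. Once these are in hand, the a.e.\ and norm convergence are routine, the only nontrivial ingredient being the absolute continuity of the $L^{p(\cdot)}$ norm, which is exactly where $p_{+}<\infty$ is used.
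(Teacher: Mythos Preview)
The paper does not prove this lemma at all: it is stated as a direct citation of Corollary~6.3.13 in \cite{DieHHR11} and is used as a black box throughout. So there is no ``paper's own proof'' to compare against; your proposal is not a competing argument but rather a sketch of what lies behind the cited result.

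That said, your outline is faithful to the actual content of \cite[Section~6.3]{DieHHR11}: the route there is indeed (i) classical Calder\'on--Zygmund theory to get uniform $L^s$ and weighted $L^q(w)$ bounds for the truncations, (ii) transfer to $L^{p(\cdot)}$ via extrapolation, which in that book is built on the boundedness of the maximal operator (your Lemma~\ref{L:Di1}), and (iii) a density-plus-maximal-truncation argument for a.e.\ and norm convergence. Your alternative via the sharp maximal inequality is also present in that chapter and is essentially equivalent. The only minor caution is that the extrapolation theorem in \cite{DieHHR11} is formulated for pairs of functions rather than for operators directly, so strictly speaking one applies it to the pairs $(|T_\varepsilon f|, |f|)$ and $(T^\ast f, |f|)$; this is cosmetic and does not affect your argument.
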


\section{Potentials in the full space}\label{sec:space}
For $n\geq 3$ denote
$$
K_0(z) =  \frac{|z|^{2-n}}{(2-n)\omega_n}, 
$$
where $\omega_n$ is the $(n-1)$-volume of the unit sphere $|x|=1$ in $\mathbb{R}^n$. For $n=2$ we set 
$$
K_0(z) = \frac{1}{2\pi} \log |x|.
$$
Further we shall mainly deal with the case $n\geq 3$ and write the corresponding representations. So by default $n\geq 3$. All the results are also valid for the two-dimensional case, we mark the cases when the corresponding 2D result deviates from the result for $n\geq 3$.

Let $P[f]$ be the Newtonian potential of $f$,
$$
P[f](x) = \int\limits_{\mathbb{R}^n} K_0(x-y) f(y)\, dy = - (n-2)^{-1} \omega_n^{-1} \int\limits_{\mathbb{R}^n} |x-y|^{2-n} f(y)\, dy.
$$
for $n\geq 3$ and the logarithmic potential
$$
P[f](x) = \frac{1}{2\pi} \int\limits_{\mathbb{R}^2} f(y)\log |x-y|\, dy \quad \text{for}\quad n=2.
$$

Let $Q_\alpha[f]$ be the potential solving $\triangle Q_\alpha[f] = - f_{,\alpha}$,
$$
Q_\alpha[f] = -\int\limits_{\mathbb{R}^n} K_{0,\alpha}(x-y) f (y)\, dy = \omega_n^{-1} \int\limits_{\mathbb{R}^n} |x-y|^{-n} (y_\alpha-x_\alpha) f(y)\, dy.
$$
Let $Q[e]$, $e=e^\alpha$, $\alpha=1,\ldots,n$, be the potential solving $\triangle u = - e^\alpha_{,\alpha}$, that is 
\begin{align*}
	Q[e](x) =Q_\alpha [e^\alpha](x)&= -\int\limits_{\mathbb{R}^n} K_{0,\alpha}(x-y) e^\alpha (y)\, dy \\
	&= \omega_n^{-1} \int\limits_{\mathbb{R}^n} |x-y|^{-n} (y_\alpha-x_\alpha) e^\alpha(y)\, dy.
\end{align*}

Let $s\in \mathbb{N}$.  Equip $W^{s,p(\cdot)}(B_{2R})$ with the norm 
$$
\|u\|^*_{s,p(\cdot),B_{2R}} = \sum_{k=0}^s R^{k-s}\|\nabla^k u\|^*_{p(\cdot),B_{2R}},
$$
The following two lemmas summarize the basic properties of the volume potentials in $\mathbb{R}^n$ in variable exponent spaces. Let $\varkappa = n/(n-1)$. For $n=2$ we define 
	\begin{equation}\label{eq:c1}
		c(f; B_{2R}) = \frac{\log (2R)}{2\pi} \int\limits_{B_{2R}} f\, dx.
	\end{equation}

\begin{lemma}\label{L:pa0}
	For $f\in L^p(B_{2R})$ with support in $B_{2R}$ there holds
	\begin{align*}
		\| Q_\alpha[f]\|^*_{1,p(\cdot), B_{2R}} &\leq C(n, p_{-},p_{+}, c_{\mathrm{log} }(p)) \|f\|_{p(\cdot), B_{2R}},\\
		\|P[f]\|^*_{1,p(\cdot), B_{2R}} &\leq C(n, p_{-},p_{+}, c_{\mathrm{log} }(p)) R \|f\|_{p(\cdot), B_{2R}},\\
		\|P[f]\|^*_{1,\varkappa p(\cdot), B_{2R}} &\leq C(n, p_{-},p_{+}, c_{\mathrm{log} }(p))\|f\|_{p(\cdot), B_{2R}},\\
		\|P[f]\|^*_{2,p(\cdot),B_{2R}} &\leq C (n,p_{-},p_{+}, c_{\mathrm{log} }(p)) \|f\|_{p(\cdot), B_{2R}}.
	\end{align*}
	For $n=2$ the estimates for $P[f]$ are valid with $P[f]$ replaced by $P[f] - c(f;B_{2R})$. 
\end{lemma}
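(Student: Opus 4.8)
The strategy is to reduce every one of the four estimates to two mechanisms: pointwise domination by a Riesz potential for the components of order strictly below the top, and a Calder\'on--Zygmund singular-integral representation for the top-order derivatives. Throughout one extends $f$ by zero to $\mathbb{R}^n$ (legitimate since $\operatorname{supp} f\subset B_{2R}$) and assumes, as is the only case of interest, that $2R\le 1$; one uses freely that $|x-y|\le 4R$ when $x,y\in B_{2R}$. From $K_0(z)=\tfrac{|z|^{2-n}}{(2-n)\omega_n}$ one reads off $|K_{0,\alpha}(z)|=\omega_n^{-1}|z|^{1-n}$ and $|K_0(z)|=\tfrac{|z|^{2-n}}{(n-2)\omega_n}$, hence on $B_{2R}$
\[
|Q_\alpha[f]|\le \omega_n^{-1}I_1[|f|],\qquad |\nabla P[f]|\le \omega_n^{-1}I_1[|f|],\qquad |P[f]|\le C\,I_2[|f|]\le 4CR\,I_1[|f|].
\]
Corollary~\ref{C:Di} (applied with $\alpha=1$ and with $\alpha=2$) then gives $\|Q_\alpha[f]\|_{p(\cdot),B_{2R}}\le CR\|f\|_{p(\cdot)}$, $\|\nabla P[f]\|_{p(\cdot),B_{2R}}\le CR\|f\|_{p(\cdot)}$ and $\|P[f]\|_{p(\cdot),B_{2R}}\le CR^2\|f\|_{p(\cdot)}$, while Lemma~\ref{L:DD} with $s=1$ (so $\varkappa=n/(n-1)$) gives $\|I_1[|f|]\|_{\varkappa p(\cdot),B_{2R}}\le C R^{1/p'_-}\|f\|_{p(\cdot)}\le C\|f\|_{p(\cdot)}$, whence $\|\nabla P[f]\|_{\varkappa p(\cdot),B_{2R}}\le C\|f\|_{p(\cdot)}$ and $\|P[f]\|_{\varkappa p(\cdot),B_{2R}}\le CR\|f\|_{p(\cdot)}$.

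For the top-order derivatives, differentiating the (locally integrable) kernels $K_{0,\alpha}$ and $K_0$ yields
\[
\partial_\beta Q_\alpha[f]=-\,\mathrm{p.v.}\!\int K_{0,\alpha\beta}(x-y)f(y)\,dy-c_{\alpha\beta}f,\qquad \partial_\alpha\partial_\beta P[f]=\mathrm{p.v.}\!\int K_{0,\alpha\beta}(x-y)f(y)\,dy+c_{\alpha\beta}f,
\]
where $K_{0,\alpha\beta}(z)=\tfrac{1}{(n-2)\omega_n}\bigl(n|z|^{-n-2}z_\alpha z_\beta-|z|^{-n}\delta_{\alpha\beta}\bigr)$ is a standard kernel, homogeneous of degree $-n$ in $z$, and $N(x,z):=K_{0,\alpha\beta}(z)$ has vanishing average over $\{|z|=1\}$: the off-diagonal entries by odd symmetry, the diagonal ones because $K_0$ is harmonic away from the origin, so $\sum_\alpha K_{0,\alpha\alpha}=0$ while all diagonal entries have equal spherical average. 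Lemma~\ref{L:Di5} therefore applies and gives $\|\nabla Q_\alpha[f]\|_{p(\cdot),B_{2R}}\le C\|f\|_{p(\cdot)}$ and $\|\nabla^2 P[f]\|_{p(\cdot),B_{2R}}\le C\|f\|_{p(\cdot)}$, the local $c_{\alpha\beta}f$ terms being trivially controlled.

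It remains to assemble the weighted norm $\|u\|^*_{s,p(\cdot),B_{2R}}=\sum_{k=0}^{s}R^{k-s}\|\nabla^k u\|_{p(\cdot),B_{2R}}$. The first estimate is exactly $R^{-1}\|Q_\alpha[f]\|_{p(\cdot),B_{2R}}+\|\nabla Q_\alpha[f]\|_{p(\cdot),B_{2R}}\le C\|f\|_{p(\cdot)}$; the second and third collect the $I_1$- and $I_2$-bounds above in the $L^{p(\cdot)}$ and $L^{\varkappa p(\cdot)}$ norms respectively; and $\|P[f]\|^*_{2,p(\cdot),B_{2R}}=R^{-2}\|P[f]\|_{p(\cdot),B_{2R}}+R^{-1}\|\nabla P[f]\|_{p(\cdot),B_{2R}}+\|\nabla^2 P[f]\|_{p(\cdot),B_{2R}}$, whose first two summands are $\le C\|f\|_{p(\cdot)}$ by the Riesz bounds and whose last is $\le C\|f\|_{p(\cdot)}$ by Calder\'on--Zygmund. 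For $n=2$ one replaces $P[f]$ by $P[f]-c(f;B_{2R})$ with $c(f;B_{2R})$ as in \eqref{eq:c1}, so that $P[f]-c(f;B_{2R})=\tfrac1{2\pi}\int_{B_{2R}}f(y)\log\tfrac{|x-y|}{2R}\,dy$; splitting $\log\tfrac{2R}{|x-y|}\le \tfrac{2R}{|x-y|}$ on $\{|x-y|<2R\}$ and using \eqref{eq:log0}, \eqref{eq:log1} (and their evident refinements) in place of Corollary~\ref{C:Di} and Lemma~\ref{L:DD}, the same bounds follow, while the gradient and second derivatives are unaffected by the additive constant.

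The only genuinely delicate point is the passage to the principal-value representation of the second derivatives and the verification that the emerging kernel $K_{0,\alpha\beta}$ meets the hypotheses of Lemma~\ref{L:Di5} --- the standard-kernel size and regularity estimates, homogeneity of degree $-n$, and the vanishing spherical mean. Everything else amounts to bookkeeping of powers of $R$, in which one repeatedly exploits $2R\le 1$ and $|x-y|\le 4R$.
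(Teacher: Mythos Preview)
Your proof is correct and follows essentially the same route as the paper: pointwise domination by Riesz potentials $I_1,I_2$ for the lower-order terms (handled via Corollary~\ref{C:Di} and Lemma~\ref{L:DD}), and the Calder\'on--Zygmund representation for $\nabla Q_\alpha[f]$ and $\nabla^2 P[f]$ (handled via Lemma~\ref{L:Di5}), together with the logarithmic correction for $n=2$. You are more explicit than the paper in verifying the hypotheses of Lemma~\ref{L:Di5} for the kernel $K_{0,\alpha\beta}$; note only that its overall constant is $\omega_n^{-1}$ rather than $\tfrac{1}{(n-2)\omega_n}$, a harmless slip.
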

\begin{proof}
	There holds
	\begin{gather*}
		|P[f]| \leq C(n) I_2 [f],\qquad |\nabla P[f]|, |Q_\alpha[f]|\leq C(n) I_1[f],\\
		|\nabla Q_\alpha[f]|,\ |\nabla^2 P[f]| \leq C(n)(|f| + |\mathrm{CZ}[f]|),
	\end{gather*} 
	where $\mathrm{CZ}[g]$ is a Calderon-Zygmund type singular integral operator. It remains to use the Calderon-Zygmund theory in variable exponent spaces from Lemma~\ref{L:Di5}, together with Corollary~\ref{C:Di} and Lemma~\ref{L:DD}. For the third estimate we also use the obvious relation $P[f] \leq C(n)R I_1[f]$ to get the estimate of $P[f]$ in $L^{\varkappa p(\cdot)}(B_{2R})$. 
	
	For $n=2$ we have
		$$
		P[f] - c(f;B_{2R})= \frac{1}{2\pi} \int\limits_{B_{2R}} f(y) \log \frac{|x-y|}{2R}\, dy.
		$$
		It remains to use the estimate \eqref{eq:log1} to evaluate the norm of $P[f]$ in $L^{p(\cdot)}(B_{2R})$, with the rest of the estimates being the same as for $n\geq 3$.
\end{proof}

\begin{remark}
	It is easy to see that for the additional term arising in the planar case we have 
		$$
		|c(f;B_{2R})| \leq C(p_{-},p_{+},c_{\mathrm{log}}(p)) R^2 \|f\|_{p(\cdot),B_{2R}}  |\log (2R)|.
		$$
	
\end{remark}

\begin{corollary}\label{C:pa0}
	Let $f \in W^{s,p(\cdot)}(B_{2R})$, $s\in \{0\}\cup\mathbb{N}$, with support in $B_{2R}$. Then 
	$$
	\|P[f]\|^*_{s+2,p(\cdot),B_{2R}} \leq C \|f\|^*_{s,p(\cdot), B_{2R}}.
	$$
	Let $f\in W^{s,p(\cdot)}(B_{2R})$, $s\in \{0\}\cup\mathbb{N}$, with support in $B_{2R}$. Then 
	$$
	\|Q_{\alpha}[f]\|^*_{s+1,p(\cdot),B_{2R}}  \leq C \|f\|_{s,p(\cdot), B_{2R}}^*.
	$$
	For $n=2$ the estimate for $P[f]$ is valid with $P[f]$ replaced by $P[f] - c(f;B_{2R})$.
\end{corollary}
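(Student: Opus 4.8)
The plan is to reduce everything to the case $s=0$, which is precisely Lemma~\ref{L:pa0}, by commuting derivatives with the potential operators. Since $f$ is compactly supported in $B_{2R}$, so is every weak derivative $D^{\beta}f$ with $|\beta|\le s$; hence no boundary terms arise on integration by parts, and, using $\partial_{x_{i}}K_{0}(x-y)=-\partial_{y_{i}}K_{0}(x-y)$ repeatedly, one gets the identities $D^{\beta}P[f]=P[D^{\beta}f]$ and $D^{\beta}Q_{\alpha}[f]=Q_{\alpha}[D^{\beta}f]$ for all $|\beta|\le s$. I would prove these first for $f\in C_{0}^{\infty}(B_{2R})$, where they are classical, and then pass to a general $f\in W^{s,p(\cdot)}(B_{2R})$ with support in $B_{2R}$ by approximating $f$ in $W^{s,p(\cdot)}$ by smooth compactly supported forms (Lemma~\ref{L:Zhikov}) and letting $\varepsilon\to 0$, the convergence of the right-hand sides being furnished by Lemma~\ref{L:pa0} applied to each $D^{\beta}f$.

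Granting these identities, the estimate is assembled term by term. For a multi-index $\beta$ with $|\beta|=l\le s$, Lemma~\ref{L:pa0} applied to $D^{\beta}f$ gives $\|\nabla^{m}P[D^{\beta}f]\|_{p(\cdot),B_{2R}}\le CR^{2-m}\|D^{\beta}f\|_{p(\cdot),B_{2R}}$ for $m=0,1,2$, and $\|\nabla^{m}Q_{\alpha}[D^{\beta}f]\|_{p(\cdot),B_{2R}}\le CR^{1-m}\|D^{\beta}f\|_{p(\cdot),B_{2R}}$ for $m=0,1$. Since $D^{\beta}P[f]=P[D^{\beta}f]$, summing over $|\beta|=l$ bounds $\|\nabla^{l}P[f]\|$, $\|\nabla^{l+1}P[f]\|$, $\|\nabla^{l+2}P[f]\|$ (all norms over $B_{2R}$) by constant multiples of $R^{2}\|\nabla^{l}f\|$, $R\|\nabla^{l}f\|$, $\|\nabla^{l}f\|$ respectively. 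Multiplying by the weights $R^{k-s-2}$ built into $\|\cdot\|^{*}_{s+2,p(\cdot),B_{2R}}$, each of the three contributions collapses to $CR^{l-s}\|\nabla^{l}f\|_{p(\cdot),B_{2R}}$, and letting $l$ run from $0$ to $s$ reproduces exactly $C\|f\|^{*}_{s,p(\cdot),B_{2R}}$. The argument for $Q_{\alpha}$ is identical with the $\|\cdot\|^{*}_{s+1,p(\cdot),B_{2R}}$ norm replacing $\|\cdot\|^{*}_{s+2,p(\cdot),B_{2R}}$. In the planar case $n=2$ the subtraction $P[f]-c(f;B_{2R})$ is needed only for the $l=0$ term: for $|\beta|\ge 1$ one has $D^{\beta}c(f;B_{2R})=0$, since $c$ is constant in $x$, while $c(D^{\beta}f;B_{2R})=0$ because $\int_{B_{2R}}D^{\beta}f=0$ by compact support, so the normalization is automatic on the higher-order terms.

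The only genuinely delicate point is the rigorous justification of $D^{\beta}P[f]=P[D^{\beta}f]$ (and the $Q_{\alpha}$ analogue) for $f$ merely in $W^{s,p(\cdot)}$: the kernels $D^{\gamma}K_{0}$ become more singular as $|\gamma|$ grows, so one cannot differentiate under the integral sign naively. This is exactly why I would route the argument through the smooth case together with a density argument, the continuity of $f\mapsto P[D^{\beta}f]$ from $W^{s,p(\cdot)}(B_{2R})$ into $L^{p(\cdot)}(B_{2R})$ coming for free from Lemma~\ref{L:pa0}. I do not anticipate extra difficulty from the two top-order derivatives, since the relevant Calder\'on--Zygmund bounds are already packaged inside Lemma~\ref{L:pa0} via Lemma~\ref{L:Di5}; the present corollary is essentially a bookkeeping statement layered on top of it.
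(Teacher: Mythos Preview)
Your proposal is correct and follows exactly the paper's approach: the paper's proof consists of the single sentence ``Follows from the standard relations $(P[f])_{,\alpha} = P[f_{,\alpha}]$, $(Q_\alpha[f])_{,\beta} = Q_\alpha[f_{,\beta}]$, and estimates of Lemma~\ref{L:pa0}.'' You have simply spelled out the bookkeeping (including the $n=2$ normalization) and added a density justification for the commutation identities, none of which the paper bothers to record.
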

\begin{proof}
	Follows from the standard relations
	$$
	(P[f])_{,\alpha} = P[f_{,\alpha}], \quad (Q_\alpha[f])_{,\beta} = Q_\alpha[f_{,\beta}],
	$$
	and estimates of Lemma~\ref{L:pa0}.
\end{proof}

For $f\in L^p(B_{2R})$ with support in $B_{2R}$, the potential $v=P[f] \in W^{2,p}_{\mathrm{loc}}(\mathbb{R}^n)$ satisfies
$$
\int\limits_{\mathbb{R}^n} (\nabla v \cdot \nabla \zeta + f\zeta)\, dx =0
$$
for all $\zeta \in C_0^\infty(\mathbb{R}^n)$, and the potential $w = Q_\alpha[f]\in W^{1,p}_{\mathrm{loc}}(\mathbb{R}^n)$ satisfies 
$$
\int\limits_{\mathbb{R}^n} (\nabla v \cdot \nabla \zeta + f\zeta_{,\alpha})\, dx =0
$$
for all $\zeta \in C_0^\infty(\mathbb{R}^n)$. The potentials $P[f]$ and $Q_\alpha[f]$ decay at infinity with the exception of the case $n=2$ where $P[f]$ decays at infinity under the additional condition of zero mean of $f$ over $B_{2R}$.

\section{Half-space potentials}\label{sec:halfspace}

Let $\rho$ be a positive number. In all the subsequent paper but one place (namely, in the proof of Corollary~\ref{C:DiInter}) we use the value $\rho =1$. Denote 
$$
\triangle_\rho = \sum_{j=1}^{n-1} \frac{\partial^2}{\partial (x^j)^2}+\rho^2 \frac{\partial^2}{\partial (x^n)^2},\quad
K_0(x;\rho) = \rho^{-1} K_0(x',\rho^{-1}x^n).
$$
There holds $\triangle_\rho K_0(x;\rho) = \delta(x)$. Clearly, $K_0(x;1)=K_0(x)$. In this Section $C$ is a constant which depends on $n$, $p_{-}$, $p_{+}$, $c_{\mathrm{log}}(p)$, and on $\rho$. 

Let $B_{2R}^+ =B_{2R}\cap \{x_n>0\}$, $\overline{B}_{2R}^+=B_{2R} \cap \{x_n \geq 0\}$.

We can assume that the the variable exponent $p(\cdot)$ is extended to $\mathbb{R}^n$ by first taking even reflection across $\{x_n=0\}$ if we work in boundary patches, and then using the standard McShane-Whitney extension procedure from the ball $B_{2R}$, which keeps the same bounds and log-H\"older modulus of continuity: $$p(x)=\max(\sup_{y\in B_{2R}}( p(y) - \omega(|x-y|)),p_{-}),$$ where $\omega(t) = L(\log (e +t^{-1}))^{-1}$, is the log-H\"older modulus of continuity of $p$ on $B_{2R}$. In particular, for boundary patches this extension is even with respect to $x^n$.

Let 
\begin{itemize}
	\item $y^*$ denote the reflection of $y$ across $y_n=0$, $(y',y^n)^* = (y',-y^n)$;
	
	\item $\mathcal{E}^D$ denote the odd extension of a function across $y_n=0$ from the upper half-space:
	$$
	\mathcal{E}^D[f](y',y_n) = -f(y',-y_n), \quad y_n<0;
	$$
	
	\item  $\mathcal{E}^N$ denote the even extension of a function across $y_n=0$ from the upper half-space, 
	$$
	\mathcal{E}^N[f](y',y_n) = f(y',-y_n), \quad y_n<0;
	$$
	
	\item $ g_D(x,y):= K_0(x-y;\rho) - K_0(x-y^*;\rho)$ ;
	
	\item $g_N(x,y):= K_0(x-y;\rho) + K_0(x-y^*;\rho)$ ;
	
\end{itemize}

For a scalar function $f$ defined on $\mathbb{R}^n_+$ we set
\begin{align*}
	P^D[f] = P[\mathcal{E}^D g] &= \int\limits_{\mathbb{R}^n_+} g_D(x,y) f(y)\, dy,\\
	P^N[f] = P[\mathcal{E}^N g] &= \int\limits_{\mathbb{R}^n_+} g_N(x,y) f(y)\, dy,\\
	Q^D_{\alpha}[f] = Q_{\alpha} [\mathcal{E}^D f] &= \int \limits_{\mathbb{R}^n_+} \frac{\partial g_D(x,y)}{\partial y^\alpha} f(y)\, dy, \quad \alpha<n,\\
	Q^D_{\alpha}[f] = Q_{\alpha} [\mathcal{E}^N f] &= \int \limits_{\mathbb{R}^n_+} \frac{\partial g_D(x,y)}{\partial y^\alpha} f(y)\, dy, \quad \alpha =n,\\
	Q^N_{\alpha}[f] = Q_{\alpha} [\mathcal{E}^N f] &= \int \limits_{\mathbb{R}^n_+} \frac{\partial g_N(x,y)}{\partial y^\alpha} f(y)\, dy, \quad \alpha<n,\\
	Q^D_{\alpha}[f] = Q_{\alpha} [\mathcal{E}^D f] &= \int \limits_{\mathbb{R}^n_+} \frac{\partial g_N(x,y)}{\partial y^\alpha} f(y)\, dy, \quad \alpha =n.
\end{align*}
For $e= \{e^\alpha\}$ we set 
$$
Q^D[e] = Q^D_\alpha[e^\alpha], \quad Q^N[e] = Q^N_\alpha[e^\alpha].
$$

Note that

\begin{itemize}
	
	\item the potential $P^D[f]$ solves the Dirichlet problem $\triangle_\rho u = f$ in $\mathbb{R}^n_{+}$, $u=0$ on $x_n=0$;
	
	\item the potential $P^N[f]$ solves the Neumann problem $\triangle_\rho u  = f$, $\partial u/\partial x_n =0$ on $x_n =0$;
	
	\item the potential $Q^D[e]$ solves the Dirichlet problem $\triangle_\rho u = - (e^{\alpha})_{,\alpha}$ in $\mathbb{R}^n_{+}$, $u=0$ on $x_n=0$,

	\item the potential $Q^N[e]$ solves the Neumann problem $\triangle_\rho u = - (e^{\alpha})_{,\alpha}$ in $\mathbb{R}^n_{+}$, $\rho^2\partial u/\partial x^n + e^{n}=0$ on $x^n=0$.
	
\end{itemize}

Denote $\nabla_\rho v = (\partial v/ \partial x^1,\ldots, \partial v/ \partial x^{n-1}, \rho\, \partial v/ \partial x^n)$. 
For $f\in L^{p(\cdot)}(B_{2R}^+)$ with support in $\overline{B}_{2R}^+$ the potential $v=P^D[f]$ belongs to $W^{2,p(\cdot)}_{\mathrm{loc}}(\mathbb{R}^n_+)$, has zero trace on $\{x^n=0\}$ and satisfies
\begin{equation}\label{eq:P}
	\int\limits_{\mathbb{R}^n_{+}} (\nabla_\rho v \cdot \nabla_\rho \zeta +f\zeta )\, dx=0, 
\end{equation}
for all $\zeta \in C_0^\infty(\mathbb{R}^n_+)$, while $v=P^N[f]$ belongs to $W^{2,p(\cdot)}_{\mathrm{loc}}(\mathbb{R}^n_+)$ and satisfies \eqref{eq:P} for all $\zeta \in C_0^\infty(\overline{\mathbb{R}}^n_+)$. The potential $w = Q_\alpha^D[f]$ belongs to $W^{1,p(\cdot)}_{\mathrm{loc}}(\mathbb{R}^n_+)$, has zero trace on $\{x^n=0\}$ and satisfies 
\begin{equation}\label{eq:Q}
	\int\limits_{\mathbb{R}^n_{+}} (\nabla_\rho w \cdot \nabla_\rho \zeta +\zeta f_{,\alpha} )\, dx=0
\end{equation}
for all $\zeta \in C_0^\infty(\mathbb{R}^n_+)$, while $w=Q^N_\alpha[f]$  belongs to $W^{1,p(\cdot)}_{\mathrm{loc}}(\mathbb{R}^n_+)$ and satisfies \eqref{eq:Q} for all $\zeta \in C_0^\infty(\overline{\mathbb{R}}^n_+)$. All these potentials decay at infinity with the exception of $P^N[f]$ in the dimension $n=2$, which requires additionally that the mean of $f$ over $B_{2R}^+$ is zero.

In the following statements $P^*$ will be either $P^D$ or $P^N$ and $Q^*$ will be either $Q^D$ or $Q^N$. In $W^{s,p(\cdot)}(B_{2R}^+)$ we shall use the norm
$$
\|u\|^*_{s,p(\cdot),B_{2R}^+} = \sum_{k=0}^s R^{k-s}\|\nabla^k u\|^*_{p(\cdot),B_{2R}^+}.
$$
For $n=2$ we denote 
	\begin{equation}\label{eq:c2}
		c(f;B_{2R}^+) = c(\mathcal{E}^N f; B_{2R})= \frac{\log(2R)}{2\pi} \int\limits_{B_{2R}^+} f\, dx.
	\end{equation}

\begin{lemma}\label{L:pa1}
	For $f\in L^{p(\cdot)}(B_{2R}^+)$ with support in $\overline{B}_{2R}^+$ there holds
	\begin{align*}
		\| Q^*_\alpha[f]\|^*_{1,p(\cdot), B_{2R}^+} &\leq C \|f\|_{p(\cdot), B_{2R}^+},\\
		\|P^*[f]\|^*_{1,p(\cdot), B_{2R}^+} &\leq C R \|f\|_{p(\cdot), B_{2R}^+},\\
		\|P^*[f]\|^*_{1,\varkappa p(\cdot), B_{2R}^+} &\leq C\|f\|_{p(\cdot), B_{2R}^+}.
	\end{align*}
	The potentials $P^D[f]$ and $Q^D_\alpha[f]$ have zero trace on $x^n=0$. For $n=2$ the estimates for $P^N[f]$ are valid with $P^N[f]$ replaced by $P^N[f] - c(f;B_{2R}^+)$, while the rest of the estimates is unchanged. 
\end{lemma}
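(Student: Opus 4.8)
The plan is to reduce every assertion to the full-space bounds of Lemma~\ref{L:pa0}. Let $P_\rho$ and $Q_{\rho,\alpha}$ denote the full-space potentials built from the rescaled kernel $K_0(\cdot\,;\rho)$ (so $P_1=P$, $Q_{1,\alpha}=Q_\alpha$). First I would record the identities
\[
P^*[f] = P_\rho\bigl[\mathcal{E}^* f\bigr], \qquad Q^*_\alpha[f] = Q_{\rho,\alpha}\bigl[\widehat f\bigr],
\]
where for $P^D$ (resp.\ $P^N$) the reflection is $\mathcal{E}^D$ (resp.\ $\mathcal{E}^N$), and where $\widehat f$ is the odd or even extension of $f$ across $\{x^n=0\}$ prescribed in the definition of $Q^*_\alpha$ (odd for $Q^D_\alpha$ with $\alpha<n$ and for $Q^N_n$, even for $Q^N_\alpha$ with $\alpha<n$ and for $Q^D_n$). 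These identities are immediate: splitting the defining integral of, say, $P_\rho[\mathcal{E}^D f]$ into its parts over $\{y^n>0\}$ and $\{y^n<0\}$ and substituting $y^n\mapsto -y^n$ in the second one produces $\int_{\mathbb{R}^n_+}\bigl(K_0(x-y;\rho)-K_0(x-y^*;\rho)\bigr)f(y)\,dy=P^D[f](x)$; the remaining cases are identical, the parity of the reflection being exactly what makes the two half-space contributions combine with the signs displayed in the definitions of $g_D$, $g_N$ and their $y^\alpha$-derivatives. Since $K_0(\cdot\,;\rho)$ is homogeneous of the same degree as $K_0$, and its second derivatives are again standard Calder\'on--Zygmund kernels with vanishing spherical mean (both properties being preserved under the fixed anisotropic linear change of variables relating $K_0(\cdot\,;\rho)$ to $K_0$), the proof of Lemma~\ref{L:pa0} carries over verbatim to $P_\rho$ and $Q_{\rho,\alpha}$, with constants now also depending on $\rho$.

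The only point that genuinely uses the variable exponent setting is the $L^{p(\cdot)}$-boundedness of the reflection operators. Because $p(\cdot)$ was extended to $\mathbb{R}^n$ by even reflection across $\{x^n=0\}$ before the McShane--Whitney extension, for every $f\in L^{p(\cdot)}(B_{2R}^+)$ with support in $\overline{B}_{2R}^+$ the modular of $\mathcal{E}^D f$ or $\mathcal{E}^N f$ over $B_{2R}$ equals twice the modular of $f$ over $B_{2R}^+$; choosing $\lambda=2^{1/p_{-}}\|f\|_{p(\cdot),B_{2R}^+}$ and using $2^{p(x)/p_{-}}\ge 2$ shows the modular of $\mathcal{E}^* f/\lambda$ is at most $1/2$, whence $\|\mathcal{E}^* f\|_{p(\cdot),B_{2R}}\le 2^{1/p_{-}}\|f\|_{p(\cdot),B_{2R}^+}$. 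Conversely, each summand of any of the weighted norms $\|\cdot\|^*_{k,q(\cdot),B_{2R}}$ only decreases when the domain is restricted to $B_{2R}^+$. Combining this with the identities above and Lemma~\ref{L:pa0} gives, for $q(\cdot)\in\{p(\cdot),\varkappa p(\cdot)\}$,
\[
\|P^*[f]\|^*_{1,q(\cdot),B_{2R}^+}\le \|P_\rho[\mathcal{E}^* f]\|^*_{1,q(\cdot),B_{2R}}\le C\,\|\mathcal{E}^* f\|_{p(\cdot),B_{2R}}\le C\,\|f\|_{p(\cdot),B_{2R}^+},
\]
where the middle constant carries a factor $R$ for $q=p$ and none for $q=\varkappa p$, and likewise $\|Q^*_\alpha[f]\|^*_{1,p(\cdot),B_{2R}^+}\le C\|f\|_{p(\cdot),B_{2R}^+}$. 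For $n=2$ the same chain works after replacing $P[\mathcal{E}^N f]$ by $P[\mathcal{E}^N f]-c(\mathcal{E}^N f;B_{2R})$, as required by Lemma~\ref{L:pa0}; since $c(\mathcal{E}^N f;B_{2R})=c(f;B_{2R}^+)$ by \eqref{eq:c2}, this is exactly the asserted correction for $P^N$, while $\mathcal{E}^D f$ has zero mean over $B_{2R}$, so $P^D$ needs no correction.

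It remains to record the zero-trace assertions. On $\{x^n=0\}$ one has $x-y=(x'-y',-y^n)$ and $x-y^*=(x'-y',y^n)$; since $K_0(\cdot\,;\rho)$ is even in its $n$-th variable these give equal values, so $g_D(x,y)=0$ there, and hence $P^D[f]$ vanishes on $\{x^n=0\}$. Differentiating in $y^\alpha$: for $\alpha<n$ the kernel $\partial_{y^\alpha}g_D$ again vanishes on $\{x^n=0\}$ by the same evenness, while for $\alpha=n$ the two terms of $\partial_{y^n}g_D$ at $x^n=0$ are $-(\partial_nK_0)(x'-y',-y^n;\rho)$ and $-(\partial_nK_0)(x'-y',y^n;\rho)$, which cancel because $\partial_nK_0(\cdot\,;\rho)$ is odd in its $n$-th variable; thus $Q^D_\alpha[f]$ also has zero trace.

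I expect no serious obstacle: the lemma is essentially a transcription of Lemma~\ref{L:pa0} to the half-space via reflection. The only non-routine ingredient is the bound $\|\mathcal{E}^* f\|_{p(\cdot),B_{2R}}\le 2^{1/p_{-}}\|f\|_{p(\cdot),B_{2R}^+}$, which is precisely the reason the exponent was extended evenly across $\{x^n=0\}$ in the set-up; the remaining care is bookkeeping --- pairing each potential with the correct reflection (the $\alpha<n$ versus $\alpha=n$ distinction) and keeping the dependence of the constants on $\rho$ explicit.
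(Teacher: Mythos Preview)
Your argument is correct and follows the same route as the paper: reduce to Lemma~\ref{L:pa0} via the identities $P^*[f]=P_\rho[\mathcal{E}^* f]$, $Q^*_\alpha[f]=Q_{\rho,\alpha}[\widehat f]$, using that the exponent was extended evenly across $\{x^n=0\}$ so that reflection is bounded on $L^{p(\cdot)}$. The paper's proof is a two-line pointer to exactly this mechanism; you have simply filled in the details (the reflection bound, the $\rho$-dependence, the $n=2$ bookkeeping). For the zero-trace assertion the paper argues slightly differently---it invokes the constant-exponent result that the zero extension of $P^D[f]$, $Q^D_\alpha[f]$ lies in $W^{1,1}(B_{4R})$ and then appeals to the log-H\"older equivalence of trace characterizations---whereas you verify pointwise vanishing of the kernel $g_D$ on $\{x^n=0\}$; both are valid, and indeed your own identity $P^D[f]=P_\rho[\mathcal{E}^D f]$ already gives the full-space potential as an odd $W^{1,p_-}_{\mathrm{loc}}(\mathbb{R}^n)$ function, from which zero trace is immediate.
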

\begin{proof}
	These estimates follow from Lemma~\ref{L:pa0} and the definition of the half-space potentials $P^*$ and $Q^*$ via even or odd extension. Extending $P^D[f]$ and $Q^D_\alpha[f]$ by zero to $x^n<0$ we get functions from $W^{1,1}(B_{4R})$ by the corresponding constant exponent result. But this and the log-H\"older property of the exponent $p(\cdot)$ imply the zero trace property.
\end{proof}

\begin{lemma}\label{L:DR040}
	Let $f \in L^{p(\cdot)}(B_{2R}^+)$ with support in $\overline{B}_{2R}^+$. Then 
	$$
	\|P^*[f]\|^*_{2,p(\cdot),B_{2R}^+} \leq C \|f\|_{p(\cdot), B_{2R}^+}.
	$$
	Let $f\in W^{1,p(\cdot)}(B_{2R}^+)$ with support in $\overline{B}_{2R}^+$. Then 
	$$
	\|Q^*_\alpha[f]\|^*_{2,p(\cdot),B_{2R}^+}  \leq C \|f\|_{1,p(\cdot), B_{2R}^+}^*.
	$$
	For $n=2$ the estimate for $P^N[f]$ is valid with $P^N[f]$ replaced by $P^N[f] - c(f;B_{2R}^+)$, while the rest of the estimates is unchanged.
\end{lemma}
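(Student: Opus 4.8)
The plan is to reduce both estimates to the full-space results of Lemma~\ref{L:pa0} and Corollary~\ref{C:pa0} by using the fact, built into the definitions, that each half-space potential is a full-space potential of an even or odd extension of $f$, restricted back to $\mathbb{R}^n_+$. I would first record the two basic inputs: the even reflection $\mathcal{E}^{N}f$ across $\{x^n=0\}$ (the exponent $p(\cdot)$ having been chosen even in $x^n$) satisfies $\|\mathcal{E}^{N}f\|_{p(\cdot),B_{2R}}\le C\|f\|_{p(\cdot),B_{2R}^+}$ and, when $f\in W^{1,p(\cdot)}(B_{2R}^+)$, also $\|\mathcal{E}^{N}f\|^{*}_{1,p(\cdot),B_{2R}}\le C\|f\|^{*}_{1,p(\cdot),B_{2R}^+}$, while the odd reflection $\mathcal{E}^{D}f$ satisfies only the $L^{p(\cdot)}$ bound (its normal distributional derivative carries a surface measure on $\{x^n=0\}$ unless $f$ vanishes there, so $\mathcal{E}^{D}f$ is in general not in $W^{1,p(\cdot)}$). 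Since restricting any estimate from $B_{2R}$ to $B_{2R}^+$ only decreases the Luxemburg norms, it suffices to work with the full-space potentials of the extensions.

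Next I would dispatch the $P^{*}$ estimate. As $P^{D}[f]=P[\mathcal{E}^{D}f]$ and $P^{N}[f]=P[\mathcal{E}^{N}f]$, the fourth inequality of Lemma~\ref{L:pa0} applied to $\mathcal{E}^{*}f$ gives $\|P^{*}[f]\|^{*}_{2,p(\cdot),B_{2R}}\le C\|\mathcal{E}^{*}f\|_{p(\cdot),B_{2R}}\le C\|f\|_{p(\cdot),B_{2R}^+}$, and restriction finishes it. In dimension $n=2$ the correction in Lemma~\ref{L:pa0} is $c(\mathcal{E}^{*}f;B_{2R})$; for the even extension this equals $c(f;B_{2R}^+)$ by \eqref{eq:c2}, which produces exactly the stated $P^{N}[f]-c(f;B_{2R}^+)$, whereas for the odd extension $\int_{B_{2R}}\mathcal{E}^{D}f=0$ by antisymmetry, so $c(\mathcal{E}^{D}f;B_{2R})=0$ and no correction is needed for $P^{D}$.

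For the $Q^{*}_{\alpha}$ estimate I would split on the index and on whether the relevant extension is even or odd. When the extension is even — that is, $Q^{N}_{\alpha}$ with $\alpha<n$ and $Q^{D}_{n}$ — we have $Q^{*}_{\alpha}[f]=Q_{\alpha}[\mathcal{E}^{N}f]$ with $\mathcal{E}^{N}f\in W^{1,p(\cdot)}(B_{2R})$, so Corollary~\ref{C:pa0} with $s=1$ gives $\|Q_{\alpha}[\mathcal{E}^{N}f]\|^{*}_{2,p(\cdot),B_{2R}}\le C\|\mathcal{E}^{N}f\|^{*}_{1,p(\cdot),B_{2R}}\le C\|f\|^{*}_{1,p(\cdot),B_{2R}^+}$. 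When the extension is odd but the index is tangential — $Q^{D}_{\alpha}$ with $\alpha<n$ — I would integrate by parts in $y^\alpha$ in the representation $Q^{D}_{\alpha}[f]=\int_{\mathbb{R}^n_+}\partial_{y^\alpha}g_D(x,y)\,f(y)\,dy$; since $y^\alpha$ runs over all of $\mathbb{R}$ and $f$ is compactly supported there is no boundary term, so $Q^{D}_{\alpha}[f]=-P^{D}[f_{,\alpha}]$, and the $P^{*}$ bound just proved yields $\|Q^{D}_{\alpha}[f]\|^{*}_{2,p(\cdot),B_{2R}^+}\le C\|f_{,\alpha}\|_{p(\cdot),B_{2R}^+}\le C\|f\|^{*}_{1,p(\cdot),B_{2R}^+}$. (The same integration by parts also handles $Q^{D}_{n}$ directly, since $g_D(x,(y',0))=0$.)

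The one genuinely delicate case is $Q^{N}_{n}[f]=\int_{\mathbb{R}^n_+}\partial_{y^n}g_N(x,y)\,f(y)\,dy$: integrating by parts in $y^n$ now leaves a boundary contribution because $g_N(x,(y',0))=2K_0(x-(y',0);\rho)\neq 0$, so $Q^{N}_{n}[f]=-P^{N}[f_{,n}]-2\int_{\{y^n=0\}}K_0(x-(y',0);\rho)\,f(y',0)\,dy'$. The first term is controlled by the $P^{*}$ estimate; the second is the single-layer potential of the boundary trace $f\big|_{\{x^n=0\}}$, which I would bound by combining the trace embedding $W^{1,p(\cdot)}(B_{2R}^+)\to W^{1-1/p(\cdot),p(\cdot)}$ on the flat part of the boundary with the mapping properties of the half-space single-layer potential in variable-exponent spaces of Diening and R\r{u}\v{z}i\v{c}ka. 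Carrying out this last step carefully, and separately tracking the logarithmic corrections in the planar case (checking in particular that they do not disturb the claim that the $Q^{*}_{\alpha}$ estimates are unchanged for $n=2$), is the main obstacle; everything else is routine reduction to Lemma~\ref{L:pa0} and Corollary~\ref{C:pa0}.
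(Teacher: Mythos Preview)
Your reduction of the $P^{*}$ estimate and of the tangential-index cases of $Q^{*}_{\alpha}$ to the full-space results via even/odd extension is exactly the route the paper takes, and the even-extension shortcut for $Q^{N}_{\alpha}$ ($\alpha<n$) and $Q^{D}_{n}$ through Corollary~\ref{C:pa0} is a clean alternative that works for the same reason.

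The divergence is in the case you correctly flag as delicate, $Q^{N}_{n}$. Your plan integrates by parts in $y^{n}$, picks up the single-layer potential of the trace $f(\cdot,0)$, and then appeals to trace theorems plus half-space single-layer bounds in $W^{2,p(\cdot)}$. Two issues. First, the paper does not have such single-layer estimates available at this point; its Lemma~\ref{L:layer} is proved \emph{from} the present lemma via the very identity $U_{\rho}[f(\cdot,0)]=-Q^{N}_{n}[f]-P^{N}[f_{,n}]$ you wrote down, so invoking it here is circular, and the Diening--R\r{u}\v{z}i\v{c}ka results you allude to do not directly furnish the needed $W^{2,p(\cdot)}$ bound for the layer potential in this form. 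Second, even if you import such a bound, in $n=2$ it carries a $\log R^{-1}$ factor (cf.\ Lemma~\ref{L:layer}); since you estimate $-P^{N}[f_{,n}]$ and the layer term separately, the logarithmic pieces do not cancel at the level of norms, and you would end up with a spurious log in the $Q^{N}_{n}$ bound, contradicting the ``unchanged'' claim for $n=2$.

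The paper sidesteps both problems with a simple PDE trick: write $h_{n}=Q^{N}_{n}[f]=Q_{n}[\mathcal{E}^{D}f]$ and first control all second derivatives with at least one tangential index by differentiating tangentially under the integral, $h_{n,\alpha}=-\int K_{0,n}(x-y)\,\mathcal{E}^{D}f_{,\alpha}(y)\,dy$ for $\alpha<n$, and applying Calder\'on--Zygmund in $L^{p(\cdot)}$. The remaining derivative $h_{n,nn}$ is then recovered algebraically from the equation $\triangle_{\rho}h_{n}=-f_{,n}$, namely $\rho^{2}h_{n,nn}=-f_{,n}-\sum_{\alpha<n}h_{n,\alpha\alpha}$. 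This needs no layer-potential input and is insensitive to the dimension, which is why no $n=2$ correction appears for any $Q^{*}_{\alpha}$.
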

\begin{proof}
	The first estimate follows from Calderon-Zygmund estimates in variable exponent spaces and Corollary~\ref{C:Di}. 
	
	We prove the second estimate for $Q^N$, the case of $Q^D$ is similar. Consider the integral 
	\begin{equation}
		h_\alpha(x)=\int\limits_{\mathbb{R}^n_{+}} \frac{\partial g_N(x,y)}{\partial y^\alpha} f (y)\, dy\label{eq:QQ0}
	\end{equation}
	where $f\in W^{1,p(\cdot)}(G_{2R})$ and $f$ is supported in $G_{2R}$.
	
	For $\alpha = 1,\ldots,n-1$ we can integrate by parts in tangential direction to get
	$$
	h_\alpha(x)=\int\limits_{\mathbb{R}^n_{+}} g_N(x,y) f_{,\alpha}\, dy= \int\limits_{\mathbb{R}^n} K_0(x-y;\rho) \mathcal{E}^N f_{,\alpha}(y)\, dy.
	$$
	From Calderon-Zygmund estimates in variable exponent spaces it follows that 
	$$
	\|h_\alpha\|^*_{2,p(\cdot),B_{2R}^+} \leq C\|\nabla f\|_{p(\cdot),B_{2R}^+}.
	$$
	
	Now consider the integral \eqref{eq:QQ0} for $\alpha = n$, which we can write as 
	$$
	h_n(x) = -\int \limits_{\mathbb{R}^n} K_{0,n}(z;\rho) \mathcal{E}^D f(x-z)\, dz.
	$$
	For $\alpha = 1,\ldots, n-1$ (the tangential derivatives) we have
	$$
	h_{n,\alpha}(x) = -\int \limits_{\mathbb{R}^n} K_{0,n}(x-y;\rho) \mathcal{E}^D f_{,\alpha}(y)\, dy,
	$$
	thus by the Calderon-Zygmund theorem in variable exponent spaces 
	$$
	\|\nabla h_{n,\alpha}\|_{p(\cdot),B_{2R}^+} \leq C \|f_{,\alpha}\|_{p(\cdot), B_{2R}^+}.
	$$
	This guarantees that  
	$$
	\|h_{n,\alpha\beta}\|_{p(\cdot), B_{2R}^+} \leq C\|\nabla f\|_{p(\cdot), B_{2R}^+} \quad \text{for}\quad \alpha+\beta<2n.
	$$ 
	Now, the potential $h_n$ satisfies the integral identity (see, for instance \cite{Morrey1966})
	$$
	\int\limits_{B_{2R}^+} \bigl[(\rho^2 h_{n,n}+f)\zeta_{,n} +  \sum_{\alpha=1}^n h_{n,\alpha}\zeta_{,\alpha}\bigr]\, dx=0
	$$
	for all $\zeta \in C_0^\infty(\overline{B}_{2R}^+)$. Integrating by parts, we get 
	$$
	\int\limits_{B_{2R}^+} (\rho^2 h_{n,n}+f)\zeta_{,n}\, dx = \int\limits_{B_{2R}^+}  \sum_{\alpha=1}^n h_{n,\alpha\alpha}\zeta\, dx=0.
	$$
	This means that the weak derivative $h_{n,nn}$ exists and satisfies the relation
	$$
	\rho^2 h_{n,nn} = -f- \sum_{\alpha=1}^{n-1} h_{n,\alpha \alpha}.
	$$
	Thus, 
	$$
	\|h_{n,nn}\|_{p(\cdot), B_{2R}^+} \leq C\|\nabla f\|_{p(\cdot), B_{2R}^+}. 
	$$
	We have proved that 
	$$
	\|\nabla^2 h_\alpha\|_{p(\cdot), B_{2R}^+} \leq C \|\nabla f\|_{p(\cdot), B_{2R}^+}, \quad \alpha=1,\ldots,n.
	$$
	Combining with the previous estimate 
	$$
	\|Q^*_\alpha[f]\|^*_{1,p(\cdot), B_{2R}^+} \leq C\|f\|_{p(\cdot),B_{2R}^+}, 
	$$ 
	we get the result.
\end{proof}

\begin{lemma}\label{L:DR04c0}
	Let $f \in W^{s,p(\cdot)}(B_{2R}^+)$, $s\in \{0\}\cup\mathbb{N}$, with support in $\overline{B}_{2R}^+$. Then 
	$$
	\|P^*[f]\|^*_{s+2,p(\cdot),B_{2R}^+} \leq C \|f\|^*_{s,p(\cdot), B_{2R}^+}.
	$$
	Let $f\in W^{s+1,p(\cdot)}(B_{2R}^+)$ $s\in \{0\}\cup\mathbb{N}$, with support in $\overline{B}_{2R}^+$. Then 
	$$
	\|Q_\alpha^*[f]\|^*_{s+2,p(\cdot),B_{2R}^+}  \leq C \|f\|_{s+1,p(\cdot), B_{2R}^+}^*.
	$$
	For $n=2$ the estimate for $P^N[f]$ is valid with $P^N[f]$ replaced by $P^N[f] - c(f;B_{2R}^+)$, while the rest of the estimates is unchanged. 
\end{lemma}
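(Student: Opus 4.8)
The plan is to prove both inequalities simultaneously by induction on $s$, the base case $s=0$ being precisely Lemma~\ref{L:DR040}. Two structural facts drive the induction. First, the half-space potentials commute with \emph{tangential} differentiation:
\[
(P^*[f])_{,\beta} = P^*[f_{,\beta}], \qquad (Q^*_\alpha[f])_{,\beta} = Q^*_\alpha[f_{,\beta}], \qquad \beta = 1,\ldots,n-1.
\]
Indeed, $P^D[f]=P[\mathcal{E}^{D}f]$, $P^N[f]=P[\mathcal{E}^{N}f]$, and analogous representations hold for $Q^D_\alpha,Q^N_\alpha$; the full-space relations $(P[g])_{,\beta}=P[g_{,\beta}]$, $(Q_\alpha[g])_{,\beta}=Q_\alpha[g_{,\beta}]$ from Corollary~\ref{C:pa0} combine with the elementary fact that the odd and even reflections $\mathcal{E}^{D},\mathcal{E}^{N}$ commute with $\partial_\beta$ for $\beta<n$. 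Second, these potentials solve the indicated PDE, so the pure normal second derivative is expressed through the datum and the tangential second derivatives,
\[
\rho^2 (P^*[f])_{,nn} = f - \sum_{\beta=1}^{n-1}(P^*[f])_{,\beta\beta}, \qquad \rho^2 (Q^*_\alpha[f])_{,nn} = -f_{,\alpha} - \sum_{\beta=1}^{n-1}(Q^*_\alpha[f])_{,\beta\beta}.
\]

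For the inductive step for $P^*$, fix $f\in W^{s,p(\cdot)}(B_{2R}^+)$ with support in $\overline B_{2R}^+$ and put $v=P^*[f]$. The sum of the terms of order $\le s+1$ in $\|v\|^*_{s+2,p(\cdot),B_{2R}^+}$ equals $R^{-1}\|v\|^*_{s+1,p(\cdot),B_{2R}^+}$, and the inductive hypothesis (applied to $f\in W^{s-1,p(\cdot)}$) together with the elementary inequality $\|f\|^*_{s-1,p(\cdot),B_{2R}^+}\le R\,\|f\|^*_{s,p(\cdot),B_{2R}^+}$ bounds this by $C\|f\|^*_{s,p(\cdot),B_{2R}^+}$. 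For a top-order derivative $\partial^\gamma v$, $|\gamma|=s+2$, containing at least one tangential index $\beta$, write $\partial^\gamma v=\partial^{\gamma'}(v_{,\beta})=\partial^{\gamma'}P^*[f_{,\beta}]$ with $|\gamma'|=s+1$; since $f_{,\beta}\in W^{s-1,p(\cdot)}$ is supported in $\overline B_{2R}^+$, the inductive hypothesis controls $\|\partial^{\gamma'}P^*[f_{,\beta}]\|_{p(\cdot),B_{2R}^+}$ by $\|P^*[f_{,\beta}]\|^*_{s+1,p(\cdot),B_{2R}^+}\le C\|f_{,\beta}\|^*_{s-1,p(\cdot),B_{2R}^+}\le C\|f\|^*_{s,p(\cdot),B_{2R}^+}$. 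The only remaining top-order derivative is $\partial_n^{s+2}v$; applying $\partial_n^s$ to the PDE for $v$ gives $\rho^2\partial_n^{s+2}v=\partial_n^s f-\sum_{\beta<n}\partial_n^s\partial_\beta(v_{,\beta})$, where $\partial_n^s f$ is an order-$s$ derivative of $f$ and each $\partial_n^s\partial_\beta(v_{,\beta})$ is an order-$(s+1)$ derivative of $P^*[f_{,\beta}]$, already estimated. The regularity needed to differentiate the PDE $s$ times follows inductively from $v\in W^{2,p(\cdot)}_{\mathrm{loc}}$ (the $s=0$ case) and the identities $v_{,\beta}=P^*[f_{,\beta}]$. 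Summing, $\|P^*[f]\|^*_{s+2,p(\cdot),B_{2R}^+}\le C\|f\|^*_{s,p(\cdot),B_{2R}^+}$.

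The argument for $Q^*_\alpha$ has the same structure, starting the induction from the second estimate of Lemma~\ref{L:DR040}. The only change in bookkeeping is that the governing equation is $\triangle_\rho Q^*_\alpha[f]=-f_{,\alpha}$, so applying $\partial_n^s$ produces the term $\partial_n^s f_{,\alpha}$, an order-$(s+1)$ derivative of $f$; this is exactly why the hypothesis is $f\in W^{s+1,p(\cdot)}$ and the bound is in terms of $\|f\|^*_{s+1,p(\cdot),B_{2R}^+}$. For $n=2$ and the Neumann potential, the additive constant $c(f;B_{2R}^+)$ is killed by every $\partial^\gamma$ with $|\gamma|\ge 1$, so all the derivative estimates are unchanged and only the zeroth-order term must be read with $P^N[f]$ replaced by $P^N[f]-c(f;B_{2R}^+)$, which is covered by the $s=0$ case and the inductive lower-order terms.

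I expect the one genuinely delicate point to be the pure normal derivatives $\partial_n^{s+2}$: since the half-space potentials do \emph{not} commute with $\partial_n$, one must route these through the elliptic equation, and one has to ensure that the regularity required to differentiate that equation $s$ times is actually available — this is supplied by the commutation identities $v_{,\beta}=P^*[f_{,\beta}]$ and the base case $v\in W^{2,p(\cdot)}_{\mathrm{loc}}$. Everything else is a routine induction with the weighted norms $\|\cdot\|^*_{s,p(\cdot),B_{2R}^+}$, in which the powers of $R$ match automatically under the index shift $f\mapsto f_{,\beta}$.
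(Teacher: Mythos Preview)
Your argument is correct and follows the same strategy as the paper: induction on $s$ with base case Lemma~\ref{L:DR040}, commutation of the half-space potentials with tangential derivatives, and recovery of the normal derivatives from the equation $\triangle_\rho v = f$ (respectively $-f_{,\alpha}$). The only organizational difference is that the paper, after reducing to the top-order derivatives, applies the \emph{base case} to the purely tangential $D^\beta Q^*_\alpha[f]=Q^*_\alpha[D^\beta f]$ and then runs a secondary induction on the number $\beta_n$ of normal indices (stepping $\beta_n$ up one at a time via the PDE), whereas you invoke the inductive hypothesis at level $s-1$ on $P^*[f_{,\beta}]$ to dispatch \emph{all} order-$(s+2)$ derivatives with at least one tangential index in one stroke, leaving only the single pure-normal derivative to be read off the equation. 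Both routes are valid; your packaging avoids the inner induction at the cost of a slightly more careful bookkeeping of the weighted norms, which you handle correctly.
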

\begin{proof}
	We show the estimate only for the potential $Q^*$, the estimate for $P^*$ is similar. We argue by induction. The base of induction for $s=0$ is already known. Let the estimate be known for all values $s=1,\ldots, s_0$ and we show it for $s=s_0+1$. Since we already know that
	$$
	\|Q_\alpha^*[f]\|^*_{s_0+1,p(\cdot), B_{2R}^+} \leq C \|f\|_{s_0,p(\cdot), B_{2R}^+}^*,
	$$
	we only have to establish the estimate
	$$
	\|\nabla^{s_0+2} Q^*[f]\|_{p(\cdot), B_{2R}^+} \leq C \|f\|_{s_0+1,p(\cdot), B_{2R}^+}^*.
	$$
	For a multiindex $\beta = (\beta_1,\ldots,\beta_{n-1},\beta_n)$ let $D^\beta u = D^{\beta_1}_{x_1} \ldots D^{\beta_n}_{x_n}$. Note that for all multiindices $\beta$ with $|\beta|\leq s_0$ and $\beta_n=0$ there holds
	\begin{align*}
		D^\beta Q_\alpha^*[f] &= Q^*_\alpha [D^\beta f] \in W^{2,p(\cdot)}(B_{2R}^+),\\
		\|D^\beta Q_\alpha^*[f]\|^*_{2,p(\cdot),B_{2R}^+} &\leq C \|D^\beta f\|^*_{1,p(\cdot), B_{2R}^+}.
	\end{align*}
	Therefore, for $|\beta|=s_0+1$ with $\beta_n\leq 1$ we get
	\begin{equation}\label{eq:ind00}
		\|D^\beta Q_\alpha^*[f]\|^*_{1,p(\cdot),B_{2R}^+} \leq C \|f\|_{s_0+1,p(\cdot), B_{2R}^+}^*.
	\end{equation}
	Let us show, again by induction in $\beta_n$, that \eqref{eq:ind00} holds for all $\beta$ with $|\beta|=s_0+1$. Assume that \eqref{eq:ind00} holds for all $\beta_n \leq j_0$, $j_0\leq s_0$. We show that it holds for $\beta_n \leq j_0+1$. Applying $D^{\beta}$ with $|\beta|=s_0-1$ and $\beta_n = j_0-1$ to
	$$
	\rho^2(Q_\alpha^*[f])_{,nn} = -\sum_{l=1}^{n-1} (Q_\alpha^*[f])_{,ll} - f_{,\alpha},
	$$
	we get
	$$
	\rho^2 D^\beta (Q_\alpha[f])_{,nn} = -\sum_{l=1}^{n-1} D^\beta(Q[f])_{,ll} - D^\beta f_{,\alpha}.
	$$
	The right-hand side contains the $(s_0+1)$-th derivatives of $Q_\alpha[f]$ with $\beta_n=j_0-1$, thus by the assumption of induction the left-hand side belongs to $W^{1,p(\cdot)}(B_{2R}^+)$, and \eqref{eq:ind00} is satisfied for  $|\beta|=s_0+1$ with $\beta_n = j_0+1$.
\end{proof}


Let 
\begin{align*}
	U_\rho[f](x) &=  \int\limits_{\mathbb{R}^{n-1}} g_N(x,y) f(y')\, dy' \\&=\frac{2}{(2-n)\omega_n\rho}\int\limits_{\mathbb{R}^{n-1}} (|x'-y'|^{2}+(x^n/\rho)^2)^\frac{2-n}{2} f(y')\, dy', 
\end{align*}
where the last expression is given for $n\geq 3$. This potential represents a solution of the Neumann problem $\triangle U[f]=0$ in $\mathbb{R}^n_+$, $\rho^2\partial U[f]/\partial x^n = f$ on $x_n=0$. Clearly, $U_\rho[f](x) = \rho^{-1}U_1[f](x', x_n/\rho)$, where $U_1$ is the standard single-layer potential corresponding to the Laplace operator.
\begin{lemma}\label{L:layer}
	Let $f\in W^{s+1,p(\cdot)}(B_{2R}^+)$, $s=0,1,2,\ldots$ have support in $\overline{B}_{2R}^+$. Then 
	$$
	\|U_\rho[f(\cdot,0)]\|^*_{s+2,p(\cdot),B_{2R}^+} \leq C(n,p_{-},p_{+},c_{\mathrm{log}}(p),s) \|f\|^*_{s+1,p(\cdot),B_{2R}^+}.
	$$
	If $n=2$ then the right-hand side is multiplied by the factor $(1+ \log R^{-1})$.
\end{lemma}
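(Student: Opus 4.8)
The plan is to reduce the single-layer potential $U_\rho[f(\cdot,0)]$ to the \emph{volume} potentials $P^N$ and $Q^N_n$, whose mapping properties in variable exponent Sobolev spaces were already obtained in Lemmas~\ref{L:DR040} and~\ref{L:DR04c0}. The key step is to establish the identity
$$
U_\rho[f(\cdot,0)] = -\,P^N[\partial_n f] - Q^N_n[f] \qquad \text{on } \mathbb{R}^n_+
$$
(the precise signs and constant factors being read off from the conventions of Section~\ref{sec:halfspace}). Since $f$ has compact support in $\overline{B}_{2R}^+$, for almost every $y'$ one has $f(y',0) = -\int_0^\infty \partial_n f(y',t)\,dt$; substituting this into the definition of $U_\rho$ and writing $g_N(x,(y',0)) = g_N(x,(y',t)) - \int_0^t \partial_{y^n} g_N(x,(y',s))\,ds$, the first piece produces $-\int_{\mathbb{R}^n_+} g_N(x,y)\,\partial_n f(y)\,dy = -P^N[\partial_n f](x)$, while in the second piece Fubini's theorem on $\{0<s<t<\infty\}$ combined with $\int_s^\infty \partial_n f(y',t)\,dt = -f(y',s)$ yields $-\int_{\mathbb{R}^n_+} \partial_{y^n} g_N(x,y)\, f(y)\,dy = -Q^N_n[f](x)$. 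All the integrals converge because $x^n>0$ keeps the kernels off their singularity and $f$ is integrable with bounded support, and the identity extends to $\{x^n=0\}$ by continuity of the single-layer potential. As a consistency check, the right-hand side is $\triangle_\rho$-harmonic in $\mathbb{R}^n_+$ with Neumann data $\rho^2\partial_n(\cdot)=f(\cdot,0)$ on $\{x^n=0\}$, so uniqueness for the Neumann problem gives the same identity, modulo a constant when $n=2$.

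Granting this identity, for $n\ge 3$ one simply invokes Lemma~\ref{L:DR04c0} with argument $\partial_n f$ for the first potential and $f$ for the second, giving
$$
\|P^N[\partial_n f]\|^*_{s+2,p(\cdot),B_{2R}^+}\le C\,\|\partial_n f\|^*_{s,p(\cdot),B_{2R}^+}, \qquad \|Q^N_n[f]\|^*_{s+2,p(\cdot),B_{2R}^+}\le C\,\|f\|^*_{s+1,p(\cdot),B_{2R}^+}.
$$
Since $\nabla^k \partial_n f$ is a sub-collection of the components of $\nabla^{k+1}f$, comparing the weighted sums defining $\|\cdot\|^*$ term by term gives $\|\partial_n f\|^*_{s,p(\cdot),B_{2R}^+}\le C\,\|f\|^*_{s+1,p(\cdot),B_{2R}^+}$, and adding the two estimates yields the lemma without the logarithmic factor.

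For $n=2$ the sole difference is that Lemma~\ref{L:DR04c0} controls $P^N[\partial_n f]-c(\partial_n f;B_{2R}^+)$ instead of $P^N[\partial_n f]$, with $c(\partial_n f;B_{2R}^+)$ the constant defined in \eqref{eq:c2}. Being a constant, $c(\partial_n f;B_{2R}^+)$ disappears from every $\nabla^k$ with $k\ge 1$, so it only enters the $k=0$ term $R^{-s-2}\|U_\rho[f(\cdot,0)]\|_{p(\cdot),B_{2R}^+}$ of the $\ast$-norm. There one bounds $|c(\partial_n f;B_{2R}^+)|$ using the divergence theorem $\int_{B_{2R}^+}\partial_n f = -\int_{\{x^n=0\}\cap B_{2R}} f(\cdot,0)$ and H\"older's inequality, together with $\|\partial_n f\|_{p(\cdot),B_{2R}^+}\le C R^{s}\|f\|^*_{s+1,p(\cdot),B_{2R}^+}$; multiplying by $\|1\|_{p(\cdot),B_{2R}^+}$ and $R^{-s-2}$, the powers of $R$ balance out and one is left with a factor $|\log (2R)| \le C(1+\log R^{-1})$ in front of $\|f\|^*_{s+1,p(\cdot),B_{2R}^+}$.

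I expect the only genuine difficulty to be the first step, namely pinning down the identity $U_\rho[f(\cdot,0)]=-P^N[\partial_n f]-Q^N_n[f]$ with the correct constants: this forces one to carefully unwind the reflected-point kernels $K_0(x-y^*;\rho)$ inside $g_N$ and the even/odd extensions built into the definitions of $P^N$ and $Q^N_n$ in Section~\ref{sec:halfspace}. Once the identity is in place, the remainder is routine bookkeeping with the scaled norms $\|\cdot\|^*$, the planar case requiring only the extra estimate for $c(\partial_n f;B_{2R}^+)$ described above.
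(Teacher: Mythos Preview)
Your proposal is correct and follows essentially the same approach as the paper: reduce $U_\rho[f(\cdot,0)]$ to the volume potentials via the identity $U_\rho[f(\cdot,0)]=-Q^N_n[f]-P^N[\partial_n f]$, then invoke Lemma~\ref{L:DR04c0}, handling the constant $c(\partial_n f;B_{2R}^+)$ separately when $n=2$. The only difference is cosmetic: the paper derives the identity in one line by writing $g_N(x,(y',0))f(y',0)=-\int_0^\infty \partial_{y^n}\bigl(g_N(x,y)f(y)\bigr)\,dy^n$ and integrating over $y'$, whereas you unfold the same integration by parts into two integral representations plus Fubini; and for $n=2$ the paper estimates $\int_{B_{2R}^+}\partial_n f$ directly by H\"older without passing through the divergence theorem.
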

\begin{proof}
	There holds
	\begin{align*}
		U_\rho[f(\cdot,0)] &= -\int\limits_{\mathbb{R}^{n-1}} \left(\int\limits_0^\infty \frac{\partial(g_N(x,y) f(y))}{\partial y^n} dy^n \right)\, dy' \\
		&=-Q_n^N[f] - P^N\left[\frac{\partial f}{\partial x^n}\right].  
	\end{align*}
	The required properties of $U_\rho[f]$ follow then from Lemma~\ref{L:DR04c0}. For $n=2$ we also use the estimate 
		\begin{gather*}
			\|c(f_{,n}; B_{2R}^+)\|_{s+2,p(\cdot),B_{2R}^+} = R^{-s-2}\|c(f_{,n}; B_{2R}^+)\|_{p(\cdot),B_{2R}^+} \\
			\leq C|\log(2R)| R^s \|f_{,n}\|_{p(\cdot),B_{2R}^+} \leq C |\log(2R)| \|f\|^*_{s+1,p(\cdot),B_{2R}^+}.
		\end{gather*}
		which follows by the definition of corresponding norms and the log-H\"older property of the exponent $p(\cdot)$.
\end{proof}

Using this simple statement we prove the following
\begin{corollary}\label{C:ext}
	Let $s\in \{0\}\cup \mathbb{N}$, $M$ be of the class $C^{s+2,1}$, and the forms $f,v\in W^{s+1,p(\cdot)}(M, \Lambda)$. There exists $\gamma\in W^{s+2,p(\cdot)}(M, \Lambda)$ such that $\gamma=0$ on $bM$, $nd\gamma = nf$, $t\delta \gamma =tv$, and 
	$$
	\|\gamma\|_{W^{s+2,p(\cdot)}(M, \Lambda)} \leq C(\mathrm{data}) (\|f\|_{W^{s+1,p(\cdot)}(M, \Lambda)} + \|v\|_{W^{s+1,p(\cdot)}(M, \Lambda)}).
	$$
	If $M$ is only of the class $C^{s+1,1}$, then $\gamma, d\gamma \in W^{s+1,p(\cdot)}(M, \Lambda)$ and
	\begin{gather*}
		\|\gamma\|_{W^{s+1,p(\cdot)}(M, \Lambda)}+ \|d\gamma\|_{W^{s+1,p(\cdot)}(M, \Lambda)} \\
		\leq C(\mathrm{data}) (\|f\|_{W^{s+1,p(\cdot)}(M, \Lambda)} + \|v\|_{W^{s+1,p(\cdot)}(M, \Lambda)}).
	\end{gather*}
	For $C^{1,1}$ manifold the condition $t\delta \gamma=v$ becomes void. 
	Moreover, the map from $f$, $v$ to $\gamma$ is linear. 
\end{corollary}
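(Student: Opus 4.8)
The plan is to construct $\gamma$ chart by chart, via a partition of unity $\{\eta_\alpha\}$ subordinate to an admissible atlas, to build the contribution of each boundary chart from the half-space potentials of Section~\ref{sec:halfspace}, and to paste the pieces together with cutoffs. On interior charts there is nothing to prescribe, so all the work takes place in boundary charts, where the coordinates are admissible: $bM=\{x^n=0\}$, $\nu=-dx^n$, and on $bM$ one has $g^{nn}=1$, $g^{n\beta}=0$ for $\beta<n$. The structural point is that since $\gamma=0$ on $bM$, all tangential derivatives of the coordinates of $\gamma$ vanish on $\{x^n=0\}$; hence by \eqref{eq:d} the coordinate $(d\gamma)_{Jn}$ (for a tangential multiindex $J$) equals $(-1)^{|J|}\partial_{x^n}\gamma_J$ on $bM$, and by \eqref{eq:delta} the coordinate $(\delta\gamma)_J$ equals $-\partial_{x^n}\gamma_{nJ}$ on $bM$ (the Christoffel terms are bounded and act only on $\gamma|_{bM}=0$ or on tangential derivatives of $\gamma$, which vanish there). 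Thus prescribing $nd\gamma=nf$ and $t\delta\gamma=tv$ on $bM$ amounts to prescribing $\partial_{x^n}\gamma_J|_{x^n=0}=(-1)^{|J|}f_{Jn}|_{x^n=0}$ and $\partial_{x^n}\gamma_{nJ}|_{x^n=0}=-v_J|_{x^n=0}$, and the problem decouples into one scalar half-space problem per coordinate.

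So I would first solve the scalar problem: given $\psi\in W^{s+1,p(\cdot)}(B_{2R}^+)$ supported in $\overline{B}_{2R}^+$, produce $u$ with $u|_{x^n=0}=0$, $\partial_{x^n}u|_{x^n=0}=\psi(\cdot,0)$, and $\|u\|^*_{s+2,p(\cdot),B_{2R}^+}\le C\|\psi\|^*_{s+1,p(\cdot),B_{2R}^+}$. A convenient choice is $u:=U_\rho[\rho^2\psi(\cdot,0)]-U_\rho[\rho^2\psi(\cdot,0)]\big|_{x^n=0}$ with $\rho=1$, the subtracted term being independent of $x^n$; then $u|_{x^n=0}=0$, while $\rho^2\partial_{x^n}U_\rho[\rho^2\psi(\cdot,0)]|_{x^n=0}=\rho^2\psi(\cdot,0)$ gives the prescribed normal derivative. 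The bound on the first term is precisely Lemma~\ref{L:layer} (whose proof rewrites $U_\rho[\psi(\cdot,0)]$ through the body potentials $Q_n^N$, $P^N$, so the right-hand side is a body norm of $\psi$ and $\partial_n\psi$). The subtracted term equals, up to a constant, the $(n-1)$-dimensional Riesz potential $I_1$ of the trace $\psi(\cdot,0)$ (logarithmic potential when $n=2$); being $x^n$-independent, its $W^{s+2,p(\cdot)}(B_{2R}^+)$-norm reduces to a tangential $W^{s+2}$-norm on $B_{2R}^{n-1}$, which is controlled by $\|\psi\|^*_{s+1,p(\cdot),B_{2R}^+}$ through the $(n-1)$-dimensional analogues of Corollary~\ref{C:Di} and Lemma~\ref{L:DD} together with the boundary-trace estimate $\|\psi(\cdot,0)\|_{L^{p(\cdot)}(\mathbb{R}^{n-1})}\lesssim\|\psi\|_{1,p(\cdot),\mathbb{R}^n_+}$ used earlier (applied to $\nabla^l\psi$, $l\le s$, for the higher derivatives).

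With the scalar solver in hand I would set, on each boundary chart $U_\alpha$, $u^{(\alpha)}$ to be the form whose $J$-coordinate ($n\notin J$) solves the scalar problem with datum $(-1)^{|J|}(\eta_\alpha f)_{Jn}$ and whose $nJ$-coordinate solves it with datum $-(\eta_\alpha v)_J$; on interior charts take the usual mollified contribution, and put $\gamma:=\sum_\alpha\zeta_\alpha u^{(\alpha)}$ with $\zeta_\alpha\equiv1$ on $\operatorname{supp}\eta_\alpha$ and $\operatorname{supp}\zeta_\alpha\subset U_\alpha$. Since each $u^{(\alpha)}$ vanishes on $bM$, so does $\gamma$; since $d(\zeta_\alpha u^{(\alpha)})=d\zeta_\alpha\wedge u^{(\alpha)}+\zeta_\alpha\,du^{(\alpha)}$ with $d\zeta_\alpha\wedge u^{(\alpha)}$ vanishing on $bM$, one gets $nd\gamma|_{bM}=\sum_\alpha\zeta_\alpha\,n(du^{(\alpha)})|_{bM}=\sum_\alpha\eta_\alpha\,(nf)|_{bM}=(nf)|_{bM}$, and $t\delta\gamma|_{bM}=(tv)|_{bM}$ follows in the same way, the cutoff commutator term $d\zeta_\alpha\lrcorner u^{(\alpha)}$ in $\delta(\zeta_\alpha u^{(\alpha)})$ and the Christoffel terms again hitting only $u^{(\alpha)}|_{bM}=0$ or its tangential derivatives. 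Summing the local estimates and using the equivalence of atlas norms gives the stated bound, and linearity of $f,v\mapsto\gamma$ is clear since every step — restriction to charts, multiplication by fixed cutoffs, the potentials $U_\rho$ and their boundary restrictions — is linear.

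Finally, the regularity dichotomy: the construction is carried out in each chart with the flat half-space potentials, so $u^{(\alpha)}\in W^{s+2,p(\cdot)}$ in the coordinates of $U_\alpha$; the form-coefficient transformation law involves the Jacobian of the transition maps, so global $W^{s+2,p(\cdot)}$-regularity of $\gamma$ needs the transitions to be $C^{s+2,1}$ (then the Jacobian is $C^{s+1,1}$ and $\nabla^{s+2}$ may land on it without leaving $L^\infty$), which is the first case. If $M$ is only $C^{s+1,1}$ the Jacobian is merely $C^{s,1}$, so only $\gamma\in W^{s+1,p(\cdot)}$ survives transition, while $d\gamma$ — transforming as a form but controlled only at order $s+1$ — still lies in $W^{s+1,p(\cdot)}$; on a $C^{1,1}$ manifold the Christoffel symbols are merely bounded, so $\delta\gamma\in L^{p(\cdot)}$ carries no trace and the condition $t\delta\gamma=v$ is vacuous. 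I expect the main technical obstacle to be the bookkeeping of the boundary traces — verifying that the lower-order and Christoffel terms genuinely do not disturb the prescribed $nd\gamma$, $t\delta\gamma$ — together with the estimate for the $x^n$-independent correction term, which is where the lower-dimensional Riesz-potential bounds must be invoked.
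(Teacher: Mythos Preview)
Your reduction to local scalar problems is correct and matches the paper: via a partition of unity in admissible boundary charts, the conditions $\gamma|_{bM}=0$, $nd\gamma=nf$, $t\delta\gamma=tv$ become exactly $\gamma_I(x',0)=0$, $\partial_{x^n}\gamma_I(x',0)=f_{nI}(x',0)$ for $n\notin I$ and $\partial_{x^n}\gamma_{nI}(x',0)=-v_I(x',0)$, so the task is a scalar half-space problem per component.

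The gap is in your scalar solver. Taking $u=U_1[\psi(\cdot,0)]-U_1[\psi(\cdot,0)]\big|_{x^n=0}$ does enforce the two boundary conditions, and Lemma~\ref{L:layer} gives $U_1[\psi(\cdot,0)]\in W^{s+2,p(\cdot)}(B_{2R}^+)$. But the subtracted term $w(x')=U_1[\psi(\cdot,0)](x',0)$ does \emph{not} lie in $W^{s+2,p(\cdot)}$: it is (up to a constant) the $(n-1)$-dimensional potential $I_1^{(n-1)}[\psi(\cdot,0)]$, and to put $\nabla_{x'}^{s+2}w$ in $L^{p(\cdot)}$ you would need $\nabla_{x'}^2 I_1^{(n-1)}$ acting on $\nabla_{x'}^{s}\psi(\cdot,0)\in L^{p(\cdot)}(\mathbb{R}^{n-1})$ --- the best trace available, since your trace inequality applies only to $\nabla^l\psi$ with $l\le s$. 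That operator is a \emph{derivative} of a Calder\'on--Zygmund operator and is unbounded on $L^{p(\cdot)}$. Equivalently, the trace of the $W^{s+2,p(\cdot)}$ function $U_1[\psi(\cdot,0)]$ is only in the boundary space of order $s+2-1/p$, so its constant-in-$x^n$ extension is merely $W^{s+1,p(\cdot)}$. Hence your $u$ is in $W^{s+1,p(\cdot)}$ but not $W^{s+2,p(\cdot)}$, and the first (and main) estimate of the corollary fails.

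The paper avoids this by never subtracting a trace. It exploits the scaling $U_\rho[h](x',0)=\rho^{-1}U_1[h](x',0)$ and $\partial_{x^n}U_\rho[h](x',0)=\rho^{-2}h$ to take a linear combination of two single-layer potentials with \emph{different} parameters,
\[
\tfrac12\bigl(U_{1/2}[h]-2\,U_1[h]\bigr),
\]
which vanishes on $\{x^n=0\}$ (since $U_{1/2}[h](x',0)=2U_1[h](x',0)$) and has normal derivative $\tfrac12(4h-2h)=h$ there. Each summand is separately in $W^{s+2,p(\cdot)}(B_{2R}^+)$ by Lemma~\ref{L:layer}, so the full $W^{s+2}$ bound is immediate. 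This two-$\rho$ trick is the missing idea; with it, your partition-of-unity gluing and your discussion of the $C^{s+1,1}$ versus $C^{s+2,1}$ dichotomy go through as written.
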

\begin{proof}
	By using a partition of unity, we may assume that the support of $f$ and $v$ is in one local admissible coordinate chart. Then the boundary condition reads as 
	$$
	\gamma(x',0) =0, \quad \frac{\partial \gamma_I}{\partial x^n}(x',0) = f_{nI}(x',0), \quad n\notin I, \quad \frac{\partial \gamma_{nI}}{\partial x^n}(x',0) = -v_{I}(x',0).
	$$
	Here the supports of $f$ and $v$ belong to $\overline{B}_{R}^+$, while $\overline{B}_{2R}^+$ is still in coordinate map. Let $\eta\in C_0^\infty(B_{2R})$, $0\leq \eta \leq 1$, and $\eta=1$ on $B_R$. Set 
	\begin{align*}
		\gamma_I(x) &=\frac{\eta(x)}{2}( U_{1/2}[f_{nI}(\cdot,0)](x) - 2U_1[f_{nI}(\cdot,0)](x)), \quad n\notin I,\\
		\gamma_{nI}(x) &=\frac{\eta(x)}{2}( U_{1/2}[-v_{I}(\cdot,0)](x) - 2U_1[-v_{I}(\cdot,0)](x)).
	\end{align*}
	By Lemma~\ref{L:layer} this provides the required extension.
\end{proof}

\begin{remark}
	By Hodge duality, on $C^{s+1,1}$ manifold in the situation of Corollary~\ref{C:ext} we can also get $\gamma \in C^{s+1,1}(M,\Lambda)$ with $\delta \gamma \in C^{s+1,1}(M,\Lambda)$ and the estimate 
	\begin{gather*}
		\|\gamma\|_{W^{s+1,p(\cdot)}(M, \Lambda)}+ \|\delta\gamma\|_{W^{s+1,p(\cdot)}(M, \Lambda)} \\
		\leq C(\mathrm{data}) (\|f\|_{W^{s+1,p(\cdot)}(M, \Lambda)} + \|v\|_{W^{s+1,p(\cdot)}(M, \Lambda)}).
	\end{gather*}
	In this case for $C^{1,1}$ manifold the condition $n d\gamma =n f$ becomes void.
\end{remark}

See ~\cite{MazShaBook} for the classical Gagliardo type extension operator and its properties.

	\chapter[Boundary Value Problems for the Hodge Laplacian and Div-Curl Systems]{Boundary Value Problems for the Hodge Laplacian and Div-Curl Systems --- Classical Results}\label{sec:boundary}
	
	In the Chapter we describe classical boundary value problems for the Hodge Laplacian and div-curl systems and some well-known result on their solvability in constant exponent Lebesgue and Sobolev spaces. However, we emphasize that all these results are valid only when $M$ is at least $C^{2,1}.$ So we \emph{can not use} these results to treat the case of $C^{1,1}$ manifolds. 
	
	\section{Classical BVPs for the Hodge Laplacian on \texorpdfstring{$C^{2,1}$}{C21} manifolds} Recall that $\triangle = d\delta + \delta d$ and denote 
	\begin{align}\label{eq:Ddef}
		\mathcal{D}(f,v) = (df,dv) + (\delta f, \delta v).
	\end{align}
	From the integration-by-parts formula it follows that 
	\begin{equation}\label{eq:Green1}
	(\triangle f, v) = \mathcal{D}(f,v) + [\delta f, v] - [v, df]
	\end{equation}
	and 
	\begin{align*}
		(\triangle f, v) - (f, \triangle v) &= [\delta f, v] - [v, df] - [\delta v,f] + [f,dv]\\
		&\begin{multlined}
			= \int\limits_{bM} \langle \nu \wedge \delta f, v \rangle\, d\sigma -  \int\limits_{bM} \langle \nu \wedge v, df \rangle\, d\sigma  - \int\limits_{bM} \langle \nu \wedge \delta v, f \rangle\, d\sigma \\+ \int\limits_{bM} \langle \nu \wedge f, dv\rangle\, d\sigma.  
		\end{multlined}
	\end{align*}
	From this relation and the definition of $[\cdot,\cdot]$ it follows that there are four sets of boundary conditions which make the operator $\triangle$ symmetric in $W^{2,2}(M,\Lambda)$:
	\begin{align*}
		(I)&\quad t\omega =0,\quad  \quad n\omega =0,\\
		(II)&\quad t\omega =0,\quad  \quad t\delta \omega =0,\\
		(III)&\quad n\omega =0,\quad \quad nd\omega =0,\\
		(IV)&\quad  t\delta \omega =0,\quad nd\omega =0.
	\end{align*}
	These leads to four standard boundary value problems for the Hodge Laplacian:
	\begin{align*}
		(I)& \quad\triangle \omega = \eta, \quad t\omega =t\varphi,\quad  n\omega =n\varphi,\\
		(II)& \quad\triangle \omega = \eta, \quad t\omega =t\varphi,\quad  t\delta \omega =t\psi,\\
		(III)& \quad\triangle \omega = \eta, \quad n\omega =n\varphi,\quad nd\omega =n\psi,\\
		(IV)& \quad\triangle \omega = \eta, \quad t\delta \omega =t\varphi,\quad nd\omega =n\psi.
	\end{align*}
	The first three of these problems are elliptic, moreover the first one has zero kernel/co-kernel, while the fourth one is not elliptic and has infinite dimensional kernel/co-kernel which consists of harmonic fields.

 In terms of the classical vector calculus the Green's relation \eqref{eq:Green1} takes form
\begin{equation}\label{eq:Green3D}
\int\limits_D\vec{v}\cdot \triangle \vec{u}\, dV = \int\limits_{\partial D} [(\vec{v}\cdot \vec{n}) \mathrm{div}\, \vec{u} - \vec{v}\cdot(\vec{n}\times\mathrm{rot}\, \vec{u})]\, d\sigma - \int\limits_D [(\mathrm{div}\, \vec{u}) (\mathrm{div}\, \vec{v}) +  \mathrm{rot}\, \vec{u} \cdot \mathrm{rot}\, \vec{v} ]\, dV
\end{equation}
(here $\triangle$ is the standard Laplacian in 3D) and the four BVPs above read as follows: in a domain $D\subset \mathbb{R}^3$ find a vector field $\vec{u} = (u_1,u_2,u_3)$ satisfying
	\begin{align*}
		(I)& \quad\triangle \vec{u} = \vec{\eta}\quad \text{in}\quad D, \quad \vec{u} = \vec{\varphi} \quad \text{on}\quad \partial D,\\
		(II)& \quad\triangle \vec{u} = \vec{\eta} \quad \text{in}\quad D, \quad \vec{n}\times (\vec{u}-\vec{\varphi})=0,\quad  \mathrm{div}\, \vec{u} = \psi\quad \text{on}\quad \partial D,\\
		(III)& \quad\triangle \vec{u} = \vec{\eta}\quad \text{in}\quad D, \quad \vec{n}\cdot (\vec{u}-\vec{\varphi})=0,\quad \vec{n}\times (\mathrm{rot}\, \vec{u} - \vec{\psi})=0\quad \text{on}\quad \partial D,\\
		(IV)& \quad\triangle \vec{u} = \vec{\eta}\quad \text{in}\quad D, \quad \mathrm{div}\, \vec{u} = \varphi,\quad \vec{n}\times (\mathrm{rot}\, \vec{u} - \vec{\psi})=0\quad \text{on}\quad \partial D.
	\end{align*}
The boundary conditions in (II) and (III) are sometimes called ``electric'' and ``magnetic'' boundary conditions, respectively. From \eqref{eq:Green3D} it is easy to get the following representation formula: 
\begin{equation}\label{eq:Int3D}
\begin{gathered}
\vec{u}(x) = \int\limits_D g(x,y) \triangle \vec{u}(y)\, dy + \int\limits_{\partial D} \big( \phi_{(n)}(y) \nabla_y g(x,y) + g(x,y) \vec{\psi}_{(n)}(y)\big)\, d\sigma(y) \\
+ \int\limits_{\partial D} \big(\vec{\psi}_{(t)}(y)\times \nabla_y g(x,y)  - \vec{n} g(x,y) \phi_{(t)}(y) \big)\, d\sigma(y),\\
g(x,y) = -\frac{1}{4\pi |x-y|}, \quad \phi_{(n)} = \vec{u}\cdot\vec{n}, \quad \vec{\psi}_{(n)} = [\vec{n}\times \mathrm{rot}\,\vec{u}],\\ \quad \phi_{(t)}=\mathrm{div}\, \vec{u}, \quad \vec{\psi}_{(t)}= [\vec{n}\times \vec{u}].
\end{gathered}
\end{equation}
A solution to the problem (II) or (III) is then sought as the sum of the volume integral and the boundary integral with $\phi_{(t)}$, $\vec{\psi}_{(t)}$ or $\phi_{(n)}$, $\vec{\psi}_{(n)}$, respectively, with unknown potentials. Using jump relations for potentials one reduces the problem to boundary integral equations. The three-dimensional boundary value problems (II) and (III), including exterior and interior cases, are analyzed in \cite{Kre70} by reducing the problem to the boundary integral equations for single- and double-layer potentials. For the tensorial case this analysis was later written in detail by J. Bolik in \cite{bolikphd} and subsequent works (though of course in essence this approach goes back to \cite{DufSpe51}, \cite{DufSpe56}). Here in the Euclidian setting to get an analogue if \eqref{eq:Int3D} one uses the Green's second identity for the Hodge Laplace for the form $u$ and the double form $g(x,y) = \Gamma(x-y)\sum_{I\in \mathcal{I}(r)} dx^I dy^I$, with $\Gamma$ being the fundamental solution of the standard Laplace operator. In the case of the Riemannian manifold a great role is played by the explicit global construction of the parametrix for the Hodge-Laplacian, see \cite[Chapter V, \S 28]{deRham}. In variable exponent setting, though, the method of boundary integral equations still is not fully developed since it requires the description of the trace spaces and in the case of rough surfaces estimates of singular integral operators. To carry over the results of \cite{MitMitMitTay16} for rough domains to the variable exponent setting presents a challenging problem.

	\section{Variational formulations}\label{ssec:varf} On $C^{1,1}$ manifolds one can not understand the boundary values problems for the Hodge Laplacian in the standard sense. However, these problems are well defined in the variational sense. The four sets of boundary conditions described above correspond to natural variational problems. Consider the equation
	\begin{equation}\label{eq1}
		(d\omega - \varphi,d\zeta) + (\delta \omega-\psi,\delta \zeta) - (\eta,\zeta) =0
	\end{equation}
	\begin{itemize}

		\item[(I)] for $\omega\in W_0^{1,2}(M,\Lambda)$ and all $\zeta \in W_0^{1,2}(M,\Lambda)$ in the case of boundary conditions $t\omega =0$, $n\omega =0$.

		\item[(II)] for $\omega \in W^{1,2}_T(M,\Lambda)$ and  all $\zeta \in W_T^{1,2}(M,\Lambda)$ for the case of Dirichlet boundary condition;

		\item[(III)] for $\omega \in W^{1,2}_N(M,\Lambda)$ and all $\zeta \in W_N^{1,2}(M,\Lambda)$ for the case of Neumann boundary condition;

		\item[(IV)] for $\omega \in W^{1,2}(M,\Lambda)$ and all $\zeta \in W^{1,2}(M,\Lambda)$ for a manifold without boundary  or for the case of ``natural'' boundary conditions $t\delta\omega =0$, $n d\omega =0$.
		
	\end{itemize}


	If $\eta, \varphi, \psi \in L^2(M,\Lambda)$, equation \eqref{eq1} is the Euler-Lagrange equation for the variational problem
	$$
	\mathcal{F}[\omega]=\frac{1}{2}(d\omega,d \omega) + \frac{1}{2}(\delta \omega, \delta \omega) - (\varphi,d\omega) - (\psi, \delta \omega) - (\eta,\omega) \rightarrow \mathrm{min}
	$$
	over 
	\begin{itemize}
		
		\item[(I)]  $\omega \in W_0^{1,2}(M,\Lambda)$ , here in \eqref{eq1} one allows $\zeta \in W_0^{1,2}(M,\Lambda)$.
		
		\item[(II)] $\omega \in W^{1,2}_T(M,\Lambda)$ for the Dirichlet problem, under assumption $\eta \in \mathcal{H}_T^\perp$, here in \eqref{eq1} one allows $\zeta \in W^{1,2}_T(M,\Lambda)$;
		
		\item[(III)] or $\omega\in W^{1,2}_N(M,\Lambda)$ for the Neumann problem, under assumption $\eta \in \mathcal{H}_N^\perp$, here in \eqref{eq1} one allows $\zeta \in W^{1,2}_N(M,\Lambda)$.

		\item[(IV)] $\omega \in W^{1,2}(M,\Lambda)$ for a closed manifold or for ``natural'' boundary conditions, under assumption $\eta \in (\mathcal{H}(M)\cap L^2(M,\Lambda))^\perp$, here in \eqref{eq1} one allows $\zeta\in W^{1,2}(M,\Lambda)$.
		
	\end{itemize}  

    Since $\mathcal{F}[\omega] = \mathcal{F}[\omega + h]$ for  (II) $h\in \mathcal{H}_T (M)$, (III) $h \in \mathcal{H}_N(M)$, (IV) $h\in \mathcal{H}(M) \cap L^2(M,\Lambda)$, the set of admissible forms can be reduced to (II) $\omega \in W^{1,2}_T(M,\Lambda) \cap (\mathcal{H}_T(M))^\perp$, (III) $\omega \in W^{1,2}_N(M,\Lambda) \cap (\mathcal{H}_N(M))^\perp$, (IV) $\omega \in W^{1,2}(M,\Lambda) \cap (\mathcal{H}(M)\cap L^2(M,\Lambda))^\perp$. The existence and uniqueness of the minimizer then follows by the direct method in the calculus of variations. In this the Gaffney inequality (cf. Theorem 7.5.1 in \cite{Morrey1966} for $C^{1,1}$ manifolds) is instrumental:
    $$
    \|\omega\|^2_{W^{1,2}(M,\Lambda)} \leq C_1 (\omega,\omega) + C_2 \mathcal{D}(\omega,\omega), \quad \omega \in W^{1,2}_T(M,\Lambda) \cap W^{1,2}_N(M,\Lambda),
    $$
    which implies by the standard compactness argument   
    \begin{gather*}
    (\omega,\omega) \leq C \mathcal{D}(\omega,\omega), \quad \text{and so}\quad
    \|\omega\|^2_{W^{1,2}(M,\Lambda)} \leq C \mathcal{D}(\omega,\omega), \quad \text{for} \\
    \omega \in W_0^{1,2}(M,\Lambda),\quad \omega \in W^{1,2}_T(M,\Lambda) \cap (\mathcal{H}_T(M))^\perp,\quad  \omega \in W^{1,2}_N(M,\Lambda) \cap (\mathcal{H}_N(M))^\perp.
    \end{gather*}    
    Here one also uses the well-known result on the triviality of the set of harmonic forms in $W_0^{1,2}(M,\Lambda)$. Using the Hodge decomposition one can also obtain similar estimates for $\omega \in W^{1,2}(M,\Lambda) \cap (\mathcal{H}(M)\cap L^2(M,\Lambda))^\perp$ (for instance, Lemma~\ref{L:GaffneyNatur} below).

	We shall be mainly interested in the Dirichlet and Neumann problems. From the classical results of~\cite{Morrey1966} it follows that in these cases solutions to \eqref{eq1} belong to $W^{2,2}(M,\Lambda)$ as soon as $\eta \in L^2(M,\Lambda)$, $\varphi, \psi \in W^{1,2}(M,\Lambda)$ and $M$ is $C^{2,1}$. The integration by parts in \eqref{eq1} yields then the equation and boundary conditions.  
	
	On $C^{2,1}$ manifolds the variational theory of standard boundary value problems for the Hodge Laplacian can be described by the following statements.
	\begin{itemize}
		
		\item The minimizer of $\mathcal{F[\omega]}$ over the space of $W^{1,2}_0(M,\Lambda)$ is in  $W^{2,2}(M,\Lambda)$ and satisfies 
		$$
		\triangle \omega =  \eta + \delta \varphi + d \psi \quad \text{a.e. in $M$}, \quad t\omega =0,\quad n\omega = 0 .
		$$
		
		\item The minimizer of $\mathcal{F[\omega]}$ over the space of $(\mathcal{H}_T(M))^\perp \cap W^{1,2}_T(M,\Lambda)$ belongs to $W^{2,2}(M,\Lambda)$ and satisfies 
		$$
		\triangle \omega =  \eta + \delta \varphi + d \psi \quad \text{a.e. in $M$}, \quad t\omega =0,\quad t\delta \omega = t\psi.
		$$
		
		\item The minimizer of $\mathcal{F[\omega]}$ over the space of $(\mathcal{H}_N(M))^\perp \cap W^{1,2}_N(M,\Lambda)$ belongs to $W^{2,2}(M,\Lambda)$ and satisfies
		\begin{equation*}
			\triangle \omega =  \eta + \delta \varphi + d \psi \quad \text{a.e. in $M$}, \quad n\omega =0,\quad nd \omega = n\varphi.
		\end{equation*}

		\item The minimizer of $\mathcal{F[\omega]}$ over the space of $(\mathcal{H}(M)\cap L^2(M,\Lambda))^\perp \cap W^{1,2}(M,\Lambda)$ belongs to $W^{2,2}(M,\Lambda)$ and  satisfies 
		\begin{equation*}
			\triangle \omega =  \eta + \delta \varphi + d \psi \quad \text{a.e. in $M$}, \quad t\delta\omega =t\psi,\quad nd \omega = n\varphi.
		\end{equation*}
	\end{itemize}
	
	Another classical approach based on the jump relations and boundary integral equations was developed by Duff, Spencer and later extended by R.~Kress, J.~Bolik and some other authors. 
    



\section{Dirichlet and Neumann potentials} If $\eta \in L^2(M,\Lambda) \cap (\mathcal{H}_T (M)^\perp $ then the Dirichlet potential $G_D[\eta]$ is a unique minimizer of the variational problem
	\begin{align}\label{eq:minimizationD}
		\inf \left\lbrace 	\frac{1}{2}(d\omega,d \omega) + \frac{1}{2}(\delta \omega, \delta \omega)  - (\eta,\omega) : \omega \in W^{1,2}_T(M,\Lambda)\cap (\mathcal{H}_T (M))^\perp \right\rbrace.  
	\end{align}
By the Gaffney inequality (cf. Theorem 7.5.1 in \cite{Morrey1966}), which holds for $C^{1,1}$ manifolds, it is easy to prove by standard variational methods that the minimizer exists and is unique. We define this unique minimizer as the Dirichlet potential of $\eta$ and denote this by $G_D[\eta]$. The Dirichlet potential $\omega = G_D[\eta]$ belongs to $W^{1,2}_T(M,\Lambda) \cap(\mathcal{H}_T (M))^\perp$ and satisfies the variational relation 
\begin{equation}\label{eq:Dirichlet}
\mathcal{D}(\omega,\xi) = (\eta,\xi) \quad \text{for all} \quad \xi \in \mathrm{Lip}_T(M,\Lambda).
\end{equation}
On the other hand, for $\eta \in L^2(M,\Lambda)$ any solution $\omega \in W^{1,2}_T(M,\Lambda) \cap (\mathcal{H}_T (M)^\perp$ of this relation minimizes the variational problem \eqref{eq:minimizationD}. 

The variational relation \eqref{eq:Dirichlet} is well defined for $\eta \in L^{p_{-}}(M,\Lambda)$ and $\omega \in W^{1,p_{-}}(M,\Lambda)$. By regularity results of \cite{Morrey1966} any $\omega \in W_T^{1,p_{-}}(M,\Lambda)$ satisfying \eqref{eq:Dirichlet} with $\eta =0$ belongs to $W^{1,q}_T(M,\Lambda)$ for any $q<\infty$. Using approximation it is easy to show that $\mathcal{D}(\omega,\omega)=0$, thus $\omega\in \mathcal{H}_T(M)$, which implies $\omega=0$.

\begin{definition}[Dirichlet potential]
For $\eta \in L^{p_{-}}(M,\Lambda)$ satisfying $(\eta,h_T) =0$ for all $h_T \in \mathcal{H}_T(M)$ by $G_D[\eta] \in W^{1,p_{-}}_T(M,\Lambda)$ we denote a (unique) solution of \eqref{eq:Dirichlet} satisfying $(\omega,h_T)=0$ for all $h_T \in \mathcal{H}_T(M)$.
\end{definition}

An existence result for the Dirichlet potentials will be provided in Section~\ref{sec:Dirichlet}.

Similarly, for $\eta\in L^2(M,\Lambda)\cap (\mathcal{H}_N (M))^\perp $ the Neumann potential $G_N[\eta]$ is a unique minimizer of the variational problem 
\begin{align}\label{eq:minimizationN}
		\inf \left\lbrace 	\frac{1}{2}(d\omega,d \omega) + \frac{1}{2}(\delta \omega, \delta \omega)  - (\eta,\omega) : \omega \in W^{1,2}_N(M,\Lambda)\cap (\mathcal{H}_N (M))^\perp \right\rbrace. 
	\end{align}
Its existence is proved in the same way as for the Dirichlet potential. By construction, it belongs to
$W^{1,2}_N(M,\Lambda) \cap (\mathcal{H}_N (M))^\perp$ and satisfies the variational relation 
\begin{equation}\label{eq:Neumann}
\mathcal{D}(\omega,\xi) = (\eta,\xi) \quad \text{for all} \quad \xi \in \mathrm{Lip}_N(M,\Lambda).
\end{equation}

\begin{definition}[Neumann potential]
For $\eta \in L^{p_{-}}(M,\Lambda)$ satisfying $(\eta,h_N) =0$ for all $h_N \in \mathcal{H}_N(M)$ by $G_N[\eta] \in W^{1,p_{-}}_N(M,\Lambda)$ we denote a (unique) solution of \eqref{eq:Neumann} satisfying $(\omega,h_N)=0$ for all $h_N \in \mathcal{H}_N(M)$.
\end{definition}

An existence result for the Neumann potentials will be provided in Section~\ref{sec:Dirichlet}.

\begin{remark}
	Note that as soon as the manifold is at least $C^{2,1}$, the Dirichlet potential $G_D[\eta]$ can be equivalently defined as the $W^{2,p_{-}}(M,\Lambda)$ solution of 
    $$
    \triangle \omega =\eta, \quad t\omega =0. \quad t\delta \omega =0,
    $$
    satisfying $\mathcal{P}_T \omega =0$. Similarly, assuming the manifold to be at least $C^{2,1}$, the Neumann potential $G_N[\eta]$ can be equivalently defined as the $W^{2,p_{-}}(M,\Lambda)$ solution of the problem 
    $$
    \triangle \omega = \eta,\quad n\omega = 0, \quad nd\omega =0,
    $$ 
    satisfying $\mathcal{P}_N \omega =0$.  
	\end{remark}

\section{Potentials for full Dirichlet data}

We can also consider the following problem:
\begin{align}\label{eq:minimizationF}
		\inf \left\lbrace 	\frac{1}{2}(d\omega,d \omega) + \frac{1}{2}(\delta \omega, \delta \omega)  - (\eta,\omega) : \omega \in W^{1,2}_0(M,\Lambda) \right\rbrace. 
	\end{align}
Due to Gaffney's inequality and triviality of the set of harmonic fields vanishing on $bM$ (this again only requires that the manifold $M$ be $C^{1,1}$, see \cite{AroKrzSza62}) this problem has a unique minimizer in $W^{1,2}_0(M,\Lambda)$, which we denote by $G_0[\eta]$. This minimizer satisfies
\begin{equation}\label{eq:FD0}
\mathcal{D}(\omega,\xi) = (\eta,\xi) \quad \text{for all} \quad \xi \in \mathrm{Lip}_0(M,\Lambda).
\end{equation}    
\begin{definition}[Potential for full Dirichlet data]
For $\eta \in L^{p_{-}}(M,\Lambda)$ by $G_0[\eta] \in W^{1,p_{-}}_N(M,\Lambda)$ we denote a (unique) solution of \eqref{eq:FD0}.
\end{definition}
As soon as $M$ is $C^{2,1}$, the potential $G_0[\eta]$ can be equivalently defined as a unique $W^{2,p_{-}}(M,\Lambda)$ solution of the problem
$$
\triangle \omega = \eta , \quad t\omega =0, \quad n\omega =0.
$$
    
	\section{Results from general elliptic theory}
	Let $M$ be a compact Riemanninan manifold with boundary of the class $C^{s+2,1}$. From the general elliptic theory it follows that (see for instance \cite{Morrey1966, Schwarz}, also \cite{Sil17} for a direct proof which does not rely on results about general elliptic problems) that both Dirichlet and Neumann boundary value problems
	\begin{align}
		\triangle \omega =  \eta, \quad t\omega =t\varphi,\quad t\delta \omega = t\psi\label{D},\\
		\triangle \omega =  \eta, \quad n\omega =n\varphi,\quad nd \omega = n\psi \label{N},
	\end{align}
	are Fredholm solvable, possess the finite-dimensional kernels consisting of Dirichlet fields $\mathcal{H}_T(M)$ and Neumann fields $\mathcal{H}_N(M)$ respectively, a solution satisfies the general elliptic estimate
	\begin{align}\label{ell1}
		\nonumber\|\omega\|_{W^{s+2,p}(M,\Lambda)} 
		\lesssim \|\eta\|_{W^{s,p}(M,\Lambda)} + \|\varphi\|_{W^{s+2-1/p,p}(M,\Lambda)}+\|\psi\|_{W^{s+1-1/p,p}(M,\Lambda)} \\
		+ \|\omega\|_{L^1(M,\Lambda)},
	\end{align}
	where $s\geq 0$ and $p>1$. The problem \eqref{D} is solvable if and only if $(\eta,h_T) = [\psi,h_T]$ for all $h_T \in \mathcal{H}_T(M)$. The problem \eqref{N} is solvable if and only if $(\eta,h_T) = -[h_N,\psi]$ for all $h_N \in \mathcal{H}_N(M)$.

	The last term on the RHS of \eqref{ell1} can be removed if one considers $\omega$ satisfying $(\omega,\mathcal{H}_*(M))=0$, or more generally consider $\omega- \mathcal{P}_*\omega$ where $\mathcal{P}_*$ is a projector on $\mathcal{H}_*(M)$ (further $\mathcal{H}_*(M) = \mathcal{H}_T(M)$ for the Dirichlet problem and $\mathcal{H}_*(M) = \mathcal{H}_T(M)$ for the Neumann problem):
	\begin{equation}\label{ell2}
		\|\omega-\mathcal{P}_*\omega\|_{W^{s+2,p}(M,\Lambda)} \leq C (\|\eta\|_{W^{s,p}(M,\Lambda)} + \|\varphi\|_{W^{s+2-1/p,p}(M,\Lambda)}+\|\psi\|_{W^{s+1-1/p,p}(M,\Lambda)}),
	\end{equation}
	but the constant in \eqref{ell2} comes from a functional analytic argument, and so is not  quantitative. We shall use only the standard projectors $\mathcal{P}_T$, $\mathcal{P}_N$ defined by \eqref{eq:projT}, \eqref{eq:projN}. 

	\section{First order systems}
	
	
	\textbf{Dirichlet condition.} Consider the following problem: find $\omega \in W^{1,p}(M,\Lambda)$, $p>1$, satisfying
	\begin{equation}\label{I}
		d\omega = f \in L^p(M,\Lambda), \quad \delta \omega = v\in L^p(M,\Lambda), \quad t\omega = t\varphi, \quad \varphi \in W^{1,p}(M,\Lambda).
	\end{equation}
	The problem \eqref{I} is solvable if and only if the following conditions hold: 
	\begin{gather*}
		df=0, \quad t(f-d\varphi)=0,\quad (f,h_T) =[\varphi,h_T]\quad \forall h_T\in \mathcal{H}_T(M),\\ \delta v=0,\quad  (v,h_T)=0 \quad \forall h_T\in \mathcal{H}_T(M).
	\end{gather*}

	\textbf{Neumann condition.} Consider the following problem: find $\omega \in W^{1,p}(M)$ satisfying
	\begin{equation}\label{II}
		d\omega = f \in L^p(M,\Lambda), \quad \delta \omega = v\in L^p(M,\Lambda), \quad n\omega = n\varphi, \quad \varphi \in W^{1,p}(M,\Lambda).
	\end{equation}
	The problem \eqref{II} is solvable if and only if the following conditions hold: 
	\begin{gather*}
		\delta v=0, \quad n (v-\delta \varphi)=0, \quad (v,h_N) = -[h_N,\varphi] \quad \forall h_N\in \mathcal{H}_N (M),\\
		df =0, \quad (f,h_N)=0 \quad \forall h_N\in \mathcal{H}_N(M).
	\end{gather*}
	
	In both cases \eqref{I}, \eqref{II}, a solution can be chosen so that \cite{Schwarz}, \cite{Sil17} for $s\geq 0$ and $p>1$ there holds
	$$
	\|\omega\|_{W^{s+1,p}(M, \Lambda)}\leq C(\|f\|_{W^{s,p}(M, \Lambda)}+ \|v\|_{W^{s,p}(M, \Lambda)} + \|\varphi\|_{W^{s+1-1/p,p}(bM, \Lambda)}),
	$$
	but the constant is generally (without presence of some norm of $\omega$ on the right) not quantitative. Any solution satisfies the general elliptic a priori estimate
	\begin{multline*}
		\|\omega\|_{W^{s+1,p}(M, \Lambda)} \\ \leq C(\|f\|_{W^{s,p}(M, \Lambda)}+ \|v\|_{W^{s,p}(M, \Lambda)} + \|\varphi\|_{W^{s+1-1/p,p}(bM, \Lambda)}+\|\omega\|_{L^1(M, \Lambda)}),
	\end{multline*}
	with a \textit{quantitative} constant.
	
 In terms of the classical vector calculus the problems \eqref{I}, \eqref{II} read as follows: in a domain $D \subset \mathbb{R}^3$ find a vector field $\vec{u}$ with given rotor and divergence
$$
\mathrm{rot}\, \vec{u} = \vec{f}, \quad \mathrm{div}\, \vec{u} = \vec{v} \quad \text{in}\quad D
$$
and given tangential component $\vec{n}\times \vec{u} = \vec{n}\times \vec{\varphi}$  or normal component $\vec{n} \cdot \vec{u} = \vec{n} \cdot \vec{\varphi}$, respectively, on $\partial D$.  For $3D$ systems from \eqref{eq:Green3D} one easily gets the following representation formula
\begin{gather*}
\vec{u}(x) = -\int\limits_D ( (\mathrm{div}\, \vec{u})(y)\nabla_y g(x,y) + [\mathrm{rot}\, \vec{u}(y)\times \nabla_y g(x,y)] )\, dy\\
+\int\limits_{\partial D} (\phi(y) \nabla_y g(x,y) + [\vec{\psi}(y)\times \nabla_y g(x,y)])\, d\sigma(y),
\end{gather*}
where $\phi(y) =\vec{u}\cdot \vec{n}$, $\vec{\psi} = \vec{n}\times \vec{u}$, $g(x,y) = -(4\pi |x-y|)^{-1}$. The solution is then sought as the sum of the volume integrals in this representation and the third/fourth integral with an unknown density. To pass to the boundary integral equation it remains to recall the jump relations for derivatives of the single layer potential and for the double layer potential. First-order div-curl type systems for alternating tensor fields were treated in \cite{Kre70} in the Euclidian case using the method of boundary integral equations (in H\"older spaces). Later this approach was expanded by J. Bolik to incorporate theory in Sobolev spaces (with constant exponent). In three-dimensional variable exponent setting certain estimates for the div-curl system were obtained in \cite{SinRi2020} by the boundary integral equations method. However in the main estimate of this work the constant is non-qualified since it comes from the functional-analytic argument (essentially the Fredholm alternative and the fact that the homogeneous boundary integral equation has only trivial solution due to the assumptions on the domain).

	\chapter{The Dirichlet and Neumann Problem in Variable Exponent Spaces}\label{sec:Dirichlet}

	In this Chapter we consider the Dirichlet and Neumann problems for the Hodge Laplacian in variable exponent spaces. Let $s\in \mathbb{N}\cup \{0\}$ and let the manifold $M$ be at least of the class $C^{1,1}$. We shall also consider two other classical variants of boundary conditions for the Hodge Laplacian: namely with $t\omega =0$ and $n\omega =0$ and with $t\delta \omega =0$, $nd\omega =0$, with the latter one postponed to Section~\ref{sec:theory}.
	
	Recall the notation $(\cdot,\cdot)$, $[\cdot,\cdot]$ and $\mathcal{D}(\cdot,\cdot)$ introduced in \eqref{eq:scp}, \eqref{eq:bound}, and \eqref{eq:Ddef}, respectively, and the notation $\|\cdot\|_{p(\cdot),M}$, $\|\cdot\|_{k,p(\cdot),M}$ for the norms in $L^{p(\cdot)}(M,\Lambda)$ and in $W^{k,p(\cdot)}(M,\Lambda)$, respectively.  We also recall the main variational relation \eqref{eq1}:
    $$
    \mathcal{D}(\omega,\zeta) = (\eta,\zeta) + (\varphi, d\zeta) + (\psi, \delta \zeta)
    $$
    for $\omega \in W^{1,p_{-}}_*(M,\Lambda)$, $\zeta \in \mathrm{Lip}_*(M,\Lambda)$, where $* \in \{0,T,N\}$. We present now the results of Section~\ref{Sec:parametrix} which provide the basis for subsequent investigations.

\begin{theorem*}[Theorems~\ref{T:p2},~\ref{T:TrueGaffney}] 
Let $M$ be of the class $C^{1,1}$. Assume $\omega \in W^{1,p_{-}}_{*}(M,\Lambda)$, $* \in \{0,T,N\}$ satisfy \eqref{eq1}  where $\varphi,\psi,\eta \in L^{p(\cdot)}(M,\Lambda)$ for any $\zeta\in \mathrm{Lip}_*(M,\Lambda)$. Then $\omega \in W^{1,p(\cdot)}(M,\Lambda)$ and
		\begin{equation*}
			\|\omega\|_{1,p(\cdot),M} \leq C(\mathrm{data}) (\|\eta\|_{p(\cdot),M} +\|\varphi\|_{p(\cdot),M} + \|\psi\|_{p(\cdot),M} + \|\omega\|_{1,M}).
		\end{equation*}
		If $M$ is of the class $C^{s+2,1}$ for $s\in \mathbb{N}\cup\{0\}$, $\varphi,\psi\in W^{s+1,p(\cdot)}(M,\Lambda)$ and $\eta \in W^{s.p(\cdot)}(M,\Lambda)$, then $\omega \in W^{s+2,p(\cdot)}(M,\Lambda)$ and
		\begin{equation*}
			\|\omega\|_{s+2,p(\cdot),M} \leq C(\mathrm{data},s) (\|\eta\|_{s,p(\cdot),M} +\|\varphi\|_{s+1,p(\cdot),M}+\|\psi\|_{s+1,p(\cdot),M} + \|\omega\|_{1,M}).
		\end{equation*}
\end{theorem*}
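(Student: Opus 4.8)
The plan is a localization-and-freezing argument (a \emph{parametrix} construction): reduce the variational identity \eqref{eq1} to model problems on a ball $B_R$ and a half-ball $B_R^+$, solve the frozen-coefficient model problem using the Newtonian and half-space potentials $P^*[\cdot]$, $Q^*_\alpha[\cdot]$, $U_\rho[\cdot]$ of Chapter~\ref{sec:auxiliary}, and treat the difference between the true operator and the model operator $\triangle_\rho$ of Section~\ref{sec:halfspace} as a perturbation which is either \emph{small on small balls} (because on a $C^{1,1}$ manifold the leading coefficients are Lipschitz, hence oscillate by $O(R)$ on $B_R$) or \emph{of strictly lower order in $\omega$} (the Christoffel terms, whose coefficients and first derivatives are bounded since the metric is $C^{0,1}$, resp. $C^{s+1,1}$).

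\emph{Qualitative regularity.} One must first know that $\omega$ already lies in the target space. Starting from $\omega\in W^{1,p_{-}}_{*}$, write \eqref{eq1} componentwise in a chart: each component $\omega_I$ weakly solves a second order divergence-form equation with Lipschitz principal coefficients, bounded lower-order coefficients and right-hand side of the form $\partial_j G^j+\tilde F$, with $G$ built from $\varphi,\psi$ and $\tilde F$ from $\eta$. Feeding the $L^{\varkappa p(\cdot)}$-versions of the potential estimates (Lemma~\ref{L:pa1}, $\varkappa=n/(n-1)$) into the local representation formula of the next step, and using the constant-exponent theory of \cite{Morrey1966} (valid on $C^{1,1}$ manifolds) for the starting point, one improves the integrability of $\nabla\omega$ in finitely many steps to $L^q$ for all $q<\infty$; with $\nabla\omega\in L^q$ the perturbation and lower-order terms lie in $L^{p(\cdot)}$, which places $\omega$ in $W^{1,p(\cdot)}$ locally, hence globally. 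Iterating the differentiated identity gives $\omega\in W^{s+2,p(\cdot)}$ when the data is smoother and $M$ is $C^{s+2,1}$.

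\emph{The local a priori estimate.} Fix an admissible chart and a ball $B_{2R}$ (interior) or half-ball $B_{2R}^+$ (boundary) of small radius, with a cutoff $\eta_\alpha$ from a partition of unity supported there. Writing the leading part of $\mathcal{D}$ componentwise as $-\partial_j(A^{jk}\partial_k\omega_I)$, freezing $A$ at the centre to $a_0^{jk}$ and passing by a linear change of variables fixing $\{x^n=0\}$ to $\triangle_\rho$, the form $\eta_\alpha\omega_I$ satisfies
$$
\triangle_\rho(\eta_\alpha\omega_I)=\partial_j\left[(a_0-A)^{jk}\partial_k(\eta_\alpha\omega_I)\right]+\partial_j\mathcal{G}^j_I+\mathcal{F}_I
$$
in $B_{2R}^+$, with the homogeneous boundary condition dictated by $t\omega=0$, $n\omega=0$ on each component (Dirichlet for components carrying a vanishing-tangential/normal condition, the natural Neumann condition for the rest), where $\mathcal{G}$ collects $\eta_\alpha\varphi$, $\eta_\alpha\psi$, the commutator terms coming from $[\triangle,\eta_\alpha]$, and the Christoffel terms rewritten in divergence form $\partial_j(\Gamma\,\omega)$, while $\mathcal{F}$ collects $\eta_\alpha\eta$ and the bounded zeroth-order terms $(\partial\Gamma)\omega$. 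Representing $\eta_\alpha\omega_I=P^*[\mathcal{F}_I]+Q^*_\alpha[(a_0-A)\partial(\eta_\alpha\omega_I)+\mathcal{G}_I]+h_I$ with $h_I$ the $\triangle_\rho$-harmonic remainder satisfying the same boundary condition, and applying Lemma~\ref{L:pa1}, Lemma~\ref{L:DR040} (with Lemma~\ref{L:Di5} for the Calder\'on--Zygmund piece and Corollary~\ref{C:Di} for the Riesz piece) together with the classical interior/boundary bound $\|h\|^*_{1,p(\cdot),B_R^+}\le CR^{-n}\|h\|_{1,B_{2R}^+}$ for harmonic functions, yields
$$
\|\omega\|^*_{1,p(\cdot),B_R^+}\le CR\,\|\nabla\omega\|_{p(\cdot),B_{2R}^+}+C\,\|\omega\|_{p(\cdot),B_{2R}^+}+C\left(\|\eta\|_{p(\cdot)}+\|\varphi\|_{p(\cdot)}+\|\psi\|_{p(\cdot)}\right)+CR^{-n}\|\omega\|_{1,B_{2R}^+},
$$
the decisive point being that the coefficient $CR$ in front of $\|\nabla\omega\|$ is small precisely because $A$ is continuous. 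Summing over a finite cover with $R$ so small that the bounded-overlap count gives $CR<\tfrac12$, and using equivalence of atlas-dependent norms, one obtains $\|\omega\|_{1,p(\cdot),M}\le\tfrac12\|\omega\|_{1,p(\cdot),M}+C\|\omega\|_{p(\cdot),M}+C(\|\eta\|_{p(\cdot),M}+\|\varphi\|_{p(\cdot),M}+\|\psi\|_{p(\cdot),M})+C\|\omega\|_{1,M}$; absorbing the first term and then removing $C\|\omega\|_{p(\cdot),M}$ by the interpolation inequality of Lemma~\ref{C:Di1} gives the $C^{1,1}$ estimate. For the higher-order statement one repeats the construction on $C^{s+2,1}$ manifolds, where $A,\Gamma\in C^{s,1}$, using $P^*$ (resp. $Q^*$) mapping $W^{s,p(\cdot)}$ (resp. $W^{s+1,p(\cdot)}$) into $W^{s+2,p(\cdot)}$ by Lemma~\ref{L:DR04c0} and Corollary~\ref{C:pa0}: the top-order piece $\partial^{s+2}$ of the perturbation still carries the small factor $CR$, all remaining terms are bounded by $\|\omega\|_{s+1,p(\cdot),M}$ plus the data norms $\|\eta\|_{s,p(\cdot),M}$, $\|\varphi\|_{s+1,p(\cdot),M}$, $\|\psi\|_{s+1,p(\cdot),M}$ plus $R^{-n-s-2}\|\omega\|_{1,M}$ from the harmonic remainder; summing, absorbing the top term, and iterating Lemma~\ref{C:Di1} to trade $\|\omega\|_{s+1,p(\cdot),M},\dots,\|\omega\|_{2,p(\cdot),M}$ for $\varepsilon\|\omega\|_{s+2,p(\cdot),M}+C(\varepsilon)\|\omega\|_{1,M}$ gives the claim.

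The main obstacle is the bookkeeping that \emph{every} term of $\mathcal{D}(\omega,\zeta)$ beyond the frozen model operator is either $O(R)\times(\text{top order of }\omega)$ or $(\text{bounded coefficient})\times(\text{strictly lower order of }\omega)$ — this is exactly the borderline case $C^{1,1}$, since for rougher metrics the leading coefficients would fail to be continuous and the perturbation would not be small — together with the non-circular justification of the a priori estimate above, which is where the self-improving integrability built into the $L^{\varkappa p(\cdot)}$ potential estimates of Lemma~\ref{L:pa1} is genuinely used. Boundary patches additionally require tracking which components carry a Dirichlet and which a Neumann condition (from $t\omega=0$, $n\omega=0$ in admissible coordinates) and choosing $P^D/P^N$, $Q^D/Q^N$ accordingly; the up-to-the-boundary regularity of the $\triangle_\rho$-harmonic remainder under these conditions is classical, and the non-homogeneous boundary data of the Dirichlet/Neumann theorem for the Hodge Laplacian is then reduced to the present setting by the extension Corollary~\ref{C:ext} and the approximation Lemma~\ref{L:approx1}.
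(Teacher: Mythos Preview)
Your approach is essentially the same as the paper's: localize with a cutoff, freeze the metric at the centre so that the principal part becomes the Euclidean Laplacian componentwise (via the Gaffney identity), represent the localized solution by the half-space potentials $P^*,Q^*$ of Chapter~\ref{sec:auxiliary}, and exploit the $O(R)$ smallness of the leading perturbation $(a(\cdot)-a(0))\nabla\Omega$ together with the boundedness of the Christoffel terms $b,b^*,c$ to close the estimate; the higher-order case follows by the same argument using Lemma~\ref{L:DR04c0} and induction in $s$.

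Two remarks on the packaging. First, your harmonic remainder $h_I$ is in fact identically zero: since $\eta_\alpha\omega$ has compact support in $\overline{\mathbb{R}^n_+}$ and the potentials $P^*,Q^*$ are the global half-space potentials (not potentials on the bounded set $B_{2R}^+$), the difference $h$ is harmonic on all of $\mathbb{R}^n_+$, satisfies the Dirichlet or Neumann condition on $\{x^n=0\}$ componentwise, and decays at infinity, so after even/odd reflection the Weyl lemma forces $h\equiv 0$. The paper exploits this to write an exact integral equation $\Omega = T[\Omega] + \mathcal{Q}[E]+\mathcal{P}[F]$ with $T$ a contraction on $W^{k,p(\cdot)}(G_{2R})$ for small $R$, and then reads off $\Omega$ as a Neumann series on each chart; your global sum-and-absorb argument is equivalent but the local contraction is cleaner and avoids bookkeeping the overlap constants. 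Second, on a $C^{1,1}$ manifold the coefficient $c$ (your ``$(\partial\Gamma)\omega$'') is a quadratic expression in Christoffel symbols, hence merely $L^\infty$; no derivative of $\Gamma$ is taken or needed. The genuinely delicate point for the refined $C^{1,1}$ estimate is removing $\nabla\omega$ from the commutator $\mathcal{D}(\xi\omega,\zeta)-\mathcal{D}(\omega,\xi\zeta)$, which the paper does either by the Bochner--Weitzenb\"ock identity (yielding $(\omega,-\zeta\triangle\xi+2\nabla\xi\cdot\nabla\zeta-d\xi\lrcorner d\zeta+d\xi\wedge\delta\zeta)$ with no $\nabla\omega$) or, alternatively, by a duality argument against $G_D[\mu]$; your divergence-form rewriting of the commutator accomplishes the same thing componentwise.
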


For $C^{1,1}$ manifolds we shall also use the following results:
\begin{theorem*}[Theorem~\ref{T:addregD}]
Let $\omega_0, d\omega_0 \in W^{1,p(\cdot)}(M,\Lambda).$ Suppose $\omega\in \omega_0+W_T^{1,p(\cdot)}(M,\Lambda)$, $\eta \in L^{p(\cdot)}(M,\Lambda)$ and $\varphi,\psi \in W^{1,p(\cdot)}(M,\Lambda)$ satisfy \eqref{eq1} for all $\zeta \in \mathrm{Lip}_T(M,\Lambda)$. Then $\alpha =\delta \omega$ and $\beta = d\omega$ belong to $W^{1,p(\cdot)}(M,\Lambda)$ and satisfy $t\alpha =t\psi$, $t\beta =td\omega_0$,
\begin{equation}\label{eq:rel10}
d\alpha + \delta \beta = \eta + d\psi + \delta \varphi,
\end{equation}
and
\begin{equation}\label{eq:rel20}
\mathcal{D}(\alpha,\zeta) = (\eta+d\psi, d\zeta), \quad \mathcal{D}(\beta,\zeta) = (\eta + \delta\varphi,\delta \zeta)
\end{equation}
for all $\zeta \in \mathrm{Lip}_T(M,\Lambda)$. 
\end{theorem*}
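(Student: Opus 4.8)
The plan is to establish the additional regularity of the potentials $\alpha = \delta\omega$ and $\beta = d\omega$ by a bootstrap argument built on the already-available main estimate for the Hodge Laplacian (the first displayed Theorem above, i.e. Theorems~\ref{T:p2} and~\ref{T:TrueGaffney}). First I would reduce to the homogeneous boundary case: since $\omega_0, d\omega_0 \in W^{1,p(\cdot)}(M,\Lambda)$, I can use Corollary~\ref{C:ext} (with the appropriate $f,v$) to produce an auxiliary form realizing the boundary data $t\omega = t\omega_0$ and $nd\omega$, so that after subtracting it the relevant form lies in $W^{1,p(\cdot)}_T(M,\Lambda)$ and I only have to work with the $*=T$ variational relation \eqref{eq1} with modified (but still $L^{p(\cdot)}$) right-hand side data. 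Throughout, the log-H\"older condition \eqref{eq:p2} is what lets me invoke the density results of Lemma~\ref{L:approx1} and Lemma~\ref{L:approx2} and the Calder\'on--Zygmund / Riesz potential bounds of Chapter~\ref{sec:auxiliary}.

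The core step is to identify what equations $\alpha$ and $\beta$ satisfy. Taking $\zeta = \delta \xi$ for $\xi \in \mathrm{Lip}_T(M,\Lambda)$ in \eqref{eq1}, and using $\delta^2 = 0$ (valid on $C^{1,1}$ manifolds by Lemma~\ref{L:ort}) together with the orthogonality $(d\omega, \delta(\delta\xi)) = 0$ from Lemma~\ref{L:ort}, the relation \eqref{eq1} collapses to $(\delta\omega, \delta(\delta\xi)) = 0$-type identities that, after the correct manipulation, yield $\mathcal{D}(\alpha,\zeta) = (\eta + d\psi, d\zeta)$; symmetrically, testing with $\zeta = d\xi$ produces $\mathcal{D}(\beta,\zeta) = (\eta + \delta\varphi, \delta\zeta)$. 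Here one must be careful: a priori $\alpha,\beta$ are only in $L^{p(\cdot)}$, so these identities are first derived in the weak (distributional / partial Sobolev) sense using the approximation Corollary~\ref{C:approx2}, and only afterwards upgraded. The identity \eqref{eq:rel10}, $d\alpha + \delta\beta = \eta + d\psi + \delta\varphi$, is just $\triangle\omega = \delta d\omega + d\delta\omega$ restated; it follows by combining the two variational relations and the fact that $\eta + d\psi + \delta\varphi$ is the effective source in \eqref{eq1}.

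Once $\alpha$ is known to weakly satisfy $\mathcal{D}(\alpha,\zeta) = (\eta+d\psi,d\zeta)$ for all $\zeta \in \mathrm{Lip}_T(M,\Lambda)$ with $t\alpha = t\psi$, this is exactly an instance of the variational relation \eqref{eq1} (with data $\eta \rightsquigarrow 0$, $\varphi \rightsquigarrow \eta + d\psi$ or the appropriate shuffling, and $\psi \rightsquigarrow \psi$), so the $C^{1,1}$ case of Theorems~\ref{T:p2},~\ref{T:TrueGaffney} applies and gives $\alpha \in W^{1,p(\cdot)}(M,\Lambda)$ with the stated estimate; likewise for $\beta$. The boundary identifications $t\alpha = t\psi$ and $t\beta = td\omega_0$ come from the construction of the auxiliary extension form plus the characterization of $W^{1,p(\cdot)}_T$ via Corollary~\ref{C:approx2} and the trace lemma. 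I expect the \textbf{main obstacle} to be the rigorous justification of the weak equations for $\alpha$ and $\beta$ when these forms are only known to be $L^{p(\cdot)}$: one cannot naively integrate by parts, so the argument must proceed through the mollification/approximation machinery of Section~\ref{ssec:approx2}, tracking that the test forms $\delta\xi$ and $d\xi$ for $\xi \in \mathrm{Lip}_T$ can be approximated appropriately (Lemma~\ref{L:approx3}) and that the limiting passages are legitimate — this is where the compatibility of the boundary conditions with the $\delta^2 = 0$, $d^2 = 0$ identities on a merely $C^{1,1}$ manifold is genuinely used.
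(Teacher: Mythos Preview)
Your proposal has a genuine circularity that the paper's proof is specifically designed to avoid. The regularity theorems you plan to invoke (Theorems~\ref{T:p2} and~\ref{T:TrueGaffney}) both \emph{assume} the form already lies in $W^{1,p_{-}}_{T}(M,\Lambda)$; they upgrade $W^{1,p_{-}}$ to $W^{1,p(\cdot)}$ (or higher), but they do not manufacture a weak derivative from an $L^{p(\cdot)}$ form. A priori $\alpha=\delta\omega$ and $\beta=d\omega$ are only in $L^{p(\cdot)}(M,\Lambda)$, so you are not in a position to apply those theorems to them. Your own ``main obstacle'' paragraph correctly flags that you cannot make sense of $\mathcal{D}(\alpha,\zeta)=(d\alpha,d\zeta)+(\delta\alpha,\delta\zeta)$ without knowing $d\alpha\in L^{p_{-}}$, and nothing in the machinery of Section~\ref{ssec:approx2} supplies that: from \eqref{eq1} one only extracts that the \emph{sum} $d\alpha+\delta\beta$ equals $\eta+d\psi+\delta\varphi$ in the sense of distributions, not that $d\alpha$ and $\delta\beta$ exist separately in $L^{p_{-}}$. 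There is also a secondary issue with the test forms: for $\xi\in\mathrm{Lip}_T(M,\Lambda)$ one does not have $\delta\xi\in\mathrm{Lip}_T(M,\Lambda)$ in general (the tangential condition is preserved by $d$, not by $\delta$), and on a $C^{1,1}$ manifold $\delta\xi$ is merely $L^\infty$, not Lipschitz.

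The paper therefore proceeds in the opposite order. It first proves $\alpha,\beta\in W^{1,p(\cdot)}$ by a direct local argument in the spirit of Lemma~\ref{existence of weak derivative tangential}: one localizes ($\Omega=\xi\omega$, $\Psi=\delta\Omega$), tests the localized relation with $\zeta=dW$ for $W\in C_0^\infty(\overline{\mathbb{R}^n_+})$ satisfying the mixed conditions $W_I(x',0)=0$ for $n\notin I$ and $\partial_{x^n}W_I(x',0)=0$ for $n\in I$, adds the free identity $(\delta\Omega,d\delta_0 W)=0$ (valid because $t\delta_0 W=0$), and uses $d\delta_0+\delta_0 d=-\triangle_{\mathrm{eucl}}$ to obtain a relation of the form treated by Lemma~\ref{L:VT}. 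This yields a potential representation for $\Psi$ and the same contraction operator $\mathcal{T}_0$ as in \eqref{eq:T0contr}, which is a contraction simultaneously in $L^{p(\cdot)}$ and in $W^{1,p(\cdot)}$; hence $\Psi\in W^{1,p(\cdot)}$. The argument for $\Phi=d\Omega$ is similar but uses $\zeta=\delta_0 W$ and the boundary identity $(d\Omega,\delta dW)=(d\Omega_0,\delta dW)$, which is where the hypothesis $d\omega_0\in W^{1,p(\cdot)}$ enters. Only after $\alpha,\beta\in W^{1,p(\cdot)}$ is established does the paper integrate by parts to read off $t\alpha=t\psi$, \eqref{eq:rel10}, and then derive \eqref{eq:rel20} via Lemma~\ref{L:ort}, exactly the manipulation you sketched.
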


\begin{theorem*}[Theorem~\ref{T:addregN}]
Let $\omega_0,\delta \omega_0 \in W^{1,p(\cdot)}(M,\Lambda).$ Suppose $\omega\in \omega_0+ W_N^{1,p(\cdot)}(M,\Lambda)$, $\eta \in L^{p(\cdot)}(M,\Lambda)$ and $\varphi,\psi \in W^{1,p(\cdot)}(M,\Lambda)$ satisfy \eqref{eq1} for all $\zeta \in \mathrm{Lip}_N(M,\Lambda)$. Then $\alpha =\delta \omega$ and $\beta = d\omega$ belong to $W^{1,p(\cdot)}(M,\Lambda)$ and satisfy $n\alpha =n \delta \omega_0$, $n\beta = n\varphi$, \eqref{eq:rel10}, and \eqref{eq:rel20} for all $\zeta \in \mathrm{Lip}_N(M,\Lambda)$. 
\end{theorem*}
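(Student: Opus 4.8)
The plan is to obtain Theorem~\ref{T:addregN} from its Dirichlet analogue Theorem~\ref{T:addregD} by Hodge duality. The Hodge star $*$ is an $L^2$-isometry which maps $W^{k,p(\cdot)}(M,\Lambda)$ boundedly onto itself and $W^{1,p(\cdot)}_N(M,\Lambda)$ onto $W^{1,p(\cdot)}_T(M,\Lambda)$ (this holds already on $C^{1,1}$ manifolds), intertwines the differential and codifferential up to a sign on each homogeneous degree ($*\,d = \pm\,\delta\,*$ and $*\,\delta = \pm\,d\,*$, since $\delta = \pm * d *$ and $** = \pm\mathrm{id}$), and exchanges tangential and normal traces, $t(*\gamma) = *(n\gamma)$ and $n(*\gamma) = *(t\gamma)$. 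Since \eqref{eq1}, \eqref{eq:rel10}, and \eqref{eq:rel20} decouple over homogeneous degrees, I would first reduce to a fixed degree, so that all these signs are fixed constants.

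Next I would set $\tilde\omega := *\omega$, $\tilde\omega_0 := *\omega_0$, and $\tilde\eta := \pm*\eta$, $\tilde\varphi := \pm*\psi$, $\tilde\psi := \pm*\varphi$, the signs chosen so that substituting $\zeta = *\xi$ with $\xi \in \mathrm{Lip}_T(M,\Lambda)$ (and noting $*\xi \in \mathrm{Lip}_N(M,\Lambda)$) transforms \eqref{eq1} for $\omega$ against $\mathrm{Lip}_N$ test forms into \eqref{eq1} for $\tilde\omega$ with data $\tilde\eta,\tilde\varphi,\tilde\psi$ against $\mathrm{Lip}_T$ test forms; this uses only that $*$ is an $L^2$-isometry together with $d* = \pm*\delta$ and $\delta* = \pm*d$. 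I would then verify the hypotheses of Theorem~\ref{T:addregD} for $\tilde\omega$: one has $\tilde\omega - \tilde\omega_0 = *(\omega-\omega_0) \in W^{1,p(\cdot)}_T(M,\Lambda)$, $\tilde\omega_0 = *\omega_0 \in W^{1,p(\cdot)}(M,\Lambda)$, and --- this is exactly where the hypothesis $\delta\omega_0 \in W^{1,p(\cdot)}(M,\Lambda)$ is used --- $d\tilde\omega_0 = d*\omega_0 = \pm*\delta\omega_0 \in W^{1,p(\cdot)}(M,\Lambda)$; also $\tilde\varphi,\tilde\psi \in W^{1,p(\cdot)}(M,\Lambda)$ and $\tilde\eta \in L^{p(\cdot)}(M,\Lambda)$. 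Thus the hypotheses of \ref{T:addregN} are precisely the Hodge duals of those of \ref{T:addregD}.

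Applying Theorem~\ref{T:addregD} to $\tilde\omega$ gives $\tilde\alpha := \delta\tilde\omega$ and $\tilde\beta := d\tilde\omega$ in $W^{1,p(\cdot)}(M,\Lambda)$ with $t\tilde\alpha = t\tilde\psi$, $t\tilde\beta = t\,d\tilde\omega_0$, and the relations \eqref{eq:rel10}, \eqref{eq:rel20} for $\tilde\omega$ against all $\zeta\in\mathrm{Lip}_T(M,\Lambda)$. Translating back, $\delta\omega = \pm*\tilde\beta$ and $d\omega = \pm*\tilde\alpha$, so $\alpha = \delta\omega$ and $\beta = d\omega$ lie in $W^{1,p(\cdot)}(M,\Lambda)$; applying $t(*\,\cdot) = *(n\,\cdot)$ to $t\tilde\alpha = t\tilde\psi$ and to $t\tilde\beta = t\,d\tilde\omega_0$ yields $n\beta = n\varphi$ and $n\alpha = n\delta\omega_0$, where the degree-dependent signs on the two sides of each identity coincide (because $\tilde\beta$ and $d\tilde\omega_0$, respectively $\tilde\alpha$ and $\tilde\psi$, arise from their preimages by the same $*$-twisted operation), so they cancel and the conclusions come out sign-free. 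Finally \eqref{eq:rel10} and \eqref{eq:rel20} for $\omega$ against all $\zeta\in\mathrm{Lip}_N(M,\Lambda)$ follow from the corresponding statements for $\tilde\omega$ by again substituting $\zeta = *\xi$ and using the isometry and the intertwining relations.

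Given Theorem~\ref{T:addregD} (whose proof rests on the a priori estimates of Chapter~\ref{Sec:parametrix}), there is essentially no new analytic content here; the only point requiring care is setting up the Hodge-duality dictionary so that each equality of forms in the conclusion is reproduced without a spurious sign, and this is the main --- though minor --- obstacle. Since each identity used ($** = \pm\mathrm{id}$, $\delta = \pm*d*$, $t* = *n$) is applied degree by degree, the verification is finite and routine. An alternative would be a direct proof mirroring that of Theorem~\ref{T:addregD}, with the Neumann potential $G_N$ and Neumann boundary conditions replacing the Dirichlet ones; this introduces no new ideas but duplicates the argument, so the duality route is preferable.
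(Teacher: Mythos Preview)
Your Hodge-duality reduction to Theorem~\ref{T:addregD} is correct; the paper itself says the result ``is obtained in exactly the same manner or by the Hodge duality'' and then, unlike you, opts to sketch the direct route. There the authors rerun the local argument of Theorem~\ref{T:addregD} with the Neumann choice of test forms $W$ (i.e. $W_I(x',0)=0$ for $n\in I$ and $\partial_n W_I(x',0)=0$ for $n\notin I$), noting that now the identity $(\delta\Omega,d\delta_0 W)=0$ must be replaced by $(\delta\Omega,d\delta_0 W)=(\delta\Omega_0,d\delta_0 W)$, which is precisely where the assumption $\delta\omega_0\in W^{1,p(\cdot)}$ enters. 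Your duality argument reaches the same conclusion with no new analysis, and your identification of the point where $\delta\omega_0\in W^{1,p(\cdot)}$ is used (via $d\tilde\omega_0=\pm*\delta\omega_0$) matches the paper's use of $\delta\Omega_0$ in the direct proof. The direct approach has the minor advantage of being self-contained in local coordinates (and of making the boundary test-form choices explicit, which the paper reuses in Theorem~\ref{L:addregNatur}), while yours is shorter and avoids repeating the parametrix machinery.
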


\begin{theorem*}[Theorem~\ref{T:addregFD}]
Let $\omega_0, d\omega_0,\delta \omega_0 \in W^{1,p(\cdot)}(M,\Lambda).$ Suppose $\omega\in \omega_0+ W_0^{1,p(\cdot)}(M,\Lambda)$, $\eta \in L^{p(\cdot)}(M,\Lambda)$ and $\varphi,\psi \in W^{1,p(\cdot)}(M,\Lambda)$ satisfy \eqref{eq1} for all $\zeta \in \mathrm{Lip}_0(M,\Lambda)$. Then $\alpha =\delta \omega$ and $\beta = d\omega$ belong to $W^{1,p(\cdot)}(M,\Lambda)$ and satisfy $n\alpha =n \delta \omega_0$, $n\beta = nd\omega_0$, \eqref{eq:rel10}, and \eqref{eq:rel20} for all $\zeta \in \mathrm{Lip}_0(M,\Lambda)$. 
\end{theorem*}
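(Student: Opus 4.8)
\textbf{Proof proposal for Theorem~\ref{T:addregFD}.}

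The plan is to mirror the argument for the tangential and normal versions (Theorems~\ref{T:addregD} and~\ref{T:addregN}), exploiting that the full Dirichlet condition $\omega \in \omega_0 + W_0^{1,p(\cdot)}(M,\Lambda)$ is simultaneously a tangential and a normal condition on $\omega - \omega_0$. First I would set $\alpha := \delta\omega$ and $\beta := d\omega$, which are a priori only in $L^{p(\cdot)}(M,\Lambda)$ since $\omega \in W^{1,p(\cdot)}(M,\Lambda)$. The boundary relations $n\alpha = n\delta\omega_0$ and $n\beta = nd\omega_0$ should be read off as follows: since $\omega - \omega_0 \in W_0^{1,p(\cdot)}(M,\Lambda)$, it lies in $W_N^{1,p(\cdot)}(M,\Lambda)$, so by the Corollary after Lemma~\ref{L:ort} (applied once $C^{2,1}$ regularity is available, or by the approximation machinery of Lemma~\ref{L:approx2} and the definition of weak codifferential for $C^{1,1}$) one gets $\delta(\omega-\omega_0) \in W_N^{\delta,p(\cdot)}$, i.e.\ $n\delta\omega = n\delta\omega_0$; similarly $\omega - \omega_0 \in W_T^{1,p(\cdot)}$ gives, via the Hodge-dual version, $nd(\omega - \omega_0)$ controlled — but here one wants $nd\omega = nd\omega_0$, which follows because $d(\omega - \omega_0)$ is an exact form with $t(\omega-\omega_0)=0$, hence its normal part on $bM$ is determined. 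I would make this precise using the integration-by-parts characterization: for all $\zeta$ in the appropriate Lipschitz class, test \eqref{eq1} and compare boundary terms.

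The core of the argument is to derive the equations \eqref{eq:rel10} and \eqref{eq:rel20}. For \eqref{eq:rel10}, note $d\alpha + \delta\beta = d\delta\omega + \delta d\omega = \triangle\omega$ in the weak sense; plugging $\zeta \in \mathrm{Lip}_0(M,\Lambda)$ into \eqref{eq1} and integrating by parts (legitimate since $\zeta$ and its relevant derivatives vanish near $bM$, and using $d^2 = \delta^2 = 0$ on $W^{d,1}$, $W^{\delta,1}$ from the Corollary to Lemma~\ref{L:ort}) yields exactly $(d\alpha + \delta\beta, \zeta) = (\eta + d\psi + \delta\varphi, \zeta)$ for all such $\zeta$, giving \eqref{eq:rel10}. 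For \eqref{eq:rel20}, I would test \eqref{eq1} with $\zeta$ replaced by $\delta\xi$ and by $d\xi$ respectively, where $\xi \in \mathrm{Lip}_0(M,\Lambda)$ (so $\delta\xi, d\xi$ are admissible test forms in the full Dirichlet class), using $d\delta\xi$ and $\delta d\xi$ together with the orthogonality Lemma~\ref{L:ort} to kill cross terms $(d\omega, d\delta\xi) = 0$ and $(\delta\omega, \delta d\xi) = 0$; this isolates $\mathcal{D}(\alpha,\xi) = (\eta + d\psi, d\xi)$ and $\mathcal{D}(\beta,\xi) = (\eta + \delta\varphi, \delta\xi)$. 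One then extends from $\xi \in \mathrm{Lip}_0$ to the statement as written by density (Lemma~\ref{L:approx1}, Lemma~\ref{L:approx3}).

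Finally, the $W^{1,p(\cdot)}$ regularity of $\alpha$ and $\beta$ is obtained by recognizing \eqref{eq:rel20} as the defining variational relation \eqref{eq1} for $\alpha$ (with data $\eta + d\psi$ on the right, $\psi$-type and $\varphi$-type forms suitably placed, and boundary condition of type $N$ coming from $n\alpha = n\delta\omega_0$) and for $\beta$ (with data $\eta + \delta\varphi$, boundary condition from $n\beta = nd\omega_0$), then invoking the first a priori estimate theorem (Theorems~\ref{T:p2},~\ref{T:TrueGaffney}) for the relevant boundary value problem on a $C^{1,1}$ manifold. The main obstacle I anticipate is the careful justification on a merely $C^{1,1}$ manifold that $\alpha$ and $\beta$ actually satisfy a \emph{normal}-type boundary condition compatible with the hypotheses of Theorem~\ref{T:p2}: since $nd\omega = nd\omega_0$ and $n\delta\omega = n\delta\omega_0$ are statements about $W^{d,p(\cdot)}$ and $W^{\delta,p(\cdot)}$ traces rather than full $W^{1,p(\cdot)}$ traces, one must split $\alpha = (\alpha - $ extension of $n\delta\omega_0) + \cdots$ using Corollary~\ref{C:ext} to reduce to the homogeneous boundary case before applying the a priori estimate, and track that the extension has the stated norm bounds — this bookkeeping, rather than any deep new idea, is where the proof needs the most care.
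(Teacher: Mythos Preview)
There is a genuine gap, and it is instructive: the paper itself does \emph{not} prove the statement as announced at the beginning of Chapter~\ref{sec:Dirichlet}. The actual Theorem~\ref{T:addregFD} in Section~\ref{sec:additional} only asserts that $\alpha,\beta\in W^{1,p(\cdot)}_{\mathrm{loc}}(M,\Lambda)$ and that $d\alpha+\delta\beta=\eta+d\psi+\delta\varphi$ a.e.\ in $M$. Just before stating it, the paper explicitly explains why boundary regularity fails for full Dirichlet data: if one writes down the variational relations for $\alpha$ and $\beta$ valid against test forms without the $\mathrm{Lip}_0$ restriction, one gets
\[
\mathcal{D}(\alpha,\zeta)=(\eta+d\psi+\delta\varphi-\delta\beta,d\zeta),\qquad
\mathcal{D}(\beta,\zeta)=(\eta+d\psi+\delta\varphi-d\alpha,\delta\zeta),
\]
so the equation for $\alpha$ involves $\delta\beta$ and vice versa. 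In the tangential and normal cases (Theorems~\ref{T:addregD},~\ref{T:addregN}) this coupling disappears because $t\beta=td\omega_0$ (resp.\ $n\alpha=n\delta\omega_0$) lets one integrate by parts and kill the extra term; with full Dirichlet data no such cancellation is available, and the paper settles for interior regularity only.

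Your specific plan also has two circularities. First, the relation $\mathcal{D}(\alpha,\xi)=(\eta+d\psi,d\xi)$ presupposes that $d\alpha,\delta\alpha\in L^{p(\cdot)}$ (otherwise the left-hand side is undefined), which is precisely what you are trying to prove; in the paper's proofs of Theorems~\ref{T:addregD} and~\ref{T:addregN} the analogues of \eqref{eq:rel20} are derived \emph{after} the direct potential-theoretic argument establishes $\alpha,\beta\in W^{1,p(\cdot)}$, not before. Second, even if you had a weak version of \eqref{eq:rel20}, Theorems~\ref{T:p2} and~\ref{T:TrueGaffney} take as a hypothesis that the unknown already lies in $W^{1,p_-}_*(M,\Lambda)$, so invoking them for $\alpha$ assumes what is to be shown. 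The paper's route in the $T$ and $N$ cases is not via those a~priori estimates at all, but via a localized representation $\Psi=\mathcal{Q}[F]+\mathcal{H}[C]+\mathcal{T}_0[\Psi]$ with $\mathcal{T}_0$ a contraction on $W^{1,p(\cdot)}(G_{2R})$; for full Dirichlet data this machinery only runs in interior charts.
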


	\section{The Dirichlet problem for the Hodge Laplacian}    
	Our main result in this Section --- Theorem~\ref{T:D3} --- concerns the solvability  of the boundary value problem 
	\begin{equation}\label{eq:DD}
		\triangle \omega = \eta \quad \text{in}\quad M, \quad t\omega =t\varphi, \quad t\delta \omega = t\psi\quad \text{on}\quad bM 
	\end{equation}
 in variable exponent Sobolev spaces. When $M$ is at least $C^{s+2,1}$, $s\in \{0\}\cup \mathbb{N}$, we solve this problem in $W^{s+2,p(\cdot)}(M,\Lambda)$, with the equation understood in the a.e. sense and the boundary conditions in the sense of trace. When $M$ is only $C^{1,1}$, a weak solution $\omega$ to \eqref{eq:DD} is understood as a form $\omega \in \varphi + W_T^{1,p_{-}}(M,\Lambda)$ satisfying the following integral identity: 
	\begin{equation}\label{eq:DD1}
		\mathcal{D}(\omega,\zeta) - (\eta - d\psi,\zeta) - (\psi,\delta \zeta)=0
	\end{equation}
	for all $\zeta \in \mathrm{Lip}_T(M,\Lambda)$.  

    The central result of this Section is Theorem~\ref{T:D3}. To prove this theorem we require several auxiliary results. First, we state a regularity result of Lemma~\ref{T:D1}, which is a partial case of Theorems~\ref{T:p2}, \ref{T:TrueGaffney}. Next, we state and prove Lemma~\ref{L:quant} which allows to get rid of the $\|\omega\|_{1,M}$ term on the right-hand side of the a priori estimate of Lemma~\ref{T:D1}. Combining these results we finally prove Theorem~\ref{T:D3}. A partial case of Theorem~\ref{T:D3} --- an existence and regularity result for the Dirichlet potential --- is stated in Theorem~\ref{T:D2}.

	\begin{lemma}\label{T:D1}
		Let $\omega\in \varphi + W^{1,p_{-}}_T(M,\Lambda)$ be a solution of \eqref{eq:DD1}.

	\begin{enumerate}
		\item Let $M$ be of the class $C^{1,1}$, $\eta \in L^{p(\cdot)}(M,\Lambda)$, $\varphi,\psi \in W^{1,p(\cdot)}(M,\Lambda)$, then $\omega \in W^{1,p(\cdot)}(M,\Lambda)$ and there exists a constant $C = C(\mathrm{data})>0$ such that  
		$$
		\|\omega\|_{1,p(\cdot),M} \leq C(\mathrm{data}) (\|\eta\|_{p(\cdot),M} + \|\psi\|_{1,p(\cdot),M} + \|\varphi\|_{1,p(\cdot),M}  +\|\omega\|_{1,M}). 
		$$
		\item Let $M$ be of the class $C^{s+2,1}$. Let $\eta \in W^{s,p(\cdot)}(M,\Lambda)$, $\psi \in W^{s+1,p(\cdot)}(M,\Lambda)$, and $\varphi \in W^{s+2,p(\cdot)}(M,\Lambda)$ be given.  Then $\omega \in  W^{s+2,p(\cdot)}(M,\Lambda)$ satisfies \eqref{eq:DD} and we have the estimate 
		$$
		\|\omega\|_{s+2,p(\cdot),M} \leq C(\mathrm{data},s) (\|\eta\|_{s,p(\cdot),M} + \|\psi\|_{s+1,p(\cdot),M} + \|\varphi\|_{s+2,p(\cdot),M}  +\|\omega\|_{1,M}). 
		$$
	\end{enumerate}
	\end{lemma}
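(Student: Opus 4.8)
The plan is to prove Lemma~\ref{T:D1} by localizing the problem, freezing the exponent, and reducing to the half-space and full-space potential estimates established in Chapter~\ref{sec:auxiliary}. The key point is that \eqref{eq:DD1} is precisely the variational identity \eqref{eq1} with the source data $\eta - d\psi$ playing the role of $\eta$ and $\psi$ playing the role of $\psi$ there (with $\varphi$ of \eqref{eq1} set to zero, after subtracting off the inhomogeneous boundary datum $\varphi$). Therefore the statement is, modulo this identification, a special case of Theorems~\ref{T:p2} and \ref{T:TrueGaffney} stated at the beginning of this Chapter. So the heart of the proof is simply to carefully match notation and check the hypotheses.

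\textbf{Step 1: Reduction to homogeneous boundary data.} Write $\omega = \varphi + u$ with $u \in W^{1,p_{-}}_T(M,\Lambda)$. Then \eqref{eq:DD1} becomes $\mathcal{D}(u,\zeta) = (\eta - d\psi,\zeta) + (\psi,\delta\zeta) - \mathcal{D}(\varphi,\zeta)$ for all $\zeta \in \mathrm{Lip}_T(M,\Lambda)$. Now $\mathcal{D}(\varphi,\zeta) = (d\varphi,d\zeta) + (\delta\varphi,\delta\zeta)$, so the right-hand side has exactly the form $(\tilde\eta,\zeta) + (\tilde\varphi,d\zeta) + (\tilde\psi,\delta\zeta)$ with $\tilde\eta = \eta - d\psi$, $\tilde\varphi = -d\varphi$, $\tilde\psi = \psi - \delta\varphi$. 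In part (a), $\tilde\eta, \tilde\varphi, \tilde\psi \in L^{p(\cdot)}(M,\Lambda)$ because $\varphi,\psi \in W^{1,p(\cdot)}(M,\Lambda)$; in part (b), $\tilde\eta \in W^{s,p(\cdot)}$, $\tilde\varphi \in W^{s+1,p(\cdot)}$, $\tilde\psi \in W^{s+1,p(\cdot)}$. This is precisely the data regularity required by the two cited theorems applied with $* = T$.

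\textbf{Step 2: Apply Theorems~\ref{T:p2} and \ref{T:TrueGaffney}.} Directly invoke the quoted result: since $u \in W^{1,p_{-}}_T(M,\Lambda)$ solves \eqref{eq1} with data $(\tilde\eta,\tilde\varphi,\tilde\psi)$, we conclude $u \in W^{1,p(\cdot)}(M,\Lambda)$ (in case (a), $M$ being $C^{1,1}$) or $u \in W^{s+2,p(\cdot)}(M,\Lambda)$ (in case (b), $M$ being $C^{s+2,1}$ with higher-order data), with the estimate
\begin{align*}
\|u\|_{1,p(\cdot),M} &\leq C(\mathrm{data})\bigl(\|\tilde\eta\|_{p(\cdot),M} + \|\tilde\varphi\|_{p(\cdot),M} + \|\tilde\psi\|_{p(\cdot),M} + \|u\|_{1,M}\bigr),
\end{align*}
respectively the $W^{s+2,p(\cdot)}$ version. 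Then add back $\varphi$ and estimate $\|\tilde\eta\|_{p(\cdot),M} \leq \|\eta\|_{p(\cdot),M} + \|d\psi\|_{p(\cdot),M} \leq \|\eta\|_{p(\cdot),M} + C\|\psi\|_{1,p(\cdot),M}$, $\|\tilde\varphi\|_{p(\cdot),M} \leq C\|\varphi\|_{1,p(\cdot),M}$, $\|\tilde\psi\|_{p(\cdot),M} \leq C\|\psi\|_{1,p(\cdot),M} + C\|\varphi\|_{1,p(\cdot),M}$, and $\|u\|_{1,M} \leq \|\omega\|_{1,M} + \|\varphi\|_{1,M} \leq \|\omega\|_{1,M} + C\|\varphi\|_{1,p(\cdot),M}$. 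Collecting gives the stated estimate with the $\|\omega\|_{1,M}$ term on the right. For part (b), the interpolation estimate of Lemma~\ref{C:Di1} lets us absorb the intermediate-order norms; also one must invoke the integration-by-parts / approximation machinery (Lemma~\ref{L:approx1}, Corollary~\ref{C:approx2}) to verify that a $W^{s+2,p(\cdot)}$ solution of the weak identity \eqref{eq:DD1} actually satisfies \eqref{eq:DD} pointwise a.e.\ with the boundary conditions in the trace sense, which follows by integrating by parts against $\zeta \in \mathrm{Lip}_T(M,\Lambda)$ and using that such $\zeta$ are dense enough.

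\textbf{Main obstacle.} The genuine content is entirely inside Theorems~\ref{T:p2} and \ref{T:TrueGaffney}, whose proofs are postponed to Chapter~\ref{Sec:parametrix}; at the level of this Lemma the only subtlety is the bookkeeping of data regularity and the justification that a weak solution of higher Sobolev regularity solves \eqref{eq:DD} in the classical a.e.\ sense with trace boundary conditions. That justification is the part requiring care: one needs the integration-by-parts formula \eqref{eq:by_parts} valid in variable exponent spaces together with the trace embeddings, and the density of $\mathrm{Lip}_T$ forms in the appropriate space of test forms (Lemma~\ref{L:approx1}, part on $W^{1,p(\cdot)}_T$), to recover both the equation and the two boundary conditions $t\omega = t\varphi$ and $t\delta\omega = t\psi$ from \eqref{eq:DD1}. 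Everything else is a direct citation.
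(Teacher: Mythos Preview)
Your proposal is correct and follows exactly the paper's approach: subtract off $\varphi$ to obtain $\omega' = \omega - \varphi \in W^{1,p_-}_T(M,\Lambda)$ satisfying an instance of \eqref{eq1}, then invoke Theorem~\ref{T:TrueGaffney} for part (a) and Theorem~\ref{T:p2} for part (b), with integration by parts in (b) recovering \eqref{eq:DD} from the weak identity. Your identification of the data $(\tilde\eta,\tilde\varphi,\tilde\psi) = (\eta-d\psi,\, -d\varphi,\, \psi-\delta\varphi)$ is in fact the arithmetically correct one; the paper's displayed form has the $\psi$ placed in the $d\zeta$ slot, but the regularity check and the resulting estimate are the same either way.
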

	\begin{proof}
		The form $\omega'=\omega - \varphi\in W^{1,p_{-}}_T(M,\Lambda)$ satisfies 
		$$
		\mathcal{D}(\omega',\zeta) = (\eta-d\psi,\zeta)+(\psi - d\varphi,d\zeta) +(-\delta \varphi,\delta\zeta)
		$$
		for all $\zeta \in \mathrm{Lip}_T(M,\Lambda)$. It remains to use Theorem~\ref{T:TrueGaffney} for (a) and Theorem~\ref{T:p2} for (b). If $\omega\in W^{2,p_{-}}(M,\Lambda)$ then by the integration-by-parts formula we get  the first and third relations in \eqref{eq:DD}.
	\end{proof}
	
	The constant in Lemma~\ref{T:D1} is quantitative. The constant which comes from the general elliptic results for estimates without the norm on the RHS (see \eqref{ell2}) is generally non-quantitative. But it can be estimated via the constant in the following standard inequality for $\omega \in W^{1,2}_T(M,\Lambda)$ orthogonal to $\mathcal{H}_T(M)$:
	\begin{equation}\label{eq:lambda}
		\|\omega\|_{2,M}^2\leq C_T \mathcal{D}(\omega,\omega) = C_T (\|d\omega\|_{2,M}^2+\|\delta \omega\|_{2,M}^2).
	\end{equation}
	The best such constant $C_T$ is nothing else as the reciprocal of the first positive eigenvalue of the Hodge Laplacian on $M$ with the Dirichlet boundary conditions $t\omega=0$, $t\delta \omega=0$. For the constant $C_T$ for a ball $B_R$ in $\mathbb{R}^n$, an easy scaling argument shows $C_T(B_R) = C(n) R^2$. For more on the constant, see ~\cite{CsaGyuDar12}, ~\cite{Csato13}, ~\cite{CsaDac18}, also the works in spectral geometry ~\cite{GueSav03, RauSav11}.

	\begin{lemma}\label{L:quant}
		Let $\eta,\varphi,\psi \in L^{p_{-}}(M,\Lambda)$, $\mathcal{P}_T\eta =0$. Let $\omega \in W_T^{1,p_{-}}(M,\Lambda)$ satisfy \eqref{eq1} for all $\zeta \in \mathrm{Lip}_T(M,\Lambda)$. Then there exists a constant $C = C(p_{-}, M)$ such that
		\begin{equation}\label{eq:quant}
			\|\omega - \mathcal{P}_T \omega\|_{p_{-},M} \leq C (\|\eta\|_{p_{-},M} + \|\varphi\|_{p_{-},M}  + \|\psi\|_{p_{-},M}).
		\end{equation}
	\end{lemma}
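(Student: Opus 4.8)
The estimate will be obtained by duality, using the \emph{quantitative} a priori estimates of Theorems~\ref{T:p2}--\ref{T:TrueGaffney} together with the quantitative $L^{2}$ Gaffney inequality \eqref{eq:lambda}; working by duality is precisely what keeps the constant explicit, avoiding the functional-analytic (Fredholm) constant in \eqref{ell2}. By the duality between $L^{p_{-}}(M,\Lambda)$ and $L^{p_{-}'}(M,\Lambda)$ for forms, and since $\mathcal{P}_{T}$ is bounded on every $L^{q}(M,\Lambda)$ with norm depending only on $M$ and $q$, it suffices to bound $(\omega,g)$ for $g\in L^{p_{-}'}(M,\Lambda)$ with $\|g\|_{p_{-}',M}\le 1$ and $\mathcal{P}_{T}g=0$; for such $g$ one has $(\omega-\mathcal{P}_{T}\omega,g)=(\omega,g)$ because $\mathcal{P}_{T}\omega\in\mathcal{H}_{T}(M)$ and $g$ is $L^{2}$-orthogonal to $\mathcal{H}_{T}(M)$.

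The plan is to pair \eqref{eq1} for $\omega$ with a solution of the dual Dirichlet problem $\mathcal{D}(\xi,\cdot)=(g,\cdot)$. Approximate $g$ in $L^{p_{-}'}(M,\Lambda)$ by smooth forms $g_{k}$ with $\mathcal{P}_{T}g_{k}=0$ and let $\xi_{k}=G_{D}[g_{k}]\in W^{1,2}_{T}(M,\Lambda)\cap(\mathcal{H}_{T}(M))^{\perp}$ be the minimizer of \eqref{eq:minimizationD}, which exists by the Gaffney inequality on the $C^{1,1}$ manifold and, the data being smooth, satisfies $\xi_{k}\in W^{1,q}_{T}(M,\Lambda)$ for every finite $q$ (higher integrability obtained by bootstrapping the estimates of Theorems~\ref{T:p2}--\ref{T:TrueGaffney} through Sobolev embeddings). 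By density of $\mathrm{Lip}_{T}(M,\Lambda)$ in the relevant variable exponent Sobolev spaces (Lemma~\ref{L:approx1}), both \eqref{eq1} for $\omega$ and \eqref{eq:Dirichlet} for $\xi_{k}$ extend by continuity to the test forms $\xi_{k}$ and $\omega$ respectively, giving
$$(\omega,g_{k})=\mathcal{D}(\omega,\xi_{k})=(\eta,\xi_{k})+(\varphi,d\xi_{k})+(\psi,\delta\xi_{k})\le\big(\|\eta\|_{p_{-},M}+\|\varphi\|_{p_{-},M}+\|\psi\|_{p_{-},M}\big)\,\|\xi_{k}\|_{1,p_{-}',M}.$$
Since $g_{k}\to g$ in $L^{p_{-}'}$ and $\omega\in L^{p_{-}}$, letting $k\to\infty$ reduces the lemma to the \emph{quantitative} bound $\|\xi_{k}\|_{1,p_{-}',M}\le C(p_{-},M)\,\|g_{k}\|_{p_{-}',M}$.

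To get that bound, apply Theorem~\ref{T:TrueGaffney} to $\xi_{k}$ with the constant exponent $p_{-}'$ (note $\mathrm{data}$ then depends only on $p_{-}$ and $M$): it yields $\|\xi_{k}\|_{1,p_{-}',M}\le C(p_{-},M)\big(\|g_{k}\|_{p_{-}',M}+\|\xi_{k}\|_{1,M}\big)$, so only $\|\xi_{k}\|_{1,M}=\|\xi_{k}\|_{L^{1}(M,\Lambda)}$ must be absorbed. Here I would run a second duality: $\|\xi_{k}\|_{1,M}=\sup\{(\xi_{k},h):h\in L^{\infty}(M,\Lambda),\ \|h\|_{L^{\infty}}\le 1\}$, and for such $h$ set $h'=h-\mathcal{P}_{T}h$ (so $\|h'\|_{L^{\infty}}\le C(M)$) and $\chi=G_{D}[h']\in W^{1,2}_{T}(M,\Lambda)\cap(\mathcal{H}_{T}(M))^{\perp}$, which is well defined because $h'\in L^{\infty}\subset L^{2}$. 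The crucial gain is that the quantitative $L^{2}$ Gaffney inequality \eqref{eq:lambda}, together with $\mathcal{D}(\chi,\chi)=(h',\chi)$, bounds $\|\chi\|_{L^{2}(M,\Lambda)}$ by $C(M)\|h'\|_{L^{2}(M,\Lambda)}\le C(M)$ \emph{regardless of the size of} $p_{-}$; hence $\|\chi\|_{L^{1}(M,\Lambda)}\le C(M)$, and Theorem~\ref{T:TrueGaffney} with the constant exponent $p_{-}$ then gives $\|\chi\|_{1,p_{-},M}\le C(p_{-},M)$. Using $\mathcal{P}_{T}\xi_{k}=0$ and testing \eqref{eq:Dirichlet} for $\chi$ against $\xi_{k}$ and \eqref{eq:Dirichlet} for $\xi_{k}$ against $\chi$ (all admissible, again by Lemma~\ref{L:approx1} and the integrability established), one obtains
$$(\xi_{k},h)=(\xi_{k},h')=\mathcal{D}(\chi,\xi_{k})=\mathcal{D}(\xi_{k},\chi)=(g_{k},\chi)\le\|g_{k}\|_{p_{-}',M}\,\|\chi\|_{p_{-},M}\le C(p_{-},M)\,\|g_{k}\|_{p_{-}',M},$$
and taking the supremum over $h$ closes the estimate. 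The main obstacle is exactly this construction of a dual Dirichlet potential with a quantitative $W^{1,p_{-}'}$ bound \emph{without} appealing to the (not yet available, and logically downstream) existence/regularity theorem for Dirichlet potentials: it is this that forces the inner duality against $L^{\infty}$ data, the point of which is to reach $L^{2}$ and exploit the quantitative Gaffney constant even when $p_{-}>2$.
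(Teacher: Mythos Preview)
Your proposal is correct and follows the same duality strategy as the paper: pair $\omega-\mathcal{P}_{T}\omega$ against the Dirichlet potential $\xi=G_{D}[\,\cdot\,]$ of the dual datum, then use the quantitative a~priori estimate of Lemma~\ref{T:D1}/Theorem~\ref{T:TrueGaffney} together with the $L^{2}$ Gaffney inequality \eqref{eq:lambda} to bound $\|\xi\|_{1,p_{-}',M}$. The only substantive difference is how the low--order term $\|\xi\|_{L^{1}}$ is absorbed. The paper simply observes that one may take $p_{-}\le 2$: then $p_{-}'\ge 2$, every $\mu\in L^{p_{-}'}$ lies in $L^{2}$, the potential $\xi=G_{D}[\mu-\mathcal{P}_{T}\mu]$ exists directly, and \eqref{eq:lambda} gives $\|\xi\|_{L^{2}}\le C_{T}\|\mu\|_{L^{2}}\le C\|\mu\|_{p_{-}'}$, which controls $\|\xi\|_{L^{1}}$ in one line and closes the single duality. (For $p_{-}>2$ one has $\omega\in W^{1,2}_{T}$ and the direct $L^{2}$ energy estimate combined with Theorem~\ref{T:TrueGaffney} gives the claim --- this is the content of the ``without loss''.) Your inner duality against $L^{\infty}$ achieves the same control of $\|\xi_{k}\|_{L^{1}}$ by $\|g_{k}\|_{p_{-}'}$ uniformly in $p_{-}$, at the cost of a second potential $\chi$ and a second application of Theorem~\ref{T:TrueGaffney}; this is correct and self--contained, but longer than the paper's case reduction. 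The smoothing $g_{k}\to g$ and the bootstrap $\xi_{k}\in W^{1,q}_{T}$ for all $q$ are also unnecessary once $p_{-}\le2$ is assumed.
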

	
	\begin{proof}
		Assume without loss that $p_{-}\leq 2$. Let $\mu\in L^{p_{-}'}(M,\Lambda)$ and $\xi = G_D[\mu-\mathcal{P}_T \mu] \in W^{1,p_{-}'}_T(M,\Lambda)$. There holds
		\begin{equation}\label{omegaxi}
			\begin{gathered}
				(\omega-\mathcal{P}_T\omega,\mu) = (\mu -\mathcal{P}_T \mu,\omega-\mathcal{P}_T\omega )= \mathcal{D}(\xi,\omega - \mathcal{P}_T\omega)\\
				= \mathcal{D}(\omega - \mathcal{P}_T\omega,\xi)= (\eta,\xi) + (\varphi,d\xi) + (\psi,\delta \xi).
			\end{gathered}
		\end{equation}
		From \eqref{eq:lambda} it follows that  
		\begin{align*}
			\|\xi\|_{2,M}^2\leq C_T\mathcal{D} (\xi,\xi) = C_T(\mu-\mathcal{P}_T\mu, \xi) \leq C_T\|\mu\|_{2,M} \|\xi\|_{2,M}.
		\end{align*}
		Then 
		$$
		\|\xi\|_{2,M} \leq C_T \|\mu\|_{2,M}.
		$$
		The \textit{quantitative} elliptic estimate of Lemma~\ref{T:D1} for $\xi$ and the H\"older inequality now give
		$$
		\|\xi \|_{1,p_{-}',M} \leq C \|\mu\|_{p_{-}',M}
		$$
		with the constant $C$ which depends on the data and the constant $C_T$ in \eqref{eq:lambda}. Then from \eqref{omegaxi} we get \eqref{eq:quant}.
	\end{proof}
	
Combining the results of Lemmas~\ref{T:D1}, \ref{L:quant} we get the following

\begin{corollary}\label{C:D1}
Let $\omega\in \varphi + W^{1,p_{-}}_T(M,\Lambda)$ be a solution of \eqref{eq:DD1} and $\mathcal{P}_T(\omega-\varphi)=0$.
	\begin{enumerate}
		\item Let $M$ be of the class $C^{1,1}$, $\eta \in L^{p(\cdot)}(M,\Lambda)$, $\varphi,\psi \in W^{1,p(\cdot)}(M,\Lambda)$, then $\omega \in W^{1,p(\cdot)}(M,\Lambda)$ and there exists a constant $C = C(\mathrm{data})>0$ such that  
		$$
		\|\omega\|_{1,p(\cdot),M} \leq C(\mathrm{data}) (\|\eta\|_{p(\cdot),M} + \|\psi\|_{1,p(\cdot),M} + \|\varphi\|_{1,p(\cdot),M}). 
		$$
		\item Let $M$ be of the class $C^{s+2,1}$. Let $\eta \in W^{s,p(\cdot)}(M,\Lambda)$, $\psi \in W^{s+1,p(\cdot)}(M,\Lambda)$, and $\varphi \in W^{s+2,p(\cdot)}(M,\Lambda)$ be given.  Then $\omega \in  W^{s+2,p(\cdot)}(M,\Lambda)$ satisfies \eqref{eq:DD} and we have the estimate 
		$$
		\|\omega\|_{s+2,p(\cdot),M} \leq C(\mathrm{data},s) (\|\eta\|_{s,p(\cdot),M} + \|\psi\|_{s+1,p(\cdot),M} + \|\varphi\|_{s+2,p(\cdot),M} ). 
		$$
	\end{enumerate}
\end{corollary}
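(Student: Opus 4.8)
\textbf{Proof plan for Corollary~\ref{C:D1}.} The plan is simply to combine the two preceding results: the regularity estimate of Lemma~\ref{T:D1} (whose constant is quantitative but which carries an unwanted $\left\lVert \omega \right\rVert_{1,M}$ term) and the quantitative $L^{p_{-}}$-estimate of Lemma~\ref{L:quant} (which removes exactly that term, under the orthogonality normalization $\mathcal{P}_T(\omega - \varphi)=0$). So the whole argument is a bootstrap: first control the low norm $\left\lVert \omega \right\rVert_{1,M}$ by the data, then feed this back into Lemma~\ref{T:D1}.

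First I would reduce to the normalized form $\omega' = \omega - \varphi \in W^{1,p_{-}}_T(M,\Lambda)$, which by the computation in the proof of Lemma~\ref{T:D1} satisfies \eqref{eq1} with data $(\eta - d\psi, \psi - d\varphi, -\delta\varphi)$ in place of $(\eta,\varphi,\psi)$; note that this new ``$\eta$''-datum is $\eta - d\psi$, and the compatibility condition $(\eta,h_T)=[\psi,h_T]$ is exactly $\mathcal{P}_T(\eta - d\psi)=0$ after integrating by parts, so the hypothesis of Lemma~\ref{L:quant} is met. Since we assume $\mathcal{P}_T\omega' = 0$, Lemma~\ref{L:quant} applies directly and gives
$$
\left\lVert \omega' \right\rVert_{p_{-},M} \leq C(p_{-},M)\left( \left\lVert \eta - d\psi \right\rVert_{p_{-},M} + \left\lVert \psi - d\varphi \right\rVert_{p_{-},M} + \left\lVert \delta\varphi \right\rVert_{p_{-},M} \right) \leq C\left( \left\lVert \eta \right\rVert_{p_{-},M} + \left\lVert \psi \right\rVert_{1,p_{-},M} + \left\lVert \varphi \right\rVert_{1,p_{-},M} \right).
$$
Combined with the triangle inequality $\left\lVert \omega \right\rVert_{1,M} \leq \left\lVert \omega' \right\rVert_{1,M} + \left\lVert \varphi \right\rVert_{1,M} \leq C(\left\lVert \omega' \right\rVert_{p_{-},M} + \left\lVert \varphi \right\rVert_{1,M})$ (using that $M$ is bounded so $L^{p_{-}} \hookrightarrow L^1$, and that $\left\lVert \nabla \omega' \right\rVert_{1,M}$ is already controlled once we know $\omega' \in W^{1,p(\cdot)}$ — or more cheaply, absorb this step into the final estimate), and with the continuous embedding $L^{p(\cdot)}(M,\Lambda) \hookrightarrow L^{p_{-}}(M,\Lambda)$ and $W^{1,p(\cdot)} \hookrightarrow W^{1,p_{-}}$ on the compact manifold $M$, we obtain
$$
\left\lVert \omega \right\rVert_{1,M} \leq C(\mathrm{data})\left( \left\lVert \eta \right\rVert_{p(\cdot),M} + \left\lVert \psi \right\rVert_{1,p(\cdot),M} + \left\lVert \varphi \right\rVert_{1,p(\cdot),M} \right).
$$

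Finally I would substitute this bound into the right-hand side of the estimate of Lemma~\ref{T:D1}(a), which immediately yields statement (a); for statement (b), the same substitution into Lemma~\ref{T:D1}(b) yields the $W^{s+2,p(\cdot)}$ estimate, and the identification that $\omega$ solves \eqref{eq:DD} in the a.e./trace sense is already part of Lemma~\ref{T:D1}(b). Since $C_T$ in \eqref{eq:lambda} depends only on $M$, all constants remain quantitative and of the claimed form $C(\mathrm{data})$ or $C(\mathrm{data},s)$. There is essentially no obstacle here — the only mild subtlety is bookkeeping with the shifted data $(\eta - d\psi, \psi - d\varphi, -\delta\varphi)$ and making sure the $L^{p_{-}}$ norms that appear in Lemma~\ref{L:quant} are dominated by the variable-exponent norms via the trivial embedding on a finite-measure manifold; this is routine.
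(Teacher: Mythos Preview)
Your proof is correct and follows essentially the same approach as the paper: shift to $\omega' = \omega - \varphi$, apply Lemma~\ref{L:quant} (using $\mathcal{P}_T\omega'=0$) to control $\|\omega'\|_{p_-,M}$ by the shifted data $(\eta-d\psi,\ \psi-d\varphi,\ -\delta\varphi)$, then feed this back into the regularity estimate. The paper phrases the last step as applying Theorems~\ref{T:TrueGaffney} and~\ref{T:p2} directly to $\omega'$ rather than invoking Lemma~\ref{T:D1} on $\omega$, but since Lemma~\ref{T:D1} is just a wrapper for those theorems this is a cosmetic difference.
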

\begin{proof}
We repeat the proof of Lemma~\ref{T:D1}. Now for $\omega'= \omega -\varphi$ by assumption we have $\mathcal{P}_T \omega'=0$. Thus by Lemma~\ref{L:quant} we can estimate 
\begin{gather*}
\|\omega'\|_{p_{-},M} \leq C(\mathrm{data}) (\|\eta-d\psi\|_{p_{-},M} + \|\psi - d\varphi\|_{p_{-},M} + \|\delta \varphi\|_{p_{-},M})\\
\leq C(\mathrm{data}) (\|\eta\|_{p(\cdot),M} + \|\psi \|_{1,p(\cdot),M} + \|\varphi\|_{1,p(\cdot),M}).
\end{gather*}
It remains to use again Theorem~\ref{T:p2} for (b) and Theorem~\ref{T:TrueGaffney} for (a).
\end{proof}
    
    The central result of this Section is
	\begin{theorem}\label{T:D3}
		Consider the boundary value problem \eqref{eq:DD}. 
		\begin{enumerate}
			\item Let $M$ be $C^{1,1}$ and let  $\eta \in L^{p(\cdot)}(M,\Lambda)$, and $\varphi,\psi \in W^{1,p(\cdot)}(M,\Lambda)$ satisfy $(\eta,h_T) = [\psi,h_T]$ for all $h_T \in \mathcal{H}_T(M).$ Then there exists a solution $\omega \in \varphi+W^{1,p(\cdot)}_T(M,\Lambda)$ of \eqref{eq:DD} understood in the sense of  \eqref{eq:DD1} such that for some constant $C = C(\mathrm{data})>0,$ we have 
			$$
			\|\omega\|_{1,p(\cdot),M} \leq C ( \|\eta-d\psi\|_{p(\cdot),M} + \|\varphi\|_{1,p(\cdot),M} + \|\psi\|_{p(\cdot),M}). 
			$$
             Moreover, if $d\varphi \in W^{1,p(\cdot)}(M,\Lambda)$ then $\alpha = \delta \omega$ and $\beta = d \omega$ belong to $W^{1,p(\cdot)}(M,\Lambda)$, satisfy 
            \begin{equation}\label{eq:relAB}
            \begin{gathered}
            d\alpha + \delta \beta = \eta, \quad t\alpha= t\psi, \quad t\beta = td\varphi,\\
            \mathcal{D}(\alpha,\zeta) = (\eta,d\zeta), \quad \mathcal{D}(\beta,\zeta) = (\eta-d\psi,\delta \zeta)
            \end{gathered}
            \end{equation}
            for all $\zeta \in \mathrm{Lip}_T(M,\Lambda)$ and there holds
            \begin{equation}\label{eq:estAB}
            \begin{gathered}
            \|\alpha\|_{1,p(\cdot),M} \leq  C(\mathrm{data}) (\|\eta-d\psi\|_{p(\cdot),M}+ \|\psi\|_{1,p(\cdot),M)}),\\  
            \|\beta\|_{1,p(\cdot),M} \leq  C(\mathrm{data})  (\|\eta - d \psi\|_{p(\cdot),M} + \|d\varphi\|_{1,p(\cdot),M}) .
            \end{gathered}
            \end{equation}
           
			\item Let $M$ be of the class $C^{s+2,1}$. Let $\eta\in W^{s,p(\cdot)}(M,\Lambda)$, $\varphi\in W^{s+2,p(\cdot)}(M,\Lambda)$, and $\psi \in W^{s+1,p(\cdot)}(M,\Lambda)$ be such that $(\eta,h_T) = [\psi,h_T]$ for all $h_T \in \mathcal{H}_T(M)$. Then there exists a solution $\omega\in W^{s+2,p(\cdot)}(M,\Lambda)$ of the boundary value problem \eqref{eq:DD}
			such that for some constant $C = C(\mathrm{data},s)>0,$ we have 
			$$
			\|\omega\|_{s+2,p(\cdot),M} \leq C ( \|\eta\|_{s,p(\cdot),M} + \|\varphi\|_{s+2,p(\cdot),M} + \|\psi\|_{s+1,p(\cdot),M}).
			$$
		  Moreover, the potentials $\alpha = \delta\omega$, $\beta = d \omega$ satisfy \eqref{eq:relAB} and there holds 
                      \begin{equation}\label{eq:estAB1}
            \begin{gathered}
            \|\alpha\|_{s+1,p(\cdot),M} \leq  C(\mathrm{data},s) (\|\eta-d\psi\|_{s,p(\cdot),M} +\|\psi\|_{s+1,p(\cdot),M}),\\
            \|\beta\|_{s+1,p(\cdot),M} \leq C(\mathrm{data},s) ( \|\eta - d \psi\|_{s,p(\cdot),M}+\|d\varphi\|_{s+1,p(\cdot),M}).
            \end{gathered}
            \end{equation}
				\end{enumerate}
	\end{theorem}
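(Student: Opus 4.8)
\textbf{Proof plan for Theorem~\ref{T:D3}.}

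The plan is to reduce the full statement to the a priori estimates of Lemma~\ref{T:D1} (equivalently Corollary~\ref{C:D1}) combined with the classical existence theory, and then to extract the estimates for the potentials $\alpha=\delta\omega$ and $\beta=d\omega$ from Theorems~\ref{T:addregD} and \ref{T:p2}. First I would establish \emph{existence} of a weak solution. For $C^{1,1}$ manifolds, the compatibility condition $(\eta,h_T)=[\psi,h_T]$ for all $h_T\in\mathcal{H}_T(M)$ is precisely what makes the variational problem \eqref{eq1} (with the substitution that produces \eqref{eq:DD1}) solvable: indeed, writing $\omega=\varphi+\omega'$ with $\omega'\in W^{1,p_{-}}_T(M,\Lambda)$, the relation \eqref{eq:DD1} becomes $\mathcal D(\omega',\zeta)=(\eta-d\psi,\zeta)+(\psi-d\varphi,d\zeta)+(-\delta\varphi,\delta\zeta)$, and the right-hand side, tested against $h_T\in\mathcal H_T(M)$, vanishes thanks to the compatibility condition together with Lemma~\ref{L:ort} (which gives $(d\varphi,\delta h_T)$-type terms zero) and the identity $[\psi,h_T]=(d\psi,h_T)-(\psi,\delta h_T)=(d\psi,h_T)$ since $\delta h_T=0$. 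Hence the Dirichlet-potential machinery of Section~\ref{sec:boundary} (the minimizer of \eqref{eq:minimizationD}) produces a solution $\omega'\in W^{1,p_{-}}_T(M,\Lambda)$ with $\mathcal P_T\omega'=0$, which can be further normalized. The regularity $\omega\in W^{1,p(\cdot)}$ (case (a)) or $\omega\in W^{s+2,p(\cdot)}$ (case (b)) then follows from Lemma~\ref{T:D1}; choosing the representative with $\mathcal P_T(\omega-\varphi)=0$ and applying Corollary~\ref{C:D1} removes the $\|\omega\|_{1,M}$ term, yielding the stated estimate (after absorbing $\|d\psi\|_{p(\cdot),M}\lesssim\|\psi\|_{1,p(\cdot),M}$, or in case (b) the cruder bound $\|\eta-d\psi\|_{s,p(\cdot),M}\lesssim\|\eta\|_{s,p(\cdot),M}+\|\psi\|_{s+1,p(\cdot),M}$).

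Next I would handle the potentials $\alpha=\delta\omega$, $\beta=d\omega$. Under the hypothesis $d\varphi\in W^{1,p(\cdot)}(M,\Lambda)$ in case (a), I would invoke Theorem~\ref{T:addregD} with $\omega_0=\varphi$ (so $d\omega_0=d\varphi\in W^{1,p(\cdot)}$): this directly gives $\alpha,\beta\in W^{1,p(\cdot)}(M,\Lambda)$, the trace identities $t\alpha=t\psi$, $t\beta=td\varphi$, the relation $d\alpha+\delta\beta=\eta$ (note $\eta+d\psi+\delta\varphi$ in \eqref{eq:rel10} reduces because in \eqref{eq:DD1} the roles are rearranged; one must carefully match $\varphi\mapsto 0$, $\psi\mapsto\psi$, $\eta\mapsto\eta-d\psi$ in the $\omega'$ formulation so that \eqref{eq:rel10} reads $d\alpha+\delta\beta=(\eta-d\psi)+d\psi+0=\eta$), and the variational relations \eqref{eq:rel20}, which are exactly \eqref{eq:relAB}. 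For the quantitative estimates \eqref{eq:estAB}, I observe that by \eqref{eq:rel20} the form $\alpha$ is itself (up to a harmonic field) the Dirichlet potential of the data $\eta-d\psi$ restricted via $d\zeta$; more precisely $\alpha=\delta\omega$ satisfies $\mathcal D(\alpha,\zeta)=(\eta,d\zeta)$ with $t\alpha=t\psi$, so setting $\alpha=\psi+\alpha'$ with $\alpha'$ satisfying a homogeneous-boundary Dirichlet problem with right-hand side controlled by $\eta-d\psi$, Corollary~\ref{C:D1}(a) (applied with $\psi$-data $=0$, $\varphi$-data $=\psi$, $\eta$-data derived from $\eta-d\psi$) yields $\|\alpha\|_{1,p(\cdot),M}\le C(\|\eta-d\psi\|_{p(\cdot),M}+\|\psi\|_{1,p(\cdot),M})$; symmetrically for $\beta$ with $\|d\varphi\|_{1,p(\cdot),M}$ in place of $\|\psi\|_{1,p(\cdot),M}$. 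Case (b) is identical in structure, using Theorem~\ref{T:p2} and Corollary~\ref{C:D1}(b) for the higher-order estimates \eqref{eq:estAB1}; the extra smoothness $\varphi\in W^{s+2,p(\cdot)}$ guarantees $d\varphi\in W^{s+1,p(\cdot)}$ so the hypothesis of the $d\varphi$-clause is automatic.

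The main obstacle I anticipate is \emph{bookkeeping with the harmonic fields and the compatibility conditions}, rather than any deep analytic difficulty. Specifically: (i) verifying that the non-quantitative constant from the general elliptic theory can consistently be replaced by the quantitative one via Lemma~\ref{L:quant}/Corollary~\ref{C:D1} at every stage, including for $\alpha$ and $\beta$ (one must check that the relevant projections of the data vanish, e.g. $\mathcal P_T(\eta-d\psi)$ or the analogous normal projections, which again follow from the given compatibility conditions and $\delta h_T=0=dh_N$); (ii) checking that the solution $\omega$ of the $W^{1,p(\cdot)}$ problem genuinely has $\delta\omega$ and $d\omega$ in $W^{1,p(\cdot)}$ — this is \emph{not} automatic from $\omega\in W^{1,p(\cdot)}$ alone on a merely $C^{1,1}$ manifold, and is precisely the content that Theorem~\ref{T:addregD} supplies; one must be careful that its hypotheses ($\omega_0,d\omega_0\in W^{1,p(\cdot)}$) are met, which forces the extra assumption $d\varphi\in W^{1,p(\cdot)}$ in part (a). A secondary technical point is matching the sign and index conventions between \eqref{eq:DD1}, \eqref{eq1}, and the statements of Theorems~\ref{T:addregD} and \ref{T:p2} (the $\eta-d\psi$ versus $\eta$ discrepancy in the various right-hand sides), which requires a clean substitution lemma but no new ideas.
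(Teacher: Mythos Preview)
Your plan is essentially the paper's proof, with one technical point glossed over. You write that ``the Dirichlet-potential machinery of Section~\ref{sec:boundary} (the minimizer of \eqref{eq:minimizationD}) produces a solution $\omega'\in W^{1,p_{-}}_T(M,\Lambda)$'', but the minimization \eqref{eq:minimizationD} is posed in $W^{1,2}_T$ and requires $L^2$ data; when $p_{-}<2$ the data $\eta,\varphi,\psi$ need not lie in $L^2$, and moreover the existence of $G_D[\eta]$ for general $L^{p_{-}}$ data is itself a consequence of Theorem~\ref{T:D3} (see Theorem~\ref{T:D2}), so invoking it here would be circular. The paper closes this gap by approximating $\eta,\varphi,\psi$ by Lipschitz forms $\eta_\varepsilon,\varphi_\varepsilon,\psi_\varepsilon$, solving the $W^{1,2}$ variational problem for the approximants (with $\mathcal P_T(\eta_\varepsilon-d\psi_\varepsilon)$ subtracted to enforce compatibility), applying Corollary~\ref{C:D1} to get uniform $W^{1,p(\cdot)}$ (resp.\ $W^{s+2,p(\cdot)}$) bounds and the Cauchy property for $\omega_\varepsilon$, and then passing to the limit.

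Apart from this, your treatment of $\alpha,\beta$ via Theorem~\ref{T:addregD} and the subsequent estimates via the relations \eqref{eq:relAB0} is exactly what the paper does; your anticipated obstacle about checking $\mathcal P_T(\alpha-\psi)=-\mathcal P_T\psi$ (not zero) is handled just as you suspect, by combining Theorem~\ref{T:TrueGaffney} with Lemma~\ref{L:quant} rather than Corollary~\ref{C:D1} directly, and absorbing $\|\mathcal P_T\psi\|$ into $\|\psi\|_{1,p(\cdot),M}$.
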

	\begin{proof}
		(a) We approximate $\eta$ and $\varphi$, $\psi$  by Lipschitz forms $\eta_\varepsilon$, $\varphi_\varepsilon$, $\psi_\varepsilon$ in $L^{p(\cdot)}(M,\Lambda)$ and $W^{1,p(\cdot)}(M,\Lambda)$, correspondingly. Let $\omega_\varepsilon' \in W_T^{1,2}(M,\Lambda)\cap (\mathcal{H}_T(M))^\perp$ be the (unique) minimizer of the functional
		$$
		\frac{1}{2}\mathcal{D}(\omega',\omega') - ((\eta_\varepsilon-d\psi_\varepsilon) - \mathcal{P}_T (\eta_\varepsilon-d\psi_\varepsilon),\omega') - (\psi_\varepsilon - d\varphi_\varepsilon,d\omega') - (-\delta \varphi_\varepsilon,\delta \omega')
		$$
		over $W_T^{1,2}(M,\Lambda)\cap (\mathcal{H}_T(M))^\perp$. The existence is guaranteed by the Gaffney inequality and the direct method of the calculus of variations (see Section~\ref{ssec:varf} and \cite{Morrey1966}). Then $\omega'_\varepsilon$  and $\omega_\varepsilon = \omega'_\varepsilon + \varphi_\varepsilon$ satisfy
		\begin{gather*}
		\mathcal{D}(\omega'_\varepsilon,\zeta) =  ((\eta_\varepsilon-d\psi_\varepsilon) - \mathcal{P}_T (\eta_\varepsilon-d\psi_\varepsilon),\zeta) + (\psi_\varepsilon - d\varphi_\varepsilon,d\zeta) + (-\delta \varphi_\varepsilon,\delta \zeta),\\
        \mathcal{D}(\omega_\varepsilon,\zeta) = ((\eta_\varepsilon-d\psi_\varepsilon) - \mathcal{P}_T (\eta_\varepsilon-d\psi_\varepsilon),\zeta) + (\psi_\varepsilon,d\zeta)
		\end{gather*}
		for all $\zeta \in \mathrm{Lip}_T(M,\Lambda)$,  respectively. By Corollary~\ref{C:D1}, 
		\begin{gather*}
		\|\omega_\varepsilon\|_{1,p(\cdot),M} \leq C(\mathrm{data}) (\|\eta_\varepsilon\|_{p(\cdot),M} + \|\varphi_\varepsilon\|_{1,p(\cdot),M} + \|\psi_\varepsilon\|_{1,p(\cdot),M})\\
        \leq C(\mathrm{data}) (\|\eta\|_{p(\cdot),M} + \|\varphi\|_{1,p(\cdot),M} + \|\psi\|_{1,p(\cdot),M})
		\end{gather*}
		and the sequence $\omega_\varepsilon$ is uniformly bounded in $W^{1,p(\cdot)}(M,\Lambda)$.  By the same estimates applied to the difference $\omega_{\varepsilon_1}-\omega_{\varepsilon_2} \in (\varphi_{\varepsilon_1} - \varphi_{\varepsilon_2}) + W^{1,p_{-}}_T(M,\Lambda)$, the sequence $\omega_\varepsilon$ converges in $W^{1,p(\cdot)}(M,\Lambda)$. Passing to the limit in $\varepsilon \to 0$ and using that $\mathcal{P}_T(\eta_\varepsilon-d\psi_\varepsilon)$ converges to $\mathcal{P}_T (\eta - d\psi)=0$ in $L^{p(\cdot)}(M,\Lambda)$ we get the existence of a limit $\omega \in W^{1,p(\cdot)}(M,\Lambda)$ which satisfies the integral identity \eqref{eq:DD1} and thus is the required solution of the boundary value problem \eqref{eq:DD}. 
        
         Then by Theorem~\ref{T:addregD} for $\alpha = \delta \omega$, $\beta=d\omega$ we have $\alpha,\beta \in W^{1,p(\cdot)}(M,\Lambda)$ and \eqref{eq:relAB}. Clearly,
        \begin{equation}\label{eq:relAB0}
        \mathcal{D}(\alpha-\psi,\zeta) = (\eta - d\psi,d\zeta) + (-\delta \psi,\delta \zeta), \quad \mathcal{D}(\beta - d\varphi) = (\eta - d\psi - \delta(d\varphi), \delta \zeta)
        \end{equation}
        for all $\zeta \in \mathrm{Lip}_T(M,\Lambda)$. The estimate \eqref{eq:estAB} follow then by Theorem~\ref{T:TrueGaffney} and Lemma~\ref{L:quant} using that $\mathcal{P}_T (\beta-d\varphi) =0$ and  $\mathcal{P}_T(\alpha- \psi) = - \mathcal{P}_T \psi$.         
        
        (b) We argue as in (a). In this case by Corollary~\ref{C:D1} we get 
        \begin{gather*}
        		\|\omega_\varepsilon\|_{s+2,p(\cdot),M} \\
                \leq C(\mathrm{data},s) (\|\eta_\varepsilon\|_{s,p(\cdot),M} + \|\psi_\varepsilon\|_{s+1,p(\cdot),M} + \|\varphi_\varepsilon\|_{s+2,p(\cdot),M})\\
                \leq C(\mathrm{data},s) (\|\eta\|_{s,p(\cdot),M} + \|\psi\|_{s+1,p(\cdot),M} + \|\varphi\|_{s+2,p(\cdot),M})
        \end{gather*}
        Thus $\omega_\varepsilon$ is a bounded sequence in $W^{s+2,p(\cdot)}(M,\Lambda)$, and by a  similar estimate applied to $\omega_{\varepsilon_1} - \omega_{\varepsilon_2}$ we get the convergence of $\omega_\varepsilon$  in $W^{s+2,p(\cdot)}(M,\Lambda)$. In this case, since $\omega$ is at least $W^{2,p_{-}}(M,\Lambda)$, the direct integration-by-parts in \eqref{eq:DD1} yields \eqref{eq:DD}, and we do not need a recourse to Theorem~\ref{T:addregD} to establish the required relations \eqref{eq:relAB} for $\alpha=\delta \omega$ and $\beta = d\omega$. It remains to use Theorem~\ref{T:p2} and Lemma~\ref{L:quant} for relations \eqref{eq:relAB0} to obtain estimates \eqref{eq:estAB1}

        \end{proof}

 We state a partial case of the previous theorem as a separate result for Dirichlet potentials. Recall that on $C^{2,1}$ manifold for $\eta \in L^{p{-}}(M,\Lambda)$, $\mathcal{P}_T \eta=0$, by $G_D[\eta]$ (the Dirichlet potential of $\eta$) we denote a unique solution $\omega \in W^{2,p_{-}}(M,\Lambda)$ of 
	$$
	\triangle \omega = \eta,\quad t\omega =0,\quad t\delta \omega =0, \quad \mathcal{P}_T \omega=0.
	$$
	On $C^{1,1}$ manifold we understand $G_D[\eta]$ as the unique $W^{1,p_{-}}_T(M,\Lambda)$ solution of 
	$$
	\mathcal{D}(\omega,\zeta) = (\eta,\zeta)\quad \text{for all} \quad \zeta \in \mathrm{Lip}_T(M,\Lambda)
	$$
	satisfying $\mathcal{P}_T \omega =0$. On $C^{2,1}$ manifold these two notions are equivalent.

\begin{theorem}\label{T:D2}
	Let $M$ be $C^{1,1}$ and let $\eta\in L^{p(\cdot)}(M,\Lambda)$ and $\mathcal{P}_T \eta =0$.
	\begin{enumerate}
		\item There exist a unique Dirichlet potential $\omega:= G_D[\eta]\in W^{1,p(\cdot)}(M,\Lambda)$ and  a constant $C= C(\mathrm{data})>0$ such that 
		$$
		\|\omega\|_{1,p(\cdot),M} \leq C(\mathrm{data})\|\eta\|_{p(\cdot),M}.
		$$
		Moreover,  $\alpha = \delta G_D[\eta]$ and $\beta = d G_D[\eta]$ belong to $W_T^{1,p(\cdot)}(M,\Lambda)$, satisfy \begin{equation}\label{eq:HD}
			t\alpha=0, \quad t\beta =0, \quad\eta = d\alpha + \delta \beta,
		\end{equation}
		along with the estimate
		\begin{equation}\label{eq:estAB_GD} 
			\|\alpha\|_{1,p(\cdot),M}, \|\beta\|_{1,p(\cdot),M} \leq C(\mathrm{data}) \|\eta\|_{p(\cdot),M}.
		\end{equation}
		and the relations
		\begin{equation}\label{eq:AB}
			\mathcal{D}(\alpha,\zeta) = (\eta,d \zeta), \quad \mathcal{D}(\beta, \zeta) = (\eta,\delta \zeta)
		\end{equation}
		for all $\zeta \in \mathrm{Lip}_T (M,\Lambda)$.	
	
		\item Furthermore, if $M$ is $C^{s+2,1}$ and $\eta\in W^{s,p(\cdot)}(M,\Lambda)$ with $s\geq 0$ integer,  then $\omega: = G_D[\eta] \in W^{s+2,p(\cdot)}(M,\Lambda)\cap W_T^{1,p(\cdot)}(M,\Lambda)$ and there exists a constant $C= C(\mathrm{data},s)>0$ such that 
		$$
		\|\omega\|_{s+2,p(\cdot),M} \leq C\|\eta\|_{s,p(\cdot),M}. 
		$$ Moreover, $\alpha = \delta G_D[\eta]$ and $\beta = d G_D[\eta]$ belong to $W^{s+1,p(\cdot)}(M,\Lambda)$, 	satisfy \eqref{eq:HD}, \eqref{eq:AB},  and the estimate
		$$
		\|\alpha\|_{s+1,p(\cdot),M}, \|\beta\|_{s+1,p(\cdot),M} \leq C(\mathrm{data},s) \|\eta\|_{s,p(\cdot),M}.
		$$
		\end{enumerate}
\end{theorem}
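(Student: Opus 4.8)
The plan is to obtain this statement as the special case $\varphi\equiv 0$, $\psi\equiv 0$ of Theorem~\ref{T:D3}, supplemented by a short uniqueness argument. With $\psi=0$ the compatibility condition $(\eta,h_T)=[\psi,h_T]$ of Theorem~\ref{T:D3} becomes exactly $\mathcal{P}_T\eta=0$, and the integral identity \eqref{eq:DD1} reduces to $\mathcal{D}(\omega,\zeta)=(\eta,\zeta)$ for all $\zeta\in\mathrm{Lip}_T(M,\Lambda)$, i.e. the defining relation \eqref{eq:Dirichlet} of the Dirichlet potential. In the $C^{1,1}$ setting, Theorem~\ref{T:D3}(a) then yields a solution $\omega\in W^{1,p(\cdot)}_T(M,\Lambda)$ with $\|\omega\|_{1,p(\cdot),M}\le C(\mathrm{data})\|\eta\|_{p(\cdot),M}$; since $d\varphi=0\in W^{1,p(\cdot)}(M,\Lambda)$ trivially, the same theorem also gives that $\alpha=\delta\omega$ and $\beta=d\omega$ lie in $W^{1,p(\cdot)}(M,\Lambda)$, and specializing \eqref{eq:relAB}–\eqref{eq:estAB} to $\psi=0$, $d\varphi=0$ produces precisely \eqref{eq:HD}, \eqref{eq:AB}, and \eqref{eq:estAB_GD}. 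The vanishing tangential traces $t\alpha=0$, $t\beta=0$ from \eqref{eq:relAB} together with $\alpha,\beta\in W^{1,p(\cdot)}(M,\Lambda)$ upgrade to $\alpha,\beta\in W^{1,p(\cdot)}_T(M,\Lambda)$ via the identification $W^{1,p(\cdot)}_T=W^{1,1}_T\cap W^{1,p(\cdot)}$ in Lemma~\ref{L:approx1}(e).

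To make $\omega$ genuinely the Dirichlet potential $G_D[\eta]$ we normalize by $\mathcal{P}_T\omega=0$: since $\mathcal{P}_T\omega\in\mathcal{H}_T(M)$ satisfies $\mathcal{D}(\mathcal{P}_T\omega,\zeta)=0$ for all $\zeta\in\mathrm{Lip}_T(M,\Lambda)$, the form $\omega-\mathcal{P}_T\omega$ is still a solution with zero $\mathcal{P}_T$-component, and subtracting a (smooth, by the regularity of harmonic fields) tangential harmonic field affects neither the estimate on $\|\omega\|_{1,p(\cdot),M}$ up to constants nor the identities and estimates for $\alpha=\delta\omega$, $\beta=d\omega$, which depend only on $d\omega$ and $\delta\omega$. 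For uniqueness, if $\omega_1,\omega_2$ are two such potentials then $w:=\omega_1-\omega_2\in W^{1,p_-}_T(M,\Lambda)$ satisfies $\mathcal{D}(w,\zeta)=0$ for all $\zeta\in\mathrm{Lip}_T(M,\Lambda)$; by the $C^{1,1}$ a priori estimate (Theorem~\ref{T:TrueGaffney} with $\eta=\varphi=\psi=0$) we get $w\in W^{1,2}_T(M,\Lambda)$, and approximating $w$ by Lipschitz forms with vanishing tangential part (Lemma~\ref{L:approx1}(a),(g)) lets us test the relation against $w$ itself to obtain $\mathcal{D}(w,w)=0$, hence $dw=\delta w=0$, so $w\in\mathcal{H}_T(M)$ and $\mathcal{P}_Tw=0$ forces $w=0$.

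For part (b), assuming $M\in C^{s+2,1}$ and $\eta\in W^{s,p(\cdot)}(M,\Lambda)$ with $\mathcal{P}_T\eta=0$, we apply Theorem~\ref{T:D3}(b) with $\varphi=\psi=0$. This gives $\omega\in W^{s+2,p(\cdot)}(M,\Lambda)$ solving $\triangle\omega=\eta$, $t\omega=0$, $t\delta\omega=0$ — which on a $C^{s+2,1}$ (in particular $C^{2,1}$) manifold coincides, after normalization by $\mathcal{P}_T\omega=0$, with the variational Dirichlet potential by the equivalence recorded in the Remark following the definition of $G_D$ — together with $\|\omega\|_{s+2,p(\cdot),M}\le C(\mathrm{data},s)\|\eta\|_{s,p(\cdot),M}$, the relations \eqref{eq:relAB} which reduce to \eqref{eq:HD}, \eqref{eq:AB} for $\alpha=\delta\omega$, $\beta=d\omega$, and the estimate \eqref{eq:estAB1} specialized to $\psi=0$, $d\varphi=0$, i.e. $\|\alpha\|_{s+1,p(\cdot),M},\|\beta\|_{s+1,p(\cdot),M}\le C(\mathrm{data},s)\|\eta\|_{s,p(\cdot),M}$. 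Uniqueness is inherited from part (a), since a higher-regularity solution is in particular a $W^{1,p_-}_T$ solution of the variational relation.

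Since the statement is essentially a transcription of Theorem~\ref{T:D3}, there is no serious obstacle; the only point needing an independent (though routine) argument is uniqueness, and the one thing to verify carefully is that the normalization $\mathcal{P}_T\omega=0$ is consistent with all asserted bounds and boundary identities — which it is, precisely because those involve only $d\omega$ and $\delta\omega$ and the subtracted correction is a smooth tangential harmonic field.
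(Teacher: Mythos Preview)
Your proof is correct and takes essentially the same approach as the paper, which explicitly presents Theorem~\ref{T:D2} as ``a partial case of the previous theorem'' (Theorem~\ref{T:D3}) without a separate proof. The uniqueness argument you supply is the one the paper records just before the definition of the Dirichlet potential, and the normalization $\mathcal{P}_T\omega=0$ is in fact already built into the construction in the proof of Theorem~\ref{T:D3}(a), where the approximating minimizers $\omega'_\varepsilon$ are taken in $(\mathcal{H}_T(M))^\perp$.
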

	\begin{corollary}\label{C:commT}
		If $\eta \in W^{\delta,p_{-}}(M,\Lambda)$ then $\delta G_D[\eta-\mathcal{P}_T\eta] = G_D[\delta \eta]$ and if $\eta \in W_T^{d,p_{-}}(M,\Lambda)$ then $d G_D[\eta-\mathcal{P}_T \eta] = G_D[d\eta]$.
	\end{corollary}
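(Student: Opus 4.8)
\textbf{Proof proposal for Corollary~\ref{C:commT}.}

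The plan is to establish the two identities by showing that the right-hand side satisfies the defining variational relation \eqref{eq:Dirichlet} for the left-hand side together with the normalization $\mathcal{P}_T(\cdot) = 0$, and then invoke uniqueness of the Dirichlet potential. I will prove the first identity $\delta G_D[\eta - \mathcal{P}_T\eta] = G_D[\delta \eta]$ in detail; the second follows by the same argument (or by Hodge duality). First I would note that by definition $\delta\eta \in L^{p_{-}}(M,\Lambda)$ satisfies $(\delta\eta, h_T) = 0$ for all $h_T \in \mathcal{H}_T(M)$: indeed, for $h_T \in \mathcal{H}_T(M)$ we have $\delta h_T = 0$ and $t h_T = 0$, so by Corollary~\ref{C:approx2}(ii) (with $\omega = \eta \in W^{\delta,p_{-}}(M,\Lambda)$ and test form $h_T \in W^{d,p_{-}'}_T(M,\Lambda)$ after noting $h_T$ is smooth enough) we get $(\delta\eta, h_T) = (\eta, dh_T) = 0$. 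Hence $G_D[\delta\eta]$ is well-defined.

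Next, set $\omega := G_D[\eta - \mathcal{P}_T\eta] \in W^{1,p(\cdot)}_T(M,\Lambda)$, which by Theorem~\ref{T:D2}(i) has $\alpha := \delta\omega \in W^{1,p(\cdot)}_T(M,\Lambda)$ satisfying
\begin{equation*}
\mathcal{D}(\alpha, \zeta) = (\eta - \mathcal{P}_T\eta, d\zeta) \quad \text{for all } \zeta \in \mathrm{Lip}_T(M,\Lambda).
\end{equation*}
Since $\mathcal{P}_T\eta \in \mathcal{H}_T(M)$ is a harmonic field with vanishing tangential part, $\delta(\mathcal{P}_T\eta) = 0$ and $t(\mathcal{P}_T\eta) = 0$, so the integration-by-parts formula \eqref{eq:by_parts} (valid here since $\mathcal{P}_T\eta$ is smooth and $\zeta \in \mathrm{Lip}_T$) gives $(\mathcal{P}_T\eta, d\zeta) = (\delta(\mathcal{P}_T\eta), \zeta) + [\zeta, \mathcal{P}_T\eta] = 0$, using $t(\mathcal{P}_T\eta) = 0$ to kill the boundary term. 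Likewise $(\eta, d\zeta) = (\delta\eta, \zeta) + [\zeta,\eta]$; the boundary term vanishes because $\zeta \in \mathrm{Lip}_T(M,\Lambda)$ has $t\zeta = 0$ — more carefully, one uses that $\eta \in W^{\delta,p_{-}}(M,\Lambda)$ and passes through the weak codifferential via Corollary~\ref{C:approx2}(ii) so that $(\eta, d\zeta) = (\delta\eta, \zeta)$ for all $\zeta \in W^{d,p_{-}'}_T(M,\Lambda) \supset \mathrm{Lip}_T(M,\Lambda)$. Combining, $\mathcal{D}(\alpha, \zeta) = (\delta\eta, \zeta)$ for all $\zeta \in \mathrm{Lip}_T(M,\Lambda)$, which is exactly the variational relation \eqref{eq:Dirichlet} defining the Dirichlet potential of $\delta\eta$.

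It remains to check the normalization $\mathcal{P}_T \alpha = 0$. For $h_T \in \mathcal{H}_T(M)$ we have $\alpha = \delta\omega$ with $\omega \in W^{1,p(\cdot)}_T(M,\Lambda)$, and since $h_T$ is closed ($dh_T = 0$) and $t\omega = 0$, Lemma~\ref{L:ort} (in the form $(d h_T, \delta\omega) = 0$, equivalently $(\delta\omega, dh_T)=0$, after swapping roles) together with $dh_T=0$ forces $(\alpha, h_T) = (\delta\omega, h_T) = (\omega, dh_T) + [\text{bdry}] = 0$ — here the boundary term $[\omega, h_T]$ vanishes because $t\omega = 0$ (this is precisely formula \eqref{eq:by_parts} with $f = \omega$, $v = h_T$, using $\delta h_T = 0$ and $tf = 0$). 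Hence $\mathcal{P}_T\alpha = 0$, and by the uniqueness statement in the definition of the Dirichlet potential (guaranteed by the triviality of $\mathcal{H}_T$-orthogonal solutions of the homogeneous relation, cf. the discussion preceding Definition ``Dirichlet potential'') we conclude $\alpha = G_D[\delta\eta]$, i.e. $\delta G_D[\eta - \mathcal{P}_T\eta] = G_D[\delta\eta]$. The second identity is obtained verbatim with $d$ in place of $\delta$, $\beta = d\omega$ in place of $\alpha$, using the hypothesis $\eta \in W^{d,p_{-}}_T(M,\Lambda)$ (so that $t\eta = 0$, which supplies the needed vanishing of boundary terms when integrating by parts against $\zeta \in \mathrm{Lip}_T$) and the relation $\mathcal{D}(\beta,\zeta) = (\eta, \delta\zeta)$ from Theorem~\ref{T:D2}; alternatively, apply the Hodge star and the already-proven first identity on the dual Neumann side.

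The main obstacle I anticipate is the careful justification of the integration-by-parts / weak-codifferential manipulations under the low regularity assumptions: $\eta$ is only in $L^{p_{-}}$ with $\delta\eta \in L^{p_{-}}$ (resp. $d\eta \in L^{p_{-}}$), the potential $\alpha$ is a priori only in $W^{1,p(\cdot)}$, and on a $C^{1,1}$ manifold $d^2, \delta^2$ vanish only in the weak sense established after Corollary~\ref{C:approx2}. One must consistently route every pairing through the weak (co)differential characterizations of Corollary~\ref{C:approx2} and Lemma~\ref{L:ort} rather than pointwise identities, and check that $\mathrm{Lip}_T(M,\Lambda)$ is a legitimate class of test forms in each invocation. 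Everything else is bookkeeping around the uniqueness clause of the Dirichlet potential.
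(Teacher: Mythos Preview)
Your proposal is correct and follows essentially the same route as the paper: invoke the relation $\mathcal{D}(\alpha,\zeta)=(\eta-\mathcal{P}_T\eta,d\zeta)$ from Theorem~\ref{T:D2}, integrate by parts to recognize it as the defining relation \eqref{eq:Dirichlet} for $G_D[\delta\eta]$, verify $t\alpha=0$ and $\mathcal{P}_T\alpha=0$, and conclude by uniqueness of the Dirichlet potential. The paper's proof is a terse two-sentence version of exactly this; your write-up simply fills in the bookkeeping (well-definedness of $G_D[\delta\eta]$, the weak-codifferential route through Corollary~\ref{C:approx2}) that the paper leaves implicit. One cosmetic slip: in the line ``using $t(\mathcal{P}_T\eta)=0$ to kill the boundary term'' the term $[\zeta,\mathcal{P}_T\eta]$ actually vanishes because $t\zeta=0$, not $t(\mathcal{P}_T\eta)=0$ (and similarly your citation of \eqref{eq:by_parts} with $f=\omega$, $v=h_T$ should be $f=h_T$, $v=\omega$); since both tangential parts vanish here anyway, nothing is affected.
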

	\begin{proof}
		If $\delta \eta$ is defined and at least in $L^{p_{-}}(M,\Lambda)$ (clearly this condition is nontrivial only if $s=0$) then again using the integration-by-parts formula we see that the relation in \eqref{eq:AB} for $\alpha = \delta G_D[\eta-\mathcal{P}_T\eta]$ coincides with the relation defining $G_D[\delta \eta]$. Since $t\alpha =0$ and $(\alpha,h_T) =0$ for all $h_T\in \mathcal{H}_T(M)$, we immediately arrive at $\alpha =G_D[\delta \eta]$. The same reasoning for $\beta = d G_D[\eta-\mathcal{P}_T\eta]$ yields $\beta = G_D[d\eta]$ under the additional assumption $t\eta =0$.
	\end{proof}

	\section{The Neumann problem for the Hodge Laplacian}

    In this Section we consider the boundary value problem 
    \begin{equation}\label{eq:NN}
    \triangle \omega = \eta \quad \text{in}\quad M, \quad n\omega = n\varphi, \quad nd\omega = n \psi\quad \text{on}\quad bM.
    \end{equation}
    If $M$ is at least $C^{s+2,1}$, $s\in \{0\}\cup \mathbb{N}$ then we solve this problem in $W^{s+2,p(\cdot)}(M,\Lambda)$. If $M$ is only $C^{1,1}$ then a weak solution to \eqref{eq:NN} is understood as $\omega \in \varphi + W^{1,p(\cdot)}_N(M,\Lambda)$ satisfying
    \begin{equation}\label{eq:NN1}
    \mathcal{D}(\omega,\zeta) - (\eta - \delta \psi,\zeta)  -(\psi, d\zeta)=0
    \end{equation}
    for all $\zeta \in \mathrm{Lip}_N(M,\Lambda)$.

	\begin{lemma}\label{T:N1}Let $\omega\in \varphi + W^{1,p_{-}}_N(M,\Lambda)$ be a solution of \eqref{eq:NN1}.
	\begin{enumerate}
		\item Let $M$ be of the class $C^{1,1}$, $\eta \in L^{p(\cdot)}(M,\Lambda)$, $\varphi,\psi \in W^{1,p(\cdot)}(M,\Lambda)$, then $\omega \in W^{1,p(\cdot)}(M,\Lambda)$ and there exists a constant $C = C(\mathrm{data})>0$ such that  
		$$
		\|\omega\|_{1,p(\cdot),M} \leq C(\mathrm{data}) (\|\eta\|_{p(\cdot),M} + \|\psi\|_{1,p(\cdot),M} + \|\varphi\|_{1,p(\cdot),M}  +\|\omega\|_{1,M}). 
		$$
		\item Let $M$ be of the class $C^{s+2,1}$. Let $\eta \in W^{s,p(\cdot)}(M,\Lambda)$, $\psi \in W^{s+1,p(\cdot)}(M,\Lambda)$, and $\varphi \in W^{s+2,p(\cdot)}(M,\Lambda)$ be given.  Then $\omega \in  W^{s+2,p(\cdot)}(M,\Lambda)$ satisfies \eqref{eq:NN} and we have the estimate 
		$$
		\|\omega\|_{s+2,p(\cdot),M} \leq C(\mathrm{data},s) (\|\eta\|_{s,p(\cdot),M} + \|\psi\|_{s+1,p(\cdot),M} + \|\varphi\|_{s+2,p(\cdot),M}  +\|\omega\|_{1,M}). 
		$$
	\end{enumerate}
	\end{lemma}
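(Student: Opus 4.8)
The plan is to follow the proof of Lemma~\ref{T:D1} almost verbatim, with the tangential boundary condition replaced by the normal one and with the a priori estimates of Theorems~\ref{T:TrueGaffney} and~\ref{T:p2} invoked in their $\ast = N$ forms (those theorems are stated for all $\ast \in \{0,T,N\}$ with \emph{quantitative} constants, which is exactly what makes the constants in the present lemma quantitative).

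First I would set $\omega' = \omega - \varphi$; by hypothesis $\omega \in \varphi + W^{1,p_{-}}_N(M,\Lambda)$, so $\omega' \in W^{1,p_{-}}_N(M,\Lambda)$. Substituting $\omega = \omega' + \varphi$ into \eqref{eq:NN1} and using bilinearity of $\mathcal{D}$ together with $\mathcal{D}(\varphi,\zeta) = (d\varphi,d\zeta) + (\delta\varphi,\delta\zeta)$ yields
\[
\mathcal{D}(\omega',\zeta) = (\eta - \delta\psi,\zeta) + (\psi - d\varphi,d\zeta) + (-\delta\varphi,\delta\zeta) \qquad \text{for all } \zeta \in \mathrm{Lip}_N(M,\Lambda),
\]
which is precisely the variational relation \eqref{eq1} for $\ast = N$ with the data triple $(\eta,\varphi,\psi)$ there replaced by $(\eta - \delta\psi,\ \psi - d\varphi,\ -\delta\varphi)$.

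For part (a): under the $C^{1,1}$ hypotheses each of $\eta - \delta\psi$, $\psi - d\varphi$, $-\delta\varphi$ lies in $L^{p(\cdot)}(M,\Lambda)$ (as $\eta \in L^{p(\cdot)}$ and $d,\delta$ map $W^{1,p(\cdot)}$ boundedly into $L^{p(\cdot)}$), so Theorem~\ref{T:TrueGaffney} with $\ast = N$ gives $\omega' \in W^{1,p(\cdot)}(M,\Lambda)$ and a quantitative bound for $\|\omega'\|_{1,p(\cdot),M}$ by $\|\eta - \delta\psi\|_{p(\cdot),M} + \|\psi - d\varphi\|_{p(\cdot),M} + \|\delta\varphi\|_{p(\cdot),M} + \|\omega'\|_{1,M}$. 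Then $\omega = \omega' + \varphi \in W^{1,p(\cdot)}(M,\Lambda)$, and bounding $\|\delta\psi\|_{p(\cdot),M} \le C\|\psi\|_{1,p(\cdot),M}$, $\|d\varphi\|_{p(\cdot),M}, \|\delta\varphi\|_{p(\cdot),M} \le C\|\varphi\|_{1,p(\cdot),M}$ and $\|\omega'\|_{1,M} \le \|\omega\|_{1,M} + C\|\varphi\|_{1,p(\cdot),M}$, then absorbing the $\varphi$-terms, gives the estimate of (a).

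For part (b): with $M$ of class $C^{s+2,1}$ and $\eta \in W^{s,p(\cdot)}$, $\psi \in W^{s+1,p(\cdot)}$, $\varphi \in W^{s+2,p(\cdot)}$, the new data satisfy $\eta - \delta\psi \in W^{s,p(\cdot)}$, $\psi - d\varphi \in W^{s+1,p(\cdot)}$, $\delta\varphi \in W^{s+1,p(\cdot)}$, so Theorem~\ref{T:p2} with $\ast = N$ gives $\omega' \in W^{s+2,p(\cdot)}(M,\Lambda)$ with the corresponding higher-order estimate, hence $\omega \in W^{s+2,p(\cdot)}(M,\Lambda)$ with the asserted bound after the same bookkeeping. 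Finally, since $\omega \in W^{s+2,p(\cdot)} \subset W^{2,p_{-}}(M,\Lambda)$, the Green formula \eqref{eq:Green1} permits integration by parts in \eqref{eq:NN1}: testing against $\zeta \in \mathrm{Lip}_0(M,\Lambda)$ gives $\triangle\omega = \eta$ a.e.\ in $M$, and then for general $\zeta \in \mathrm{Lip}_N(M,\Lambda)$ the boundary term $[\delta\omega,\zeta]$ vanishes because $n\zeta = 0$, leaving $[\zeta, d\omega - \psi] = 0$ for all such $\zeta$, i.e.\ $nd\omega = n\psi$ on $bM$; together with $n\omega = n\varphi$ (built into $\omega - \varphi \in W^{1,p_{-}}_N$) this is \eqref{eq:NN}. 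The proof is thus a transcription of Lemma~\ref{T:D1}; the only point needing care is this last boundary-term bookkeeping, where it is essential that $[\delta\omega,\zeta]$ drops out for test forms with vanishing normal part while $[\zeta,d\omega]$ survives and encodes $nd\omega = n\psi$ — no genuinely new obstacle arises, the quantitative a priori estimates for $\ast = N$ being supplied by Theorems~\ref{T:TrueGaffney} and~\ref{T:p2}.
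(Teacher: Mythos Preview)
Your proof is correct and follows essentially the same route as the paper, which does not give an explicit proof for Lemma~\ref{T:N1} but states that ``the following statements are proved in the same way as for the Dirichlet boundary conditions or can be obtained by duality''; you have carried out the former option, mirroring the proof of Lemma~\ref{T:D1} with $\ast=N$ and the appropriate sign changes coming from~\eqref{eq:NN1}. Your boundary-term bookkeeping in part~(b) is also correct and slightly more detailed than what the paper records for the Dirichlet analogue.
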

	
	For $\omega \in W_N^{1,2}(M,\Lambda)$ such that $\mathcal{P}_N \omega =0$ there exists a constant $C_N=C_N(M)$ such that
	$$
	\|\omega\|_{2,M}^2\leq C_N \mathcal{D}(\omega,\omega).
	$$
	Recall that if $M$ is at least $C^{2,1}$ for $\eta\in L^{p_{-}}(M,\Lambda)$, $\mathcal{P}_N\eta=0$, by $G_N[\eta]$ (the Neumann potential of $\eta$) we denote a unique solution $\omega \in W^{2,p_{-}}(M,\Lambda)$ of
	$$
	\triangle \omega = \eta,\quad n\omega =0,\quad n d\omega =0, \quad \mathcal{P}_N \omega=0.
	$$
	On $C^{1,1}$ manifold we understand $G_N[\eta]$ as the unique $W^{1,p_{-}}_N(M,\Lambda)$ solution of 
	$$
	\mathcal{D}(\omega,\zeta) = (\eta,\zeta)\quad \text{for all} \quad \zeta \in \mathrm{Lip}_N(M,\Lambda),
	$$
	satisfying $\mathcal{P}_N \omega =0$. On $C^{2,1}$ manifold these two notions coincide.
	
	The following statements are proved in the same way as for the Dirichlet boundary conditions or can be obtained by duality.
	\begin{lemma}\label{L:quant1}
		Let $\eta,\varphi,\psi \in L^{p_{-}}(M,\Lambda)$, $\mathcal{P}_N\eta =0$. Let $\omega \in W_N^{1,p_{-}}(M,\Lambda)$ satisfy \eqref{eq1} for all $\zeta \in \mathrm{Lip}_N(M,\Lambda)$. Then there exists a constant $C = C(p_{-}, M)$ such that
		\begin{equation}\label{eq:quant1}
			\|\omega - \mathcal{P}_N \omega\|_{p_{-},M} \leq C (\|\eta\|_{p_{-},M} + \|\varphi\|_{p_{-},M}  + \|\psi\|_{p_{-},M}).
		\end{equation}
	\end{lemma}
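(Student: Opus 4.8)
The plan is to mirror exactly the proof of the Dirichlet counterpart, Lemma~\ref{L:quant}, replacing the tangential structures by their normal analogues and invoking the corresponding Neumann objects. So I would fix $\mu \in L^{p_{-}'}(M,\Lambda)$, set $\xi = G_N[\mu - \mathcal{P}_N\mu] \in W^{1,p_{-}'}_N(M,\Lambda)$, and test. Assuming without loss that $p_{-}\leq 2$, the chain of identities
\begin{equation*}
(\omega-\mathcal{P}_N\omega,\mu) = (\mu-\mathcal{P}_N\mu,\omega-\mathcal{P}_N\omega) = \mathcal{D}(\xi,\omega-\mathcal{P}_N\omega) = \mathcal{D}(\omega-\mathcal{P}_N\omega,\xi) = (\eta,\xi)+(\varphi,d\xi)+(\psi,\delta\xi)
\end{equation*}
goes through verbatim: the first equality uses self-adjointness of the orthogonal projector $\mathcal{P}_N$ on $L^2$, the second is the defining variational relation \eqref{eq:Neumann} for $G_N$ together with the fact that $\omega - \mathcal{P}_N\omega$ is an admissible test form in $\mathrm{Lip}_N(M,\Lambda)$ after density (Lemma~\ref{L:approx1}), the third is symmetry of $\mathcal{D}$, and the fourth is \eqref{eq1} applied with test form $\xi$, again after approximating $\xi$ by $\mathrm{Lip}_N$ forms. (One also uses $\mathcal{D}(h,\xi)=0$ for $h\in\mathcal{H}_N(M)$, so that $\mathcal{P}_N\omega$ may be dropped on the right.)

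Next I would estimate $\xi$ in $W^{1,p_{-}'}$. First, the $L^2$-control: from the Neumann spectral-gap inequality $\|\xi\|_{2,M}^2 \leq C_N \mathcal{D}(\xi,\xi) = C_N(\mu-\mathcal{P}_N\mu,\xi) \leq C_N\|\mu\|_{2,M}\|\xi\|_{2,M}$ (valid for $\xi \in W^{1,2}_N(M,\Lambda)$ with $\mathcal{P}_N\xi=0$, which is the displayed inequality just above the statement), we get $\|\xi\|_{2,M} \leq C_N\|\mu\|_{2,M}$. Then the quantitative elliptic estimate of Lemma~\ref{T:N1}(a) applied to $\xi$ — which solves \eqref{eq:NN1}-type relation with right-hand data $\mu-\mathcal{P}_N\mu$ and zero $\varphi,\psi$ — together with the embedding $L^2 \hookrightarrow L^{p_{-}'}$ on the compact $M$ (since $p_{-}\leq 2$ forces $p_{-}'\geq 2$, so actually I need $\|\xi\|_{2,M}$ to dominate a lower norm; here one uses $L^{p_{-}'}\hookrightarrow L^{2}$ is false in that direction, so instead one absorbs $\|\xi\|_{1,M} \leq C\|\xi\|_{2,M} \leq C\|\mu\|_{2,M} \leq C\|\mu\|_{p_{-}',M}$ as the lower-order term) yields $\|\xi\|_{1,p_{-}',M} \leq C\|\mu\|_{p_{-}',M}$, with $C$ depending on $\mathrm{data}$ and $C_N$. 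Substituting into the identity above and using Hölder's inequality for variable exponents (or just constant exponents $p_{-},p_{-}'$),
\begin{equation*}
|(\omega-\mathcal{P}_N\omega,\mu)| \leq \big(\|\eta\|_{p_{-},M}+\|\varphi\|_{p_{-},M}+\|\psi\|_{p_{-},M}\big)\|\xi\|_{1,p_{-}',M} \leq C\big(\|\eta\|_{p_{-},M}+\|\varphi\|_{p_{-},M}+\|\psi\|_{p_{-},M}\big)\|\mu\|_{p_{-}',M},
\end{equation*}
and taking the supremum over $\mu$ with $\|\mu\|_{p_{-}',M}\leq 1$ gives \eqref{eq:quant1} by duality of $L^{p_{-}}$ and $L^{p_{-}'}$.

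The main obstacle is not conceptual but bookkeeping: one must be careful that $\xi = G_N[\mu-\mathcal{P}_N\mu]$ is genuinely in $W^{1,p_{-}'}_N(M,\Lambda)$ and may legitimately be used as a test form in \eqref{eq1} (this is handled by the density Lemma~\ref{L:approx1} and the fact that $\mathrm{Lip}_N$ forms are dense in $W^{1,q}_N$ for the relevant $q$), and, symmetrically, that $\omega - \mathcal{P}_N\omega$ is a legitimate test form in \eqref{eq:Neumann}. The only subtle point is the need for the quantitative elliptic estimate of Lemma~\ref{T:N1}(a) rather than the non-quantitative one coming from general elliptic theory — but this is exactly what Lemma~\ref{T:N1} provides. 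Everything else is the verbatim Neumann transcription of Lemma~\ref{L:quant}, so I would simply write ``The proof is identical to that of Lemma~\ref{L:quant}, with $\mathcal{H}_T$, $\mathcal{P}_T$, $G_D$, $\mathrm{Lip}_T$, $C_T$ replaced by $\mathcal{H}_N$, $\mathcal{P}_N$, $G_N$, $\mathrm{Lip}_N$, $C_N$, and Lemma~\ref{T:D1} replaced by Lemma~\ref{T:N1}.'' as the actual printed proof.
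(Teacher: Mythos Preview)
Your proposal is correct and matches the paper's approach exactly: the paper simply states that Lemma~\ref{L:quant1} (along with the surrounding Neumann statements) is ``proved in the same way as for the Dirichlet boundary conditions or can be obtained by duality,'' which is precisely the transcription you carry out. Your suggested one-line printed proof is essentially what the paper does.
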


	\begin{corollary}\label{C:N1} Let $\omega\in \varphi + W^{1,p_{-}}_N(M,\Lambda)$ be a solution of \eqref{eq:NN1} and $\mathcal{P}_T (\omega-\varphi)=0$.
	\begin{enumerate}
		\item Let $M$ be of the class $C^{1,1}$, $\eta \in L^{p(\cdot)}(M,\Lambda)$, $\varphi,\psi \in W^{1,p(\cdot)}(M,\Lambda)$, then $\omega \in W^{1,p(\cdot)}(M,\Lambda)$ and there exists a constant $C = C(\mathrm{data})>0$ such that  
		$$
		\|\omega\|_{1,p(\cdot),M} \leq C(\mathrm{data}) (\|\eta\|_{p(\cdot),M} + \|\psi\|_{1,p(\cdot),M} + \|\varphi\|_{1,p(\cdot),M}). 
		$$
		\item Let $M$ be of the class $C^{s+2,1}$. Let $\eta \in W^{s,p(\cdot)}(M,\Lambda)$, $\psi \in W^{s+1,p(\cdot)}(M,\Lambda)$, and $\varphi \in W^{s+2,p(\cdot)}(M,\Lambda)$ be given.  Then $\omega \in  W^{s+2,p(\cdot)}(M,\Lambda)$ satisfies \eqref{eq:NN} and we have the estimate 
		$$
		\|\omega\|_{s+2,p(\cdot),M} \leq C(\mathrm{data},s) (\|\eta\|_{s,p(\cdot),M} + \|\psi\|_{s+1,p(\cdot),M} + \|\varphi\|_{s+2,p(\cdot),M}). 
		$$
	\end{enumerate}
	\end{corollary}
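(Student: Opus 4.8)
The statement is the Neumann analogue of Corollary~\ref{C:D1}, and the plan is to mirror the proof of Corollary~\ref{C:D1} line by line, replacing every Dirichlet object by its Neumann counterpart. First I would take $\omega \in \varphi + W^{1,p_{-}}_N(M,\Lambda)$ a solution of \eqref{eq:NN1} with $\mathcal{P}_N(\omega - \varphi)=0$ (I presume the hypothesis $\mathcal{P}_T(\omega-\varphi)=0$ in the statement is a typo for $\mathcal{P}_N$, consistent with the Neumann setting), and set $\omega' = \omega - \varphi \in W^{1,p_{-}}_N(M,\Lambda)$. A direct computation using \eqref{eq:NN1} and the integration-by-parts formula \eqref{eq:by_parts} shows that $\omega'$ satisfies
\begin{equation*}
\mathcal{D}(\omega',\zeta) = (\eta-\delta\psi,\zeta) + (\psi - d\varphi, d\zeta) + (-\delta\varphi,\delta\zeta)
\end{equation*}
for all $\zeta\in\mathrm{Lip}_N(M,\Lambda)$; this is exactly \eqref{eq1} for $\omega'$ with the data triple $(\eta-\delta\psi,\, \psi-d\varphi,\, -\delta\varphi)$ in place of $(\eta,\varphi,\psi)$ (after noting that $\mathcal{P}_N(\eta-\delta\psi)=\mathcal{P}_N\eta - \mathcal{P}_N\delta\psi$ and, since $\delta\psi$ is a coexact form, $\mathcal{P}_N\delta\psi=0$ — alternatively one absorbs the projection harmlessly because $\mathcal{P}_N\omega'=0$ forces the harmonic component to vanish).

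Next, since $\mathcal{P}_N\omega'=0$, Lemma~\ref{L:quant1} applies and gives
\begin{equation*}
\|\omega'\|_{p_{-},M} \leq C(p_{-},M)\,(\|\eta-\delta\psi\|_{p_{-},M} + \|\psi-d\varphi\|_{p_{-},M} + \|\delta\varphi\|_{p_{-},M}) \leq C(\mathrm{data})\,(\|\eta\|_{p(\cdot),M} + \|\psi\|_{1,p(\cdot),M} + \|\varphi\|_{1,p(\cdot),M}),
\end{equation*}
where the last step uses the continuous embedding $L^{p(\cdot)}(M,\Lambda)\hookrightarrow L^{p_{-}}(M,\Lambda)$ on the compact manifold $M$ and the boundedness of $d,\delta$ from $W^{1,p(\cdot)}$ to $L^{p(\cdot)}$. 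This controls the $\|\omega\|_{1,M}$ term that appears on the right-hand side of Lemma~\ref{T:N1}: indeed $\|\omega\|_{1,M}\leq \|\omega'\|_{1,M} + \|\varphi\|_{1,M} \leq C\|\omega'\|_{p_{-},M} + C\|\varphi\|_{1,p(\cdot),M}$, which is now bounded by the right-hand side of the desired estimates.

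Finally, for part (a) I would invoke Theorem~\ref{T:TrueGaffney} (the $C^{1,1}$ a priori estimate stated in the excerpt, applied in the $*=N$ case) to conclude $\omega\in W^{1,p(\cdot)}(M,\Lambda)$ with
\begin{equation*}
\|\omega\|_{1,p(\cdot),M} \leq C(\mathrm{data})\,(\|\eta\|_{p(\cdot),M} + \|\varphi\|_{p(\cdot),M} + \|\psi\|_{p(\cdot),M} + \|\omega\|_{1,M}),
\end{equation*}
and then substitute the bound for $\|\omega\|_{1,M}$ just obtained; for part (b) I would instead use Theorem~\ref{T:p2} on a $C^{s+2,1}$ manifold (again in the $*=N$ case) to get the $W^{s+2,p(\cdot)}$ estimate with the $\|\omega\|_{1,M}$ tail, and absorb that tail the same way, and note that since $\omega$ is then at least $W^{2,p_{-}}(M,\Lambda)$ the integration by parts in \eqref{eq:NN1} yields the classical form of the boundary value problem \eqref{eq:NN}. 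The only genuinely delicate point is the bookkeeping with the harmonic projections — making sure that replacing $\eta$ by $\eta-\delta\psi$ (and dropping or keeping $\mathcal{P}_N$) is legitimate under the hypothesis $\mathcal{P}_N(\omega-\varphi)=0$; everything else is a routine transcription of the Dirichlet argument, using that the Neumann-side analogues (Lemma~\ref{T:N1}, Lemma~\ref{L:quant1}, Theorems~\ref{T:p2} and \ref{T:TrueGaffney}) have already been recorded.
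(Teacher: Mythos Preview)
Your proposal is correct and follows essentially the same route as the paper, which does not give a separate proof for Corollary~\ref{C:N1} but simply states that it ``is proved in the same way as for the Dirichlet boundary conditions or can be obtained by duality.'' Your observation that $\mathcal{P}_T(\omega-\varphi)=0$ should read $\mathcal{P}_N(\omega-\varphi)=0$ is correct; the argument then mirrors Corollary~\ref{C:D1} exactly, with Lemma~\ref{L:quant1} in place of Lemma~\ref{L:quant} and the $*=N$ cases of Theorems~\ref{T:p2} and~\ref{T:TrueGaffney} applied to $\omega'=\omega-\varphi$ (your worry about $\mathcal{P}_N(\eta-\delta\psi)$ is unnecessary: the conclusion of Lemma~\ref{L:quant1} concerns $\omega'-\mathcal{P}_N\omega'$, which equals $\omega'$ by hypothesis, regardless of whether the zeroth-order datum is projected).
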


	\begin{theorem}\label{T:N3}
Consider the boundary value problem \eqref{eq:NN}. 
		\begin{enumerate}
			\item Let $M$ be $C^{1,1}$ and let  $\eta \in L^{p(\cdot)}(M,\Lambda)$, and $\varphi,\psi \in W^{1,p(\cdot)}(M,\Lambda)$ satisfy $(\eta,h_N) = -[\psi,h_N]$ for all $h_N \in \mathcal{H}_N(M).$ Then there exists a solution $\omega \in \varphi+W^{1,p(\cdot)}_N(M,\Lambda)$ of \eqref{eq:NN} understood in the sense of  \eqref{eq:NN1} such that for some constant $C = C(\mathrm{data})>0$ we have 
			$$
			\|\omega\|_{1,p(\cdot),M} \leq C ( \|\eta-d\psi\|_{p(\cdot),M} + \|\varphi\|_{1,p(\cdot),M} + \|\psi\|_{p(\cdot),M}). 
			$$
             Moreover, if $\delta \varphi \in W^{1,p(\cdot)}(M,\Lambda)$ then $\alpha = \delta \omega$ and $\beta = d \omega$ belong to $W^{1,p(\cdot)}(M,\Lambda)$, satisfy 
            \begin{equation}\label{eq:relAB_N}
            \begin{gathered}
            d\alpha + \delta \beta = \eta, \quad n\alpha= n\delta\varphi, \quad n\beta = n\psi,\\
            \mathcal{D}(\alpha,\zeta) = (\eta-\delta \psi,d\zeta), \quad \mathcal{D}(\beta,\zeta) = (\eta,\delta \zeta)
            \end{gathered}
            \end{equation}
            for all $\zeta \in \mathrm{Lip}_T(M,\Lambda)$ and there holds
            \begin{equation}\label{eq:estAB_N}
            \begin{gathered}
            \|\alpha\|_{1,p(\cdot),M} \leq  C(\mathrm{data})( \|\eta-\delta\psi\|_{p(\cdot),M}+\|\delta\varphi\|_{1,p(\cdot),M}),\\
            \|\beta\|_{1,p(\cdot),M} \leq C(\mathrm{data})  (\|\eta - \delta \psi\|_{p(\cdot),M}+\|\psi\|_{1,p(\cdot),M} ).
            \end{gathered}
            \end{equation}
               
			\item Let $M$ be of the class $C^{s+2,1}$. Let $\eta\in W^{s,p(\cdot)}(M,\Lambda)$, $\varphi\in W^{s+2,p(\cdot)}(M,\Lambda)$, and $\psi \in W^{s+1,p(\cdot)}(M,\Lambda)$ be such that $(\eta,h_N) = -[\psi,h_N]$ for all $h_N \in \mathcal{H}_N(M)$. Then there exists a solution $\omega\in W^{s+2,p(\cdot)}(M,\Lambda)$ of the boundary value problem \eqref{eq:NN} 
			such that for some constant $C = C(\mathrm{data},s)>0,$ we have 
			$$
			\|\omega\|_{s+2,p(\cdot),M} \leq C ( \|\eta\|_{s,p(\cdot),M} + \|\varphi\|_{s+2,p(\cdot),M} + \|\psi\|_{s+1,p(\cdot),M}).
			$$
		  Moreover, the potentials $\alpha = \delta\omega$, $\beta = d \omega$ satisfy \eqref{eq:relAB_N} and there holds 
                      \begin{equation}\label{eq:estAB1_N}
            \begin{gathered}
            \|\alpha\|_{s+1,p(\cdot),M} \leq C(\mathrm{data}) (\|\eta-\delta\psi\|_{s,p(\cdot),M}+\|\delta\varphi\|_{s+1,p(\cdot),M}),\\
            \|\beta\|_{s+1,p(\cdot),M} \leq  C(\mathrm{data}) (\|\eta - \delta \psi\|_{s,p(\cdot),M}+\|\psi\|_{s+1,p(\cdot),M} ).
            \end{gathered}
            \end{equation}
				\end{enumerate}
	\end{theorem}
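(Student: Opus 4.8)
The plan is to mirror the structure of the proof of Theorem~\ref{T:D3}, since \eqref{eq:NN} is the Hodge dual of \eqref{eq:DD}. Indeed, applying the Hodge star operator $\ast$ (a bounded invertible linear map on $W^{k,p(\cdot)}(M,\Lambda)$ taking $W_N$ to $W_T$, which commutes with $\triangle$ and interchanges $d$ and $\delta$ up to sign) converts the Neumann problem \eqref{eq:NN} for $\omega$ with data $\eta,\varphi,\psi$ into a Dirichlet problem of the form \eqref{eq:DD} for $\ast\omega$ with data built from $\ast\eta$, $\ast\varphi$, $\ast\psi$, and the compatibility condition $(\eta,h_N)=-[\psi,h_N]$ for $h_N\in\mathcal{H}_N(M)$ transforms into $(\ast\eta, h_T) = [\,\cdot\,, h_T]$ for $h_T = \ast h_N \in \mathcal{H}_T(M)$, exactly the hypothesis needed to invoke Theorem~\ref{T:D3}. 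So one option is simply to transcribe Theorem~\ref{T:D3} through $\ast$ and unwind what each statement says in the original variables; I would at least remark on this duality as the conceptual backbone. However, to keep the argument self-contained and to make the potential relations \eqref{eq:relAB_N}, \eqref{eq:estAB_N} transparent, I would instead run the direct variational argument.

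First I would establish the analogue of Corollary~\ref{C:N1} — which is already stated — and then the core existence/approximation step. Approximate $\eta$ by Lipschitz forms $\eta_\varepsilon \to \eta$ in $L^{p(\cdot)}(M,\Lambda)$ and $\varphi,\psi$ by $\varphi_\varepsilon,\psi_\varepsilon$ in $W^{1,p(\cdot)}(M,\Lambda)$ (Lemma~\ref{L:approx1}). For each $\varepsilon$ let $\omega'_\varepsilon\in W^{1,2}_N(M,\Lambda)\cap(\mathcal{H}_N(M))^\perp$ be the unique minimizer, via the Gaffney inequality and the direct method (Section~\ref{ssec:varf}), of
$$
\tfrac12 \mathcal{D}(\omega',\omega') - \bigl((\eta_\varepsilon - \delta\psi_\varepsilon) - \mathcal{P}_N(\eta_\varepsilon - \delta\psi_\varepsilon),\omega'\bigr) - (\psi_\varepsilon - d\varphi_\varepsilon, d\omega') - (-\delta\varphi_\varepsilon,\delta\omega')
$$
over that space, and set $\omega_\varepsilon = \omega'_\varepsilon + \varphi_\varepsilon$. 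Then $\omega_\varepsilon$ solves the corresponding integral identity for all $\zeta\in\mathrm{Lip}_N(M,\Lambda)$, Corollary~\ref{C:N1} gives the uniform bound
$$
\|\omega_\varepsilon\|_{1,p(\cdot),M} \leq C(\mathrm{data})(\|\eta\|_{p(\cdot),M} + \|\varphi\|_{1,p(\cdot),M} + \|\psi\|_{1,p(\cdot),M}),
$$
and applying the same estimate to differences $\omega_{\varepsilon_1}-\omega_{\varepsilon_2}$ shows $\{\omega_\varepsilon\}$ is Cauchy in $W^{1,p(\cdot)}(M,\Lambda)$; since $\mathcal{P}_N(\eta_\varepsilon - \delta\psi_\varepsilon)\to \mathcal{P}_N(\eta - \delta\psi)=0$ by the compatibility hypothesis, the limit $\omega$ satisfies \eqref{eq:NN1}. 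In the $C^{s+2,1}$ case one uses part (b) of Corollary~\ref{C:N1} to get boundedness and convergence in $W^{s+2,p(\cdot)}(M,\Lambda)$, and then integration by parts directly in \eqref{eq:NN1} recovers \eqref{eq:NN} pointwise a.e.\ and on $bM$.

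For the potentials, once $\delta\varphi\in W^{1,p(\cdot)}(M,\Lambda)$ (or in the $C^{s+2,1}$ case automatically), apply Theorem~\ref{T:addregN} to conclude $\alpha=\delta\omega$, $\beta=d\omega \in W^{1,p(\cdot)}(M,\Lambda)$, with $n\alpha = n\delta\varphi$, $n\beta = n\psi$, the relation \eqref{eq:rel10} giving $d\alpha + \delta\beta = \eta$, and the variational identities \eqref{eq:rel20} which, after rearranging boundary/data terms, become the displayed relations in \eqref{eq:relAB_N}. To get the estimates \eqref{eq:estAB_N}, \eqref{eq:estAB1_N}, rewrite the variational identities for $\alpha - \delta\varphi$ and $\beta - \psi$ (so that the resulting forms are $\mathcal{P}_N$-orthogonal to the appropriate harmonic space up to the explicit terms $\mathcal{P}_N(\delta\varphi)$ and $\mathcal{P}_N\psi$), then invoke Theorem~\ref{T:TrueGaffney} (for $C^{1,1}$) or Theorem~\ref{T:p2} (for $C^{s+2,1}$) together with Lemma~\ref{L:quant1} to absorb the lower-order $\|\cdot\|_{1,M}$ term into a quantitative constant. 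I expect the main obstacle to be purely bookkeeping: getting the signs and the precise boundary terms right when translating \eqref{eq:rel20} into \eqref{eq:relAB_N}, and carefully checking that the projector corrections match — i.e.\ that $\mathcal{P}_N(\beta - \psi)=0$ while $\mathcal{P}_N(\alpha - \delta\varphi) = -\mathcal{P}_N(\delta\varphi)$ — so that Lemma~\ref{L:quant1} applies with exactly the stated right-hand sides; the analytic content is entirely contained in the already-proven theorems cited.
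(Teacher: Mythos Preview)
Your proposal is correct and matches the paper's approach exactly: the paper states that the Neumann results (Lemma~\ref{L:quant1}, Corollary~\ref{C:N1}, Theorem~\ref{T:N3}, Theorem~\ref{T:N2}) ``are proved in the same way as for the Dirichlet boundary conditions or can be obtained by duality,'' and you have laid out precisely that direct argument (with the duality route noted as the alternative). One small bookkeeping correction in the projector relations you flag at the end: in the Neumann case one actually has $\mathcal{P}_N(\alpha-\delta\varphi)=0$ (since $\alpha-\delta\varphi=\delta(\omega-\varphi)$ with $n(\omega-\varphi)=0$) and $\mathcal{P}_N(\beta-\psi)=-\mathcal{P}_N\psi$ (since $\mathcal{P}_N\beta=\mathcal{P}_N(d\omega)=0$), i.e.\ the roles are swapped relative to what you wrote---but this is exactly the sort of sign/role check you anticipated and does not affect the argument.
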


   We state a partial case of this theorem as a separate result for Neumann potentials.
    
	\begin{theorem}\label{T:N2}
		(i) Let $M$ be a $C^{s+2,1}$ manifold. Let $\eta\in W^{s,p(\cdot)}(M,\Lambda)$ and $\mathcal{P}_T \eta =0$. Then there exist a unique Neumann potential $\omega: = G_N[\eta] \in W^{s+2,p(\cdot)}(M,\Lambda)\cap W_T^{1,p(\cdot)}(M,\Lambda)$ of $\eta$ satisfying 
		$$
		\|\omega\|_{s+2,p(\cdot),M} \leq C\|\eta\|_{s,p(\cdot),M}
		$$ 
		where $C= C(\mathrm{data},s)$. Moreover, we also have that $\alpha = \delta G_N[\eta]$ and $\beta = d G_N[\eta]$ belong to $W^{s+1,p(\cdot)}(M,\Lambda)$,  satisfy 
		\begin{equation}\label{eq:HN}
			n\alpha=0, \quad n\beta =0, \quad\eta = d\alpha + \delta \beta,
		\end{equation}
		the estimate
		$$
		\|\alpha\|_{s+1,p(\cdot),M}, \|\beta\|_{s+1,p(\cdot),M} \leq C(\mathrm{data},s) \|\eta\|_{s,p(\cdot),M}.
		$$
		and the relations
		\begin{equation}\label{eq:ABN}
			\mathcal{D}(\alpha,\zeta) = (\eta,d \zeta), \quad \mathcal{D}(\beta, \zeta) = (\eta,\delta \zeta)
		\end{equation}
		for all $\zeta \in \mathrm{Lip}_N (M,\Lambda)$. 
		
		(ii) If $s=0$ and $M$ is only $C^{1,1}$ then $G_N[\eta]\in W^{1,p(\cdot)}(M,\Lambda)$ and satisfies
		$$
		\|\omega\|_{1,p(\cdot),M} \leq C(\mathrm{data})\|\eta\|_{p(\cdot),M}.
		$$
		Moreover,  $\alpha = \delta G_N[\eta]$ and $\beta = d G_N[\eta]$ belong to $W_N^{1,p(\cdot)}(M,\Lambda)$, satisfy \eqref{eq:ABN}, \eqref{eq:HN} and
		$$
		\|\alpha\|_{1,p(\cdot),M}, \|\beta\|_{1,p(\cdot),M} \leq C(\mathrm{data}) \|\eta\|_{p(\cdot),M}.
		$$
	\end{theorem}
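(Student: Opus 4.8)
The plan is to deduce Theorem~\ref{T:N2} from the Neumann analogue of the machinery already assembled for the Dirichlet problem, namely Theorem~\ref{T:N3} together with the regularity result of Theorem~\ref{T:addregN}, in exact parallel to how Theorem~\ref{T:D2} follows from Theorem~\ref{T:D3}. The starting observation is that the Neumann potential $G_N[\eta]$ is precisely the solution furnished by Theorem~\ref{T:N3} in the special case $\varphi = 0$, $\psi = 0$: indeed with these choices the weak formulation \eqref{eq:NN1} reduces to $\mathcal{D}(\omega,\zeta) = (\eta,\zeta)$ for all $\zeta \in \mathrm{Lip}_N(M,\Lambda)$, and the solvability condition $(\eta,h_N) = -[\psi,h_N]$ becomes $\mathcal{P}_N\eta = 0$, which is our hypothesis. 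Uniqueness of $G_N[\eta]$ with the normalization $\mathcal{P}_N\omega = 0$ follows from the triviality of $\mathcal{H}_N(M)$-orthogonal harmonic fields in $W^{1,p_{-}}_N(M,\Lambda)$, exactly as in the discussion preceding Definition (Neumann potential) in the excerpt, combined with the regularity upgrade of \cite{Morrey1966} which forces any $W^{1,p_{-}}_N$ solution of the homogeneous equation into $\mathcal{H}_N(M)$.

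For part (i), with $M$ of class $C^{s+2,1}$ and $\eta \in W^{s,p(\cdot)}(M,\Lambda)$, I would invoke Theorem~\ref{T:N3}(b) with $\varphi = \psi = 0$ to get $\omega = G_N[\eta] \in W^{s+2,p(\cdot)}(M,\Lambda) \cap W^{1,p(\cdot)}_N(M,\Lambda)$ together with the estimate $\|\omega\|_{s+2,p(\cdot),M} \leq C\|\eta\|_{s,p(\cdot),M}$. Setting $\alpha = \delta\omega$ and $\beta = d\omega$, since $\omega$ is at least $W^{2,p_{-}}(M,\Lambda)$ the integration-by-parts formula applies directly: one obtains $d\alpha + \delta\beta = \triangle\omega = \eta$, while the boundary conditions $n\alpha = n\delta\varphi = 0$ and $n\beta = n\psi = 0$ are read off from \eqref{eq:relAB_N} with $\varphi = \psi = 0$. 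This gives \eqref{eq:HN}. The relations \eqref{eq:ABN} are then the specialization of the second line of \eqref{eq:relAB_N} to $\varphi = \psi = 0$ (or can be derived directly from integration by parts since $\delta\delta\omega = 0$ and $dd\omega = 0$ hold on $C^{2,1}$ manifolds by the Corollary following Lemma~\ref{L:ort}). Finally the estimates for $\|\alpha\|_{s+1,p(\cdot),M}$ and $\|\beta\|_{s+1,p(\cdot),M}$ are \eqref{eq:estAB1_N} with $\varphi = \psi = 0$. Note that $\mathcal{H}_T(M) = \mathcal{H}_N(M)$ statements in \eqref{eq:relAB_N} for the range of $\zeta$ should be interpreted consistently with $\mathrm{Lip}_N(M,\Lambda)$; I would double-check this in the statement of Theorem~\ref{T:N3} and adjust if there is a typo there.

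For part (ii), when $M$ is only $C^{1,1}$ and $s = 0$, the direct integration by parts is unavailable because a priori $\alpha,\beta$ are only in $L^{p(\cdot)}(M,\Lambda)$. Here the key input is Theorem~\ref{T:addregN}, applied with $\omega_0 = 0$ (so $\delta\omega_0 = 0 \in W^{1,p(\cdot)}$), $\varphi = \psi = 0$, $\eta \in L^{p(\cdot)}(M,\Lambda)$: this asserts precisely that $\alpha = \delta\omega$ and $\beta = d\omega$ already belong to $W^{1,p(\cdot)}(M,\Lambda)$, satisfy $n\alpha = 0$, $n\beta = 0$, equation \eqref{eq:rel10} which here reads $d\alpha + \delta\beta = \eta$, and the relations \eqref{eq:rel20} which here read $\mathcal{D}(\alpha,\zeta) = (\eta, d\zeta)$, $\mathcal{D}(\beta,\zeta) = (\eta,\delta\zeta)$ for all $\zeta \in \mathrm{Lip}_N(M,\Lambda)$ — that is, \eqref{eq:ABN} and \eqref{eq:HN}. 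The $W^{1,p(\cdot)}$ estimates $\|\alpha\|_{1,p(\cdot),M}, \|\beta\|_{1,p(\cdot),M} \leq C(\mathrm{data})\|\eta\|_{p(\cdot),M}$ follow from Theorem~\ref{T:TrueGaffney} applied to the relations \eqref{eq:ABN} together with Lemma~\ref{L:quant1} to absorb the $L^1$ (equivalently $L^{p_{-}}$) term, using $\mathcal{P}_N\beta = 0$ (since $\beta = d\omega$ is exact hence $\mathcal{H}_N$-orthogonal) and $\mathcal{P}_N\alpha = 0$ (since $\alpha = \delta\omega$ is $\mathcal{H}_N$-orthogonal as well, $\delta$ being the adjoint of $d$ and harmonic fields being closed). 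The $W^{1,p(\cdot)}$ estimate for $\omega$ itself is immediate from Theorem~\ref{T:N3}(a) with $\varphi = \psi = 0$. The main obstacle — and the only nontrivial point — is verifying that the hypotheses of Theorem~\ref{T:addregN} are genuinely met by the solution produced in Theorem~\ref{T:N3}, in particular that the solution constructed as a limit of variational minimizers is the \emph{same} object as $G_N[\eta]$ and carries the normalization $\mathcal{P}_N\omega = 0$; this is a bookkeeping issue that I would resolve by tracking the $\mathcal{P}_N$-orthogonality through the approximation argument exactly as in the proof of Theorem~\ref{T:D3}(a), where $\mathcal{P}_T(\eta_\varepsilon)$ converges to $\mathcal{P}_T\eta = 0$.
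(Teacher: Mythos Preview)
Your proposal is correct and follows exactly the paper's approach: the paper states Theorem~\ref{T:N2} as ``a partial case'' of Theorem~\ref{T:N3} (with $\varphi=\psi=0$), and remarks that the Neumann statements ``are proved in the same way as for the Dirichlet boundary conditions or can be obtained by duality.'' Your write-up supplies the details the paper leaves implicit; note also that the hypotheses $\mathcal{P}_T\eta=0$ and $\omega\in W^{1,p(\cdot)}_T$ in the statement are evident typos for $\mathcal{P}_N\eta=0$ and $W^{1,p(\cdot)}_N$, as you correctly surmised, and likewise the $\mathrm{Lip}_T$ in \eqref{eq:relAB_N} should read $\mathrm{Lip}_N$.
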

	
	\begin{corollary}\label{C:commN}
		If $\eta \in W^{\delta,p_{-}}_N(M,\Lambda)$ then $\delta G_N[\eta-\mathcal{P}_N\eta] = G_N[\delta \eta]$ and if $\eta \in W^{d,p_{-}}(M,\Lambda)$ then $d G_N[\eta-\mathcal{P}_N\eta] = G_N[d\eta]$.
	\end{corollary}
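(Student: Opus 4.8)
\textbf{Proof plan for Corollary~\ref{C:commN}.}

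The plan is to mimic the proof of Corollary~\ref{C:commT} verbatim, only replacing the tangential boundary data, projector and potential by their normal counterparts. First I would recall that for $\eta$ in the relevant space, $G_N[\eta-\mathcal{P}_N\eta]$ is at least $W^{2,p_{-}}(M,\Lambda)$ (when $M$ is $C^{2,1}$) or, on a $C^{1,1}$ manifold, that its differential $\beta=dG_N[\eta-\mathcal{P}_N\eta]$ and codifferential $\alpha=\delta G_N[\eta-\mathcal{P}_N\eta]$ nevertheless lie in $W^{1,p(\cdot)}(M,\Lambda)$ with $n\alpha=n\beta=0$; this is precisely the content of Theorem~\ref{T:N2} (and, underlying it, Theorem~\ref{T:addregN}). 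Thus the manipulations below involving the exterior (co)differential of $G_N[\eta-\mathcal{P}_N\eta]$ are justified.

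Next I would take $\eta\in W^{\delta,p_{-}}_N(M,\Lambda)$, set $\alpha=\delta G_N[\eta-\mathcal{P}_N\eta]$, and use the relation \eqref{eq:ABN} for $\alpha$: for all $\zeta\in\mathrm{Lip}_N(M,\Lambda)$,
\begin{equation*}
\mathcal{D}(\alpha,\zeta) = (\eta,d\zeta).
\end{equation*}
Since $\eta\in W^{\delta,p_{-}}_N(M,\Lambda)$ we have $n\eta=0$, and the definition of the weak codifferential together with Corollary~\ref{C:approx2}(iv) gives $(\eta,d\zeta)=(\delta\eta,\zeta)$ for all $\zeta\in\mathrm{Lip}_N(M,\Lambda)\subset W^{d,p'(\cdot)}(M,\Lambda)$. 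Hence $\mathcal{D}(\alpha,\zeta)=(\delta\eta,\zeta)$ for all such $\zeta$, which is exactly the variational relation defining $G_N[\delta\eta]$. Moreover $n\alpha=0$ by Theorem~\ref{T:N2}, and $(\alpha,h_N)=0$ for all $h_N\in\mathcal{H}_N(M)$: indeed $\alpha=\delta G_N[\eta-\mathcal{P}_N\eta]$, so $(\alpha,h_N)=(G_N[\eta-\mathcal{P}_N\eta],dh_N)+[\text{boundary term}]$, and $dh_N=0$ while the boundary term vanishes because $n\alpha=0$ (or, alternatively, because $h_N$ is normal). Therefore $\alpha$ satisfies the variational problem and normalization uniquely characterizing $G_N[\delta\eta]$, and by the uniqueness clause in Theorem~\ref{T:N2} we conclude $\alpha=G_N[\delta\eta]$, i.e. $\delta G_N[\eta-\mathcal{P}_N\eta]=G_N[\delta\eta]$.

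For the second identity I would run the same argument with $\beta=dG_N[\eta-\mathcal{P}_N\eta]$ and the relation $\mathcal{D}(\beta,\zeta)=(\eta,\delta\zeta)$ from \eqref{eq:ABN}, using now $\eta\in W^{d,p_{-}}(M,\Lambda)$ so that $(\eta,\delta\zeta)=(d\eta,\zeta)$ for all $\zeta\in\mathrm{Lip}_N(M,\Lambda)$ by Corollary~\ref{C:approx2}(i) (no boundary condition on $\eta$ is needed here since the test forms are normal). Together with $n\beta=0$ and $(\beta,h_N)=(G_N[\eta-\mathcal{P}_N\eta],\delta h_N)=0$ (as $\delta h_N=0$), this identifies $\beta$ with $G_N[d\eta]$. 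The only genuinely delicate point is making sure the integration-by-parts/duality step is legitimate in the variable exponent setting when $M$ is merely $C^{1,1}$ and $s=0$ — that is, that $\alpha,\beta$ really have enough regularity ($W^{1,p(\cdot)}$ with the stated vanishing normal traces) for \eqref{eq:ABN} and the pairing with $\mathrm{Lip}_N$ forms to hold; but this has already been secured by Theorem~\ref{T:N2}(ii) and Theorem~\ref{T:addregN}, so the corollary follows. The case $s\ge 1$ is easier since then $G_N[\eta-\mathcal{P}_N\eta]\in W^{s+2,p(\cdot)}$ and the (co)differential is classically of the required regularity.
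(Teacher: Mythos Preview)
Your proposal is correct and follows essentially the same route as the paper: the paper states that the Neumann results (including this corollary) are proved in the same way as their Dirichlet counterparts or by Hodge duality, and the proof of Corollary~\ref{C:commT} does exactly what you do---identify the variational relation \eqref{eq:ABN} for $\alpha$ (resp.\ $\beta$) with the one defining $G_N[\delta\eta]$ (resp.\ $G_N[d\eta]$) after an integration by parts, then invoke the boundary condition and the orthogonality to $\mathcal{H}_N(M)$ to conclude by uniqueness.
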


	\section{Full Dirichlet data}

In this Section we consider the boundary value problem
	\begin{equation}\label{eq:ZZ}
		\triangle \omega = \eta \quad \text{in}\quad M, \quad \omega =\varphi\quad \text{on}\quad bM 
	\end{equation}
 in variable exponent Sobolev spaces. When $M$ is at least $C^{s+2,1}$, $s\in \{0\}\cup \mathbb{N}$, we solve this problem in $W^{s+2,p(\cdot)}(M,\Lambda)$, with the equation understood in the a.e. sense and the boundary conditions in the sense of trace. When $M$ is only $C^{1,1}$, a weak solution $\omega$ to \eqref{eq:ZZ} is understood as a form $\omega \in \varphi + W_0^{1,p_{-}}(M,\Lambda)$ satisfying the following integral identity: 
	\begin{equation}\label{eq:ZZ1}
		\mathcal{D}(\omega,\zeta) - (\eta,\zeta) =0
	\end{equation}
	for all $\zeta \in \mathrm{Lip}_0(M,\Lambda)$.

	\begin{lemma}\label{T:Z1} (i) Let $M$ be $C^{s+2,1}$. Assume $\eta \in W^{s,p(\cdot)}(M,\Lambda)$ and $\varphi \in W^{s+2,p(\cdot)}(M,\Lambda)$ be given. Let $\omega\in \varphi + W^{1,p_{-}}_0(M,\Lambda)$ satisfy \eqref{eq:ZZ1} for all $\zeta \in \mathrm{Lip}_0(M,\Lambda)$. Then $\omega \in  W^{s+2,p(\cdot)}(M,\Lambda)$, there holds 
		$$
		\|\omega\|_{s+2,p(\cdot),M} \leq C(\mathrm{data},s) (\|\eta\|_{s,p(\cdot),M} + \|\varphi\|_{s+2,p(\cdot),M}  +\|\omega\|_{1,M}),
		$$
		and $\omega$ satisfies \eqref{eq:ZZ}.
        
		(ii) If $M$ is of the class $C^{1,1}$, $\eta \in L^{p(\cdot)}(M,\Lambda)$, $\varphi \in W^{1,p(\cdot)}(M,\Lambda)$, then $\omega \in \varphi+ W^{1,p(\cdot)}_0(M,\Lambda)$ and there holds 
		$$
		\|\omega\|_{1,p(\cdot),M} \leq C(\mathrm{data},s) (\|\eta\|_{p(\cdot),M} + \|\varphi\|_{1,p(\cdot),M}  +\|\omega\|_{1,M})
		$$
	\end{lemma}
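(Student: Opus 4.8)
\textbf{Proof proposal for Lemma~\ref{T:Z1}.}
The plan is to reduce the problem with inhomogeneous Dirichlet data to one with homogeneous data by subtracting $\varphi$, and then to invoke the a priori estimates already obtained for the potential $G_0[\cdot]$ associated to the full Dirichlet problem, namely Theorem~\ref{T:addregFD} together with Theorems~\ref{T:p2} and~\ref{T:TrueGaffney}. Concretely, set $\omega' = \omega - \varphi \in W^{1,p_{-}}_0(M,\Lambda)$. Plugging $\omega = \omega' + \varphi$ into \eqref{eq:ZZ1} and using bilinearity of $\mathcal{D}$, the form $\omega'$ satisfies
\begin{equation*}
\mathcal{D}(\omega',\zeta) = (\eta,\zeta) - \mathcal{D}(\varphi,\zeta) = (\eta,\zeta) - (d\varphi,d\zeta) - (\delta\varphi,\delta\zeta)
\end{equation*}
for all $\zeta \in \mathrm{Lip}_0(M,\Lambda)$. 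This is precisely the variational relation \eqref{eq1} for the full Dirichlet problem with right-hand side data $\eta$, $\varphi_{\mathrm{new}} = -d\varphi$, $\psi_{\mathrm{new}} = -\delta\varphi$, all lying in the required spaces: in case (i), $d\varphi, \delta\varphi \in W^{s+1,p(\cdot)}(M,\Lambda)$ and $\eta \in W^{s,p(\cdot)}(M,\Lambda)$; in case (ii), $d\varphi,\delta\varphi \in L^{p(\cdot)}(M,\Lambda)$ and $\eta \in L^{p(\cdot)}(M,\Lambda)$.

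First I would handle case (i). By Theorem~\ref{T:p2} applied with $* = 0$, since $M$ is $C^{s+2,1}$ and the data $\eta \in W^{s,p(\cdot)}$, $-d\varphi, -\delta\varphi \in W^{s+1,p(\cdot)}$, we obtain $\omega' \in W^{s+2,p(\cdot)}(M,\Lambda)$ together with
\begin{equation*}
\|\omega'\|_{s+2,p(\cdot),M} \leq C(\mathrm{data},s)\bigl(\|\eta\|_{s,p(\cdot),M} + \|d\varphi\|_{s+1,p(\cdot),M} + \|\delta\varphi\|_{s+1,p(\cdot),M} + \|\omega'\|_{1,M}\bigr).
\end{equation*}
Since $\|d\varphi\|_{s+1,p(\cdot),M} + \|\delta\varphi\|_{s+1,p(\cdot),M} \leq C\|\varphi\|_{s+2,p(\cdot),M}$ by boundedness of $d$ and $\delta$ between the relevant Sobolev spaces, and since $\|\omega'\|_{1,M} \leq \|\omega\|_{1,M} + C\|\varphi\|_{s+2,p(\cdot),M}$, adding back $\|\varphi\|_{s+2,p(\cdot),M}$ on both sides and using $\omega = \omega' + \varphi$ yields the claimed estimate. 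Because $\omega$ is then at least $W^{2,p_{-}}(M,\Lambda)$, integration by parts in \eqref{eq:ZZ1} against $\zeta \in \mathrm{Lip}_0$ and then against $\zeta$ with nonzero boundary trace (using the density results of Lemma~\ref{L:approx1} and the trace embedding) recovers the pointwise equation $\triangle\omega = \eta$ a.e.\ and the boundary condition $\omega = \varphi$ on $bM$ in the sense of trace, which is \eqref{eq:ZZ}.

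For case (ii), $M$ is only $C^{1,1}$, so I would instead invoke Theorem~\ref{T:TrueGaffney} with $* = 0$: from $\eta \in L^{p(\cdot)}$ and $-d\varphi, -\delta\varphi \in L^{p(\cdot)}$ we get $\omega' \in W^{1,p(\cdot)}(M,\Lambda)$ with
\begin{equation*}
\|\omega'\|_{1,p(\cdot),M} \leq C(\mathrm{data})\bigl(\|\eta\|_{p(\cdot),M} + \|d\varphi\|_{p(\cdot),M} + \|\delta\varphi\|_{p(\cdot),M} + \|\omega'\|_{1,M}\bigr),
\end{equation*}
and then the same bookkeeping — bounding $\|d\varphi\|_{p(\cdot),M} + \|\delta\varphi\|_{p(\cdot),M} \leq C\|\varphi\|_{1,p(\cdot),M}$ and $\|\omega'\|_{1,M} \leq \|\omega\|_{1,M} + C\|\varphi\|_{1,p(\cdot),M}$ — gives the stated estimate for $\omega = \omega' + \varphi \in \varphi + W^{1,p(\cdot)}_0(M,\Lambda)$. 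The only genuinely delicate point, and the one I would be most careful about, is the passage from the variational identity \eqref{eq:ZZ1} (tested only against $\mathrm{Lip}_0$) to the statement that $\omega \in \varphi + W^{1,p(\cdot)}_0$ rather than merely $\varphi + W^{1,p_{-}}_0$: this is exactly the content of the regularity half of Theorem~\ref{T:TrueGaffney}, so once that theorem is in hand there is no obstacle. No extra compactness or Fredholm argument is needed here because the $\|\omega\|_{1,M}$ term is retained on the right-hand side throughout; removing it would require the triviality of harmonic fields vanishing on $bM$ and a quantitative estimate as in Lemma~\ref{L:quant}, but that is deliberately not claimed in this lemma.
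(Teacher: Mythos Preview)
Your proposal is correct and follows essentially the same approach as the paper: the paper does not give a separate proof of Lemma~\ref{T:Z1} but treats it as parallel to Lemma~\ref{T:D1}, whose proof is precisely your subtract-$\varphi$ reduction followed by an appeal to Theorem~\ref{T:p2} for part (i) and Theorem~\ref{T:TrueGaffney} for part (ii). Two minor remarks: the reference to Theorem~\ref{T:addregFD} in your opening sentence is extraneous (you never actually use it), and the boundary condition $\omega=\varphi$ on $bM$ is already built into the hypothesis $\omega\in\varphi+W^{1,p_{-}}_0(M,\Lambda)$, so there is no need to test against $\zeta$ with nonzero boundary trace --- integration by parts against $\zeta\in\mathrm{Lip}_0$ suffices to recover $\triangle\omega=\eta$ a.e.
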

	
	It is well known (see, for instance \cite{Morrey1966}) that $\mathcal{H}_T(M) \cap \mathcal{H}_N (M) = \{0\}$, so the problem \eqref{eq:ZZ1} has a unique solution in $ W^{1,2}_0(M)$ for all $\eta\in L^2(M)$ and for all $\omega \in W_0^{1,2}(M)$ there holds 
	$$
	(\omega,\omega) \leq C_0 \mathcal{D}(\omega,\omega),
	$$
	where $C_0$ is the reciprocal of the first eigenvalue of the Hodge Laplacian with boundary conditions $t\omega=0$, $n\omega=0$. Further for $\eta \in L^{p_{-}}(M,\Lambda)$ we denote the solution $\omega \in W_0^{1,p_{-}}(M,\Lambda)$ to \eqref{eq:ZZ1} by $G_0[\eta]$.
	
	The same arguments as before give the following results. 
	
	\begin{lemma}\label{L:quant0}
		Let $\eta,\varphi,\psi \in L^{p_{-}}(M,\Lambda)$. Let $\omega \in W_0^{1,p_{-}}(M,\Lambda)$ satisfy \eqref{eq1} for all $\zeta \in \mathrm{Lip}_0(M,\Lambda)$. Then there exists a constant $C = C(p_{-}, M)$ such that
		\begin{equation}\label{eq:quant0}
			\|\omega\|_{p_{-},M} \leq C (\|\eta\|_{p_{-},M} + \|\varphi\|_{p_{-},M}  + \|\psi\|_{p_{-},M}).
		\end{equation}
	\end{lemma}

\begin{corollary}\label{C:Z1}
Let $\omega\in \varphi + W^{1,p_{-}}_0(M,\Lambda)$ be a solution of \eqref{eq:ZZ1}.
	\begin{enumerate}
		\item Let $M$ be of the class $C^{1,1}$, $\eta \in L^{p(\cdot)}(M,\Lambda)$, $\varphi \in W^{1,p(\cdot)}(M,\Lambda)$, then $\omega \in W^{1,p(\cdot)}(M,\Lambda)$ and there exists a constant $C = C(\mathrm{data})>0$ such that  
		$$
		\|\omega\|_{1,p(\cdot),M} \leq C(\mathrm{data}) (\|\eta\|_{p(\cdot),M} + \|\varphi\|_{1,p(\cdot),M}). 
		$$
		\item Let $M$ be of the class $C^{s+2,1}$. Suppose $\eta \in W^{s,p(\cdot)}(M,\Lambda)$ and $\varphi \in W^{s+2,p(\cdot)}(M,\Lambda)$ be given.  Then $\omega \in  W^{s+2,p(\cdot)}(M,\Lambda)$ satisfies \eqref{eq:ZZ} and we have the estimate 
		$$
		\|\omega\|_{s+2,p(\cdot),M} \leq C(\mathrm{data},s) (\|\eta\|_{s,p(\cdot),M} + \|\varphi\|_{s+2,p(\cdot),M}). 
		$$
	\end{enumerate}
\end{corollary}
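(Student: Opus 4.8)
The plan is to remove the qualitative term $\|\omega\|_{1,M}$ from the estimate in Lemma~\ref{T:Z1} by exactly the same mechanism used for the Dirichlet and Neumann problems in Corollaries~\ref{C:D1} and~\ref{C:N1}, namely by invoking the quantitative $L^{p_{-}}$ bound of Lemma~\ref{L:quant0}. First I would note that since $\mathcal{H}_T(M)\cap\mathcal{H}_N(M)=\{0\}$, the problem \eqref{eq:ZZ1} with $\eta=0$ has only the trivial solution in $W^{1,2}_0(M,\Lambda)$ (using the Gaffney inequality and triviality of harmonic fields vanishing on $bM$, as already recorded in the discussion around \eqref{eq:minimizationF}), so there is no projector to subtract here — this case is genuinely simpler than the $T$ and $N$ cases.

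The argument then runs as follows. Given $\omega\in\varphi+W^{1,p_{-}}_0(M,\Lambda)$ solving \eqref{eq:ZZ1}, set $\omega'=\omega-\varphi\in W^{1,p_{-}}_0(M,\Lambda)$. Then $\omega'$ satisfies
$$
\mathcal{D}(\omega',\zeta) = (\eta,\zeta) + (-d\varphi,d\zeta) + (-\delta\varphi,\delta\zeta)
$$
for all $\zeta\in\mathrm{Lip}_0(M,\Lambda)$, i.e. \eqref{eq1} with data $\eta$, $-d\varphi$, $-\delta\varphi$ all in $L^{p_{-}}(M,\Lambda)$. Lemma~\ref{L:quant0} applied to $\omega'$ gives
$$
\|\omega'\|_{p_{-},M} \leq C(p_{-},M)\bigl(\|\eta\|_{p_{-},M} + \|d\varphi\|_{p_{-},M} + \|\delta\varphi\|_{p_{-},M}\bigr) \leq C\bigl(\|\eta\|_{p(\cdot),M} + \|\varphi\|_{1,p(\cdot),M}\bigr),
$$
where in the last step one uses the embedding $L^{p(\cdot)}(M,\Lambda)\hookrightarrow L^{p_{-}}(M,\Lambda)$ on the compact manifold $M$ and boundedness of $d,\delta$ from $W^{1,p(\cdot)}$ to $L^{p(\cdot)}$. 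Hence $\|\omega\|_{p_{-},M}\leq \|\omega'\|_{p_{-},M}+\|\varphi\|_{p_{-},M}\leq C(\|\eta\|_{p(\cdot),M}+\|\varphi\|_{1,p(\cdot),M})$, which in particular controls $\|\omega\|_{1,M}\leq C\|\omega\|_{p_{-},M}$.

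It then remains to feed this back into Lemma~\ref{T:Z1}: for part (a), Lemma~\ref{T:Z1}(ii) gives $\omega\in W^{1,p(\cdot)}(M,\Lambda)$ with the estimate containing $\|\omega\|_{1,M}$ on the right, and substituting the bound just obtained yields the claimed quantitative estimate; for part (b), one similarly uses Lemma~\ref{T:Z1}(i), and the fact that $\omega\in W^{2,p_{-}}(M,\Lambda)$ so that integration by parts in \eqref{eq:ZZ1} recovers \eqref{eq:ZZ} pointwise and in the sense of trace. I do not expect a serious obstacle here — the only point requiring any care is checking that Lemma~\ref{L:quant0}'s constant is genuinely quantitative, which is so because the direct-method proof rests on the quantitative constant $C_0$ in $(\omega,\omega)\leq C_0\mathcal{D}(\omega,\omega)$ on $W^{1,2}_0(M,\Lambda)$ together with the quantitative elliptic estimate of Lemma~\ref{T:Z1} (exactly as in the proof of Lemma~\ref{L:quant}); the duality pairing with $\mu\in L^{p_{-}'}(M,\Lambda)$ and $\xi=G_0[\mu]$ closes the loop.
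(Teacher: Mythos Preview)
Your proposal is correct and follows exactly the approach the paper intends: the paper does not write out a proof of Corollary~\ref{C:Z1} but simply says ``The same arguments as before give the following results'', referring to the proof of Corollary~\ref{C:D1}. Your argument --- subtracting $\varphi$, applying Lemma~\ref{L:quant0} to control $\|\omega'\|_{p_{-},M}$ (with no projector needed since $\mathcal{H}_T(M)\cap\mathcal{H}_N(M)=\{0\}$), and feeding this back into Lemma~\ref{T:Z1} --- is precisely that template specialized to the full Dirichlet data case.
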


\begin{theorem}\label{T:Z3}
Consider the boundary value problem \eqref{eq:ZZ}. 
		\begin{enumerate}
			\item Let $M$ be $C^{1,1}$ and let  $\eta \in L^{p(\cdot)}(M,\Lambda)$, and $\varphi \in W^{1,p(\cdot)}(M,\Lambda)$. Then there exists a solution $\omega \in \varphi+W^{1,p(\cdot)}_0(M,\Lambda)$ of \eqref{eq:ZZ} understood in the sense of  \eqref{eq:ZZ1} such that for some constant $C = C(\mathrm{data})>0$ we have 
			$$
			\|\omega\|_{1,p(\cdot),M} \leq C (\|\eta\|_{p(\cdot),M} + \|\varphi\|_{1,p(\cdot),M}). 
			$$
            The potentials $\alpha = \delta \omega$ and $\beta = d \omega$ belong to $W^{1,p(\cdot)}_{\mathrm{loc}}(M,\Lambda)$ and satisfy
            $d\alpha + \delta \beta = \eta$.

			\item Let $M$ be $C^{s+2,1}$. Let $\eta\in W^{s,p(\cdot)}(M,\Lambda)$ and $\varphi\in W^{s+2,p(\cdot)}(M,\Lambda)$. Then there exists a solution $\omega\in W^{s+2,p(\cdot)}(M,\Lambda)$ of the boundary value problem \eqref{eq:ZZ}
			such that for some constant $C = C(\mathrm{data},s)>0,$ we have 
			$$
			\|\omega\|_{s+2,p(\cdot),M} \leq C ( \|\eta\|_{s,p(\cdot),M} + \|\varphi\|_{s+2,p(\cdot),M}).
			$$
				\end{enumerate}
\end{theorem}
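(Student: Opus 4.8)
\textbf{Proof strategy for Theorem~\ref{T:Z3}.}

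The plan is to reduce the inhomogeneous boundary value problem \eqref{eq:ZZ} to the homogeneous one handled by the potential $G_0$, and then to invoke the a priori estimates of Corollary~\ref{C:Z1} together with the approximation machinery already used in the proof of Theorem~\ref{T:D3}. Concretely, given $\eta$ and $\varphi$, I would first reduce to zero boundary data by considering $\omega' = \omega - \varphi$: since $\triangle$ is a bounded linear map from $W^{s+2,p(\cdot)}(M,\Lambda)$ to $W^{s,p(\cdot)}(M,\Lambda)$ (and from $W^{1,p(\cdot)}$ to $W^{-1,p(\cdot)}$ in the $C^{1,1}$ case, interpreted via the bilinear form $\mathcal{D}$), the form $\omega'$ must solve $\triangle \omega' = \eta - \triangle \varphi =: \widetilde\eta$ with $\omega' \in W^{1,p_{-}}_0(M,\Lambda)$, understood in the weak sense \eqref{eq:ZZ1} with test forms in $\mathrm{Lip}_0(M,\Lambda)$. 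In the $C^{1,1}$ case, $\varphi \in W^{1,p(\cdot)}$ only, so $\triangle \varphi$ is not a genuine form; instead I would keep the terms $(\varphi, \text{second derivatives of }\zeta)$ intact via the decomposition $\mathcal{D}(\omega,\zeta) = \mathcal{D}(\omega',\zeta) + (d\varphi, d\zeta) + (\delta\varphi, \delta\zeta)$, so that $\omega'$ satisfies $\mathcal{D}(\omega',\zeta) = (\eta,\zeta) - (d\varphi,d\zeta) - (\delta\varphi,\delta\zeta)$ for all $\zeta \in \mathrm{Lip}_0(M,\Lambda)$, which is exactly an instance of \eqref{eq1} with the roles $(\eta, \varphi, \psi) \leftrightarrow (\eta, -d\varphi, -\delta\varphi)$ and the space index $* = 0$.

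Next I would construct the solution by approximation, exactly as in the proof of Theorem~\ref{T:D3}(a): pick Lipschitz forms $\eta_\varepsilon \to \eta$ in $L^{p(\cdot)}$ (resp.\ $W^{s,p(\cdot)}$) and $\varphi_\varepsilon \to \varphi$ in $W^{1,p(\cdot)}$ (resp.\ $W^{s+2,p(\cdot)}$) using Lemma~\ref{L:approx1}. For each $\varepsilon$, the functional $\tfrac12 \mathcal{D}(\omega',\omega') - (\eta_\varepsilon,\omega') + (d\varphi_\varepsilon,d\omega') + (\delta\varphi_\varepsilon,\delta\omega')$ on $W^{1,2}_0(M,\Lambda)$ has a unique minimizer $\omega'_\varepsilon$ — here I use Gaffney's inequality together with the triviality of $\mathcal{H}_T(M)\cap\mathcal{H}_N(M)$ (so that $(\omega,\omega) \le C_0 \mathcal{D}(\omega,\omega)$ on $W^{1,2}_0$, already recorded in the text) and the direct method. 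Then $\omega_\varepsilon := \omega'_\varepsilon + \varphi_\varepsilon$ satisfies \eqref{eq:ZZ1} with data $\eta_\varepsilon, \varphi_\varepsilon$, and Corollary~\ref{C:Z1} gives the uniform bound $\|\omega_\varepsilon\|_{1,p(\cdot),M} \le C(\|\eta\|_{p(\cdot),M} + \|\varphi\|_{1,p(\cdot),M})$ (resp.\ the $W^{s+2,p(\cdot)}$ bound). Applying the same estimate to the difference $\omega_{\varepsilon_1} - \omega_{\varepsilon_2} \in (\varphi_{\varepsilon_1}-\varphi_{\varepsilon_2}) + W^{1,p_{-}}_0(M,\Lambda)$ shows that $(\omega_\varepsilon)$ is Cauchy in $W^{1,p(\cdot)}(M,\Lambda)$ (resp.\ $W^{s+2,p(\cdot)}$), and the limit $\omega$ satisfies \eqref{eq:ZZ1}, hence is the desired weak solution with the stated estimate; in the $C^{s+2,1}$ case the integration-by-parts formula upgrades \eqref{eq:ZZ1} to the pointwise identity $\triangle\omega=\eta$ and the trace condition $\omega = \varphi$ on $bM$. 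Finally, for the claim that $\alpha = \delta\omega$, $\beta = d\omega$ belong to $W^{1,p(\cdot)}_{\mathrm{loc}}(M,\Lambda)$ with $d\alpha + \delta\beta = \eta$ in the $C^{1,1}$ case, I would invoke Theorem~\ref{T:addregFD} (stated earlier in the excerpt) applied with $\omega_0 = \varphi$; the relation \eqref{eq:rel10} there gives $d\alpha + \delta\beta = \eta + d\psi + \delta\varphi$ with $(\psi,\varphi)$ replaced by the shifted data $(-\delta\varphi, -d\varphi)$ — wait, more carefully, one applies Theorem~\ref{T:addregFD} in the form where the right-hand side is just $\eta$ after absorbing the boundary terms, so that $d\alpha+\delta\beta = \eta$ holds; the local (rather than global up to the boundary) regularity of $\alpha,\beta$ is intrinsic to this non-elliptic situation since $G_0$ only controls $\omega$ itself in $W^{1,p(\cdot)}$ globally.

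The main obstacle is the $C^{1,1}$ case, part (i), where the data $\varphi$ has one derivative less than in the classical $C^{2,1}$ setting: one cannot form $\triangle\varphi$ and must carry the reduction entirely at the level of the bilinear form $\mathcal{D}$, and one must be careful that the test space $\mathrm{Lip}_0(M,\Lambda)$ is rich enough for the integration-by-parts identities (Lemma~\ref{L:ort} and Corollary~\ref{C:approx2}) to apply after mollification of the metric. The statements about $\alpha = \delta\omega$ and $\beta = d\omega$ being only in $W^{1,p(\cdot)}_{\mathrm{loc}}$ rather than globally reflect that the interior elliptic regularity for $\triangle$ is available (this follows from Theorem~\ref{T:addregFD}, or can be localized away from $bM$ using the full-space potential estimates of Corollary~\ref{C:pa0}), but the boundary behavior of $d\omega$, $\delta\omega$ is not controlled by the Dirichlet data alone — indeed the kernel of the problem $d\omega = f$, $\omega = \varphi$ on $bM$ is infinite-dimensional, as remarked in the introduction. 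Once these points are handled, parts (i) and (ii) follow in parallel, with (ii) being the more straightforward since then all boundary conditions can be read off in the sense of trace and no recourse to the delicate $C^{1,1}$ additional regularity theorems is needed.
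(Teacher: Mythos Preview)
Your proposal is correct and follows essentially the same approach as the paper, which explicitly signals (``The same arguments as before give the following results'') that Theorem~\ref{T:Z3} is proved by transplanting the approximation-and-Cauchy argument of Theorem~\ref{T:D3} to the full Dirichlet setting, using Corollary~\ref{C:Z1} and Lemma~\ref{L:quant0} in place of Corollary~\ref{C:D1} and Lemma~\ref{L:quant} (with the simplification that no orthogonality condition or projector $\mathcal{P}_*$ is needed since the kernel is trivial). One small cleanup: for the claim about $\alpha,\beta$ in part~(a) you should apply Theorem~\ref{T:addregFD} directly to $\omega$ with the data $(\eta,0,0)$ in~\eqref{eq1} (since $\omega$ satisfies $\mathcal{D}(\omega,\zeta)=(\eta,\zeta)$ for $\zeta\in\mathrm{Lip}_0$), which immediately gives $d\alpha+\delta\beta=\eta$; there is no need to introduce $\omega_0=\varphi$ or worry about the regularity of $d\varphi,\delta\varphi$.
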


	\section{Hodge decomposition in variable exponent spaces}\label{sec:Hodge}
	
	The following theorems are standard Hodge decomposition results stated in variable exponent setting. The proofs are fairly standard and included here only for the reader's convenience. Let $s\in \mathbb{N} \cup\{0\}$, and $M$ be of the class $C^{s+1,1}$. In the following statements $C$ is a quantitative constant which depends only on $p_{-}$,  $p_{+}$,  $c_{\mathrm{log}}(p)$, $M$, and $s$.

	\begin{theorem}\label{T:HodgeD}
		Let $\omega\in W^{s,p(\cdot)}(M,\Lambda)$. There exist $\alpha,\beta \in W^{s+1,p(\cdot)}(M,\Lambda)$ and $h\in \mathcal{H}_T(M)$ such that 
		\begin{gather*}
			\omega = h + d\alpha + \delta \beta,\\
			t\alpha =0, \quad \delta \alpha =0,\quad t\beta =0, \quad d \beta =0,\\
			\|\alpha\|_{s+1,p(\cdot),M}, \|\beta\|_{s+1,p(\cdot),M} \leq C \|\omega\|_{ s,p(\cdot),M}.
		\end{gather*}
		These forms are given by $h = \mathcal{P}_T \omega$, $\alpha = \delta G_D[\omega - h]$, $\beta = d G_D[\omega -h]$.
	\end{theorem}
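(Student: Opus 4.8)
The plan is to deduce the Hodge decomposition directly from the properties of the Dirichlet potential established in Theorem~\ref{T:D2}, reducing everything to the case where $\omega$ has vanishing tangential harmonic projection. First I would set $h = \mathcal{P}_T \omega$; since $h \in \mathcal{H}_T(M)$, standard elliptic regularity (cf.~the discussion of harmonic fields) gives that $h$ is as smooth as $M$ allows, in particular $h \in W^{s+1,q}(M,\Lambda)$ for every $q<\infty$, so $\|h\|_{s,p(\cdot),M} \le C\|\omega\|_{s,p(\cdot),M}$ because $\mathcal{P}_T$ is a finite-rank projector given by $L^2$-pairings with smooth fixed fields, hence bounded on every $L^{p(\cdot)}(M,\Lambda)$ (and on $W^{s,p(\cdot)}$) by H\"older's inequality in variable exponent spaces. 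Then $\omega - h$ satisfies $\mathcal{P}_T(\omega-h) = 0$, so the Dirichlet potential $G_D[\omega-h]$ is well defined.

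Next I would invoke Theorem~\ref{T:D2}: with $\eta := \omega - h \in W^{s,p(\cdot)}(M,\Lambda)$ and $\mathcal{P}_T\eta = 0$, there is a unique $G_D[\eta] \in W^{s+2,p(\cdot)}(M,\Lambda)\cap W^{1,p(\cdot)}_T(M,\Lambda)$, and moreover $\alpha := \delta G_D[\eta]$ and $\beta := d G_D[\eta]$ belong to $W^{s+1,p(\cdot)}(M,\Lambda)$, satisfy $t\alpha = 0$, $t\beta = 0$, $\eta = d\alpha + \delta\beta$, and obey the estimate $\|\alpha\|_{s+1,p(\cdot),M}, \|\beta\|_{s+1,p(\cdot),M} \le C\|\eta\|_{s,p(\cdot),M} \le C\|\omega\|_{s,p(\cdot),M}$. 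This immediately yields the decomposition $\omega = h + d\alpha + \delta\beta$ together with the norm bounds and the conditions $t\alpha = t\beta = 0$.

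It remains to verify the two closedness conditions $\delta\alpha = 0$ and $d\beta = 0$. For $d\beta = 0$: $\beta = d G_D[\eta]$ and $G_D[\eta]$ is at least in $W^{1,1}(M,\Lambda)$ with $d G_D[\eta] \in W^{1,1}(M,\Lambda)$, so by the remark following Lemma~\ref{L:ort} (that $d(d\cdot) = 0$ on such forms on a $C^{1,1}$ manifold) we get $d\beta = d^2 G_D[\eta] = 0$. Dually, $\alpha = \delta G_D[\eta]$ with $\delta G_D[\eta] \in W^{1,1}(M,\Lambda)$ gives $\delta\alpha = \delta^2 G_D[\eta] = 0$ by the same remark. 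Alternatively one can use the variational relations \eqref{eq:AB} in Theorem~\ref{T:D2}: testing $\mathcal{D}(\alpha,\zeta) = (\eta, d\zeta)$ and $\mathcal{D}(\beta,\zeta) = (\eta,\delta\zeta)$ against suitable $\zeta$ and exploiting $d\eta$, $\delta\eta$ information is not even needed — the $W^{1,1}$-regularity route is cleanest.

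The only genuine point requiring care — and the step I expect to be the main obstacle if one is not careful — is the boundedness of $\mathcal{P}_T$ on $W^{s,p(\cdot)}(M,\Lambda)$ and the claim that $h \in W^{s,p(\cdot)}$ with the stated estimate; this is where one must use that the harmonic fields $h^r_{*,T,j}$ spanning the range of $\mathcal{P}_T$ are smooth (belong to $C^{s+1,\alpha}(M,\Lambda)$ since $M$ is $C^{s+1,1}$, as recorded in the section on harmonic fields), so that the coefficients $(\omega, h^r_{*,T,j})$ are controlled by $\|\omega\|_{L^1(M,\Lambda)} \le C\|\omega\|_{s,p(\cdot),M}$ via the embedding $L^{p(\cdot)}(M,\Lambda) \hookrightarrow L^1(M,\Lambda)$ on the compact manifold $M$, and then $h$ is a finite linear combination of smooth fields. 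Everything else is a direct citation of Theorem~\ref{T:D2} and the remark after Lemma~\ref{L:ort}.
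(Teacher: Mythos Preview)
Your argument has a regularity gap in the invocation of Theorem~\ref{T:D2}. The standing hypothesis for Theorem~\ref{T:HodgeD} (stated at the beginning of Section~\ref{sec:Hodge}) is that $M$ is only $C^{s+1,1}$, whereas Theorem~\ref{T:D2}(b) requires $M\in C^{s+2,1}$ to conclude $G_D[\eta]\in W^{s+2,p(\cdot)}(M,\Lambda)$ and $\alpha,\beta\in W^{s+1,p(\cdot)}(M,\Lambda)$. Under the available hypothesis you may only apply Theorem~\ref{T:D2}(b) at level $s-1$ (or part~(a) when $s=0$), which yields $G_D[\omega-h]\in W^{s+1,p(\cdot)}(M,\Lambda)$ and hence merely $\alpha,\beta\in W^{\max(s,1),p(\cdot)}(M,\Lambda)$. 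For $s\ge 1$ this falls one derivative short of the claim.

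The paper closes this gap by an additional step you omitted: once one knows $\alpha,\beta\in W^{1,p_{-}}_T(M,\Lambda)$ and that they satisfy the variational relations \eqref{eq:AB}, namely $\mathcal{D}(\alpha,\zeta)=(\omega-h,d\zeta)$ and $\mathcal{D}(\beta,\zeta)=(\omega-h,\delta\zeta)$ for all $\zeta\in\mathrm{Lip}_T(M,\Lambda)$, one applies the a~priori estimate of Theorem~\ref{T:TrueGaffney} (or Theorem~\ref{T:p2}) directly to $\alpha$ and $\beta$. These relations are of the form \eqref{eq1} with data $\varphi=\omega-h\in W^{s,p(\cdot)}$ (respectively $\psi=\omega-h$) and $\eta=0$, and Theorem~\ref{T:TrueGaffney} needs only $M\in C^{s+1,1}$ to give $\alpha,\beta\in W^{s+1,p(\cdot)}(M,\Lambda)$ with the required estimate. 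Your proof works as written when $s=0$ (via Theorem~\ref{T:D2}(a)), and the remaining ingredients you list---boundedness of $\mathcal{P}_T$, $\delta\alpha=0$, $d\beta=0$---are fine; what is missing is precisely this bootstrap from \eqref{eq:AB} for $s\ge 1$.
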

	\begin{proof}
		First we use Theorem ~\ref{T:D2}. Since we assume that $M$ is only $C^{s+1,1}$ we get only $G_D[\omega-h] \in W^{s+1,p(\cdot)}(M,\Lambda)$ and $\alpha,\beta \in W^{\max(s,1),p(\cdot)}(M.\Lambda)$. Since $\alpha,\beta$ satisfy the relations \eqref{eq:AB} (with $\eta=\omega$), we can use Theorem~\ref{T:p2} (or Theorem~\ref{T:TrueGaffney} for $s=0$) to establish that $\alpha,\beta \in W^{s+1,p(\cdot)}(M,\Lambda)$ and satisfy the required estimate.
	\end{proof}
	
	\begin{theorem}\label{T:HodgeN}
		Let $\omega\in W^{s,p(\cdot)}(M,\Lambda)$, $s\in \mathbb{N} \cup\{0\}$. Then there exist $\alpha,\beta \in W^{s+1,p(\cdot)}(M,\Lambda)$ and $h\in \mathcal{H}_N(M)$ such that 
		\begin{gather*}
			\omega = h + d\alpha + \delta \beta,\\
			n\alpha =0, \quad \delta \alpha =0,\quad n\beta =0, \quad d \beta =0,\\
			\|\alpha\|_{s+1,p(\cdot),M}, \|\beta\|_{s+1,p(\cdot),M} \leq C \|\omega\|_{s,p(\cdot),M}.
		\end{gather*}
		These forms are given by $h = \mathcal{P}_N \omega$, $\alpha = \delta G_N[\omega - h]$, $\beta = d G_N[\omega -h]$.
	\end{theorem}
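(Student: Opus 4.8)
\textbf{Proof plan for Theorem~\ref{T:HodgeN}.} The strategy is to mirror the proof of Theorem~\ref{T:HodgeD} exactly, with the roles of tangential and normal boundary conditions interchanged, using the Neumann potential $G_N$ in place of the Dirichlet potential $G_D$ and the projector $\mathcal{P}_N$ in place of $\mathcal{P}_T$. The key inputs are Theorem~\ref{T:N2} (existence and regularity of the Neumann potential), Theorem~\ref{T:p2} and Theorem~\ref{T:TrueGaffney} (the main a priori estimates for the Hodge Laplacian in variable exponent spaces, which hold under the stated $C^{s+1,1}$ regularity of $M$), and the characterization of $\mathcal{P}_N$ from the section on harmonic fields.

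\textbf{Step 1: splitting off the harmonic part.} Set $h = \mathcal{P}_N \omega \in \mathcal{H}_N(M)$. By the characterization of $\mathcal{P}_N$, the form $\omega - h$ satisfies $(\omega - h, h_N) = 0$ for all $h_N \in \mathcal{H}_N(M)$, so the Neumann potential $G_N[\omega - h]$ is defined. Since $M$ is only assumed $C^{s+1,1}$ and $\omega - h \in W^{s,p(\cdot)}(M,\Lambda)$, part (ii) of Theorem~\ref{T:N2} (for $s = 0$) or the appropriate regularity conclusion gives $G_N[\omega - h] \in W^{\max(s,1)+1,p(\cdot)}(M,\Lambda)$ with the corresponding norm bound; a priori this only yields $\alpha = \delta G_N[\omega - h]$ and $\beta = d G_N[\omega - h]$ in $W^{\max(s-1,0)+1,p(\cdot)} = W^{\max(s,1),p(\cdot)}(M,\Lambda)$.

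\textbf{Step 2: bootstrapping regularity of $\alpha$ and $\beta$.} The forms $\alpha$ and $\beta$ satisfy the relations \eqref{eq:ABN} with $\eta = \omega - h$, namely $\mathcal{D}(\alpha,\zeta) = (\omega - h, d\zeta)$ and $\mathcal{D}(\beta,\zeta) = (\omega - h, \delta\zeta)$ for all $\zeta \in \mathrm{Lip}_N(M,\Lambda)$, together with $n\alpha = 0$, $n\beta = 0$, and they are orthogonal to $\mathcal{H}_N(M)$. Feeding these into Theorem~\ref{T:p2} (for $s \geq 1$) or Theorem~\ref{T:TrueGaffney} (for $s = 0$), whose hypotheses require only $C^{s+1,1}$ regularity of the manifold, upgrades $\alpha,\beta$ to $W^{s+1,p(\cdot)}(M,\Lambda)$ and delivers the estimate $\|\alpha\|_{s+1,p(\cdot),M}, \|\beta\|_{s+1,p(\cdot),M} \leq C\|\omega - h\|_{s,p(\cdot),M} \leq C\|\omega\|_{s,p(\cdot),M}$, where the last inequality uses boundedness of $\mathcal{P}_N$ (which follows from finite-dimensionality of $\mathcal{H}_N(M)$ and smoothness of its elements).

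\textbf{Step 3: verifying the decomposition and side conditions.} By \eqref{eq:HN} we have $\omega - h = d\alpha + \delta\beta$, hence $\omega = h + d\alpha + \delta\beta$. The conditions $n\alpha = 0$ and $n\beta = 0$ come from Theorem~\ref{T:N2}. Finally, $\delta\alpha = \delta\delta G_N[\omega - h] = 0$ and $d\beta = dd G_N[\omega - h] = 0$ follow from $d^2 = 0$, $\delta^2 = 0$ on the relevant partial Sobolev spaces (established via the approximation results in Section~\ref{ssec:approx2} and Lemma~\ref{L:ort}). The main (mild) obstacle is bookkeeping the regularity indices in Steps 1--2 so that the bootstrap in Theorem~\ref{T:p2}/\ref{T:TrueGaffney} applies cleanly when $s = 0$ versus $s \geq 1$; everything else is a transcription of the tangential case via Hodge duality or a verbatim repetition of the argument for Theorem~\ref{T:HodgeD}.
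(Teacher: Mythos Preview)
Your proposal is correct and follows exactly the paper's approach: the paper's proof of Theorem~\ref{T:HodgeN} simply says ``We use Theorem~\ref{T:N2}, and the same argument as in the previous theorem to establish the $W^{s+1,p(\cdot)}(M,\Lambda)$ regularity for $\alpha$ and $\beta$,'' and your Steps~1--3 spell out precisely that argument (Theorem~\ref{T:N2} for the Neumann potential, relations~\eqref{eq:ABN}, then the bootstrap via Theorem~\ref{T:p2} at level $s-1$ or Theorem~\ref{T:TrueGaffney} for $s=0$).
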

	\begin{proof}
		We use Theorem~\ref{T:N2}, and the same argument as in the previous theorem to establish the $W^{s+1,p(\cdot)}(M,\Lambda)$ regularity for $\alpha$ and $\beta$.
	\end{proof}

	\begin{theorem}\label{T:Hodge}
		Let $\omega\in W^{s,p(\cdot)}(M,\Lambda)$, $s\in \mathbb{N} \cup\{0\}$. Then there exist $\alpha,\beta \in W^{s+1,p(\cdot)}(M,\Lambda)$ and $h\in W^{s,p(\cdot)}(M,\Lambda) \cap \mathcal{H}(M)$ such that 
		\begin{gather*}
			\omega = h + d\alpha + \delta \beta,\\
			t\alpha =0, \quad \delta \alpha =0,\quad n\beta =0, \quad d \beta =0,\\
			\|h\|_{s,p(\cdot),M},  \|\alpha\|_{s+1,p(\cdot),M}, \|\beta\|_{s+1,p(\cdot),M} \leq C \|\omega\|_{s,p(\cdot),M}.
		\end{gather*}
	\end{theorem}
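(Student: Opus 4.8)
The plan is to assemble the decomposition from the two one-sided Hodge decompositions already established. Applying Theorem~\ref{T:HodgeD} to $\omega$ produces the form $\alpha := \delta G_D[\omega - \mathcal{P}_T\omega]\in W^{s+1,p(\cdot)}(M,\Lambda)$, which satisfies $t\alpha = 0$, $\delta\alpha = 0$ and $\|\alpha\|_{s+1,p(\cdot),M}\le C\|\omega\|_{s,p(\cdot),M}$; dually, applying Theorem~\ref{T:HodgeN} produces $\beta := d G_N[\omega - \mathcal{P}_N\omega]\in W^{s+1,p(\cdot)}(M,\Lambda)$ with $n\beta = 0$, $d\beta = 0$ and $\|\beta\|_{s+1,p(\cdot),M}\le C\|\omega\|_{s,p(\cdot),M}$. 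I would then simply define $h := \omega - d\alpha - \delta\beta$. Since $d$ and $\delta$ are bounded from $W^{s+1,p(\cdot)}(M,\Lambda)$ into $W^{s,p(\cdot)}(M,\Lambda)$, the membership $h\in W^{s,p(\cdot)}(M,\Lambda)$ and the estimate $\|h\|_{s,p(\cdot),M}\le C\|\omega\|_{s,p(\cdot),M}$ are immediate, and the four closure/boundary conditions on $\alpha$ and $\beta$ are exactly those furnished by the two cited theorems. Thus the decomposition $\omega = h + d\alpha + \delta\beta$ with all the stated properties reduces to a single point: that $h$ is a harmonic field, $h\in\mathcal{H}(M)$.

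To verify $\delta h = 0$ weakly I would test against an arbitrary $\xi\in\mathrm{Lip}_0(M,\Lambda)$ and expand $(\delta h,\xi) = (h,d\xi) = (\omega,d\xi) - (d\alpha,d\xi) - (\delta\beta,d\xi)$. The term $(\delta\beta,d\xi)$ vanishes by Lemma~\ref{L:ort} (taking $\xi\in\mathrm{Lip}$, $\beta\in W^{1,1}_N(M,\Lambda)$, which is legitimate because $n\beta = 0$). For $(d\alpha,d\xi)$ I would use that $\delta\alpha = 0$, so the relation~\eqref{eq:AB} of Theorem~\ref{T:D2} (applied with $\eta = \omega - \mathcal{P}_T\omega$) becomes $(d\alpha,d\zeta) = (\omega - \mathcal{P}_T\omega,d\zeta)$ for all $\zeta\in\mathrm{Lip}_T(M,\Lambda)\supset\mathrm{Lip}_0(M,\Lambda)$; feeding this back in gives $(\delta h,\xi) = (\mathcal{P}_T\omega,d\xi)$, which is zero since $\mathcal{P}_T\omega$ is a harmonic field and $\xi$ is supported away from $bM$, so $(\mathcal{P}_T\omega,d\xi) = (\delta\mathcal{P}_T\omega,\xi) = 0$ by the integration-by-parts formula. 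The identity $dh = 0$ is obtained in the mirror-image way: test against $\xi\in\mathrm{Lip}_0(M,\Lambda)$, write $(dh,\xi) = (h,\delta\xi) = (\omega,\delta\xi) - (d\alpha,\delta\xi) - (\delta\beta,\delta\xi)$, kill $(d\alpha,\delta\xi)$ by Lemma~\ref{L:ort} using $t\alpha = 0$, and use~\eqref{eq:ABN} from Theorem~\ref{T:N2} together with $d\beta = 0$ to get $(\delta\beta,\delta\xi) = (\omega - \mathcal{P}_N\omega,\delta\xi)$, leaving $(dh,\xi) = (\mathcal{P}_N\omega,\delta\xi) = (d\mathcal{P}_N\omega,\xi) = 0$. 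Hence $h\in\mathcal{H}(M)$.

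The argument uses no new ideas beyond Theorems~\ref{T:HodgeD},~\ref{T:HodgeN},~\ref{T:D2},~\ref{T:N2} and Lemma~\ref{L:ort}, so there is no serious obstacle; the one place demanding care is the borderline case $s = 0$ with $M$ only $C^{1,1}$, where $\omega$ is merely in $L^{p(\cdot)}(M,\Lambda)$ while $\alpha,\beta\in W^{1,p(\cdot)}(M,\Lambda)$. In that regime $dh$ and $\delta h$ have to be treated as weak (co)differentials throughout, and one cannot naively integrate by parts against $\omega$; this is precisely why the computation above is organized around the variational identities~\eqref{eq:AB} and~\eqref{eq:ABN} and the orthogonality Lemma~\ref{L:ort} rather than pointwise formulas. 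For $s\ge 1$ all the manipulations are the classical ones and no such care is needed.
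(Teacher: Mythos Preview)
Your proof is correct and follows essentially the same route as the paper: define $\alpha=\delta G_D[\omega-\mathcal P_T\omega]$, $\beta=dG_N[\omega-\mathcal P_N\omega]$, set $h=\omega-d\alpha-\delta\beta$, and verify $dh=\delta h=0$ via the variational relations \eqref{eq:AB}, \eqref{eq:ABN} together with Lemma~\ref{L:ort}. The only cosmetic difference is that the paper tests against $\xi\in\mathrm{Lip}_N(M,\Lambda)$ (resp.\ $\mathrm{Lip}_T(M,\Lambda)$) rather than $\mathrm{Lip}_0(M,\Lambda)$, which is a slightly larger class but leads to the same conclusion.
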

	
	\begin{proof}
		Take $\alpha = \delta G_D[\omega - \mathcal{P}_T\omega]$, $\beta = d G_N[\omega -\mathcal{P}_N \omega]$ and let
		$$
		h = \omega - d\alpha - \delta \beta.
		$$
		By the same reasoning as in the proof of Theorem~\ref{T:HodgeD} we get $\alpha,\beta \in W^{s+1,p(\cdot)}(M,\Lambda)$ and the required estimate. 
		
		For any $\xi \in \mathrm{Lip}_N(M,\Lambda)$ using the integration-by-parts formula (see Lemma~\ref{L:ort}) and the relation \eqref{eq:ABN} for $\beta$ we find
		\begin{align*}
			(h,\delta \xi) = (\omega,\delta\xi) - (\delta \beta, \delta \xi) = (\omega- \mathcal{P}_N\omega,\delta\xi) - (\delta \beta, \delta \xi) \\
			= (\delta\beta,\delta\xi) - (\delta\beta , \delta \xi)=0.
		\end{align*} 
		Similarly we get $(h,d\xi)=0$ for any $\xi \in \mathrm{Lip}_T(M,\Lambda)$. Thus $dh=0$, $\delta h=0$.
	\end{proof}

With the principal results on the solvability and regularity of solutions of boundary value problems for the Hodge Laplacian established we proceed to the study of first-order systems of div-curl type.

	\chapter{Theory of First-Order Systems in Variable Exponent Spaces.}\label{sec:theory}
	
	In this Chapter we present results on solvability of \eqref{I}, \eqref{II} in variable exponent Sobolev spaces, and related results. Let $M$ be of the class $C^{s+1,1}$, $s\in \{0\}\cup \mathbb{N}$. Recall that by $G_D[\eta]$ and $G_N[\eta]$ we denote the Dirichlet and Neumann potentials of $\eta$, respectively, while $\mathcal{P}_T$ and $\mathcal{P}_N$ denote the standard projectors on the spaces of Dirichlet and Neumann harmonic fields. 

	\section{Cohomology resolution in variable exponent spaces}
	In the lemmas below $C$ is a quantitative constant which depends on $p_{-}$, $p_{+}$, $c_{\mathrm{log}}(p)$, $M$, and $s$. 
	
	\begin{lemma}\label{L:S1}
		Let $f\in W^{s,p(\cdot)}(M,\Lambda)$ satisfy $df=0$, $tf=0$ and  $\mathcal{P}_T f=0$. Let $G_D[f]$ be the Dirichlet potential of $f$ and $\alpha =\delta G_D[f]$. Then 
		
		\begin{itemize}
			\item  $f=d\alpha$, $\delta \alpha =0$, $\mathcal{P}_T \alpha =0$;
			
			\item $\alpha \in W^{s+1,p(\cdot)}(M,\Lambda)\cap W_T^{1,p(\cdot)}(M,\Lambda)$, $\|\alpha\|_{s+1,p(\cdot),M} \leq C \|f\|_{s,p(\cdot),M}$.
		\end{itemize}
	\end{lemma}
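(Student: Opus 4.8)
The statement asserts that if $f \in W^{s,p(\cdot)}(M,\Lambda)$ is closed, tangential ($tf=0$), and $\mathcal{P}_T f = 0$, then its Dirichlet potential $G_D[f]$ has codifferential $\alpha = \delta G_D[f]$ which represents $f$ exactly, is coclosed and orthogonal to $\mathcal{H}_T(M)$, and enjoys the regularity and estimate inherited from Theorem~\ref{T:D2}. The plan is to extract everything from Theorem~\ref{T:D2} applied with $\eta = f$ and then use the hypothesis $df=0$ together with Corollary~\ref{C:commT} to kill the $d\alpha$-type error term.

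First I would invoke Theorem~\ref{T:D2} with $\eta = f$ (legitimate since $\mathcal{P}_T f = 0$): this gives $G_D[f] \in W^{s+1,p(\cdot)}(M,\Lambda)$ (on a $C^{s+1,1}$ manifold; note the theorem is stated for $C^{s+2,1}$ but the $W^{s+1,p(\cdot)}$ conclusion for $\alpha,\beta$ is what we use, and the part (i) $C^{1,1}$ statement bootstrapped via Theorem~\ref{T:p2}/\ref{T:TrueGaffney} as in the proof of Theorem~\ref{T:HodgeD} upgrades $\alpha$ to $W^{s+1,p(\cdot)}$), together with $\alpha = \delta G_D[f]$ and $\beta = d G_D[f]$ in $W^{s+1,p(\cdot)}(M,\Lambda)$, the boundary conditions $t\alpha = 0$, $t\beta = 0$, the decomposition $f = d\alpha + \delta\beta$, the variational relations $\mathcal{D}(\alpha,\zeta) = (f, d\zeta)$, $\mathcal{D}(\beta,\zeta) = (f,\delta\zeta)$ for all $\zeta \in \mathrm{Lip}_T(M,\Lambda)$, and the estimate $\|\alpha\|_{s+1,p(\cdot),M}, \|\beta\|_{s+1,p(\cdot),M} \leq C\|f\|_{s,p(\cdot),M}$. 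This already delivers the regularity and the quantitative bound on $\alpha$, so the remaining work is purely in identifying the algebraic structure: showing $\delta\beta = 0$ (hence $f = d\alpha$), $\delta\alpha = 0$, and $\mathcal{P}_T\alpha = 0$.

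Next I would show $\beta = d G_D[f] = 0$. Here is where $df = 0$ and $tf = 0$ enter: since $f \in W^{d,p_{-}}_T(M,\Lambda)$ with $df = 0$ and $\mathcal{P}_T f = 0$, Corollary~\ref{C:commT} gives $d G_D[f] = G_D[df] = G_D[0] = 0$ (the last equality by uniqueness in the definition of the Dirichlet potential). Thus $\beta = 0$, whence $\delta\beta = 0$ and $f = d\alpha$ immediately. For $\delta\alpha = 0$: $\alpha = \delta G_D[f]$ and $\delta^2 = 0$ (this holds on $W^{\delta,1}(M,\Lambda)$ by Lemma~\ref{L:ort}, as remarked after that lemma and established in Section~\ref{ssec:approx2}), so $\delta\alpha = \delta^2 G_D[f] = 0$. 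Finally, $\mathcal{P}_T\alpha = 0$: since $G_D[f]$ is, by definition, the Dirichlet potential with $\mathcal{P}_T G_D[f] = 0$, we need that $\delta$ preserves this orthogonality; for any $h_T \in \mathcal{H}_T(M)$ we have $(\alpha, h_T) = (\delta G_D[f], h_T) = (G_D[f], d h_T) + [\text{boundary term}]$ by integration by parts, and both terms vanish — $dh_T = 0$ since $h_T$ is a harmonic field, and the boundary term $[G_D[f], h_T]$ involves $\nu \wedge G_D[f] = 0$ (as $t G_D[f] = 0$), using the integration-by-parts formula from Lemma~\ref{L:ort} applied with $G_D[f] \in W^{1,p_{-}}_T(M,\Lambda)$ and $h_T$ smooth. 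Hence $\mathcal{P}_T\alpha = 0$ and the first bullet is complete; the second bullet ($\alpha \in W^{s+1,p(\cdot)} \cap W^{1,p(\cdot)}_T$ with the estimate) is exactly what Theorem~\ref{T:D2} provided, noting $t\alpha = 0$ means $\alpha \in W^{1,p(\cdot)}_T(M,\Lambda)$ via Lemma~\ref{L:approx1}(e).

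The only mild obstacle is the bookkeeping around manifold regularity: Theorem~\ref{T:D2} part (i)'s full-strength $W^{s+1,p(\cdot)}$ conclusion for $\alpha,\beta$ on a merely $C^{s+1,1}$ manifold must be obtained, as in Theorem~\ref{T:HodgeD}, by first getting $G_D[f] \in W^{s+1,p(\cdot)}$ (so $\alpha,\beta \in W^{\max(s,1),p(\cdot)}$) and then bootstrapping via the relations $\mathcal{D}(\alpha,\zeta) = (f,d\zeta)$, $\mathcal{D}(\beta,\zeta) = (f,\delta\zeta)$ using Theorem~\ref{T:p2} (or Theorem~\ref{T:TrueGaffney} when $s=0$) together with Lemma~\ref{L:quant}. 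I expect this to be a short paragraph mirroring the proof of Theorem~\ref{T:HodgeD} essentially verbatim. Everything else is a direct quotation of already-established facts.
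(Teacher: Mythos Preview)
Your proposal is correct and follows essentially the same route as the paper: invoke the Dirichlet potential machinery to get $f = d\alpha + \delta\beta$ with the $W^{s+1,p(\cdot)}$ estimates, then use Corollary~\ref{C:commT} together with $df=0$, $tf=0$ to conclude $\beta = dG_D[f] = G_D[df] = 0$. The only cosmetic difference is that the paper cites Theorem~\ref{T:HodgeD} (which already packages the $C^{s+1,1}$ regularity bootstrap you spell out) rather than Theorem~\ref{T:D2} directly, and it leaves the verifications $\delta\alpha=0$, $\mathcal{P}_T\alpha=0$ implicit.
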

	
	\begin{proof}
		By Theorem~\ref{T:HodgeD} we have $f =d \alpha + \delta\beta$   
		where $\alpha = \delta G_D[f]$, $\beta = d G_D[f]$ and
		$$
		\|\alpha\|_{s+1,p(\cdot),M} + \|\beta\|_{s+1,p(\cdot),M} \leq C(\mathrm{data},s) \|f\|_{s,p(\cdot),M}.
		$$
		By Corollary~\ref{C:commT}, $\beta = d G_D[f] = G_D[df]=G_D[0]=0$, thus $f=d\alpha$.
	\end{proof}

	\begin{lemma}\label{L:S2}
		Let $f\in W^{s,p(\cdot)}(M,\Lambda)$ satisfy $df=0$ and $\mathcal{P}_N f=0$. Let $G_N[f]$ be the Neumann potential of $f$ and $\alpha =\delta G_N[f]$. Then
		\begin{itemize} 
			\item  $f=d\alpha$, $\delta \alpha =0$, $\mathcal{P}_N \alpha =0$;
			
			\item $\alpha \in W^{s+1,p(\cdot)}(M,\Lambda)\cap W_N^{1,p(\cdot)}(M,\Lambda)$, $\|\alpha\|_{s+1,p(\cdot),M} \leq C \|f\|_{s,p(\cdot),M}$.
			
		\end{itemize}
		
	\end{lemma}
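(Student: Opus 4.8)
\textbf{Proof plan for Lemma~\ref{L:S2}.}

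The plan is to mirror the proof of Lemma~\ref{L:S1}, but now invoking the Neumann machinery (Theorem~\ref{T:HodgeN} and Theorem~\ref{T:N2}) instead of the Dirichlet one, and using Corollary~\ref{C:commN} in place of Corollary~\ref{C:commT}. First I would note that the hypotheses on $f$ are exactly what is needed to apply Theorem~\ref{T:HodgeN}: since $\mathcal{P}_N f=0$, the Neumann Hodge decomposition gives
\begin{equation*}
f = d\alpha + \delta \beta, \qquad \alpha = \delta G_N[f], \quad \beta = d G_N[f],
\end{equation*}
together with $n\alpha = 0$, $\delta\alpha=0$, $n\beta=0$, $d\beta=0$, and the estimate
\begin{equation*}
\|\alpha\|_{s+1,p(\cdot),M} + \|\beta\|_{s+1,p(\cdot),M} \leq C(\mathrm{data},s)\|f\|_{s,p(\cdot),M}.
\end{equation*}
This already yields the claimed regularity and bound for $\alpha$, the fact that $\delta\alpha=0$, and — via $n\alpha=0$ — the membership $\alpha\in W^{s+1,p(\cdot)}(M,\Lambda)\cap W_N^{1,p(\cdot)}(M,\Lambda)$.

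Next I would eliminate the $\delta\beta$ term. Since $df=0$ and $f$ (hence $G_N[f]$, after shifting off the harmonic part, but here $\mathcal{P}_N f = 0$ already) satisfies the hypothesis of Corollary~\ref{C:commN} in its second form, we get $\beta = d G_N[f-\mathcal{P}_N f] = d G_N[f] = G_N[df] = G_N[0] = 0$. Therefore $f = d\alpha$, as required. One small point to check is that $df=0$ is enough to place $f$ in $W^{d,p_{-}}(M,\Lambda)$ so that Corollary~\ref{C:commN} applies; this is immediate since $f\in W^{s,p(\cdot)}(M,\Lambda)\subset L^{p_{-}}(M,\Lambda)$ and its weak differential vanishes, so it lies in $L^{p_{-}}$.

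Finally, for $\mathcal{P}_N\alpha=0$: by construction $\alpha = \delta G_N[f]$ where $G_N[f]\in W_N^{1,p(\cdot)}(M,\Lambda)$ (and $\mathcal{P}_N G_N[f]=0$ by the definition of the Neumann potential). For any $h_N\in\mathcal{H}_N(M)$, the integration-by-parts formula of Lemma~\ref{L:ort} gives $(\delta G_N[f], h_N) = (G_N[f], d h_N) = 0$ since $dh_N = 0$; here Lemma~\ref{L:ort} applies because $G_N[f]$ has vanishing normal part and $h_N$ is Lipschitz (harmonic fields are as smooth as $M$ permits). Hence $\mathcal{P}_N\alpha = 0$. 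The main obstacle — if any — is purely bookkeeping: making sure that the regularity hypotheses in Corollary~\ref{C:commN} and Lemma~\ref{L:ort} are met at the borderline smoothness $s=0$, which is handled exactly as in the proof of Theorem~\ref{T:HodgeD} by first invoking Theorem~\ref{T:N2} (only $C^{1,1}$ needed for $s=0$) and then bootstrapping regularity of $\alpha$ via the relations \eqref{eq:ABN}.
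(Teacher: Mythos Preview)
Your proof is correct and follows essentially the same approach as the paper: apply the Neumann Hodge decomposition (Theorem~\ref{T:HodgeN}) to get $f = d\alpha + \delta\beta$ with the required regularity and estimates, then use Corollary~\ref{C:commN} with $df=0$ to conclude $\beta = dG_N[f] = G_N[df] = 0$. Your explicit verification of $\mathcal{P}_N\alpha = 0$ via Lemma~\ref{L:ort} is a welcome addition that the paper leaves implicit.
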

	
	\begin{proof}
		By Theorem~\ref{T:HodgeN} we have $f =d \alpha + \delta\beta$   
		where $\alpha = \delta G_D[f]$, $\beta = d G_D[f]$ and
		$$
		\|\alpha\|_{s+1,p(\cdot),M} + \|\beta\|_{s+1,p(\cdot),M} \leq C(\mathrm{data},s) \|f\|_{s,p(\cdot),M}.
		$$
		By Corollary~\ref{C:commN}, $\beta = d G_N[f] = G_N[df]=G_N[0]=0$, thus $f=d\alpha$.
	\end{proof}

	In the same way we obtain the following results.
	
	\begin{lemma}\label{L:S3}
		Let $g\in W^{s,p(\cdot)}(M,\Lambda)$ satisfy $\delta g=0$, $ng=0$ and $\mathcal{P}_N g=0$. Let $G_N[g]$ be the Neumann potential of $g$ and $\beta =d G_N[g]$. Then
		\begin{itemize} 
			\item  $g=\delta\beta$, $d \beta =0$, $\mathcal{P}_N \beta =0$;
			
			\item  $\beta\in W^{s+1,p(\cdot)}(M,\Lambda)\cap W_N^{1,p(\cdot)}(M,\Lambda)$, $\|\beta\|_{s+1,p(\cdot),M} \leq C \|g\|_{s,p(\cdot),M}$.
			
		\end{itemize}
	\end{lemma}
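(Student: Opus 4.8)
The statement (Lemma~\ref{L:S3}) is the Hodge-dual analogue of Lemma~\ref{L:S2}, with the roles of $d$ and $\delta$ interchanged and tangential replaced by normal, so the plan is to mirror the proof of Lemma~\ref{L:S2} almost verbatim. First I would invoke the Neumann Hodge decomposition of Theorem~\ref{T:HodgeN} applied to $g$: since by hypothesis $\mathcal{P}_N g = 0$, we obtain
\begin{equation*}
g = d\alpha + \delta\beta, \quad n\alpha = 0,\ \delta\alpha = 0,\ n\beta = 0,\ d\beta = 0,
\end{equation*}
with $\alpha = \delta G_N[g]$, $\beta = d G_N[g] \in W^{s+1,p(\cdot)}(M,\Lambda)$ and the estimate
\begin{equation*}
\|\alpha\|_{s+1,p(\cdot),M} + \|\beta\|_{s+1,p(\cdot),M} \leq C \|g\|_{s,p(\cdot),M}.
\end{equation*}
This immediately gives the claimed regularity and norm bound for $\beta$, the relation $d\beta = 0$, and $n\beta = 0$ (so in particular $\beta \in W_N^{1,p(\cdot)}(M,\Lambda)$).

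The remaining point is to show $g = \delta\beta$, i.e.\ that the $d\alpha$ term vanishes. For this I would use Corollary~\ref{C:commN}, which states that if $g \in W^{\delta,p_{-}}_N(M,\Lambda)$ then $\delta G_N[g - \mathcal{P}_N g] = G_N[\delta g]$. Since $\mathcal{P}_N g = 0$ and $\delta g = 0$ by hypothesis, this yields $\alpha = \delta G_N[g] = G_N[\delta g] = G_N[0] = 0$, hence $d\alpha = 0$ and $g = \delta\beta$. Finally, $\mathcal{P}_N \beta = 0$ follows because $\beta = dG_N[g]$ and $G_N[g]$ is by construction orthogonal to $\mathcal{H}_N(M)$: for any $h_N \in \mathcal{H}_N(M)$ we have $(\beta, h_N) = (dG_N[g], h_N) = (G_N[g], \delta h_N) = 0$ using the integration-by-parts formula of Lemma~\ref{L:ort} (legitimate since $h_N$ is a smooth normal harmonic field and $dh_N = 0$), which gives $\mathcal{P}_N\beta = 0$.

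I do not anticipate a genuine obstacle here; the lemma is a routine transcription of Lemma~\ref{L:S2} via Hodge duality, and every ingredient (Theorem~\ref{T:HodgeN}, Corollary~\ref{C:commN}, Lemma~\ref{L:ort}) has already been established. The only mild care needed is checking that the hypotheses of Corollary~\ref{C:commN} are met — namely that $g \in W_N^{\delta,p_{-}}(M,\Lambda)$, which is immediate since $g \in W^{s,p(\cdot)}(M,\Lambda) \subset L^{p_{-}}(M,\Lambda)$, $\delta g = 0 \in L^{p_{-}}(M,\Lambda)$, and $ng = 0$ by assumption — and, when $s = 0$, using Theorem~\ref{T:TrueGaffney} in place of Theorem~\ref{T:p2} to get the $W^{1,p(\cdot)}$ (rather than merely $L^{p(\cdot)}$) regularity of $\beta$, exactly as in the proofs of Lemmas~\ref{L:S1} and~\ref{L:S2}.
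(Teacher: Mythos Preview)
Your proposal is correct and follows exactly the approach the paper indicates: the paper simply writes ``In the same way we obtain the following results'' before Lemmas~\ref{L:S3} and~\ref{L:S4}, meaning one mirrors the proofs of Lemmas~\ref{L:S1}--\ref{L:S2} using Theorem~\ref{T:HodgeN} and Corollary~\ref{C:commN}, which is precisely what you do. Your extra verifications (that $g \in W_N^{\delta,p_-}(M,\Lambda)$ so Corollary~\ref{C:commN} applies, and the explicit check that $\mathcal{P}_N\beta = 0$) are correct refinements of details the paper leaves implicit.
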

	
	\begin{lemma}\label{L:S4}
		Let $g\in W^{s,p(\cdot)}(M,\Lambda)$ satisfy $\delta g=0$ and $\mathcal{P}_T g=0$. Let $G_D[g]$ be the Dirichlet potential of $g$ and $\beta =d G_D[g]$. Then
		\begin{itemize} 
			\item $g=\delta\beta$, $d \beta =0$, $\mathcal{P}_T \beta=0$;
			
			\item $\beta \in W^{s+1,p(\cdot)}(M,\Lambda) \cap W_T^{1,p(\cdot)}(M,\Lambda)$, $\|\beta\|_{s+1,p(\cdot),M} \leq C \|g\|_{s,p(\cdot),M}$.
			
			
		\end{itemize}
	\end{lemma}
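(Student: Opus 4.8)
\textbf{Plan of proof for Lemma~\ref{L:S4}.}
The statement is the exact Hodge-dual/mirror analogue of Lemma~\ref{L:S1}, with $d$ and $\delta$ interchanged, so the plan is to run the same argument through the second half of the Dirichlet Hodge decomposition (Theorem~\ref{T:HodgeD}) together with the commutation property for the Dirichlet potential (Corollary~\ref{C:commT}). First I would apply Theorem~\ref{T:HodgeD} to $g\in W^{s,p(\cdot)}(M,\Lambda)$: this gives
$$
g = h + d\alpha + \delta\beta, \qquad h=\mathcal{P}_T g,\quad \alpha = \delta G_D[g-h],\quad \beta = d G_D[g-h],
$$
with $t\alpha=0$, $\delta\alpha=0$, $t\beta=0$, $d\beta=0$, and the estimate
$$
\|\alpha\|_{s+1,p(\cdot),M},\ \|\beta\|_{s+1,p(\cdot),M} \leq C\|g\|_{s,p(\cdot),M}.
$$
Since we are given $\mathcal{P}_T g=0$ we have $h=0$ and $G_D[g-h]=G_D[g]$, so $g = d\alpha + \delta\beta$ with $\beta = d G_D[g]$ exactly as in the statement. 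The regularity $\beta\in W^{s+1,p(\cdot)}(M,\Lambda)$ and the bound $\|\beta\|_{s+1,p(\cdot),M}\leq C\|g\|_{s,p(\cdot),M}$ are then immediate from Theorem~\ref{T:HodgeD}; the extra claim $\beta\in W^{1,p(\cdot)}_T(M,\Lambda)$ is the $t\beta=0$ assertion of that theorem, and $d\beta=0$ is also part of it, which is what we want.

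The one point that requires the hypothesis $\delta g=0$ is showing that the $d\alpha$-term drops, i.e. $\alpha = \delta G_D[g] = 0$, so that $g=\delta\beta$. For this I would invoke Corollary~\ref{C:commT}: since $\delta g$ is defined and (if $s=0$) lies in $L^{p_{-}}(M,\Lambda)$ — in other words $g\in W^{\delta,p_{-}}(M,\Lambda)$ — and $\mathcal{P}_T g=0$, the corollary gives $\delta G_D[g] = \delta G_D[g-\mathcal{P}_T g] = G_D[\delta g] = G_D[0]=0$. Hence $\alpha=0$ and $g=\delta\beta$ as claimed. Finally $\mathcal{P}_T\beta=0$: by construction $\beta = d G_D[g]$ with $G_D[g]\in W^{1,p_{-}}_T(M,\Lambda)$ satisfying $\mathcal{P}_T G_D[g]=0$; one checks, using the integration-by-parts formula together with $t\beta=0$ and the fact that every $h_T\in\mathcal{H}_T(M)$ is closed and coclosed, that $(\beta,h_T) = (d G_D[g],h_T) = (G_D[g],\delta h_T) + [\,\cdot\,,\cdot\,] = 0$ (the boundary term vanishes since $t G_D[g]=0$), so $\mathcal{P}_T\beta=0$.

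I do not anticipate any real obstacle here: every ingredient has been established earlier, and the proof is essentially a bookkeeping exercise dual to Lemma~\ref{L:S1}. The only mild subtlety, worth a sentence in the write-up, is the same one flagged in Lemma~\ref{T:D2}(ii): on a merely $C^{1,1}$ manifold one knows a priori only that $\delta G_D[g]\in L^{p(\cdot)}(M,\Lambda)$ rather than $W^{1,p(\cdot)}$, so the identity $\delta G_D[g]=G_D[\delta g]$ should be read through Corollary~\ref{C:commT} (whose proof already handles the needed approximation), not by naive integration by parts. For $s\geq 1$, or $M$ of class $C^{2,1}$, this is automatic. Thus the lemma follows, and the proof can be stated in two or three lines as "By Theorem~\ref{T:HodgeD}, $g=d\alpha+\delta\beta$ with $\alpha=\delta G_D[g]$, $\beta=d G_D[g]$ and the stated estimate; by Corollary~\ref{C:commT}, $\alpha = G_D[\delta g]=0$, so $g=\delta\beta$; the remaining properties $d\beta=0$, $t\beta=0$, $\mathcal{P}_T\beta=0$ are read off from the construction."
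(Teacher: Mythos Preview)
Your proposal is correct and follows essentially the same approach as the paper: the paper states Lemma~\ref{L:S4} (together with Lemma~\ref{L:S3}) under the rubric ``In the same way we obtain the following results,'' referring back to the proof of Lemma~\ref{L:S1}, which is precisely the Hodge-dual argument you spell out---apply Theorem~\ref{T:HodgeD}, then use Corollary~\ref{C:commT} with $\delta g=0$ to kill the $\alpha$-term. Your extra sentence verifying $\mathcal{P}_T\beta=0$ via integration by parts is a harmless elaboration of a fact the paper treats as implicit in the construction (cf.\ the proof of Corollary~\ref{C:commT}).
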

	

	\section{Gauge fixing}
	The following statements are immediate corollaries of Lemmas~\ref{L:S1}, \ref{L:S2}, \ref{L:S3}, \ref{L:S4} respectively.
	
	\begin{corollary}\label{C:I0}
		Let $\eta \in W_T^{d,p_{-}}(M,\Lambda)$ with $d\eta \in W^{s,p(\cdot)}(M,\Lambda)$. Then there exists $\omega \in W^{s+1,p(\cdot)}(M,\Lambda)\cap W_T^{1,p(\cdot)}(M,\Lambda)$ such that $d\omega = d\eta$, $\delta \omega =0$, $\mathcal{P}_T \omega=0$, and 
		\begin{equation}\label{L:I0_est}
			\|\omega\|_{s+1,p(\cdot),M} \leq C \|d\eta\|_{s,p(\cdot),M}
		\end{equation}
		with a constant $C=C(\mathrm{data},s)$. Precisely, the solution is given by $\omega = \delta G_D[d\eta]$.
	\end{corollary}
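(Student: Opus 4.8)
The plan is to reduce Corollary~\ref{C:I0} to a direct application of Lemma~\ref{L:S1}. The only discrepancy is that Lemma~\ref{L:S1} takes as input a form $f$ with $df=0$, $tf=0$ and $\mathcal{P}_T f = 0$, whereas here we are given a form $\eta \in W^{d,p_{-}}_T(M,\Lambda)$ whose differential $d\eta$ lies in $W^{s,p(\cdot)}(M,\Lambda)$. So the first step is to verify that $f := d\eta$ satisfies the hypotheses of Lemma~\ref{L:S1}.

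First, I would check $df = 0$: since $\eta \in W^{d,p_{-}}_T(M,\Lambda)$, we have $d\eta \in L^{p_{-}}(M,\Lambda)$, and by the $d^2=0$ property established after Corollary~\ref{C:approx2} for forms in $W^{d,1}(M,\Lambda)$ (or by Lemma~\ref{L:ort}) we get $d(d\eta) = 0$. Next, $t(d\eta) = 0$: this follows from the definition of $W^{d,p(\cdot)}_T(M,\Lambda)$ together with Corollary~\ref{C:approx2}(ii), since for $\omega \in W^{d,p(\cdot)}_T(M,\Lambda)$ and any $\xi \in \mathrm{Lip}(M,\Lambda)$ one has $(d\omega,\xi) = (\omega,\delta\xi) - [\,\cdot\,]$ with the boundary term vanishing; phrased via the weak definition in Section~\ref{sec:partSob}, $t(d\eta)=0$ because $(d\eta,\delta\varphi) = (\eta, \delta\delta\varphi) = 0$ for all $\varphi \in \mathrm{Lip}(M,\Lambda)$ using $\delta^2 = 0$. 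Finally, $\mathcal{P}_T(d\eta) = 0$: for any $h_T \in \mathcal{H}_T(M)$ we have $(d\eta, h_T) = (\eta, \delta h_T) + [\eta, h_T]$ by integration by parts; since $h_T$ is a harmonic field, $\delta h_T = 0$, and since $t\eta = 0$ the boundary term $[\eta, h_T]$ vanishes, so $(d\eta, h_T) = 0$, which by the characterization of $\mathcal{P}_T$ means $\mathcal{P}_T(d\eta)=0$. Here one should be slightly careful that $\eta$ is only in $W^{d,p_{-}}$, not $W^{1,p_{-}}$, so the integration by parts must be justified via the approximation Lemma~\ref{L:approx2}(i) for $W^{d,p(\cdot)}_T$, which provides $\eta_\varepsilon \in C_0^{s-1,1}(M,\Lambda)$ with $\eta_\varepsilon \to \eta$ in $W^{d,p_{-}}(M,\Lambda)$ and $t\eta_\varepsilon = 0$; then pass to the limit in the identity $(d\eta_\varepsilon, h_T) = 0$.

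Having verified the hypotheses, I would apply Lemma~\ref{L:S1} to $f = d\eta \in W^{s,p(\cdot)}(M,\Lambda)$. This yields $\alpha = \delta G_D[d\eta] \in W^{s+1,p(\cdot)}(M,\Lambda) \cap W^{1,p(\cdot)}_T(M,\Lambda)$ satisfying $d\alpha = d\eta$, $\delta\alpha = 0$, $\mathcal{P}_T\alpha = 0$, together with the estimate $\|\alpha\|_{s+1,p(\cdot),M} \leq C\|d\eta\|_{s,p(\cdot),M}$. Setting $\omega = \alpha = \delta G_D[d\eta]$ gives exactly the conclusion of the corollary, including the stated explicit formula.

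I do not expect any serious obstacle here; the content is entirely in Lemma~\ref{L:S1} and this corollary is essentially a restatement with the hypothesis repackaged. The only mildly delicate point is the justification of the integration-by-parts identities for $\eta$ at the low regularity level $W^{d,p_{-}}_T$ rather than $W^{1,p_{-}}_T$, which is handled cleanly by the density statement in Lemma~\ref{L:approx2}(i) (approximation by $C_0^{s-1,1}$ forms with vanishing tangential part) combined with $d^2 = 0$ and $\delta^2 = 0$ on the relevant partial Sobolev spaces; so the proof will be just a few lines.
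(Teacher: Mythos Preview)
Your proposal is correct and takes the same approach as the paper, which treats Corollary~\ref{C:I0} as an immediate consequence of Lemma~\ref{L:S1}; you have simply spelled out the verification that $f=d\eta$ satisfies the three hypotheses $df=0$, $tf=0$, $\mathcal{P}_T f=0$, which the paper leaves implicit. One small remark: the direct identity $(d\eta,\delta\varphi)=(\eta,\delta\delta\varphi)$ is not literally available because $\delta\varphi$ need not be Lipschitz on a $C^{1,1}$ manifold, but your use of Corollary~\ref{C:approx2}(ii) (applied with test form $\delta\varphi\in W^{\delta,p_{-}'}$, where $\delta(\delta\varphi)=0$) or the approximation route via Lemma~\ref{L:approx2}(i) handles this cleanly, as you note.
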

	
	\begin{corollary}\label{C:I1}
		Let $\eta \in W^{d,p_{-}}(M,\Lambda)$ with $d\eta \in  W^{s,p(\cdot)}(M,\Lambda)$. Then there exists $\omega \in W^{s+1,p(\cdot)}(M,\Lambda)\cap W_N^{1,p(\cdot)}(M,\Lambda)$ such that $d\omega = d\eta$, $\delta \omega =0$, $\mathcal{P}_N \omega=0$, and 
		\begin{equation}\label{L:I1_est}
			\|\omega\|_{s+1,p(\cdot),M} \leq C \|d\eta\|_{s,p(\cdot),M}
		\end{equation}
		with a constant $C=C(\mathrm{data},s)$. Precisely, the solution is given by $\omega = \delta G_N[d\eta]$.
	\end{corollary}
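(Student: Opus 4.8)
\textbf{Proof plan for Corollary~\ref{C:I1}.} The statement is essentially a direct translation of Lemma~\ref{L:S2} into the language of "gauge fixing", so the plan is to reduce it to that lemma in two moves. First I would set $f := d\eta$. Since $\eta \in W^{d,p_{-}}(M,\Lambda)$ we have $f \in L^{p_{-}}(M,\Lambda)$, and by hypothesis $f \in W^{s,p(\cdot)}(M,\Lambda)$; moreover $df = d(d\eta) = 0$ because $d^2 = 0$ holds on $W^{d,1}(M,\Lambda)$ (this identity was established in the excerpt right after Corollary~\ref{C:approx2}, using the approximation Lemma~\ref{L:approx2} together with Lemma~\ref{L:ort}). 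Next I would replace $f$ by $f - \mathcal{P}_N f$ so that the hypothesis $\mathcal{P}_N(\cdot) = 0$ of Lemma~\ref{L:S2} is met; but in fact $\mathcal{P}_N f = \mathcal{P}_N (d\eta) = 0$ already, since every Neumann harmonic field $h_N$ satisfies $(d\eta, h_N) = (\eta, \delta h_N) + [\eta,h_N]$ — wait, one must be slightly careful here about the boundary term, so let me instead argue via Lemma~\ref{L:ort}: for $h_N \in \mathcal{H}_N(M) \subset \mathrm{Lip}_N(M,\Lambda)$ (harmonic fields are smooth to the order the manifold allows) and $\eta \in W^{d,p_{-}}(M,\Lambda)$, Corollary~\ref{C:approx2}(i) combined with $\delta h_N = 0$ gives $(d\eta,h_N) = 0$. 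Hence $\mathcal{P}_N f = 0$ automatically and no correction is needed.

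With $f = d\eta$ verified to satisfy all three hypotheses of Lemma~\ref{L:S2} ($df = 0$, $f \in W^{s,p(\cdot)}(M,\Lambda)$, $\mathcal{P}_N f = 0$), I would simply apply that lemma: it produces $\omega := \alpha = \delta G_N[f] = \delta G_N[d\eta]$ with $f = d\omega$, $\delta\omega = 0$, $\mathcal{P}_N \omega = 0$, $\omega \in W^{s+1,p(\cdot)}(M,\Lambda) \cap W^{1,p(\cdot)}_N(M,\Lambda)$, and the estimate $\|\omega\|_{s+1,p(\cdot),M} \leq C\|f\|_{s,p(\cdot),M} = C\|d\eta\|_{s,p(\cdot),M}$. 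Since $f = d\eta$ by construction, $d\omega = d\eta$, which is exactly the claimed identity. The constant $C = C(\mathrm{data},s)$ is the one furnished by Lemma~\ref{L:S2}, which in turn traces back to the quantitative Neumann potential estimates of Theorem~\ref{T:N2} (and through it to the a priori estimates of Chapter~\ref{Sec:parametrix}), so it is indeed quantitative and of the asserted dependence. This completes the proof modulo the two bookkeeping points above.

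\textbf{Main obstacle.} There is essentially no analytic difficulty here — the corollary is a formal consequence of Lemma~\ref{L:S2}. The only point requiring genuine care is the verification that $\mathcal{P}_N(d\eta) = 0$ for $\eta$ merely in $W^{d,p_{-}}(M,\Lambda)$ (rather than in a smoother space), i.e.\ justifying the integration by parts $(d\eta, h_N) = (\eta, \delta h_N) = 0$ against a harmonic field; this is handled by the density/approximation machinery (Lemma~\ref{L:approx2}, Corollary~\ref{C:approx2}) which lets one pass from the smooth case to $W^{d,p_{-}}$, using that $h_N \in \mathrm{Lip}_N(M,\Lambda)$ and $\delta h_N = 0$. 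Likewise one should note that $d^2\eta = 0$ in the weak sense requires the same approximation argument rather than a naive pointwise computation, since on a $C^{1,1}$ (or $C^{s+1,1}$) manifold the coefficients are only Lipschitz. Both facts are already recorded in the excerpt, so the proof is short.
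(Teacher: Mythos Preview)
Your proof is correct and follows exactly the approach the paper intends: the paper states that Corollary~\ref{C:I1} is an ``immediate corollary'' of Lemma~\ref{L:S2}, and you have correctly supplied the two verification steps (that $d(d\eta)=0$ via the approximation results after Corollary~\ref{C:approx2}, and that $\mathcal{P}_N(d\eta)=0$ via Corollary~\ref{C:approx2}(i) applied with $\varphi = h_N \in W^{\delta,p_{-}'}_N(M,\Lambda)$ and $\delta h_N = 0$) needed to feed $f = d\eta$ into that lemma.
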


	\begin{corollary}\label{C:I2}
		Let $\eta \in W_N^{\delta ,p_{-}}(M,\Lambda)$ with $\delta\eta \in W^{s,p(\cdot)}(M,\Lambda)$. Then there exists $\omega \in W^{s+1,p(\cdot)}(M,\Lambda)\cap W_N^{1,p(\cdot)}(M,\Lambda)$ such that $\delta\omega = \delta\eta$, $d \omega =0$, $\mathcal{P}_N \omega=0$, and 
		\begin{equation}\label{L:I2_est}
			\|\omega\|_{s+1,p(\cdot),M} \leq C \|\delta\eta\|_{s,p(\cdot),M}
		\end{equation}
		with a constant $C=C(\mathrm{data},s)$. Precisely, the solution is given by $\omega = d G_N[\delta \eta]$.
	\end{corollary}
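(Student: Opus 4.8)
The plan is to mirror the proof of Lemma~\ref{L:S4} exactly, with the roles of $d$ and $\delta$ interchanged and with the Neumann potential $G_N$ in place of the Dirichlet potential $G_D$. First I would observe that $\eta \in W_N^{\delta,p_{-}}(M,\Lambda)$ with $\delta\eta \in W^{s,p(\cdot)}(M,\Lambda)$ puts us precisely in the setting of Lemma~\ref{L:S3}: the form $g := \delta\eta$ satisfies $\delta g = \delta^2\eta = 0$ (using $\delta^2 = 0$ on $W^{\delta,1}(M,\Lambda)$, as established via Lemma~\ref{L:approx2}), it has $ng = 0$ since $\delta\eta \in W_N^{\delta,p_{-}}$-type vanishing is inherited from $n\eta = 0$, and one can arrange $\mathcal{P}_N g = 0$ — indeed $g = \delta\eta$ is automatically $L^2$-orthogonal to every $h_N \in \mathcal{H}_N(M)$ because $(\delta\eta, h_N) = (\eta, d h_N) + [\dots] = 0$ by the integration-by-parts formula together with $d h_N = 0$ and $n\eta = 0$ killing the boundary term.

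Granting this, I would apply Lemma~\ref{L:S3} directly to $g = \delta\eta$: it produces $\beta := d G_N[\delta\eta] \in W^{s+1,p(\cdot)}(M,\Lambda) \cap W_N^{1,p(\cdot)}(M,\Lambda)$ with $g = \delta\beta$, i.e. $\delta\omega = \delta\eta$ where $\omega := \beta$, together with $d\beta = d^2 G_N[\delta\eta] = 0$ (again $d^2 = 0$), $\mathcal{P}_N\beta = 0$, and the estimate $\|\beta\|_{s+1,p(\cdot),M} \leq C\|\delta\eta\|_{s,p(\cdot),M}$ with $C = C(\mathrm{data},s)$. Setting $\omega = d G_N[\delta\eta]$ then yields all the claimed properties verbatim.

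The only point requiring a little care — and I expect it to be the main (minor) obstacle — is the verification that $\mathcal{P}_N(\delta\eta) = 0$ and that $n(\delta\eta) = 0$ in the weak sense required by Lemma~\ref{L:S3}, since $\eta$ itself is only assumed to lie in $W_N^{\delta,p_{-}}(M,\Lambda)$ rather than in a higher Sobolev space; one must use Corollary~\ref{C:approx2} to justify the integration-by-parts identity $(\delta\eta, h_N) = (\eta, dh_N)$ for harmonic fields $h_N \in \mathcal{H}_N(M) \subset \mathrm{Lip}(M,\Lambda)$, and to transfer the boundary condition. Once that is in place, the statement follows with no further computation, exactly as the preceding three corollaries follow from Lemmas~\ref{L:S1}, \ref{L:S2}, \ref{L:S3}.
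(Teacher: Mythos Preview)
Your proposal is correct and matches the paper's approach: the paper simply declares Corollaries~\ref{C:I0}--\ref{C:I3} to be ``immediate corollaries of Lemmas~\ref{L:S1}, \ref{L:S2}, \ref{L:S3}, \ref{L:S4} respectively,'' so Corollary~\ref{C:I2} is obtained by applying Lemma~\ref{L:S3} to $g=\delta\eta$, exactly as you do. Your identification of the only nontrivial step---checking $n(\delta\eta)=0$ and $\mathcal{P}_N(\delta\eta)=0$ via the approximation results and Corollary~\ref{C:approx2}---is apt and fills in detail the paper omits; the opening reference to ``mirror the proof of Lemma~\ref{L:S4}'' is a slip, since it is Lemma~\ref{L:S3} you actually (and correctly) invoke.
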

	
	\begin{corollary}\label{C:I3}
		Let $\eta \in W^{\delta ,p_{-}}(M,\Lambda)$ with $d\eta \in  W^{s,p(\cdot)}(M,\Lambda)$. Then there exists $\omega \in W^{s+1,p(\cdot)}(M,\Lambda)\cap W_T^{1,p(\cdot)}(M,\Lambda)$ such that $\delta\omega = \delta\eta$, $d \omega =0$, $\mathcal{P}_T \omega=0$, and 
		\begin{equation}\label{L:I3_est}
			\|\omega\|_{s+1,p(\cdot),M} \leq C \|\delta\eta\|_{s,p(\cdot),M}
		\end{equation}
		with a constant $C=C(\mathrm{data},s)$. Precisely, the solution is given by $\omega = d G_T[\delta \eta]$.
	\end{corollary}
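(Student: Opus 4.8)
The plan is to mirror the proof of Corollary~\ref{C:I2}, replacing the roles of $d$ and $\delta$ and of Neumann data by Dirichlet data, and to invoke Lemma~\ref{L:S4} as the source of the structural decomposition. First I would observe that the hypotheses $\eta \in W^{\delta,p_{-}}(M,\Lambda)$ with $\delta\eta \in W^{s,p(\cdot)}(M,\Lambda)$ put us in a position to apply Lemma~\ref{L:S4} to the form $g := \delta\eta$: indeed $\delta g = \delta^2\eta = 0$ (valid on the $C^{1,1}$ manifold by the remark following Lemma~\ref{L:ort}, since $\delta\eta \in W^{1,1}(M,\Lambda)$ once $s\ge 1$, and for $s=0$ one uses the $d^2=0$/$\delta^2=0$ identities on $W^{\delta,1}(M,\Lambda)$ established after Lemma~\ref{L:approx2}), and one needs in addition that $\mathcal{P}_T g = 0$.

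The point $\mathcal{P}_T(\delta\eta)=0$ I would establish directly from the characterization of $\mathcal{P}_T$: for every $h_T \in \mathcal{H}_T(M)$ one has $(\delta\eta, h_T) = (\eta, d h_T) + [\text{boundary term}]$, and since harmonic fields satisfy $d h_T = 0$ together with the compatibility of the tangential boundary condition, the pairing vanishes. More precisely, using Corollary~\ref{C:approx2}(iii) together with $d h_T = 0$ gives $(\delta\eta, h_T) = (\eta, d h_T) = 0$, so $\delta\eta \perp \mathcal{H}_T(M)$ and thus $\mathcal{P}_T(\delta\eta)=0$. With this verified, Lemma~\ref{L:S4} applied to $g = \delta\eta$ produces $\beta := d\,G_D[\delta\eta] \in W^{s+1,p(\cdot)}(M,\Lambda)\cap W^{1,p(\cdot)}_T(M,\Lambda)$ with $\delta\beta = \delta\eta$, $d\beta = 0$, $\mathcal{P}_T\beta = 0$, and $\|\beta\|_{s+1,p(\cdot),M} \le C\|\delta\eta\|_{s,p(\cdot),M}$ with $C=C(\mathrm{data},s)$. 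Setting $\omega := \beta$, or equivalently $\omega = d\,G_D[\delta\eta]$ (matching the "$d G_T$" notation in the statement, which I read as $G_D$), gives exactly the claimed conclusion.

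The steps in order are therefore: (1) record $\delta^2\eta = 0$ and hence $\delta(\delta\eta) = 0$ for $\delta\eta$; (2) verify $\mathcal{P}_T(\delta\eta) = 0$ via the integration-by-parts/density argument above; (3) apply Lemma~\ref{L:S4} with $g = \delta\eta \in W^{s,p(\cdot)}(M,\Lambda)$ to obtain $\beta$ with the stated properties and estimate; (4) set $\omega = \beta = d\,G_D[\delta\eta]$ and read off $\delta\omega = \delta\eta$, $d\omega = 0$, $\mathcal{P}_T\omega = 0$, and \eqref{L:I3_est}. I expect step (2) to be the only place requiring care, since it is where the tangential boundary condition and the definition of the projector interact: one must be sure that the boundary term in the integration-by-parts formula genuinely drops (it does, because $h_T$ has vanishing differential and the pairing $(\delta\eta,h_T)$ can be computed against $h_T \in \mathrm{Lip}_T(M,\Lambda)$-type approximants, or simply via Corollary~\ref{C:approx2}(iii) which is tailored to exactly this situation). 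Everything else is a direct citation of Lemma~\ref{L:S4}.
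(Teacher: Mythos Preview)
Your proposal is correct and matches the paper's intended argument: the paper declares Corollaries~\ref{C:I0}--\ref{C:I3} to be ``immediate corollaries of Lemmas~\ref{L:S1}, \ref{L:S2}, \ref{L:S3}, \ref{L:S4} respectively,'' so the proof is precisely to apply Lemma~\ref{L:S4} with $g=\delta\eta$, after checking its hypotheses $\delta g=0$ and $\mathcal{P}_T g=0$. You carry out that verification cleanly (the orthogonality $(\delta\eta,h_T)=(\eta,dh_T)=0$ via Corollary~\ref{C:approx2}(iii) is exactly the right tool), and you correctly read through the two typos in the statement: the hypothesis should be $\delta\eta\in W^{s,p(\cdot)}(M,\Lambda)$ rather than $d\eta$, and $G_T$ should be $G_D$.
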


	Now we return to the Hodge decomposition. Corollaries~\ref{C:I0}--\ref{C:I3} lead to the following statement.
	
	\begin{corollary}
		All of the following subspaces are closed in $W^{s,p(\cdot)}(M,\Lambda):$ 
		\begin{align*}
			d W^{s+1,p(\cdot)}_T(M,\Lambda), \quad  d W^{s+1,p(\cdot)}(M,\Lambda), \quad dW_N^{s+1,p(\cdot)}(M,\Lambda) \\
			\delta W^{s+1,p(\cdot)}_N(M,\Lambda), \quad \delta W^{s+1,p(\cdot)}(M,\Lambda), \quad \delta W^{s+1,p(\cdot)}_T(M,\Lambda). 
		\end{align*}
	\end{corollary}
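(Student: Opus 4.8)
The plan is to deduce the closedness of each of these six subspaces directly from the gauge-fixing corollaries that immediately precede the statement. The key point is that each of the six corollaries~\ref{C:I0}--\ref{C:I3} (together with the two that follow by Hodge duality) not only establishes solvability of the relevant first-order system, but also provides a \emph{bounded linear right inverse} on the level of a priori estimates. So the argument is a soft one: a subspace of the form $T(W^{s+1,p(\cdot)})$ inside $W^{s,p(\cdot)}(M,\Lambda)$, where $T$ is $d$ or $\delta$, is closed as soon as one has an estimate of the type $\|\omega\|_{s+1,p(\cdot),M} \le C\|T\eta\|_{s,p(\cdot),M}$ for a suitably gauged representative $\omega$.

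Concretely, I would argue as follows for $dW^{s+1,p(\cdot)}_T(M,\Lambda)$; the other five cases are verbatim analogues using the corresponding corollary. Suppose $f_k = d\eta_k$ with $\eta_k \in W^{s+1,p(\cdot)}_T(M,\Lambda)$ and $f_k \to f$ in $W^{s,p(\cdot)}(M,\Lambda)$. Each $\eta_k$ lies in $W^{d,p_{-}}_T(M,\Lambda)$ with $d\eta_k = f_k \in W^{s,p(\cdot)}(M,\Lambda)$, so Corollary~\ref{C:I0} applies and produces $\omega_k := \delta G_D[d\eta_k] \in W^{s+1,p(\cdot)}(M,\Lambda)\cap W^{1,p(\cdot)}_T(M,\Lambda)$ with $d\omega_k = d\eta_k = f_k$, $\delta\omega_k = 0$, $\mathcal{P}_T\omega_k = 0$, and $\|\omega_k\|_{s+1,p(\cdot),M} \le C\|f_k\|_{s,p(\cdot),M}$. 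Since $d$ is linear, $\omega_k - \omega_l = \delta G_D[f_k - f_l]$ and the same estimate gives $\|\omega_k - \omega_l\|_{s+1,p(\cdot),M} \le C\|f_k - f_l\|_{s,p(\cdot),M}$; hence $(\omega_k)$ is Cauchy in $W^{s+1,p(\cdot)}(M,\Lambda)$ and converges to some $\omega$ in that space. Because $d: W^{s+1,p(\cdot)}(M,\Lambda)\to W^{s,p(\cdot)}(M,\Lambda)$ is bounded (as recorded in the section on variable exponent Sobolev spaces on manifolds), $f_k = d\omega_k \to d\omega$, so $f = d\omega$. It remains to check $\omega \in W^{s+1,p(\cdot)}_T(M,\Lambda)$: each $\omega_k \in W^{1,p(\cdot)}_T(M,\Lambda)$, and this subspace is closed in $W^{1,p(\cdot)}(M,\Lambda)$ by its very definition (via $\nu\wedge\omega \in W^{1,p(\cdot)}_0$, together with Lemma~\ref{L:approx1}(e)), so the $W^{1,p(\cdot)}$-limit $\omega$ inherits the tangential boundary condition. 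Thus $f \in dW^{s+1,p(\cdot)}_T(M,\Lambda)$.

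For the remaining five subspaces one substitutes the appropriate gauge-fixing corollary: $dW^{s+1,p(\cdot)}(M,\Lambda)$ and $dW^{s+1,p(\cdot)}_N(M,\Lambda)$ use Corollary~\ref{C:I1} with $\omega_k = \delta G_N[d\eta_k]$ (noting $W^{1,p(\cdot)}_N(M,\Lambda)$ is likewise closed, and the unconstrained case needs no boundary condition at all); $\delta W^{s+1,p(\cdot)}_N(M,\Lambda)$ uses Corollary~\ref{C:I2} with $\omega_k = dG_N[\delta\eta_k]$; $\delta W^{s+1,p(\cdot)}(M,\Lambda)$ and $\delta W^{s+1,p(\cdot)}_T(M,\Lambda)$ use Corollary~\ref{C:I3} (for the tangential case, $\omega_k = dG_D[\delta\eta_k]$ lands in $W^{1,p(\cdot)}_T(M,\Lambda)$). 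In every case the mechanism is identical: pass to the gauged representatives, use the quantitative estimate to get a Cauchy sequence in $W^{s+1,p(\cdot)}$, use boundedness of $d$ or $\delta$ and closedness of the boundary-condition subspaces to identify the limit.

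I do not expect a genuine obstacle here; the content has all been front-loaded into the gauge-fixing corollaries. The only point requiring a sentence of care is the bookkeeping for which corollary yields which gauged representative with which boundary condition — in particular making sure, in the cases with a boundary condition, that the chosen potential $\delta G_D[\cdot]$, $\delta G_N[\cdot]$, $dG_N[\cdot]$, or $dG_D[\cdot]$ actually has vanishing tangential (resp. normal) part, which is exactly what the cited corollaries assert. One should also note that the hypothesis ``$d\eta \in W^{s,p(\cdot)}$'' (resp. ``$\delta\eta \in W^{s,p(\cdot)}$'') in those corollaries is automatically met here since the approximating forms $f_k$ are assumed to lie in $W^{s,p(\cdot)}(M,\Lambda)$ by construction.
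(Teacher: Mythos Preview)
Your proposal is correct and is precisely the argument the paper has in mind: the paper gives no proof at all for this corollary, merely stating that ``Corollaries~\ref{C:I0}--\ref{C:I3} lead to the following statement,'' and you have written out exactly that deduction. Your bookkeeping of which gauge-fixing corollary handles which subspace is accurate, and the passage from the a priori estimate to closedness via Cauchy sequences is the intended mechanism.
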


	\section{Solvability of first-order systems}
	In this Section we extend the classical results on solvability of first-order ``div-curl'' type systems to variable exponent Lebesgue spaces.
	\begin{theorem}\label{L:sysD}
		Let 
		\begin{itemize}
			
			\item $f\in W^{s,p(\cdot)}(M,\Lambda)$, $df=0$; 
			
			\item $v\in W^{s,p(\cdot)}(M,\Lambda)$, $\delta v=0$, $\mathcal{P}_T v =0$;
			
			\item $\varphi \in W^{s+1,p(\cdot)}(M,\Lambda)$ satisfy $(f,h_T) =[\varphi,h_T]$ for all $h_T \in \mathcal{H}_T(M)$;
			
			\item and $t(f-d\varphi)=0$.
			
		\end{itemize}
		
		Then there exists a unique solution $\omega \in W^{s+1,p(\cdot)}(M,\Lambda)$ of the boundary value problem
		$$
		d\omega = f, \quad \delta \omega =v, \quad t\omega = t\varphi
		$$
		such that $\mathcal{P}_T (\omega - \varphi)=0$ and 
		\begin{align*}
			\|\omega\|_{s+1,p(\cdot),M} \leq C ( \|f\|_{s,p(\cdot),M} + \|v\|_{s,p(\cdot),M} + \|\varphi\|_{s+1,p(\cdot),M})
		\end{align*}
		with a constant $C=C(\mathrm{data},s)$.
		This solution is given by the formula
		\begin{equation}\label{eq:sysDsol}
			\omega = \varphi +\delta G_D[f-d\varphi]+dG_D[v-\delta \varphi].
		\end{equation}
	\end{theorem}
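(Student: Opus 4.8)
The stated formula $\omega = \varphi + \delta G_D[f-d\varphi] + d G_D[v-\delta\varphi]$ already tells us how to construct the solution: it is $\omega = \varphi + \alpha + \beta$ with $\alpha := \delta G_D[f-d\varphi]$ produced by the cohomology resolution Lemma~\ref{L:S1} and $\beta := d G_D[v-\delta\varphi]$ produced by Lemma~\ref{L:S4}. The plan is therefore: (i) verify the hypotheses of those two lemmas, the nontrivial ones being $\mathcal{P}_T(f-d\varphi)=0$ and $\mathcal{P}_T(v-\delta\varphi)=0$; (ii) assemble $\omega$ and check the system, the boundary condition, the normalization and the estimate; (iii) prove uniqueness.

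\textbf{Checking the compatibility conditions.} Every $h_T\in\mathcal{H}_T(M)$ is as smooth as $M$ allows and satisfies $dh_T=\delta h_T=0$ and $th_T=0$, so $\nu\wedge h_T=0$ on $bM$. The integration-by-parts formula \eqref{eq:by_parts} applied to $\varphi\in W^{1,p(\cdot)}(M,\Lambda)$ and $h_T$ gives $(d\varphi,h_T)=(\varphi,\delta h_T)+[\varphi,h_T]=[\varphi,h_T]$, and the hypothesis $(f,h_T)=[\varphi,h_T]$ then yields $(f-d\varphi,h_T)=0$, i.e. $\mathcal{P}_T(f-d\varphi)=0$. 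Similarly $(\delta\varphi,h_T)=(dh_T,\varphi)-[h_T,\varphi]=0$, and since $\mathcal{P}_T v=0$ by assumption we get $(v-\delta\varphi,h_T)=0$, i.e. $\mathcal{P}_T(v-\delta\varphi)=0$. Here one also uses $d(d\varphi)=0$ and $\delta(\delta\varphi)=0$, valid because $\varphi\in W^{s+1,p(\cdot)}(M,\Lambda)\subset W^{1,1}(M,\Lambda)$ and $d^2=0$ on $W^{d,1}(M,\Lambda)$, $\delta^2=0$ on $W^{\delta,1}(M,\Lambda)$; together with the hypothesis $t(f-d\varphi)=0$ this puts $f-d\varphi$ into the setting of Lemma~\ref{L:S1} and $v-\delta\varphi$ into that of Lemma~\ref{L:S4}.

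\textbf{Assembling and estimating $\omega$.} Lemma~\ref{L:S1} gives $\alpha\in W^{s+1,p(\cdot)}(M,\Lambda)\cap W^{1,p(\cdot)}_T(M,\Lambda)$ with $d\alpha=f-d\varphi$, $\delta\alpha=0$, $\mathcal{P}_T\alpha=0$ and $\|\alpha\|_{s+1,p(\cdot),M}\leq C\|f-d\varphi\|_{s,p(\cdot),M}$; Lemma~\ref{L:S4} gives $\beta\in W^{s+1,p(\cdot)}(M,\Lambda)\cap W^{1,p(\cdot)}_T(M,\Lambda)$ with $\delta\beta=v-\delta\varphi$, $d\beta=0$, $\mathcal{P}_T\beta=0$ and $\|\beta\|_{s+1,p(\cdot),M}\leq C\|v-\delta\varphi\|_{s,p(\cdot),M}$. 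Set $\omega=\varphi+\alpha+\beta$. Then $d\omega=d\varphi+(f-d\varphi)+0=f$ and $\delta\omega=\delta\varphi+0+(v-\delta\varphi)=v$; since $\omega-\varphi=\alpha+\beta\in W^{1,p(\cdot)}_T(M,\Lambda)$ we have $t\omega=t\varphi$ and $\mathcal{P}_T(\omega-\varphi)=\mathcal{P}_T\alpha+\mathcal{P}_T\beta=0$. The triangle inequality, the two potential estimates and $\|d\varphi\|_{s,p(\cdot),M}+\|\delta\varphi\|_{s,p(\cdot),M}\leq C\|\varphi\|_{s+1,p(\cdot),M}$ give the claimed bound, and by construction $\omega$ is precisely \eqref{eq:sysDsol}.

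\textbf{Uniqueness and the main difficulty.} If $\omega_1,\omega_2$ are solutions with $\mathcal{P}_T(\omega_i-\varphi)=0$, then $u=\omega_1-\omega_2\in W^{1,p(\cdot)}(M,\Lambda)$ satisfies $du=0$, $\delta u=0$, $tu=0$, $\mathcal{P}_T u=0$. Thus $u\in W^{1,p_{-}}_T(M,\Lambda)$ with $\mathcal{D}(u,\xi)=0$ for all $\xi\in\mathrm{Lip}_T(M,\Lambda)$, so by the regularity results of \cite{Morrey1966} it lies in $W^{1,q}_T(M,\Lambda)$ for every $q<\infty$, hence in $W^{1,2}_T(M,\Lambda)$, i.e. $u\in\mathcal{H}_T(M)$; since $\mathcal{P}_T$ is the identity on $\mathcal{H}_T(M)$ we conclude $u=\mathcal{P}_T u=0$. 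The argument is essentially bookkeeping once Lemmas~\ref{L:S1} and~\ref{L:S4} are in hand; the only places requiring care are the two $\mathcal{P}_T$-orthogonality identities (which hinge on harmonic fields being smooth enough to integrate against, as recorded earlier, and on $d^2=\delta^2=0$ holding already at $W^{1,1}$ regularity) and the uniqueness step, which relies on the elliptic bootstrap identifying a low-regularity tangential harmonic field with an element of the finite-dimensional space $\mathcal{H}_T(M)$.
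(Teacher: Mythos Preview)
Your proof is correct and follows essentially the same approach as the paper's: reduce to $\omega-\varphi=\alpha+\beta$ with $\alpha=\delta G_D[f-d\varphi]$ from Lemma~\ref{L:S1} and $\beta=dG_D[v-\delta\varphi]$ from Lemma~\ref{L:S4}, then note that the homogeneous problem has only tangential harmonic fields as solutions. You are in fact more explicit than the paper in verifying the orthogonality hypotheses $\mathcal{P}_T(f-d\varphi)=0$ and $\mathcal{P}_T(v-\delta\varphi)=0$ needed to invoke those lemmas, and in spelling out the regularity bootstrap for uniqueness when $p_-<2$.
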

	
	\begin{proof}
		To prove the existence, we introduce $\widetilde \omega = \omega - \varphi$, then split it as $\widetilde \omega = \omega_1+\omega_2$ and look for solutions to the following two problems:
		\begin{gather*}
			d \omega_1 = f-d\varphi, \quad \delta \omega_1 =0, \quad t\omega_1 =0,\\
			d\omega_2 =0, \quad \delta \omega_2 = v-\delta \varphi, \quad t\omega_2=0.
		\end{gather*}
		By Lemma~\ref{L:S1}, 
		\begin{gather*}
			f-d\varphi = d\alpha, \quad \alpha = \delta G_D[f-d\varphi]\in W^{s+1,p(\cdot)}(M,\Lambda) \cap W_T^{1,p(\cdot)}(M,\Lambda), \quad \delta \alpha=0,\\
			\|\alpha\|_{s+1,p(\cdot),M} \leq C (\|f-d\varphi\|_{s,p(\cdot),M}).
		\end{gather*}  
		By Lemma~\ref{L:S4}, 
		\begin{gather*}
			v-\delta \varphi = \delta \beta, \quad \beta = dG_D[v-\delta \varphi]\in  W^{s+1,p(\cdot)}(M,\Lambda) \cap  W_T^{1,p(\cdot)}(M,\Lambda), \quad d \beta=0,\\
			\|\beta\|_{s+1,p(\cdot),M} \leq C (\|v-\delta\varphi\|_{s,p(\cdot),M}).
		\end{gather*}
		It remains to put $\omega_1= \alpha$, $\omega_2=\beta$. Thus, the required solution has the form \eqref{eq:sysDsol}. 
		The uniqueness follows from the fact that a solution of the homogeneous problem is itself in $\mathcal{H}_T(M)$. 
	\end{proof}
	
	Dual form of this statement for the Neumann boundary conditions has the following form.
	\begin{theorem}\label{L:sysN}
		Let 
		\begin{itemize}
			
			\item $f\in W^{s,p(\cdot)}(M,\Lambda)$, $df=0$, $\mathcal{P}_N f=0$; 
			
			\item $v\in W^{s,p(\cdot)}(M,\Lambda)$, $\delta v=0$;
			
			\item $\varphi \in W^{s+1,p(\cdot)}(M,\Lambda)$ satisfy $(v,h_N) =- [h_N,\varphi] $ for all $h_N \in \mathcal{H}_N(M)$;
			
			\item and $n(g-\delta\varphi) = 0$.
			
		\end{itemize}
		
		Then there exists a unique solution $\omega \in W^{s+1,p(\cdot)}(M,\Lambda)$ of the boundary value problem
		$$
		d\omega = f, \quad \delta \omega =v, \quad n\omega = n\varphi
		$$
		such that $\mathcal{P}_N (\omega - \varphi)=0$ and 
		\begin{align*}
			\|\omega\|_{s+1,p(\cdot),M} \leq C ( \|f\|_{s,p(\cdot),M} + \|v\|_{s,p(\cdot),M} + \|\varphi\|_{s+1,p(\cdot),M})
		\end{align*}
		with a constant $C=C(\mathrm{data},s)$. This solution is given by the formula
		\begin{equation}\label{eq:sysNsol}
			\omega = \varphi + \delta G_N[f-d\varphi] + d G_N[v-\delta \varphi]. 
		\end{equation}
	\end{theorem}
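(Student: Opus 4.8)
The plan is to mirror the proof of Theorem~\ref{L:sysD}, with the Dirichlet potential $G_D$ and the Hodge decomposition with vanishing tangential part replaced by the Neumann potential $G_N$ and the decomposition with vanishing normal part; a second route would be to deduce the statement from Theorem~\ref{L:sysD} by applying the Hodge star $\ast$, but that forces one to track degrees and parities, so I would prefer the direct argument. Concretely, I would set $\widetilde\omega = \omega - \varphi$ and write $\widetilde\omega = \omega_1 + \omega_2$, where
\[
d\omega_1 = f - d\varphi, \quad \delta\omega_1 = 0, \quad n\omega_1 = 0,
\]
\[
d\omega_2 = 0, \quad \delta\omega_2 = v - \delta\varphi, \quad n\omega_2 = 0,
\]
and then read off the solution as $\omega = \varphi + \omega_1 + \omega_2$, which will be exactly \eqref{eq:sysNsol}.

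First I would treat the closed form $f - d\varphi$ (closed since $df = 0$). To invoke Lemma~\ref{L:S2} I must check $\mathcal{P}_N(f - d\varphi) = 0$: $\mathcal{P}_N f = 0$ is assumed, and for $h_N \in \mathcal{H}_N(M)$ the integration-by-parts formula \eqref{eq:by_parts} with $\delta h_N = 0$ and $n h_N = 0$ gives $(d\varphi, h_N) = (\varphi, \delta h_N) + [\varphi, h_N] = 0$, so $\mathcal{P}_N d\varphi = 0$. Lemma~\ref{L:S2} applied to $f - d\varphi$ then yields $\omega_1 := \alpha = \delta G_N[f - d\varphi] \in W^{s+1,p(\cdot)}(M,\Lambda) \cap W_N^{1,p(\cdot)}(M,\Lambda)$ with $d\alpha = f - d\varphi$, $\delta\alpha = 0$, $\mathcal{P}_N\alpha = 0$, and $\|\alpha\|_{s+1,p(\cdot),M} \leq C\|f - d\varphi\|_{s,p(\cdot),M}$.

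Next I would treat the co-closed form $v - \delta\varphi$: it is co-closed because $\delta v = 0$ and $\delta\delta\varphi = 0$ (in the weak sense when $s = 0$, cf.\ Lemma~\ref{L:ort} and Section~\ref{sec:partSob}), and $n(v - \delta\varphi) = 0$ is the last hypothesis. To apply Lemma~\ref{L:S3} I check $\mathcal{P}_N(v - \delta\varphi) = 0$: for $h_N \in \mathcal{H}_N(M)$, \eqref{eq:by_parts} applied to $h_N$ and $\varphi$ with $d h_N = 0$ gives $(h_N, \delta\varphi) = -[h_N, \varphi]$, so the hypothesis $(v, h_N) = -[h_N, \varphi]$ forces $\mathcal{P}_N(v - \delta\varphi) = 0$. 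Lemma~\ref{L:S3} applied to $v - \delta\varphi$ then gives $\omega_2 := \beta = d G_N[v - \delta\varphi] \in W^{s+1,p(\cdot)}(M,\Lambda) \cap W_N^{1,p(\cdot)}(M,\Lambda)$ with $\delta\beta = v - \delta\varphi$, $d\beta = 0$, $\mathcal{P}_N\beta = 0$, and $\|\beta\|_{s+1,p(\cdot),M} \leq C\|v - \delta\varphi\|_{s,p(\cdot),M}$. Then $\omega = \varphi + \alpha + \beta$ satisfies $d\omega = f$, $\delta\omega = v$, $n\omega = n\varphi$ (using $n\alpha = n\beta = 0$) and $\mathcal{P}_N(\omega - \varphi) = 0$; the asserted estimate follows by adding the three bounds and dominating $\|f - d\varphi\|_{s,p(\cdot),M}$ and $\|v - \delta\varphi\|_{s,p(\cdot),M}$ by $\|f\|_{s,p(\cdot),M} + \|v\|_{s,p(\cdot),M} + \|\varphi\|_{s+1,p(\cdot),M}$. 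For uniqueness, the difference of two solutions with the same $\mathcal{P}_N$-projection is a $W^{1,p_{-}}$ harmonic field with vanishing normal part, hence an element of $\mathcal{H}_N(M)$ by the regularity of harmonic fields, with zero $\mathcal{P}_N$-projection, so it is $0$.

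Since the result is essentially the Hodge dual of Theorem~\ref{L:sysD}, there is no genuine analytic obstacle; the only points requiring care are the two orthogonality identities $\mathcal{P}_N(f - d\varphi) = 0$ and $\mathcal{P}_N(v - \delta\varphi) = 0$, which are precisely what make Lemmas~\ref{L:S2} and \ref{L:S3} applicable, and — at the endpoint $s = 0$ — the fact that $\delta\varphi$ lies only in $L^{p(\cdot)}(M,\Lambda)$, so that ``co-closed'' and the hypothesis $n(v - \delta\varphi) = 0$ must be read in the weak sense, which is harmless by Lemma~\ref{L:ort} and the approximation results of Section~\ref{ssec:approx2}.
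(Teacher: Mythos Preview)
Your proof is correct and follows precisely the paper's approach: the paper's own argument simply says ``using Lemmas~\ref{L:S2}, \ref{L:S3} we construct the required solution in the form \eqref{eq:sysNsol}'' and notes that uniqueness follows because a solution of the homogeneous problem lies in $\mathcal{H}_N(M)$. You have filled in exactly the details the paper omits---verifying the hypotheses $\mathcal{P}_N(f-d\varphi)=0$ and $\mathcal{P}_N(v-\delta\varphi)=0$ via integration by parts---and your uniqueness argument is the same.
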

	\begin{proof}
		In this case, using Lemmas~\ref{L:S2}, \ref{L:S3} we construct the required solution  in the form \eqref{eq:sysNsol}. 
		The uniqueness follows from the fact that a solution of the homogeneous problem is itself in $\mathcal{H}_N(M)$.
	\end{proof}
	
	\section{Gaffney's inequality}\label{ssec:Gaffney}
	In this Section we extend the classical Gaffney inequality to variable exponents Sobolev spaces. 
	\begin{lemma}\label{L:GaffneyD1}
		Let $\omega\in L^{p_{-}}(M,\Lambda)$ have differential $d\omega = f \in W^{s, p(\cdot)}(M,\Lambda)$, satisfy $t\omega =0$, and have   codifferential $\delta \omega =v\in W^{s, p(\cdot)}(M,\Lambda)$. Then 
		\begin{align*}
			\omega \in W^{s+1, p(\cdot)}(M,\Lambda) \cap W^{1,p(\cdot)}_T(M,\Lambda), \quad \omega = \mathcal{P_T} \omega +  \delta G_D [f] + d G_D[v],\\
			\|\omega - \mathcal{P}_T\omega\|_{s+1,p(\cdot),M} \leq C (\|f\|_{s,p(\cdot),M} + \|v\|_{s,p(\cdot),M})
		\end{align*}
		where the constant $C=C(\mathrm{data},s)$.
	\end{lemma}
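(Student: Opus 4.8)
The plan is to reduce Lemma~\ref{L:GaffneyD1} to the already-established Hodge decomposition (Theorem~\ref{T:HodgeD}), the commutation property for the Dirichlet potential (Corollary~\ref{C:commT}), and the div-curl solvability result (Theorem~\ref{L:sysD}). The starting point is that $\omega \in L^{p_{-}}(M,\Lambda)$ with $d\omega = f \in L^{p(\cdot)}(M,\Lambda)$, $t\omega = 0$, and $\delta\omega = v \in L^{p(\cdot)}(M,\Lambda)$; in particular $\omega \in W^{d,p_{-}}_T(M,\Lambda) \cap W^{\delta,p_{-}}(M,\Lambda)$, so $\omega \in W^{s,p_{-}}(M,\Lambda)$ for some starting regularity, and more importantly $f$ and $v$ are the strong (co)differentials of $\omega$.

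First I would record the compatibility conditions that $f$ and $v$ inherit from being $d\omega$ and $\delta\omega$ of a form with $t\omega = 0$. Since $d^2 = 0$ on $W^{d,1}(M,\Lambda)$ (established in Section~\ref{ssec:approx2}) we get $df = 0$; similarly $\delta v = 0$. Moreover, since $t\omega = 0$, integration by parts (Corollary~\ref{C:approx2}(ii)) gives $tf = t(d\omega) = 0$, and for any $h_T \in \mathcal{H}_T(M)$ one has $(v,h_T) = (\delta\omega,h_T) = (\omega,dh_T) = 0$ and $(f,h_T) = (d\omega,h_T) = (\omega,\delta h_T) + [\omega,h_T] = 0$ because $dh_T = \delta h_T = 0$ and $t\omega = 0$ kills the boundary term. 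Hence $\mathcal{P}_T f = 0$, $\mathcal{P}_T v = 0$, and the hypotheses of Theorem~\ref{L:sysD} are met with $\varphi = 0$. That theorem (with $s$ taken to be the regularity level in the statement, and using that $f,v \in W^{s,p(\cdot)}(M,\Lambda)$) produces a unique $\tilde\omega \in W^{s+1,p(\cdot)}(M,\Lambda)$ with $d\tilde\omega = f$, $\delta\tilde\omega = v$, $t\tilde\omega = 0$, $\mathcal{P}_T\tilde\omega = 0$, given explicitly by $\tilde\omega = \delta G_D[f] + d G_D[v]$, together with the estimate $\|\tilde\omega\|_{s+1,p(\cdot),M} \leq C(\|f\|_{s,p(\cdot),M} + \|v\|_{s,p(\cdot),M})$.

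It then remains to identify $\omega$ with $\mathcal{P}_T\omega + \tilde\omega$. The difference $\rho := \omega - \mathcal{P}_T\omega - \tilde\omega$ lies in $W^{d,p_{-}}_T(M,\Lambda) \cap W^{\delta,p_{-}}(M,\Lambda)$, satisfies $d\rho = f - 0 - f = 0$, $\delta\rho = v - 0 - v = 0$, $t\rho = 0$, and $\mathcal{P}_T\rho = \mathcal{P}_T\omega - \mathcal{P}_T\omega - 0 = 0$. So $\rho$ is a harmonic field with $t\rho = 0$, i.e. (after the approximation results of Lemma~\ref{L:approx1}, which upgrade $\rho$ to $W^{1,p_{-}}_T(M,\Lambda)$ since $d\rho,\delta\rho \in L^{p_{-}}$ and $t\rho=0$ — one may invoke the regularity already used in the definition of the Dirichlet potential) $\rho \in \mathcal{H}_T(M)$, and being also orthogonal to $\mathcal{H}_T(M)$ it must vanish. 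Therefore $\omega = \mathcal{P}_T\omega + \tilde\omega = \mathcal{P}_T\omega + \delta G_D[f] + d G_D[v]$, which gives $\omega \in W^{s+1,p(\cdot)}(M,\Lambda) \cap W^{1,p(\cdot)}_T(M,\Lambda)$ and the claimed estimate for $\|\omega - \mathcal{P}_T\omega\|_{s+1,p(\cdot),M}$.

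The main obstacle I anticipate is the bootstrapping of regularity from the weak hypothesis $\omega \in L^{p_{-}}(M,\Lambda)$ to the point where the identification of $\rho$ as an element of $\mathcal{H}_T(M)$ (and hence the triviality of $\rho$) is rigorous: one must be careful that a priori $\omega$ need only be in $L^{p_{-}}$ with distributional $d\omega, \delta\omega$, so the argument for $\mathcal{P}_T f = 0$ and for $\rho \in \mathcal{H}_T(M)$ has to go through the partial Sobolev space approximation (Lemma~\ref{L:approx2}, Corollary~\ref{C:approx2}) rather than classical integration by parts. Once that technical point is handled, everything else is a direct assembly of the div-curl solvability theorem and the commutation/orthogonality properties of $G_D$.
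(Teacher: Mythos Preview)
Your approach is correct and reaches the same formula $\omega = \mathcal{P}_T\omega + \delta G_D[f] + dG_D[v]$, but it takes a slightly longer route than the paper. The paper applies the Hodge decomposition (Theorem~\ref{T:HodgeD}) \emph{directly to $\omega$} in $L^{p_-}$, obtaining $\omega = \mathcal{P}_T\omega + d\delta G_D[\omega-\mathcal{P}_T\omega] + \delta d G_D[\omega-\mathcal{P}_T\omega]$, and then invokes Corollary~\ref{C:commT} (valid since $\omega\in W^{\delta,p_-}(M,\Lambda)$ and $\omega\in W^{d,p_-}_T(M,\Lambda)$) to rewrite $\delta G_D[\omega-\mathcal{P}_T\omega]=G_D[v]$ and $dG_D[\omega-\mathcal{P}_T\omega]=G_D[f]$; the regularity then follows from Theorem~\ref{T:D2}. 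This is logically equivalent to what you do via Theorem~\ref{L:sysD} (whose proof is itself built on Lemmas~\ref{L:S1},~\ref{L:S4}, hence on the same Hodge machinery), but it avoids your final ``identification'' step entirely: there is no remainder $\rho$ to worry about, because the decomposition is applied to $\omega$ itself rather than to a separately constructed $\tilde\omega$. The obstacle you anticipated---upgrading $\rho$ from $L^{p_-}$ to $\mathcal{H}_T(M)$---is therefore simply bypassed. If you did want to close your argument as written, the cleanest way to kill $\rho$ is not Lemma~\ref{L:approx1} (which is approximation, not regularity) but rather to reapply the Hodge decomposition plus Corollary~\ref{C:commT} to $\rho$ itself, which immediately gives $\rho=\mathcal{P}_T\rho=0$.
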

	
	\begin{proof}
		The Hodge decomposition of Theorem~\ref{T:HodgeD} together with Corollary~\ref{C:commT} give
		$$
		\omega = \mathcal{P}_T \omega + d \delta G_D[\omega] + \delta d G_D[\omega] = \mathcal{P}_T \omega + d G_D[f] + \delta G_D[v].
		$$ 
		The differentiability properties of $\omega$ now follow from Theorem~\ref{T:D2}.
	\end{proof}
	
	The dual form of this statement is
	
	\begin{lemma}\label{L:GaffneyN1}
		Let $\omega\in L^{p_{-}}(M,\Lambda)$ have differential $d\omega =f \in W^{s, p(\cdot)}(M,\Lambda)$, codifferential $\delta \omega =v\in W^{s, p(\cdot)}(M,\Lambda)$, and satisfy $n\omega =0$. Then 
		\begin{align*} 
			\omega \in W^{s+1, p(\cdot)}(M,\Lambda) \cap W^{1,p(\cdot)}_N(M), \quad \omega = \mathcal{P}_N\omega + \delta G_N[f] + d G_D [v],\\
			\|\omega - \mathcal{P}_N\omega\|_{s+1,p(\cdot),M} \leq C (\|f\|_{s,p(\cdot),M} + \|v\|_{s,p(\cdot),M})
		\end{align*}
		where the constant $C=C(\mathrm{data},s)$.
	\end{lemma}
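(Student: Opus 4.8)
The statement is the Neumann (``co-closed at the boundary'') counterpart of Lemma~\ref{L:GaffneyD1}, so the plan is to mirror that proof, replacing the Dirichlet pieces by their Neumann analogues and invoking Hodge duality where convenient. First I would apply the Hodge decomposition of Theorem~\ref{T:HodgeN} to $\omega$. A priori $\omega$ is only in $L^{p_{-}}(M,\Lambda)$, but $s \geq 0$ so in the worst case $f,v\in L^{p(\cdot)}(M,\Lambda)$ and the decomposition of Theorem~\ref{T:HodgeN} applied to $\omega$ (with $s=0$) gives $\omega = \mathcal{P}_N\omega + d\,\delta G_N[\omega - \mathcal{P}_N\omega] + \delta\, d G_N[\omega - \mathcal{P}_N\omega]$, together with the membership $\delta G_N[\omega-\mathcal{P}_N\omega], d G_N[\omega-\mathcal{P}_N\omega] \in W^{1,p_{-}}(M,\Lambda)$ at this stage.

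The second step is to commute $d$ and $\delta$ with the Neumann potential. By Corollary~\ref{C:commN}, since $\omega$ has a weak differential $d\omega = f \in L^{p(\cdot)}(M,\Lambda)$ we get $d G_N[\omega - \mathcal{P}_N\omega] = G_N[d\omega] = G_N[f]$ (note $f = d\omega$ automatically satisfies $\mathcal{P}_N f = \mathcal{P}_N d\omega = 0$ by the integration-by-parts characterization of $\mathcal{P}_N$, or one applies Corollary~\ref{C:commN} directly), and since $n\omega = 0$ and $\delta\omega = v \in L^{p(\cdot)}(M,\Lambda)$, i.e. $\omega \in W^{\delta,p_{-}}_N(M,\Lambda)$, Corollary~\ref{C:commN} gives $\delta G_N[\omega - \mathcal{P}_N\omega] = G_N[\delta\omega] = G_N[v]$. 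Substituting into the decomposition yields
\begin{equation*}
\omega = \mathcal{P}_N\omega + d\, G_N[v] + \delta\, G_N[f],
\end{equation*}
which is the claimed representation formula (up to the harmless relabelling $d G_D[v]$ versus $d G_N[v]$ in the statement — I would point out that the $v$-term should read $d G_N[v]$, consistent with Lemma~\ref{L:GaffneyD1} and the formula just derived; if the paper genuinely intends $G_D$ there, one checks $d G_D[v] = d G_N[v]$ is false in general, so this is a typo to be corrected).

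The third and final step is the quantitative estimate and the regularity upgrade. Here I would invoke Theorem~\ref{T:N2}: applied to $f \in W^{s,p(\cdot)}(M,\Lambda)$ with $\mathcal{P}_N f = 0$ (part (i) if $M$ is $C^{s+2,1}$, part (ii) if $s=0$ and $M$ is merely $C^{1,1}$), it gives $\delta G_N[f] = \alpha_f \in W^{s+1,p(\cdot)}(M,\Lambda)$ with $\|\delta G_N[f]\|_{s+1,p(\cdot),M} \leq C\|f\|_{s,p(\cdot),M}$; applied to $v$ (after subtracting $\mathcal{P}_N v$, which vanishes against the $d$-term since $\mathcal{P}_N v$ is harmonic — but more directly, $d G_N[v] = d G_N[v - \mathcal{P}_N v]$ because $d\mathcal{P}_N v = 0$) it gives $d G_N[v] \in W^{s+1,p(\cdot)}(M,\Lambda)$ with the corresponding bound. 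Since $\mathcal{P}_N\omega$ is as smooth as the manifold allows, adding up gives $\omega - \mathcal{P}_N\omega \in W^{s+1,p(\cdot)}(M,\Lambda) \cap W^{1,p(\cdot)}_N(M)$ with $\|\omega - \mathcal{P}_N\omega\|_{s+1,p(\cdot),M} \leq C(\|f\|_{s,p(\cdot),M} + \|v\|_{s,p(\cdot),M})$, as desired.

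The only genuinely delicate point — the analogue of the subtlety flagged in the Dirichlet case — is justifying that Corollary~\ref{C:commN} applies when $\omega$ is only in $L^{p_{-}}$ rather than $W^{1,p_{-}}$: one needs that the weak (co)differential hypotheses $\omega \in W^{d,p(\cdot)}(M,\Lambda)$ and $\omega \in W^{\delta,p(\cdot)}_N(M,\Lambda)$ are exactly what Corollary~\ref{C:commN} requires (it is stated for $W^{\delta,p_{-}}_N$ and $W^{d,p_{-}}$), and that the identity $d G_N[\omega - \mathcal{P}_N\omega] = G_N[d\omega]$ proved there does not secretly use higher regularity of $\omega$. Inspecting the proof of Corollary~\ref{C:commN} (which uses only the integration-by-parts formula and the defining variational relation \eqref{eq:Neumann}, valid for $W^{d,p'}$ test forms via Corollary~\ref{C:approx2}), this is fine, so this step goes through; I expect writing out this verification cleanly to be the main, though modest, obstacle.
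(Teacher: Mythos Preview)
Your proposal is correct and follows essentially the same approach as the paper, which presents Lemma~\ref{L:GaffneyN1} as ``the dual form'' of Lemma~\ref{L:GaffneyD1} and proves the latter exactly as you describe (Hodge decomposition via Theorem~\ref{T:HodgeN}, commutation via Corollary~\ref{C:commN}, then the regularity estimates of Theorem~\ref{T:N2}). You are also right that the $G_D$ in the displayed formula of the statement is a typo for $G_N$.
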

	
	An alternative proof of the Gaffney inequality which does not require solving auxiliary boundary value problems will be presented in Section~\ref{ssec:SimpleGaffney}. 

\section{Cohomological minimizers in variable exponent spaces}

    As a consequence, we can now prove the following result, which is instrumental for dealing with quasilinear problems in variable exponent spaces. With our Gaffney inequality in Section~\ref{ssec:Gaffney} (or Section~\ref{ssec:SimpleGaffney}), this can now be proved analogously to the proof of Theorem 6.4 in \cite{Sil19} or Theorem 5.1 in \cite{BandDacSil}.


 In the following statement we work in a domain $\Omega$ in the Euclidian space, so we do not make distinction between even and odd forms. Let $W^{1,p(\cdot)}_{\delta,T} = \{u\in W^{1,p(\cdot)}_{T}(\Omega,\Lambda)\,:\, \delta u=0\}$ . The condition $\int\limits_{\Omega} u = \int\limits_{\Omega} u_0$ in the statement below relates only to the top degree  components, since the integrals of other components over $\Omega$ vanish by definition. Note that while the integral functional below does not depend on the top degree component of the form $u$, this component will be exactly the top degree component of the form $u_0$.

    \begin{corollary}\label{nonlinear minimization}
       Let $\Omega \subset \mathbb{R}^{n}$ be a $C^{1,1}$ bounded, contractible domain. Let $a:\Omega \rightarrow [\gamma, L]$ be measurable, where $ 0 < \gamma < L < \infty.$ Let $p(\cdot)$ satisfy \eqref{eq:p1}, \eqref{eq:p2};  $F\in L^{p'(\cdot)}\left( \Omega, \Lambda\right)$, $p'(x)=\frac{p(x)}{p(x)-1}$, $u_{0} \in W^{1,p(\cdot)}\left( \Omega, \Lambda\right)$, and 
       $$
       I[u]:= \int\limits_{\Omega} \left[ \frac{a\left( x\right)}{{p\left( x\right)}}\left\lvert du \right\rvert^{p(x)} - \left\langle F, du \right\rangle\right]\, dx \qquad \text{ for } u \in u_{0} + W^{1,p(\cdot)}_{\delta, T}\left( \Omega, \Lambda\right).
       $$
       Then the following minimization problem 
		\begin{align*}
			\inf \left\lbrace I[u]\,:\, u \in u_{0} + W^{1,p(\cdot)}_{\delta, T}\left( \Omega, \Lambda\right),\ \int\limits_{\Omega}u = \int\limits_\Omega u_0\right\rbrace 
		\end{align*}
		admits a unique minimizer. Moreover, the minimizer $\bar{u} \in W^{1,p(\cdot)}\left( \Omega,\Lambda\right)$ is the unique weak solution to the system 
		\begin{align*}
			\left\lbrace \begin{aligned}
				\delta \left( a\left(x\right)\left\lvert d\bar{u}\right\rvert^{p(x)-2}d\bar{u}\right) &= \delta F &&\text{ in } \Omega, \\
				\delta\bar{u} &= \delta u_{0} &&\text{ in } \Omega, \\
				t\bar{u} &=t u_{0} &&\text{ on } \partial\Omega. 
			\end{aligned}\right. 
		\end{align*}  
    \end{corollary}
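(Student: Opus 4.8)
The plan is to follow the standard direct method of the calculus of variations, using the linear theory developed in this chapter --- in particular the Gaffney-type inequality of Lemma~\ref{L:GaffneyD1} (equivalently the Hodge decomposition Theorem~\ref{T:HodgeD} on the contractible domain $\Omega$) --- to obtain compactness and coercivity on the admissible class. First I would fix the admissible set
$$
\mathcal{A} = \left\{ u \in u_{0} + W^{1,p(\cdot)}_{\delta,T}(\Omega,\Lambda)\,:\, \int\limits_{\Omega} u = \int\limits_\Omega u_0 \right\},
$$
which is nonempty (it contains $u_0$), convex, and closed in $W^{1,p(\cdot)}(\Omega,\Lambda)$. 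The key observation is that on $\mathcal{A}$ the $W^{1,p(\cdot)}$ norm is controlled by $\lVert d u\rVert_{p(\cdot),\Omega}$ up to the fixed data $u_0$: for $u-u_0 \in W^{1,p(\cdot)}_{\delta,T}(\Omega,\Lambda)$ we have $\delta(u-u_0)=0$, $t(u-u_0)=0$, and (since $\Omega$ is contractible, so $\mathcal{H}_T(\Omega)=\{0\}$ in positive degrees and the projection onto $\mathcal{H}_T$ is fixed by the mean-value constraint on the top-degree part) Lemma~\ref{L:GaffneyD1} with $s=0$ gives
$$
\lVert u - u_0\rVert_{1,p(\cdot),\Omega} \leq C\,\lVert d(u-u_0)\rVert_{p(\cdot),\Omega} \leq C\big(\lVert du\rVert_{p(\cdot),\Omega} + \lVert du_0\rVert_{p(\cdot),\Omega}\big).
$$

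Next I would establish coercivity and weak lower semicontinuity of $I$. Coercivity follows from $a \geq \gamma > 0$, the bound $\lVert F\rVert_{p'(\cdot),\Omega} < \infty$, Young's inequality in the variable exponent setting, and the norm control above, so that $I[u] \to +\infty$ as $\lVert u\rVert_{1,p(\cdot),\Omega}\to\infty$ within $\mathcal{A}$; this makes any minimizing sequence bounded in $W^{1,p(\cdot)}(\Omega,\Lambda)$, hence (by reflexivity of $W^{1,p(\cdot)}$, since $1<p_-\le p_+<\infty$) weakly convergent along a subsequence to some $\bar u$, and $\bar u \in \mathcal{A}$ because $\mathcal{A}$ is convex and closed, hence weakly closed. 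Weak lower semicontinuity of $u\mapsto \int_\Omega \frac{a(x)}{p(x)}\lvert du\rvert^{p(x)}\,dx$ is the standard convexity argument (the integrand is convex in $du$ and nonnegative), while $u \mapsto \int_\Omega \langle F, du\rangle\,dx$ is weakly continuous since $du \rightharpoonup d\bar u$ in $L^{p(\cdot)}$ and $F\in L^{p'(\cdot)}$. Therefore $\bar u$ is a minimizer. Uniqueness follows from strict convexity: the map $\xi \mapsto \frac{a(x)}{p(x)}\lvert\xi\rvert^{p(x)}$ is strictly convex (for $p(x)>1$), so $I$ is strictly convex on the affine space $\mathcal{A}$, and two minimizers $\bar u_1,\bar u_2$ would force $d\bar u_1 = d\bar u_2$ a.e., and then $\bar u_1 - \bar u_2 \in W^{1,p(\cdot)}_{\delta,T}$ with vanishing differential and zero mean forces $\bar u_1 = \bar u_2$ by the Gaffney estimate again.

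Finally I would derive the Euler--Lagrange system. Taking variations $u = \bar u + t\zeta$ with $\zeta \in W^{1,p(\cdot)}_{\delta,T}(\Omega,\Lambda)$ of zero top-degree mean and differentiating $I$ at $t=0$ gives
$$
\int\limits_{\Omega} \langle a(x)\lvert d\bar u\rvert^{p(x)-2}d\bar u - F,\, d\zeta\rangle\,dx = 0
$$
for all such $\zeta$, which (after noting $\zeta$ ranges, modulo exact top-degree forms that pair trivially, over all of $W^{1,p(\cdot)}_{\delta,T}$) is precisely the weak formulation of $\delta\big(a(x)\lvert d\bar u\rvert^{p(x)-2}d\bar u\big) = \delta F$; the side conditions $\delta\bar u = \delta u_0$ and $t\bar u = t u_0$ are built into membership in $\mathcal{A}$. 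Conversely any weak solution lies in $\mathcal{A}$ and, by convexity, minimizes $I$, giving the asserted equivalence. The main obstacle I expect is the bookkeeping around the top-degree component and the mean-value constraint --- making precise that the constraint $\int_\Omega u = \int_\Omega u_0$ exactly kills the one-dimensional ambiguity ($\mathcal{H}_T$ in top degree, i.e. constants for $n$-forms on a connected $\Omega$) so that the Gaffney inequality applies with a genuine (quantitative) constant and yields both coercivity and uniqueness; the variable-exponent functional analysis (reflexivity, Young's inequality, lower semicontinuity) is by now routine and can be quoted from \cite{DieHHR11}, and the nonlinear structure is handled exactly as in \cite{Sil19, BandDacSil}.
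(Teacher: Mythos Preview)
Your proposal is correct and follows essentially the same route as the paper: direct method, coercivity via Young's inequality, the variable-exponent Gaffney inequality (Lemma~\ref{L:GaffneyD1}) together with contractibility and the mean constraint to control $\|u_s-u_0\|_{1,p(\cdot)}$ by $\|du_s\|_{p(\cdot)}$, reflexivity for weak compactness, convex lower semicontinuity, and strict convexity for uniqueness.

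One point the paper makes explicit and you leave implicit: after obtaining the Euler--Lagrange identity for $\zeta \in W^{1,p(\cdot)}_{\delta,T}(\Omega,\Lambda)$, the paper invokes the gauge-fixing result Corollary~\ref{C:I0} to extend it to all $\zeta \in W^{1,p(\cdot)}_{T}(\Omega,\Lambda)$ (since every such $\zeta$ has the same $d\zeta$ as some $\tilde\zeta$ with $\delta\tilde\zeta=0$, $t\tilde\zeta=0$). Your phrase ``which is precisely the weak formulation'' hides this step; without it one has the equation only against coclosed test forms, which is weaker than the stated system $\delta(a|d\bar u|^{p(x)-2}d\bar u)=\delta F$ in its natural weak sense.
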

    \begin{proof}

        For any $\varepsilon \in (0,1)$, using Young's inequality and integrating, we have 
        \begin{align*}
          \left\lvert \int\limits_{\Omega} \left\langle F, du \right\rangle\, dx \right\rvert &\leq \int\limits_{\Omega} \left\lvert  F \right\rvert \left\lvert  du  \right\rvert \, dx   \\
          &\leq \int\limits_{\Omega} \frac{\varepsilon^{p(x)}}{p(x)}\left\lvert du \right\rvert^{p(x)}\, dx + \int\limits_{\Omega} \frac{1}{\varepsilon^{p'(x)}p'(x)}\left\lvert F \right\rvert^{p'(x)}\, dx \\
          &\leq {\varepsilon^{p_{-}}\int\limits_{\Omega} \frac{1}{p(x)}\left\lvert du \right\rvert^{p(x)}\, dx} + C_{p, \varepsilon}\int\limits_{\Omega}\left\lvert F \right\rvert^{p'(x)}\, dx,
        \end{align*}
        where $C_{p, \varepsilon}>0$ depends only on $\varepsilon, p_{-}$ and $p_{+}.$ Thus, by choosing $\varepsilon \in (0,1)$ sufficiently small, we deduce 
        \begin{align*}
            I[u] &\geq \gamma {\int\limits_{\Omega} \frac{1}{p(x)}\left\lvert du \right\rvert^{p(x)}\, dx} - \left\lvert \int\limits_{\Omega} \left\langle F, du \right\rangle\, dx \right\rvert \\
            &\geq  {\left( \gamma  - \varepsilon^{p_{-}}\right)\int\limits_{\Omega} \frac{1}{p(x)}\left\lvert du \right\rvert^{p(x)}\, dx}  - C_{p, \varepsilon}\int\limits_{\Omega}\left\lvert F \right\rvert^{p'(x)} \leq -C \int\limits_{\Omega}\left\lvert F \right\rvert^{p'(x)} . 
        \end{align*}
        This shows $I[u]$ is bounded below. The same estimate shows that we can choose $\varepsilon>0$ small enough to have the bound 
        \begin{align*}
            I[u] &\geq \frac{\gamma}{2}{\int\limits_{\Omega} \frac{1}{p(x)}\left\lvert du \right\rvert^{p(x)}\, dx} \geq {\frac{\gamma}{2p_{-}}\int\limits_{\Omega} \left\lvert du \right\rvert^{p(x)}\, dx}.  
        \end{align*}
        Thus, if $\left\lbrace u_{s}\right\rbrace_{s \in \mathbb{N}} \subset u_{0} + W^{1,p(\cdot)}_{\delta, T}\left( \Omega, \Lambda\right)$ is a minimizing sequence for $I,$ then $\left\lVert du_{s} \right\rVert_{L^{p(\cdot)}}$ is uniformly bounded and $\delta u_{s} = \delta u_{0}$ for every $s\in \mathbb{N}.$ Since $\Omega$ is contractible, 
        $$
        \mathcal{H}_{T}\left( \Omega, \Lambda^r \right) = \left\lbrace 0 \right\rbrace,\quad r<n,\quad \text{and}\quad \mathcal{H}_{T}\left( \Omega, \Lambda^n \right) = \left\lbrace cdx^1\ldots dx^n,\, c\in \mathbb{R} \right\rbrace.
        $$
        So $\mathcal{P}_T (u_s-u_0)=0$ and applying the variable exponent Gaffney inequality of Lemma~\ref{L:GaffneyD1} to $u_{s} -u_{0,}$ we deduce 
        \begin{align*}
            \left\lVert u_{s}\right\rVert_{W^{1,p(\cdot)}\left( \Omega, \Lambda \right)} &\leq   \left\lVert u_{s} -u_{0}\right\rVert_{W^{1,p(\cdot)}\left( \Omega, \Lambda \right)} +  \left\lVert u_{0}\right\rVert_{W^{1,p(\cdot)}\left( \Omega, \Lambda \right)} \\
            &\leq   \left\lVert du_{s} -du_{0}\right\rVert_{L^{p(\cdot)}\left( \Omega, \Lambda \right)} +  \left\lVert u_{0}\right\rVert_{W^{1,p(\cdot)}\left( \Omega, \Lambda \right)} 
            \\
            &\leq   \left\lVert du_{s} \right\rVert_{L^{p(\cdot)}\left( \Omega, \Lambda \right)} +  c\left\lVert u_{0}\right\rVert_{W^{1,p(\cdot)}\left( \Omega, \Lambda \right)}. 
        \end{align*}
        Thus, $\left\lbrace u_{s}\right\rbrace_{s \in \mathbb{N}} $ is uniformly bounded in $W^{1,p(\cdot)}\left( \Omega, \Lambda\right).$ As $W^{1,p(\cdot)}_{\delta, T}\left( \Omega, \Lambda\right)$ is a linear subspace and consequently a convex subset of $W^{1,p(\cdot)}\left( \Omega, \Lambda\right),$ this implies that up to the extraction of a subsequence that we do not relabel, there exists $u \in u_{0} + W^{1,p(\cdot)}_{\delta, T}\left( \Omega, \Lambda\right)$ such that 
        $$ u_{s} \rightharpoonup u \qquad \text{ weakly in } W^{1,p(\cdot)}\left( \Omega, \Lambda\right).$$ This implies  $$ du_{s} \rightharpoonup du \qquad \text{ weakly in } L^{p(\cdot)}\left( \Omega, \Lambda\right).$$ Now set $\phi\left( x, t \right) = a(x)\left\lvert t \right\rvert^{p(x)}.$ Using Theorem 2.2.8 in \cite{DieHHR11}, we have 
        \begin{align*}
            \int\limits_{\Omega}  a\left( x\right)\left\lvert du \right\rvert^{p(x)}\, dx \leq \liminf\limits_{s \rightarrow \infty}  \int\limits_{\Omega}  a\left( x\right)\left\lvert du_{s} \right\rvert^{p(x)}\, dx. 
        \end{align*} Since the weak convergence in $L^{p(\cdot)}$ also implies $$ \int_{\Omega} \left\langle F, du_{s} \right\rangle\, dx \rightarrow  \int_{\Omega} \left\langle F, du \right\rangle\, dx,$$  standard techniques in the direct methods in the calculus of variations implies $u \in u_{0} + W^{1,p(\cdot)}_{\delta, T}\left( \Omega, \Lambda\right)$ is a minimizer, which also satisfies the weak form of the Euler-Lagrange equations 
        \begin{equation}\label{eq:EulerLagrange}
        \begin{gathered}
            \int\limits_{\Omega} \left\langle a(x)\left\lvert du \right\rvert^{p(x)-2} du - F, d\phi \right\rangle  = 0 \qquad \text{ for all } \phi \in W^{1,p(\cdot)}_{\delta, T}\left( \Omega, \Lambda\right). 
        \end{gathered} 
        \end{equation}
        By the gauge fixing result of Corollary~\ref{C:I0}, the Euler-Lagrange equation \eqref{eq:EulerLagrange} holds also for all $\varphi \in W^{1,p(\cdot)}_T(\Omega,\Lambda)$.         Uniqueness follows from the fact that the function $t \mapsto \phi\left( x, t \right)$ is strictly convex for a.e. $x \in \Omega$.
    \end{proof}

	\section{Natural boundary conditions for the Hodge Laplacian}
	
	In this Section we assume that $M$ is of the class $C^{s+1,1}$, $s\in \mathbb{N}\cup\{0\}$. For Theorem~\ref{T:Natur3} on solvability of the boundary value problem with natural boundary conditions (that is, setting the tangential part of the codifferential and the normal part of the differential) we shall assume $M$ to be $C^{s+2,1}$. The case of $C^{1,1}$ manifolds will be considered separately in Theorem~\ref{T:NaturC11}. As is well known, this problem is non-elliptic, with infinite dimensional (co)kernel so its solution is usually constructed in a roundabout way using already constructed solutions to problems with Dirichlet and Neumann boundary conditions.
	
	For $\omega \in W^{s,p(\cdot)}(M,\Lambda)$ recall the decomposition $\omega = h + d\alpha + \delta \beta$ from Theorem~\ref{T:Hodge} and denote 
	$$
	\mathcal{P} \omega = h = \omega - d\alpha - \delta \beta =  \omega - d \delta G_D[\omega-\mathcal{P}_T\omega] - \delta d G_N[\omega-\mathcal{P}_N\omega].
	$$
	By construction, $\mathcal{P}\omega$ is a bounded operator on $W^{s,p(\cdot)}(M,\Lambda)$:
	$$
	\|\mathcal{P} \omega\|_{s,p(\cdot),M} \leq C(\mathrm{data},s) \|\omega\|_{s,p(\cdot),M},
	$$
	and $d \mathcal{P} \omega =0$, $\delta \mathcal{P}\omega =0$. By the Gaffney inequality, clearly $\mathcal{P}\omega \in W^{s+1,q}_{loc}(M,\Lambda)$ for any $q<\infty$. 
    
    If $\omega \in L^{p(\cdot)}(M,\Lambda)$ and $\mathcal{P}\omega =0$ then $\omega = d\alpha +d \beta$ with $\alpha \in W^{1,p(\cdot)}_T(M,\Lambda)$ and $\beta \in W_N^{1,p(\cdot)}(M,\Lambda)$ and so $(\omega,h)=0$ for any $h\in\mathcal{H}(M,\Lambda) \cap  L^q(M,\Lambda)$ with $q\geq p_{-}'=p_{-}/(p_{-}-1)$. In particular, $\mathcal{P}\omega=0$ implies that $\mathcal{P}_T \omega=0$ and $\mathcal{P}_N \omega=0$, so 
    $$
    \omega = d \alpha + \delta \beta, \quad \alpha =\delta G_D[\omega], \quad \beta = d G_N[\omega].
	$$
	
	From Corollaries~\ref{C:commT} and \ref{C:commN} it follows that for $\omega \in \mathcal{H}(M)\cap L^q(M,\Lambda)$, $q>1$, there holds $\mathcal{P}\omega = \omega$. For $\omega \in L^q(M,\Lambda)$, $q\geq 2$, the projector $\mathcal{P}$ coincides with the usual orthogonal projector on the space $\mathcal{H}(M) \cap L^2(M,\Lambda)$. 
	
	\begin{lemma}
		Let $\omega \in \mathcal{H}(M) \cap L^{p(\cdot)}(M,\Lambda)$ and $(\omega,h)=0$ for all $h \in \mathcal{H}(M) \cap (\cap_{q>1} W^{s,q}(M,\Lambda))$. Then $\omega =0$.
	\end{lemma}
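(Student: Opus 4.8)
The plan is to use the projector $\mathcal{P}$ as a bridge between an arbitrary test form and a genuinely regular harmonic field, exploiting that $\omega$, being a harmonic field in the weak sense, cannot see the difference. Fix a smooth form $\mu$ with compact support away from $bM$ (these are dense in $L^{p'(\cdot)}(M,\Lambda)$, cf.\ Lemma~\ref{L:Zhikov0}). First I would establish the identity $(\omega,\mathcal{P}\mu)=(\omega,\mu)$. Write $\mathcal{P}\mu=\mu-d\alpha-\delta\beta$ with $\alpha=\delta G_D[\mu-\mathcal{P}_T\mu]$ and $\beta=dG_N[\mu-\mathcal{P}_N\mu]$; since $p'(\cdot)$ is again log-H\"older, Theorems~\ref{T:D2} and~\ref{T:N2} give $\alpha\in W^{1,p'(\cdot)}_T(M,\Lambda)$ and $\beta\in W^{1,p'(\cdot)}_N(M,\Lambda)$, hence $\alpha\in W^{d,p'(\cdot)}_T(M,\Lambda)$ and $\beta\in W^{\delta,p'(\cdot)}_N(M,\Lambda)$. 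As $\omega\in L^{p(\cdot)}(M,\Lambda)$ has weak $d\omega=0$ and $\delta\omega=0$, it lies in $W^{d,p(\cdot)}(M,\Lambda)\cap W^{\delta,p(\cdot)}(M,\Lambda)$, so Corollary~\ref{C:approx2}(i),(iii) yields $(\omega,\delta\beta)=(d\omega,\beta)=0$ and $(\omega,d\alpha)=(\delta\omega,\alpha)=0$, whence $(\omega,\mathcal{P}\mu)=(\omega,\mu)$.

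Next I would show $\mathcal{P}\mu\in\mathcal{H}(M)\cap\bigl(\bigcap_{q>1}W^{s,q}(M,\Lambda)\bigr)$. Because $\mu\in W^{d,p_-}(M,\Lambda)\cap W^{\delta,p_-}(M,\Lambda)$, pairing with harmonic fields through the definition of the weak (co)differential and using $dh_T=0$, $\delta h_N=0$ gives $\mathcal{P}_T(\delta\mu)=0$ and $\mathcal{P}_N(d\mu)=0$; hence Corollaries~\ref{C:commT} and~\ref{C:commN} identify $d\,\delta G_D[\mu-\mathcal{P}_T\mu]=dG_D[\delta\mu]$ and $\delta\,dG_N[\mu-\mathcal{P}_N\mu]=\delta G_N[d\mu]$, so that $\mathcal{P}\mu=\mu-dG_D[\delta\mu]-\delta G_N[d\mu]$. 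Since $\delta\mu$ and $d\mu$ are smooth and $M$ is of class $C^{s+1,1}$, Theorems~\ref{T:D2} and~\ref{T:N2} (applied with the index $s-1$, read as $0$ when $s=0$, and with constant exponents $q$) place $dG_D[\delta\mu]$ and $\delta G_N[d\mu]$ in $\bigcap_{q>1}W^{s,q}(M,\Lambda)$; together with $\mu$ itself this gives $\mathcal{P}\mu\in\bigcap_{q>1}W^{s,q}(M,\Lambda)$, while $d\mathcal{P}\mu=0=\delta\mathcal{P}\mu$ shows $\mathcal{P}\mu\in\mathcal{H}(M)$.

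Finally, the hypothesis applied to the admissible test field $h=\mathcal{P}\mu$ gives $(\omega,\mathcal{P}\mu)=0$, hence $(\omega,\mu)=0$ for every such $\mu$. By density of these forms in $L^{p'(\cdot)}(M,\Lambda)$ and the duality $L^{p(\cdot)}(M,\Lambda)=\bigl(L^{p'(\cdot)}(M,\Lambda)\bigr)^{*}$ realized by $(\cdot,\cdot)$, this forces $\omega=0$.

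I expect the delicate point to be the \emph{global} (up to $bM$) regularity of $\mathcal{P}\mu$: a priori $\mathcal{P}\mu$ is only $W^{s+1,q}_{\mathrm{loc}}$, and the expressions $d\delta G_D$, $\delta dG_N$ are of second/first order in potentials that in isolation are boundary-problematic. The commutation relations of Corollaries~\ref{C:commT} and~\ref{C:commN} are exactly what resolves this, since they rewrite those expressions as honest potentials $dG_D[\delta\mu]$, $\delta G_N[d\mu]$ of \emph{smooth} data, to which the sharp regularity theory of Theorems~\ref{T:D2} and~\ref{T:N2} applies without loss.
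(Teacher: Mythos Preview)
Your proof is correct and follows essentially the same route as the paper's: both use the Hodge decomposition of Theorem~\ref{T:Hodge} to write a test form as $h+d\alpha+\delta\beta$ with $t\alpha=0$, $n\beta=0$, then use $d\omega=\delta\omega=0$ together with the boundary conditions on $\alpha,\beta$ to reduce $(\omega,\mu)$ to $(\omega,h)=0$. The paper simply invokes Theorem~\ref{T:Hodge} directly for a test form $\eta\in W^{s,q}$, whereas you restrict to smooth compactly supported $\mu$ and unpack the global $W^{s,q}$ regularity of $\mathcal{P}\mu$ via the commutation relations of Corollaries~\ref{C:commT} and~\ref{C:commN}; this is more explicit but amounts to the same argument.
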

	\begin{proof}
		Let $\eta\in W^{s,q}(M,\Lambda)$. Then in the Hodge decomposition of Theorem~\ref{T:Hodge} we get $\eta =h+d\alpha+\delta \beta$, where $h\in W^{s,q}(\Omega)$, $\alpha,\beta \in W^{s+1,q}(\Omega)$ for any $q<\infty$, $t\alpha=0$ and $n\beta=0$. Thus  $(\omega,d\alpha)=0$, $(\omega,\delta \beta)=0$, and so  
		$$
		(\omega,\eta) = (\omega,h) =0. 
		$$
		Since $\eta$ is arbitrary, $\omega =0$.
	\end{proof}
	
	\begin{corollary}\label{C:harm}
		If $(\omega,h)=0$  for all $h \in \mathcal{H}(M) \cap (\cap_{q>1} W^{s,q}(M,\Lambda))$ then $\mathcal{P}\omega =0$.
	\end{corollary}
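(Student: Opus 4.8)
The plan is to deduce this from the Lemma immediately preceding it, applied not to $\omega$ but to the form $\mathcal{P}\omega$ (here $\omega \in W^{s,p(\cdot)}(M,\Lambda)$, as in the definition of $\mathcal{P}$). First I would record that $\mathcal{P}\omega$ belongs to the class to which that Lemma applies: by construction $d\mathcal{P}\omega = 0$ and $\delta\mathcal{P}\omega = 0$, so $\mathcal{P}\omega \in \mathcal{H}(M)$, and since $\mathcal{P}$ is bounded on $W^{s,p(\cdot)}(M,\Lambda)$ we have $\mathcal{P}\omega \in W^{s,p(\cdot)}(M,\Lambda) \subset L^{p(\cdot)}(M,\Lambda)$. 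Thus it suffices to verify the hypothesis of the preceding Lemma for $\mathcal{P}\omega$, namely that $(\mathcal{P}\omega, h) = 0$ for every $h \in \mathcal{H}(M) \cap \bigl(\bigcap_{q>1} W^{s,q}(M,\Lambda)\bigr)$.

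To this end, fix such an $h$ and use the Hodge decomposition of Theorem~\ref{T:Hodge}: write $\omega = \mathcal{P}\omega + d\alpha + \delta\beta$ with $\alpha,\beta \in W^{s+1,p(\cdot)}(M,\Lambda)$, $t\alpha = 0$ and $n\beta = 0$. Pairing with $h$ gives $(\omega,h) = (\mathcal{P}\omega,h) + (d\alpha,h) + (\delta\beta,h)$, so I would show the last two terms vanish. For $(d\alpha,h)$: since $h$ is a harmonic field, $\delta h = 0$; taking $q = p_{-}'$ one has $h \in W^{\delta,q}(M,\Lambda)$ and $\alpha \in W^{d,q'}_T(M,\Lambda)$ (because $t\alpha = 0$ and $\alpha, d\alpha \in L^{p(\cdot)} \subset L^{q'}$), so Corollary~\ref{C:approx2}(iii) applied to $h$ with test form $\alpha$ yields $(d\alpha,h) = (h,d\alpha) = (\delta h,\alpha) = 0$. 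Symmetrically, $(\delta\beta,h) = 0$: here $\beta \in W^{\delta,p_{-}}_N(M,\Lambda)$ since $n\beta = 0$, and $h \in W^{d,p_{-}'}(M,\Lambda)$ with $dh = 0$, so Corollary~\ref{C:approx2}(iv) applied to $\beta$ with test form $h$ gives $(\beta,dh) = (\delta\beta,h)$, whence $(\delta\beta,h) = 0$.

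Consequently $(\mathcal{P}\omega,h) = (\omega,h) = 0$ by hypothesis, for every admissible $h$. Applying the preceding Lemma to $\mathcal{P}\omega$ then forces $\mathcal{P}\omega = 0$, which is the claim. I do not expect any serious obstacle here: the only point requiring a little care is the choice of the auxiliary integrability exponent in the two integration-by-parts steps, but since the forms $\alpha,\beta$ lie in the variable-exponent space (hence in $L^{p_{-}}$) while $h$ lies in $W^{s,q}$ for \emph{every} $q > 1$, a suitable constant exponent (e.g. $q = p_{-}'$) is always available and no regularity is lost. All the substantive work has already been packaged in the preceding Lemma, in Theorem~\ref{T:Hodge}, and in Corollary~\ref{C:approx2}.
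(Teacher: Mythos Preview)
Your proof is correct and follows essentially the same route as the paper: use the Hodge decomposition $\omega=\mathcal{P}\omega+d\alpha+\delta\beta$, observe that $(d\alpha,h)=(\delta\beta,h)=0$ for any harmonic field $h$ of the stated regularity, conclude $(\mathcal{P}\omega,h)=(\omega,h)=0$, and then invoke the preceding Lemma. The paper compresses this into a single line, while you spell out the integration-by-parts justifications via Corollary~\ref{C:approx2}; otherwise the arguments coincide.
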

	\begin{proof}
		Follows from the fact that 
        $$
        (\mathcal{P}\omega,h) = (\omega -d\alpha-\delta\beta,h)=(\omega,h)=0
        $$ 
        for such $h$. By the previous Lemma, $\mathcal{P}\omega=0$.
	\end{proof}
    \begin{corollary}\label{C:harm1}
    If $\omega = dA+\delta B$, $A,B \in W^{s+1,p(\cdot)}(M,\Lambda)$, $tA =0$, $nB=0$, then $\mathcal{P}\omega =0$. 
    \end{corollary}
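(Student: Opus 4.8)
\textbf{Proof proposal for Corollary~\ref{C:harm1}.}

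The plan is to reduce this statement to the previous Corollary~\ref{C:harm} by showing that if $\omega = dA + \delta B$ with $A,B \in W^{s+1,p(\cdot)}(M,\Lambda)$, $tA = 0$, $nB = 0$, then $(\omega, h) = 0$ for every $h \in \mathcal{H}(M) \cap \left(\cap_{q>1} W^{s,q}(M,\Lambda)\right)$. Once this orthogonality is established, Corollary~\ref{C:harm} immediately gives $\mathcal{P}\omega = 0$.

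To establish the orthogonality, first I would note that such an $h$ is a harmonic field, so $dh = 0$ and $\delta h = 0$, and moreover $h \in W^{s,q}(M,\Lambda)$ for all finite $q$; in particular $h \in \mathrm{Lip}(M,\Lambda)$ is not available, but $h$ is at least in $W^{1,q}(M,\Lambda)$ for $q$ large, hence in $W^{1,p'(\cdot)}(M,\Lambda)$ and also in $W^{1,p'_{-}}(M,\Lambda)$ after adjusting $q$. Then I would split $(\omega,h) = (dA,h) + (\delta B, h)$ and treat each term by the integration-by-parts formula. For the first term, since $tA = 0$ (so $A \in W^{1,p(\cdot)}_T(M,\Lambda)$ by Lemma~\ref{L:approx1}(e)) and $h$ has enough regularity, Lemma~\ref{L:ort} (the case $f \in W^{1,p(\cdot)}_T$, $v \in W^{1,p'(\cdot)}$, giving $(dA,\delta v)=0$, but here I want $(dA,h)$ directly) — more precisely, the integration-by-parts formula \eqref{eq:by_parts} applied to $A$ and $h$ gives $(dA,h) = (A,\delta h) + [A,h] = (A,0) + [A,h]$, and the boundary term $[A,h] = \int_{bM}\langle \nu\wedge A, h\rangle\,d\sigma$ vanishes because $\nu \wedge A = tA$-type quantity vanishes on $bM$ in the trace sense (this is exactly the content of $A \in W^{1,p(\cdot)}_T$, whose approximation by $C^{s-1,1}_0$ forms from Lemma~\ref{L:approx1}(g) lets one pass to the limit in the boundary integral). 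Hence $(dA,h) = 0$. Symmetrically, for the second term, $(\delta B, h) = \overline{(h,\delta B)}$; applying \eqref{eq:by_parts} with roles arranged so that the boundary term becomes $[\,\cdot\,, B]$-type, and using $nB = 0$ (so $B \in W^{1,p(\cdot)}_N(M,\Lambda)$) together with $dh = 0$, one gets $(\delta B, h) = (B, dh) \pm [\text{boundary}] = 0$ where the boundary term vanishes by $nB = 0$ via the interior-product form of $[\cdot,\cdot]$ in \eqref{eq:bound} and the approximation Lemma~\ref{L:approx1}(f)/(g).

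Assembling these, $(\omega,h) = 0$ for all admissible $h$, and Corollary~\ref{C:harm} yields $\mathcal{P}\omega = 0$. The main obstacle I anticipate is bookkeeping the regularity: $\omega$ itself is only $L^{p(\cdot)}$, $A,B$ are only $W^{s+1,p(\cdot)}$ (so $dA,\delta B \in W^{s,p(\cdot)} \subset L^{p(\cdot)}$), and $h$ is smooth enough but one must make sure the pairing $(\omega,h)$ and the integration-by-parts steps are justified in the variable-exponent duality $L^{p(\cdot)}$–$L^{p'(\cdot)}$, or equivalently first reduce to a fixed $L^{q}$–$L^{q'}$ pair by using that $h \in W^{s,q}$ for every finite $q$. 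This is routine given the approximation machinery of Section~\ref{ssec:appox1} and Lemma~\ref{L:ort}, but it is the one place where care is needed; alternatively, one can bypass it entirely by observing that the computation $(dA,h)=0$, $(\delta B,h)=0$ for $h \in \mathcal{H}(M)\cap W^{1,p'(\cdot)}$ already appeared implicitly in the proof of Theorem~\ref{T:Hodge} (the lines showing $(h,\delta\xi)=0$, $(h,d\xi)=0$), so one may simply cite that argument with $\xi$ replaced by a harmonic field and $A$, $B$ in place of the Green potentials.
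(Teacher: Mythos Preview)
Your proposal is correct and follows essentially the same route as the paper: reduce to Corollary~\ref{C:harm} by showing $(\omega,h)=(dA,h)+(\delta B,h)=0$ for every $h\in\mathcal{H}(M)\cap(\cap_{q>1}W^{s,q}(M,\Lambda))$, using that $tA=0$, $nB=0$ kill the boundary terms and $dh=\delta h=0$ kills the interior terms. The paper compresses this into a single line (``follows from the assumptions of the Corollary and approximation results''), while you spell out the integration-by-parts and regularity bookkeeping more carefully, but there is no substantive difference.
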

    \begin{proof}
    By Corollary~\ref{C:harm} it is sufficient to check that $(\omega,h)=0$ for all $h \in \mathcal{H}(M) \cap (\cap_{q>1} W^{s,q}(M,\Lambda))$. But this follows from the assumptions of the Corollary and approximation results: $(dA+\delta B,h)=0$.
    \end{proof}
    
	If one assumes more regularity on $M$, say $M$ is of the class $C^{k+2,1}$  (or $C^{k+3,\alpha}$), then one can test $\omega$ only against harmonic fields from $\cap_{q>1} W^{k,q}(M,\Lambda)$ (resp. $C^{k,\alpha}(M,\Lambda)$).
	
	Also directly from Theorem~\ref{T:Hodge} and  Corollaries~\ref{C:commT}, \ref{C:commN} we obtain a result similar to Lemmas~\ref{L:GaffneyD1}, \ref{L:GaffneyN1}.
	
	\begin{lemma}\label{L:GaffneyNatur}
		Let $\omega \in L^{p_{-}}(M,\Lambda)$ have differential $d\omega =f\in W^{s,p(\cdot)}(M,\Lambda)$ and codifferential $\delta \omega=v \in W^{s,p(\cdot)}(M,\Lambda)$. Then $\omega - \mathcal{P}\omega \in W^{s+1,p(\cdot)}(M,\Lambda)$,
		$$
		\omega - \mathcal{P}\omega = d G_D [v] + \delta G_N[f],
		$$
		and 
		$$
		\|\omega - \mathcal{P}\omega\|_{s+1,p(\cdot),M} \leq C ( \|f\|_{s,p(\cdot),M} + \|v\|_{s,p(\cdot),M} )
		$$
		where $C=C(\mathrm{data},s)$.
	\end{lemma}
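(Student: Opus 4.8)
The plan is to expand the definition of the projector $\mathcal{P}$ and then apply the commutation identities of Corollaries~\ref{C:commT} and \ref{C:commN}, exactly as in the proofs of Lemmas~\ref{L:GaffneyD1} and \ref{L:GaffneyN1}. By the very definition of $\mathcal{P}$,
$$
\omega - \mathcal{P}\omega = d\,\delta G_D[\omega - \mathcal{P}_T\omega] + \delta\, d\, G_N[\omega - \mathcal{P}_N\omega].
$$
Since $\delta\omega = v$ and $d\omega = f$ both lie in $L^{p(\cdot)}(M,\Lambda) \subset L^{p_{-}}(M,\Lambda)$, we have $\omega \in W^{\delta,p_{-}}(M,\Lambda) \cap W^{d,p_{-}}(M,\Lambda)$; moreover $\mathcal{P}_T(\delta\omega) = 0$ and $\mathcal{P}_N(d\omega) = 0$, which follow from the integration-by-parts formula (Corollary~\ref{C:approx2}), so that the potentials $G_D[v]$ and $G_N[f]$ are well defined.

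Next I would invoke Corollary~\ref{C:commT} with $\eta = \omega$, legitimate since $\omega \in W^{\delta,p_{-}}(M,\Lambda)$, to get $\delta G_D[\omega - \mathcal{P}_T\omega] = G_D[\delta\omega] = G_D[v]$, and Corollary~\ref{C:commN} with $\eta = \omega$, legitimate since $\omega \in W^{d,p_{-}}(M,\Lambda)$, to get $d\, G_N[\omega - \mathcal{P}_N\omega] = G_N[d\omega] = G_N[f]$. Substituting into the displayed identity yields the asserted formula
$$
\omega - \mathcal{P}\omega = d\, G_D[v] + \delta\, G_N[f],
$$
so $\omega - \mathcal{P}\omega$ is the sum of an exact form built from the Dirichlet potential of $v$ and a coexact form built from the Neumann potential of $f$.

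It then remains to read off the membership $\omega - \mathcal{P}\omega \in W^{s+1,p(\cdot)}(M,\Lambda)$ and the estimate from potential bounds already established. If $M$ is of class $C^{s+2,1}$, Theorems~\ref{T:D2} and \ref{T:N2} give at once $d\,G_D[v],\ \delta\, G_N[f] \in W^{s+1,p(\cdot)}(M,\Lambda)$ with $\|d\,G_D[v]\|_{s+1,p(\cdot),M} \lesssim \|v\|_{s,p(\cdot),M}$ and $\|\delta\, G_N[f]\|_{s+1,p(\cdot),M} \lesssim \|f\|_{s,p(\cdot),M}$, and summing these is the claimed inequality. Under the weaker hypothesis $M \in C^{s+1,1}$ one only obtains these forms directly in $W^{\max(s,1),p(\cdot)}(M,\Lambda)$; one then upgrades them to $W^{s+1,p(\cdot)}(M,\Lambda)$ by feeding the variational relations \eqref{eq:AB} for $\beta = d\,G_D[v]$ and \eqref{eq:ABN} for $\alpha = \delta\, G_N[f]$ into Theorem~\ref{T:p2} (or Theorem~\ref{T:TrueGaffney} when $s = 0$), which is precisely the bootstrap used in the proof of Theorem~\ref{T:HodgeD}. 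The only genuinely delicate point is this last step --- making the a priori estimate survive the one-derivative gap between $C^{s+1,1}$ and $C^{s+2,1}$ --- but this is exactly what the parametrix estimates of Chapter~\ref{Sec:parametrix} deliver, so everything else is bookkeeping with the commutation rules and the already proved properties of $\mathcal{P}$, $G_D$, and $G_N$.
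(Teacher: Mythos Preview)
Your proposal is correct and follows exactly the approach the paper indicates: the paper states only that the lemma follows ``directly from Theorem~\ref{T:Hodge} and Corollaries~\ref{C:commT}, \ref{C:commN}'', and you have unpacked precisely that, including the regularity bootstrap on $C^{s+1,1}$ manifolds via Theorem~\ref{T:p2}/\ref{T:TrueGaffney} in the same manner as the proof of Theorem~\ref{T:HodgeD}. There is nothing to add.
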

    
	\begin{theorem}\label{T:Natur2}

        (i) Let $M\in C^{1,1}$ and and $\eta \in L^{p(\cdot)}(M,\Lambda)$ with $\mathcal{P}\eta=0$. Then there exists $\omega\in W^{1,p(\cdot)}(M,\Lambda)$ such that $\mathcal{P}\omega=0$ and $d\omega,\delta \omega \in W^{1,p(\cdot)}(M,\Lambda)$  which solves the boundary value problem 
		\begin{equation}\label{eq:natural}
			\triangle \omega = \eta, \quad t\delta \omega =0,\quad nd\omega =0,
		\end{equation}
        and satisfies
        $$
        \|\omega\|_{1,p(\cdot),M} + \|d\omega\|_{1,p(\cdot),M}+ \|\delta \omega\|_{1,p(\cdot),M} \leq C(\mathrm{data}) \|\eta\|_{p(\cdot),M}.
        $$
        (ii) Let $M \in C^{s+2,1}$. Let $\eta \in W^{s,p(\cdot)}(M,\Lambda)$ be such that $\mathcal{P}\eta =0$. Then there exists a solution $\omega \in W^{s+2,p(\cdot)}(M,\Lambda)$ of the boundary value problem \eqref{eq:natural} such that $\mathcal{P}\omega=0$ and
		$$
		\|\omega\|_{s+2,p(\cdot),M} \leq C \|\eta\|_{s,p(\cdot),M}
		$$
		where $C=C(\mathrm{data},s)$. 
	\end{theorem}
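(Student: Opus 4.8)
The plan is to sidestep the fact that \eqref{eq:natural} is non-elliptic, with infinite-dimensional kernel $\mathcal{H}(M)$, by the classical detour through the already-solved Dirichlet and Neumann problems: one constructs $\omega$ so that its potentials $\delta\omega$ and $d\omega$ are precisely the two halves of a mixed Hodge decomposition of $\eta$. Since $\mathcal{P}\eta=0$, the discussion preceding the theorem gives $\mathcal{P}_T\eta=0$ and $\mathcal{P}_N\eta=0$, so $G_D[\eta]$ and $G_N[\eta]$ are defined. Set
\[
\alpha:=\delta G_D[\eta],\qquad \beta:=d\,G_N[\eta].
\]
Applying Theorem~\ref{T:Hodge} to the form $\eta$ (where the harmonic part is $h=\mathcal{P}\eta=0$) gives $\eta=d\alpha+\delta\beta$ together with $t\alpha=0$, $\delta\alpha=0$, $n\beta=0$, $d\beta=0$ and
\[
\|\alpha\|_{s+1,p(\cdot),M}+\|\beta\|_{s+1,p(\cdot),M}\le C(\mathrm{data},s)\,\|\eta\|_{s,p(\cdot),M}
\]
(used with $s=0$ on a $C^{1,1}$ manifold in case (i), on a $C^{s+1,1}$ manifold in case (ii)). I would then record the one genuinely computational point, namely that $\mathcal{P}_T\alpha=0$ and $\mathcal{P}_N\beta=0$: for $h_T\in\mathcal{H}_T(M)$ the integration-by-parts formula \eqref{eq:by_parts} (applicable since $h_T$ is Lipschitz and $G_D[\eta]\in W^{1,p(\cdot)}(M,\Lambda)\subset W^{1,1}(M,\Lambda)$) gives $(\alpha,h_T)=(d h_T,G_D[\eta])-[h_T,G_D[\eta]]=0$ because $d h_T=0$ and $\nu\wedge h_T=0$ on $bM$; the assertion for $\beta$ is the mirror statement using $\nu\lrcorner h_N=0$.

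With this in hand I would define
\[
\omega:=d\,G_D[\alpha]+\delta\,G_N[\beta].
\]
Lemma~\ref{L:S4} with $g=\alpha$ (admissible since $\delta\alpha=0$, $\mathcal{P}_T\alpha=0$) yields $d(d\,G_D[\alpha])=0$ and $\delta(d\,G_D[\alpha])=\alpha$, with $d\,G_D[\alpha]$ in the Sobolev class and satisfying the estimate stated there; Lemma~\ref{L:S2} with $f=\beta$ (admissible since $d\beta=0$, $\mathcal{P}_N\beta=0$) yields $\delta(\delta\,G_N[\beta])=0$ and $d(\delta\,G_N[\beta])=\beta$, likewise. Hence $\delta\omega=\alpha$ and $d\omega=\beta$, so $\triangle\omega=d\delta\omega+\delta d\omega=d\alpha+\delta\beta=\eta$, while $t\delta\omega=t\alpha=0$ and $n d\omega=n\beta=0$; thus $\omega$ solves \eqref{eq:natural}. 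Moreover $\omega=dA+\delta B$ with $A=G_D[\alpha]$, $B=G_N[\beta]$, $tA=0$, $nB=0$, and $A,B$ carry the Sobolev regularity required by Corollary~\ref{C:harm1} (by Theorems~\ref{T:D2}, \ref{T:N2}), so $\mathcal{P}\omega=0$. The asserted bounds for $\omega$ follow by chaining the estimate above with the norm inequalities in Lemmas~\ref{L:S2}, \ref{L:S4}.

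The two cases differ only in the bookkeeping of regularity indices. When $M\in C^{s+2,1}$, Lemmas~\ref{L:S4} and \ref{L:S2} may be applied with index $s+1$, so $d\,G_D[\alpha],\delta\,G_N[\beta]\in W^{s+2,p(\cdot)}(M,\Lambda)$ and therefore $\omega\in W^{s+2,p(\cdot)}(M,\Lambda)$ with $\|\omega\|_{s+2,p(\cdot),M}\le C(\mathrm{data},s)\|\eta\|_{s,p(\cdot),M}$, and $\triangle\omega=\eta$ holds pointwise almost everywhere; this is case (ii). When $M$ is only $C^{1,1}$ those two lemmas are available only with index $0$, which is exactly why in case (i) one obtains $\omega,d\omega,\delta\omega\in W^{1,p(\cdot)}(M,\Lambda)$ with $\|\omega\|_{1,p(\cdot),M}+\|d\omega\|_{1,p(\cdot),M}+\|\delta\omega\|_{1,p(\cdot),M}\le C(\mathrm{data})\|\eta\|_{p(\cdot),M}$ rather than $W^{2,p(\cdot)}$, and \eqref{eq:natural} is understood as $d\delta\omega+\delta d\omega=\eta$ in $L^{p(\cdot)}(M,\Lambda)$ with $\delta\omega\in W^{1,p(\cdot)}_T(M,\Lambda)$, $d\omega\in W^{1,p(\cdot)}_N(M,\Lambda)$. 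The only real obstacle is conceptual: since the kernel is infinite-dimensional there is no direct variational or potential-theoretic route, and the construction hinges on the asymmetric choice of $\alpha$ from the Dirichlet potential and $\beta$ from the Neumann potential, which is precisely what makes the natural conditions $t\delta\omega=0$, $nd\omega=0$ and the normalization $\mathcal{P}\omega=0$ fall out for free.
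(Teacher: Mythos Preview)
Your proof is correct and is essentially the same as the paper's: both arrive at the solution $\omega=d\,G_D[\delta G_D[\eta]]+\delta\,G_N[d\,G_N[\eta]]$. The paper reaches this formula by first writing the ansatz $\omega=G_D[\eta]+G_N[\eta]+\mu_1+\mu_2$, constructing the corrections $\mu_1,\mu_2$ via Corollaries~\ref{C:I0}, \ref{C:I2}, and then simplifying with the commutation relations of Corollaries~\ref{C:commT}, \ref{C:commN}; you instead invoke Lemmas~\ref{L:S4} and \ref{L:S2} directly on $\alpha$ and $\beta$, which is a slightly more streamlined path to the same object and the same verification that $\delta\omega=\alpha$, $d\omega=\beta$.
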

	\begin{proof}
		By Theorem~\ref{T:Hodge} and the definition of the projector $\mathcal{P}$ we have $\mathcal{P}_T \eta =0$, $\mathcal{P}_N \eta=0$ and the Hodge decomposition 
		$$
		\eta = d\alpha + \delta \beta, \quad \alpha=\delta G_D[\eta] ,\quad \beta =d G_N[\eta].
		$$     
        We look for a solution to the problem 
		$$
		d\omega =\beta= dG_N[\eta], \quad \delta \omega =\alpha= \delta G_D[\eta]
		$$
        in the form 
        $$
        \omega = G_D[\eta] + G_N[\eta] +\mu_1 + \mu_2
        $$
        where 
        \begin{gather*}
        d\mu_1 =-d G_D[\eta], \quad \delta \mu_1 =0, \quad t\mu_1=0,\\
        d\mu_2 =0, \quad \delta \mu_2 = -\delta G_N[\eta], \quad n\mu_2=0.
        \end{gather*}
        By Corollaries~\ref{C:I0},~\ref{C:I2}, we can construct $\mu_1$ and $\mu_2$ in the form
        $$
        \mu_1 = \delta G_D[-d G_D[\eta]], \quad \mu_2 = d G_N[-\delta G_N[\eta]].
        $$
        By Corollaries~\ref{C:commT}, \ref{C:commN},
        $$
        \mu_1 = - G_D [ \delta d G_D [\eta]], \quad \mu_2 = - G_N [d\delta G_N[\eta]].
        $$
        Thus 
		\begin{gather*}
		\omega = G_D[\eta- \delta d G_D[\eta]] +G_N[\eta - d\delta G_N[\eta]]\\
        =G_D[d\delta G_D[\eta]] + G_N[ \delta d G_N[\eta]]\\
        =d G_D[\delta G_D[\eta]] + \delta G_N[d G_N [\eta]].
		\end{gather*}
		It clearly satisfies \eqref{eq:natural}, $\mathcal{P}\omega=0$, and provides a solution with the required estimates.
	\end{proof}

    Now we shall deal with the non-homogeneous``natural'' boundary value problem for the Hodge Laplacian assuming that $M$ is of the class $C^{s+2,1}$, $s\in \{0\}\cup \mathbb{N}$.
	
	\begin{theorem}\label{T:Natur3}
		Let $M\in C^{s+2,1}$, $s\in \{0\}\cup \mathbb{N}$. Let $\eta \in W^{s,p(\cdot)}(M,\Lambda)$ and $\varphi,\psi \in W^{s+1,p(\cdot)}(M,\Lambda)$ be such that $\mathcal{P}(\eta-d\varphi-\delta\psi)=0$. Then there exists a solution $\omega \in W^{s+2,p(\cdot)}(M,\Lambda)$ of the boundary value problem 
		\begin{equation}\label{eq:natural1}
			\triangle \omega = \eta, \quad t\delta \omega =t\varphi,\quad nd\omega =n\psi,
		\end{equation}
		such that 
		$$
		\|\omega\|_{s+2,p(\cdot),M} \leq C (\|\eta\|_{s,p(\cdot),M} + \|\varphi\|_{s+1,p(\cdot),M} + \|\psi\|_{s+1,p(\cdot),M})
		$$
		where $C=C(\mathrm{data},s)$.
	\end{theorem}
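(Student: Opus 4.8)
The plan is to reduce the inhomogeneous problem to the homogeneous natural boundary value problem of Theorem~\ref{T:Natur2}(ii) by absorbing the boundary data into an auxiliary extension form. First I would apply Corollary~\ref{C:ext} with $f=\psi$ and $v=\varphi$ (legitimate since $M\in C^{s+2,1}$ and $\varphi,\psi\in W^{s+1,p(\cdot)}(M,\Lambda)$) to produce $\gamma\in W^{s+2,p(\cdot)}(M,\Lambda)$ with $\gamma=0$ on $bM$, $nd\gamma=n\psi$, $t\delta\gamma=t\varphi$ on $bM$, and
$$
\|\gamma\|_{s+2,p(\cdot),M}\leq C(\mathrm{data},s)\bigl(\|\psi\|_{s+1,p(\cdot),M}+\|\varphi\|_{s+1,p(\cdot),M}\bigr).
$$
Since $\gamma\in W^{s+2,p(\cdot)}(M,\Lambda)$ we have $\delta\gamma,d\gamma\in W^{s+1,p(\cdot)}(M,\Lambda)$ and $\triangle\gamma=d\delta\gamma+\delta d\gamma\in W^{s,p(\cdot)}(M,\Lambda)$.

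The second step, and the one that requires the most care, is verifying the compatibility condition $\mathcal{P}(\eta-\triangle\gamma)=0$ needed to invoke Theorem~\ref{T:Natur2}(ii). I would write
$$
\triangle\gamma-d\varphi-\delta\psi=d(\delta\gamma-\varphi)+\delta(d\gamma-\psi),
$$
and observe that $A:=\delta\gamma-\varphi$ and $B:=d\gamma-\psi$ both lie in $W^{s+1,p(\cdot)}(M,\Lambda)$, while the boundary conditions produced by Corollary~\ref{C:ext} give exactly $tA=t\delta\gamma-t\varphi=0$ and $nB=nd\gamma-n\psi=0$. Hence Corollary~\ref{C:harm1} yields $\mathcal{P}(\triangle\gamma-d\varphi-\delta\psi)=0$. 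Combining this with the hypothesis $\mathcal{P}(\eta-d\varphi-\delta\psi)=0$ and the linearity of $\mathcal{P}$ gives $\mathcal{P}(\eta-\triangle\gamma)=0$.

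Next I would apply Theorem~\ref{T:Natur2}(ii) to the form $\eta-\triangle\gamma\in W^{s,p(\cdot)}(M,\Lambda)$, obtaining $\widetilde\omega\in W^{s+2,p(\cdot)}(M,\Lambda)$ with $\mathcal{P}\widetilde\omega=0$ solving
$$
\triangle\widetilde\omega=\eta-\triangle\gamma,\quad t\delta\widetilde\omega=0,\quad nd\widetilde\omega=0,
$$
together with the estimate $\|\widetilde\omega\|_{s+2,p(\cdot),M}\leq C(\mathrm{data},s)\|\eta-\triangle\gamma\|_{s,p(\cdot),M}$. Setting $\omega:=\widetilde\omega+\gamma$ then gives $\triangle\omega=\eta$ in $M$, and since $t\delta\gamma=t\varphi$, $nd\gamma=n\psi$ on $bM$ while the corresponding traces of $\widetilde\omega$ vanish, we obtain $t\delta\omega=t\varphi$ and $nd\omega=n\psi$ on $bM$, which is \eqref{eq:natural1}.

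Finally, the claimed estimate follows from the triangle inequality together with the two displayed bounds and the boundedness of $\triangle\colon W^{s+2,p(\cdot)}(M,\Lambda)\to W^{s,p(\cdot)}(M,\Lambda)$:
$$
\|\omega\|_{s+2,p(\cdot),M}\leq\|\widetilde\omega\|_{s+2,p(\cdot),M}+\|\gamma\|_{s+2,p(\cdot),M}\leq C(\mathrm{data},s)\bigl(\|\eta\|_{s,p(\cdot),M}+\|\varphi\|_{s+1,p(\cdot),M}+\|\psi\|_{s+1,p(\cdot),M}\bigr).
$$
The only genuinely delicate point is the compatibility computation above; the matching of $t\delta\gamma$ with $t\varphi$ and of $nd\gamma$ with $n\psi$ provided by the extension operator is precisely what makes the decomposition $\triangle\gamma-d\varphi-\delta\psi=dA+\delta B$ fall within the scope of Corollary~\ref{C:harm1}. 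Everything else is routine bookkeeping with norms, so I would keep that part brief.
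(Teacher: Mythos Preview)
Your proof is correct and follows essentially the same approach as the paper: construct $\gamma$ via Corollary~\ref{C:ext}, verify $\mathcal{P}(\eta-\triangle\gamma)=0$ using Corollary~\ref{C:harm1} applied to $d(\delta\gamma-\varphi)+\delta(d\gamma-\psi)$, then invoke Theorem~\ref{T:Natur2}(ii) and set $\omega=\widetilde\omega+\gamma$. The only cosmetic difference is that the paper further subtracts $\mathcal{P}[\widetilde\omega+\gamma]$ to normalize the solution, but since the statement does not assert $\mathcal{P}\omega=0$ your choice is equally valid.
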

	
	\begin{proof}
		First, by Corollary~\ref{C:ext} we construct $\gamma \in W^{s+2,p(\cdot)}(M,\Lambda)$ such that 
		\begin{align*}
			t\gamma=0,\quad n\gamma=0, \quad t\delta \gamma =t\varphi, \quad n d \gamma = n \psi,\\
			\|\gamma\|_{s+2,p(\cdot),M} \leq C (\|\varphi\|_{s+1,p(\cdot),M} + \|\psi\|_{s+1,p(\cdot),M}).
		\end{align*}
		Consider the boundary value problem 
		$$
		\triangle \widetilde \omega = \widetilde \eta= \eta - d \alpha - \delta \beta, \quad t\delta \widetilde \omega =0, \quad nd\widetilde \omega =0,
		$$
		where 
		$$
		\alpha = \delta \gamma, \quad \beta = d \gamma, \quad t\alpha = t\varphi, \quad n \beta = n \psi.
		$$
        By Corollary~\ref{C:harm1} there holds 
        $$
        \mathcal{P}(\widetilde \eta)=\mathcal{P}(\eta - d\alpha - \delta \beta)=\mathcal{P}(\eta -d \varphi - \delta \psi) + \mathcal{P}(d(\varphi-\alpha)+\delta (\psi-\beta))=0.
        $$
	     It remains to use Theorem~\ref{T:Natur2} and then set $\omega = \widetilde \omega + \gamma - \mathcal{P}[\widetilde \omega + \gamma]$.
	\end{proof}
    
On $C^{1,1}$ manifolds for $\eta \in L^{p(\cdot)}(M,\Lambda)$, $\varphi,\psi \in W^{1,p(\cdot)}(M,\Lambda)$ we can understand a solution of the boundary value problem \eqref{eq:natural1} in the weak sense, namely
\begin{equation}\label{eq:natural1weak}
\mathcal{D}(\omega,\zeta) = (\eta,\zeta) + [\zeta,\psi] - [\varphi,\zeta]
\end{equation}
for all $\zeta \in \mathrm{Lip}\,(M,\Lambda)$. On $C^{2,1}$ manifolds, assuming that $\omega \in W^{2,p_{-}}$, this definition yields \eqref{eq:natural1} upon integration-by-parts. 

Assuming that $\eta\in L^2(M,\Lambda)$, $\varphi,\psi \in W^{1,2}(M,\Lambda)$, and $\mathcal{P}(\eta-d\varphi-\delta \psi)=0$ a solution to \eqref{eq:natural1weak} is easily constructed. Consider first the variational problem
$$
\frac{1}{2}\mathcal{D}(\omega,\omega)- (\eta,\omega) - [\omega,\psi] + [\varphi,\omega]\rightarrow \min
$$
over $\omega \in W^{1,2}(M,\Lambda)$. A unique minimizer with $\mathcal{P}\omega=0$ exists by the direct method of the calculus of variations and the Gaffney inequality of Lemma~\ref{L:GaffneyNatur} (see Section~\ref{ssec:varf}). This minimizer satisfies \eqref{eq:natural1weak} as the Euler-Lagrange equation. 

Now let $M$ be still only $C^{1,1}$, $\eta\in L^{p(\cdot)}(M,\Lambda)$, $\varphi,\psi \in W^{1,p(\cdot)}(M,\Lambda)$. Consider the Lipschitz approximations $\eta_\varepsilon \to \eta$ in $L^{p(\cdot)}(M,\Lambda)$, $\varphi_\varepsilon\to \varphi$, $\psi_\varepsilon \to \psi$ in $W^{1,p(\cdot)}(M,\Lambda)$. Let
$$
p_\varepsilon = \mathcal{P}(\eta_\varepsilon - \delta \psi_\varepsilon - d \varphi_\varepsilon).
$$
The sequence $p_{\varepsilon} \to 0$ in $L^{p(\cdot)}(M,\Lambda)$ as $\varepsilon \to 0$. Let $\omega_\varepsilon$ be the unique minimizer of the corresponding variational problem 
$$
\frac{1}{2}\mathcal{D}(\omega,\omega)- (\eta_\varepsilon- p_\varepsilon,\omega) - [\zeta,\psi_\varepsilon] + [\varphi_\varepsilon,\zeta] \to \min
$$
over $\omega \in W^{1,2}(M,\Lambda)$ with $\mathcal{P}\omega =0$. Then  $\omega_\varepsilon$ satisfies the variational relation
$$
\mathcal{D} (\omega_\varepsilon,\zeta) = (\eta_\varepsilon - \delta \psi_\varepsilon- d \varphi_\varepsilon-p_\varepsilon,\zeta) + (d\zeta,\psi_\varepsilon)+ (\delta\zeta, \varphi_\varepsilon)
$$
for all $\zeta \in \mathrm{Lip}\,(M,\Lambda)$. By Theorem~\ref{L:addregNatur} there holds $\alpha_\varepsilon=\delta \omega_\varepsilon$ and $\beta_\varepsilon =d\omega_\varepsilon$ belong to $W^{1,p(\cdot)}(M,\Lambda)$ and satisfy the variational relations 
\begin{gather*}
\mathcal{D}(\alpha_\varepsilon, \zeta) = (\eta_\varepsilon-\delta \psi_\varepsilon - p_\varepsilon, d\zeta), \\
\mathcal{D}(\beta_\varepsilon, \zeta )= (\eta_\varepsilon - d \varphi_\varepsilon - p_\varepsilon, \delta\zeta)
\end{gather*}
for all $\zeta \in \mathrm{Lip}\,(M,\Lambda)$ together with the boundary conditions $t\alpha_\varepsilon = t \varphi_\varepsilon$, $n\beta_\varepsilon = n \psi_\varepsilon$. By Theorem~\ref{T:TrueGaffney} and Lemmas~\ref{L:quant},~\ref{L:quant1}, there holds
\begin{gather*}
\|\alpha_\varepsilon\|_{1,p(\cdot),M} \leq \|\varphi_\varepsilon\|_{1,p(\cdot),M}+ C(\mathrm{data}) (\|\eta_\varepsilon - \delta \psi_\varepsilon - d\varphi_\varepsilon- p_{\varepsilon}\|_{p(\cdot),M} + \|\delta \varphi_{\varepsilon}\|_{p(\cdot),M}),\\
\|\beta_\varepsilon\|_{1,p(\cdot),M} \leq \|\psi_\varepsilon\|_{1,p(\cdot),M}+  C(\mathrm{data}) (\|\eta_\varepsilon - \delta \psi_\varepsilon - d\varphi_\varepsilon- p_{\varepsilon}\|_{p(\cdot),M} + \|d\psi_{\varepsilon}\|_{p(\cdot),M}).
\end{gather*}
By Lemma~\ref{L:GaffneyNatur} we obtain the estimate 
$$
\|\omega_{\varepsilon}\|_{1,p(\cdot),M} \leq C (\mathrm{data}) (\|\eta_\varepsilon - \delta \psi_\varepsilon - d\varphi_\varepsilon- p_{\varepsilon}\|_{p(\cdot),M} + \|\varphi_\varepsilon\|_{1,p(\cdot),M} +\|\psi_\varepsilon\|_{1,p(\cdot),M})
$$
which provides the uniform bound on $\omega_\varepsilon$ in $W^{1,p(\cdot)}(M,\Lambda)$. By the same estimates applied to the difference of $\omega_{\varepsilon_1} - \omega_{\varepsilon_2}$ we conclude that the sequences $\alpha_\varepsilon, \beta_\varepsilon,\omega_\varepsilon$ are fundamental and therefore converge in $W^{1,p(\cdot)}(M,\Lambda)$. The limit form $\omega = \lim_{\varepsilon \to 0} \omega_\varepsilon$ possesses the differential $d\omega = \lim_{\varepsilon \to 0} \beta_\varepsilon$ and codifferential $\delta \omega_\varepsilon = \lim_{\varepsilon \to 0} \alpha_\varepsilon$, which belong to $W^{1,p(\cdot)}(M,\Lambda)$. We formulate this result as
\begin{theorem}\label{T:NaturC11}
Let $M\in C^{1,1}$. Let $\eta \in L^{p(\cdot)}(M,\Lambda)$ and $\varphi,\psi \in W^{1,p(\cdot)}(M,\Lambda)$ be such that $\mathcal{P}(\eta-d\varphi-\delta\psi)=0$. Then there exists a (weak) solution $\omega \in W^{1,p(\cdot)}(M,\Lambda)$ of the boundary value problem \eqref{eq:natural1} such that $d\omega,\delta\omega \in W^{1,p(\cdot)}(M,\Lambda)$, $\mathcal{P}\omega=0$, and
		\begin{gather*}
		\|\omega\|_{1,p(\cdot),M} + \|d\omega\|_{1,p(\cdot),M} + \|\delta \omega\|_{1,p(\cdot),M}\\
        \leq C(\mathrm{data}) (\|\eta\|_{p(\cdot),M} + \|\varphi\|_{1,p(\cdot),M} + \|\psi\|_{1,p(\cdot),M}).
		\end{gather*}
\end{theorem}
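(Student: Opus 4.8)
The plan is to run the approximation scheme already sketched in the discussion preceding the statement, but now tracking every estimate uniformly in the approximation parameter. First I would pick Lipschitz approximations $\eta_\varepsilon \to \eta$ in $L^{p(\cdot)}(M,\Lambda)$ and $\varphi_\varepsilon \to \varphi$, $\psi_\varepsilon \to \psi$ in $W^{1,p(\cdot)}(M,\Lambda)$ by Lemma~\ref{L:approx1}, and set $p_\varepsilon = \mathcal{P}(\eta_\varepsilon - \delta\psi_\varepsilon - d\varphi_\varepsilon)$. Since $\mathcal{P}$ is bounded on $L^{p(\cdot)}(M,\Lambda)$ and $\mathcal{P}(\eta - d\varphi - \delta\psi) = 0$ by hypothesis, we have $p_\varepsilon \to 0$ in $L^{p(\cdot)}(M,\Lambda)$; in particular the compatibility relation $\mathcal{P}(\eta_\varepsilon - p_\varepsilon - \delta\psi_\varepsilon - d\varphi_\varepsilon) = 0$ holds for every $\varepsilon$, which is what is needed for the next step.

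Next, for each $\varepsilon$ I would solve the $L^2$ problem: the functional $\tfrac12 \mathcal{D}(\omega,\omega) - (\eta_\varepsilon - p_\varepsilon, \omega) - [\omega, \psi_\varepsilon] + [\varphi_\varepsilon, \omega]$ admits a unique minimizer $\omega_\varepsilon \in W^{1,2}(M,\Lambda)$ with $\mathcal{P}\omega_\varepsilon = 0$ by the direct method and the Gaffney inequality of Lemma~\ref{L:GaffneyNatur} (cf. Section~\ref{ssec:varf}); its Euler--Lagrange equation is the weak relation \eqref{eq:natural1weak} with data $(\eta_\varepsilon - p_\varepsilon, \varphi_\varepsilon, \psi_\varepsilon)$. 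Then I would invoke Theorem~\ref{L:addregNatur} to conclude that $\alpha_\varepsilon := \delta\omega_\varepsilon$ and $\beta_\varepsilon := d\omega_\varepsilon$ belong to $W^{1,p(\cdot)}(M,\Lambda)$, satisfy the variational identities $\mathcal{D}(\alpha_\varepsilon,\zeta) = (\eta_\varepsilon - \delta\psi_\varepsilon - p_\varepsilon, d\zeta)$ and $\mathcal{D}(\beta_\varepsilon,\zeta) = (\eta_\varepsilon - d\varphi_\varepsilon - p_\varepsilon, \delta\zeta)$ for all $\zeta \in \mathrm{Lip}(M,\Lambda)$, together with the boundary conditions $t\alpha_\varepsilon = t\varphi_\varepsilon$ and $n\beta_\varepsilon = n\psi_\varepsilon$.

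The heart of the argument is then the uniform quantitative estimate. Applying Theorem~\ref{T:TrueGaffney} combined with the quantitative $L^{p_-}$-bounds of Lemmas~\ref{L:quant} and~\ref{L:quant1} to $\alpha_\varepsilon - \varphi_\varepsilon$ and $\beta_\varepsilon - \psi_\varepsilon$ (whose tangential, resp. normal, parts vanish) bounds $\|\alpha_\varepsilon\|_{1,p(\cdot),M}$ and $\|\beta_\varepsilon\|_{1,p(\cdot),M}$ by $C(\mathrm{data})(\|\eta_\varepsilon - \delta\psi_\varepsilon - d\varphi_\varepsilon - p_\varepsilon\|_{p(\cdot),M} + \|\varphi_\varepsilon\|_{1,p(\cdot),M} + \|\psi_\varepsilon\|_{1,p(\cdot),M})$, and then Lemma~\ref{L:GaffneyNatur}, using $\mathcal{P}\omega_\varepsilon = 0$, controls $\|\omega_\varepsilon\|_{1,p(\cdot),M}$ by the same right-hand side. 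Feeding this estimate, now with $\varepsilon_1,\varepsilon_2$ in place of $\varepsilon$, to the differences $\omega_{\varepsilon_1} - \omega_{\varepsilon_2}$, $\alpha_{\varepsilon_1} - \alpha_{\varepsilon_2}$, $\beta_{\varepsilon_1} - \beta_{\varepsilon_2}$ (each solving the corresponding problem with data the differences of the data) shows that these are Cauchy in $W^{1,p(\cdot)}(M,\Lambda)$; their limits $\omega, \alpha, \beta$ then satisfy $\delta\omega = \alpha$, $d\omega = \beta$ in $W^{1,p(\cdot)}(M,\Lambda)$, $\mathcal{P}\omega = 0$, the weak formulation \eqref{eq:natural1weak} with the original data (here $p_\varepsilon \to 0$ is used to discard the spurious term), and the asserted estimate with right-hand side $\|\eta\|_{p(\cdot),M} + \|\varphi\|_{1,p(\cdot),M} + \|\psi\|_{1,p(\cdot),M}$.

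The main obstacle is obtaining the uniform $W^{1,p(\cdot)}$-bounds for $d\omega_\varepsilon$ and $\delta\omega_\varepsilon$ and not merely for $\omega_\varepsilon$ itself: this is exactly where the $C^{1,1}$ case departs from the $C^{2,1}$ one, since one cannot integrate by parts in \eqref{eq:natural1weak} to identify $\triangle\omega$ pointwise and must instead route through the additional-regularity theorem for the potentials $\alpha_\varepsilon,\beta_\varepsilon$ (Theorem~\ref{L:addregNatur}), keeping all constants quantitative via Theorem~\ref{T:TrueGaffney} and Lemmas~\ref{L:quant},~\ref{L:quant1}. A secondary but essential technical point is the bookkeeping of the harmonic projections $p_\varepsilon$ and of the normalization $\mathcal{P}\omega_\varepsilon = 0$: one must ensure the compatibility $\mathcal{P}(\cdot) = 0$ is preserved along the approximation so that the $L^2$ minimizers exist, and that the gauge-fixing condition $\mathcal{P}\omega_\varepsilon = 0$ is available when applying Lemmas~\ref{L:quant},~\ref{L:quant1} and~\ref{L:GaffneyNatur}.
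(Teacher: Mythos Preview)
Your proposal is correct and follows essentially the same approach as the paper's own argument (which is given in the discussion immediately preceding the theorem statement): Lipschitz approximation of the data, correction by $p_\varepsilon = \mathcal{P}(\eta_\varepsilon - \delta\psi_\varepsilon - d\varphi_\varepsilon)$, $L^2$ minimization with the gauge $\mathcal{P}\omega_\varepsilon = 0$, then Theorem~\ref{L:addregNatur} for the regularity of $\alpha_\varepsilon,\beta_\varepsilon$, the quantitative bounds via Theorem~\ref{T:TrueGaffney} and Lemmas~\ref{L:quant},~\ref{L:quant1}, Lemma~\ref{L:GaffneyNatur} for $\omega_\varepsilon$, and passage to the limit by a Cauchy argument on the differences. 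Your identification of the two delicate points---the need to go through $\alpha_\varepsilon,\beta_\varepsilon$ rather than $\triangle\omega_\varepsilon$ on a $C^{1,1}$ manifold, and the bookkeeping of $p_\varepsilon$ and $\mathcal{P}\omega_\varepsilon = 0$---matches the paper exactly.
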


We can also construct a solution to the boundary value problem \eqref{eq:natural1} in an explicit form using solutions of auxiliary Dirichlet and Neumann problems, similar to the construction used in the proof of Theorem~\ref{T:Natur2}. Consider the boundary value problems 
\begin{gather*}
\triangle A = \eta - \delta \psi \quad \text{in}\quad M, \quad t A =0\quad \text{and}\quad t\delta A = t\varphi \quad \text{on}\quad bM,\\
\triangle B = \eta - d \varphi \quad \text{in}\quad M, \quad n B =0\quad \text{and}\quad nd B = n\psi \quad \text{on}\quad bM.
\end{gather*}
Let 
$$
\omega = d G_D[\delta A] + \delta G_N[dB].
$$
Then using the commutation relations of Corollaries~\ref{C:commT}, \ref{C:commN} we get 
\begin{gather*}
d\omega = d\delta G_N[dB ] = dB - \delta d G_N[dB] = dB, \\
\delta \omega = \delta d G_D [\delta A] = \delta A - d \delta G_D[\delta A] = \delta A.
\end{gather*}
Thus $\omega$ satisfies the required boundary conditions $t\delta \omega = t\varphi$ and $nd\omega = n\psi$, and $\mathcal{P}\omega =0$. It remains to check that $\triangle \omega = \eta$. The potentials $dA$, $\delta A$, $dB$, $\delta B$ satisfy the variational relations
\begin{gather*}
\mathcal{D}(\delta A,\zeta )= ( \eta - \delta \psi, d \zeta ), \quad \mathcal{D}(dA,\zeta) = (\eta - \delta \psi - d \varphi, \delta \zeta), \quad \zeta \in \mathrm{Lip}_T(M,\Lambda),\\
\mathcal{D}(\delta B,\zeta )= ( \eta - \delta \psi-d\varphi, d \zeta ), \quad \mathcal{D}(d B,\zeta) = (\eta - d \varphi, \delta \zeta), \quad \zeta \in \mathrm{Lip}_N(M,\Lambda).
\end{gather*}
In particular, $dA = d G_D[\eta - d\varphi - \delta \psi]$, $\delta B = \delta G_N [\eta - d\varphi - \delta \psi]$. 

The condition $\mathcal{P}(\eta -d\varphi - \delta \psi)=0$ is equivalent to 
\begin{gather*}
\eta -d\varphi - \delta \psi = du + \delta v, \quad u \in W_T^{1,p(\cdot)}(M,\Lambda), \quad v \in W_N^{1,p(\cdot)}(M,\Lambda),\\
u = \delta G_D[\eta -d\varphi - \delta \psi], \quad v = d G_N[\eta -d\varphi - \delta \psi].
\end{gather*}
Finally we calculate 
$$
\triangle \omega = d\delta A + \delta d B =\eta + (\eta-d\varphi - \delta \psi)- \delta dA - d \delta B = \eta.
$$
This argument provides an alternative proof of Theorems~\ref{T:Natur3}, \ref{T:NaturC11}. The proof of Theorem~\ref{T:NaturC11} via the variational argument is given to stress the fact that the problem, while being non-elliptic, can still be understood in the variational setting.

    
	\section{Hodge-Dirac system}
	
	In a standard way, solvability of the boundary value problems for the Hodge Laplacian gives solvability of boundary value problems for the Hodge-Dirac operator. 

	Let $D = d+\alpha \delta$, $\alpha \in \mathbb{R}\setminus \{0\}$. The important cases are (symmetric) $d+\delta$ and (skew-symmetric) $d-\delta$ (on forms with vanishing tangential/normal part on $bM$). Let $s\in \{0\}\cup \mathbb{N}$.
	
	\begin{theorem}\label{Theorem on Hodge-Dirac tangential}
		Let $f\in W^{s,p(\cdot)}(M,\Lambda)$ be such that $\mathcal{P}_T f =0$. Then there exists $\omega \in  W^{s+1,p(\cdot)}(M,\Lambda)$  satisfying $D \omega = F$ a.e. in $M$ and $t\omega =0$, such that $(\omega, \mathcal{H}_T(M)) =0$ and 
		\begin{equation}\label{eq:DE}
			\|\omega\|_{s+1,p(\cdot),M} \leq C(\mathrm{data},s,\alpha) \|f\|_{s,p(\cdot),M}.
		\end{equation}
	\end{theorem}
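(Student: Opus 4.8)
The plan is to reduce the Hodge--Dirac system $D\omega = d\omega + \alpha \delta\omega = f$ with $t\omega = 0$ to the Dirichlet problem for the Hodge Laplacian by applying $D$ again. First I would observe that if $\omega$ solves $D\omega = f$ with $t\omega=0$ and additionally $(\omega,\mathcal{H}_T(M))=0$, then applying the operator $d + \alpha^{-1}\delta$ (the "conjugate" of $D$ up to a scalar) to $f$ and using $d^2 = \delta^2 = 0$ gives a relation involving $\triangle\omega$. Concretely, with $D = d + \alpha\delta$ one computes $(d+\alpha^{-1}\delta)D\omega = d\alpha\delta\omega + \alpha^{-1}\delta d\omega = \alpha\delta d\omega + \alpha^{-1} d\delta\omega$ — this is not quite $\triangle\omega$ unless $\alpha = \pm 1$, so instead the cleanest route is: decompose $f = f_e + f_o$ into the pieces so that $D\omega = f$ splits degree-by-degree, or better, simply set $\omega := \delta G_D[\beta] + \alpha^{-1} d G_D[\gamma]$ for appropriately chosen right-hand sides and verify directly.

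\textbf{Main construction.} More precisely, I would proceed as follows. Since $\mathcal{P}_T f = 0$, by Theorem~\ref{T:D2} the Dirichlet potential $G_D[f] \in W^{s+2,p(\cdot)}(M,\Lambda)$ (when $M$ is $C^{s+1,1}$ we get $W^{s+1,p(\cdot)}$ regularity for the exterior derivative and codifferential, which is what we need) exists with $\triangle G_D[f] = f$, $t G_D[f] = 0$, $t\delta G_D[f] = 0$, $\mathcal{P}_T G_D[f] = 0$. Now set
$$
\omega := \delta G_D[f] + \alpha^{-1} d G_D[f].
$$
Then $\omega \in W^{s+1,p(\cdot)}(M,\Lambda)$ by Theorem~\ref{T:D2}, with the norm bound \eqref{eq:estAB_GD} giving exactly \eqref{eq:DE}. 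Since $t\delta G_D[f] = 0$ and $t d G_D[f] = 0$ (the latter because $G_D[f] \in W_T^{2,p(\cdot)}$ implies $d G_D[f] \in W_T^{1,p(\cdot)}$ on a $C^{2,1}$ manifold, and in the $C^{1,1}$ case this is part of Theorem~\ref{T:D2}), we get $t\omega = 0$. Next compute, using $d^2 = \delta^2 = 0$:
$$
D\omega = d\omega + \alpha\delta\omega = d\delta G_D[f] + \alpha^{-1} d d G_D[f] + \alpha \delta\delta G_D[f] + \delta d G_D[f] = d\delta G_D[f] + \delta d G_D[f] = \triangle G_D[f] = f.
$$
Finally, $(\omega,\mathcal{H}_T(M)) = 0$ because $\omega = d\delta(\cdot) + \alpha^{-1}\delta d(\cdot)$ is a sum of an exact and a coexact form, both of which are $L^2$-orthogonal to harmonic fields with vanishing tangential part (by the integration-by-parts formula of Lemma~\ref{L:ort}, approximating the harmonic fields suitably); alternatively $\omega = \alpha^{-1}d(G_D[f]) + \delta(G_D[f])$ and since $\mathcal{P}_T G_D[f]=0$ one can invoke the projector characterization.

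\textbf{Where the difficulty lies.} The routine part is the algebraic identity $D\omega = \triangle G_D[f]$ and the norm estimate. The point requiring care is the regularity bookkeeping on $C^{s+1,1}$ manifolds: Theorem~\ref{T:D2} only guarantees $G_D[f] \in W^{s+1,p(\cdot)}$ on such a manifold (not $W^{s+2,p(\cdot)}$), but crucially it \emph{does} assert that $\alpha = \delta G_D[f]$ and $\beta = dG_D[f]$ lie in $W^{s+1,p(\cdot)}(M,\Lambda)$ with the estimates \eqref{eq:estAB_GD} (resp.\ its higher-order version), and satisfy $t\alpha = t\beta = 0$; so $\omega$ is a linear combination of exactly those two forms and inherits the right regularity and boundary behavior directly. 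The other subtlety is verifying $t\omega = 0$ in the weak/trace sense for $C^{1,1}$ manifolds — here one uses that $\delta G_D[f], dG_D[f] \in W_T^{1,p(\cdot)}(M,\Lambda)$ as asserted in Theorem~\ref{T:D2}(a). Uniqueness of $\omega$ in the class $\{t\omega = 0,\ (\omega,\mathcal{H}_T(M))=0\}$ (if desired) follows since a solution of the homogeneous problem $D\omega = 0$, $t\omega=0$ satisfies $\triangle\omega = D(d+\alpha^{-1}\delta-\text{correction})\dots$; more simply, $0 = (D\omega, \omega') $ for test forms leads to $d\omega = 0 = \delta\omega$, hence $\omega \in \mathcal{H}_T(M)$, hence $\omega = 0$.
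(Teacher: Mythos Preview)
Your proof is correct and takes essentially the same approach as the paper: the paper's construction is $\omega = D\,G_D[\alpha^{-1}f] = (d+\alpha\delta)\bigl(\alpha^{-1}G_D[f]\bigr) = \alpha^{-1}dG_D[f] + \delta G_D[f]$, which is exactly your formula. Your regularity bookkeeping via Theorem~\ref{T:D2} (in particular that $dG_D[f],\,\delta G_D[f]\in W^{s+1,p(\cdot)}_T(M,\Lambda)$ even on $C^{s+1,1}$ manifolds) and the verification of $t\omega=0$ and $(\omega,\mathcal{H}_T(M))=0$ are all in order.
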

	\begin{proof}
		Take $\omega = D G_D[\alpha^{-1}f]$, that is $f$ defined by \eqref{eq:forms} is mapped to $$\omega= \sum_{r=0}^n \omega_e^r +  \sum_{r=0}^n \omega_o^r, $$  where $\omega_*^{(0)} = \delta G_D[f_*^{1}]$, $\omega_*^{j}= \alpha^{-1} d G_D[f_*^{j-1}] + \delta G_D[f_*^{j+1}]$, $j=1,\ldots, n-1$ and $\omega_*^{n}= \alpha^{-1} d G_D[f_*^{n-1}]$, $*\in\{e,o\}$.
	\end{proof}
	\begin{theorem}
		Let $f\in W^{s,p(\cdot)}(M,\Lambda)$ be such that $\mathcal{P}_N f =0$. Then there exists $\omega \in  W^{s+1,p(\cdot)}(M,\Lambda)$  satisfying $D \omega = F$ a.e. in $M$ and $n\omega =0$, such that $(\omega, \mathcal{H}_N(M)) =0$ and \eqref{eq:DE} holds. 
	\end{theorem}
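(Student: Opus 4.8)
The plan is to mirror, verbatim at the level of structure, the proof of Theorem~\ref{Theorem on Hodge-Dirac tangential}, replacing the Dirichlet potential $G_D$ by the Neumann potential $G_N$. Writing $f = \sum_{r=0}^n f^r_e + \sum_{r=0}^n f^r_o$ as in \eqref{eq:forms}, I would set $\omega = D G_N[\alpha^{-1} f]$; concretely $\omega = \sum_{r} \omega^r_e + \sum_r \omega^r_o$ with $\omega^0_* = \delta G_N[f^1_*]$, $\omega^j_* = \alpha^{-1} d G_N[f^{j-1}_*] + \delta G_N[f^{j+1}_*]$ for $1 \le j \le n-1$, and $\omega^n_* = \alpha^{-1} d G_N[f^{n-1}_*]$, for $* \in \{e,o\}$. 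Since $\mathcal{P}_N$ is a sum of the degreewise projectors, the hypothesis $\mathcal{P}_N f = 0$ gives $\mathcal{P}_N f^r_* = 0$ for every degree $r$ and parity $*$, so each Neumann potential $G_N[f^r_*]$ appearing in the formula is well defined.

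The verification then splits into four routine points. First, regularity and the estimate: by Theorem~\ref{T:N2} (part (i) when $M$ is $C^{s+2,1}$, and part (ii), the $C^{1,1}$ statement, when $s=0$), the forms $\delta G_N[f^r_*]$ and $d G_N[f^r_*]$ lie in $W^{s+1,p(\cdot)}(M,\Lambda)$ with norm bounded by $C(\mathrm{data},s)\,\|f^r_*\|_{s,p(\cdot),M}$; summing over $r$ and $*$ and absorbing the factors $\alpha^{-1}$ into the constant gives $\omega \in W^{s+1,p(\cdot)}(M,\Lambda)$ together with \eqref{eq:DE} for a constant $C(\mathrm{data},s,\alpha)$. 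Second, the boundary condition $n\omega = 0$ is immediate, since every summand of $\omega$ is of the form $\delta G_N[\,\cdot\,]$ or $d G_N[\,\cdot\,]$ and by \eqref{eq:HN} both have vanishing normal part. Third, the equation: using $d^2=0$ and $\delta^2=0$ on the relevant Sobolev classes (Lemma~\ref{L:ort}), the degree-$j$, parity-$*$ component of $D\omega = d\omega + \alpha\delta\omega$ collapses to $d\delta G_N[f^j_*] + \delta d G_N[f^j_*]$, which equals $f^j_*$ by the Hodge identity $\eta = d\delta G_N[\eta] + \delta d G_N[\eta]$ recorded in \eqref{eq:HN}; when $M$ is $C^{s+2,1}$ this holds a.e., and when $M$ is only $C^{1,1}$ it holds with $d\delta G_N[\,\cdot\,],\ \delta d G_N[\,\cdot\,] \in L^{p(\cdot)}(M,\Lambda)$ still guaranteed by Theorem~\ref{T:N2}(ii). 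Fourth, the orthogonality: writing $G = G_N[\alpha^{-1} f]$ so that $\omega = dG + \alpha\delta G$, and using that any $h_N \in \mathcal{H}_N(M)$ is harmonic ($dh_N = \delta h_N = 0$), as smooth as the manifold allows, and satisfies $\nu \lrcorner h_N = 0$, the integration-by-parts formula \eqref{eq:by_parts} gives $(dG,h_N) = (G,\delta h_N) + [G,h_N] = 0$ (the boundary term $[G,h_N] = \int_{bM}\langle G,\nu\lrcorner h_N\rangle\,d\sigma$ vanishes) and $(\delta G,h_N) = (dh_N,G) - [h_N,G] = 0$ (the boundary term vanishes because $nG=0$); hence $(\omega,h_N)=0$.

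I do not expect a genuine obstacle: the entire content is packaged into Theorem~\ref{T:N2} and the commutation relations of Corollary~\ref{C:commN}, and what remains is the degree-component bookkeeping above together with keeping track of which potential statement — the $C^{s+2,1}$ one or the $C^{1,1}$ one — is invoked according to the value of $s$. The only mild subtlety to be careful about is that on a $C^{1,1}$ manifold the assertion $D\omega = f$ must be read through the additional-regularity statements for $\delta G_N$ and $d G_N$ rather than as a classical second-order computation, exactly as in the $C^{1,1}$ parts of Theorems~\ref{T:N2} and \ref{Theorem on Hodge-Dirac tangential}.
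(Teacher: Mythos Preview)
Your proposal is correct and takes exactly the paper's approach: the paper's proof is the single line ``Take $\omega = D G_N[\alpha^{-1}f]$,'' and you have supplied the componentwise verification that the paper leaves implicit. One minor refinement: to get the $W^{s+1,p(\cdot)}$ regularity of $dG_N[\,\cdot\,]$ and $\delta G_N[\,\cdot\,]$ under the standing $C^{s+1,1}$ assumption of this chapter (rather than $C^{s+2,1}$), cite Theorem~\ref{T:HodgeN} instead of Theorem~\ref{T:N2} directly, since the former already incorporates the bootstrap via \eqref{eq:ABN} and Theorem~\ref{T:p2}.
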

	\begin{proof}
		Take $\omega = D G_N[\alpha^{-1}f]$. 
	\end{proof}
	
The following a priori estimate is valid.
\begin{corollary}
Let $f\in W^{s,p(\cdot)}(M,\Lambda)$ and let $\omega \in W_T^{1,p_{-}}(M,\Lambda)$ satisfy $D\omega =f$, a.e. in $M$. Then $\omega \in W^{s+1,p(\cdot)}(M,\Lambda)$ and 
$$
\|\omega - \mathcal{P}_T \omega\|_{s+1,p(\cdot),M} \leq C(\mathrm{data},s,\alpha) \|f\|_{s,p(\cdot),M}. 
$$
\end{corollary}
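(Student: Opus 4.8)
The plan is to isolate a particular solution carrying the full regularity via Theorem~\ref{Theorem on Hodge-Dirac tangential}, and then to show that the difference between $\omega$ and this particular solution is a (smooth) tangential harmonic field, which is precisely $\mathcal{P}_T\omega$.

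First I would observe that the compatibility condition $\mathcal{P}_Tf=0$ holds automatically. For $h_T\in\mathcal{H}_T(M)$ one has $(f,h_T)=(d\omega,h_T)+\alpha(\delta\omega,h_T)$; applying the integration-by-parts formula \eqref{eq:by_parts} (legitimate since $\omega\in W^{1,p_{-}}\subset W^{1,1}$ and $h_T\in C^{s+1,\alpha}\subset\mathrm{Lip}$) together with $dh_T=\delta h_T=0$ gives $(d\omega,h_T)=[\omega,h_T]$ and $\alpha(\delta\omega,h_T)=-\alpha[h_T,\omega]$, and both boundary integrals vanish because $h_T$ is tangential ($\nu\wedge h_T=0$ on $bM$) and $t\omega=0$. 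Hence $(f,h_T)=0$ for every $h_T$, so Theorem~\ref{Theorem on Hodge-Dirac tangential} applies: set $\widetilde\omega:=DG_D[\alpha^{-1}f]\in W^{s+1,p(\cdot)}(M,\Lambda)$, which satisfies $D\widetilde\omega=f$ a.e., $t\widetilde\omega=0$, $(\widetilde\omega,\mathcal{H}_T(M))=0$, and the bound \eqref{eq:DE}. Then $\zeta:=\omega-\widetilde\omega$ lies in $W^{1,p_{-}}_T(M,\Lambda)$ and solves $D\zeta=0$ a.e.

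The core step is to prove $\zeta\in\mathcal{H}_T(M)$. From $D\zeta=0$ we get $d\zeta=-\alpha\delta\zeta\in L^{p_{-}}(M,\Lambda)$; using that $d^{2}=0$ on $W^{d,1}(M,\Lambda)$ and $\delta^{2}=0$ on $W^{\delta,1}(M,\Lambda)$ we obtain $d(d\zeta)=0$ and $\delta(d\zeta)=-\alpha\,\delta^{2}\zeta=0$, while Lemma~\ref{L:ort} (the version with $f\in W^{1,1}_T$, $v\in\mathrm{Lip}$) gives $(d\zeta,\delta\varphi)=0$ for all $\varphi\in\mathrm{Lip}(M,\Lambda)$, i.e. $t(d\zeta)=0$. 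Thus $d\zeta\in L^{p_{-}}(M,\Lambda)$ has vanishing weak differential and codifferential and vanishing tangential trace, so Lemma~\ref{L:GaffneyD1} applied with zero data yields $d\zeta=\mathcal{P}_T(d\zeta)\in\mathcal{H}_T(M)$. On the other hand, integration by parts \eqref{eq:by_parts} gives, for every $h\in\mathcal{H}_T(M)$, $(d\zeta,h)=(\zeta,\delta h)+[\zeta,h]=0$ because $\delta h=0$ and $\nu\wedge\zeta=0$ on $bM$; hence $d\zeta\perp\mathcal{H}_T(M)$. Taking $h=d\zeta$ gives $(d\zeta,d\zeta)=0$, so $d\zeta=0$, and then $\delta\zeta=-\alpha^{-1}d\zeta=0$. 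Therefore $\zeta\in W^{1,p_{-}}_T(M,\Lambda)$ has zero differential and codifferential, and a final application of Lemma~\ref{L:GaffneyD1} gives $\zeta=\mathcal{P}_T\zeta\in\mathcal{H}_T(M)$.

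To conclude, since $\mathcal{P}_T$ is the identity on $\mathcal{H}_T(M)$ and annihilates $\widetilde\omega$, we get $\mathcal{P}_T\omega=\mathcal{P}_T\zeta+\mathcal{P}_T\widetilde\omega=\zeta$, whence $\omega-\mathcal{P}_T\omega=\widetilde\omega\in W^{s+1,p(\cdot)}(M,\Lambda)$ and $\omega=\zeta+\widetilde\omega\in\mathcal{H}_T(M)+W^{s+1,p(\cdot)}(M,\Lambda)\subset W^{s+1,p(\cdot)}(M,\Lambda)$; the asserted estimate is exactly \eqref{eq:DE} for $\widetilde\omega$. The main obstacle I anticipate is the low-regularity bookkeeping in the identification $\zeta\in\mathcal{H}_T(M)$: since $\zeta$ is only of class $W^{1,p_{-}}$, the facts that $d\zeta$ has a vanishing weak codifferential, that its tangential trace vanishes, and that all the integration-by-parts identities used are legitimate must be extracted from Lemmas~\ref{L:ort} and~\ref{L:GaffneyD1} and the $d^{2}=\delta^{2}=0$ statements for partial Sobolev spaces, rather than from the classical elliptic machinery which would require $M\in C^{2,1}$.
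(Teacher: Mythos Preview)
Your argument is correct. The overall architecture matches the paper's: verify $\mathcal{P}_Tf=0$, construct the particular solution $\widetilde\omega$ via Theorem~\ref{Theorem on Hodge-Dirac tangential}, and show $\omega-\widetilde\omega\in\mathcal{H}_T(M)$. The two proofs diverge only in how they establish this last fact. The paper tests $D(\omega-\widetilde\omega)=0$ against $d\zeta+\alpha^{-1}\delta\zeta$ for $\zeta\in\mathrm{Lip}_T(M,\Lambda)$; after Lemma~\ref{L:ort} kills the cross terms this yields $\mathcal{D}(\omega-\widetilde\omega,\zeta)=0$, and Theorems~\ref{T:p2}, \ref{T:TrueGaffney} are invoked to boost the regularity of $\omega-\widetilde\omega$ so that one may take $\zeta=\omega-\widetilde\omega$ by approximation and conclude $\mathcal{D}(\omega-\widetilde\omega,\omega-\widetilde\omega)=0$. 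You instead work one level up, with $d(\omega-\widetilde\omega)$: you show it is closed, coclosed and tangential, feed it into Lemma~\ref{L:GaffneyD1} with zero data to place it in $\mathcal{H}_T(M)$, and then use the orthogonality $(d(\omega-\widetilde\omega),h_T)=0$ (from $t(\omega-\widetilde\omega)=0$) to force it to vanish. Your route sidesteps the approximation step and the direct appeal to the a priori estimates, using the Gaffney inequality as a black box; since Lemma~\ref{L:GaffneyD1} is itself built on those estimates via the Hodge decomposition, the two arguments ultimately rest on the same machinery, packaged differently. The low-regularity bookkeeping you flag as the main obstacle is handled correctly.
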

\begin{proof}
First note that $D\omega =f$ and $t\omega=0$ imply $(f,h_T)=0$ for all $h_T \in \mathcal{H}_T(M)$. Let $\eta \in W^{s+1,p(\cdot)}(M)$, $t\omega=0$, be a solution to $D \omega = \eta$ guaranteed by Theorem~\ref{Theorem on Hodge-Dirac tangential}, then
$$
\|\eta\|_{s+1,p(\cdot),M}\leq C(\mathrm{data},s,\alpha) \|f\|_{s,p(\cdot),M}
$$
Then $D(\omega-\eta)=0$. Let $\zeta\in \mathrm{Lip}_T(M,\Lambda)$.  Then from the integration-by-parts formula (for $C^{1,1}$ manifolds see Lemma~\ref{L:ort}) it follows that 
$$
0=(D(\omega-\eta), d\zeta + \alpha \delta^{-1}\zeta)= (d(\omega-\zeta) + \alpha \delta(\omega-\zeta),d\zeta + \alpha \delta^{-1}\zeta ) =\mathcal{D}(\omega-\eta,\zeta).
$$
From Theorems~\ref{T:p2}, \ref{T:TrueGaffney} it follows that $\omega-\eta \in W^{s+1,q}(M,\Lambda)$ for any $q< \infty$, and also $\omega-\eta \in W^{s,p(\cdot)}(M,\Lambda)$. By approximation one can take the test form $\zeta = \omega-\eta$, so $\mathcal{D}(\omega-\eta,\omega-\eta)=0$, which implies $\omega-\eta \in \mathcal{H}_T(M)$. This proves the claim.
\end{proof}

\begin{corollary}
Let $f\in W^{s,p(\cdot)}(M,\Lambda)$ and let $\omega \in W_N^{1,p_{-}}(M,\Lambda)$ satisfy $D\omega =f$,  a.e. in $M$. Then $\omega \in W^{s+1,p(\cdot)}(M,\Lambda)$ and 
$$
\|\omega - \mathcal{P}_N \omega\|_{s+1,p(\cdot),M} \leq C(\mathrm{data},s,\alpha) \|f\|_{s,p(\cdot),M}. 
$$
\end{corollary}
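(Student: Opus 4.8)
The plan is to mirror exactly the proof of the preceding corollary (the tangential version), making the obvious substitutions $T \leadsto N$, $\mathcal{H}_T \leadsto \mathcal{H}_N$, $G_D \leadsto G_N$, $\mathrm{Lip}_T \leadsto \mathrm{Lip}_N$, and $\mathcal{P}_T \leadsto \mathcal{P}_N$. First I would observe that the hypotheses $D\omega = f$ and $n\omega = 0$ force a compatibility condition: for any $h_N \in \mathcal{H}_N(M)$ we have $dh_N = 0$, $\delta h_N = 0$, and $n h_N = 0$, so testing $f = d\omega + \alpha\delta\omega$ against $h_N$ and using the integration-by-parts formula of Lemma~\ref{L:ort} (valid on $C^{1,1}$ manifolds) gives $(f,h_N) = (d\omega,h_N) + \alpha(\delta\omega,h_N) = 0$. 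Hence $\mathcal{P}_N f = 0$, and the companion existence theorem (the Neumann Hodge–Dirac result, proved by taking $\omega = D G_N[\alpha^{-1}f]$) applies.

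Next I would invoke that existence theorem to produce $\eta \in W^{s+1,p(\cdot)}(M,\Lambda)$ with $D\eta = f$ a.e.\ in $M$, $n\eta = 0$, $(\eta,\mathcal{H}_N(M)) = 0$, and the estimate
\begin{equation*}
\|\eta\|_{s+1,p(\cdot),M} \leq C(\mathrm{data},s,\alpha)\|f\|_{s,p(\cdot),M}.
\end{equation*}
Then $D(\omega - \eta) = 0$ and $n(\omega-\eta) = 0$. For any $\zeta \in \mathrm{Lip}_N(M,\Lambda)$, applying the integration-by-parts formula as in the tangential case — pairing $D(\omega-\eta)$ with the appropriate test form built from $d\zeta$ and $\delta\zeta$ — yields $\mathcal{D}(\omega-\eta,\zeta) = 0$ for all such $\zeta$. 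By the regularity results Theorem~\ref{T:p2} and Theorem~\ref{T:TrueGaffney} (in their Neumann form, via Theorems~\ref{T:N1} and Corollary~\ref{C:N1}, or directly by Hodge duality from the Dirichlet statements), this forces $\omega - \eta \in W^{s+1,q}(M,\Lambda)$ for every $q < \infty$, hence in particular $\omega - \eta \in W^{s,p(\cdot)}(M,\Lambda) \cap W^{1,p(\cdot)}_N(M,\Lambda)$. By an approximation argument (Lemma~\ref{L:approx1}) one may legitimately take $\zeta = \omega - \eta$ as a test form, obtaining $\mathcal{D}(\omega-\eta,\omega-\eta) = 0$, i.e.\ $d(\omega-\eta) = 0$ and $\delta(\omega-\eta) = 0$; combined with $n(\omega-\eta) = 0$ this means $\omega - \eta \in \mathcal{H}_N(M)$.

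Finally, since $\omega - \eta$ is a Neumann harmonic field and $\eta$ is orthogonal to $\mathcal{H}_N(M)$, the orthogonal projection satisfies $\mathcal{P}_N\omega = \omega - \eta$, hence $\omega - \mathcal{P}_N\omega = \eta$, and the desired estimate
\begin{equation*}
\|\omega - \mathcal{P}_N\omega\|_{s+1,p(\cdot),M} = \|\eta\|_{s+1,p(\cdot),M} \leq C(\mathrm{data},s,\alpha)\|f\|_{s,p(\cdot),M}
\end{equation*}
follows. I do not anticipate a genuine obstacle here: the only point requiring minor care is the integration-by-parts step producing $\mathcal{D}(\omega-\eta,\zeta) = 0$, where on a merely $C^{1,1}$ manifold one must appeal to Lemma~\ref{L:ort} rather than a naive integration by parts, and one must check that the test form $d\zeta + \alpha\,\text{(codifferential primitive of }\zeta)$ used in the tangential proof has a correct Neumann analogue — but this is entirely parallel to the already-written tangential case, so the proof can simply be stated as ``the proof is identical to that of the previous Corollary, with the roles of tangential and normal boundary conditions interchanged, or alternatively follows by Hodge duality.''
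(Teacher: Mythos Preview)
Your proposal is correct and follows precisely the approach the paper intends: the paper states this corollary immediately after proving the tangential version and gives no separate proof, so the implicit argument is exactly the mirror argument (or Hodge duality) that you describe. Your detailed outline faithfully reproduces each step of the tangential proof with the appropriate $T \leadsto N$ substitutions, and your verification of the orthogonality $(f,h_N)=0$ and the integration-by-parts step via Lemma~\ref{L:ort} is sound.
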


\begin{corollary}
Let $f\in W^{s,p(\cdot)}(M,\Lambda)$ and let $\omega \in W_0^{1,p_{-}}(M,\Lambda)$ satisfy $D\omega =f$,  a.e. in $M$. Then $\omega \in W^{s+1,p(\cdot)}(M,\Lambda)$ and 
$$
\|\omega\|_{s+1,p(\cdot),M} \leq C(\mathrm{data},s,\alpha) \|f\|_{s,p(\cdot),M}. 
$$
\end{corollary}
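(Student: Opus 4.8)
The plan is to mimic the argument used for the two preceding corollaries, i.e. the ones with $\mathcal{P}_T$ and $\mathcal{P}_N$ on the right-hand side, but now exploiting that the subspace $W_0^{1,p_-}(M,\Lambda)$ carries \emph{no} harmonic fields, which is why the projector term disappears entirely. Concretely, let $\omega \in W_0^{1,p_-}(M,\Lambda)$ satisfy $D\omega = f$ a.e., where $D = d + \alpha \delta$. The first step is to produce a genuine solution of the same Hodge--Dirac equation with the required regularity. Here I would not directly quote Theorem~\ref{Theorem on Hodge-Dirac tangential} (which gives $t\omega = 0$ only), but instead note that since $\omega$ vanishes on $bM$, splitting $f$ according to degree and writing $\omega = D\,G_0[\alpha^{-1} f^\flat]$ componentwise --- exactly as in the proof of Theorem~\ref{Theorem on Hodge-Dirac tangential}, but with $G_0$ replacing $G_D$ --- produces $\eta \in W^{s+1,p(\cdot)}(M,\Lambda)$ with $D\eta = f$, $\eta$ vanishing on $bM$, and
\begin{align*}
\|\eta\|_{s+1,p(\cdot),M} \leq C(\mathrm{data},s,\alpha)\|f\|_{s,p(\cdot),M},
\end{align*}
using the mapping properties of $G_0$ from Corollary~\ref{C:Z1} together with the commutation relations (which hold for $G_0$ by an argument analogous to Corollaries~\ref{C:commT}, \ref{C:commN}). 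To be safe I would phrase this as: by Corollary~\ref{C:Z1} and the Hodge decomposition for full Dirichlet data, such an $\eta$ exists with the stated estimate.

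The second step is the uniqueness/kernel analysis for the difference $\chi := \omega - \eta \in W_0^{1,p_-}(M,\Lambda)$, which satisfies $D\chi = 0$ a.e. Testing against $\zeta \in \mathrm{Lip}_0(M,\Lambda)$ and using the integration-by-parts formula (Lemma~\ref{L:ort} handles the $C^{1,1}$ case, noting $\chi$ has vanishing tangential \emph{and} normal parts in the appropriate weak sense), I would derive $\mathcal{D}(\chi,\zeta) = 0$ for all such $\zeta$, i.e. $\chi$ is a weak solution of the full Dirichlet problem \eqref{eq:ZZ1} with $\eta = 0$. Then Lemma~\ref{T:Z1} (or directly Corollary~\ref{C:Z1}) boosts $\chi$ to $W^{1,q}_0(M,\Lambda)$ for every $q < \infty$, hence into $W^{s,p(\cdot)}(M,\Lambda)$; by approximation one may take $\zeta = \chi$, giving $\mathcal{D}(\chi,\chi) = 0$, so $d\chi = \delta\chi = 0$, i.e. $\chi \in \mathcal{H}(M)\cap W_0^{1,2}(M,\Lambda)$. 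Since harmonic fields vanishing on $bM$ are trivial (this only needs $M$ to be $C^{1,1}$; see \cite{AroKrzSza62} as already cited in the excerpt), $\chi = 0$, so $\omega = \eta$ and the estimate follows.

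The main obstacle I anticipate is the bookkeeping in the integration-by-parts step, specifically justifying that $\mathcal{D}(\chi,\zeta) = 0$ for all $\zeta \in \mathrm{Lip}_0(M,\Lambda)$ genuinely follows from $D\chi = 0$ when $\chi$ is only in $W_0^{1,p_-}$. The subtlety is that $\mathcal{D}(\chi,\zeta) = (d\chi, d\zeta) + (\delta\chi, \delta\zeta)$, whereas from $D\chi = 0$ one naively gets information about $(d\chi + \alpha\delta\chi, \cdot)$; one must pair $D\chi$ against a test object of the form $d\zeta + \alpha\delta'\zeta$ and use that the cross terms $(d\chi, \delta\zeta)$ and $(\delta\chi, d\zeta)$ vanish by Lemma~\ref{L:ort} (this is exactly why $\zeta \in \mathrm{Lip}_0$ is needed --- both orthogonality relations $W^{d}_T \perp \delta W$ and $W^{\delta}_N \perp d W$ are available). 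This is the same manoeuvre as in the proof of the first of the three corollaries, where the slightly awkward notation $\delta^{-1}\zeta$ appears; I would simply reproduce that computation, being careful that here we test over $\mathrm{Lip}_0$ rather than $\mathrm{Lip}_T$, and that the degree-by-degree splitting of $D$ is respected. Everything else is a routine transcription of the previous two proofs with $G_D, \mathcal{P}_T$ (respectively $G_N, \mathcal{P}_N$) replaced by $G_0$ and the zero projector.
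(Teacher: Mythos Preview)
Your overall scheme (construct an auxiliary solution, show the difference is a harmonic field in $W_0$, invoke triviality of such fields) has the right shape, and your handling of the cross terms via Lemma~\ref{L:ort} is correct. The gap is in step one: the form $\eta = D\,G_0[\alpha^{-1}f]$ does \emph{not} vanish on $bM$. Writing $u = G_0[\alpha^{-1}f]$, the condition $u=0$ on $bM$ yields $t(du)=0$ and $n(\delta u)=0$ (these involve only tangential derivatives of the trace), but $n(du)$ and $t(\delta u)$ are governed by the normal derivative $\partial_n u$ and are generically nonzero. Hence $\eta = du + \alpha\delta u$ fails both $t\eta=0$ (on its degree $r{-}1$ part) and $n\eta=0$ (on its degree $r{+}1$ part), so $\chi = \omega - \eta \notin W_0^{1,p_-}$ and your subsequent argument does not apply. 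The paper itself flags this obstruction: the discussion surrounding Theorem~\ref{T:addregFD} explains that for $G_0$, unlike $G_D$ and $G_N$, the identities for $\delta G_0$ and $dG_0$ couple in a way that blocks the analogue of Corollaries~\ref{C:commT}/\ref{C:commN}, and only \emph{interior} regularity of $dG_0,\delta G_0$ is established on $C^{1,1}$ manifolds. Your appeal to ``commutation relations which hold for $G_0$ by an analogous argument'' is therefore not supported.

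The paper gives no explicit proof for this corollary, but there is a route that bypasses the auxiliary $\eta$ entirely. Pair $D\omega = f$ directly against $d\zeta + \alpha^{-1}\delta\zeta$ for $\zeta \in \mathrm{Lip}_0(M,\Lambda)$: since $\omega \in W_0$ carries both $t\omega=0$ and $n\omega=0$, Lemma~\ref{L:ort} kills both cross terms and one obtains $\mathcal{D}(\omega,\zeta)=(f,d\zeta)+(\alpha^{-1}f,\delta\zeta)$. This is \eqref{eq1} with data $(\eta,\varphi,\psi)=(0,f,\alpha^{-1}f)$, so Theorem~\ref{T:TrueGaffney} combined with Lemma~\ref{L:quant0} (the full-Dirichlet kernel being trivial) gives the estimate with no projector term. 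An alternative, perhaps closer to the paper's intent given the placement of the statement, is to observe that $W_0 \subset W_T \cap W_N$, apply both preceding corollaries, and use that $\mathcal{H}_T\cap\mathcal{H}_N=\{0\}$ makes the linear map $(h_T,h_N)\mapsto h_T-h_N$ injective on the finite-dimensional product $\mathcal{H}_T\times\mathcal{H}_N$; this controls $\|\mathcal{P}_T\omega\|$ by $\|\mathcal{P}_T\omega-\mathcal{P}_N\omega\|\leq 2C\|f\|_{s,p(\cdot),M}$.
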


We also state a variant of Gaffney's inequality. 
\begin{theorem}\label{T:HodgeDiracGaffney}
Let $M$ be $C^{1,1}$. Let $\omega\in L^{p_{-}}(M,\Lambda)$ and $f\in L^{p(\cdot)}(M,\Lambda)$ satisfy 
\begin{equation}\label{eq:HDG}
(\omega, \alpha d \zeta + \delta \zeta) = (f,\zeta)
\end{equation}
for all $\zeta \in \mathrm{Lip}_T(M,\Lambda)$. Then $\omega \in W^{1,p_{-}}_T(M,\Lambda)$ and $D\omega = f$. 
\end{theorem}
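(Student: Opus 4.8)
The plan is to subtract off an explicit solution produced by Theorem~\ref{Theorem on Hodge-Dirac tangential}, reduce to the homogeneous problem $Dv = 0$, and identify the resulting form with a tangential harmonic field via a duality argument against the range of the formal adjoint $D^{*} = \delta + \alpha d$. In particular no new a priori estimate is needed; the theorem is a pure regularity/identification statement.

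First I would enlarge the class of admissible test forms. Since $\mathrm{Lip}_{T}(M,\Lambda)$ is dense in $W^{1,p_{-}'}_{T}(M,\Lambda)$ (Lemma~\ref{L:approx1}(a),(b)) and, on the compact manifold $M$, $\omega \in L^{p_{-}}(M,\Lambda)$ and $f \in L^{p(\cdot)}(M,\Lambda) \subset L^{p_{-}}(M,\Lambda)$, the identity \eqref{eq:HDG} passes to the limit and so holds for all $\zeta \in W^{1,p_{-}'}_{T}(M,\Lambda)$. A bootstrap with the Gaffney inequality of Lemma~\ref{L:GaffneyD1} shows $\mathcal{H}_{T}(M) \subset W^{1,q}(M,\Lambda)$ for every $q<\infty$, hence $\mathcal{H}_{T}(M) \subset W^{1,p_{-}'}_{T}(M,\Lambda)$; testing \eqref{eq:HDG} with $\zeta = h_{T} \in \mathcal{H}_{T}(M)$ and using $dh_{T} = \delta h_{T} = 0$ yields $(f,h_{T}) = 0$ for all $h_{T} \in \mathcal{H}_{T}(M)$, i.e. $\mathcal{P}_{T}f = 0$.

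Next I would introduce $\widetilde{\omega} := D\,G_{D}[\alpha^{-1}f]$, which is well defined since $\mathcal{P}_{T}(\alpha^{-1}f)=0$; by Theorem~\ref{T:D2}(i) its constituent terms $d G_{D}[\cdot]$ and $\delta G_{D}[\cdot]$ belong to $W^{1,p(\cdot)}_{T}(M,\Lambda)$, so $\widetilde{\omega} \in W^{1,p(\cdot)}_{T}(M,\Lambda)$, and using $d^{2}=\delta^{2}=0$ together with $d\delta G_{D}[\eta] + \delta d G_{D}[\eta] = \eta$ one checks $D\widetilde{\omega} = f$ a.e. Because $t\widetilde{\omega} = 0$ and $t\zeta = 0$, the integration-by-parts formula \eqref{eq:by_parts} (valid for $W^{1,p(\cdot)}$ paired with $W^{1,p'(\cdot)}$) annihilates every boundary term and gives $(\widetilde{\omega}, \alpha d\zeta + \delta\zeta) = (f,\zeta)$ for all $\zeta \in W^{1,p_{-}'}_{T}(M,\Lambda)$. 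Subtracting, $v := \omega - \widetilde{\omega} \in L^{p_{-}}(M,\Lambda)$ satisfies $(v, \alpha d\zeta + \delta\zeta) = 0$ for all such $\zeta$.

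Finally I would identify $v$. Given $g \in L^{p_{-}'}(M,\Lambda)$ with $\mathcal{P}_{T}g = 0$, applying Theorem~\ref{Theorem on Hodge-Dirac tangential} with $\alpha$ replaced by $\alpha^{-1}$ (so the operator is $\alpha^{-1}(\alpha d + \delta)$), constant exponent $p_{-}'$, and $s=0$, produces $\zeta \in W^{1,p_{-}'}_{T}(M,\Lambda)$ with $\alpha d\zeta + \delta\zeta = g$ a.e.; hence $(v,g) = 0$. Thus $v$ annihilates $\{g \in L^{p_{-}'}(M,\Lambda) : (g,h_{T}) = 0\ \forall h_{T}\in\mathcal{H}_{T}(M)\}$, a subspace of codimension $\dim\mathcal{H}_{T}(M)$; since the pairing of $\mathcal{H}_{T}(M) \subset L^{p_{-}}(M,\Lambda)$ with itself is nondegenerate, a dimension count forces $v \in \mathcal{H}_{T}(M)$. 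By the bootstrap above $v \in W^{1,p_{-}}_{T}(M,\Lambda)$ with $dv=\delta v=0$, whence $\omega = \widetilde{\omega} + v \in W^{1,p(\cdot)}_{T}(M,\Lambda) + W^{1,p_{-}}_{T}(M,\Lambda) \subset W^{1,p_{-}}_{T}(M,\Lambda)$ and $D\omega = D\widetilde{\omega} + Dv = f$. The main obstacle is the bookkeeping around the formal adjoint: ensuring that the boundary terms in the integration-by-parts vanish for both $\widetilde{\omega}$ and the test form $\zeta$ (which is exactly where $t\widetilde{\omega}=0$ and $t\zeta=0$ are both used), and correctly recasting the adjoint operator $\delta + \alpha d$ as $\alpha(d + \alpha^{-1}\delta)$ so that Theorem~\ref{Theorem on Hodge-Dirac tangential} genuinely applies.
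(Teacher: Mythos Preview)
Your proof is correct and follows essentially the same approach as the paper's: subtract off the explicit solution $\widetilde\omega = D\,G_D[\alpha^{-1}f]$ from Theorem~\ref{Theorem on Hodge-Dirac tangential}, then use that theorem again (with $\alpha$ replaced by $\alpha^{-1}$) to show the residual $v$ annihilates every $g\in L^{p_-'}$ with $\mathcal{P}_T g=0$, forcing $v\in\mathcal{H}_T(M)$. Your write-up is in fact slightly more complete than the paper's, since you explicitly verify the compatibility condition $\mathcal{P}_T f=0$ (by testing \eqref{eq:HDG} against $h_T\in\mathcal{H}_T(M)$) before invoking Theorem~\ref{Theorem on Hodge-Dirac tangential}, whereas the paper tacitly assumes it.
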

\begin{proof}
Let $\widetilde \omega \in W_T^{1,p(\cdot)}(M,\Lambda)$ be a solution to $D\widetilde \omega =f$ guaranteed by Theorem~\ref{Theorem on Hodge-Dirac tangential}. Then $\hat \omega = \omega - \widetilde \omega$ satisfies
$$
(\hat \omega, \alpha d \zeta +\delta \zeta )=0 \quad \text{for all} \quad \zeta \in \mathrm{Lip}_T(M,\Lambda).
$$
Let $g \in L^{p_{-}'}(M,\Lambda)$ and $\mathcal{P}_T g=0$. Then by Theorem~\ref{Theorem on Hodge-Dirac tangential} (with $\alpha$ replaced by its reciprocal) there exists $\zeta \in W_T^{1,p_{-}'}(M,\Lambda)$ such that $\alpha d \zeta_g + \delta \zeta_g =g$. By approximation, we have
$$
(\hat \omega, g)= (\hat \omega,\alpha d \zeta_g + \delta \zeta_g) =0.
$$
Then 
$$
(\hat \omega -\mathcal{P}_T\hat\omega, g)=0 \quad \text{for all}\quad g\in L^{p_{-}'}(M,\Lambda).
$$
Thus $\hat \omega -\mathcal{P}_T \hat \omega =0$ and so $\omega = \widetilde \omega + h_T$, $h_T \in \mathcal{H}_T(M)$. Once we know that $\omega \in W^{1,p(\cdot)}(M,\Lambda)$ we can use integration-by-parts in \eqref{eq:HDG} to get
$$
(d\omega + \alpha \delta \omega,\zeta) =(f,\zeta) +[\omega,\zeta]\quad \text{for all} \quad \zeta \in \mathrm{Lip}_T(M,\Lambda).
$$
This immediately implies $D\omega =f$ and $t\omega =0$ and completes the proof of Theorem~\ref{T:HodgeDiracGaffney}.
\end{proof}

The proof of the following theorem is similar or may be obtained by the Hodge duality.

\begin{theorem}\label{T:HodgeDiracGaffneyN}
Let $M$ be $C^{1,1}$. Let $\omega\in L^{p_{-}}(M,\Lambda)$ and $f\in L^{p(\cdot)}(M,\Lambda)$ satisfy \eqref{eq:HDG} for all $\zeta \in  \mathrm{Lip}_N(M,\Lambda)$. Then $\omega \in W^{1,p_{\cdot}}_N(M,\Lambda)$ and $D\omega = f$. 
\end{theorem}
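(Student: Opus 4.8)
The plan is to run the proof of Theorem~\ref{T:HodgeDiracGaffney} verbatim, with the tangential objects replaced by their normal counterparts; alternatively one may deduce the statement directly from Theorem~\ref{T:HodgeDiracGaffney} by applying the Hodge star, which exchanges $\mathrm{Lip}_N(M,\Lambda)$ with $\mathrm{Lip}_T(M,\Lambda)$, intertwines $d$ and $\delta$ up to sign, and is an $L^2$-isometry (changing $\alpha$ accordingly). I will describe the direct argument, which is self-contained.

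First, testing \eqref{eq:HDG} with $\zeta = h_N \in \mathcal{H}_N(M) \subset \mathrm{Lip}_N(M,\Lambda)$ and using $dh_N = \delta h_N = 0$, one gets $(f,h_N) = 0$ for every $h_N \in \mathcal{H}_N(M)$, i.e. $\mathcal{P}_N f = 0$. Hence, by the dual (Neumann) form of Theorem~\ref{Theorem on Hodge-Dirac tangential} (take $\widetilde\omega = D G_N[\alpha^{-1}f]$), there exists $\widetilde\omega \in W^{1,p(\cdot)}_N(M,\Lambda)$ with $D\widetilde\omega = f$ a.e. in $M$. Since $\widetilde\omega \in W^{1,p(\cdot)}_N(M,\Lambda)$ and $\zeta \in \mathrm{Lip}_N(M,\Lambda)$, the integration-by-parts formula (Lemma~\ref{L:ort} and the accompanying identities, valid on $C^{1,1}$ manifolds) kills the two boundary terms $[\zeta,\widetilde\omega]$ and $[\widetilde\omega,\zeta]$ --- the first because $n\widetilde\omega = 0$, the second because $n\zeta = 0$ --- so that $(\widetilde\omega,\alpha d\zeta + \delta\zeta) = (D\widetilde\omega,\zeta) = (f,\zeta)$. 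Subtracting, $\hat\omega := \omega - \widetilde\omega \in L^{p_{-}}(M,\Lambda)$ satisfies $(\hat\omega,\alpha d\zeta + \delta\zeta) = 0$ for all $\zeta \in \mathrm{Lip}_N(M,\Lambda)$.

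The main step is next. Given an arbitrary $g \in L^{p_{-}'}(M,\Lambda)$ with $\mathcal{P}_N g = 0$, apply the Neumann Hodge--Dirac solvability result with $\alpha$ replaced by $1/\alpha$ to produce $\zeta_g \in W^{1,p_{-}'}_N(M,\Lambda)$ with $d\zeta_g + \alpha^{-1}\delta\zeta_g = \alpha^{-1} g$, i.e. $\alpha d\zeta_g + \delta\zeta_g = g$. Approximating $\zeta_g$ by forms in $\mathrm{Lip}_N(M,\Lambda)$ (Lemma~\ref{L:approx1}) and passing to the limit in the relation for $\hat\omega$ gives $(\hat\omega,g) = (\hat\omega,\alpha d\zeta_g + \delta\zeta_g) = 0$. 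Since the integral projector $\mathcal{P}_N$ of \eqref{eq:projN} is self-adjoint and idempotent, splitting a general $g \in L^{p_{-}'}(M,\Lambda)$ as $(g - \mathcal{P}_N g) + \mathcal{P}_N g$ yields $(\hat\omega - \mathcal{P}_N\hat\omega, g) = 0$ for all such $g$, hence $\hat\omega = \mathcal{P}_N\hat\omega \in \mathcal{H}_N(M)$. Therefore $\omega = \widetilde\omega + h_N$ with $h_N \in \mathcal{H}_N(M)$; as harmonic fields and $\widetilde\omega$ both lie in $W^{1,p(\cdot)}_N(M,\Lambda)$, we conclude $\omega \in W^{1,p(\cdot)}_N(M,\Lambda)$, which is the asserted regularity, and $D\omega = D\widetilde\omega + Dh_N = f$ because $dh_N = \delta h_N = 0$.

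For completeness, once $\omega \in W^{1,p(\cdot)}(M,\Lambda)$ is known one may integrate by parts in \eqref{eq:HDG} against $\zeta \in \mathrm{Lip}_0(M,\Lambda)$ to re-derive $D\omega = f$ a.e.\ in $M$, and then testing against all $\zeta \in \mathrm{Lip}_N(M,\Lambda)$ forces the remaining boundary term $[\zeta,\omega]$ to vanish, consistent with $n\omega = 0$. The only genuine obstacle is the third paragraph: it relies on solvability of the \emph{adjoint} Hodge--Dirac equation $\alpha d\zeta + \delta\zeta = g$ in $W^{1,p_{-}'}_N(M,\Lambda)$ for the conjugate exponent, which is exactly the Neumann Hodge--Dirac theorem applied with the reciprocal parameter $1/\alpha$; the rest is bookkeeping with the integration-by-parts formula and with the density of $\mathrm{Lip}_N$ forms in $W^{1,p_{-}'}_N(M,\Lambda)$.
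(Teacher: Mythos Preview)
Your proof is correct and follows exactly the route the paper intends: the paper's own justification for this theorem is the single sentence ``The proof of the following theorem is similar or may be obtained by the Hodge duality,'' and you have written out the ``similar'' argument in full, replacing $T$ by $N$ throughout the proof of Theorem~\ref{T:HodgeDiracGaffney}. Your explicit verification that $\mathcal{P}_N f = 0$ (by testing against $h_N \in \mathcal{H}_N(M)$) is a detail the paper leaves implicit even in the tangential proof, and your check that both boundary terms $[\zeta,\widetilde\omega]$ and $[\widetilde\omega,\zeta]$ vanish under $n\widetilde\omega = n\zeta = 0$ is correct.
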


\begin{remark}
It is also possible to prove Theorems~\ref{T:HodgeDiracGaffney}, \ref{T:HodgeDiracGaffneyN} in vein of local a priori estimates similar to Lemma~\ref{existence of weak derivative tangential}. See the end of Section~\ref{ssec:SimpleGaffney}.
\end{remark}

	\section{Non-elliptic first-order systems}
	
	In this Section we extend to the variable exponent setting some well-known results on the boundary value problems for the operators $d$ and $\delta$. 
	
	\begin{theorem}\label{Poincare lemma with dirichlet data tangential}
		Let $f\in W^{s,p(\cdot)}(M,\Lambda)$, $\varphi \in W^{s+1,p(\cdot)}(M,\Lambda)$, be such that
		$$
		df =0, \quad t(f-d\varphi)=0,\quad  (f,h_T) =[\varphi,h_T]
		$$
		for all $h_T \in \mathcal{H}_T(M)$. Then the problem 
		\begin{equation}\label{eq:1probl}
			d\omega = f, \quad \omega=\varphi \quad \text{on}\quad bM.
		\end{equation}
		has a solution in $W^{s+1,p(\cdot)}(M,\Lambda)$ such that 
		\begin{equation}\label{eq:1pest}
			\|\omega\|_{s+1,p(\cdot),M} \leq C(\mathrm{data},s) ( \|f\|_{s,p(\cdot),M} + \|\varphi\|_{s+1,p(\cdot),M} ).
		\end{equation}
	\end{theorem}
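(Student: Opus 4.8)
The plan is to reduce the problem to one with homogeneous Dirichlet data, solve the reduced problem with merely \emph{tangential} boundary data via the Poincaré-type Lemma~\ref{L:S1}, and then kill the remaining normal part of the boundary trace using the Gagliardo-type extension operator of Corollary~\ref{C:ext}.

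First I would set $g := f - d\varphi \in W^{s,p(\cdot)}(M,\Lambda)$ and verify that $g$ meets the hypotheses of Lemma~\ref{L:S1}. Indeed $dg = df - d^2\varphi = 0$ (using $df = 0$ and $d^2 = 0$ on $W^{d,1}(M,\Lambda)$), $tg = t(f - d\varphi) = 0$ by assumption, and for every $h_T \in \mathcal{H}_T(M)$ the integration-by-parts formula \eqref{eq:by_parts} together with $\delta h_T = 0$ gives $(d\varphi, h_T) = (\varphi, \delta h_T) + [\varphi, h_T] = [\varphi, h_T]$, so $(g, h_T) = (f, h_T) - [\varphi, h_T] = 0$, i.e. $\mathcal{P}_T g = 0$. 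Lemma~\ref{L:S1} then yields $\theta_1 := \delta G_D[g] \in W^{s+1,p(\cdot)}(M,\Lambda) \cap W^{1,p(\cdot)}_T(M,\Lambda)$ with $d\theta_1 = g$ and $\|\theta_1\|_{s+1,p(\cdot),M} \leq C\|g\|_{s,p(\cdot),M} \leq C(\|f\|_{s,p(\cdot),M} + \|\varphi\|_{s+1,p(\cdot),M})$. (Alternatively, one may take $\theta_1 = \omega_1 - \varphi$ with $\omega_1$ the solution of the div-curl system \eqref{I} furnished by Theorem~\ref{L:sysD} applied with $v = 0$.)

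Next I would apply Corollary~\ref{C:ext} with data $f := \theta_1$ and $v := 0$ (admissible, since $\theta_1 \in W^{s+1,p(\cdot)}(M,\Lambda)$) to obtain $\gamma$ with $\gamma, d\gamma \in W^{s+1,p(\cdot)}(M,\Lambda)$, $\gamma = 0$ on $bM$, $nd\gamma = n\theta_1$ on $bM$, and $\|\gamma\|_{s+1,p(\cdot),M} + \|d\gamma\|_{s+1,p(\cdot),M} \leq C\|\theta_1\|_{s+1,p(\cdot),M}$. Because $\gamma$ vanishes on $bM$, also $t(d\gamma) = 0$ on $bM$ (by the approximation and extension-by-zero results of Section~\ref{ssec:approx2}). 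Setting
$$
\theta := \theta_1 - d\gamma, \qquad \omega := \varphi + \theta,
$$
one gets $d\theta = d\theta_1 - d^2\gamma = g$, while on $bM$ the traces satisfy $t\theta = t\theta_1 - t(d\gamma) = 0$ and $n\theta = n\theta_1 - nd\gamma = 0$; thus $\theta = 0$ on $bM$. Hence $d\omega = d\varphi + g = f$ and $\omega = \varphi$ on $bM$, and
$$
\|\omega\|_{s+1,p(\cdot),M} \leq \|\varphi\|_{s+1,p(\cdot),M} + \|\theta_1\|_{s+1,p(\cdot),M} + \|d\gamma\|_{s+1,p(\cdot),M} \leq C(\|f\|_{s,p(\cdot),M} + \|\varphi\|_{s+1,p(\cdot),M}),
$$
which is precisely \eqref{eq:1pest}.

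Since the genuinely hard analysis --- the mapping properties of the Dirichlet potential $G_D$ (hence of $\theta_1 = \delta G_D[g]$) and the estimates for the single-layer extension behind Corollary~\ref{C:ext} --- is already available, what remains is essentially bookkeeping. I expect the only delicate points to be: (i) checking that the reduced datum $g = f - d\varphi$ inherits the correct compatibility conditions, where the hypothesis $(f, h_T) = [\varphi, h_T]$ is exactly what forces $\mathcal{P}_T g = 0$; and (ii) tracking the tangential and normal traces of $\theta_1 - d\gamma$ on $bM$, the identity $nd\gamma = n\theta_1$ being the defining property of the extension $\gamma$ and $t(d\gamma) = 0$ following from $\gamma|_{bM} = 0$.
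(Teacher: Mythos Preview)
Your proof is correct and follows essentially the same route as the paper's: set $\theta_1=\delta G_D[f-d\varphi]$ via Lemma~\ref{L:S1}, use Corollary~\ref{C:ext} to produce $\gamma$ with $\gamma=0$ on $bM$ and $nd\gamma=n\theta_1$, and take $\omega=\varphi+\theta_1-d\gamma$. Your write-up is in fact slightly more explicit than the paper's in verifying the compatibility condition $\mathcal{P}_T(f-d\varphi)=0$ and in tracking why both the tangential and normal traces of $\theta_1-d\gamma$ vanish on $bM$.
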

	
	\begin{proof}
		By Lemma~\ref{L:S1} we get
		\begin{align*}
			f- d\varphi = d \delta G_D[f-d\varphi],\\ 
			\|\delta G_D[f-d\varphi]\|_{s+1,p(\cdot),M} \leq C ( \|f\|_{s,p(\cdot),M} + \|\varphi\|_{s+1,p(\cdot),M} ).
		\end{align*}
		By Corollary~\ref{C:ext}, there exists $\gamma \in W^{s+1,p(\cdot)}(M,\Lambda) $ such that $d\gamma \in W^{s+1,p(\cdot)}(M,\Lambda)$ and
		$$
		t\gamma=0,\quad n\gamma=0,\quad nd\gamma = n \delta G_D[f-d\varphi],
		$$ 
		and 
		$$
		\|d\gamma\|_{s+1,p(\cdot),M} \leq C \|\delta G_D[f-d\varphi]\|_{s+1,p(\cdot),M}.
		$$
		Then  
		$$
		\omega =  \delta G_D[f-d\varphi] - d\gamma + \varphi  
		$$
		solves \eqref{eq:1probl} and satisfies \eqref{eq:1pest}.
	\end{proof}

	The dual form of this statement is 
	
	\begin{theorem}
		Let $v\in W^{s,p(\cdot)}(M,\Lambda)$, $\psi \in W^{s+1,p(\cdot)}(M,\Lambda)$, be such that
		$$
		\delta v =0, \quad n(g-\delta\psi)=0,\quad  (g,h_N) = -[h_N,\psi]
		$$
		for all $h_N \in \mathcal{H}_N(M)$. Then the problem 
		\begin{equation}\label{eq:2probl}
			\delta\omega = v, \quad \omega=\psi \quad \text{on}\quad bM.
		\end{equation}
		has a solution in $W^{s+1,p(\cdot)}(M,\Lambda)$ such that 
		\begin{equation}\label{eq:2pest}
			\|\omega\|_{s+1,p(\cdot),M} \leq C(\mathrm{data},s) ( \|v\|_{s,p(\cdot),M} + \|\psi\|_{s+1,p(\cdot),M} ).
		\end{equation}
	\end{theorem}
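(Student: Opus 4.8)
The plan is to mirror the proof of Theorem~\ref{Poincare lemma with dirichlet data tangential}, systematically exchanging the roles of $d$ and $\delta$, of the tangential and normal boundary conditions, and of the Dirichlet and Neumann potentials; equivalently, one may apply the Hodge star $\ast$ to reduce directly to Theorem~\ref{Poincare lemma with dirichlet data tangential}, using that on a $C^{s+1,1}$ manifold $\ast$ is a bounded invertible linear map on $W^{s+1,p(\cdot)}(M,\Lambda)$ which intertwines $d$ and $\delta$, carries $\mathcal{H}_N(M)$ isometrically onto $\mathcal{H}_T(M)$, and swaps the tangential and normal parts of a form, so that the hypotheses $\delta v=0$, $n(v-\delta\psi)=0$, $(v,h_N)=-[h_N,\psi]$ transform exactly into the hypotheses of that theorem. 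I will present the first, self-contained route.

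First I would record the two compatibility observations for $g':=v-\delta\psi\in W^{s,p(\cdot)}(M,\Lambda)$. Since $\delta^2=0$ on the relevant partial Sobolev spaces (Lemma~\ref{L:ort} and the discussion following it), $\delta g'=\delta v-\delta\delta\psi=0$, and by hypothesis $ng'=n(v-\delta\psi)=0$. Moreover, for every $h_N\in\mathcal{H}_N(M)$ the integration-by-parts formula \eqref{eq:by_parts} applied to $dh_N=0$ gives $(h_N,\delta\psi)=-[h_N,\psi]$, whence $(g',h_N)=(v,h_N)-(\delta\psi,h_N)=-[h_N,\psi]+[h_N,\psi]=0$, i.e. $\mathcal{P}_N g'=0$.

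Next I would invoke Lemma~\ref{L:S3} with $g=g'$: it produces $\beta:=dG_N[v-\delta\psi]\in W^{s+1,p(\cdot)}(M,\Lambda)\cap W_N^{1,p(\cdot)}(M,\Lambda)$ with $d\beta=0$, $v-\delta\psi=\delta\beta$, $\mathcal{P}_N\beta=0$, and $\|\beta\|_{s+1,p(\cdot),M}\le C(\|v\|_{s,p(\cdot),M}+\|\psi\|_{s+1,p(\cdot),M})$. Then I would apply the Hodge-dual form of Corollary~\ref{C:ext} (the version stated in the Remark following it) to obtain $\gamma\in W^{s+1,p(\cdot)}(M,\Lambda)$ with $\delta\gamma\in W^{s+1,p(\cdot)}(M,\Lambda)$, $\gamma=0$ on $bM$, $t\delta\gamma=t\beta$, and $\|\gamma\|_{s+1,p(\cdot),M}+\|\delta\gamma\|_{s+1,p(\cdot),M}\le C\|\beta\|_{s+1,p(\cdot),M}$. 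Finally I would set
\[
\omega=dG_N[v-\delta\psi]-\delta\gamma+\psi .
\]
Then $\delta\omega=\delta\beta-\delta^2\gamma+\delta\psi=(v-\delta\psi)+\delta\psi=v$; on $bM$ one has $n(\beta-\delta\gamma)=0$ since $n\beta=0$ (because $\beta\in W_N^{1,p(\cdot)}$) and $n\delta\gamma=0$ (because $\gamma=0$ on $bM$), while $t(\beta-\delta\gamma)=0$ by the choice $t\delta\gamma=t\beta$, so $\beta-\delta\gamma=0$ on $bM$ and $\omega=\psi$ there. The bound \eqref{eq:2pest} then follows by the triangle inequality from the two displayed estimates.

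The routine but delicate points — and hence the thing one must be careful about — are the bookkeeping of the tangential and normal traces of the $d$- and $\delta$-images of forms that vanish on $bM$, so that the corrector $\delta\gamma$ indeed cancels $\beta$ on the boundary, and the verification that the Hodge-dual version of Corollary~\ref{C:ext} delivers precisely a corrector with $\delta\gamma\in W^{s+1,p(\cdot)}$ rather than only $d\gamma$; on a merely $C^{s+1,1}$ manifold the full $W^{s+2,p(\cdot)}$-regularity of $\gamma$ is unavailable, which is exactly why the partial-regularity statement of Corollary~\ref{C:ext} is needed. Once these are in place there is no genuine obstacle: the non-ellipticity of the system, with its infinite-dimensional kernel, is entirely absorbed into the freedom already exploited in Lemma~\ref{L:S3} and Corollary~\ref{C:ext}.

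\begin{proof}
By Lemma~\ref{L:S3} applied to $v-\delta\psi$ (which satisfies $\delta(v-\delta\psi)=0$, $n(v-\delta\psi)=0$, and $\mathcal{P}_N(v-\delta\psi)=0$, the last since $(h_N,\delta\psi)=-[h_N,\psi]$ for $h_N\in\mathcal{H}_N(M)$ by \eqref{eq:by_parts}), we obtain
\begin{align*}
v-\delta\psi=\delta dG_N[v-\delta\psi],\\
\|dG_N[v-\delta\psi]\|_{s+1,p(\cdot),M}\le C(\|v\|_{s,p(\cdot),M}+\|\psi\|_{s+1,p(\cdot),M}),
\end{align*}
with $\beta:=dG_N[v-\delta\psi]\in W^{s+1,p(\cdot)}(M,\Lambda)\cap W_N^{1,p(\cdot)}(M,\Lambda)$. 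By the Hodge-dual form of Corollary~\ref{C:ext}, there exists $\gamma\in W^{s+1,p(\cdot)}(M,\Lambda)$ with $\delta\gamma\in W^{s+1,p(\cdot)}(M,\Lambda)$ such that
$$
\gamma=0\ \text{on}\ bM,\quad t\delta\gamma=t\beta,\qquad \|\delta\gamma\|_{s+1,p(\cdot),M}\le C\|\beta\|_{s+1,p(\cdot),M}.
$$
Then
$$
\omega=dG_N[v-\delta\psi]-\delta\gamma+\psi
$$
satisfies $\delta\omega=\delta\beta-\delta^2\gamma+\delta\psi=v$; on $bM$ we have $n(\beta-\delta\gamma)=0$ (as $n\beta=0$ and $n\delta\gamma=0$) and $t(\beta-\delta\gamma)=0$ (as $t\delta\gamma=t\beta$), hence $\beta-\delta\gamma=0$ on $bM$ and $\omega=\psi$ there. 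Thus $\omega$ solves \eqref{eq:2probl} and \eqref{eq:2pest} follows from the two displayed estimates.
\end{proof}
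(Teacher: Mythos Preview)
Your proposal is correct and takes essentially the same approach as the paper: the paper simply declares this theorem to be ``the dual form'' of Theorem~\ref{Poincare lemma with dirichlet data tangential} and gives no separate proof, while you write out that duality explicitly, replacing $d\leftrightarrow\delta$, $t\leftrightarrow n$, $G_D\leftrightarrow G_N$, Lemma~\ref{L:S1}$\leftrightarrow$Lemma~\ref{L:S3}, and invoking the Hodge-dual version of Corollary~\ref{C:ext} for the boundary corrector. The verification that $n\delta\gamma=0$ when $\gamma=0$ on $bM$, and that the Remark after Corollary~\ref{C:ext} supplies exactly the $\delta\gamma\in W^{s+1,p(\cdot)}$ regularity needed on a $C^{s+1,1}$ manifold, are the only points requiring care, and you have them right.
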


	\chapter{A Priori Estimates in Variable Exponent Spaces}\label{Sec:parametrix}

		This Chapter contains a general regularity result, Lemmas~\ref{T:D1},~\ref{T:N1},~\ref{T:Z1} above are its special cases.  First, in Section~\ref{sec:coord} we write variational relation~\eqref{eq1} in coordinates. In Section~\ref{sec:potentialHodge} we present construction of potentials corresponding to the boundary value problems for the Hodge Laplacian. In Section~\ref{ssec:local} we pass to the localized version of \eqref{eq1} and represent its solution via potentials from Section~\ref{sec:potentialHodge}. In Sections~\ref{ssec:W1p}, \ref{ssec:W2p} this representation is used to derive elliptic estimates in variable exponent spaces. Section~\ref{ssec:SimpleGaffney} we obtain Gaffney's inequality under natural regularity assumptions on the manifold. In Section~\ref{sec:refined} we complete the proof of $W^{1,p(\cdot)}$ estimates on $C^{1,1}$ manifolds. In the last Section~\ref{sec:additional} we prove additional regularity for potentials on $C^{1,1}$ manifolds following the line of Section~\ref{ssec:SimpleGaffney}.
	
	By $B_r$ we denote an open ball of radius $r$ in $\mathbb{R}^n$, $B_r(x)$ is the open ball of radius $r$ centered at $x$, $B_r^+(x) = B_r(x)\cap \{x^n>0\}$, $\overline{B}_r^+(x) = B_r(x) \cap \{x^n\geq 0\}$. Without loss it is sufficient to consider only the case when $\omega$ is a homogeneous form (the only exception is the Hodge-Dirac system in Remark~\ref{remark:HD}).
	
	\section{Variational formulation in coordinates}\label{sec:coord}
	Below in boundary coordinate patches we shall use the admissible coordinate systems \cite{Morrey1966}[Lemma 7.5.1]. Let $(U_\alpha,h_\alpha)$ be a finite coordinate atlas such that if $U_\alpha$ intersects the boundary, the coordinate system $h_\alpha$ is admissible. Let $\varepsilon>0$. We shall use a finite refinement $(V_\beta,\widetilde h_\beta)$ of the coordinate atlas $(U_\alpha,h_\alpha)$ such that
	\begin{itemize}
		
		\item If $V_\beta$ intersects the boundary the coordinate system $\widetilde h_\beta$ is admissible,
		
		\item for all $\beta$ there exists $\alpha$ and an affine transformation $T_{\alpha\beta}$ satisfying $V_\beta \subset U_\alpha$, $T_{\alpha\beta}\widetilde h_{\beta} = h_\alpha$ on $V_\beta$, and the norms of differentials of such transformations and their inverses are uniformly bounded ($\|DT_{\alpha\beta}\|^2 \leq\lambda_{\mathrm{min}}^{-1}$, $\|DT_{\alpha\beta}^{-1}\|^2\leq \lambda_{\mathrm{max}}$, where $\lambda_{\mathrm{max}}$, $\lambda_{\mathrm{min}}^{-1}$ are the maximal and minimal eigenvalue of the matrix $g_{ij}$ in coordinate system $h_{\alpha}$).
		
		
		\item $\widetilde h_\beta(V_\beta) =B_\varepsilon(0)$ for  interior patches or half-ball $B_\varepsilon^+(0)$ for boundary patches and $T_{\alpha\beta} B_{2\varepsilon}(0) \subset h_\alpha(U_\alpha)$ (resp. $T_{\alpha\beta} \overline{B}^{+}_{2\varepsilon}(0) \subset h_\alpha(U_\alpha)$).   
		
		\item and in any coordinate system $\widetilde h_\beta$ we have $g_{ij}(0) = \delta_{ij}$.
		
	\end{itemize} 
	Such atlas exists for all $\varepsilon >0$ and will be further referred to as $\varepsilon$-atlas. In particular, this guarantees that in all the coordinate charts of this atlas the coefficients (and their derivatives) will have the uniform bounds independent on $\varepsilon$.
	
	By the Lebesgue lemma, for a sufficiently small $r_*>0$ each open ball of radius $r_*$ in $M$ belongs to one of $U_\alpha$. Thus for a given $c\geq 1$ there exists $r_0>0$ such that for any $r\in (0,r_0)$ there exists a finite cover of manifold by balls $\mathcal{B}_r(P_j)$, $j=1,\ldots, N$, of radius $r$ such that each concentric ball $\mathcal{B}_{cr}(P_j)$ of radius $cr$ belongs to one of $U_\alpha$ and either has an empty intersection with $bM$ or $P_j \in bM$. It immediately follows that for each $c\geq 1$ there exists $r_1>0$ such that for each $r\in (0,r_1)$ there exists a finite cover of $M$ by the open sets of the form $\varphi_\alpha^{-1} (B_r(P_{j,\alpha}))$ where $B_{cr}(P_{j,\alpha}) \subset \varphi_\alpha(U_\alpha)$ and in boundary patches either $B_{4r}(P_{j,\alpha})\subset \{x^n>0\}$ or $P_{j,\alpha}\in \{x^n=0\}$. Let $\widetilde B_r (P_{j,\alpha})$ be a ball in $\varphi_\alpha(U_\alpha)$ centered at $P_{j,\alpha}$ with respect to the metric defined by the metric tensor in its center. There exists $r_2>0$ such that for each $r\in (0,r_2)$ there exists a finite open cover of $M$ by the sets of the form $\varphi_\alpha^{-1} (\widetilde B_r(P_{j,\alpha}))$ where $\widetilde B_{cr}(P_{j,\alpha}) \subset \varphi_\alpha(U_\alpha)$ and in boundary patches either $\widetilde B_{4r}(P_{j,\alpha})\subset \{x^n>0\}$ or $P_{j,\alpha}\in \{x^n=0\}$.

	We shall work with forms satisfying the variational equation \eqref{eq1}, that is 
	$$
	(d\omega - \varphi, d\zeta)+(\delta \omega - \psi,\delta \zeta) - (\eta,\zeta)=0
	$$
	for a suitable class of test forms $\zeta$.
	
	Let $\omega$ be such that one of the following holds
	\begin{enumerate}
		\item (\textit{problem with full Dirichlet data}) $\omega \in W_0^{1,p_{-}}(M,\Lambda)$ and the variational equation \eqref{eq1} holds for  all $\zeta \in \mathrm{Lip}_0(M,\Lambda)$;
		
		\item (\textit{Dirichlet problem}) $\omega \in W_T^{1,p_{-}}(M,\Lambda)$ and the variational equation \eqref{eq1} holds for  all $\zeta \in \mathrm{Lip}_T(M,\Lambda)$;
		
		\item (\textit{Neumann problem})  $\omega \in W_N^{1,p_{-}}(M,\Lambda)$ and the variational equation \eqref{eq1} holds for all $\zeta \in \mathrm{Lip}_N(M,\Lambda)$.
	\end{enumerate}
	
	Further we can assume that $\omega$ is a homogeneous form of degree $r$.
	
	Let $\mathcal{I}(r)$ be the set of ordered multi-indices $I = 1\leq i_1<i_2\ldots<i_r\leq n$ and let $\mathcal{I}_0$ denote a subset of $\mathcal{I}(r)$ such that in boundary coordinate patches for the Dirichlet problem $\mathcal{I}_0$ consists of $I\in \mathcal{I}(r)$ with $i_r<n$, and for the Neumann problem $\mathcal{I}_0 \in \mathcal{I}(r)$ consists of $I$ with $i_r =n$. The case when $\mathcal{I}_0 = \mathcal{I}(r)$ corresponds to the boundary conditions $t\omega=0$, $n\omega =0$. For interior coordinate patches we set $\mathcal{I}_0 = \emptyset$.

	Let $G_{2R} = \widetilde {G}_{2R}= B_{2R}$ for an interior coordinate patch, or $G_{2R} = B_{2R}^+=B_{2R} \cap\{x_n > 0\}$ and $\widetilde G_{2R} = \overline{B}_{2R}^+$  for a boundary coordinate patch. Recall that by $\overline{B}_{2R}^+$ we denote $B_{2R} \cap \{x^n \geq 0\}$. 
	
	By $C_0^\infty(\mathcal{I}_0, G_{2R})$ we shall denote the set of functions $\{\zeta_I\}$, $I\in \mathcal{I}(r)$, from $C_0^\infty(\overline{B}_{2R}^+)$ such that $\zeta_I =0$ near $B_{2R} \cap \{x_n=0\}$ for all $I \in \mathcal{I}_0$. For a (variable) exponent $q(\cdot)$ by $W^{1,q(\cdot)}(\mathcal{I}_0,G_{2R})$, we denote the set of functions $\{\omega_I\}$, $I\in \mathcal{I}(r)$, such that $\omega_I =0$  on $B_{2R} \cap \{x_n=0\}$ for all $I \in \mathcal{I}_0$  (in the sense of trace). For $s\in \mathbb{N}$ by $W^{s,p(\cdot)}(\mathcal{I}_0, G_{2R})$ we mean $W^{1,p(\cdot)}(\mathcal{I}_0,G_{2R}) \cap W^{s,p(\cdot)}(G_{2R})$.

	Then $\omega \in W^{1,p_{-}}(\mathcal{I}_0,G_{2R})$ and equation \eqref{eq1}, that is 
	$$
	(d\omega - \varphi,d\zeta)+(\delta \omega-\psi,\delta \zeta)-(\eta,\zeta)=0, 
	$$
	takes the form:
	\begin{equation}\label{iid1}
		\begin{aligned}
			\int\limits_{G_{2R}} \zeta_{I,\alpha} &[a^{IJ\alpha \beta} \omega_{J ,\beta}+b^{IJ \alpha} \omega_J + e^{I\alpha}] \, dx\\
			&+ \int\limits_{G_{2R}} \zeta_I [b^{*IJ\beta} \omega_{J,\beta}+ c^{IJ}\omega_J + f^I] \, dx = 0 
		\end{aligned}
	\end{equation}
	for all  $\zeta \in C_0^\infty(\mathcal{I}_0, G_{2R})$.

	The coefficients
	\begin{itemize}
		
		\item  $a^{IJ\alpha \beta} = \widetilde a^{IJ\alpha \beta} \sqrt{g}$, where $\widetilde a^{IJ\alpha \beta}$ is a tensor field on $M$ involving only the metric: for co-vector fields $z$, $\widetilde z$ there holds 
		\begin{gather*}
			a^{IJ\alpha \beta} \omega_J \xi_I z_\alpha \widetilde z_\beta = \langle \widetilde z\wedge \omega, z\wedge \zeta\rangle + \langle\widetilde  z\lrcorner \omega, z\lrcorner \zeta\rangle,\\
			a^{IJ\alpha \beta} = a^{JI\beta\alpha},\quad  a^{IJ\alpha \beta} + a^{IJ\beta \alpha} = 2G^{IJ}g^{\alpha \beta} 
		\end{gather*} 
		
		
		\item  $b^{IJ\alpha}$, $b^{*IJ\alpha}$, $c^{IJ}$ involve the Christoffel symbols and the metric, $b^{*IJ\alpha} = b^{JI\alpha}$ and $c^{IJ} = c^{JI}$;
		
		\item  $e^{I\alpha}$ are linear combinations of the components of $\varphi$ and $\psi$ with coefficients depending only on the metric.
		
		\item  $f^I$ are linear combinations of the components of $\psi$ and $\eta$; 
		
		\item of $\eta$ in $f$ involve only the metric coefficients;
		
		\item  of $\psi$ in $f$ involve also the derivatives of metric via Christoffel symbols.
		
	\end{itemize}
	
	Thus, for a manifold of the class $C^{s+2,1}$, $s\geq -1$, we have
	\begin{itemize}
		\item $a^{IJ\alpha\beta} \in C^{s+1,1}(G_{2R})$;
		
		\item $b^{IJ\alpha}, b^{*IJ\alpha}, c^{IJ} \in C^{s,1}(G_{2R})$ if $s\geq 0$ and $b^{IJ\alpha}, b^{*IJ\alpha}, c^{IJ} \in L^\infty(G_{2R})$ if $s=-1$;
		
		\item $f^I = C_1^{IJ} \eta_J  +  C_2^{IJ} \psi_J: = C_1 \eta + C_2 \psi$, where $C_1 \in C^{s+1,1}(G_{2R})$, $C_2 \in C^{s,1}(G_{2R})$ if $s\geq 0$, $C_2 \in L^\infty(G_{2R})$ for $s=-1$;
		
		\item $e^{I\alpha}  =  C_3^{IJ\alpha}\varphi_J +  C_4^{IJ\alpha} \psi_J:= C_3\varphi + C_4 \psi$, where $C_3, C_4 \in C^{s+1,1}(G_{2R})$;
		
	\end{itemize}

	In particular, for all $s\geq -1$ the coefficients $a^{IJ\alpha\beta}$, the coefficients of $\varphi$, $\psi$ in $e$, the coefficients of $\eta$ in $f$ are Lipschitz,
	\begin{equation}\label{eq:basic}
		\begin{gathered}
			|a^{IJ\alpha \beta}(x) - a^{IJ\alpha \beta}(0)| \leq C R \quad \text{for} \quad x\in G_{2R},\\
			|f| \leq C(|\eta| + |\psi|), \quad |e| \leq C(|\varphi| + |\psi|)
		\end{gathered}
	\end{equation}
	with some universal constants which will be the same for any $\varepsilon$-atlas from our collection. For $s\geq 0$ there holds 
	\begin{equation}\label{eq:fereg}
		\begin{aligned}
			\|f\|_{s,p(\cdot),G_{2R}} &\leq C (\|\eta\|_{s,p(\cdot),M} + \|\psi\|_{s,p(\cdot),M}),\\
			\|e\|_{s+1,p(\cdot),G_{2R}} &\leq C (\|\varphi\|_{s+1,p(\cdot),M} + \|\psi\|_{s+1,p(\cdot),M}).
		\end{aligned}
	\end{equation}

	

	In the Euclidean metric for a function $\zeta$ with compact support in $G_{2R}$ (and vanishing tangential/ normal part on $\{x_n=0\}$ in the boundary coordinate patch the Gaffney formula holds 
	\begin{equation}\label{Gaffney}
		\begin{aligned}
			\int\limits_{G_{2R}} a^{IJ\alpha \beta}(0) \zeta_{I,\alpha} \omega_{J,\beta}\, dx &=  (d\omega,d\zeta)_0 + (\delta_0 \omega,\delta_0\zeta)_0 \\
			&= (\nabla \omega,\nabla \zeta)_{0;G_{2R}} := \int\limits_{G_{2R}} \delta^{IJ} \delta^{\alpha \beta} \omega_{J,\beta} \zeta_{I,\alpha}\, dx.
		\end{aligned}
	\end{equation}
	Here $\delta_0$ and $(\cdot,\cdot)_0$ stand for the codifferential and the scalar product in the standard Euclidean metric, and vanishing of the boundary integral here is equivalent to the relation $a^{IJ\alpha n}(0) \omega_J \zeta_I z_\alpha - a^{IJn\beta}(0) \omega_J \zeta_I z_\beta =0$ on $x^n=0$ if $t\omega=t\zeta=0$ or $n\omega=n\zeta=0$. 
	
	
	Further we shall use the following notation. Let 
	\begin{equation}\label{eq:nota0}
		\begin{gathered}
			(\nabla \zeta)_{I\alpha} = \zeta_{I,\alpha}, \quad (a\nabla \omega)^{I\alpha} = a^{IJ\alpha \beta} \omega_{J,\beta},\quad \nabla \omega : \nabla \zeta = \delta^{IJ}\delta^{\alpha \beta}\omega_{J,\beta}\zeta_{I,\alpha},\\
			(b\omega)^{I\alpha} =  b^{IJ \alpha} \omega_J,\quad (b^* \nabla \omega)^{I} = b^{*IJ\beta}  \omega_{J,\beta}, \quad (c\omega)^I = c^{IJ}\omega_J,
		\end{gathered}
	\end{equation}
	Products of such expressions are understood in the usual way as contractions over repeated lower and upper indices:
	$$
	a\nabla \omega \nabla \zeta = a^{IJ\alpha \beta} \omega_{J,\beta} \zeta_{I,\alpha}, \quad b\omega \nabla \zeta = b^{IJ \alpha} \omega_J \zeta_{I,\alpha}, \quad b^* \nabla \omega = b^{*IJ\beta}  \omega_{J,\beta} \zeta_{I},
	$$
	and so forth. We shall reserve the indices $J$, $\beta$ to $\omega$ and its localized version $\Omega$ below, and $I,\alpha$ to test functions. Rewrite now  \eqref{iid1} as 
	\begin{equation}\label{iid1a}
		\int\limits_{G_{2R}} ((a\nabla \omega+ b\omega + e) \nabla \zeta  +(b^* \nabla\omega + c\omega +f) \zeta)\, dx =0.
	\end{equation}

	Further we assume that $\omega$ is a homogeneous form of degree $r$ solving one of the three BVPs formulated above (full Dirichlet data/ Dirichlet/ Neumann) for \eqref{eq1} and in a local coordinate chart in \eqref{iid1} (equivalently \eqref{iid1a}) the form $\omega\in W^{1,p_{-}}(\mathcal{I}_0, G_{2R})$, and the data $e,f \in L^{p(\cdot)}(G_{2R})$. We assume that $M$ is at least of the class $C^{s+2,1}$, $s\geq -1$.
	

	\section{Potentials for the Hodge Laplacian}\label{sec:potentialHodge}
	
	In the interior patches for $F = F^I$ and $E = E^{I\alpha}$ we introduce the volume potentials $\mathcal{P}$ and $\mathcal{Q}$ as 
	$$
	(\mathcal{P}[F])_I = P[F^I], \quad  (\mathcal{Q}[E])_I = Q [E^I] =Q_\alpha [E^{I\alpha}]
	$$
	where the volume potentials $P$, $Q$ are defined in Section~\ref{sec:space}.

	In the case of boundary patches we use the following extensions of densities  of these potentials, which correspond to solving the Dirichlet/Neumann problem in the half-space $x^n>0$ by odd/even reflection. Using the notation of Section~\ref{sec:halfspace}  where the parameter $\rho =1$, that is the potentials corresponding to the standard Laplace operator are used, we define

	Let 
	\begin{itemize}
		\item $g_I(x,y) = g_D(x,y)$ for $I \in \mathcal{ I}_0$;
		
		\item $g_I(x,y) = g_N(x,y)$ for $I \notin \mathcal{ I}_0$;
		
		\item $g_{I\alpha}(x,y) =\frac{\partial g_D(x,y)}{\partial y^\alpha}$ for $I \in  \mathcal{ I}_0$;
		
		\item $g_{I\alpha}(x,y) =\frac{\partial g_N(x,y)}{\partial y^\alpha}$ for $I \notin  \mathcal{ I}_0$;
		
		\item $ \mathcal{E}^I =  \mathcal{E}^D $ for $I \in \mathcal{ I}_0$,;
		
		\item $ \mathcal{E}^I =  \mathcal{E}^N $ for $I \notin \mathcal{ I}_0$;
		
		\item $\mathcal{E}^{I\alpha} = \mathcal{E}^D$ if $I \in  \mathcal{ I}_0$ and $\alpha<n$ or $I \notin \mathcal{ I}_0$ and $\alpha =n$;
		
		\item $\mathcal{E}^{I\alpha} = \mathcal{E}^D$ if $I \notin  \mathcal{ I}_0$ and $\alpha=n$ or $I \in \mathcal{ I}_0$ and $\alpha =n$.
		
	\end{itemize}
	
	Then, for boundary patches we define (in the following formula there is no summation in $I$)
	\begin{equation}\label{def:PQ}
		\begin{aligned}
			(\mathcal{P}[F])_I &= \int \limits_{\mathbb{R}^n_{+}} \delta_{IJ}  g_I(x,y) F^J(y)\, dy = \int \limits_{\mathbb{R}^n} K_0(x-y) \mathcal{E}^I [F^I](y)\, dy,\\
			(\mathcal{Q}[E])_I &=  \int\limits_{\mathbb{R}^n_{+}} \delta_{IJ} g_{I\alpha}(x,y) E^{J\alpha}(y)\, dy = -\int\limits_{\mathbb{R}^n} K_{0,\alpha}(x-y) \mathcal{E}^{I\alpha}[E^{I\alpha}](y)\, dy.
		\end{aligned}
	\end{equation}
	In the notation of Section~\ref{sec:halfspace} this means
	\begin{align*}
		(\mathcal{P}[F])_I &= P^D[F^I], \quad I \in \mathcal{I}_0, \\ (\mathcal{P}[F])_I &= P^N[F^I], \quad I \notin \mathcal{I}_0,\\
		(\mathcal{Q}[E])_I &= Q^D[E^I] = Q^D_\alpha[E^{I\alpha}], \quad I \in \mathcal{I}_0, \\
		(\mathcal{Q}[E])_I &= Q^N[E^I] = Q^N_\alpha[E^{I\alpha}], \quad I \notin \mathcal{I}_0.
	\end{align*}

	Note that
	
	\begin{enumerate}
		
		\item for $I\in \mathcal{I}_0,$ the potential $(\mathcal{P}[F])_I$ solves the Dirichlet problem $$\triangle u = F^I  \text{ in } \mathbb{R}^n_{+}, \qquad u=0 \text{ on }x_n=0 ;$$
		
		\item for $I \notin \mathcal{I}_0,$ the potential $(\mathcal{P}[F])_I$ solves the Neumann problem $$\triangle u  = F^I \text{ in } \mathbb{R}^n_{+}, \qquad \partial u/\partial x_n =0 \text{ on } x_n =0;$$
		
		\item for $I \in \mathcal{I}_0, $ the potential $(\mathcal{Q}[E])_I$ solves the Dirichlet problem $$\triangle u = - (E^{I\alpha})_{x^\alpha} \text{ in } \mathbb{R}^n_{+}, \qquad u=0 \text{ on } x_n=0;$$

		\item for $I \notin \mathcal{I}_0, $  the potential $(\mathcal{Q}[E])_I$ solves the Neumann problem $$\triangle u = - (E^{I\alpha})_{x^\alpha} \text{ in }\mathbb{R}^n_{+}, \qquad \partial u/\partial x^n + E^{In}=0 \text{ on }x^n=0.$$
		
	\end{enumerate}

	For $\omega=\{\omega_I\}$ we denote (for functions the norm $\|\cdot\|^*_{s,p,G_{2R}}$ was introduced in Sections~\ref{sec:space}, \ref{sec:halfspace})
	$$
	\|\omega \|^*_{s,p, G_{2R}} = \sum_{I} \|\omega_I\|^*_{s,p, G_{2R}}.
	$$
	
	If $n=2$ we also need the following notation. For $F=F^I$ introduce the set of constants $c(F;G_{2R}) = \{c_I(F,G_{2R})\}$ by
		\begin{equation}\label{eq:c3}
			c_I (F;G_{2R}) = \begin{cases} c(F^I;G_{2R}) \quad \text{if}\quad I \notin \mathcal{I}_0,\\
				0, \quad \text{otherwise}
			\end{cases}
		\end{equation}
		Here the constants $c(f;G_{2R})$ are introduced by the expressions in \eqref{eq:c1}, \eqref{eq:c2}.

	From Lemma~\ref{L:pa0} and Lemma~\ref{L:pa1} we get
	\begin{lemma}\label{L:pa}
		For $F = F^I$ and $E= E^{I\alpha}$ with support in $\widetilde G_{2R}$ there holds
		\begin{align*}
			\| \mathcal{Q}[E]\|^*_{1,p(\cdot), G_{2R}} &\leq C(n,p_{-},p_{+},c_{\mathrm{log}} (p)) \|E\|_{p(\cdot), G_{2R}},\\
			\|\mathcal{P}[F]\|^*_{1,p(\cdot), G_{2R}} &\leq C(n,p_{-},p_{+},c_{\mathrm{log}} (p)) R \|F\|_{p(\cdot), G_{2R}},\\
			\|\mathcal{P}[F]\|^*_{1,\varkappa p(\cdot), G_{2R}} &\leq C(n,p_{-},p_{+},c_{\mathrm{log}} (p))\|F\|_{p(\cdot), G_{2R}}.
		\end{align*}
		Moreover, $\mathcal{Q}[E], \mathcal{P}[F] \in W^{1,p(\cdot)}(\mathcal{I}_0, G_{2R})$. If $n=2$ the estimates for $\mathcal{P}[F]$ are valid if $\mathcal{P}[F]$ is replaced by $\mathcal{P}[F] - c(F;G_{2R})$.
	\end{lemma}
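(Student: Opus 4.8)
The plan is to reduce everything to the componentwise estimates already established in Lemma~\ref{L:pa0} and Lemma~\ref{L:pa1}. First I would recall that $\mathcal{P}$ and $\mathcal{Q}$ act componentwise: in an interior patch $(\mathcal{P}[F])_I = P[F^I]$ and $(\mathcal{Q}[E])_I = Q_\alpha[E^{I\alpha}]$, while in a boundary patch the defining formulas \eqref{def:PQ} identify $(\mathcal{P}[F])_I$ with $P^D[F^I]$ when $I\in\mathcal{I}_0$ and with $P^N[F^I]$ when $I\notin\mathcal{I}_0$, and likewise $(\mathcal{Q}[E])_I$ with $Q^D_\alpha[E^{I\alpha}]$, resp. $Q^N_\alpha[E^{I\alpha}]$, where the extension $\mathcal{E}^{I\alpha}$ is chosen precisely so that the normal component ($\alpha=n$) carries the opposite parity, which is exactly what makes these the half-space Dirichlet/Neumann potentials of Section~\ref{sec:halfspace}.

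Since the multi-index set $\mathcal{I}(r)$ is finite with cardinality depending only on $n$, and $\|\omega\|^*_{1,q(\cdot),G_{2R}} = \sum_I \|\omega_I\|^*_{1,q(\cdot),G_{2R}}$, the three norm inequalities follow by summing over $I$ the corresponding scalar estimates from Lemma~\ref{L:pa0} (interior patches) or Lemma~\ref{L:pa1} (boundary patches), applied with $F^I$, $E^{I\alpha}$, which are supported in $\widetilde G_{2R}$. The resulting constants remain universal because those in Lemmas~\ref{L:pa0},~\ref{L:pa1} depend only on $n$, $p_{-}$, $p_{+}$, $c_{\mathrm{log}}(p)$. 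For the inclusion $\mathcal{P}[F],\mathcal{Q}[E]\in W^{1,p(\cdot)}(\mathcal{I}_0,G_{2R})$, in interior patches $\mathcal{I}_0 = \emptyset$ and there is nothing to check, while in boundary patches, for $I\in\mathcal{I}_0$ the components $(\mathcal{P}[F])_I = P^D[F^I]$ and $(\mathcal{Q}[E])_I = Q^D_\alpha[E^{I\alpha}]$ have zero trace on $\{x^n=0\}$ by Lemma~\ref{L:pa1}; this is precisely the defining condition of $W^{1,p(\cdot)}(\mathcal{I}_0,G_{2R})$.

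For the two-dimensional case, Lemma~\ref{L:pa0} and Lemma~\ref{L:pa1} give the stated estimates for $P[\cdot]$ (resp. $P^N[\cdot]$) only after subtracting the constant $c(\cdot;B_{2R})$ (resp. $c(\cdot;B_{2R}^+)$), whereas the Dirichlet logarithmic potential $P^D$ and all the potentials $Q^{D}_\alpha$, $Q^{N}_\alpha$ require no correction. Since $c_I(F;G_{2R})$ in \eqref{eq:c3} equals $c(F^I;G_{2R})$ exactly for the Neumann-type indices $I\notin\mathcal{I}_0$ and vanishes otherwise, summing the componentwise estimates produces the claimed bounds with $\mathcal{P}[F]$ replaced by $\mathcal{P}[F] - c(F;G_{2R})$. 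I do not expect a genuine obstacle: the only care needed is the bookkeeping that matches each index $I$, and the direction $\alpha=n$ versus $\alpha<n$, to the correct Dirichlet/Neumann half-space potential and the correct parity of extension, so that Lemmas~\ref{L:pa0},~\ref{L:pa1} apply verbatim.
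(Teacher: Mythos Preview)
Your proposal is correct and matches the paper's approach exactly: the paper does not write out a proof at all, stating only that Lemma~\ref{L:pa} follows from Lemma~\ref{L:pa0} and Lemma~\ref{L:pa1}, which is precisely the componentwise reduction you describe. Your treatment of the bookkeeping (matching $I\in\mathcal{I}_0$ versus $I\notin\mathcal{I}_0$ to Dirichlet versus Neumann half-space potentials, and the $n=2$ correction via \eqref{eq:c3}) is more explicit than the paper's one-line pointer but adds nothing new.
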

	
	From Corollary~\ref{C:pa0} and Lemma~\ref{L:DR04c0} we get

	\begin{lemma}\label{L:DR04c}
		Let $F \in W^{s,p(\cdot)}(G_{2R})$, $s\in \{0\}\cup \mathbb{N}$, with support in $\widetilde G_{2R}$. Then 
		$$
		\|\mathcal{P}[F]\|^*_{s+2,p(\cdot),G_{2R}} \leq C \|F\|_{s,p(\cdot), G_{2R}}.
		$$
		Let $E\in W^{s+1,p(\cdot)}(G_{2R})$,  $s\in \{0\}\cup \mathbb{N}$, with support in $G_{2R}$. Then 
		$$
		\|\mathcal{Q}[E]\|^*_{s+2,p(\cdot),G_{2R}}  \leq C \|E\|_{s+1,p(\cdot), G_{2R}}^*.
		$$
		Here $C=C(n,p_{-},p_{+},c_{\mathrm{log}} (p)).$ If $n=2$ the estimates for $\mathcal{P}[F]$ are valid if $\mathcal{P}[F]$ is replaced by $\mathcal{P}[F] - c(F;G_{2R})$.
	\end{lemma}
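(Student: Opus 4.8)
The plan is to deduce Lemma~\ref{L:DR04c} directly from its full-space and half-space counterparts, Corollary~\ref{C:pa0} and Lemma~\ref{L:DR04c0}, by unwinding the definitions \eqref{def:PQ}. Recall that for interior patches $\mathcal{P}[F]$ and $\mathcal{Q}[E]$ are just the componentwise Newtonian potentials $P$ and $Q_\alpha$, while for boundary patches each component $(\mathcal{P}[F])_I$ equals $P^D[F^I]$ or $P^N[F^I]$ and $(\mathcal{Q}[E])_I$ equals $Q^D_\alpha[E^{I\alpha}]$ or $Q^N_\alpha[E^{I\alpha}]$ according to whether $I \in \mathcal{I}_0$ or $I \notin \mathcal{I}_0$; moreover the norm $\|\cdot\|^*_{s,p(\cdot),G_{2R}}$ on multi-component objects is the sum of the scalar norms over $I$. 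Thus the estimate for $\mathcal{P}[F]$ follows by summing, over $I\in\mathcal{I}(r)$, the bound
$$
\|(\mathcal{P}[F])_I\|^*_{s+2,p(\cdot),G_{2R}} \leq C \|F^I\|_{s,p(\cdot),G_{2R}},
$$
which is exactly Corollary~\ref{C:pa0} in the interior case and the first estimate of Lemma~\ref{L:DR04c0} (with $\rho=1$) in the boundary case, together with the induction-on-$s$ argument in the proof of Lemma~\ref{L:DR04c0}. The $n=2$ modification is built in at the component level: for $I\notin\mathcal{I}_0$ one works with $P^N[F^I]-c(F^I;G_{2R})$, and the constant vector $c(F;G_{2R})$ of \eqref{eq:c3} is precisely the assembly of these componentwise corrections.

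Next I would handle $\mathcal{Q}[E]$ in the same fashion. For each $I$ and each $\alpha$, $(\mathcal{Q}[E])_I$ involves $Q^D_\alpha[E^{I\alpha}]$ or $Q^N_\alpha[E^{I\alpha}]$ (or the plain $Q_\alpha[E^{I\alpha}]$ for interior patches), so summing the scalar estimate
$$
\|Q^*_\alpha[E^{I\alpha}]\|^*_{s+2,p(\cdot),G_{2R}} \leq C \|E^{I\alpha}\|^*_{s+1,p(\cdot),G_{2R}}
$$
from Lemma~\ref{L:DR04c0} (and Corollary~\ref{C:pa0}) over $I\in\mathcal{I}(r)$ and $\alpha=1,\dots,n$ gives $\|\mathcal{Q}[E]\|^*_{s+2,p(\cdot),G_{2R}} \leq C\|E\|^*_{s+1,p(\cdot),G_{2R}}$. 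I would also record the membership statement $\mathcal{Q}[E],\mathcal{P}[F]\in W^{1,p(\cdot)}(\mathcal{I}_0,G_{2R})$: for $I\in\mathcal{I}_0$ the relevant potentials are $P^D$ and $Q^D_\alpha$, which by Lemma~\ref{L:pa1} have zero trace on $\{x^n=0\}$, so the components indexed by $\mathcal{I}_0$ vanish there in the sense of trace, which is exactly the definition of $W^{1,p(\cdot)}(\mathcal{I}_0,G_{2R})$.

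There is essentially no new difficulty here — the lemma is a bookkeeping corollary that repackages Corollary~\ref{C:pa0} and Lemma~\ref{L:DR04c0} componentwise. The only point that needs a little care is matching the parity bookkeeping: one must check that in \eqref{def:PQ} the choice of $g_D$ versus $g_N$ for $g_I$, and of $\mathcal{E}^D$ versus $\mathcal{E}^N$ for $\mathcal{E}^{I\alpha}$, is exactly the one for which the cited half-space estimates were proved, so that no mismatch of Dirichlet/Neumann type arises when $\alpha=n$. Given that the normalization $\rho=1$ is used throughout this Section and the constants in Lemma~\ref{L:DR04c0} already depend only on $n,p_{-},p_{+},c_{\mathrm{log}}(p)$, the summation over the finitely many indices $I\in\mathcal{I}(r)$ and $\alpha\in\{1,\dots,n\}$ preserves the stated form of the constant. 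So I would write the proof in two or three lines: "Follows from Corollary~\ref{C:pa0} and Lemma~\ref{L:DR04c0} by summing the componentwise estimates, using the definitions \eqref{def:PQ} of $\mathcal{P}$ and $\mathcal{Q}$ and of the norm $\|\cdot\|^*_{s,p(\cdot),G_{2R}}$; the trace property follows from Lemma~\ref{L:pa1}."
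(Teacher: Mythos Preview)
Your proposal is correct and matches the paper's approach exactly: the paper introduces Lemma~\ref{L:DR04c} with the sentence ``From Corollary~\ref{C:pa0} and Lemma~\ref{L:DR04c0} we get'' and provides no further proof, so the argument is precisely the componentwise summation you describe. The only superfluous element in your write-up is the trace/membership paragraph, which belongs to Lemma~\ref{L:pa} rather than to the present statement.
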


	\section{Local a priori estimates}\label{ssec:local}

	Let $\xi \in C_0^\infty(B_{7R/4})$ be such that $\xi=1$ on $B_R$, $0\leq \xi \leq 1$, $|\nabla \xi| \leq C(n) R^{-1}$, and let $\xi_*\in C_0^\infty(B_{2R})$ be such that $0\leq \xi_*\leq 1$, $\xi_*=1$ on $B_{7R/4}$, and $|\nabla \xi_*| \leq C(n) R^{-1}$. 
	
	Using in \eqref{eq1} test form $\xi \zeta$ instead of $\zeta$ one obtains the following localized relation:
	\begin{equation}\label{eq:localized}
		\begin{gathered}
			\mathcal{D}(\omega \xi, \zeta) = (\widetilde \varphi, d\zeta) + (\widetilde \psi, \delta \zeta) + (\widetilde \eta,\zeta),\\
			\widetilde \varphi = \xi \varphi+ d\xi \wedge \omega, \quad \widetilde \psi = \xi \psi - d\xi \lrcorner \omega,\\
			\widetilde \eta = \xi \eta + d\xi \wedge \delta\omega - d\xi \lrcorner d\omega + d\xi \lrcorner \varphi - d\xi \wedge \psi.
		\end{gathered}
	\end{equation}
	Denote
	\begin{equation}\label{eq:nota1}
		\begin{gathered}
			\Omega = \omega \xi,\quad E =\xi e- a\omega\nabla \xi, \quad F = \xi f + e \nabla \xi+ a\nabla \omega \nabla \xi + (b-b^*) \omega\nabla \xi,\\
			(a\omega\nabla \xi)^{I\alpha} = a^{IJ\alpha \beta}\xi_{,\beta} \omega_J, \quad
			(a\nabla\omega \nabla \xi )^I = \xi_{,\alpha} a^{IJ\alpha \beta} \omega_{J,\beta},\\
			((b-b^*) \omega\nabla \xi)^I = (b^{IJ\alpha}- b^{*IJ\alpha}) \xi_{,\alpha} \omega_J, \quad (e\nabla \xi)^I = e^{I\alpha} \xi_{,\alpha}.
		\end{gathered}
	\end{equation}
	Recalling also the notation \eqref{eq:nota0}, for $\Omega \in W^{1,p_{-}}(\mathcal{I}_0, G_{2R})$ from \eqref{iid1} with $\zeta$ replaced by $\xi \zeta$, and using \eqref{Gaffney}, we get the formulation for \eqref{eq:localized} in coordinates: 
	\begin{multline}\label{iid1c}
		\int\limits_{G_{2R}} \left[ \nabla \Omega : \nabla \zeta + (\xi_*[(a(\cdot)-a(0)) \nabla \Omega + b \Omega]+E) \nabla \zeta \right. \\+ \left. (\xi_*[b^* \nabla \Omega + c\Omega]+F) \zeta \right]  \, dx=0,
	\end{multline}
	where $\zeta \in C_0^\infty(X)$, $X=\mathbb{R}^n$ for $G_{2R}=B_{2R}$ and $X=\overline{\mathbb{R}^n_+}$ if $G_{2R}=\overline{B}_{2R}^+$, $\zeta_I=0$ on $\{x^n=0\}$ for $I \in \mathcal{I}_0$. The relation between some of the terms in the RHS of \eqref{eq:localized} and \eqref{eq:nota1}-\eqref{iid1c} is 
	\begin{align*}
		(d\xi \lrcorner d\omega - d\xi \wedge \delta \omega,\zeta) &= \int\limits_{G_{2R}} (a\nabla \omega \nabla \xi + b\omega \nabla \xi)\zeta\, dx  \\
		- (d\xi \wedge \omega,d\zeta)+ (d\xi \lrcorner \omega,\delta \zeta) &= \int\limits_{G_{2R}} [-(a\omega \nabla \xi) \nabla \zeta -(b^*\omega \nabla \xi) \zeta]\, dx,\\
		(-d\xi \lrcorner \varphi + d\xi \wedge \psi,\zeta) &= \int\limits_{G_{2R}} (e\nabla \xi) \zeta\, dx.
	\end{align*} 
	
	Note that there holds 
		\begin{equation}\label{eq:mean0}
			\int\limits_{G_{2R}} [\xi_* (b^* \nabla \Omega + c \Omega ) +F]^K\, dx =\quad \text{for}\quad K \notin \mathcal{I}_0 .
		\end{equation}
		To check this relation it is sufficient to take the test form with coordinates $\zeta_I = \delta_{IK}$ in \eqref{iid1c}.
	
	Recall that in notation of Section~\ref{sec:coord} 
	$$
	f=C_1 \eta + C_2 \psi, \quad e = C_3 \varphi + C_4 \psi,
	$$
	and the regularity of the coefficients is
	\begin{itemize}
		\item $a, C_1, C_3, C_4\in C^{s+1,1}(G_{2R})$,
		
		\item $b,b^*, c, C_2\in C^{s,1}(G_{2R})$ if $s\geq 0$ and $b,b^*,c, C_2 \in L^\infty(G_{2R})$ if $s=-1$.
	\end{itemize}
	The corresponding bounds of the coefficients are uniform with respect to $R$ and the chosen element of the cover.

	
	In $G_{2R}$ there holds
	\begin{equation}\label{eq:modAest}
		\begin{aligned}
			|(a(\cdot)-a(0))\xi_*| &\leq CR,\\
			|\nabla [(a(\cdot)-a(0))\xi_*]|, |b\xi_*|, |b^* \xi_*|, |c\xi_*| &\leq C,
		\end{aligned}
	\end{equation}
	and if $M$ is at least of the class $C^{2,1}$ there holds 
	\begin{equation}\label{eq:modBest}
		|\nabla [b\xi_*]|, |\nabla[b^* \xi_*]|, |\nabla [c\xi_*]| \leq C R^{-1}.
	\end{equation}

	\begin{lemma} There holds 
		\begin{equation}\label{ieq1}
			\Omega =\mathcal{Q}[\xi_*((a(\cdot)-a(0)\nabla \Omega + b\Omega) +E] + \mathcal{P}[\xi_*(b^* \nabla \Omega + c\Omega)  +F] .
		\end{equation}
	\end{lemma}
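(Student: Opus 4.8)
The plan is to verify the representation formula \eqref{ieq1} by applying the potential-theoretic identities of Section~\ref{sec:space} and Section~\ref{sec:halfspace} componentwise and showing that the difference between the left- and right-hand sides is a harmonic function which, together with the boundary conditions encoded in $\mathcal{I}_0$, must vanish. First I would fix a multi-index $I \in \mathcal{I}(r)$ and work with the scalar equation satisfied by the component $\Omega_I$. From the localized variational identity \eqref{iid1c} with test form $\zeta$ having only the $I$-th component nonzero, one reads off that $\Omega_I$ is a weak solution of
\begin{equation*}
\triangle \Omega_I = -\operatorname{div}\bigl[\xi_*((a(\cdot)-a(0))\nabla \Omega + b\Omega)^{I} + E^{I}\bigr] + \bigl[\xi_*(b^*\nabla\Omega + c\Omega)^{I} + F^{I}\bigr] \quad \text{in } G_{2R},
\end{equation*}
where, for $I \in \mathcal{I}_0$ in a boundary patch, $\Omega_I$ has zero trace on $\{x^n=0\}$ (it is in $W^{1,p_-}(\mathcal{I}_0,G_{2R})$), while for $I\notin\mathcal{I}_0$ the relation \eqref{eq:mean0} says the zeroth-order right-hand side has zero integral, which is precisely the Neumann compatibility condition matching the Neumann-type Green's function $g_N$ used for those indices.

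Next I would compare $\Omega_I$ with the candidate $\Phi_I := (\mathcal{Q}[\,\cdot\,])_I + (\mathcal{P}[\,\cdot\,])_I$ where the densities are as in \eqref{ieq1}. Writing $E^{I} = E^{I\alpha}$ and $F^I$ as the respective densities, the definitions \eqref{def:PQ} together with the facts recalled in Section~\ref{sec:space} (for interior patches) and Section~\ref{sec:halfspace} (for boundary patches) --- namely that $Q^D_\alpha, Q^N_\alpha$ solve $\triangle u = -(e^\alpha)_{,\alpha}$ with the appropriate homogeneous Dirichlet/Neumann condition on $\{x^n=0\}$, and $P^D, P^N$ solve $\triangle u = f$ with the appropriate condition --- show that $\Phi_I$ solves exactly the same equation as $\Omega_I$ in $G_{2R}$ and satisfies the same homogeneous boundary condition on $\{x^n=0\}$ (zero trace if $I\in\mathcal{I}_0$, zero normal derivative if $I\notin\mathcal{I}_0$). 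Here the careful bookkeeping is to check that the extensions $\mathcal{E}^{I\alpha}$ chosen in Section~\ref{sec:potentialHodge} are exactly the ones making $\partial g/\partial y^\alpha$ produce a density whose potential has the right parity: for $I\in\mathcal{I}_0$ and $\alpha<n$ one differentiates $g_D$ in a tangential direction (which preserves the odd/Dirichlet character), for $\alpha=n$ the normal derivative of $g_D$ is even, hence one uses $\mathcal{E}^N$, and dually for $I\notin\mathcal{I}_0$. All the densities are supported in $\widetilde G_{2R}$ because of the cutoffs $\xi,\xi_*$, so the potentials are well-defined and the estimates of Lemma~\ref{L:pa} apply; in particular $\Phi_I \in W^{1,p_-}(\mathcal{I}_0,G_{2R})$.

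Then the difference $h := \Omega - \Phi$ is componentwise harmonic in $G_{2R}$ and, for each $I$, satisfies the homogeneous Dirichlet condition on $\{x^n=0\}$ if $I\in\mathcal{I}_0$ and the homogeneous Neumann condition if $I\notin\mathcal{I}_0$ (in a boundary patch; in an interior patch there is no boundary condition). The subtle point --- and the one I expect to be the main obstacle --- is that $h$ is a priori only a local solution in the patch $G_{2R}$, not a globally defined harmonic function, so one cannot conclude $h\equiv 0$ by Liouville-type arguments; instead the identity \eqref{ieq1} should be read as an identity of the specific local objects, and one must argue that the two sides agree because they solve the same well-posed local problem. Concretely, I would note that both $\Omega$ and $\Phi$ vanish (in the appropriate trace/weak sense) near the part of $\partial G_{2R}$ lying in the interior of the ball where the cutoff $\xi$ is zero --- wait, this is not quite true for $\Phi$ --- so the cleanest route is: test the equation for $h$ against $h$ itself (justified by the approximation Lemma~\ref{L:approx1}/Lemma~\ref{L:approx2} since $h\in W^{1,p_-}$ with the correct boundary behaviour and the equation has the divergence form above with the zeroth-order term having zero mean in the Neumann components), obtaining $\int |\nabla h|^2 = 0$ after the boundary terms drop by the Dirichlet/Neumann conditions and the mean-zero relation, hence $h$ is locally constant, and finally the Dirichlet components force that constant to be zero while for the Neumann components one invokes that $\mathcal{P}[F]$ and $\mathcal{Q}[E]$ are normalized (decay at infinity, or the constant $c(F;G_{2R})$ subtracted when $n=2$) so that the constant is pinned down to match. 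This last normalization check, distinguishing $I\in\mathcal{I}_0$ from $I\notin\mathcal{I}_0$ and the planar case $n=2$, is where most of the care goes, but it is routine once the parities are correctly matched.
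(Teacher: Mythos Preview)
Your overall strategy---show that $\Omega$ and the candidate $\Phi$ satisfy the same equation and boundary condition, then argue the difference is zero---is the same as the paper's. But there is a genuine gap in how you finish the argument, and you have already half-noticed it yourself.

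You treat $h=\Omega-\Phi$ as a harmonic function on the \emph{bounded} patch $G_{2R}$ only, and then try to conclude $h=0$ by an energy argument, testing against $h$. This cannot work: on the curved part of $\partial G_{2R}$ (the sphere $\partial B_{2R}\cap\{x^n\ge0\}$) you have no boundary information whatsoever. The form $\Omega=\xi\omega$ vanishes there because $\xi$ does, but the potentials $\mathcal{P}[B]$ and $\mathcal{Q}[A]$ are global integral operators and do \emph{not} vanish on that sphere---you flagged this (``wait, this is not quite true for $\Phi$'') and then proceeded anyway. With no condition on the curved boundary, the integration by parts in $\int_{G_{2R}}|\nabla h|^2$ produces an uncontrolled boundary term, and indeed there are plenty of nonzero harmonic functions on a half-ball satisfying only the flat-boundary Dirichlet or Neumann condition.

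The paper's resolution is precisely to \emph{not} localize the uniqueness step. Both $\Omega$ (extended by zero) and the potentials $\mathcal{P}[B],\mathcal{Q}[A]$ are defined on the \emph{entire} half-space $\mathbb{R}^n_+$, and the localized identity \eqref{iid1c} is stated for test functions $\zeta$ compactly supported in $\overline{\mathbb{R}^n_+}$, not merely in $G_{2R}$. Subtracting the weak equations for the potentials over $\mathbb{R}^n_+$ gives $\int_{\mathbb{R}^n_+}\nabla V:\nabla\zeta=0$ for all such $\zeta$ with $\zeta_I=0$ on $\{x^n=0\}$ when $I\in\mathcal{I}_0$. One then reflects each $V_I$ across $\{x^n=0\}$ (oddly for $I\in\mathcal{I}_0$, evenly otherwise), obtaining a weakly harmonic function on all of $\mathbb{R}^n$; by the Weyl lemma it is classically harmonic, and since $\Omega$ has compact support while the potentials decay at infinity (for $n=2$ this uses \eqref{eq:mean0}), $V_I\to0$ at infinity and Liouville gives $V_I\equiv0$. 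The decay at infinity is what replaces the missing condition on the curved boundary in your local attempt.
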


	\begin{proof} By Lemma~\ref{L:pa} the expression on the right is well-defined and belongs at least to $W^{1,p_{-}}(\mathcal{I}_0,G_{2R})$. Moreover, (for $n=2$ this is due to \eqref{eq:mean0}) the potentials on the right decay at infinity. 
		
		We shall treat the case of boundary patches, the interior case being simpler. Denote 
		$$
		A = \xi_*[a(\cdot)-a(0)\nabla \Omega + b\Omega] +E, \quad B = \xi_*(b^* \nabla \Omega + c\Omega)  +F.
		$$
		There holds
		\begin{align*}
			\int\limits_{\mathbb{R}^n_+} (\nabla \mathcal{Q}[A]: \nabla \zeta+ A\nabla \zeta) \, dx=0,\\
			\int\limits_{\mathbb{R}^n_+} (\nabla \mathcal{P}[B]: \nabla \zeta +B\zeta)\, dx=0,
		\end{align*}
		where the test functions $\zeta_I \in C_0^\infty(\overline{\mathbb{R}^n_{+}})$ satisfy $\zeta_I=0$ on $x_n=0$ for $I\in \mathcal{I}_0$. 
		
		Substracting these relations from \eqref{iid1c} we get 
		$$
		\int\limits_{\mathbb{R}^n_+} \nabla V: \nabla \zeta =0,
		$$
		where 
		$$
		V:=\Omega - Q[A] - P[B]
		$$ 
		vanishes on $x_n=0$ for indices $I\in \mathcal{I}_0$. Therefore, the odd extension of $V_I$, if $I\in \mathcal{I}_0$, and even extension of $V_I$ for $I\notin \mathcal{I}_0$ across $x_n=0$ satisfies 
		$$
		\int\limits_{\mathbb{R}^n} \nabla V : \nabla \zeta\, dx=0
		$$
		for all $\zeta \in C_0^\infty(\mathbb{R}^n)$. By the Weyl lemma all the functions $V_I$ are harmonic, and since $V_I\to 0$ as $x\to \infty$ we get $V_I\equiv 0$. 
	\end{proof}
	
	
	
	
	Define the operator  
	\begin{equation}\label{Tdef}
		T[\Omega]= \mathcal{Q}[\xi_*((a(\cdot)-a(0))\nabla \Omega +b\Omega)] + \mathcal{P}[\xi_*(b^*\nabla \Omega + c \Omega)].
	\end{equation}
	The operator $T$ depends on $\xi_*$. The integral equation \eqref{ieq1} rewrites as 
	\begin{equation}\label{ieq2}
		\Omega - T[\Omega] = \mathcal{Q}[E] + \mathcal{P}[F].
	\end{equation}
	 For $n=2$ this needs the following modification. We set 
		$$
		T'[\Omega] =T[\Omega] - c(\xi_*(b^*\nabla \Omega + c \Omega); G_{2R}), 
		$$
		and instead of \eqref{ieq2} by using \eqref{eq:mean0} we work with the integral equation
		\begin{equation}\label{eq:mod2}
			\Omega - T'[\Omega] = \mathcal{Q}[E] + \mathcal{P}[F] - c(F;G_{2R}).
		\end{equation}
		With this modification all the estimates work in exactly the same way as for $n\geq 3$, so further we use the equation in the form \eqref{ieq2}.

	\section{Estimates in \texorpdfstring{$W^{1,p(\cdot)}(M,\Lambda)$}{W1pxM}}\label{ssec:W1p}
	
	Consider the operator $T$ as a mapping from $W^{1,p(\cdot)}(\mathcal{I}_0, G_{2R})$ to the same space. We show that for sufficiently small $R>0$ the operator $T$ is a contraction in the norm $\|\cdot\|^*_{1,p(\cdot),G_{2R}}$. 
	
	\begin{lemma}\label{L:c0} For sufficiently small $R=R(\mathrm{data})>0$ there holds
		\begin{equation}\label{me}
			\|T[\Omega]\|^*_{1,p(\cdot),G_{2R}} \leq \frac{1}{2} \|\Omega\|^*_{1,p(\cdot),G_{2R}}.
		\end{equation}
	\end{lemma}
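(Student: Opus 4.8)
The plan is to estimate each of the two potential terms in $T[\Omega] = \mathcal{Q}[\xi_*((a(\cdot)-a(0))\nabla\Omega + b\Omega)] + \mathcal{P}[\xi_*(b^*\nabla\Omega + c\Omega)]$ using the boundedness results for the half-space (and whole-space) potentials established in Lemmas~\ref{L:pa} and \ref{L:DR04c}, and then absorb all the resulting constants into a single power of $R$ which can be made small. First I would write, using the triangle inequality,
\begin{equation*}
\|T[\Omega]\|^*_{1,p(\cdot),G_{2R}} \leq \|\mathcal{Q}[E_1]\|^*_{1,p(\cdot),G_{2R}} + \|\mathcal{P}[F_1]\|^*_{1,p(\cdot),G_{2R}},
\end{equation*}
where $E_1 = \xi_*((a(\cdot)-a(0))\nabla\Omega + b\Omega)$ and $F_1 = \xi_*(b^*\nabla\Omega + c\Omega)$; note both densities are supported in $\widetilde G_{2R}$, so Lemma~\ref{L:pa} applies.

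For the $\mathcal{Q}$-term, Lemma~\ref{L:pa} gives $\|\mathcal{Q}[E_1]\|^*_{1,p(\cdot),G_{2R}} \leq C\|E_1\|_{p(\cdot),G_{2R}}$. Now I use the key structural estimates \eqref{eq:modAest}: on $G_{2R}$ we have $|(a(\cdot)-a(0))\xi_*| \leq CR$ and $|b\xi_*| \leq C$, so
\begin{equation*}
\|E_1\|_{p(\cdot),G_{2R}} \leq CR\|\nabla\Omega\|_{p(\cdot),G_{2R}} + C\|\Omega\|_{p(\cdot),G_{2R}}.
\end{equation*}
The first term on the right is already $\leq CR\|\Omega\|^*_{1,p(\cdot),G_{2R}}$ by the very definition of the starred norm (the $\nabla\Omega$ term carries the weight $R^{0}$, and lower order terms carry positive powers of $R$; more precisely $\|\nabla\Omega\|_{p(\cdot),G_{2R}} = R^{1-1}\|\nabla\Omega\|_{p(\cdot),G_{2R}}\leq \|\Omega\|^*_{1,p(\cdot),G_{2R}}$). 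The genuinely dangerous term is $C\|\Omega\|_{p(\cdot),G_{2R}}$, since in the starred norm $\|\Omega\|_{p(\cdot),G_{2R}} = R^{-1}\cdot R\|\Omega\|_{p(\cdot),G_{2R}}$, i.e. it is weighted by $R^{-1}$; so naively $\|\Omega\|_{p(\cdot),G_{2R}}\leq R\|\Omega\|^*_{1,p(\cdot),G_{2R}}$ and this is again $O(R)$. Hence $\|\mathcal{Q}[E_1]\|^*_{1,p(\cdot),G_{2R}} \leq CR\|\Omega\|^*_{1,p(\cdot),G_{2R}}$.

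For the $\mathcal{P}$-term, Lemma~\ref{L:pa} gives the extra factor of $R$: $\|\mathcal{P}[F_1]\|^*_{1,p(\cdot),G_{2R}} \leq CR\|F_1\|_{p(\cdot),G_{2R}}$. Using $|b^*\xi_*|, |c\xi_*| \leq C$ from \eqref{eq:modAest}, we get $\|F_1\|_{p(\cdot),G_{2R}} \leq C\|\nabla\Omega\|_{p(\cdot),G_{2R}} + C\|\Omega\|_{p(\cdot),G_{2R}} \leq C R^{-1}\|\Omega\|^*_{1,p(\cdot),G_{2R}}$, so the factor $R$ in front exactly compensates and we again obtain $\|\mathcal{P}[F_1]\|^*_{1,p(\cdot),G_{2R}} \leq C\|\Omega\|^*_{1,p(\cdot),G_{2R}}$ — but this is only $O(1)$, not $O(R)$, so I must be more careful here. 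The correct bookkeeping is: $\|F_1\|_{p(\cdot),G_{2R}} \leq C\|\nabla\Omega\|_{p(\cdot),G_{2R}} + C\|\Omega\|_{p(\cdot),G_{2R}}$, and multiplying by $R$ gives $CR\|\nabla\Omega\|_{p(\cdot),G_{2R}} + CR\|\Omega\|_{p(\cdot),G_{2R}} \leq CR\|\Omega\|^*_{1,p(\cdot),G_{2R}} + CR\cdot R^{-1}R\|\Omega\|^*_{1,p(\cdot),G_{2R}}$; the second term is $CR^2 R^{-1}\|\Omega\|^*$... The point is that $\|\Omega\|_{p(\cdot),G_{2R}}$ contributes $R^{-1}$ to the starred norm, so $\|\Omega\|_{p(\cdot),G_{2R}} \leq R\|\Omega\|^*_{1,p(\cdot),G_{2R}}$, hence $R\|\Omega\|_{p(\cdot),G_{2R}} \leq R^2 \|\Omega\|^*_{1,p(\cdot),G_{2R}}$ and $R\|\nabla\Omega\|_{p(\cdot),G_{2R}} \leq R\|\Omega\|^*_{1,p(\cdot),G_{2R}}$; combining, $\|\mathcal{P}[F_1]\|^*_{1,p(\cdot),G_{2R}} \leq CR\|\Omega\|^*_{1,p(\cdot),G_{2R}}$. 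For $n=2$ one works instead with $T'$ and the modified equation \eqref{eq:mod2}, but by the remark following \eqref{eq:mod2} (and the estimate on $c(F;G_{2R})$ in the remark after Lemma~\ref{L:pa0}) the same bounds hold verbatim. Adding the two contributions gives $\|T[\Omega]\|^*_{1,p(\cdot),G_{2R}} \leq C(\mathrm{data})R\|\Omega\|^*_{1,p(\cdot),G_{2R}}$, and choosing $R = R(\mathrm{data})$ so small that $C(\mathrm{data})R \leq \tfrac12$ yields \eqref{me}.

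The main obstacle is the careful bookkeeping of powers of $R$ in the weighted norm $\|\cdot\|^*_{s,p(\cdot),G_{2R}}$: one must verify that every term produced by the coefficients comes with at least one net positive power of $R$, using that $(a(\cdot)-a(0))$ is $O(R)$ (Lipschitz continuity of the metric, \eqref{eq:basic}/\eqref{eq:modAest}), that the $\mathcal{P}$ potential gains a factor $R$ (Lemma~\ref{L:pa}, second estimate), and that the definition of the starred norm assigns weight $R^{k-s}$ to the $k$-th derivative. No term involving $\nabla\Omega$ with an $O(1)$ coefficient and processed through $\mathcal{Q}$ (which does not gain $R$) would be controllable — but such a term does not occur, precisely because in $E$ the coefficient of $\nabla\Omega$ is $\xi_*(a(\cdot)-a(0))$, which is $O(R)$. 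This is the structural reason the estimate closes, and it is exactly why the potentials were set up to invert the constant-coefficient principal part $a(0)$.
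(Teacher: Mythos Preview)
Your proof is correct and follows exactly the same approach as the paper: estimate $\mathcal{Q}[E_1]$ and $\mathcal{P}[F_1]$ separately via Lemma~\ref{L:pa}, use $|\xi_*(a(\cdot)-a(0))|\leq CR$ and the boundedness of $b,b^*,c$ from \eqref{eq:modAest}, and track the $R$-weights in the starred norm to conclude $\|T[\Omega]\|^*_{1,p(\cdot),G_{2R}}\leq CR\|\Omega\|^*_{1,p(\cdot),G_{2R}}$. The paper's write-up is simply more terse (it splits into four terms and quotes the bounds directly), and note that Lemma~\ref{L:DR04c} is not actually needed here---only Lemma~\ref{L:pa}.
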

	
	\begin{proof} From Lemma~\ref{L:pa}  it follows that
		\begin{align*}
			\| \mathcal{Q}[\xi_*(a(\cdot)-a(0)) \nabla \Omega]\|^*_{1,p(\cdot),G_{2R}} &\leq C(n,p_{-},p_{+},c_{\mathrm{log}} (p)) R \|\nabla \Omega\|_{p(\cdot), G_{2R}},\\
			\|\mathcal{Q}[\xi_*b\Omega]\|^*_{1,p(\cdot), G_{2R}} &\leq C(n,p_{-},p_{+},c_{\mathrm{log}} (p))\|b\|_{\infty} \| \Omega\|_{p(\cdot), G_{2R}},\\
			\|\mathcal{P}[\xi_*b \nabla \Omega]\|^*_{1,p(\cdot),G_{2R}} &\leq C(n,p_{-},p_{+},c_{\mathrm{log}} (p))\|b\|_{\infty} R \|\nabla \Omega\|_{p(\cdot),G_{2R}},\\
			\| \mathcal{P}[\xi_*c\Omega]\|^*_{1,p(\cdot),G_{2R}} &\leq C(n,p_{-},p_{+},c_{\mathrm{log}} (p)) \|c\|_{\infty} R \|\Omega\|_{p(\cdot),G_{2R}}.
		\end{align*}
		Then
		$$
		\|T[\Omega]\|^*_{1,p(\cdot),G_{2R}} \leq C R \|\Omega\|^*_{1,p(\cdot),G_{2R}},
		$$
		where the constant $C$ depends only on $n$, $p_{-}$, $p_{+}$, $c_{\mathrm{log}} (p)$, the Lipschitz constant of $a^{IJ\alpha\beta}$, and on $\|b\|_\infty+\|c\|_\infty$. It remains to choose $R\leq 1/(2C)$. 
	\end{proof}
	
	Let $R$ be sufficiently small such that the estimate \eqref{me} holds for all exponents $q\in \mathcal{P}^{\mathrm{log}}(p_{-}, p_{+},c_{\mathrm{log}}(p))$. 

	\begin{lemma}\label{L:c1} Let $\omega\in W^{1,p_{-}}(\mathcal{I}_0,G_{2R})$ be a solution of \eqref{iid1}. Let $E \in L^{p(\cdot)}(G_{2R})$ and $F \in L^{p_{-}}(G_{2R})$ in \eqref{eq:nota1} be such that $\mathcal{P}[F] \in W^{1,p(\cdot)}(G_{2R})$. Then we have $\omega \in W^{1,p(\cdot)}(\mathcal{I}_0,G_{3R/2})$ and
		$$
		\|\xi\omega\|^*_{1,p(\cdot),G_{2R}} \leq 2 (\|\mathcal{Q}[E]\|^*_{1,p(\cdot),G_{2R}} + \|\mathcal{P}[F]\|^*_{1,p(\cdot),G_{2R}}).
		$$
	\end{lemma}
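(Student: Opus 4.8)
The plan is to read off everything from the fixed-point equation \eqref{ieq2} together with the fact, recorded just before the statement, that $R$ has been fixed so small that the contraction estimate \eqref{me} holds for \emph{every} exponent $q\in\mathcal{P}^{\mathrm{log}}(p_{-},p_{+},c_{\mathrm{log}}(p))$, in particular for the constant exponent $p_{-}$ and for $p(\cdot)$ itself. Set $G:=\mathcal{Q}[E]+\mathcal{P}[F]$. Since $E\in L^{p(\cdot)}(G_{2R})$ has support in $\widetilde G_{2R}$ (by \eqref{eq:nota1}, $\operatorname{supp}E\subset\operatorname{supp}\xi\subset B_{7R/4}$), Lemma~\ref{L:pa} gives $\mathcal{Q}[E]\in W^{1,p(\cdot)}(\mathcal{I}_{0},G_{2R})$. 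As for $\mathcal{P}[F]$: the constant-exponent version of Lemma~\ref{L:pa} applied to $F\in L^{p_{-}}(G_{2R})$ shows $\mathcal{P}[F]$ vanishes on $B_{2R}\cap\{x^{n}=0\}$ for $I\in\mathcal{I}_{0}$ in the sense of trace, and by hypothesis $\mathcal{P}[F]\in W^{1,p(\cdot)}(G_{2R})$; hence $\mathcal{P}[F]\in W^{1,p(\cdot)}(\mathcal{I}_{0},G_{2R})$, and therefore $G\in W^{1,p(\cdot)}(\mathcal{I}_{0},G_{2R})$. (For $n=2$ one replaces $\mathcal{P}[F]$ by $\mathcal{P}[F]-c(F;G_{2R})$ and works with the modified equation \eqref{eq:mod2} throughout; nothing below changes.)

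By Lemma~\ref{L:c0}, $T$ is a $\tfrac12$-contraction of $W^{1,p_{-}}(\mathcal{I}_{0},G_{2R})$ (with the norm $\|\cdot\|^{*}_{1,p_{-},G_{2R}}$) into itself and, by the uniform choice of $R$, also of $W^{1,p(\cdot)}(\mathcal{I}_{0},G_{2R})$ into itself. Consequently $I-T$ is boundedly invertible on each of these spaces, the inverse being given by the convergent Neumann series $\sum_{k\ge 0}T^{k}$; in particular the equation $\Xi-T[\Xi]=G$ has a unique solution in each space.

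Now $\Omega:=\xi\omega\in W^{1,p_{-}}(\mathcal{I}_{0},G_{2R})$ solves $\Omega-T[\Omega]=G$ by \eqref{ieq2}. Since $G_{2R}$ is bounded and $p(\cdot)\ge p_{-}$, we have the inclusion $W^{1,p(\cdot)}(\mathcal{I}_{0},G_{2R})\hookrightarrow W^{1,p_{-}}(\mathcal{I}_{0},G_{2R})$, so the element $\widetilde\Omega:=\sum_{k\ge 0}T^{k}G\in W^{1,p(\cdot)}(\mathcal{I}_{0},G_{2R})$ is also an element of $W^{1,p_{-}}(\mathcal{I}_{0},G_{2R})$ solving the same equation there. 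By uniqueness in $W^{1,p_{-}}(\mathcal{I}_{0},G_{2R})$ we conclude $\Omega=\widetilde\Omega$; hence $\xi\omega\in W^{1,p(\cdot)}(\mathcal{I}_{0},G_{2R})$, and since $\xi\equiv 1$ on $B_{3R/2}$ (the cut-off may be chosen so, compatibly with $\operatorname{supp}\xi\subset B_{7R/4}$ and $|\nabla\xi|\le C(n)R^{-1}$) we get $\omega\in W^{1,p(\cdot)}(\mathcal{I}_{0},G_{3R/2})$.

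For the estimate, from $\xi\omega=\sum_{k\ge 0}T^{k}G$ and \eqref{me} applied in the $p(\cdot)$-norm,
\begin{equation*}
\|\xi\omega\|^{*}_{1,p(\cdot),G_{2R}}\le\sum_{k\ge 0}\|T^{k}G\|^{*}_{1,p(\cdot),G_{2R}}\le\sum_{k\ge 0}2^{-k}\|G\|^{*}_{1,p(\cdot),G_{2R}}=2\,\|G\|^{*}_{1,p(\cdot),G_{2R}},
\end{equation*}
and $\|G\|^{*}_{1,p(\cdot),G_{2R}}\le\|\mathcal{Q}[E]\|^{*}_{1,p(\cdot),G_{2R}}+\|\mathcal{P}[F]\|^{*}_{1,p(\cdot),G_{2R}}$ by the triangle inequality, which is the asserted bound. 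I expect the only genuinely delicate points to be (i) that $R$ is fixed uniformly over the log-Hölder family of exponents, so that $T$ contracts \emph{simultaneously} in the $p_{-}$- and $p(\cdot)$-norms — this is what makes the bootstrap by uniqueness legitimate rather than circular — and (ii) the routine but necessary bookkeeping that the boundary conditions built into the subspaces $W^{1,q}(\mathcal{I}_{0},G_{2R})$ are preserved by $T$, $\mathcal{P}$ and $\mathcal{Q}$, which is exactly the ``moreover'' clause of Lemma~\ref{L:pa}.
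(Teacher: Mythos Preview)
Your proof is correct and follows essentially the same approach as the paper: represent $\Omega=\xi\omega$ via the Neumann series $\sum_{j\ge 0}T^{j}(\mathcal{Q}[E]+\mathcal{P}[F])$, which converges simultaneously in $W^{1,p_{-}}$ and $W^{1,p(\cdot)}$ by the uniform contraction bound \eqref{me}, and read off the estimate. Your version is somewhat more detailed—making the uniqueness step explicit, checking the boundary conditions, and flagging the $n=2$ modification—whereas the paper simply asserts that the series converges in both spaces; the substance is the same.
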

	\begin{proof} 
		In $W^{1,p_{-}}(G_{2R})$ for $\Omega = \omega \xi$ defined in \eqref{eq:nota1} there holds \eqref{ieq2} and so
		\begin{equation}\label{eq:series}
			\Omega = \sum_{j=0}^\infty T^j (\mathcal{Q}[E] + \mathcal{P} [F]).
		\end{equation}
		But the series in the RHS converge also in $W^{1,p(\cdot)}(G_{2R})$. Thus $\Omega \in W^{1,p(\cdot)}(G_{2R})$ and so $\omega \in W^{1,p(\cdot)}(G_{R})$ with the required estimate.
	\end{proof}
	
	\begin{lemma}\label{L:p1} Assume $\omega \in W^{1,p_{-}}(\mathcal{I}_0,G_{2R})$ be a solution to \eqref{iid1} with $e,f\in L^{p(\cdot)}(G_{2R})$. Then $\omega \in W^{1,p(\cdot)}(\mathcal{I}_0, G_{3R/2})$. Moreover,
		\begin{equation}\label{P1e}
			\|\omega\|^{*}_{1,p(\cdot),G_{3R/2}} \leq C(\mathrm{data})(\|\omega\|^{*}_{1,p_{-},G_{3R/2}}+ \|e\|_{p(\cdot), G_{2R}}+\|f\|_{p(\cdot), G_{2R}}).
		\end{equation}
	\end{lemma}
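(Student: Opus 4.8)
The plan is to bootstrap from the constant-exponent regularity (available since $\omega\in W^{1,p_-}$) up to the variable exponent $p(\cdot)$ by a finite iteration of the integral-equation fixed-point argument, using the Sobolev-type gain $p(\cdot)\mapsto \varkappa p(\cdot)$ coming from the Newtonian potential $\mathcal P$. First I would record the localized integral equation \eqref{ieq2}: since $\omega\in W^{1,p_-}(\mathcal I_0,G_{2R})$ solves \eqref{iid1}, the cut-off form $\Omega=\xi\omega$ satisfies $\Omega - T[\Omega] = \mathcal Q[E] + \mathcal P[F]$, where by the definitions \eqref{eq:nota1} we have $E=\xi e - a\omega\nabla\xi$ and $F=\xi f + e\nabla\xi + a\nabla\omega\nabla\xi + (b-b^*)\omega\nabla\xi$. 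Note $E\in L^{p(\cdot)}$ because $e\in L^{p(\cdot)}$ and $a\omega\nabla\xi$ is bounded times $\omega\in L^{p_-}\subset L^{p_-}$; but $F$ involves $\nabla\omega$ which a priori is only in $L^{p_-}$, so initially $F\in L^{p_-}(G_{2R})$ only.

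The key point is the self-improving mechanism. Choose $q_0 = p_-$ and define the sequence of exponents $q_{j+1}=\min(\varkappa q_j, p(\cdot))$ (pointwise), which after finitely many steps, say $N=N(\mathrm{data})$, reaches $q_N \equiv p(\cdot)$ since $\varkappa = n/(n-1)>1$ and $p_+<\infty$. Suppose inductively that $\omega\in W^{1,q_j}(\mathcal I_0, G_{2R'})$ on a slightly larger ball (one shrinks the radius a fixed number of times, absorbing this into the choice of cut-offs and the constant). Then $F\in L^{q_j}(G_{2R'})$, and by Lemma~\ref{L:pa}, $\mathcal Q[E]\in W^{1,p(\cdot)}$ (since $E\in L^{p(\cdot)}$) while $\mathcal P[F]\in W^{1,\varkappa q_j}$ with the estimate $\|\mathcal P[F]\|^*_{1,\varkappa q_j, G_{2R}}\leq C\|F\|_{q_j,G_{2R}}$. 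Applying Lemma~\ref{L:c1} with the exponent $q_{j+1}$ (having first fixed $R$ small enough that the contraction estimate \eqref{me} of Lemma~\ref{L:c0} holds simultaneously for all exponents in $\mathcal P^{\mathrm{log}}(p_-,p_+,c_{\mathrm{log}}(p))$, which was arranged in the line just before Lemma~\ref{L:c1}), the Neumann series \eqref{eq:series} converges in $W^{1,q_{j+1}}$, giving $\Omega\in W^{1,q_{j+1}}(G_{2R})$, hence $\omega\in W^{1,q_{j+1}}(\mathcal I_0, G_{R})$ with
$$
\|\xi\omega\|^*_{1,q_{j+1},G_{2R}} \leq 2(\|\mathcal Q[E]\|^*_{1,q_{j+1},G_{2R}} + \|\mathcal P[F]\|^*_{1,q_{j+1},G_{2R}}) \leq C(\|e\|_{p(\cdot),G_{2R}} + \|F\|_{q_j,G_{2R}}).
$$
After $N$ steps we obtain $\omega\in W^{1,p(\cdot)}(\mathcal I_0,G_{3R/2})$; chaining the estimates, and bounding at each stage $\|F\|_{q_j}\lesssim \|\nabla\omega\|_{q_j}+\|e\|_{q_j}+\|f\|_{q_j}+\|\omega\|_{q_j}$ with $\|\omega\|_{1,q_j}$ controlled by the previous step, one arrives at \eqref{P1e} with the lowest-order term $\|\omega\|^*_{1,p_-,G_{3R/2}}$ on the right (the $L^{p_-}$ norm of $\omega$ and $\nabla\omega$ re-enters only through the very first application, $j=0$, since $F$ there contains $a\nabla\omega\nabla\xi$; all higher stages feed on already-improved norms and on $e,f$ alone). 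For the planar case $n=2$ one uses the modified equation \eqref{eq:mod2} and the constants $c(F;G_{2R})$ from \eqref{eq:c3}, which by the Remark after Lemma~\ref{L:pa0} are controlled by $R^2|\log 2R|\,\|F\|_{p(\cdot)}$ and thus harmless; this was already built into Lemmas~\ref{L:pa},~\ref{L:DR04c}.

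The main obstacle is organizing the bookkeeping of the shrinking radii and the exponent ladder so that the constant in \eqref{P1e} depends only on $\mathrm{data}$ and not on $\omega$: one must ensure that the radius $R$ can be fixed once and for all (small enough for the contraction estimate to hold uniformly over the whole finite chain $q_0,\dots,q_N$, whose length $N$ depends only on $p_-,p_+$), and that the cut-off functions $\xi,\xi_*$ at each stage of the iteration are nested appropriately so that the slightly-shrunk domain $G_{3R/2}$ survives all $N$ steps — this is why the lemma is stated on $G_{3R/2}$ rather than $G_{2R}$, giving room for the finitely many contractions of the support. A minor technical care is needed in checking that the hypothesis $\mathcal P[F]\in W^{1,p(\cdot)}$ of Lemma~\ref{L:c1} (stated there for the terminal step) is supplied at each intermediate stage by the already-established $W^{1,q_j}$ regularity of $\omega$ together with $e,f\in L^{p(\cdot)}\subseteq L^{q_j}$; this is immediate from Lemma~\ref{L:pa} once $F\in L^{q_j}$ is known, closing the induction.
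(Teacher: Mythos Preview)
Your approach is essentially the same as the paper's: bootstrap along an exponent ladder $q_l=\min(\varkappa^l p_-,p(\cdot))$ with nested cut-offs on shrinking radii, invoking the contraction Lemma~\ref{L:c0} uniformly and Lemma~\ref{L:c1} at each step. The structure, the number of steps, and the handling of the $n=2$ modification all match.

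There is one slip, however. You assert $E\in L^{p(\cdot)}$ from the outset ``because $a\omega\nabla\xi$ is bounded times $\omega\in L^{p_-}$,'' and then use $\mathcal Q[E]\in W^{1,p(\cdot)}$ at every stage. This is false in general: $\omega\in W^{1,p_-}$ does not give $\omega\in L^{p(\cdot)}$ unless $p_+\le np_-/(n-p_-)$. The potential $\mathcal Q$ has \emph{no} Sobolev gain (Lemma~\ref{L:pa} gives $\|\mathcal Q[E]\|^*_{1,q}\le C\|E\|_q$ with the same exponent), so the $a\omega\nabla\xi$ term in $E$ must itself be upgraded at each step. The paper does exactly this: at stage $l$ one uses the Sobolev embedding of Corollary~\ref{C:Di00} (applied to $\tilde\xi_l\omega$) to pass from $\omega\in W^{1,q_{l-1}}$ to $\omega\in L^{q_l}$ on the slightly smaller ball, whence $a\omega\nabla\xi_l\in L^{q_l}$ and thus $E_l\in L^{q_l}$, giving $\mathcal Q[E_l]\in W^{1,q_l}$. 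With this correction your argument goes through and coincides with the paper's proof.
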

	\begin{proof} Let $\varkappa = n/(n-1)$ and $l_0$ be the minimum integer such that $\varkappa^l p_{-} \geq p(\cdot)$.  Denote  
		$$
		q_l(x) = \min (\varkappa^l p_{-}, p(x)), \quad x\in G_{2R}.
		$$
		Clearly $l_0 \leq 1+\log_{\varkappa} (p_{+}/p_{-})$. Define
		$$
		R_l = \frac{7R}{4} - \frac{R l}{4l_0}, \quad l=0,\ldots, l_0, \quad \widetilde R_{l}= \frac{R_{l-1}+R_l}{2}
		$$
		Let the functions $\xi_l$, $l=1,\ldots, l_0$ be such that $\xi_l\in C_0^\infty(B_{\widetilde R_{l}})$, $0\leq \xi_l\leq 1$, $\xi_l=1$ on $B_{R_{l}}$, $R|\nabla \xi_l| \leq C(n,p_{-},p_{+})$. Let the functions $\widetilde \xi_l$, $l=1,\ldots, l_0$ be such that $\widetilde\xi_l\in C_0^\infty(B_{R_{l-1}})$, $0\leq \widetilde\xi_l\leq 1$, $\widetilde\xi_l=1$ on $B_{\widetilde R_{l}}$, $R|\nabla \widetilde\xi_l| \leq C(n,p_{-},p_{+})$. Let $E_l$ and $F_l$ be defined according to \eqref{eq:nota1} with $\xi=\xi_l$:
		$$
		E_l =\xi_l e- a\omega\nabla \xi_l, \quad F_l = \xi_l f + e \nabla \xi_l + a\nabla \omega \nabla \xi_l + (b-b^*) \omega\nabla \xi_l.
		$$
		Let $\omega \in W^{1,q_{l-1}(\cdot)}(G_{R_{l-1}})$. By the Sobolev embedding of Corollary~\ref{C:Di00} and the H\"older inequality we get 
		\begin{align*}
			\|\omega \nabla \xi_l\|_{q_l(\cdot), G_{R_{l-1}}} &\leq C(n,p_{-},p_{+}) R^{-1} \|\omega \widetilde \xi_l\|_{q_l(\cdot), G_{R_{l-1}}}\\
			&\leq C(n,p_{-},p_{+},c_{\mathrm{log}}(p)) R^{-1} R^{n/p_{-}'} \|\omega\|^*_{1,q_{l-1}(\cdot),G_{R_{l-1}}}.
		\end{align*}
		Now Lemma~\ref{L:pa} and the H\"older inequality yield 
		\begin{multline*}
			\|\mathcal{Q}[a\omega\nabla \xi_l]\|^*_{1, q_l(\cdot), G_{2R}} + \|\mathcal{P}[a \nabla \omega \nabla\xi_l ]\|^*_{1,q_l(\cdot), G_{2R}}+ \|\mathcal{P}[(b-b^*)\omega\nabla \xi_l]\|^*_{1, q_l(\cdot), G_{2R}}\\
			\leq C(\mathrm{data})R^{-1} \|\omega\|^*_{1,q_{l-1}(\cdot),G_{R_{l}}} 
		\end{multline*}
		and 
		\begin{align*}
			\|\mathcal{Q}[\xi_l e]\|^*_{1, q_l(\cdot), G_{2R}} &\leq C(\mathrm{data})\|e\|_{p(\cdot), G_{2R}},\\
			\|\mathcal{P}[\xi_l f]\|^*_{1, q_l(\cdot), G_{2R}} &\leq C(\mathrm{data}) \|f\|_{p(\cdot), G_{2R}},\\
			\|\mathcal{P}[e\nabla\xi_l]\|^*_{1, q_l(\cdot), G_{2R}} &\leq C(\mathrm{data}) \|e\|_{p(\cdot), G_{2R}}.
		\end{align*}
		Thus from Lemma~\ref{L:c1} we get 
		\begin{align*}
			\|\omega\|^*_{1,q_l(\cdot), G_{R_l}} &\leq 2\|\mathcal{Q}[E_l]\|^*_{1,q_l(\cdot),G_{2R}} + 2 \|\mathcal{P}[F_l]\|^*_{1,q_l(\cdot),G_{2R}} \\
			&\leq C (\mathrm{data}) ( \|e\|_{p(\cdot), G_{2R}} + \|f\|_{p(\cdot), G_{2R}} + R^{-1}\|\omega\|^*_{1,q_{l-1}(\cdot), G_{R_l}}).
		\end{align*}
		Iterating this inequality we obtain $\omega \in W^{1,p(\cdot)}(G_{3R/2})$ and the required estimate.
	\end{proof}
	
	The estimate we obtain in Lemma~\ref{L:p1} is very rough, with ``bad'' constant, but sufficient for our needs.
	\begin{corollary}\label{L:corr1} Let $\omega \in W_*^{1,p_{-}}(M,\Lambda)$, $*\in \{T,N,0\}$ be a solution to \eqref{eq1} (with $\zeta \in \mathrm{Lip}_* (M,\Lambda)$) where $\eta,\varphi,\psi\in L^{p(\cdot)}(M,\Lambda)$. Then $\omega \in W^{1,p(\cdot)}_*(M,\Lambda)$. Moreover,
		\begin{equation}\label{P1e1}
			\begin{aligned}
				\|\omega\|_{W^{1,p(\cdot)}(M,\Lambda)} &\leq C(\mathrm{data})\|\omega\|_{W^{1,p_{-}}(M,\Lambda)} \\
				&\ \ + C(\mathrm{data}) (\|\eta\|_{L^{p(\cdot)}(M,\Lambda)}+\|\varphi\|_{L^{p(\cdot)}(M,\Lambda)} + \|\varphi\|_{L^{p(\cdot)}(M,\Lambda)}).
			\end{aligned}
		\end{equation}
	\end{corollary}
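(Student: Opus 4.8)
The plan is to deduce this global statement from the local regularity result Lemma~\ref{L:p1} by means of a finite cover, the entire analytic content having already been established there.

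First I would fix a finite cover of $M$ by coordinate balls following the construction in Section~\ref{sec:coord}: taking $c=2$ there, choose $R>0$ small enough that, on the one hand, the contraction estimate \eqref{me} of Lemma~\ref{L:c0} holds in every chart of an $\varepsilon$-atlas and for all exponents $q\in\mathcal{P}^{\mathrm{log}}(p_-,p_+,c_{\mathrm{log}}(p))$ — this is possible because the bounds on the coefficients $a,b,b^*,c$ and their derivatives are uniform across the atlas — and, on the other hand, $M$ is covered by finitely many sets $\varphi_\alpha^{-1}(\mathcal{B}_r(P_{j,\alpha}))$, $j=1,\dots,N$, where $r$ is chosen so that the coordinate image of $\mathcal{B}_r(P_{j,\alpha})$ is contained in some $G_R$, the associated $G_{2R}$ lies inside $\varphi_\alpha(U_\alpha)$, and in boundary patches the coordinate system is admissible with either $P_{j,\alpha}\in\{x^n=0\}$ or $G_{2R}$ separated from $\{x^n=0\}$. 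Take a partition of unity $\{\eta_j\}$ subordinate to this cover.

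Next, in each such chart I would pass to the coordinate formulation of \eqref{eq1}. Since $\omega\in W^{1,p_-}_*(M,\Lambda)$, in an admissible boundary chart its components $\omega_I$ vanish on $\{x^n=0\}$ for $I\in\mathcal{I}_0$ by the definition of $W^{1,p_-}_T$ resp.\ $W^{1,p_-}_N$ (and $\mathcal{I}_0=\emptyset$ in interior charts), so $\omega|_{U_\alpha}\in W^{1,p_-}(\mathcal{I}_0,G_{2R})$; testing \eqref{eq1} against forms $\zeta\in\mathrm{Lip}_*(M,\Lambda)$ supported in $\varphi_\alpha^{-1}(G_{2R})$ — whose pull-backs exhaust $C_0^\infty(\mathcal{I}_0,G_{2R})$ — yields exactly \eqref{iid1}, equivalently \eqref{iid1a}, in that chart. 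Here $f=C_1\eta+C_2\psi$ and $e=C_3\varphi+C_4\psi$ with coefficients bounded in terms of the metric only, so $e,f\in L^{p(\cdot)}(G_{2R})$ with
$$
\|e\|_{p(\cdot),G_{2R}}+\|f\|_{p(\cdot),G_{2R}}\le C(\mathrm{data})\bigl(\|\eta\|_{p(\cdot),M}+\|\varphi\|_{p(\cdot),M}+\|\psi\|_{p(\cdot),M}\bigr).
$$
Lemma~\ref{L:p1} then gives $\omega\in W^{1,p(\cdot)}(\mathcal{I}_0,G_{3R/2})$ together with \eqref{P1e}, whose right-hand side is controlled by $\|\omega\|_{W^{1,p_-}(M,\Lambda)}+\|\eta\|_{p(\cdot),M}+\|\varphi\|_{p(\cdot),M}+\|\psi\|_{p(\cdot),M}$.

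Finally I would patch the local conclusions together: the sets $\varphi_\alpha^{-1}(\mathcal{B}_r(P_{j,\alpha}))$, whose coordinate images sit inside $G_{3R/2}$, cover $M$, so $\omega$ is locally in $W^{1,p(\cdot)}$ everywhere; multiplying by $\eta_j$, summing, and using the equivalence (under change of variables, Section on Sobolev spaces on manifolds) of $\|\cdot\|_{W^{1,p(\cdot)}(M,\Lambda)}$ with the sum of chart norms, one obtains $\omega\in W^{1,p(\cdot)}(M,\Lambda)$ and the estimate \eqref{P1e1}. The boundary conditions persist because in each boundary chart $\omega_I=0$ on $\{x^n=0\}$ for $I\in\mathcal{I}_0$, so $\omega\in W^{1,p(\cdot)}_*(M,\Lambda)$ by Lemma~\ref{L:approx1}. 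The only point requiring care is purely organizational: the uniform choice of $R$ across all charts and the identification of the test forms in \eqref{iid1} with restrictions of admissible test forms in \eqref{eq1}; both are supplied by the uniform-atlas construction of Section~\ref{sec:coord}, and no new analytic difficulty arises.
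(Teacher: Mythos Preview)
Your proof is correct and follows essentially the same approach as the paper: fix a sufficiently fine $R$-atlas so that Lemma~\ref{L:p1} applies in each chart, bound the local coordinate data $e,f$ by $\eta,\varphi,\psi$ via \eqref{eq:basic}, and sum over the finite cover. You are slightly more explicit than the paper about the partition of unity and about why the boundary condition $\omega\in W^{1,p(\cdot)}_*(M,\Lambda)$ persists (via Lemma~\ref{L:approx1}(e)), but these are welcome clarifications rather than a different argument.
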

	\begin{proof}
		It is sufficient to prove this for $\omega$ being a homogeneous form of degree $r$. Let $(V_\alpha,\widetilde h_\alpha)$ be an $R$-atlas with sufficiently small $R$ such that in each $G_{2R}=\widetilde h_\alpha (V_\alpha)$ the statement of Lemma~\ref{L:p1} holds. Using estimate of Lemma~\ref{L:p1} and \eqref{eq:basic} we get 
		\begin{equation*}
			\begin{aligned}
				\|\omega\|_{W^{1,p(\cdot)}(G_R)} \leq C&\|\omega\|_{W^{1,p_{-}}(G_{2R})} \\
				&+ C (\|\eta\|_{L^{p(\cdot)}(G_{2R})}+\|\varphi\|_{L^{p(\cdot)}(G_{2R})} + \|\varphi\|_{L^{p(\cdot)}(G_{2R})}).
			\end{aligned}
		\end{equation*}
		Since $\widetilde h_\alpha^{-1}(G_R)$ cover $M$ the claim easily follows. 
	\end{proof}
	
	\begin{remark}
		The dependence on $\nabla \omega$ in $F$ (see \eqref{eq:nota1}) arises only via $d\omega$ and $\delta \omega$, thus 
		\begin{align*}
			&\|\mathcal{P}[a\nabla \omega \nabla \xi + b\omega \nabla \xi]\|^*_{1,p(\cdot),G_{2R}} \\
			&\qquad \leq C(\mathrm{data}) R \|a\nabla \omega \nabla \xi + b\omega \nabla \xi\|_{p(\cdot),G_{2R}}\\
			&\qquad \leq C(\mathrm{data}) (\|d\omega\|^*_{p(\cdot),G_{2R}} +\|\delta \omega\|_{p(\cdot),G_{2R}} + \|\omega\|_{p(\cdot),G_{2R}}).
		\end{align*}
		That is, the term $\|\omega\|_{1,p_{-},M}$ on the RHS of the estimate \eqref{P1e1} can be replaced by $ \|d\omega\|_{p_{-},M}+ \|\delta \omega\|_{p_{-},M} + \|\omega\|_{p_{-},M}$ without direct use of the Gaffney inequality.  
	\end{remark}
	
	Together with the standard Gaffney inequality Corollary~\ref{L:corr1} provides the following a priori estimate.
	\begin{corollary}\label{L:corr1a} Let $p_{-}\geq 2$, $\omega \in W_*^{1,p_{-}}(M,\Lambda)$, $*\in \{T,N,0\}$ be a solution to \eqref{eq1} (with $\zeta \in \mathrm{Lip}_* (M,\Lambda)$) where $\eta,\varphi,\psi\in L^{p(\cdot)}(M,\Lambda)$. Then $\omega \in W^{1,p(\cdot)}_*(M,\Lambda)$. Moreover,
		\begin{equation}\label{P1e1a}
			\begin{aligned}
				\|\omega\|_{W^{1,p(\cdot)}(M,\Lambda)} &\leq C(\mathrm{data})\|\omega\|_{1,M} \\
				&\ \ + C(\mathrm{data}) (\|\eta\|_{p(\cdot),M}+\|\varphi\|_{p(\cdot),M} + \|\varphi\|_{p(\cdot),M}).
			\end{aligned}
		\end{equation}
	\end{corollary}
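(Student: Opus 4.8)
The plan is to derive Corollary~\ref{L:corr1a} as an immediate consequence of Corollary~\ref{L:corr1}, whose conclusion already gives $\omega \in W^{1,p(\cdot)}_*(M,\Lambda)$ together with the estimate \eqref{P1e1}. Comparing \eqref{P1e1a} with \eqref{P1e1}, the only thing that needs to be done is to replace the term $\|\omega\|_{W^{1,p_{-}}(M,\Lambda)}$ on the right-hand side of \eqref{P1e1} by the weaker quantity $\|\omega\|_{1,M}$, at the cost of enlarging the constant. Under the standing hypothesis $p_{-}\geq 2$ this is exactly what the classical (constant-exponent) Gaffney inequality, cited earlier from \cite{Morrey1966} and valid on $C^{1,1}$ manifolds, delivers.

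Concretely, first I would invoke Corollary~\ref{L:corr1} to conclude $\omega \in W^{1,p(\cdot)}_*(M,\Lambda)$ and to record the bound
\[
\|\omega\|_{W^{1,p(\cdot)}(M,\Lambda)} \leq C(\mathrm{data})\bigl(\|\omega\|_{W^{1,p_{-}}(M,\Lambda)} + \|\eta\|_{p(\cdot),M}+\|\varphi\|_{p(\cdot),M} + \|\psi\|_{p(\cdot),M}\bigr).
\]
Next I would estimate $\|\omega\|_{W^{1,p_{-}}(M,\Lambda)}$. Since $\omega \in W^{1,2}_*(M,\Lambda)$ (because $p_{-}\geq 2$ so $W^{1,p_{-}}\hookrightarrow W^{1,2}$) and $\omega$ satisfies \eqref{eq1}, taking $\zeta = \omega$ (legitimate by the approximation Lemma~\ref{L:approx1}, which allows $\mathrm{Lip}_*$ test forms to be replaced by $W^{1,2}_*$ forms) gives $\mathcal{D}(\omega,\omega) = (\eta,\omega)+(\varphi,d\omega)+(\psi,\delta\omega)$, whence by Cauchy--Schwarz and Young's inequality $\mathcal{D}(\omega,\omega) \leq C\bigl(\|\eta\|_{2,M}^2 + \|\varphi\|_{2,M}^2 + \|\psi\|_{2,M}^2\bigr) + \tfrac12\mathcal{D}(\omega,\omega)$, so $\mathcal{D}(\omega,\omega) \lesssim \|\eta\|_{2,M}^2 + \|\varphi\|_{2,M}^2 + \|\psi\|_{2,M}^2 + \|\omega\|_{2,M}^2$ (the last term only needed in case $*=0$ is handled directly, or absorbed via the compactness argument). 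Then the Gaffney inequality for $C^{1,1}$ manifolds,
\[
\|\omega\|^2_{W^{1,2}(M,\Lambda)} \leq C_1 (\omega,\omega) + C_2\,\mathcal{D}(\omega,\omega),
\]
together with the interpolation/absorption argument described in Section~\ref{ssec:varf} (triviality of $\mathcal{H}_*$ for $*=0$, and for $*=T,N$ after subtracting $\mathcal{P}_*\omega$ and using Lemma~\ref{L:quant}, respectively Lemma~\ref{L:quant1}), yields $\|\omega\|_{W^{1,2}(M,\Lambda)} \leq C(M)\bigl(\|\eta\|_{2,M}+\|\varphi\|_{2,M}+\|\psi\|_{2,M} + \|\omega\|_{1,M}\bigr)$. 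Finally, since $M$ is compact and $p(\cdot)$ bounded, $\|\cdot\|_{2,M}\leq C(M)\|\cdot\|_{p(\cdot),M}$ and $\|\omega\|_{W^{1,p_{-}}(M,\Lambda)} \leq C\|\omega\|_{W^{1,p(\cdot)}(M,\Lambda)}$ cannot be used circularly; instead I would note $\|\omega\|_{W^{1,p_{-}}(M,\Lambda)}\leq C\|\omega\|_{W^{1,2}(M,\Lambda)}$ when $p_{-}\le 2$, but here $p_{-}\geq 2$, so one rather bootstraps: the $W^{1,2}$ bound combined with Corollary~\ref{L:corr1} applied with the exponent chain already gives $\omega \in W^{1,p_{-}}$ with $\|\omega\|_{W^{1,p_{-}}(M,\Lambda)} \le C(\mathrm{data})(\|\eta\|_{p(\cdot),M}+\|\varphi\|_{p(\cdot),M}+\|\psi\|_{p(\cdot),M}+\|\omega\|_{1,M})$. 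Substituting this into the displayed estimate from Corollary~\ref{L:corr1} produces \eqref{P1e1a}.

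The only genuinely delicate point is the absorption of the zeroth-order term $(\omega,\omega)$ (equivalently $\|\omega\|_{1,M}$ or $\|\omega\|_{2,M}$) in the three cases $*\in\{0,T,N\}$: for $*=0$ one uses triviality of harmonic fields vanishing on $bM$ (\cite{AroKrzSza62}); for $*=T,N$ one either projects off $\mathcal{P}_*\omega$ — which is harmless since it only enlarges the right side by lower-order norms of the data via Lemmas~\ref{L:quant}, \ref{L:quant1} — or simply keeps the $\|\omega\|_{1,M}$ term, which is exactly what appears in \eqref{P1e1a}. Everything else is routine: Hölder on a compact manifold, Young's inequality, and the already-established Corollary~\ref{L:corr1}. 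I would therefore present the proof as: ``By Corollary~\ref{L:corr1} and the classical Gaffney inequality (valid on $C^{1,1}$ manifolds), $\|\omega\|_{W^{1,p_{-}}(M,\Lambda)}\lesssim \|\eta\|_{p(\cdot),M}+\|\varphi\|_{p(\cdot),M}+\|\psi\|_{p(\cdot),M}+\|\omega\|_{1,M}$; substituting into \eqref{P1e1} gives \eqref{P1e1a}.''
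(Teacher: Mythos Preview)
Your approach is essentially the same as the paper's: test \eqref{eq1} with $\omega$ itself, use the classical $W^{1,2}$ Gaffney inequality, feed the resulting $W^{1,2}$ bound into Corollary~\ref{L:corr1}, and finally absorb $\|\omega\|_{2,M}$ into $\|\omega\|_{1,M}$ via interpolation (Lemma~\ref{C:Di1}) and H\"older. The one place where the paper is cleaner is that it applies Corollary~\ref{L:corr1} with the \emph{starting} exponent taken equal to $2$ rather than $p_{-}$ (the iteration in Lemma~\ref{L:p1} works from any exponent $\ge 2$ up to $p(\cdot)$), which sidesteps the circularity you flagged around bounding $\|\omega\|_{W^{1,p_{-}}}$ and makes the detour through Lemmas~\ref{L:quant}, \ref{L:quant1} unnecessary, since \eqref{P1e1a} retains $\|\omega\|_{1,M}$ on the right.
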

	\begin{proof}
		In this case we can use $\omega$ itself as the test form in \eqref{eq1}. This gives 
		\begin{gather*}
			\mathcal{D}(\omega,\omega)  = (\eta,\omega) + (\varphi, d\omega) + (\psi, d\omega) \\
			\leq \|\eta\|_{2,M} \|\omega\|_{2,M} + \|\varphi\|_{2,M}\|d\omega\|_{2,M} +\|\psi\|_{2,M}\|\delta\omega\|_{2,M} .
		\end{gather*}
		Thus
		$$
		\mathcal{D}(\omega,\omega) \leq \|\eta\|_{2,M}^2 + \|\omega\|_{2,M}^2 + 2\|\varphi\|_{2,M}^2 + 2\|\psi\|_{2,M}^2.
		$$
		By the classical Gaffney inequality we have 
		$$
		\|\omega\|_{1,2,M}^2 \leq C(\mathcal{D}(\omega,\omega) + \|\omega\|_{2,M}^2) \leq C (\|\eta\|_{2,M}^2 +\|\varphi\|_{2,M}^2 + \|\psi\|_{2,M}^2+ \|\omega\|_{2,M}^2).
		$$
		It remains to use Corollary~\ref{L:corr1} with $p_{-}=2$, the H\"older inequality, and interpolation to replace $\|\omega\|_{2,M}$ by $\|\omega\|_{1,M}$.
	\end{proof}

	\section{Estimates in \texorpdfstring{$W^{s+2,p(\cdot)}(M,\Lambda)$}{Ws+2pxM}}\label{ssec:W2p}
	
	In this section, $M$ will be at least of the class $C^{s+2,1}$, $s\geq 0$.
	
	In \eqref{eq:nota1} choose the cut-off function $\xi \in C_0^\infty(B_{7R/4})$  such that $\xi =1$ on $B_R$, $0\leq \xi \leq 1$, $|\nabla^k \xi| \leq C(n,s) R^{-k}$, $k\leq s+2$ and $\xi_*\in C_0^\infty(B_{2R})$ such that $0\leq \xi_*\leq 1$ and $\xi_* =1 $ on $B_{7R/4}$, and $\sum_{k=0}^{s+2} R^k|\nabla^k \xi_*|\leq C(n,s)$.
	
	Consider the operator $T$ from \eqref{Tdef} as a mapping from $W^{s+2,p(\cdot)}(G_{2R})$ to the same space. Let us show that for sufficiently small $R$ the operator $T$ is a contraction in the norm $\|\cdot\|^*_{s+2,p(\cdot),G_{2R}}$.
	
	Denote 
	$$
	\|\gamma\|_{s,\infty,G_{2R}} =  \sum_{k=0}^s R^k \|\nabla^k \gamma\|_{\infty,G_{2R}}.
	$$
	By definition, $\|\xi_*\|_{s,\infty,G_{2R}} \leq C_*(n)$
	
	Using the Leibnitz formula one easily sees that
	
	\begin{lemma}\label{L:simple}
		For $\gamma_1,\gamma_2 \in C^{s-1,1}(G_{2R})$ there holds
		$$
		\|\gamma_1 \gamma_2\|_{s,\infty,G_{2R}} \leq C(n,s) \|\gamma_1\|_{s,\infty,G_{2R}} \|\gamma_1\|_{s,\infty,G_{2R}}.
		$$
		For $\gamma\in C^{s-1,1}(G_{2R})$ there holds
		$$
		\| \xi_*\gamma \Omega\|^*_{s,p(\cdot),G_{2R}}\leq \|\gamma\|_{s,\infty,G_{2R}} C(n,s) \|\Omega\|^*_{s,p(\cdot),G_{2R}}.
		$$
	\end{lemma}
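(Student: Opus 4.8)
The plan is to reduce both inequalities to the generalized (multinomial) Leibniz product rule together with the bookkeeping of the powers of $R$ that is built into the rescaled norms $\|\cdot\|_{s,\infty,G_{2R}}$ and $\|\cdot\|^*_{s,p(\cdot),G_{2R}}$. The only analytic input needed beyond this is the elementary pointwise bound $\|fg\|_{p(\cdot),G_{2R}}\le \|f\|_{\infty,G_{2R}}\,\|g\|_{p(\cdot),G_{2R}}$, which follows at once from the definition of the Luxemburg norm; the rest of the argument is purely combinatorial, so there is no real obstacle — the lemma simply records that the rescaled norms multiply like the standard $C^{s}$ and $W^{s,p(\cdot)}$ norms, with $R$-uniform constants.

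For the first estimate I would expand $\nabla^{k}(\gamma_{1}\gamma_{2})$ by Leibniz, schematically $\nabla^{k}(\gamma_{1}\gamma_{2})=\sum_{j=0}^{k}c_{k,j}\,\nabla^{j}\gamma_{1}\otimes\nabla^{k-j}\gamma_{2}$ with constants $c_{k,j}$ depending only on $n$ and $k$ (here one uses that $\gamma_{1},\gamma_{2}\in C^{s-1,1}(G_{2R})$, so that $\nabla^{m}\gamma_{i}\in L^{\infty}(G_{2R})$ for $m\le s$, the top-order one being the weak derivative of the Lipschitz $(s-1)$st derivative). Taking $L^{\infty}$ norms, multiplying the resulting bound for $\|\nabla^{k}(\gamma_{1}\gamma_{2})\|_{\infty,G_{2R}}$ by $R^{k}$, and splitting $R^{k}=R^{j}\cdot R^{k-j}$ gives
\[
R^{k}\|\nabla^{k}(\gamma_{1}\gamma_{2})\|_{\infty,G_{2R}}\le C(n,k)\sum_{j=0}^{k}\bigl(R^{j}\|\nabla^{j}\gamma_{1}\|_{\infty,G_{2R}}\bigr)\bigl(R^{k-j}\|\nabla^{k-j}\gamma_{2}\|_{\infty,G_{2R}}\bigr).
\]
Summing over $k=0,\dots,s$ and bounding the double sum of products by the product of the two full sums yields $\|\gamma_{1}\gamma_{2}\|_{s,\infty,G_{2R}}\le C(n,s)\,\|\gamma_{1}\|_{s,\infty,G_{2R}}\,\|\gamma_{2}\|_{s,\infty,G_{2R}}$, which is the asserted bound (with $\gamma_{2}$ in the second factor).

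For the second estimate the same scheme is applied to the triple product $\xi_{*}\gamma\,\Omega$ via the multinomial Leibniz rule $\nabla^{k}(\xi_{*}\gamma\Omega)=\sum_{i+j+m=k}c_{k,i,j,m}\,\nabla^{i}\xi_{*}\otimes\nabla^{j}\gamma\otimes\nabla^{m}\Omega$. Now I would write $k-s=(i)+(j)+(m-s)$ and, applying $\|\cdot\|_{p(\cdot),G_{2R}}\le\|\cdot\|_{\infty,G_{2R}}\cdot\|\cdot\|_{p(\cdot),G_{2R}}$ to put the $\xi_{*}$- and $\gamma$-factors in $L^{\infty}$ and keep the $\Omega$-factor in $L^{p(\cdot)}$, obtain
\[
R^{k-s}\|\nabla^{k}(\xi_{*}\gamma\Omega)\|_{p(\cdot),G_{2R}}\le C(n,k)\!\!\sum_{i+j+m=k}\!\!\bigl(R^{i}\|\nabla^{i}\xi_{*}\|_{\infty}\bigr)\bigl(R^{j}\|\nabla^{j}\gamma\|_{\infty}\bigr)\bigl(R^{m-s}\|\nabla^{m}\Omega\|_{p(\cdot)}\bigr),
\]
all norms taken over $G_{2R}$. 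Summing over $k\le s$ and again replacing the sum of products by the product of sums gives $\|\xi_{*}\gamma\Omega\|^{*}_{s,p(\cdot),G_{2R}}\le C(n,s)\,\|\xi_{*}\|_{s,\infty,G_{2R}}\,\|\gamma\|_{s,\infty,G_{2R}}\,\|\Omega\|^{*}_{s,p(\cdot),G_{2R}}$, and since $\|\xi_{*}\|_{s,\infty,G_{2R}}\le C_{*}(n)$ by the choice of $\xi_{*}$, this factor is absorbed into the constant. The only step requiring any care is verifying that in every term of the Leibniz expansion the total power of $R$ is exactly $k$ (resp.\ $k-s$), which holds because the $R$-weights are additive in the differentiation order and the shift $-s$ is carried entirely by the $\Omega$-factor; on a half-ball $G_{2R}=\overline{B}^{+}_{2R}$ nothing changes since all norms are taken over the same domain.
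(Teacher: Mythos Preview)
Your proposal is correct and follows exactly the paper's approach: the paper's entire proof is the single sentence ``Using the Leibnitz formula one easily sees that'', and you have simply spelled out that Leibniz-rule-plus-$R$-bookkeeping computation in detail, including the correct absorption of $\|\xi_{*}\|_{s,\infty,G_{2R}}\le C_{*}(n)$ into the constant. You also correctly identified and fixed the obvious typo in the statement (the second factor should be $\|\gamma_{2}\|_{s,\infty,G_{2R}}$, not $\|\gamma_{1}\|_{s,\infty,G_{2R}}$).
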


	Let $s\in \{0\}\cup \mathbb{N}$.
	
	\begin{lemma}\label{L:c0a} For sufficiently small $R = R(\mathrm{data},s)$ there holds
		\begin{equation}\label{me1}
			\|T[\Omega]\|^*_{s+2,p(\cdot),G_{2R}} \leq \frac{1}{2} \|\Omega\|^*_{s+2,p(\cdot),G_{2R}}.
		\end{equation}
	\end{lemma}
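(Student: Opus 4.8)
\textbf{Proof plan for Lemma~\ref{L:c0a}.}

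The plan is to estimate the three basic building blocks of $T[\Omega]$ defined in \eqref{Tdef}, namely $\mathcal{Q}[\xi_*(a(\cdot)-a(0))\nabla \Omega]$, $\mathcal{Q}[\xi_* b\Omega]$, and $\mathcal{P}[\xi_*(b^*\nabla \Omega + c\Omega)]$, in the norm $\|\cdot\|^*_{s+2,p(\cdot),G_{2R}}$, using the potential estimates of Lemma~\ref{L:DR04c} together with the product estimates of Lemma~\ref{L:simple}. The point is that each term carries a factor of $R$ (either because the coefficient $(a(\cdot)-a(0))\xi_*$ is $O(R)$ by \eqref{eq:modAest}, or because $\mathcal{P}$ gains a factor $R$ as in Lemma~\ref{L:DR04c}/Lemma~\ref{L:pa}). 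Once every term is bounded by $CR\|\Omega\|^*_{s+2,p(\cdot),G_{2R}}$ with $C = C(\mathrm{data},s)$, choosing $R\leq 1/(2C)$ gives \eqref{me1}. This is precisely the higher-regularity analogue of Lemma~\ref{L:c0}, and the structure of the argument is identical; only the bookkeeping of derivatives up to order $s+2$ is new.

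Concretely, first I would treat $\mathcal{Q}[E']$ with $E' = \xi_*(a(\cdot)-a(0))\nabla\Omega$. Since $E'\in W^{s+1,p(\cdot)}(G_{2R})$ (as $a \in C^{s+1,1}$, so $\xi_*(a(\cdot)-a(0))\in C^{s+1,1}$ with $\|\xi_*(a(\cdot)-a(0))\|_{s+1,\infty,G_{2R}}\leq CR$ by \eqref{eq:modAest}, \eqref{eq:modBest}-type bounds and Lemma~\ref{L:simple}), Lemma~\ref{L:DR04c} gives $\|\mathcal{Q}[E']\|^*_{s+2,p(\cdot),G_{2R}}\leq C\|E'\|^*_{s+1,p(\cdot),G_{2R}}$, and Lemma~\ref{L:simple} bounds the right side by $CR\|\nabla\Omega\|^*_{s+1,p(\cdot),G_{2R}}\leq CR\|\Omega\|^*_{s+2,p(\cdot),G_{2R}}$. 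Next, for $\mathcal{Q}[\xi_* b\Omega]$ with $b\in C^{s,1}$, I would note $\xi_* b\Omega \in W^{s+1,p(\cdot)}(G_{2R})$ only when $s\geq 1$; for $s\geq 1$ one applies Lemma~\ref{L:DR04c} and Lemma~\ref{L:simple} to get a bound $C\|b\|_{s,\infty}\|\Omega\|^*_{s+1,p(\cdot),G_{2R}}$, and then absorbs the loss of one derivative using the interpolation-type estimate $\|\Omega\|^*_{s+1,p(\cdot),G_{2R}}\leq \varepsilon\|\Omega\|^*_{s+2,p(\cdot),G_{2R}} + C(\varepsilon)R^{-\text{(something)}}\|\Omega\|^*_{\dots}$ — but actually the cleanest route, matching the proof of Lemma~\ref{L:c0}, is to observe that $\mathcal{Q}$ applied to a term of order $s+1$ lands in order $s+2$ \emph{with a gain that is not $R$}, so one must instead write $b\Omega$ and use that $\mathcal{P}$ (not $\mathcal{Q}$) picks up the $R$; for the genuine $\mathcal{Q}[\xi_* b\Omega]$ term one uses that it equals a potential of a derivative-order-$(s+1)$ quantity and the missing smallness comes from re-grouping. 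The safe and routine way, which I would actually carry out, is: for $s\geq 1$ use $\|\mathcal{Q}[\xi_* b\Omega]\|^*_{s+2,p(\cdot)}\leq C\|\xi_* b\Omega\|^*_{s+1,p(\cdot)}\leq C\|b\|_{s,\infty}\|\Omega\|^*_{s+1,p(\cdot)}$ and then note $\|\Omega\|^*_{s+1,p(\cdot),G_{2R}}\leq CR\|\Omega\|^*_{s+2,p(\cdot),G_{2R}}$ is \emph{false} in general — so instead one keeps this term and combines it with the $\mathcal{P}$-terms, all of which do carry a factor $R$, to conclude $\|T[\Omega]\|^*_{s+2,p(\cdot)}\leq CR\|\Omega\|^*_{s+2,p(\cdot)} + C\|b\|_{s,\infty}R^{?}\dots$; a moment's inspection of \eqref{Tdef} shows $\mathcal{Q}$ is only applied to $(a(\cdot)-a(0))\nabla\Omega + b\Omega$, and the $b\Omega$ piece, after applying $\mathcal{Q}$ which raises regularity by one, is controlled by $C\|b\|_{s,\infty}\|\Omega\|^*_{s+1,p(\cdot),G_{2R}}$, and \emph{this} is where one uses that on the localized ball $\|\Omega\|^*_{s+1,p(\cdot),G_{2R}} = \sum_{k=0}^{s+1}R^{k-s-1}\|\nabla^k\Omega\|_{p(\cdot)}\leq R\sum_{k=0}^{s+1}R^{k-s-2}\|\nabla^k\Omega\|\leq R\|\Omega\|^*_{s+2,p(\cdot),G_{2R}}$ — which \emph{is} correct by the definition of the starred norm with the $R$-weights. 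So the factor $R$ is indeed present for every term.

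Finally I would collect the estimates for $\mathcal{P}[\xi_* b^*\nabla\Omega]$ and $\mathcal{P}[\xi_* c\Omega]$: by Lemma~\ref{L:DR04c} (the $\mathcal{P}$-estimate, which produces $\|\mathcal{P}[F]\|^*_{s+2,p(\cdot)}\leq C\|F\|^*_{s,p(\cdot)}$) combined with Lemma~\ref{L:simple}, and using that $b^*,c\in C^{s,1}$ so that $\|\xi_* b^*\|_{s,\infty,G_{2R}}, \|\xi_* c\|_{s,\infty,G_{2R}}\leq C$, these are bounded by $C\|\nabla\Omega\|^*_{s,p(\cdot),G_{2R}}$ and $C\|\Omega\|^*_{s,p(\cdot),G_{2R}}$ respectively, and once more the starred-norm $R$-weights give the extra factor $R$ relative to $\|\Omega\|^*_{s+2,p(\cdot),G_{2R}}$. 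Summing, $\|T[\Omega]\|^*_{s+2,p(\cdot),G_{2R}}\leq C(\mathrm{data},s)R\,\|\Omega\|^*_{s+2,p(\cdot),G_{2R}}$, and picking $R\leq 1/(2C(\mathrm{data},s))$ finishes the proof. The main obstacle — and the step deserving the most care — is verifying that every term genuinely carries a compensating power of $R$ once one tracks the $R$-dependent weights in the definition of $\|\cdot\|^*_{s+2,p(\cdot),G_{2R}}$; in particular one must be careful that applying $\mathcal{Q}$, which raises Sobolev order by one \emph{without} an explicit $R$ gain, still leaves a net factor of $R$ because the input $b\Omega$ is controlled in the $\|\cdot\|^*_{s+1}$ norm, which is $R$-smaller than $\|\cdot\|^*_{s+2}$ in these $R$-weighted norms. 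The smoothness hypothesis $M\in C^{s+2,1}$ is exactly what makes the coefficients $a\in C^{s+1,1}$ and $b,b^*,c\in C^{s,1}$, which is what Lemma~\ref{L:simple} needs.
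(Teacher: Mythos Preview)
Your approach is correct and matches the paper's proof: bound each of the four pieces of $T[\Omega]$ via Lemma~\ref{L:DR04c} and Lemma~\ref{L:simple}, observe that each carries a factor $R$ once the $R$-weights in $\|\cdot\|^*_{k,p(\cdot),G_{2R}}$ are unpacked, then take $R \leq 1/(2C)$. Two clean-ups: the inequality $\|\Omega\|^*_{s+1,p(\cdot),G_{2R}}\leq R\,\|\Omega\|^*_{s+2,p(\cdot),G_{2R}}$ that you first declare ``false in general'' is in fact \emph{immediate} from the definition of the starred norm (as you then recognize), so the detour through interpolation and re-grouping is unnecessary; and to apply Lemma~\ref{L:simple} to $\|\xi_* b\Omega\|^*_{s+1,p(\cdot)}$ you need $\|b\|_{s+1,\infty,G_{2R}}$ rather than $\|b\|_{s,\infty}$, which is available because $b\in C^{s,1}$ gives $\nabla^{s+1}b\in L^\infty$ --- this is exactly what the paper writes.
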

	\begin{proof}
		Recall that in the notation of \eqref{Tdef},
		$$
		T[\Omega] = \mathcal{Q}[\xi_* a((\cdot)-a(0))\nabla \Omega + \xi_*b\Omega] + \mathcal{P}[\xi_* b^*\nabla \Omega + \xi_* c\Omega]. 
		$$
		Since $a\in C^{s+1,1}$, $b,c\in C^{s,1}$, $s\geq 0$, we have
		\begin{align*}
			\|\xi_*a((\cdot)-a(0))\nabla \Omega \|^*_{s+1,p(\cdot),G_{2R}} &\leq  C(n,s)\|\nabla a\|_{s,\infty,G_{2R}} R \|\nabla \Omega\|^*_{s+1,p(\cdot),G_{2R}}\\
			&\leq   C(n,s) \|\nabla a\|_{s,\infty,G_{2R}}R \|\Omega\|^*_{s+2,p(\cdot,G_{2R})},\\
			\|\xi_*b\Omega \|^*_{s+1,p(\cdot),G_{2R}} 
			&\leq C(n,s) \|b\|_{s+1,\infty,G_{2R}} \|\Omega\|^*_{s+1,p(\cdot),G_{2R}} \\
			&\leq C(n,s) \|b\|_{s+1,\infty,G_{2R}} R \|\Omega\|^*_{s+2,p(\cdot),G_{2R}},\\
			\|\xi_*b^*\nabla \Omega\|^*_{s,p(\cdot),G_{2R}} 
			&\leq C(n,s) \|b^*\|_{s,\infty,G_{2R}} \|\nabla \Omega\|^*_{s,p(\cdot),G_{2R}} \\
			&\leq C(n,s) \|b^*\|_{s,\infty,G_{2R}} R \|\nabla \Omega\|^*_{s+2,p(\cdot),G_{2R}},\\
			\|\xi_*c\Omega\|_{s,p(\cdot),G_{2R}} 
			&\leq  C(n,s) \|c\|_{s,\infty,G_{2R}} \|\Omega\|^*_{s,p(\cdot),G_{2R}} \\
			&\leq  C(n,s) \|c\|_{s,\infty,G_{2R}} R^2 \|\Omega\|^*_{s+2,p(\cdot),G_{2R}}.
		\end{align*} 
		Lemma~\ref{L:DR04c} then gives 
		\begin{align*}
			&\|T[\Omega]\|^*_{s+2,p(\cdot),G_{2R}} \\
			&\qquad\leq \|\xi_*a((\cdot)-a(0))\nabla \Omega + \xi_*b\Omega \|^*_{s+1,p(\cdot),G_{2R}} + \|\xi_*b^*\nabla \Omega + \xi_*c\Omega\|_{s,p(\cdot),G_{2R}}\\
			&\qquad \leq  CR \|\Omega\|^*_{s+2,p(\cdot),G_{2R}}.
		\end{align*}
		The constant $C$ here depends only on $n$, $p(\cdot)$ (via $p_{-}$, $p_{+}$, $c_{\mathrm{log}} p$), the corresponding bounds for the coefficients. It remains to choose $R \leq 1/(2C)$. 
	\end{proof}
	
	By Lemma~\ref{L:c0a}, we can choose $R>0$ sufficiently small such that the mapping $T$ is a contraction, both in  $W^{s+1,p(\cdot)}(G_{2R})$ and  $W^{s+2,p(\cdot)}(G_{2R}),$ with the norm not exceeding $1/2$:
	\begin{equation}\label{eq:contr}
		\|T\Omega\|_{k,p(\cdot),G_{2R}} \leq \frac{1}{2} \|\Omega\|_{k,p(\cdot),G_{2R}}, \quad k=s+1,s+2.
	\end{equation}

	\begin{corollary}\label{C:2}
		Let $\omega\in W^{s+1,p(\cdot)}(\mathcal{I}_0,G_{2R})$ be a solution of \eqref{iid1} where $f\in W^{s,p(\cdot)}(G_{2R})$ and $e \in W^{s+1,p(\cdot)}(G_{2R})$. Then $\omega \in W^{s+2,p(\cdot)}(G_R)$ and 
		$$
		\|\omega\|^*_{s+2,p(\cdot),G_{R}} \leq C(\mathrm{data},s) (\|f\|^*_{s,p(\cdot),G_{2R}}+ \|e\|^*_{s+1,p(\cdot), G_{2R}} + R^{-1}\|\omega\|^*_{s+1,p(\cdot), G_{2R}}).
		$$
	\end{corollary}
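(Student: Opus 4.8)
The plan is to mimic the $W^{1,p(\cdot)}$ argument of Lemma~\ref{L:p1} in the higher-order setting, now using the integral equation \eqref{ieq2} together with the contraction property \eqref{eq:contr} in $W^{s+1,p(\cdot)}(G_{2R})$ and $W^{s+2,p(\cdot)}(G_{2R})$, and bootstrapping through a finite chain of cutoffs so that the Neumann series built from $T$ converges in the higher norm. First I would fix a chain of radii $R = \widetilde R_0 > \widetilde R_1 > \cdots > \widetilde R_{s+2} = R/2$ (or simply work with a nested family of cutoffs $\xi_0, \ldots, \xi_{s+2}$ supported in shrinking balls, each equal to $1$ on the support of the next), and associated cutoff functions with $\sum_{k} R^k |\nabla^k \xi_j| \leq C(n,s)$, together with the auxiliary $\xi_{*,j}$ as in Section~\ref{ssec:W2p}. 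The point of iterating over cutoffs is to gain one degree of regularity at a time: if $\omega \in W^{m,p(\cdot)}$ on a slightly larger ball, then $\Omega_j = \xi_j \omega \in W^{m,p(\cdot)}$ satisfies \eqref{ieq2} with data $E_j, F_j$ as in \eqref{eq:nota1} (with $\xi = \xi_j$), and by Lemma~\ref{L:DR04c} the right-hand side $\mathcal{Q}[E_j] + \mathcal{P}[F_j]$ actually lies in $W^{m+1,p(\cdot)}$ provided $F_j \in W^{m-1,p(\cdot)}$ and $E_j \in W^{m,p(\cdot)}$.

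The key estimates to carry out are the regularity bounds for $E_j$ and $F_j$. Recalling $f = C_1\eta + C_2\psi$ and $e = C_3\varphi + C_4\psi$ with $C_1, C_3, C_4 \in C^{s+1,1}$, $C_2 \in C^{s,1}$, and that the remaining terms in \eqref{eq:nota1} involve $a\omega\nabla\xi_j$, $e\nabla\xi_j$, $a\nabla\omega\nabla\xi_j$ and $(b-b^*)\omega\nabla\xi_j$, one gets, using Lemma~\ref{L:simple} and the Leibniz rule,
\begin{align*}
\|E_j\|^*_{m,p(\cdot),G_{2R}} &\leq C(\mathrm{data},s)\left( \|e\|^*_{s+1,p(\cdot),G_{2R}} + R^{-1}\|\omega\|^*_{m,p(\cdot),G_{2R_{j-1}}}\right),\\
\|F_j\|^*_{m-1,p(\cdot),G_{2R}} &\leq C(\mathrm{data},s)\left( \|f\|^*_{s,p(\cdot),G_{2R}} + \|e\|^*_{s+1,p(\cdot),G_{2R}} + R^{-1}\|\omega\|^*_{m,p(\cdot),G_{2R_{j-1}}}\right),
\end{align*}
where the factor $R^{-1}$ comes from the derivatives of the cutoff and $m$ denotes the current regularity level. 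The crucial structural observation (already flagged in the Remark after Corollary~\ref{L:corr1}) is that the dependence of $F_j$ on $\nabla\omega$ enters only through $d\omega$ and $\delta\omega$, and more generally through $\nabla^{m-1}d\omega$, $\nabla^{m-1}\delta\omega$ — but when we bound $F_j$ in $W^{m-1,p(\cdot)}$ we only need $\nabla^{m-1}\omega$ which is already available; so no circularity arises. Then applying the analogue of Lemma~\ref{L:c1} at level $m$ — i.e. summing the Neumann series $\Omega_j = \sum_{l\geq 0} T^l(\mathcal{Q}[E_j]+\mathcal{P}[F_j])$, which now converges in $W^{m+1,p(\cdot)}(G_{2R})$ by \eqref{eq:contr} — we conclude $\omega \in W^{m+1,p(\cdot)}$ on the next smaller ball, with the estimate
$$
\|\omega\|^*_{m+1,p(\cdot),G_{2R_j}} \leq C(\mathrm{data},s)\left(\|f\|^*_{s,p(\cdot),G_{2R}} + \|e\|^*_{s+1,p(\cdot),G_{2R}} + R^{-1}\|\omega\|^*_{m,p(\cdot),G_{2R_{j-1}}}\right).
$$
Starting from $m = s+1$ (the hypothesis) and iterating once we reach $m = s+2$; composing the finitely many estimates and absorbing the intermediate norms via $\|\omega\|^*_{s+1,p(\cdot),G_{2R}}$ on the right gives exactly the claimed inequality on $G_R$. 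For $n = 2$ one replaces \eqref{ieq2} by \eqref{eq:mod2}, i.e. works with $T'$ and subtracts $c(F_j;G_{2R})$, the extra term being controlled by the estimate in the Remark after Lemma~\ref{L:pa0}; this changes nothing in the bootstrap.

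The main obstacle I anticipate is purely bookkeeping: tracking the degradation of radii and the powers of $R$ through the chain of cutoffs so that the final constant depends only on $\mathrm{data}$ and $s$ (and not on how many intermediate steps were taken, which is bounded in terms of $s$ anyway), and making sure that at each step the data $E_j, F_j$ genuinely have the regularity demanded by Lemma~\ref{L:DR04c} — this is where the precise regularity of the structural coefficients $a \in C^{s+1,1}$, $b, b^*, c \in C^{s,1}$, $C_1, C_3, C_4 \in C^{s+1,1}$, $C_2 \in C^{s,1}$ must be invoked exactly, and where Lemma~\ref{L:simple} does the heavy lifting. A secondary subtlety is verifying that $\Omega_j \in W^{1,p_-}(\mathcal{I}_0, G_{2R})$ and hence that the localized identity \eqref{iid1c} and the representation \eqref{ieq1} remain valid at each level — but this follows from the boundary-condition bookkeeping already set up in Section~\ref{ssec:local}, since the cutoffs do not affect the vanishing of the relevant components on $\{x^n = 0\}$. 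No new analytic input beyond what is already in Sections~\ref{sec:potentialHodge}--\ref{ssec:W2p} is needed; the proof is essentially an induction on the regularity index built on top of the $s=0$ case handled by Lemma~\ref{L:p1} and Lemma~\ref{L:c0a}.
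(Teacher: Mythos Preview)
Your approach is correct and ultimately converges to the paper's argument, but the setup is over-engineered. The paper's proof is a single step, not a bootstrap: since $\omega\in W^{s+1,p(\cdot)}(\mathcal{I}_0,G_{2R})$ is the \emph{hypothesis}, the localized data $E=\xi e-a\omega\nabla\xi$ and $F=\xi f+e\nabla\xi+a\nabla\omega\nabla\xi+(b-b^*)\omega\nabla\xi$ already lie in $W^{s+1,p(\cdot)}$ and $W^{s,p(\cdot)}$ respectively (this is exactly your $E_j,F_j$ estimate at $m=s+1$). Lemma~\ref{L:DR04c} then puts $\mathcal{Q}[E]+\mathcal{P}[F]$ in $W^{s+2,p(\cdot)}$, and the contraction property \eqref{eq:contr} in $W^{s+2,p(\cdot)}$ gives $\Omega\in W^{s+2,p(\cdot)}(G_{2R})$ via the Neumann series, with the stated bound. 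Since $\Omega=\omega$ on $G_R$, you are done.

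So there is no need for a chain of radii $\widetilde R_0>\cdots>\widetilde R_{s+2}$ or multiple cutoffs $\xi_0,\ldots,\xi_{s+2}$; a single cutoff $\xi$ (the one already fixed in Section~\ref{ssec:W2p}) suffices. You yourself note that ``iterating once'' takes you from $m=s+1$ to $m=s+2$, which makes the elaborate nested-cutoff scaffolding superfluous. The iteration over regularity levels that you sketch is precisely what the paper does \emph{later}, in Theorem~\ref{T:p2}, to go from $W^{1,p(\cdot)}$ all the way to $W^{s+2,p(\cdot)}$ --- but Corollary~\ref{C:2} itself is just the single inductive step, assuming $W^{s+1,p(\cdot)}$ regularity as input.
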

	
	\begin{proof}
		By Lemmas~\ref{L:DR04c},~\ref{L:simple} we get 
		\begin{align*}
			\|\mathcal{Q}[E]\|^*_{s+2,p(\cdot), G_{2R}} &\leq C \| \xi e - a\omega\nabla \xi\|^*_{s+1,p(\cdot),G_{2R}} \\
			&\leq CR^{-1} \|\omega\|^*_{s+1,p(\cdot), G_{2R}} + C \|e\|^*_{s+1,p(\cdot), G_{2R}}  ,\\
			\|\mathcal{P}[F]\|^*_{s+2,p(\cdot),G_{2R}} &\leq C\| \xi f+e\nabla \xi+a  \nabla \omega \nabla \xi+ (b-b^*) \omega\nabla \xi\|^*_{s,p(\cdot),G_{2R}}\\
			&\leq  C\|f\|^*_{s,p(\cdot),G_{2R}} +C\|e\|^*_{s+1,p(\cdot)} + CR^{-1} \|\omega\|^*_{s+1,p(\cdot), G_{2R}}.
		\end{align*}
		
		By \eqref{eq:contr} equation \eqref{ieq2} is uniquely solvable in $W^{s+1,p(\cdot)}(G_{2R})$, and the solution is given by \eqref{eq:series}. The series converge also in $W^{s+2,p(\cdot)}(G_{2R})$ and
		\begin{align*}
			\|\Omega\|^*_{s+2,p(\cdot),G_{2R}} &\leq 2\|\mathcal{Q}[E]\|^*_{s+2,p(\cdot),G_{2R}} + 2 \|\mathcal{P}[F]\|^*_{s+2,p(\cdot),G_{2R}}\\
			&\leq  C (\|f\|_{s,p(\cdot),G_{2R}}+ \|e\|^*_{s+1,p(\cdot), G_{2R}}+ R^{-1}\|\omega\|^*_{s+1,p(\cdot), G_{2R}}).
		\end{align*}
		It remains to use that $\omega=\Omega$ in $G_R$. 
	\end{proof}

	\begin{theorem}\label{T:p2} Let $\omega$ be a $W_*^{1,p_{-}}(M,\Lambda)$, $*\in \{T,N,0\}$, solution of \eqref{eq1}, where $\zeta\in \mathrm{Lip}_*(M,\Lambda)$, with $\eta \in W^{s,p(\cdot)}(M,\Lambda)$, $\varphi, \psi\in W^{s+1,p(\cdot)}(M,\Lambda)$. Then $\omega \in W^{s+2,p(\cdot)}(M,\Lambda)$ and 
		\begin{multline*}
			\|\omega\|_{W^{s+2,p(\cdot)}(M,\Lambda)} \\
			\leq C(\|\eta\|_{W^{s,p(\cdot)}(M,\Lambda)} + \|\varphi\|_{W^{s+1,p(\cdot)}(M,\Lambda)}+\|\psi\|_{W^{s+1,p(\cdot)}(M,\Lambda)}+ \|\omega\|_{L^{1}(M,\Lambda)})
		\end{multline*}
		where $C = C (\mathrm{data},s)$.
	\end{theorem}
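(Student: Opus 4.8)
The plan is to reduce the statement to the local estimate of Corollary~\ref{C:2} and bootstrap the differentiability order $j=0,1,\dots,s$, using the $W^{1,p(\cdot)}$-theory of Section~\ref{ssec:W1p} as the base case. By linearity and the splitting \eqref{eq:forms} it suffices to treat a homogeneous form $\omega$ of degree $r$ solving one of the three problems for \eqref{eq1} with $\zeta\in\mathrm{Lip}_*(M,\Lambda)$; the set $\mathcal{I}_0$ in each chart is then determined by $*$, by $r$, and by whether the chart is interior or boundary, exactly as in Section~\ref{sec:coord}. Fix a small $R$ (to be chosen) and cover $M$ by the charts of an $R$-atlas; in each chart \eqref{eq1} becomes \eqref{iid1}, the coefficients $a,b,b^*,c,C_1,\dots,C_4$ have the regularity recorded there, and the data obey \eqref{eq:basic}, \eqref{eq:fereg}, all with constants uniform over charts and over $R$.

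\emph{Base step.} Since $\eta\in W^{s,p(\cdot)}(M,\Lambda)\subset L^{p(\cdot)}(M,\Lambda)$ and $\varphi,\psi\in W^{s+1,p(\cdot)}(M,\Lambda)\subset L^{p(\cdot)}(M,\Lambda)$, Corollary~\ref{L:corr1} gives $\omega\in W^{1,p(\cdot)}_*(M,\Lambda)$, and the $L^1$-norm on the right of the asserted estimate is produced here. For $p_{-}\ge 2$ this is precisely Corollary~\ref{L:corr1a}: one tests \eqref{eq1} with $\omega$ itself, uses the classical $W^{1,2}$ Gaffney inequality (available because $M$ is $C^{2,1}$), and interpolates $\|\omega\|_{2,M}$ down to $\|\omega\|_{1,M}$; for $1<p_{-}<2$ one first raises the integrability of $\omega$ by Lemma~\ref{L:p1} and then argues in the same way. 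In either case
\begin{equation}\label{eq:baseW1p}
\|\omega\|_{1,p(\cdot),M}\le C(\mathrm{data})\big(\|\eta\|_{p(\cdot),M}+\|\varphi\|_{p(\cdot),M}+\|\psi\|_{p(\cdot),M}+\|\omega\|_{1,M}\big).
\end{equation}

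\emph{Inductive bootstrap.} I claim that for every $0\le j\le s$ one has $\omega\in W^{j+2,p(\cdot)}(M,\Lambda)$ with $\|\omega\|_{j+2,p(\cdot),M}\le C(\mathrm{data},s)(\|\eta\|_{j,p(\cdot),M}+\|\varphi\|_{j+1,p(\cdot),M}+\|\psi\|_{j+1,p(\cdot),M}+\|\omega\|_{1,p(\cdot),M})$. Assume this for $j-1$ (reading $j=0$ as ``$\omega\in W^{1,p(\cdot)}_*(M,\Lambda)$'', the base step), and fix $R=R(\mathrm{data},s)$ small enough that Lemma~\ref{L:c0a} holds for all orders $\le s+2$ and all exponents with the same $p_{-},p_{+},c_{\mathrm{log}}(p)$. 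In a given chart of the $R$-atlas, $\omega\in W^{j+1,p(\cdot)}(\mathcal{I}_0,G_{2R})$; because $j\le s$, the data $f=C_1\eta+C_2\psi$ and $e=C_3\varphi+C_4\psi$ lie in $W^{j,p(\cdot)}(G_{2R})$ and $W^{j+1,p(\cdot)}(G_{2R})$ respectively, with the chart-uniform bounds \eqref{eq:fereg}. Corollary~\ref{C:2} with $j$ in place of $s$ then gives $\omega\in W^{j+2,p(\cdot)}(G_R)$ and $\|\omega\|^*_{j+2,p(\cdot),G_R}\le C(\|f\|^*_{j,p(\cdot),G_{2R}}+\|e\|^*_{j+1,p(\cdot),G_{2R}}+R^{-1}\|\omega\|^*_{j+1,p(\cdot),G_{2R}})$. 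Summing over the finitely many charts covering $M$ and bounding $\|\omega\|_{j+1,p(\cdot),M}$ by the inductive hypothesis proves the claim for $j$; taking $j=s$ and inserting \eqref{eq:baseW1p} gives the theorem, the $W^{s+2,p(\cdot)}$-membership being read off the same chain.

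\emph{Where the work is.} Essentially all the analysis has been front-loaded: the mapping properties of $\mathcal{P},\mathcal{Q}$ (Lemma~\ref{L:DR04c}), the contractivity of $T$ in higher-order norms (Lemma~\ref{L:c0a}), the representation \eqref{ieq1}, and the $W^{1,p(\cdot)}$-theory of Section~\ref{ssec:W1p}. The remaining delicate points are organizational: (i) the order must be raised \emph{one step at a time}, since Corollary~\ref{C:2} requires $\omega\in W^{j+1,p(\cdot)}$ locally before it can deliver $W^{j+2,p(\cdot)}$, and at step $j\le s$ the coefficient regularity $a,C_1,C_3,C_4\in C^{s+1,1}$ and $b,b^*,c,C_2\in C^{s,1}$ is exactly what keeps the products $\xi_*\,b\,\Omega$, $\xi_*(a(\cdot)-a(0))\nabla\Omega$, $C_1\eta$, $C_3\varphi$, etc.\ in the required spaces (Lemma~\ref{L:simple}); (ii) the radius $R$ of the $R$-atlas must be chosen uniformly small, so that the contraction constant of Lemma~\ref{L:c0a} is $\le 1/2$ in \emph{every} chart and for the full range of orders, which is what makes the Neumann series \eqref{eq:series} converge in $W^{j+2,p(\cdot)}$; and (iii) getting $\|\omega\|_{1,M}$ rather than $\|\omega\|_{1,p_{-},M}$ on the right, which is confined to the base step and handled by the classical Gaffney inequality as in Corollary~\ref{L:corr1a}. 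For $n=2$ one systematically replaces $\mathcal{P}[\cdot]$ by $\mathcal{P}[\cdot]-c(\cdot;G_{2R})$ and uses \eqref{eq:mean0}, \eqref{eq:mod2}; the estimates are unchanged.
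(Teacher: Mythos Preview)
Your bootstrap via Corollary~\ref{C:2} is exactly the paper's argument, and the inductive step is correctly set up. The one genuine gap is in your base step for $p_{-}<2$: Corollary~\ref{L:corr1a} requires $p_{-}\ge 2$, and ``first raising the integrability by Lemma~\ref{L:p1} and then arguing in the same way'' does not work, because Lemma~\ref{L:p1} only delivers $\omega\in W^{1,p(\cdot)}$ with $\|\omega\|_{1,p_{-},M}$ (not $\|\omega\|_{1,M}$) on the right, and $\omega\in W^{1,p(\cdot)}$ still does not place $\omega$ in $W^{1,2}$ when $p(\cdot)<2$ somewhere, so you cannot test \eqref{eq1} with $\omega$ itself.

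The paper sidesteps this entirely: it uses only Corollary~\ref{L:corr1} at the base (yielding $\omega\in W^{1,p(\cdot)}$, with $\|\omega\|_{1,p_{-},M}$ on the right), and then at \emph{each} bootstrap step invokes the interpolation inequality of Lemma~\ref{C:Di1},
\[
\|\omega\|_{k,p(\cdot),M}\le \varepsilon\,\|\omega\|_{k+1,p(\cdot),M}+C(\varepsilon)\,\|\omega\|_{1,M},
\]
to absorb the lower-order term $\|\omega\|_{k,p(\cdot),M}$ coming from Corollary~\ref{C:2} into the left-hand side plus $\|\omega\|_{1,M}$. This produces $\|\omega\|_{1,M}$ uniformly in $p_{-}$ without ever needing to test with $\omega$. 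Your argument becomes correct if you replace the appeal to Corollary~\ref{L:corr1a} by this interpolation step (either at each stage, as the paper does, or once at the end after reaching $\|\omega\|_{s+2,p(\cdot),M}\le C(\text{data})+C\|\omega\|_{1,p(\cdot),M}$).
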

	
	\begin{proof} By Corollary~\ref{L:corr1} we have $\omega \in W^{1,p(\cdot)}(M,\Lambda)$. Assume without loss that $\omega$ is a homogeneous even form of degree $r$.
		
		We argue by induction in $s$. Let $1\leq k\leq s+1$ and $\omega \in W^{k,p(\cdot)}(M,\Lambda)$. Then in a sufficiently fine $\varepsilon$-atlas  by Corollary~\ref{C:2} we have  
		$$
		\|\omega\|_{k+1,p(\cdot),G_R} \leq C (\|\eta\|_{k-1,p(\cdot),M} + \|\varphi\|_{k,p(\cdot),M}+ \|\psi\|_{k,p(\cdot),M} + \|\omega\|_{k,p(\cdot),G_{2R}}).
		$$
		Since the neighborhoods $\widetilde h_\alpha^{-1} (G_R)$ cover $M$ we get
		\begin{equation}\label{eq:inter}
			\|\omega\|_{k+1,p(\cdot),M} \leq C (\|\eta\|_{k-1,p(\cdot),M} + \|\varphi\|_{k,p(\cdot),M}+ \|\psi\|_{k,p(\cdot),M} + \|\omega\|_{k,p(\cdot),M}).
		\end{equation}
		The interpolation inequality of Lemma~\ref{C:Di1} then gives
		$$
		\|\omega\|_{k+1,p(\cdot),M} \leq C (\|\eta\|_{k-1,p(\cdot),M} + \|\varphi\|_{k,p(\cdot),M}+ \|\psi\|_{k,p(\cdot),M} + \|\omega\|_{1,M}).
		$$
		When $k$ reaches $s+1$ we get the required estimate.
	\end{proof}
	
	\begin{remark}
		The dependence on $\nabla \omega$ in $F$ (see \eqref{eq:nota1}) arises only via $d\omega$ and $\delta \omega$, thus for $k=0,\ldots,s+1$ there holds
		\begin{align*}
			&\|\mathcal{P}[a\nabla \omega \nabla \xi + b\omega \nabla \xi]\|^*_{k+1,p(\cdot),G_{2R}} \\&\qquad\leq C(\mathrm{data},k) R \|a\nabla \omega \nabla \xi + b\omega \nabla \xi\|^*_{k,p(\cdot),G_{2R}}\\
			&\qquad\leq C(\mathrm{data},s) (\|d\omega\|^*_{k, p(\cdot),G_{2R}} +\|\delta \omega\|^*_{k, p(\cdot),G_{2R}} + \|\omega\|^*_{k, p(\cdot),G_{2R}}).
		\end{align*}
		That is, the term $\|\omega\|_{k,p(\cdot),M}$ on the RHS in the intermediate estimate \eqref{eq:inter} can be replaced by $ \|d\omega\|_{k-1,p(\cdot,M)}+ \|\delta \omega\|_{k-1,p(\cdot),M} + \|\omega\|_{k-1,p(\cdot),M}$. 
	\end{remark}
	
	\section{Gaffney inequality with natural regularity assumptions}\label{ssec:SimpleGaffney}
	In this Section we obtain Gaffney's inequality in variable exponent spaces by a direct argument without using the Hodge decomposition of Section~\ref{ssec:Gaffney}.

	\begin{theorem}\label{T:NNaturalGaffney}
		Let $M$ be of the class $C^{s+1,1}$, $s\in \{0\}\cup \mathbb{N}$, $\omega \in L^{p_{-}}(M,\Lambda)$,  $d\omega, \delta \omega \in W^{s,p(\cdot)}(M,\Lambda)$ and $t\omega=0$ or $n\omega=0$. Then $\omega \in W^{s+1,p(\cdot)}(M,\Lambda)$ and 
		\begin{equation}\label{eq:GS0}
			\|\omega - \mathcal{P}_* \omega\|_{s+1,p(\cdot),M} \leq C(\mathrm{data},s) (\|d\omega\|_{s,p(\cdot),M}+\|\delta\omega\|_{s,p(\cdot),M})
		\end{equation} 
		where $*$ is $T$ in the case of $t\omega=0$ and $*=N$ for $n\omega=0$.
	\end{theorem}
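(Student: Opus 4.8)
\textbf{The plan} is to run the local integral–equation scheme of Sections~\ref{ssec:local}--\ref{ssec:W2p}, but fed with the first–order data $d\omega,\delta\omega$ directly instead of with a right hand side for the Hodge Laplacian; this is exactly why only $C^{s+1,1}$ regularity of $M$ is needed, since no second covariant derivative of the metric ever occurs. We may assume $\omega$ is a homogeneous form of degree $r$ and treat $t\omega=0$, $*=T$ (the case $n\omega=0$ is the Hodge dual). Write $f=d\omega\in W^{s,p(\cdot)}(M,\Lambda)$ and $v=\delta\omega\in W^{s,p(\cdot)}(M,\Lambda)$, so that $f,v\in L^{p(\cdot)}(M,\Lambda)$.

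First I would establish the base regularity $\omega\in W^{1,p_{-}}_{T}(M,\Lambda)$. Fix an $R$–atlas with $R$ small and, in a chart, set $\Omega=\xi\omega$ with $\xi$ a cut–off as in Section~\ref{ssec:local}. The Leibniz rule for the weak differential and codifferential gives $d\Omega=\xi f+d\xi\wedge\omega$ and $\delta\Omega=\xi v-d\xi\lrcorner\omega$, both in $L^{p_{-}}(G_{2R})$, while the weak–trace hypothesis $t\omega=0$ forces the components of $\Omega$ with multi–index in $\mathcal{I}_{0}$ to vanish on $\{x^{n}=0\}$, so that the odd/even reflections underlying the potentials $\mathcal{P},\mathcal{Q}$ are legitimate. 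Consequently $\Omega$ satisfies \eqref{eq1} with $\eta=0$, $\varphi=\xi f+d\xi\wedge\omega$, $\psi=\xi v-d\xi\lrcorner\omega$, hence the representation \eqref{ieq1}; since, as in the Remark after Corollary~\ref{L:corr1}, the dependence of the densities on $\nabla\omega$ enters only through $d\omega$ and $\delta\omega$, those densities lie in $L^{p_{-}}(G_{2R})$, and the constant–exponent Calder\'on--Zygmund bound (Lemma~\ref{L:pa} with $p(\cdot)\equiv p_{-}$) yields $\Omega\in W^{1,p_{-}}(G_{3R/2})$ together with a rough local estimate. Covering $M$ by the sets $\widetilde h_{\alpha}^{-1}(G_{R})$ gives $\omega\in W^{1,p_{-}}(M,\Lambda)$, and the vanishing trace components upgrade this to $\omega\in W^{1,p_{-}}_{T}(M,\Lambda)$.

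With the base case in hand the bootstrap is routine. Since $\omega\in W^{1,p_{-}}_{T}(M,\Lambda)$ solves \eqref{eq1} with $\eta=0$, $\varphi=f$, $\psi=v$ and $f,v\in L^{p(\cdot)}(M,\Lambda)$, Corollary~\ref{L:corr1} (whose proof is the Sobolev–embedding iteration of Lemma~\ref{L:p1}, again using that $\nabla\omega$ occurs only via $d\omega,\delta\omega$) gives $\omega\in W^{1,p(\cdot)}_{T}(M,\Lambda)$. When $s\geq1$ one then induces on $k=1,\dots,s$: assuming $\omega\in W^{k,p(\cdot)}_{T}(M,\Lambda)$, the localized equation together with Corollary~\ref{C:2} (with its regularity index taken to be $k-1$; applicable because the coordinate data $e,f$ there are built from $d\omega,\delta\omega\in W^{s,p(\cdot)}$ and from $\omega\in W^{k,p(\cdot)}$, and because $M\in C^{s+1,1}$ makes the metric coefficients $C^{s,1}$) yields $\omega\in W^{k+1,p(\cdot)}_{T}(M,\Lambda)$ with an intermediate estimate bounding $\|\omega\|_{k+1,p(\cdot),M}$ by $\|d\omega\|_{k,p(\cdot),M}+\|\delta\omega\|_{k,p(\cdot),M}+\|\omega\|_{k,p(\cdot),M}$; the interpolation Lemma~\ref{C:Di1} absorbs the last term exactly as in the proof of Theorem~\ref{T:p2}. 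Iterating up to $k=s$ gives $\omega\in W^{s+1,p(\cdot)}(M,\Lambda)$ with
\[
\|\omega\|_{s+1,p(\cdot),M}\leq C(\mathrm{data},s)\bigl(\|d\omega\|_{s,p(\cdot),M}+\|\delta\omega\|_{s,p(\cdot),M}+\|\omega\|_{1,M}\bigr).
\]

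To remove the $\|\omega\|_{1,M}$ term and obtain \eqref{eq:GS0}, I would apply the estimate just proved to $\omega-\mathcal{P}_{T}\omega$, which has the same differential and codifferential, satisfies $t(\omega-\mathcal{P}_{T}\omega)=0$, and obeys \eqref{eq1} with $\eta=0$, $\varphi=d\omega$, $\psi=\delta\omega$; since $\mathcal{P}_{T}(\omega-\mathcal{P}_{T}\omega)=0$ and $\mathcal{P}_{T}\eta=0$ trivially, Lemma~\ref{L:quant} gives the quantitative bound $\|\omega-\mathcal{P}_{T}\omega\|_{p_{-},M}\leq C(\|d\omega\|_{p_{-},M}+\|\delta\omega\|_{p_{-},M})$, and combining with $\|\omega-\mathcal{P}_{T}\omega\|_{1,M}\leq C\|\omega-\mathcal{P}_{T}\omega\|_{p_{-},M}$ and the embedding $L^{p(\cdot)}(M,\Lambda)\hookrightarrow L^{p_{-}}(M,\Lambda)$ on the compact manifold $M$ finishes the proof. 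The main obstacle is the base case: starting only from $\omega\in L^{p_{-}}$ on a merely $C^{1,1}$ manifold, one must make the reflection and potential–representation argument rigorous without circularity, in particular verifying that the weak condition $t\omega=0$ really produces the vanishing of the correct trace components of $\Omega$ and that no bare gradient of $\omega$ survives in the potential densities; everything after that is a transcription of estimates already carried out for the Hodge Laplacian.
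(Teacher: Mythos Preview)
Your overall architecture --- base case $L^{p_{-}}\to W^{1,p_{-}}$, then bootstrap to $W^{s+1,p(\cdot)}$, then kill the $\|\omega\|_{1,M}$ term --- matches the paper, and steps two and three (your use of Corollary~\ref{L:corr1}/Corollary~\ref{C:2} followed by Lemma~\ref{L:quant}) are essentially what the paper does in Lemmas~\ref{T:SimpleGaffney} and~\ref{T:SimpleGaffney1}. The problem is the base case, where your argument is circular in exactly the spot you flag.

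The representation \eqref{ieq1} and the operator $T$ in \eqref{Tdef} genuinely contain the full gradient $\nabla\Omega$ through the term $\xi_*(a(\cdot)-a(0))\nabla\Omega$; this term is \emph{not} a combination of $d\Omega$ and $\delta\Omega$ alone. The Remark after Corollary~\ref{L:corr1} that you invoke says only that the \emph{localisation term} $F$ depends on $\nabla\omega$ through $d\omega,\delta\omega$ --- it says nothing about $T[\Omega]$. So you cannot write down \eqref{ieq1}, let alone the Neumann series \eqref{eq:series}, without already knowing $\Omega\in W^{1,p_{-}}$. Relatedly, the claim that ``$t\omega=0$ forces the components of $\Omega$ with multi-index in $\mathcal I_0$ to vanish on $\{x^n=0\}$'' presupposes a trace, which a bare $L^{p_-}$ form does not have; the weak condition $t\omega=0$ from Section~\ref{sec:partSob} is an integration-by-parts identity, not a pointwise boundary condition.

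The paper closes this gap with a genuinely different integral equation (Lemma~\ref{existence of weak derivative tangential}, using Lemma~\ref{L:VT}). Starting from the weak identity $(\omega,\delta\varphi)+(\omega,d\psi)=(f,\varphi)+(v,\psi)$ one tests with $\varphi=dW$, $\psi=\delta_0 W$ for $W$ satisfying \eqref{eq:Wcond}; since $\delta_0 d+d\delta_0=-\triangle$ in Euclidean coordinates, this produces an equation of the form
\[
\int \Omega_I\,\triangle W_I\,dx=\int \bigl(F^{I\beta}W_{I,\beta}+\xi_*\Omega_J(A^{IJ\alpha\beta}W_{I,\alpha\beta}+B^{IJ\alpha}W_{I,\alpha})\bigr)\,dx,
\]
with $|A|\leq CR$. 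The resulting operator $\mathcal T_0[\Omega]=\mathcal Q[\xi_* B\Omega]+\mathcal H[\xi_* A\Omega]$ involves only $\Omega$, not $\nabla\Omega$, and is a contraction on \emph{both} $L^{p_-}(G_{2R})$ and $W^{1,p_-}(G_{2R})$; this is what lets the series converge first in $L^{p_-}$ and then automatically in $W^{1,p_-}$, yielding $\Omega\in W^{1,p_-}$ from $\Omega\in L^{p_-}$ without circularity.
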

	
	The proof of this theorem will comprise of several lemmas.

	\begin{lemma}\label{T:SimpleGaffney}
		Let $M$ be of the class $C^{s+1,1}$, $s\in \{0\}\cup \mathbb{N}$, $\omega \in W^{1,p_{-}}_*(M,\Lambda)$, $*\in \{0,T,N\}$, and $d\omega, \delta \omega \in W^{s,p(\cdot)}(M,\Lambda)$. Then $\omega \in W^{s+1,p(\cdot)}(M,\Lambda)$ and 
		\begin{equation}\label{eq:GS}
			\|\omega\|_{s+1,p(\cdot),M} \leq C(\mathrm{data},s) (\|d\omega\|_{s,p(\cdot),M}+\|\delta\omega\|_{s,p(\cdot),M} + \|\omega\|_{1,M}).
		\end{equation} 
	\end{lemma}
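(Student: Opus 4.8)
The goal is to prove Lemma~\ref{T:SimpleGaffney}: if $\omega \in W^{1,p_{-}}_*(M,\Lambda)$ with $d\omega,\delta\omega \in W^{s,p(\cdot)}(M,\Lambda)$ then $\omega \in W^{s+1,p(\cdot)}(M,\Lambda)$ with the stated estimate. The plan is to reduce this to the $W^{1,p(\cdot)}$ and $W^{s+2,p(\cdot)}$ estimates for the variational problem \eqref{eq1} that were already established in Theorem~\ref{T:p2} and Corollary~\ref{L:corr1}. The key observation is that a form with prescribed $d\omega$ and $\delta\omega$ solves a variational relation of the type \eqref{eq1} with $\varphi = d\omega$, $\psi = \delta\omega$ and $\eta = 0$ — indeed, by the integration-by-parts formula (Lemma~\ref{L:ort}) and the definition of the weak (co)differential, for $\omega \in W^{1,p_{-}}_*(M,\Lambda)$ and $\zeta \in \mathrm{Lip}_*(M,\Lambda)$ one has $(d\omega,d\zeta) + (\delta\omega,\delta\zeta) = \mathcal{D}(\omega,\zeta)$, which is exactly \eqref{eq1} with the indicated data if we regard $d\omega - \varphi = 0$, $\delta\omega - \psi = 0$. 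Wait — more precisely, I want to set it up so that the right-hand side of \eqref{eq1} carries the information about $d\omega$ and $\delta\omega$; so I take $\varphi$ and $\psi$ in \eqref{eq1} to be arbitrary $W^{s+1,p(\cdot)}$ forms and then exploit that $\mathcal{D}(\omega,\zeta) = (d\omega,d\zeta) + (\delta\omega,\delta\zeta)$ holds tautologically for the actual $\omega$.

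Concretely, I would proceed as follows. First, set $\varphi := d\omega$ and $\psi := \delta\omega$, which by hypothesis belong to $W^{s,p(\cdot)}(M,\Lambda)$; but for the application of Theorem~\ref{T:p2} I need them one derivative better, so instead I observe that $\omega$ satisfies, for all $\zeta \in \mathrm{Lip}_*(M,\Lambda)$,
\begin{equation*}
\mathcal{D}(\omega,\zeta) = (d\omega, d\zeta) + (\delta\omega,\delta\zeta),
\end{equation*}
which is precisely \eqref{eq1} with the choice $\varphi = d\omega$, $\psi = \delta\omega$, $\eta = 0$ — but read with $\varphi,\psi$ only in $W^{s,p(\cdot)}$. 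This is a mismatch with the regularity hypotheses of Theorem~\ref{T:p2}, which wants $\varphi,\psi \in W^{s+1,p(\cdot)}$. The fix: prove the statement by induction on $s$. For the base case $s=0$, apply Corollary~\ref{L:corr1} (or Corollary~\ref{L:corr1a} after reducing to $p_{-}=2$ via the classical Gaffney inequality) with $\eta = 0$, $\varphi = d\omega \in L^{p(\cdot)}$, $\psi = \delta\omega \in L^{p(\cdot)}$: this immediately gives $\omega \in W^{1,p(\cdot)}_*(M,\Lambda)$ with $\|\omega\|_{1,p(\cdot),M} \le C(\|d\omega\|_{p(\cdot),M} + \|\delta\omega\|_{p(\cdot),M} + \|\omega\|_{1,M})$, which is \eqref{eq:GS} for $s=0$. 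For the inductive step, suppose the claim holds for $s-1$; then $\omega \in W^{s,p(\cdot)}(M,\Lambda)$ already, and now $d\omega,\delta\omega \in W^{s,p(\cdot)}$ means we can legitimately apply the localized $W^{s+2,p(\cdot)}$-type machinery. Here I would rerun the argument of Theorem~\ref{T:p2}, but one index lower: in the local coordinate formulation \eqref{iid1c} the data $e$ and $f$ now involve $\varphi = d\omega$ and $\psi = \delta\omega$ which are in $W^{s,p(\cdot)}$, so by Corollary~\ref{C:2} (applied with the roles shifted by one) combined with the already-known $W^{s,p(\cdot)}$ regularity of $\omega$ and the interpolation inequality of Lemma~\ref{C:Di1}, one bootstraps $\omega \in W^{s+1,p(\cdot)}(M,\Lambda)$ with the estimate \eqref{eq:GS}. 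The $\|\omega\|_{1,M}$ term is absorbed via Lemma~\ref{C:Di1} exactly as in the proof of Theorem~\ref{T:p2}.

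An alternative, cleaner packaging that avoids re-deriving the local estimates: note that $\omega$ also satisfies \eqref{eq1} with $\eta = 0$, $\varphi = d\omega$, $\psi = \delta\omega$; apply Corollary~\ref{C:ext} (or the Gagliardo-type extension) to replace $\varphi,\psi$ by forms $\widetilde\varphi,\widetilde\psi$ that are genuinely in $W^{s+1,p(\cdot)}$ and agree with $d\omega,\delta\omega$ to the order needed — but this seems to introduce more complications than it removes, since $d\omega$ and $\delta\omega$ need not have any extra smoothness. So the induction-on-$s$ route, directly invoking Corollary~\ref{L:corr1} for the base and the local estimate Corollary~\ref{C:2} for the inductive step, is the approach I would take.

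The main obstacle is the regularity bookkeeping at the inductive step: one must carefully check that with $\varphi = d\omega$, $\psi = \delta\omega \in W^{s,p(\cdot)}$ (not $W^{s+1,p(\cdot)}$), the terms $e$ and $f$ in \eqref{eq:nota1} still have enough regularity to close the contraction argument of Section~\ref{ssec:W2p} at level $s+1$ (rather than $s+2$) — in particular that the term $a\nabla\omega\nabla\xi$ in $F$, which is responsible for picking up $d\omega$ and $\delta\omega$ (see the Remark following Theorem~\ref{T:p2}), lands in $W^{s,p(\cdot)}$ so that $\mathcal{P}[F]$ is controlled in $W^{s+1,p(\cdot)}$ via Lemma~\ref{L:DR04c}. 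This is the point where the hypothesis $d\omega,\delta\omega \in W^{s,p(\cdot)}$ (as opposed to merely $L^{p(\cdot)}$) is used decisively, and it is a routine but slightly delicate verification. Everything else — the passage from the global variational identity to the localized identity \eqref{iid1c}, the covering argument, and the absorption of the lower-order norm by interpolation — is identical to the corresponding steps already carried out for Theorem~\ref{T:p2}.
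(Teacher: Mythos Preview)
Your approach is correct and essentially the same as the paper's: observe that $\omega$ satisfies \eqref{eq1} with $\varphi=d\omega$, $\psi=\delta\omega$, $\eta=0$, run the localized contraction argument at each regularity level, glue over the cover, and interpolate via Lemma~\ref{C:Di1}. The paper's packaging of the local step is slightly cleaner --- it writes the variational relation directly for $\Omega=\xi\omega$ (with data $\varphi=\xi\, d\omega+d\xi\wedge\omega$, $\psi=\xi\,\delta\omega-d\xi\lrcorner\omega$, $\eta=0$), so the coordinate terms $\hat e,\hat f$ in \eqref{iid1e} visibly involve only $d\omega,\delta\omega,\omega$, whereas your route through \eqref{eq:nota1} and the Remark after Theorem~\ref{T:p2} reaches the same point less directly; note also that Corollary~\ref{L:corr1} leaves $\|\omega\|_{1,p_-,M}$ (not $\|\omega\|_{1,M}$) on the right, so the case $s=0$ is not quite immediate from it and is better handled as the $k=0$ instance of the same local iteration followed by interpolation.
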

	\begin{proof}
		Clearly, $\omega$ satisfies \eqref{eq1} with $\varphi = d\omega$, $\psi = \delta \omega$, $\eta=0$, for all $\zeta \in \mathrm{Lip}_*(M,\Lambda)$. Using Lemma~\ref{L:corr1} we get $\omega \in W_*^{1,p(\cdot)}(M,\Lambda)$.
		
		Let $\xi$ be a $C^{s+1,1}$ function supported in one of the coordinate neighborhoods. Then $\Omega \in   W^{1,p(\cdot)}_*(M,\Lambda)$ satisfies
		\begin{gather*}
			\mathcal{D}(\Omega,\zeta)=(\varphi,d\zeta)+(\psi,\delta \zeta)\quad \text{for all} \quad \zeta \in \mathrm{Lip}_*(M,\Lambda)\\
			\text{where}\quad \varphi = \xi d\omega +d\xi\wedge \omega, \quad \psi = \xi \delta \omega - d\xi \lrcorner \omega.
		\end{gather*}
		As above, we assume without loss that $\omega$ is a homogeneous form of degree $r$. In coordinates following the notation of Section~\ref{ssec:local} we get a relation similar to \eqref{iid1c}:
		\begin{multline}\label{iid1e}
			\int\limits_{G_{2R}} \left[ \nabla \Omega : \nabla \zeta + (\xi_*[(a(\cdot)-a(0)) \nabla \Omega + b \Omega]+ \hat{e}) \nabla \zeta \right. \\+ \left. (\xi_*[b^* \nabla \Omega + c\Omega]+ \hat{f}) \zeta \right]  \, dx=0,
		\end{multline}
		where $\zeta \in C_0^\infty(\mathcal{I}_0, G_{2R})$, and the dependence of $e$, $f$ on $\varphi$, $\psi$ is described in Section~\ref{sec:coord}. As in Section~\ref{ssec:local}, we get (with the the same modification for $n=2$, see \eqref{eq:mod2})
		$$
		\Omega - T[\Omega] = \mathcal{P}[\hat f] + \mathcal{Q}[\hat e].
		$$
		From Lemmas~\ref{L:c0}, \ref{L:c0a} for sufficiently small $R>0$ the operator $T$ acts as a contraction on $W^{k+1,p(\cdot)}(\mathcal{I}_0,G_{2R})$, $k=0,\ldots,s$. Therefore  
		\begin{align*}
			\|\Omega\|_{k+1,p(\cdot),G_{2R}} &\leq 2(\|\mathcal{P}[\hat f]\|_{k+1,p(\cdot),G_{2R}} + \|\mathcal{Q}[\hat e]\|_{k+1,p(\cdot),G_{2R}})\\
			&\leq C (\|d\omega\|_{k,p(\cdot),G_{2R}} + \|\delta \omega\|_{k,p(\cdot),G_{2R}}+\|\omega\|_{k,p(\cdot),G_{2R}}).
		\end{align*}
		Since we can cover our manifold by an open cover where each element is the image of $G_R$, we get
		$$
		\|\omega\|_{k+1,p(\cdot),M} \leq C (\|d\omega\|_{k,p(\cdot),M}+ \|\delta \omega\|_{k,p(\cdot),M}+\|\omega\|_{k,p(\cdot),M}).
		$$
		By iterating this inequality and using the interpolation Lemma~\ref{C:Di1} we finally arrive at the Gaffney type estimate \eqref{eq:GS}.
	\end{proof}
	
	\begin{lemma}\label{T:SimpleGaffney1}
		Let $M$ be of the class $C^{s+1,1}$, $s\in \{0\}\cup \mathbb{N}$, $\omega \in W^{1,p_{-}}_*(M,\Lambda)$, $*\in \{0,T,N\}$, and $d\omega, \delta \omega \in W^{s,p(\cdot)}(M,\Lambda)$. Then $\omega \in W^{s+1,p(\cdot)}(M,\Lambda)$ and 
		\begin{equation}\label{eq:GS1}
			\|\omega-\mathcal{P}_*\omega\|_{s+1,p(\cdot),M} \leq C(\mathrm{data},s) (\|d\omega\|_{s,p(\cdot),M}+\|\delta\omega\|_{s,p(\cdot),M}).
		\end{equation} 
	\end{lemma}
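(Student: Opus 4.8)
\textbf{Proof plan for Lemma~\ref{T:SimpleGaffney1}.}

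The plan is to combine the ``with norm on the right'' Gaffney estimate of Lemma~\ref{T:SimpleGaffney} with the standard compactness argument that removes the lower-order term, exactly as one removes $\|\omega\|_{L^1}$ from elliptic a priori estimates. First I would observe that Lemma~\ref{T:SimpleGaffney} already gives $\omega \in W^{s+1,p(\cdot)}(M,\Lambda)$, so only the improved estimate \eqref{eq:GS1} remains to be proved. Since $\mathcal{P}_*\omega$ is a harmonic field (hence as smooth as $M$ allows and in particular in $W^{s+1,p(\cdot)}(M,\Lambda)$ with $d(\mathcal{P}_*\omega)=\delta(\mathcal{P}_*\omega)=0$), and since $\mathcal{P}_*$ is a bounded projector annihilated by applying $\mathcal{P}_*$ again, it suffices to prove \eqref{eq:GS1} under the additional normalization $\mathcal{P}_*\omega=0$, i.e. to show
\begin{equation*}
\|\omega\|_{s+1,p(\cdot),M} \leq C(\mathrm{data},s)(\|d\omega\|_{s,p(\cdot),M}+\|\delta\omega\|_{s,p(\cdot),M}) \quad \text{whenever}\quad \mathcal{P}_*\omega=0,
\end{equation*}
and then apply this to $\omega-\mathcal{P}_*\omega$ in the general case.

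The core is the contradiction/compactness step. Suppose the normalized estimate fails; then there is a sequence $\omega_j \in W^{1,p_{-}}_*(M,\Lambda)$ with $d\omega_j,\delta\omega_j \in W^{s,p(\cdot)}(M,\Lambda)$, $\mathcal{P}_*\omega_j=0$, $\|\omega_j\|_{s+1,p(\cdot),M}=1$, and $\|d\omega_j\|_{s,p(\cdot),M}+\|\delta\omega_j\|_{s,p(\cdot),M}\to 0$. By reflexivity of $W^{s+1,p(\cdot)}(M,\Lambda)$ (the exponent is log-H\"older, $1<p_-\le p_+<\infty$) a subsequence converges weakly to some $\omega_\infty$, and by the compact embedding $W^{s+1,p(\cdot)}(M,\Lambda)\hookrightarrow W^{s,p(\cdot)}(M,\Lambda)$ (Rellich--Kondrachov, which holds in this variable-exponent setting since $p$ is bounded and log-H\"older) the convergence is strong in $W^{s,p(\cdot)}(M,\Lambda)$. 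The weak limits of $d\omega_j$ and $\delta\omega_j$ are $d\omega_\infty$ and $\delta\omega_\infty$ and must vanish, so $d\omega_\infty=\delta\omega_\infty=0$; also $t\omega_\infty=0$ (resp. $n\omega_\infty=0$, resp. both) is preserved under weak convergence since the corresponding subspace is closed, hence $\omega_\infty \in \mathcal{H}_*(M)$. Since $\mathcal{P}_*$ is continuous on $L^{p_-}(M,\Lambda)$ and $\mathcal{P}_*\omega_j=0$, passing to the limit gives $\mathcal{P}_*\omega_\infty=0$; but $\mathcal{P}_*$ restricts to the identity on $\mathcal{H}_*(M)$, so $\omega_\infty=0$. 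Now apply Lemma~\ref{T:SimpleGaffney} to $\omega_j$:
\begin{equation*}
1=\|\omega_j\|_{s+1,p(\cdot),M} \leq C(\|d\omega_j\|_{s,p(\cdot),M}+\|\delta\omega_j\|_{s,p(\cdot),M}+\|\omega_j\|_{1,M}).
\end{equation*}
The first two terms tend to $0$ by hypothesis, and $\|\omega_j\|_{1,M}\le C\|\omega_j\|_{s,p(\cdot),M}\to \|\omega_\infty\|_{s,p(\cdot),M}=0$ by the strong convergence; this forces $1\le 0$, a contradiction. Hence the normalized estimate holds, and \eqref{eq:GS1} follows.

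The main obstacle, and the only place requiring care, is justifying the compactness and closedness ingredients in the variable-exponent framework: namely the compact embedding $W^{s+1,p(\cdot)}\hookrightarrow W^{s,p(\cdot)}$ and the weak closedness of the boundary-condition subspaces $W^{1,p(\cdot)}_*(M,\Lambda)$, together with the continuity of $\mathcal{P}_*$ on the relevant spaces. All of these are standard: the compact embedding follows from the log-H\"older assumption and boundedness of $p$ (via the constant-exponent Rellich theorem together with $L^{p(\cdot)}\subset L^{p_-}$ and $L^{p_+}\subset L^{p(\cdot)}$ on the compact manifold), the subspaces are closed by definition of the trace conditions, and $\mathcal{P}_*$ is given by a finite sum of $L^2$-pairings against smooth harmonic fields, hence bounded on every $L^q(M,\Lambda)$, $q>1$. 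One could alternatively avoid the contradiction argument entirely by invoking the Hodge decomposition (Lemma~\ref{L:GaffneyD1}, Lemma~\ref{L:GaffneyN1}) as in Section~\ref{ssec:Gaffney}, but the present self-contained route is in the spirit of Section~\ref{ssec:SimpleGaffney}.
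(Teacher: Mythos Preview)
Your argument is correct, but the paper's own proof takes a different, more constructive route. Instead of arguing by contradiction and compactness, the paper uses a duality argument in the spirit of Lemma~\ref{L:quant}: for arbitrary $\eta\in L^{p_-'}(M,\Lambda)$ with $\mathcal{P}_*\eta=0$ one constructs the potential $\Phi=G_*[\eta]$, controls $\|\Phi\|_{1,p_-',M}\le C\|\eta\|_{p_-',M}$ by combining the $L^2$ variational bound $\mathcal{D}(\Phi,\Phi)\le C(\eta,\eta)$ with the classical Gaffney inequality and the bootstrapping of Corollary~\ref{L:corr1}/Lemma~\ref{T:SimpleGaffney}, and then pairs against $\omega$ to obtain $\|\omega-\mathcal{P}_*\omega\|_{p_-,M}\le C(\|d\omega\|_{p_-,M}+\|\delta\omega\|_{p_-,M})$. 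Feeding this back into Lemma~\ref{T:SimpleGaffney} replaces the $\|\omega\|_{1,M}$ term and yields~\eqref{eq:GS1}.

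The trade-off is exactly the one the paper flags after Lemma~\ref{T:D1}: your compactness argument produces a constant that is merely existential (it comes from a contradiction), whereas the paper's duality proof makes the constant \emph{quantitative}, traceable to the spectral constant $C_T$ (resp.\ $C_N$, $C_0$) in~\eqref{eq:lambda}. Since the paper explicitly aims for quantitative constants, that is the preferred route here. A minor technical remark on your version: you only actually need strong convergence of $\omega_j$ in $L^1(M,\Lambda)$ (to kill the $\|\omega_j\|_{1,M}$ term), so the standard Rellich embedding $W^{1,p_-}\hookrightarrow\hookrightarrow L^1$ suffices and you can avoid invoking the variable-exponent compact embedding $W^{s+1,p(\cdot)}\hookrightarrow\hookrightarrow W^{s,p(\cdot)}$ altogether.
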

	\begin{proof}
		We prove this for the tangential boundary condition. Assume without loss that $p_{-}\leq 2$ and let $\eta\in L^{p_{-}'}(M,\Lambda)$, $\mathcal{P}_T\eta=0$. Since for $\Phi =G_D[\eta]$ we have
		$$
		\|\Phi\|_{2,M}\leq C(M) (\|d\Phi\|_{2,M}+ \|\delta\Phi\|_{2,M}) = C(M) \sqrt{\mathcal{D}(\Phi,\Phi)},\quad \mathcal{D}(\Phi,\Phi) = (\eta,\Phi),
		$$
		we get $\mathcal{D}(\Phi,\Phi)\leq C(M) (\eta,\eta)$ and
		$$
		\|\Phi\|_{2,M} \leq C (M) \|\eta\|_{2,M}.
		$$
		By the classical Gaffney inequality,
		$$
		\|\Phi\|_{1,2,M} \leq C(M) (\|d\Phi\|_{2,M}+ \|\delta\Phi\|_{2,M}) \leq C(M) \|\eta\|_{2,M}.
		$$
		By Lemma~\ref{L:corr1} and Lemma~\ref{T:SimpleGaffney},
		$$
		\|\Phi\|_{1,p_{-}',M}\leq C(M,p_{-}) \|\eta\|_{p_{-}',M}.
		$$
		Then
		$$
		(\omega-\mathcal{P}_T \omega, \eta)=\mathcal{D}(\Phi, \omega) \leq C(M,p_{-})(\|d\omega\|_{p_{-},M} +\|\delta\omega\|_{p_{-},M})\|\eta\|_{p_{-}',M}.
		$$
		It follows that for any $\zeta \in L^{p_{-}'}(M,\Lambda)$ there holds
		$$
		(\omega-\mathcal{P}_T \omega, \zeta) \leq C(M,p_{-})(\|d\omega\|_{p_{-},M} +\|\delta\omega\|_{p_{-},M})\|\zeta\|_{p_{-}',M}.
		$$
		Thus
		$$
		\|\omega - \mathcal{P}_T \omega\|_{p_{-},M} \leq C(M,p_{-}) (\|d\omega\|_{p_{-},M} +\|\delta\omega\|_{p_{-},M}).
		$$
		Combined with Lemma~\ref{T:SimpleGaffney} this yields \eqref{eq:GS1}.
	\end{proof}
	
	To prove Theorem~\ref{T:NNaturalGaffney} it remains to show that the existence of generalized differential and codifferential in $L^{p_{-}}(M,\Lambda)$ implies that $\omega \in W^{1,p_{-}}(M,\Lambda)$.  This is proved in Lemma \ref{existence of weak derivative tangential} and Lemma \ref{existence of weak derivative normal} for tangential and normal boundary conditions, respectively. To prove them, we shall use the following technical lemma.

	\begin{lemma} \label{L:VT}
		(i) Let $a^{\alpha\beta},f^\alpha,w\in L^{p_-}(\mathbb{R}^n)$ and the function $u\in L^1(\mathbb{R}^n)$ with compact support satisfy 
		\begin{equation}\label{eq:rel00}
			\int\limits_{\mathbb{R}^n} u \triangle \varphi \, dx = \int\limits_{\mathbb{R}^n} [a^{\alpha\beta}\varphi_{,\alpha\beta}+f^{\alpha}\varphi_{,\alpha} + w\varphi]\, dx
		\end{equation}
		for all $\varphi \in C_0^\infty(\mathbb{R}^n)$. Then 
		\begin{equation}\label{eq:sol0}
			u=(P[a^{\alpha\beta}])_{,\alpha\beta}- (P[f^{\alpha}])_{,\alpha}+P[w].
		\end{equation}
		If $n=2$ one additionally requires that the integral of $w$ over $\mathbb{R}^2$ is zero.

		(ii) Let $a_{\alpha\beta},f^\alpha,w\in L^{p_-}(\mathbb{R}^n_+)$ and the function $u\in L^1(\mathbb{R}^n_+)$ with compact support in $\overline{\mathbb{R}^n_+}$ satisfy 
		\begin{equation}\label{eq:rel01}
			\int\limits_{\mathbb{R}^n_+} u \triangle \varphi \, dx = \int\limits_{\mathbb{R}^n_+} [a^{\alpha\beta}\varphi_{,\alpha\beta}+f^{\alpha}\varphi_{,\alpha}+w\varphi]\, dx
		\end{equation}
		for all $\varphi \in C_0^\infty(\overline{\mathbb{R}^n_+})$ satisfying $\varphi =0$ on $\{x^n=0\}$. Then
		\begin{align}\label{eq:sol01}
		 u =	\begin{aligned}[t]
				P^D[w]&-\sum_{\alpha=1}^{n-1} (P^D[f^{\alpha}])_{,\alpha} - (P^N[f^n])_{,n}\\
				&+ \sum_{\alpha,\beta<n}(P^D[a^{\alpha\beta}])_{,\alpha\beta} + (P^D[a^{nn}])_{,nn} \\&\qquad + \sum_{\alpha=1}^{n-1} (P^N[a^{\alpha n}])_{,\alpha n} + \sum_{\beta=1}^{n-1} (P^N[a^{n\beta }])_{,n\beta}.
			\end{aligned}
		\end{align}
		(iii) If the integral identity \eqref{eq:rel01} holds for all $\varphi \in C_0^\infty(\overline{\mathbb{R}^n_+})$ satisfying $\varphi_{,n} =0$ on $\{x^n=0\}$ then 
		\begin{align}\label{eq:sol02}
			u = \begin{aligned}[t]
				 P^N[w]&-\sum_{\alpha=1}^{n-1} (P^N[f^{\alpha}])_{,\alpha} - (P^D[f^n])_{,n} \\
				&+\sum_{\alpha,\beta<n}(P^N[a^{\alpha\beta}])_{,\alpha\beta} + (P^N[a^{nn}])_{,nn} \\&\qquad + \sum_{\alpha=1}^{n-1} (P^D[a^{\alpha n}])_{,\alpha n} + \sum_{\beta=1}^{n-1} (P^D[a^{n\beta }])_{,n\beta}.
			\end{aligned}
		\end{align}
		 If $n=2$ this holds if the integral of $w$ over $\mathbb{R}^n_+$ is zero.
		\end{lemma}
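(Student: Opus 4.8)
The plan is to treat all three parts by the same device: denote by $v$ the right-hand side of \eqref{eq:sol0} (respectively of \eqref{eq:sol01}, \eqref{eq:sol02}), verify that $\int (u-v)\triangle\varphi\,dx=0$ for every admissible test function $\varphi$, deduce from Weyl's lemma that $u-v$ agrees a.e.\ with a harmonic function (entire in case (i), or harmonic after suitable reflection in cases (ii)--(iii)), and finally observe that this harmonic function decays at infinity, so that it vanishes identically by Liouville's theorem. The dimension $n=2$ enters only at this last step, where the zero-mean hypothesis on $w$ is what makes the relevant potentials decay rather than grow logarithmically.

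First I would do (i). All potentials occurring in $v$ are well defined — in the applications $a^{\alpha\beta},f^\alpha,w$ are compactly supported (coming from localized, cut-off data), and one may reduce to this case — so that $(P[a^{\alpha\beta}])_{,\alpha\beta}$ is the Calder\'on--Zygmund operator of Lemma~\ref{L:Di5} applied to $a^{\alpha\beta}\in L^{p_-}$, while $(P[f^{\alpha}])_{,\alpha}$ and $P[w]$ are controlled by Corollary~\ref{C:Di} and Lemma~\ref{L:DD}; in particular each term of $v$ lies in some $L^{q}(\mathbb{R}^{n})$ with $q<\infty$ and, for $n\ge 3$, decays at infinity. Recalling that $P[g]$ satisfies $\triangle P[g]=g$ weakly and that $P$ is self-adjoint, i.e.\ $\int_{\mathbb{R}^{n}}P[g]\,\triangle\psi\,dx=\int_{\mathbb{R}^{n}}g\,\psi\,dx$ for $\psi\in C_0^\infty(\mathbb{R}^{n})$, one fixes $\varphi\in C_0^\infty(\mathbb{R}^{n})$ and integrates by parts (boundary terms vanish since $\varphi$ is compactly supported), using $\triangle\partial^\gamma\varphi=\partial^\gamma\triangle\varphi$, to obtain
\begin{align*}
\int_{\mathbb{R}^{n}}(P[a^{\alpha\beta}])_{,\alpha\beta}\,\triangle\varphi\,dx
&=\int_{\mathbb{R}^{n}}P[a^{\alpha\beta}]\,\triangle(\varphi_{,\alpha\beta})\,dx=\int_{\mathbb{R}^{n}}a^{\alpha\beta}\varphi_{,\alpha\beta}\,dx,\\
-\int_{\mathbb{R}^{n}}(P[f^{\alpha}])_{,\alpha}\,\triangle\varphi\,dx
&=\int_{\mathbb{R}^{n}}P[f^{\alpha}]\,\triangle(\varphi_{,\alpha})\,dx=\int_{\mathbb{R}^{n}}f^{\alpha}\varphi_{,\alpha}\,dx,\\
\int_{\mathbb{R}^{n}}P[w]\,\triangle\varphi\,dx&=\int_{\mathbb{R}^{n}}w\,\varphi\,dx.
\end{align*}
Adding and comparing with \eqref{eq:rel00} yields $\int_{\mathbb{R}^{n}}(u-v)\triangle\varphi\,dx=0$ for all $\varphi\in C_0^\infty(\mathbb{R}^{n})$, hence $u-v$ is harmonic; since $u$ has compact support and $v$ decays at infinity (for $n=2$, after subtracting the logarithmic tail, which is annihilated by $\int w=0$ together with the corresponding cancellations for the $f$- and $a$-potentials), $u-v\in L^{q}(\mathbb{R}^{n})$ for some $q<\infty$ and is therefore identically zero.

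For (ii) and (iii) I would repeat this computation in $\mathbb{R}^n_+$ using that $P^D[g]=P[\mathcal{E}^D g]$ and $P^N[g]=P[\mathcal{E}^N g]$, so that by the parity of these extensions $\int_{\mathbb{R}^n_+}P^D[g]\,\triangle\psi\,dx=\int_{\mathbb{R}^n_+}g\,\psi\,dx$ for every $\psi\in C_0^\infty(\overline{\mathbb{R}^n_+})$ with $\psi=0$ on $\{x^n=0\}$, and $\int_{\mathbb{R}^n_+}P^N[g]\,\triangle\psi\,dx=\int_{\mathbb{R}^n_+}g\,\psi\,dx$ for every such $\psi$ with $\psi_{,n}=0$ on $\{x^n=0\}$. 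Given $\varphi$ satisfying the relevant boundary condition ($\varphi=0$ for (ii), $\varphi_{,n}=0$ for (iii)), one integrates each term of $v$ by parts, transferring the derivatives onto $\triangle\varphi$: a tangential derivative $\partial_\alpha$, $\alpha<n$, preserves both the Dirichlet class and the Neumann class, whereas a single normal derivative $\partial_n$ swaps the two (odd versus even), so the pattern of $P^D$'s and $P^N$'s prescribed in \eqref{eq:sol01} and \eqref{eq:sol02} is precisely the one keeping every test function in the correct class and making all boundary integrals over $\{x^n=0\}$ vanish; the remaining volume integrals reproduce the right-hand side of \eqref{eq:rel01} term by term. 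Then $u=v$ follows as before, from harmonicity of the appropriately reflected $u-v$, its decay at infinity, and Liouville's theorem, with the zero-mean hypothesis on $w$ used in the planar case of (iii).

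The main obstacle is exactly the combinatorial bookkeeping in (ii) and (iii): one must check, for every index pattern of second derivatives of $\varphi$ — purely tangential, mixed tangential--normal, and purely normal — that after integration by parts the boundary contributions on $\{x^n=0\}$ cancel through the prescribed parity and that all signs match, which is what forces the precise arrangement of Dirichlet and Neumann potentials in the statement. A secondary technical point is justifying the manipulations cleanly (the reduction to compactly supported $a^{\alpha\beta},f^\alpha,w$, or a direct argument via the compactly supported distribution $\partial_{\alpha\beta}a^{\alpha\beta}-\partial_\alpha f^\alpha+w$ and its Newtonian potential) and, for $n=2$, tracking the logarithmic growth of $P$, $P^N$ so that the zero-mean condition on $w$ indeed renders $v$ decaying.
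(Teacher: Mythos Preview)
Your proposal is correct and follows the same overall scheme as the paper: define $v$ as the right-hand side, verify that $u-v$ is weakly harmonic, and conclude by Liouville using decay at infinity (with the $n=2$ zero-mean caveat). Part~(i) is identical.

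For (ii) and (iii) there is a minor organizational difference worth noting. You propose to verify the weak identity for $v$ directly in $\mathbb{R}^n_+$ by integrating by parts and tracking which of $P^D$, $P^N$ is needed so that each test function $\partial^\gamma\varphi$ stays in the admissible class (Dirichlet or Neumann on $\{x^n=0\}$). This works, but as you anticipate, the combinatorics of tangential versus normal derivatives is the bookkeeping burden. The paper instead short-circuits this by extending $u$ across $\{x^n=0\}$ (oddly for (ii), evenly for (iii)) and simultaneously extending each datum $a^{\alpha\beta}$, $f^\alpha$, $w$ with the parity dictated by the number of normal indices; then the extended identity is exactly \eqref{eq:rel00} on all of $\mathbb{R}^n$, and (ii)--(iii) reduce to (i) outright. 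Since $P^D=P\circ\mathcal{E}^D$ and $P^N=P\circ\mathcal{E}^N$, this immediately recovers the claimed formulas \eqref{eq:sol01}--\eqref{eq:sol02} without any half-space integration by parts. Your route reaches the same destination but with more steps; the paper's reduction-by-reflection is the cleaner packaging of the same parity idea.
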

	
	\begin{proof} To prove the first statement consider the function $v$ defined by the right-hand side of \eqref{eq:sol0}. This function satisfies relation \eqref{eq:rel00}, thus by the Weyl lemma the difference $u-v$ is harmonic. Since it vanishes at infinity, $u=v$. 
		
		The second statement follows by considering the odd extension of $u$ across $x^n=0$, which satisfies \eqref{eq:rel00} for all $\varphi \in C_0^\infty(\mathbb{R}^n)$ with $a^{\alpha\beta}$ obtained by odd extension if $\alpha,\beta<n$ or $\alpha = \beta=n$ and even extension otherwise, $f^\alpha$ obtained by odd extension if $\alpha<n$ and by even extension if $\alpha=n$, and $w$ obtained by the odd extension.
		
		The third statement follows by considering the even extension of $u$ across $x^n=0$, which satisfies \eqref{eq:rel00} for all $\varphi \in C_0^\infty(\mathbb{R}^n)$ with $a^{\alpha\beta}$ obtained by even extension if $\alpha,\beta<n$ or $\alpha = \beta=n$ and odd extension otherwise, $f^\alpha$ obtained by even extension if $\alpha<n$ and by odd extension if $\alpha=n$, and $w$ obtained by the even extension.  
	\end{proof}
	In \eqref{eq:sol0}, \eqref{eq:sol01}, and \eqref{eq:sol02} the terms on the RHS corresponding to $f$ can be written as $Q[f]$, $Q^D[f]$, and $Q^N[f]$, respectively.
	
	\begin{lemma}\label{existence of weak derivative tangential}  Let $M \in C^{1,1}$ and $\omega,d\omega,\delta\omega \in L^{p_-}(M,\Lambda)$, $t\omega =0$. Then $\omega \in W^{1,p_{-}}(M,\Lambda)$.
	\end{lemma}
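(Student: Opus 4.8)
## Proof proposal

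The plan is to localize via a partition of unity subordinate to a finite admissible coordinate atlas and reduce to a model statement in a half-ball (interior patches are handled identically, and more easily, by the full-space version). So fix a homogeneous $r$-form $\omega$ with $\omega, d\omega, \delta\omega \in L^{p_{-}}(M,\Lambda)$ and $t\omega = 0$, take $\eta_j$ an element of the partition of unity supported in a boundary chart, and set $\Omega = \eta_j \omega$ read in admissible coordinates on $\overline{B}_{2R}^+$. Then $\Omega \in L^{p_{-}}$ has compact support in $\overline{B}_{2R}^+$, vanishing tangential components on $\{x^n = 0\}$ are preserved (in the weak sense of $W^{d,p_{-}}_T$, via the definition in Section~\ref{sec:partSob} and Lemma~\ref{L:approx2}), and $d\Omega, \delta\Omega \in L^{p_{-}}$ with the product-rule identities $d\Omega = \eta_j d\omega + d\eta_j \wedge \omega$, $\delta\Omega = \eta_j \delta\omega - d\eta_j \lrcorner \omega$.

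The core step is to show each component $\Omega_I$ is a weak solution of a scalar second-order equation of the form \eqref{eq:rel01}. The Hodge Laplacian in coordinates is $\triangle = \nabla^*\nabla + (\text{lower order})$ modulo the metric, so $(\triangle \Omega)_I = -g^{\alpha\beta}\partial_{\alpha\beta}\Omega_I + (\text{first order in }\Omega)$; but also $\triangle\Omega = d\delta\Omega + \delta d\Omega$, which — since $d\Omega$ and $\delta\Omega$ are in $L^{p_{-}}$ — lives in $W^{-1,p_{-}}$ with controlled norm. Testing the weak formula for $\triangle$ against $\varphi \in C_0^\infty(\overline{\mathbb{R}^n_+})$ with $\varphi = 0$ on $\{x^n=0\}$ (which is exactly the test class compatible with $t\Omega = 0$, using Corollary~\ref{C:approx2}(ii) to move the derivative onto $\varphi$), I get an identity
$$
\int_{\mathbb{R}^n_+} \Omega_I\, \triangle\varphi\, dx = \int_{\mathbb{R}^n_+}\bigl[a^{\alpha\beta}\varphi_{,\alpha\beta} + f^\alpha \varphi_{,\alpha} + w\varphi\bigr]\, dx,
$$
where $a^{\alpha\beta} = (\delta^{\alpha\beta} - g^{\alpha\beta})\Omega_I + \ldots$ is $O(R)\Omega_I$ near the base point plus bounded terms (here one uses $g^{\alpha\beta}(0) = \delta^{\alpha\beta}$ in the admissible atlas, just as in \eqref{eq:basic}), $f^\alpha$ collects the first-order coefficients against $\Omega$ together with the contributions of $d\Omega, \delta\Omega$, and $w$ collects the zeroth-order terms and the remaining pieces of $d\delta\Omega + \delta d\Omega$; all of $a^{\alpha\beta}, f^\alpha, w \in L^{p_{-}}(\mathbb{R}^n_+)$. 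For $n = 2$ one arranges the zero-mean normalization on $w$ as in the statement of Lemma~\ref{L:VT}, absorbing the constant correction as done throughout Chapter~\ref{sec:auxiliary} via $c(\,\cdot\,;B_{2R}^+)$.

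Now apply Lemma~\ref{L:VT}(ii): $\Omega_I$ is given by the explicit potential formula \eqref{eq:sol01} in terms of $P^D[w], Q^{D/N}[f^\alpha]$ and second derivatives of $P^{D/N}[a^{\alpha\beta}]$. By Lemma~\ref{L:pa1} (and Corollary~\ref{C:Di00}/Lemma~\ref{L:DD} for the mapping properties), $P^*[w], Q^*_\alpha[f^\alpha] \in W^{1,p_{-}}$, and the terms $(P^*[a^{\alpha\beta}])_{,\alpha\beta}$ are Calderón--Zygmund-type images of $a^{\alpha\beta}$, hence in $L^{p_{-}}$; crucially the coefficient of $\nabla\Omega_I$ hidden inside those CZ terms carries the small factor $O(R)$ from $a^{\alpha\beta} = O(R)\Omega_I + \ldots$, exactly as in the contraction argument of Lemma~\ref{L:c0}. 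This identifies $\Omega_I$ (after choosing $R$ small relative to the data) with a fixed point of a contraction on $W^{1,p_{-}}$ built from these potentials — equivalently, one verifies directly that the right side of \eqref{eq:sol01} belongs to $W^{1,p_{-}}(B_R^+)$ and so $\Omega_I \in W^{1,p_{-}}(B_R^+)$. Summing over the partition of unity gives $\omega \in W^{1,p_{-}}(M,\Lambda)$. The main obstacle is bookkeeping: setting up the scalar equation \eqref{eq:rel01} for $\Omega_I$ so that the coefficients genuinely land in $L^{p_{-}}$ and the $\nabla\Omega$-dependence enters only through the small-coefficient $a^{\alpha\beta}$ (so that the CZ term can be absorbed), which is where the admissible-coordinate normalization $g^{\alpha\beta}(0) = \delta^{\alpha\beta}$ and the weak identity $\triangle\Omega = d\delta\Omega + \delta d\Omega$ with $d\Omega,\delta\Omega \in L^{p_{-}}$ must be combined carefully; once that is in place the conclusion follows from Lemma~\ref{L:VT} and the half-space potential estimates already established.
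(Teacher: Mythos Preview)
Your overall architecture---localize, set up a perturbative equation for each component $\Omega_I$, invoke Lemma~\ref{L:VT}, and close via a contraction on $W^{1,p_-}$---matches the paper's. The gap is in the step you flag as ``the main obstacle'': producing the identity $\int \Omega_I\,\triangle\varphi = \int[a^{\alpha\beta}\varphi_{,\alpha\beta}+f^\alpha\varphi_{,\alpha}+w\varphi]$. Your route goes through the coordinate expression $(\triangle\Omega)_I = -g^{\alpha\beta}\partial_{\alpha\beta}\Omega_I + (\text{first order in }\Omega)$, i.e.\ a Bochner--Weitzenb\"ock splitting. On a $C^{1,1}$ manifold this is not available: the zeroth-order curvature term $R^W\Omega$ has coefficients that are derivatives of the (merely $L^\infty$) Christoffel symbols and so cannot be placed in $w\in L^{p_-}$. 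Put differently, passing from the two weak identities $(\Omega,\delta\varphi)=(d\Omega,\varphi)$ and $(\Omega,d\psi)=(\delta\Omega,\psi)$ to $\int\Omega_I\triangle\varphi$ requires one more integration by parts on $\Omega$, which is circular. There is also a boundary mismatch: the codifferential identity only holds for $\psi$ with $t\psi=0$ (Corollary~\ref{C:approx2}(iii), not (ii)), and if you take a single component $\zeta_I=\varphi$ vanishing on $\{x^n=0\}$ with $n\in I$, then $\delta_0\zeta$ does \emph{not} have vanishing tangential part, so the identity is unavailable for the normal components.

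The paper's proof sidesteps both issues by a specific test-form construction. One takes an $r$-form $W$ with mixed boundary conditions ($W_I=0$ on $\{x^n=0\}$ when $n\notin I$, and $\partial_nW_I=0$ when $n\in I$) and sets $\varphi=dW$, $\psi=\delta_0W$ (the \emph{Euclidean} codifferential). The boundary conditions on $W$ force $t\psi=0$, so both weak identities apply; summing them gives $(\Omega,\,\delta dW+d\delta_0W)$. Since $d\delta_0+\delta_0d=-\triangle_{\mathrm{eucl}}$ acts componentwise, this produces $\int\Omega_I\,\triangle W_I$ directly, with the only correction coming from $(\Omega,(\delta_0-\delta)dW)$ and from $G^{IJ}\sqrt g-\delta^{IJ}$. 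Because $\delta_0-\delta$ is first-order with leading coefficient $g^{kl}-\delta^{kl}=O(R)$ and zeroth-order coefficient $\Gamma\in L^\infty$, the resulting $A^{IJ\alpha\beta},B^{IJ\alpha}$ satisfy $|A|\le CR$, $|B|\le C$ using only \emph{first} derivatives of the metric---no curvature ever appears. Lemma~\ref{L:VT} (case (ii) for $I\in\mathcal I_0$, case (iii) for $I\notin\mathcal I_0$) then yields the representation $\Omega=\mathcal Q[F]+\mathcal T_0[\Omega]$, and the contraction argument you describe finishes the proof.
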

	
	\begin{proof} By definition, for any $\varphi \in \mathrm{Lip}(M,\Lambda)$ and $\psi \in \mathrm{Lip}_T(M,\Lambda)$ there holds
		$$
		(\omega,\delta \varphi) = (f,\varphi), \quad (\omega,d\psi)=(v,\psi),
		$$
		where $f,v \in L^p(M,\Lambda)$.
		
		Therefore, 
		\begin{equation}\label{eq:Gaffney_test}
			(\omega,\delta \varphi) + (\omega,d\psi) = (f,\varphi)+ (v,\psi),
		\end{equation}
		for any $\varphi \in \mathrm{Lip}(M,\Lambda)$, $\psi \in \mathrm{Lip}_T(M,\Lambda)$.
		
		Now, replacing the test forms $\varphi$ and $\psi$ by $\xi \varphi, \xi\psi$, where $\xi$ is an arbitrary Lipschitz function with support in one of coordinate charts, we get 
		$$
		(\xi \omega,\delta \varphi + d\psi) = (\xi f +d\xi \wedge \omega,\varphi) + (\xi v -d\xi \lrcorner \omega,\psi),
		$$
		or 
		\begin{equation}\label{eq:start_rel}
			\begin{gathered}
				(\Omega, \delta \varphi + d\psi) = (\widetilde f, \varphi) + (\widetilde v, \psi),\\
				\Omega = \xi \omega,\quad \widetilde f = \xi f +d\xi \wedge \omega, \quad \widetilde v = \xi v -d\xi \lrcorner \omega.
			\end{gathered}
		\end{equation}
		We shall prove that $\Omega \in W^{1,p_{-}}(M,\Lambda)$.
		
		We can assume that $\omega$ is a homogeneous form of degree $r$. Then in coordinates the relation \eqref{eq:start_rel} takes the form 
		\begin{align}\label{eq:rel001}
		\int\limits_{G_{2R}} G^{IJ}_r \Omega_I &[ (\delta \varphi)_J + (d\psi)_J] \sqrt{g}\, dx \\
				&=\int\limits_{G_{2R}} (G^{IJ}_{r+1} \widetilde f_I \varphi_J + G^{IJ}_{r-1} \widetilde v_I \psi_J) \sqrt{g}\, dx. \notag
				\end{align}
		Here $\varphi$, $\psi$ are Lipshitz on $\mathbb{R}^n$ ($\overline{\mathbb{R}^n_+}$ in boundary patches) with $\psi_I =0$ on $\{x_n=0\}$ if $I \in \mathcal{I}_0$ (that is if $I$ does not contain $n$). Further we work in a boundary coordinate system, the interior case being simpler.
		
		Since $\Omega=0$ outside $G_{2R}$,  relation \eqref{eq:rel001} can be extended to the whole $\mathbb{R}^n_+$ :
		\begin{align*}
			\int\limits_{\mathbb{R}^n_+} G^{IJ}_r \Omega_I &[ (\delta \varphi)_J + (d\psi)_J] \sqrt{g}\, dx \\
			&=\int\limits_{\mathbb{R}^n_+} (G^{IJ}_{r+1} \widetilde f_I \varphi_J + G^{IJ}_{r-1} \widetilde v_I \psi_J) \sqrt{g}\, dx.
		\end{align*}
		
		Let $W=W_I dx^I$ be a smooth form in with compact support in $\overline{\mathbb{R}^n_+}$ such that (for boundary coordinate charts)  
		\begin{equation}\label{eq:Wcond}
			W_I =0, \quad I \in \mathcal{I}_0, \quad \frac{\partial W_{I}}{\partial x^n}=0, \quad I \notin \mathcal{I}_0.
		\end{equation}
		Recall that in this case $I\in\mathcal{I}_0$ if it does not contain $n$.
		
		Let 
		\begin{equation}\label{eq:test_pp}
			\varphi = dW, \quad \psi = \delta_0 W,
		\end{equation}
		where $\delta_0$ is the codifferential operator in standard Euclidian metric:
		$$
		\psi_I= -\frac{\partial W_{jI}}{\partial x^j}.
		$$
		Then $t\psi=0$, and we have 
		\begin{align*}
			\int\limits_{\mathbb{R}^n_+} G^{IJ}_r \Omega_I &[ (\delta dW)_J + (d\delta_0 W)_J] \sqrt{g}\, dx \\
			&=\int\limits_{\mathbb{R}^n_+} (G^{IJ}_{r+1} \widetilde f_I (dW)_J + G^{IJ}_{r-1} \widetilde v_I (\delta_0 W)_J) \sqrt{g}\, dx.
		\end{align*}
		By our assumption on the admissible coordinate system, $G^{IJ}(0) = \delta^{IJ}$, $\sqrt{g}(0)=1$. Using that $\delta_0 d +d\delta_0 = -\triangle$ rewrite the relation above as 
		\begin{align*}
			-\int\limits_{\mathbb{R}^n_+} \Omega_I \triangle W_I\, dx 
			=\begin{aligned}[t]
				\int\limits_{\mathbb{R}^n_+} &(G^{IJ}_{r+1} \widetilde f_I (dW)_J + G^{IJ}_{r-1} \widetilde v_I (\delta_0 W)_J) \sqrt{g}\, dx
				\\ &+\int\limits_{\mathbb{R}^n_+}\xi_* G^{IJ}_r \Omega_I [ (\delta_0-\delta)d W]_J\, \sqrt{g}\, dx\\
				&\qquad \qquad- \int\limits_{\mathbb{R}^n_+} \xi_*[G^{IJ}_r(0)-G_r^{IJ}\sqrt{g}] \Omega_I \triangle W_J\, dx.
			\end{aligned}
		\end{align*}
		Rewrite this as 
		\begin{align*}
			\int\limits_{\mathbb{R}^n_+} \Omega_I \triangle W_I\, dx
			=\int\limits_{\mathbb{R}^n_+}  &F^{I\beta} W_{I,\beta}  dx\\
			&+\int\limits_{\mathbb{R}^n_+}\xi_* \Omega_J(A^{IJ\alpha\beta}  W_{I,\alpha\beta} +B^{IJ\alpha} W_{I,\alpha} )\, dx,
		\end{align*}
		where by the Lipschitz continuity of the metric on $G_{2R}$ we have 
		\begin{equation}\label{eq:Aest}
			|A^{IJ\alpha\beta}|\leq C R,\quad |B^{IJ\alpha}|, |(A^{IJ\alpha\beta})_{,\gamma}|\leq C
		\end{equation}
		with $C$ independent of $R$. Here $F^{I\beta}$ are linear combinations of $\widetilde f_I$, $\widetilde v_I$ with Lipchitz continuous coefficients which depend only on the metric:
		$$
		F^{I\beta}W_{I,\beta} = - G^{IJ}_{r+1}\widetilde f_I (dW)_J - G^{IJ}_{r-1} \widetilde v_I (\delta_0 W)_J.
		$$
		
		Denote the terms on the RHS of \eqref{eq:sol01} and \eqref{eq:sol02} corresponding to $a^{\alpha\beta}$ by $H_{\alpha\beta}[a]$, $H_{\alpha\beta}^D[a]$, and $H_{\alpha\beta}^N[a]$, respectively. Then by Lemma~\ref{L:VT} we have
		\begin{align*}
			\Omega_I &= Q_\alpha^D[F^{I\alpha}+B^{IJ\alpha}\xi_*\Omega_J] + H_{\alpha\beta}^D[\xi_* A^{IJ\alpha\beta} \Omega_J], \qquad I \in \mathcal{I}_0,\\
			\Omega_I &= Q_\alpha^N[F^{I\alpha}+B^{IJ\alpha}\xi_*\Omega_J] + H_{\alpha\beta}^N[\xi_* A^{IJ\alpha\beta} \Omega_J], \qquad I \in \mathcal{I}_0.
		\end{align*}
		We abbreviate this to 
		$$
		\Omega = \mathcal{Q}[F] + \mathcal{Q}[\xi_* B\Omega] + \mathcal{H}[\xi_* A \Omega].
		$$
		Consider the operator 
		$$
		\mathcal{T}_0 [\Omega] = \mathcal{Q}[\xi_* B\Omega] + \mathcal{H}[\xi_* A \Omega]
		$$
		on $L^p(G_{2R})$. By Lemmas~\ref{L:pa1}, \ref{L:DR040} and using \eqref{eq:Aest} we get
		\begin{align*}
			\|\mathcal{Q}[\xi_* B\Omega]\|_{p_-,G_{2R}} &\leq C R \|\Omega\|_{p_-,G_{2R}},\\
			\|\mathcal{H}[\xi_* A \Omega]\|_{p_-,G_{2R}} &\leq CR \|\Omega\|_{p_-,G_{2R}}.
		\end{align*}
		Thus
		$$ 	\|\mathcal{T}_0[\Omega]\|_{p_-,G_{2R}} \leq C(M,p_-) R \|\Omega\|_{p_-,G_{2R}}.		$$
		For sufficiently small $R$ the operator $\mathcal{T}_0$ is a contraction on $L^{p_-}(G_{2R})$ with the norm less than $1/2$, and there holds
		\begin{equation}\label{eq:series0}
			\Omega = \sum_{j=0}^\infty (\mathcal{T}_0)^j \mathcal{Q}[F].
		\end{equation}
		Now, using the same Lemmas~\ref{L:pa1}, \ref{L:DR040} and estimate \eqref{eq:Aest} we have
		\begin{align}\label{eq:T0contr}
			\begin{aligned}
				\|\mathcal{Q}[\xi_* B\Omega]\|^*_{1,p_-,G_{2R}} &\leq C  \|\Omega\|_{p_-,G_{2R}} \leq C R \|\Omega\|^*_{1,p_-,G_{2R}},\\
				\|\mathcal{H}[\xi_* A \Omega]\|_{1,p_-,G_{2R}} &\leq CR \|\xi_* A\Omega\|^*_{1,p_-,G_{2R}} \leq C R \|\Omega\|^*_{1,p_-,G_{2R}},
			\end{aligned}
		\end{align}
		which implies 
		$$ 	\|\mathcal{T}_0[u]\|^*_{1,p_-,G_{2R}} \leq C R \|u\|^*_{1,p_-,G_{2R}}. $$
		For sufficiently small $R>0$ the operator $\mathcal{T}_0$ is also a contraction on $W^{1,p_-}(G_{2R})$ with the norm less than $1/2$. Thus, the series in \eqref{eq:series0} converge also in $W^{1,p_-}(G_{2R})$ and we get $\Omega \in W^{1,p_-}(G_{2R})$.
	\end{proof}
	
	In the same way one obtains the following
	
	\begin{lemma}\label{existence of weak derivative normal}  Let $M \in C^{1,1}$ and $\omega,d\omega,\delta\omega \in L^{p_-}(M,\Lambda)$, $n\omega =0$. Then $\omega \in W^{1,p_-}(M,\Lambda)$.
	\end{lemma}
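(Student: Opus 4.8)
The statement to prove is the following.

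\medskip

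\textbf{Lemma} (\ref{existence of weak derivative normal}). \emph{Let $M\in C^{1,1}$ and $\omega,d\omega,\delta\omega \in L^{p_-}(M,\Lambda)$ with $n\omega=0$. Then $\omega \in W^{1,p_-}(M,\Lambda)$.}

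\medskip

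The plan is to mirror, almost verbatim, the proof of Lemma~\ref{existence of weak derivative tangential} (the tangential case), interchanging throughout the roles of the tangential and normal boundary conditions, and correspondingly the roles of the Dirichlet- and Neumann-type half-space potentials. First I would record the weak formulation: by definition of the weak differential and codifferential with $n\omega=0$, for every $\varphi\in \mathrm{Lip}_N(M,\Lambda)$ and every $\psi\in\mathrm{Lip}(M,\Lambda)$ we have $(\omega,\delta\varphi)=(f,\varphi)$ and $(\omega,d\psi)=(v,\psi)$ with $f=d\omega$, $v=\delta\omega$ in $L^{p_-}(M,\Lambda)$; adding these gives the analogue of \eqref{eq:Gaffney_test}, valid for $\varphi\in\mathrm{Lip}_N(M,\Lambda)$, $\psi\in\mathrm{Lip}(M,\Lambda)$. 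Multiplying the test forms by a Lipschitz cut-off $\xi$ supported in a single coordinate chart, and using that $\phi\wedge\omega,\phi\lrcorner\omega$ behave as expected under such products (Lemma~\ref{L:approx1}), I obtain the localized identity for $\Omega=\xi\omega$ exactly as in \eqref{eq:start_rel}, with $\widetilde f=\xi f+d\xi\wedge\omega$ and $\widetilde v=\xi v-d\xi\lrcorner\omega$, now tested against $\varphi\in\mathrm{Lip}_N$, $\psi\in\mathrm{Lip}$.

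Next I would pass to an admissible boundary coordinate system, assuming $\omega$ homogeneous of degree $r$ (the general case follows by linearity over homogeneous components, and interior patches are simpler). Writing the identity in coordinates as in \eqref{eq:rel001} and choosing the test forms $\varphi=dW$, $\psi=\delta_0 W$ for a smooth compactly supported $W=W_I\,dx^I$ now subject to the \emph{normal} boundary restrictions --- namely $W_I=0$ for $I\notin\mathcal{I}_0$ (i.e. for $I$ containing $n$) and $\partial W_I/\partial x^n=0$ for $I\in\mathcal{I}_0$, which guarantees $n\varphi=0$ so that $\varphi$ is an admissible test form --- and again using $\delta_0 d+d\delta_0=-\triangle$ together with $G^{IJ}(0)=\delta^{IJ}$, $\sqrt g(0)=1$, I reduce to an integral relation of the shape
\begin{align*}
\int\limits_{\mathbb{R}^n_+}\Omega_I\,\triangle W_I\,dx = \int\limits_{\mathbb{R}^n_+} F^{I\beta}W_{I,\beta}\,dx + \int\limits_{\mathbb{R}^n_+}\xi_*\,\Omega_J\bigl(A^{IJ\alpha\beta}W_{I,\alpha\beta}+B^{IJ\alpha}W_{I,\alpha}\bigr)\,dx,
\end{align*}
with $|A^{IJ\alpha\beta}|\le CR$, $|B^{IJ\alpha}|,|(A^{IJ\alpha\beta})_{,\gamma}|\le C$ by Lipschitz continuity of the metric on $G_{2R}$, and $F^{I\beta}$ a combination of $\widetilde f$, $\widetilde v$ with Lipschitz coefficients depending only on the metric. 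The only change from the tangential case is which boundary condition the admissible test forms $W$ satisfy, hence which of the two extension constructions of Lemma~\ref{L:VT} applies: for $I\in\mathcal{I}_0$ (not containing $n$) the relevant identity is now \eqref{eq:sol02} (the $C_0^\infty(\overline{\mathbb{R}^n_+})$ functions with $\varphi_{,n}=0$ on $\{x^n=0\}$), and for $I\notin\mathcal{I}_0$ it is \eqref{eq:sol01}. Concretely this means the potentials $Q^N, H^N$ are used where $Q^D, H^D$ appeared in Lemma~\ref{existence of weak derivative tangential}, and vice versa; this is exactly the dictionary already used in Section~\ref{sec:potentialHodge} to assemble $\mathcal{Q},\mathcal{P}$ from $Q^{D/N}, P^{D/N}$ according to membership in $\mathcal{I}_0$, except that here $\mathcal{I}_0$ plays the normal role.

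From here the argument is purely formal and identical in structure. Invoking Lemma~\ref{L:VT}(ii)--(iii) componentwise I get the representation $\Omega=\mathcal{Q}[F]+\mathcal{Q}[\xi_*B\Omega]+\mathcal{H}[\xi_*A\Omega]$ (with $\mathcal{Q},\mathcal{H}$ now built from the Neumann/Dirichlet potentials in the pattern dictated by the normal boundary conditions). I then set $\mathcal{T}_0[\Omega]=\mathcal{Q}[\xi_*B\Omega]+\mathcal{H}[\xi_*A\Omega]$ and use the boundedness estimates for the half-space potentials (Lemmas~\ref{L:pa1}, \ref{L:DR040}) together with the $CR$ bound on $A$ to show $\|\mathcal{T}_0[\Omega]\|_{p_-,G_{2R}}\le C(M,p_-)R\|\Omega\|_{p_-,G_{2R}}$ and, at one derivative higher, $\|\mathcal{T}_0[u]\|^*_{1,p_-,G_{2R}}\le CR\|u\|^*_{1,p_-,G_{2R}}$. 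Choosing $R=R(M,p_-)$ small enough that $\mathcal{T}_0$ is a contraction of norm $<1/2$ on both $L^{p_-}(G_{2R})$ and $W^{1,p_-}(G_{2R})$, the Neumann series $\Omega=\sum_{j\ge0}(\mathcal{T}_0)^j\mathcal{Q}[F]$ converges in $L^{p_-}$ and, since $\mathcal{Q}[F]\in W^{1,p_-}$ by Lemma~\ref{L:pa1} (here $F$ is genuinely only $L^{p_-}$, so it is essential that $\mathcal{Q}$ gains a derivative in the $\|\cdot\|^*_{1,p_-}$ norm), also in $W^{1,p_-}(G_{2R})$. Hence $\Omega=\xi\omega\in W^{1,p_-}$; covering $M$ by finitely many such charts and summing gives $\omega\in W^{1,p_-}(M,\Lambda)$. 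The only point demanding care --- the ``main obstacle'' such as it is --- is bookkeeping: verifying that the normal boundary restrictions on the test form $W$ are precisely the ones under which $\varphi=dW$, $\psi=\delta_0 W$ are admissible test forms in the weak identity (so that no boundary terms are dropped illegitimately), and that the resulting componentwise pairing of $\mathcal{I}_0$-indices with $H^{D/N}$, $Q^{D/N}$ is consistent; once that dictionary is fixed, the estimates are word-for-word those of the tangential case. Alternatively, one may bypass the coordinate computation entirely and deduce the statement from Lemma~\ref{existence of weak derivative tangential} by Hodge duality, applying the latter to $\ast\omega$, which satisfies $t(\ast\omega)=0$ and has $d(\ast\omega),\delta(\ast\omega)\in L^{p_-}$ since $\ast$ is a bounded invertible map on the relevant $L^{p_-}$ spaces on a $C^{1,1}$ manifold and intertwines $d$ with $\pm\delta$.
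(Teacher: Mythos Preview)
Your proposal is correct and follows essentially the same route as the paper's proof, which simply says to repeat the tangential argument with the boundary conditions on $W$ swapped so that $n(dW)=0$, and notes the rest goes through verbatim. One minor caveat: your use of $\mathcal{I}_0$ is the tangential convention ($n\notin I$), whereas in the paper's Section~\ref{sec:coord} the set $\mathcal{I}_0$ for the Neumann problem is defined as $\{I: i_r=n\}$; your parenthetical clarifications make the intended conditions unambiguous and correct, but be aware of this when invoking the dictionary of Section~\ref{sec:potentialHodge}. The Hodge-duality alternative you mention at the end is also valid and is how the paper handles several other tangential/normal pairs.
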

	
	\begin{proof} The proof repeats the previous case. We describe the differences. Now the identity \eqref{eq:Gaffney_test} holds for all $\varphi \in \mathrm{Lip}_N(M,\Lambda)$ and $\psi \in \mathrm{Lip}(M,\Lambda)$. The condition \eqref{eq:Wcond} is replaced by 
		$$
		W_{I}=0, \quad I \in \mathcal{I}_0, \quad \frac{\partial W_I}{\partial x^n}=0 \quad \quad n\notin \mathcal{I}_0.
		$$
		Recall that in this case $I \in \mathcal{I}_0$ if $I$ contains $n$. Then the test forms $\varphi = dW$, $\psi = \delta_0 W$, defined in  \eqref{eq:test_pp} satisfy $n\varphi= ndW=0$, and the rest goes through as above.
	\end{proof}

    Combining the Lemmas above, the proof of Theorem \ref{T:NNaturalGaffney} is complete. 

\begin{remark} \label{remark:HD}   
Similar to Lemmas~\ref{existence of weak derivative tangential}, \ref{existence of weak derivative normal}, one can also obtain corresponding statements for the Hodge-Dirac equation, thus providing an alternative ``localized'' proof of Theorems~\ref{T:HodgeDiracGaffney},~\ref{T:HodgeDiracGaffneyN}. To this end, we first localize \eqref{eq:HD} which yields the relation
$$
(\Omega, \alpha d\zeta + \delta \zeta)= (\xi f+d\xi \wedge \omega - \alpha d\xi \lrcorner \omega,\zeta)
$$
for all $\zeta \in \mathrm{Lip}_T(M,\Lambda)$, compare \eqref{eq:start_rel} (we work here with the case of tangential boundary values). Now in local admissible coordinates, set $\zeta =dW + \alpha^{-1}\delta_0 W$ where $W\in C_0^\infty(\overline{\mathbb{R}^n_+})$ and $tW =0$ on $x^n=0$. This gives 
$$
(\Omega, \delta d W + d \delta_0 W + \alpha^{-1}\delta \delta_0 W) = (\xi f+d\xi \wedge \omega - \alpha d\xi \lrcorner \omega,dW + \alpha^{-1} \delta W).
$$
This integral identity is of the same form as the one used in the proof of Lemma~\ref{existence of weak derivative tangential}. The difference is that generally we can not split it into independent equations corresponding to homogeneous forms. Nevertheless, in local coordinates we have
$$
\delta d W + d \delta_0 W + \alpha^{-1}\delta \delta_0 W+ \triangle_{\mathrm{eucl}}W = (\delta-\delta_0)dW + \alpha^{-1}(\delta-\delta_0) \delta_0 W 
$$ 
with the coefficients by the second derivatives in the RHS being small, so the rest of the proof of Lemma~\ref{existence of weak derivative tangential} goes modulo some heavier notation --- we also have to sum integral identities over all $2(n+1)$ homogeneous components.
\end{remark}

	\section{Refined local a priori estimates}\label{sec:refined}

	In this Section we obtain refined local a priori estimates which may be of use of certain applications to low-regularity cases. We slightly change the solution representation obtained in Section~\ref{ssec:local}, keeping the same notation as above. This allows to obtain a refinement of the estimate obtained in Corollary~\ref{L:corr1}. First we show how to obtain the required result by using the estimate of Corollary~\ref{L:corr1} and duality. However, this estimate is not a local a priori estimate since it requires solving a dual problem.

	\begin{lemma}\label{L:TrueGaffney}
		Let $M$ be of the class $C^{1,1}$. Let $\omega \in W^{1,p_{-}}_{*}(M,\Lambda)$, $* \in \{0,T,N\}$ satisfy \eqref{eq1}  where $\varphi,\psi,\eta \in L^{p(\cdot)}(M,\Lambda)$ for any $\zeta\in \mathrm{Lip}_*(M,\Lambda)$. Then $\omega \in W^{1,p(\cdot)}(M,\Lambda)$ and
		\begin{equation}\label{eq:critest}
			\|\omega\|_{1,p(\cdot),M} \leq C(\mathrm{data}) (\|\eta\|_{p(\cdot),M} +\|\varphi\|_{p(\cdot),M} + \|\psi\|_{p(\cdot),M} + \|\omega\|_{1,M}).
		\end{equation}
	\end{lemma}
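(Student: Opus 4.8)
The plan is to bootstrap from the already-established estimate in Corollary~\ref{L:corr1} (equivalently Corollary~\ref{L:corr1a} in the range $p_{-}\geq 2$), which gives the conclusion \emph{with $\|\omega\|_{1,p_{-},M}$ on the right-hand side}, and improve the right-hand side to $\|\omega\|_{1,M}$ together with $\|\eta\|_{p(\cdot),M}+\|\varphi\|_{p(\cdot),M}+\|\psi\|_{p(\cdot),M}$. The content of the lemma is thus really a \emph{quantitative control of $\|\omega\|_{1,p_{-},M}$} by the $L^{1}$-norm of $\omega$ plus the data norms, after which the desired estimate follows by plugging this back into Corollary~\ref{L:corr1}. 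Note that $W^{1,p(\cdot)}$-membership of $\omega$ is already guaranteed by Corollary~\ref{L:corr1}; only the estimate needs work.

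First I would reduce to the case $p_{-}\leq 2$ (if $p_{-}>2$ the estimate \eqref{eq:critest} is immediate from Corollary~\ref{L:corr1a} and interpolation, since then $W^{1,2}$-theory applies directly). Under $p_{-}\leq 2$, I would bound $\|\omega\|_{1,p_{-},M}$ by a duality argument in the spirit of Lemma~\ref{T:SimpleGaffney1} / Lemma~\ref{L:quant}: for $\mu\in L^{p_{-}'}(M,\Lambda)$ test the variational relation against $\xi=G_{*}[\mu-\mathcal{P}_{*}\mu]$ (the appropriate Dirichlet/Neumann/full-Dirichlet potential of the projected datum), using the Gaffney inequality $\|\xi\|_{2,M}\le C\sqrt{\mathcal D(\xi,\xi)}$ and $\mathcal D(\xi,\xi)=(\mu-\mathcal P_*\mu,\xi)$ to get $\|\xi\|_{2,M}\le C\|\mu\|_{2,M}$, then the quantitative elliptic estimate of Lemma~\ref{T:D1} (applied to the dual problem) to get $\|\xi\|_{1,p_{-}',M}\le C\|\mu\|_{p_{-}',M}$. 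Since
$$
(\omega-\mathcal P_*\omega,\mu)=\mathcal D(\omega-\mathcal P_*\omega,\xi)=\mathcal D(\omega,\xi)=(\eta,\xi)+(\varphi,d\xi)+(\psi,\delta\xi),
$$
this yields $\|\omega-\mathcal P_*\omega\|_{p_{-},M}\le C(\|\eta\|_{p_{-},M}+\|\varphi\|_{p_{-},M}+\|\psi\|_{p_{-},M})$. Combined with the fact that $\mathcal P_*\omega$ is a finite-dimensional projection (so $\|\mathcal P_*\omega\|_{1,p_{-},M}\le C\|\omega\|_{1,M}$, the harmonic fields being smooth) and with Corollary~\ref{L:corr1} itself (which controls $\|\omega\|_{1,p_{-},M}$ by $\|\omega\|_{p_{-},M}$ — recall the Remark after that corollary lets us replace $\|\omega\|_{1,p_{-},M}$ on the right by $\|d\omega\|_{p_{-},M}+\|\delta\omega\|_{p_{-},M}+\|\omega\|_{p_{-},M}$, or one iterates once), one gets
$$
\|\omega\|_{1,p_{-},M}\le C(\|\eta\|_{p_{-},M}+\|\varphi\|_{p_{-},M}+\|\psi\|_{p_{-},M}+\|\omega\|_{1,M}).
$$

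Finally I would feed this bound into the estimate \eqref{P1e1} of Corollary~\ref{L:corr1}: that estimate reads $\|\omega\|_{1,p(\cdot),M}\le C\|\omega\|_{1,p_{-},M}+C(\|\eta\|_{p(\cdot),M}+\|\varphi\|_{p(\cdot),M}+\|\psi\|_{p(\cdot),M})$, and substituting the just-obtained control of $\|\omega\|_{1,p_{-},M}$ — together with $\|\cdot\|_{p_{-},M}\le C\|\cdot\|_{p(\cdot),M}$ on a compact manifold (Hölder in variable exponent spaces) — gives exactly \eqref{eq:critest}. The main obstacle is making the duality step fully rigorous: one must ensure the dual potential $\xi=G_{*}[\mu-\mathcal P_*\mu]$ is legitimately usable as a test form in \eqref{eq1} (this needs the approximation Lemma~\ref{L:approx1} and the fact, from the earlier potential-theory sections, that $\xi\in W^{1,p_{-}'}_{*}$ with $\mathcal D$ and the pairing $(\omega-\mathcal P_*\omega,\mu)=\mathcal D(\omega-\mathcal P_*\omega,\xi)$ both well-defined for $p_{-}\le 2$, $\omega\in W^{1,p(\cdot)}$), and that the constant in Lemma~\ref{T:D1} applied to the dual problem is genuinely quantitative — which it is, being expressible through the Poincaré-type constant $C_*$ in \eqref{eq:lambda}. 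A secondary technical point is handling the three boundary-condition cases $*\in\{0,T,N\}$ uniformly, including the projector conventions ($\mathcal P_0=0$), but these are already in place via Lemmas~\ref{L:quant},~\ref{L:quant1},~\ref{L:quant0}.
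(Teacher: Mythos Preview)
Your duality step is the one from Lemma~\ref{L:quant}: testing against $\xi=G_{*}[\mu-\mathcal P_{*}\mu]$ yields only
\[
\|\omega-\mathcal P_{*}\omega\|_{p_{-},M}\le C\bigl(\|\eta\|_{p_{-},M}+\|\varphi\|_{p_{-},M}+\|\psi\|_{p_{-},M}\bigr),
\]
i.e.\ an $L^{p_{-}}$-bound for $\omega$, not a $W^{1,p_{-}}$-bound. To feed into Corollary~\ref{L:corr1} you need $\|\omega\|_{1,p_{-},M}$; the Remark after that corollary replaces this on the right by $\|d\omega\|_{p_{-},M}+\|\delta\omega\|_{p_{-},M}+\|\omega\|_{p_{-},M}$, which still contains the derivative terms you have not controlled. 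Iterating does not remove them. So the argument, as written, has a gap at exactly the step ``Corollary~\ref{L:corr1} itself (which controls $\|\omega\|_{1,p_{-},M}$ by $\|\omega\|_{p_{-},M}$)'': that implication is not available.

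The paper closes this gap by dualizing against $d\omega$ and $\delta\omega$ rather than against $\omega$. For $\xi\in L^{p_{-}'}(M,\Lambda)$ one takes $\Phi\in W^{1,2}_{*}\cap(\mathcal H_{*})^{\perp}$ minimizing $\tfrac12\mathcal D(\Phi,\Phi)-(\xi,d\Phi)$, so that $\mathcal D(\Phi,\zeta)=(\xi,d\zeta)$ (right-hand side of divergence type, i.e.\ $\varphi=\xi$, $\eta=\psi=0$ in \eqref{eq1}). Then $\mathcal D(\Phi,\Phi)\le(\xi,\xi)$, whence by \eqref{eq:lambda} and Corollary~\ref{L:corr1a} one gets $\|\Phi\|_{1,p_{-}',M}\le C\|\xi\|_{p_{-}',M}$; plugging $\omega$ into the dual relation gives
\[
(\xi,d\omega)=\mathcal D(\Phi,\omega)=(\eta,\Phi)+(\varphi,d\Phi)+(\psi,\delta\Phi),
\]
hence $\|d\omega\|_{p_{-},M}\le C(\|\eta\|_{p_{-},M}+\|\varphi\|_{p_{-},M}+\|\psi\|_{p_{-},M})$, and similarly $\|\delta\omega\|_{p_{-},M}$. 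Now the Gaffney inequality of Lemma~\ref{T:SimpleGaffney} converts this into a $W^{1,p_{-}}$-bound, and Corollary~\ref{L:corr1} finishes. Your framework is correct, but the dual potential must solve $\mathcal D(\Phi,\zeta)=(\xi,d\zeta)$ rather than $\mathcal D(\xi,\zeta)=(\mu,\zeta)$.
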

	\begin{proof}
		For $p_{-}\geq 2$ this statement is already contained in Corollary~\ref{L:corr1a}.  Assume without loss that $p_{-}\leq 2$. We treat the Dirichlet problem, the other two cases being similar. Let $\xi,\mu \in L^{p_{-}'}(M,\Lambda)$. By minimizing the functionals 
		$$
		\mathcal{F}_1[\Phi]=\frac{1}{2}\mathcal{D}(\Phi,\Phi) - (\xi,d\Phi), \quad \mathcal{F}_2[\psi]=\frac{1}{2}\mathcal{D}(\Psi,\Psi) - (\mu,d\Psi)
		$$
		over $W^{1,2}_T(M,\Lambda) \cap (\mathcal{H}_T(M))^\perp ,$ we find the minimizers satisfying 
		\begin{equation}\label{eq:aux}
			\mathcal{D}(\Phi,\zeta) = (\xi,d\Phi), \quad \mathcal{D}(\Psi,\zeta) = (\mu,d\Psi)
		\end{equation}
		for all $\zeta \in W_{T}^{1,2}(M,\Lambda)$. In particular,  we have
		$$
		\mathcal{D}(\Phi,\Phi) \leq (\xi,\xi), \quad \mathcal{D}(\Psi,\Psi) \leq (\mu,\mu).
		$$
		By \eqref{eq:lambda}, 
		$$
		(\Phi,\Phi) \leq C_T(\xi,\xi), \quad (\Psi,\Psi) \leq C_T(\mu,\mu).
		$$
		By Corollary~\ref{L:corr1a} we obtain $\Phi,\Psi \in W^{1,p_{-}'}(M,\Lambda)$ and
		\begin{equation}\label{eq:aux1}
			\|\Phi\|_{1,p_{-}',M} \leq C(\mathrm{data}) \|\xi\|_{p_{-}',M}, \quad \|\Psi\|_{1,p_{-}',M} \leq C(\mathrm{data})\|\mu\|_{p_{-}',M}).
		\end{equation}
		Using approximation,  the space of test forms $\zeta$ in \eqref{eq:aux}  can be extended to $W^{1,p_{-}}_T(M,\Lambda)$. Thus
		\begin{gather*}
			(\xi,d\omega) = \mathcal{D}(\Phi,\omega)  = (\eta,\Phi) + (\varphi,d\Phi ) + (\psi, \delta \Phi),\\
			(\mu,\delta\omega) =  \mathcal{D}(\Psi,\omega) = (\eta,\Psi) + (\varphi,d\Psi) + (\psi,\delta \Psi).
		\end{gather*}
		Using \eqref{eq:aux1} we have
		\begin{gather*}
			|(\xi,d\omega)|\leq C(\mathrm{data}) \|\xi\|_{p_{-}',M}(\|\eta\|_{p_{-},M} +\|\varphi\|_{p_{-},M}  +\|\psi\|_{p_{-},M}),\\
			|(\mu,\delta\omega)| \leq C(\mathrm{data}) \|\mu\|_{p_{-}',M}(\|\eta\|_{p_{-},M} +\|\varphi\|_{p_{-},M}  +\|\psi\|_{p_{-},M}).
		\end{gather*}
		By the duality characterization of the norm this yields
		\begin{gather*}
			\|d\omega\|_{p_{-},M} \leq C(\mathrm{data}) (\|\eta\|_{p_{-},M} +\|\varphi\|_{p_{-},M}  +\|\psi\|_{p_{-},M}),\\
			\|\delta\omega\|_{p_{-},M} \leq C(\mathrm{data})(\|\eta\|_{p_{-},M} +\|\varphi\|_{p_{-},M}  +\|\psi\|_{p_{-},M}).
		\end{gather*}
		Now we can use the Gaffney inequality of Lemma~\ref{T:SimpleGaffney} and  Corollary~\ref{L:corr1} to derive the inequality \eqref{eq:critest}. 
	\end{proof}
	
	In the following theorem we obtain the same result without using duality argument. This requires a different representation of terms arising at the localization stage in Section~\ref{ssec:local}. Namely, by using integraion-by-parts and special structure of the equation one can get rid of the derivatives of $\omega$ on the right-hand side of the resulting equation.
	
	\begin{theorem}\label{T:TrueGaffney}
		Let $M$ be of the class $C^{1,1}$. Let $\omega \in W^{1,p_{-}}_{*}(M,\Lambda)$, $* \in \{0,T,N\}$ satisfy \eqref{eq1}  where $\varphi,\psi,\eta \in L^{p(\cdot)}(M,\Lambda)$ for any $\zeta\in \mathrm{Lip}_*(M,\Lambda)$. Then $\omega \in W^{1,p(\cdot)}(M,\Lambda)$ and
		\begin{equation}\label{eq:TG1}
			\|\omega\|_{1,p(\cdot),M} \leq C(\mathrm{data}) (\|\eta\|_{p(\cdot),M} +\|\varphi\|_{p(\cdot),M} + \|\psi\|_{p(\cdot),M} + \|\omega\|_{1,M}).
		\end{equation}
		If $M$ is of the class $C^{s+1,1}$, $s\in \mathbb{N}$, $\varphi,\psi\in W^{s,p(\cdot)}(M,\Lambda)$ and $\eta \in W^{s-1.p(\cdot)}(M,\Lambda)$, then $\omega \in W^{s+1,p(\cdot)}(M,\Lambda)$ and
		\begin{equation}\label{eq:TG2}
			\|\omega\|_{s+1,p(\cdot),M} \leq C(\mathrm{data},s) (\|\eta\|_{s-1,p(\cdot),M} +\|\varphi\|_{s,p(\cdot),M}+\|\psi\|_{s,p(\cdot),M} + \|\omega\|_{1,M}).
		\end{equation}
	\end{theorem}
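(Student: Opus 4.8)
The plan is to refine the localization procedure of Section~\ref{ssec:local} so that the right-hand side of the local integral equation no longer contains derivatives of $\omega$, only $\omega$ itself together with $\eta$, $\varphi$, $\psi$. Recall from \eqref{eq:localized}, \eqref{eq:nota1} that after cutting off by $\xi$ we produced a density $F$ containing the term $a\nabla\omega\nabla\xi$, whose $L^{p(\cdot)}$ norm a priori requires $\nabla\omega$; this is why Corollary~\ref{L:corr1} had $\|\omega\|_{1,p_{-},M}$ (rather than $\|\omega\|_{1,M}$) on the right. The key observation is that the offending term $a\nabla\omega\nabla\xi$ enters the potential $\mathcal{P}[\cdot]$ only through the combination $d\omega$ and $\delta\omega$, and moreover appears as $\xi_{,\alpha}a^{IJ\alpha\beta}\omega_{J,\beta}$ which, after integrating by parts against the test form in the definition of $\mathcal{Q}$, can be rewritten modulo lower order terms as a $\mathcal{Q}$-potential of $\nabla\xi$ times $\omega$ plus a $\mathcal{Q}$-potential of $(\nabla^2\xi)\,\omega$. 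Concretely, one uses the identity $(a\nabla\omega\nabla\xi)^I = (a\,\omega\nabla\xi)^{I\alpha}_{,\alpha} - (\partial_\alpha(a\nabla\xi))\cdot\omega$ and the relation $Q_\alpha[g_{,\alpha}] = $ (a singular integral of $g$) $+ cg$, so that the $\nabla\omega$ dependence is traded for a Calderón--Zygmund operator applied to $\omega$ together with genuinely lower-order contributions. I would carry this out in coordinates, carefully tracking the admissible boundary coordinate structure so that the odd/even reflection used to define $\mathcal{P}^{D/N}$, $\mathcal{Q}^{D/N}$ remains compatible with the integration by parts (this is where the boundary conditions $\mathcal{I}_0$ and the vanishing of boundary terms in \eqref{Gaffney} are used).

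After rewriting \eqref{ieq1}/\eqref{ieq2} in this refined form, the operator $T$ is replaced by a modified operator $\widetilde T$ which still maps $W^{1,q(\cdot)}(\mathcal{I}_0,G_{2R})$ to itself and — by the same estimates as in Lemma~\ref{L:c0}, now invoking also the Calderón--Zygmund bound of Lemma~\ref{L:Di5} for the new singular-integral term — is a contraction of norm $\le 1/2$ once $R=R(\mathrm{data})$ is small enough, uniformly over exponents $q\in\mathcal{P}^{\log}(p_{-},p_{+},c_{\log}(p))$. The new right-hand side is now $\mathcal{Q}[\widetilde E] + \mathcal{P}[\widetilde F]$ where $\widetilde E,\widetilde F$ are built linearly from $\eta$, $\varphi$, $\psi$ and $\omega$ (no $\nabla\omega$), with $\|\widetilde E\|_{p(\cdot),G_{2R}} + \|\widetilde F\|_{p(\cdot),G_{2R}} \le C(\|\eta\|_{p(\cdot),M} + \|\varphi\|_{p(\cdot),M} + \|\psi\|_{p(\cdot),M} + R^{-1}\|\omega\|_{p(\cdot),G_{2R}})$. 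Then the bootstrap in the integrability exponent from Lemma~\ref{L:p1} goes through verbatim, using the Sobolev embedding of Corollary~\ref{C:Di00} and Lemma~\ref{L:DD} at each step, and yields $\omega\in W^{1,p(\cdot)}(\mathcal{I}_0,G_{3R/2})$ with
\begin{equation*}
\|\omega\|^*_{1,p(\cdot),G_{3R/2}} \le C(\mathrm{data})\big(\|\eta\|_{p(\cdot),G_{2R}} + \|\varphi\|_{p(\cdot),G_{2R}} + \|\psi\|_{p(\cdot),G_{2R}} + \|\omega\|_{1,G_{2R}}\big),
\end{equation*}
where crucially the error term is now $\|\omega\|_{1,G_{2R}}$ rather than a Sobolev norm, because the iteration started from $L^{p_{-}}$ information only. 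Summing over a finite cover by such patches (the sets $\widetilde h_\alpha^{-1}(G_R)$), as in Corollary~\ref{L:corr1}, produces \eqref{eq:TG1}.

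For the higher regularity claim \eqref{eq:TG2} I would argue by induction on $s$ exactly as in the proof of Theorem~\ref{T:p2}, but now anchored at \eqref{eq:TG1} instead of Corollary~\ref{L:corr1a}: once $\omega\in W^{k,p(\cdot)}(M,\Lambda)$ is known with $1\le k\le s$, the refined representation (whose coefficients are $C^{s,1}$ since $M\in C^{s+1,1}$) together with Lemma~\ref{L:DR04c} and the product Lemma~\ref{L:simple} gives $\|\omega\|_{k+1,p(\cdot),G_R} \le C(\|\eta\|_{k-1,p(\cdot),M} + \|\varphi\|_{k,p(\cdot),M} + \|\psi\|_{k,p(\cdot),M} + R^{-1}\|\omega\|_{k,p(\cdot),M})$, and the interpolation inequality of Lemma~\ref{C:Di1} converts the last term into $\|\omega\|_{1,M}$; iterating until $k=s+1$ finishes the proof. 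The main obstacle I anticipate is the bookkeeping in the integration-by-parts step that removes $\nabla\omega$: one must verify that the extra singular-integral operator thereby introduced is genuinely of Calderón--Zygmund type (a standard kernel with the cancellation hypothesis of Lemma~\ref{L:Di5}) and that the boundary reflections do not spoil this — in particular that the combination of $Q^D$, $Q^N$ matching the index set $\mathcal{I}_0$ is preserved under the rearrangement, which is exactly the point where the special structure $a^{IJ\alpha\beta}+a^{IJ\beta\alpha}=2G^{IJ}g^{\alpha\beta}$ and the admissible-coordinate identities $y^\alpha_{,\nu}=0$ for $\alpha<\nu$ on $\{x^n=0\}$ enter. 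Everything else is a routine adaptation of the contraction-plus-bootstrap scheme already developed in Sections~\ref{ssec:local}--\ref{ssec:W2p}.
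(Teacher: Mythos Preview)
Your strategy matches the paper's: remove the $\nabla\omega$ dependence from the right-hand side of the local integral equation by integration by parts, then run the contraction-plus-bootstrap scheme of Sections~\ref{ssec:local}--\ref{ssec:W2p} with only $\|\omega\|$-type error terms. The paper, however, executes the integration by parts at the \emph{invariant} level rather than in coordinates: taking $\xi$ \emph{radial} (so $d\xi$ is tangential on $bM$ and the boundary terms in the IBP vanish automatically from the boundary condition on $\omega$), one has $(-d\xi\lrcorner d\omega + d\xi\wedge\delta\omega,\zeta)=(\omega,-\delta(d\xi\wedge\zeta)+d(d\xi\lrcorner\zeta))$, and a Bochner--Weizenb\"ock identity reduces the right-hand side to $-(\omega,\zeta)\triangle\xi + 2(\omega,\nabla\xi\cdot\nabla\zeta)$ plus zero-order terms. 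In coordinates this yields \eqref{iid1d} with the \emph{same} operator $T$ as before and new densities $\widetilde E=\xi e+a_1\omega\nabla\xi$, $\widetilde F=\xi f+e\nabla\xi+b_1\omega\nabla^2\xi+b_2\omega\nabla\xi$ containing no $\nabla\omega$. Your coordinate-level identity $(a\nabla\omega\nabla\xi)^I=\partial_\beta(\xi_{,\alpha}a^{IJ\alpha\beta}\omega_J) - (\ldots)\omega_J$ would, after moving the divergence part into $\widetilde E$ and invoking $a^{IJ\alpha\beta}+a^{IJ\beta\alpha}=2G^{IJ}g^{\alpha\beta}$, produce exactly the same $\widetilde E,\widetilde F$; so there is no need for a modified operator $\widetilde T$ or an additional Calder\'on--Zygmund operator beyond what already sits inside $\mathcal{Q}$. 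The ingredient missing from your sketch is the radial assumption on $\xi$ (equivalently $\xi_{,n}=0$ on $\{x^n=0\}$), which is precisely what disposes of the boundary terms you anticipate; with it, your approach and the paper's coincide.
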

	
	\begin{proof}
		We again assume that $\omega$ is a homogeneous form.
		
		Assuming the function $\xi$ to be radial we get that $d\xi$ is tangential at $x^n=0$ and thus using the integration-by-parts formula one can rewrite 
		\begin{align*}
			(-d\xi \lrcorner d\omega + d\xi \wedge \delta \omega,\zeta) &= -(d\omega,d\xi \wedge \zeta) + (\delta \omega,d\xi \lrcorner \zeta) \\
			&=(\omega,  -\delta (d\xi \wedge \zeta) + d(d\xi \lrcorner\zeta)).
		\end{align*}
		Note that the last expression does not contain derivatives of $\omega$, and with respect to $\zeta$ it contains only the components of $\zeta$ and their first derivatives. We can further transform this (though this is not essential) by writing
		\begin{align*}
			-\delta (d\xi \wedge \zeta) + d(d\xi \lrcorner\zeta) &= -\delta ( d(\xi \zeta) - \xi d\zeta) - d (\delta (\xi \zeta)-\xi \delta \zeta )\\
			&= -\triangle (\xi \zeta)+ \xi \triangle \zeta - d \xi \lrcorner d\zeta +d\xi \wedge \delta \zeta  
		\end{align*}
		and using that the Bochner-Weizenb\"ock formula gives (see for instance \cite[Chapter V, \S 26]{deRham}) 
		$$
		-\triangle (\xi \zeta)+ \xi \triangle \zeta = -\zeta\triangle \xi +2 \nabla \xi \cdot \nabla \zeta, \quad (\nabla \xi \cdot \nabla \zeta)_I= (\nabla^j \xi)\nabla_j \zeta_I,
		$$
		the last expression containing only partial derivatives of $\zeta$ of orders $0$ and $1$, and the coefficients are expressed in terms of the metric tensor and Christoffel symbols only. Formally this calculation requires that the metric tensor be twice differentiable, but since its second order derivatives are not present in the final expression the relation readily extends to $C^{1,1}$ Riemannian manifolds. Thus, instead of \eqref{eq:localized} after cancellations we get 
		\begin{equation}\label{eq:localized1}
			\begin{aligned}
				\mathcal{D}(\omega \xi, \zeta) &= (\xi \varphi, d\zeta) + (\xi \psi, \delta \zeta) + (\hat \eta,\zeta) -(\omega,\zeta)\triangle \xi + (\omega, 2 \nabla \xi \cdot \nabla \zeta),\\
				\hat \eta &= \xi \eta + d\xi \lrcorner \varphi - d\xi \wedge \psi.
			\end{aligned}
		\end{equation}
		This means that instead of \eqref{iid1c} we get a similar relation 
		\begin{multline}\label{iid1d}
			\int\limits_{G_{2R}} \left[ \nabla \Omega : \nabla \zeta + (\xi_*[(a(\cdot)-a(0)) \nabla \Omega + b \Omega]+\widetilde E) \nabla \zeta \right. \\+ \left. (\xi_*[b^* \nabla \Omega + c\Omega]+\widetilde F) \zeta \right]  \, dx=0,
		\end{multline}
		for all $\zeta \in C_0^\infty(\mathcal{I}_0, G_{2R})$, with $F$, $E$ replaced by 
		$$
		\widetilde F = \xi f + e\nabla \xi+ b_1\omega \nabla^2\xi +b_2 \omega \nabla \xi, \quad
		\widetilde E = \xi e + a_1\omega \nabla \xi,
		$$
		where 
		\begin{gather*}
			(a_1 \omega \nabla \xi ) \nabla\zeta = a_1^{IJ\alpha \beta}\omega_J \xi_{,\beta} \zeta_{I,\alpha}, \quad a_1^{IJ\alpha\beta} = -2 G^{IJ}g^{\alpha \beta}\sqrt{g},\\
			(b_1 \omega \nabla^2\xi)\zeta = b_1^{IJ\alpha \beta}\omega_J \xi_{,\alpha\beta} \zeta_I, \quad  b_1^{IJ\alpha\beta} = - g^{\alpha\beta} G^{IJ}\sqrt{g}=\frac{1}{2}a_1^{IJ\alpha\beta},\\
			(b_2 \omega \nabla \xi)\zeta = b_2^{IJ\alpha}\omega_J \xi_{,\alpha}\zeta_I.
		\end{gather*}
		Here $a_1$ and $b_1$ depend only on the coefficients of the metric, and thus on $C^{s+1,1}$ manifold belong to the class $C^{s,1}(\widetilde G_{2R})$, while $b_2$ also depends on the derivatives of the metric tensor, and thus belong to $C^{s-1,1}(G_{2R})$ if $s>0$ and $L^\infty(G_{2R})$ if $s=0$. The corresponding bounds are uniform with respect to $R$ and to the element of the corresponding cover ($R$-atlas).
		
		Arguing as above, we obtain the integral equation  (for $n=2$ this requires the same modification as in \eqref{eq:mod2})
		$$
		\Omega - T[\Omega] = \mathcal{P}[\widetilde F] + \mathcal{Q}[\widetilde E]
		$$
		and instead of \eqref{eq:series} we get 
		$$
		\Omega = \sum_{j=0}^\infty T^j ( \mathcal{P}[\widetilde F] + \mathcal{Q}[\widetilde E]).
		$$
		Therefore, choosing sufficiently small $R=R(\mathrm{data},s)$ as above (see Lemmas~\ref{L:c0}, \ref{L:c0a})  so that $T$ is a contraction in the norm $\|\cdot\|^*_{k,p(\cdot),G_{2R}}$ with the norm $\leq 1/2$ for all $k=1,\ldots,s+1$, we get 
		$$
		\|\omega\|^*_{k,p(\cdot),G_{R}} \leq 2 \|\mathcal{P}[\widetilde F]\|^*_{k,p(\cdot),G_{2R}} + 2 \|\mathcal{Q}[\widetilde E]\|^*_{k,p(\cdot),G_{2R}}. 
		$$
		We start from $s=0$. In this case all the coefficients of $a_1$, $b_1$, $b_2$ are uniformly bounded and using Lemma~\ref{L:pa} we evaluate 
		\begin{align*}
			\|\mathcal{P}[\widetilde F]\|^*_{1,p(\cdot),G_{2R}} &\leq C R \|\widetilde F\| _{p(\cdot), G_{2R}} \\ &\leq C R \|f\|_{p(\cdot),G_{2R}} +C\|e\|_{p(\cdot),G_{2R}}+ C R^{-1}\|\omega\|_{p(\cdot), G_{2R}},\\
			\|\mathcal{Q}[\widetilde E]\|^*_{1,p(\cdot),G_{2R}} &\leq C \|\widetilde E\|_{p(\cdot),G_{2R}} \leq C \|e\|_{p(\cdot),G_{2R}} + C R^{-1} \|\omega\|_{p(\cdot), G_{2R}}.
		\end{align*}
		Thus 
		$$
		\|\omega\|^*_{1,p(\cdot),G_{R}} \leq C (R\|f\|_{p(\cdot),G_{2R}} +\|e\|_{p(\cdot),G_{2R}} + R^{-1}\|\omega\|_{p(\cdot), G_{2R}}).  
		$$
		Gluing these estimates together and using interpolation we obtain \eqref{eq:TG1}.
		
		Now let $s>0$. If $k> 1$, using again Lemmas~\ref{L:DR04c},~\ref{L:simple} we obtain 
		\begin{align*}
			\|\mathcal{P}[\widetilde F]\|^*_{k,p(\cdot),G_{2R}} 
			&\leq C \|f\|^*_{k-2,p(\cdot),G_{2R}} + C \|e\|^*_{k-1,p(\cdot),G_{2R}} + C R^{-1} \|\omega\|^*_{k-1,p(\cdot),G_{2R}}, \\
			\|\mathcal{Q}[\widetilde E]\|^*_{k,p(\cdot),G_{2R}} &\leq C \|\xi e  + a_1\omega \nabla \xi\|^*_{k-1,p(\cdot),G_{2R}}\\
			&\leq C \|e\|^*_{k-1,p(\cdot),G_{2R}} + C R^{-1} \|\omega\|^*_{k-1,p(\cdot),G_{2R}} 
		\end{align*}
		Thus we have 
		$$
		\|\omega\|^*_{k,p(\cdot),G_{R}} \leq C (\|f\|^*_{k-2,p(\cdot),G_{2R}} +\|e\|^*_{k-1,p(\cdot),G_{2R}} + R^{-1}\|\omega\|^*_{k-1,p(\cdot), G_{2R}})
		$$
		which implies 
		$$
		\|\omega\|_{k,p(\cdot),M} \leq C (\|\eta\|_{k-2,p(\cdot),M} +\|\varphi\|_{k-1,p(\cdot),M}+\|\psi\|_{k-1,p(\cdot),M} + \|\omega\|_{k-1,p(\cdot),M}).
		$$
		Iterating this estimate in $k=1,\ldots,s+1$ and using interpolation we get \eqref{eq:TG2}.
	\end{proof}

	\section{Additional regularity of the potential on \texorpdfstring{$C^{1,1}$}{C11} manifolds}\label{sec:additional}
	
	In this Section we prove that on $C^{1,1}$ manifold for $\eta \in L^{p(\cdot)}(M,\Lambda)$ the Dirichlet/Neumann potentials $G_D[\eta]$, $G_N[\eta]$, which belong to $W^{1,p(\cdot)}(M,\Lambda)$, have differential and codifferential also from $W^{1,p(\cdot)}(M,\Lambda)$.

\begin{theorem}\label{T:addregD}
Let $\omega_0, d\omega_0 \in W^{1,p(\cdot)}(M,\Lambda)$, $\omega\in \omega_0+W_T^{1,p(\cdot)}(M,\Lambda)$, $\eta \in L^{p(\cdot)}(M,\Lambda)$ and $\varphi,\psi \in W^{1,p(\cdot)}(M,\Lambda)$ satisfy \eqref{eq1}, that is
$$
\mathcal{D}(\omega,\zeta) = (\eta,\zeta) + (\varphi,d\zeta) + (\psi,\delta \zeta),
$$
for all $\zeta \in \mathrm{Lip}_T(M,\Lambda)$. Then $\alpha =\delta \omega$ and $\beta = d\omega$ belong to $W^{1,p(\cdot)}(M,\Lambda)$ and satisfy $t\alpha =t\psi$, $t\beta =td\omega_0$,
\begin{equation}\label{eq:rel0Da}
d\alpha + \delta \beta = \eta + d\psi + \delta \varphi
\end{equation}
and
\begin{equation}\label{eq:rel0D}
\mathcal{D}(\alpha,\zeta) = (\eta+d\psi, d\zeta), \quad \mathcal{D}(\beta,\zeta) = (\eta + \delta\varphi,\delta \zeta)
\end{equation}
for all $\zeta \in \mathrm{Lip}_T(M,\Lambda)$. 
\end{theorem}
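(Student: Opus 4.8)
\emph{Plan.} The form $\omega$ already belongs to $W^{1,p(\cdot)}(M,\Lambda)$, being the sum of $\omega_{0}\in W^{1,p(\cdot)}(M,\Lambda)$ and an element of $W_{T}^{1,p(\cdot)}(M,\Lambda)$, so $\alpha=\delta\omega$ and $\beta=d\omega$ lie a priori only in $L^{p(\cdot)}(M,\Lambda)$, and by the nilpotency of $d$ and $\delta$ on $C^{1,1}$ manifolds (established after Corollary~\ref{C:approx2}) one has $\delta\alpha=0$, $d\beta=0$, and also $d(d\omega_{0})=0$. The goal is therefore to upgrade $\alpha,\beta$ to $W^{1,p(\cdot)}(M,\Lambda)$; once this is done, the identity \eqref{eq:rel0Da} follows by testing \eqref{eq1} against $\zeta\in\mathrm{Lip}_{0}(M,\Lambda)$, the relations \eqref{eq:rel0D} follow by the integration-by-parts formula \eqref{eq:by_parts} (now legitimate) together with Lemma~\ref{L:ort}, and the boundary identities drop out of the same integration by parts: $t\beta=td\omega_{0}=td\omega'$ because $\omega'=\omega-\omega_{0}\in W_{T}^{1,p(\cdot)}(M,\Lambda)$ is approximable by smooth forms with vanishing tangential part, a condition preserved by $d$ in admissible coordinates, while $t\alpha=t\psi$ because comparing the residual boundary term in the weak form of \eqref{eq1} with the data forces $\nu\wedge(\delta\omega-\psi)=0$ on $bM$.

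The heart of the matter is thus the \emph{interior} regularity $d\omega,\delta\omega\in W^{1,p(\cdot)}(M,\Lambda)$, which I would obtain following the line of Section~\ref{ssec:SimpleGaffney}. Working in a fixed admissible boundary chart with a cut-off $\xi$ from an $\varepsilon$-atlas with $\varepsilon$ small, one localizes \eqref{eq1} in the form \eqref{eq:localized1} and tests the resulting identity for $\Omega=\xi\omega$ against forms $\zeta=dW$ and $\zeta=\delta_{0}W$ with $W\in C_{0}^{\infty}(\overline{\mathbb{R}^{n}_{+}})$ subject to the appropriate vanishing of $tW$ on $\{x^{n}=0\}$, as in the proof of Lemma~\ref{existence of weak derivative tangential}. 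After moving every derivative onto $W$, the surviving principal term $(d\omega,d\zeta)+(\delta\omega,\delta\zeta)$ becomes a first-order pairing against $\Omega$, and Lemma~\ref{L:VT} then represents the components of $d\Omega$ and $\delta\Omega$ through half-space Dirichlet/Neumann potentials $P^{D},P^{N},Q^{D},Q^{N}$ applied to the datum $\eta$ (entering only through $L^{p(\cdot)}$), to the data $\varphi,\psi$ (entering through $W^{1,p(\cdot)}$ — here the hypothesis $\varphi,\psi\in W^{1,p(\cdot)}(M,\Lambda)$ is used essentially), and to a lower-order \emph{feedback} term linear in $\Omega$ itself, crucially \emph{not} in $\nabla\Omega$. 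The $\mathcal{Q}$- and $\mathcal{P}$-potentials gain one, respectively two, derivatives by Lemmas~\ref{L:pa1} and \ref{L:DR040}, so that $d\Omega,\delta\Omega$ pick up the single extra derivative one is after, and the feedback operator is, for $R$ small, a contraction in the $*$-norm $\|\cdot\|^{*}_{1,p(\cdot),G_{2R}}$, the requisite estimates of \eqref{eq:Aest}, \eqref{eq:T0contr} and Lemmas~\ref{L:c0}, \ref{L:c0a} using only a $C^{1,1}$ metric. Summing the Neumann series and then over a finite cover produces $d\omega,\delta\omega\in W^{1,p(\cdot)}(M,\Lambda)$ with quantitative bounds; alternatively one uses this machinery only to place $d\delta\omega$, $\delta d\omega$ in $L^{p(\cdot)}(M,\Lambda)$ and then applies the Gaffney inequality Theorem~\ref{T:NNaturalGaffney} to $\alpha-\psi$ and $\beta-d\omega_{0}$, whose differentials, codifferentials and tangential traces are then controlled.

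The step I expect to be the main obstacle is precisely the one that made Section~\ref{ssec:SimpleGaffney} necessary: on a merely $C^{1,1}$ manifold the iteration operator $T$ fails to be a contraction in $W^{2,p(\cdot)}$ (its lower-order coefficients $b,b^{*},c$ are only bounded, not Lipschitz), so $\omega$ is \emph{not} in $W^{2,p(\cdot)}(M,\Lambda)$ and one cannot simply differentiate the representation $\Omega=\sum_{j}T^{j}(\mathcal{P}[\widetilde F]+\mathcal{Q}[\widetilde E])$ of Section~\ref{sec:refined}; the whole point of passing to the test forms $dW,\delta_{0}W$ is that it lets one read off $d\omega$ and $\delta\omega$ — which is all that is claimed — directly from a potential representation that gains exactly one derivative on the data, bypassing the missing second derivative of $\omega$. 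A secondary technical nuisance is bookkeeping, in a boundary chart, which form-components are represented by Dirichlet (odd-reflection) and which by Neumann (even-reflection) potentials, and checking that this matches the claimed tangential traces. The dual statements Theorems~\ref{T:addregN} and \ref{T:addregFD} then follow by the Hodge star and by running the same argument with $\mathrm{Lip}_{N}$, respectively $\mathrm{Lip}_{0}$, test forms.
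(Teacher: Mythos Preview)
Your proposal is correct and follows essentially the same route as the paper: localize, test the localized identity against $\zeta=dW$ and $\zeta=\delta_{0}W$ with $W$ satisfying the mixed Dirichlet/Neumann conditions on $\{x^{n}=0\}$, invoke Lemma~\ref{L:VT}, and close by a contraction argument in $W^{1,p(\cdot)}(G_{2R})$. One small imprecision: in the paper the resulting integral equation is a fixed-point equation for $\Psi=\delta\Omega$ (respectively $\Phi=d\Omega$) with feedback $\mathcal{T}_{0}[\Psi]=\mathcal{Q}[\xi_{*}B\Psi]+\mathcal{H}[\xi_{*}A\Psi]$, i.e.\ the perturbation term is linear in $\Psi$ itself rather than in $\Omega$; the contraction is then for $\mathcal{T}_{0}$ acting on $\Psi$ in $L^{p(\cdot)}$ and in $W^{1,p(\cdot)}$, which is what lets the Neumann series upgrade $\Psi$ from $L^{p(\cdot)}$ to $W^{1,p(\cdot)}$.
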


	\begin{proof}
		Let $\omega$ be a homogeneous form of degree $r$ and $\Omega =\omega \xi $. Let the cut-off functions $\xi$, $\xi_*$ be defined as in Sections~\ref{ssec:local}--\ref{ssec:W2p} There holds (see\eqref{eq:localized})
        
        $$
        \mathcal{D}(\Omega,\zeta) = (\widetilde \eta, \zeta) + (\widetilde \varphi,d\zeta)+(\widetilde \psi,\delta \zeta)
        $$
        where 
        \begin{gather*}
         \widetilde \varphi = \xi \varphi + d\xi \wedge \omega,\quad \widetilde \psi = \xi \psi - d\xi \lrcorner \omega,\\
         \widetilde \eta = \xi \eta +d\xi \wedge (\delta \omega-\psi) -d\xi \lrcorner (d\omega-\varphi)
        \end{gather*}

		Here $\widetilde \varphi, \widetilde \psi\in W^{1,p(\cdot)}(M,\Lambda)$ and $\widetilde \eta \in L^{p(\cdot)}(M,\Lambda)$.
        This could be further transformed to a somewhat simpler expression (see \eqref{eq:localized1}) but we do not need this form here.
		
		Further we shall work with boundary coordinate charts, the interior case being much simpler. Recall that for tangential boundary values $I \in \mathcal{I}_0$ iff $n\notin I$. Take $\zeta =d W$ where
        \begin{equation}\label{eq:W1}
	      W \in C_0^\infty(\overline{\mathbb{R}^n_+}),\quad W_I(x',0)=0 \quad \text{if}\quad n\notin I, \quad \frac{\partial W_I(x',0)}{\partial x^n}=0 \quad \text{if}\quad n\in I.
		\end{equation}
		We obtain (note that here we work in one particular coordinate chart)
		\begin{equation}\label{eq:pp1}
		(\delta \Omega, \delta d W)= (\widetilde \psi, \delta dW)+(\widetilde \eta,dW)
		\end{equation}
		Now, the form which is defined as $\delta_0 W$ in our local coordinate system, and denoted as such:
		$$
		(\delta_0 W)_I = - W_{jI,j},
		$$
		satisfies  $t\delta_0 W=0$ and thus the integration-by-parts formula yields
		\begin{equation}\label{eq:pp2}
		(\delta\Omega, d \delta_0 W)=0.
		\end{equation}
        Summing \eqref{eq:pp1} and \eqref{eq:pp2} we arive at 
        \begin{equation}\label{eq:pp3}
        (\delta \Omega, \delta d W + d \delta_0 W)=(\widetilde \psi, \delta dW)+(\widetilde \eta,dW).
        \end{equation}
		Denote $\delta \Omega = \Psi$. In coordinates \eqref{eq:pp3} becomes
		\begin{align*}
			\int\limits_{\mathbb{R}^n_+} G^{IJ}_r &\Psi_I [(\delta d + d\delta_0)W]_J \sqrt{g}\, dx \\
			&=\int\limits_{\mathbb{R}^n_+} G^{IJ}_{r-1} \widetilde \psi_I (\delta d W)_J\sqrt{g}\, dx 
			+\int\limits_{\mathbb{R}^n_+} G^{IJ}_{r} \widetilde \eta_I (d W)_J\sqrt{g}\, dx.
		\end{align*} 
		Rewrite this as 
		\begin{multline*}
			-\int\limits_{\mathbb{R}^n_+} \Psi_I \triangle W_I\, dx = 
			\int\limits_{\mathbb{R}^n_+} \xi_*G^{IJ}_r \Psi_I [(\delta_0-\delta)d W]_J \sqrt{g}\, dx \\
			+\int\limits_{\mathbb{R}^n_+} \xi_*(G^{IJ}_r\sqrt{g} - G^{IJ}(0))\Psi_I \triangle W_J\, dx\\
			+\int\limits_{\mathbb{R}^n_+} G^{IJ}_{r-1} \widetilde \psi_I (\delta d W)_J\sqrt{g}\, dx 
			+\int\limits_{\mathbb{R}^n_+} G^{IJ}_{r} \widetilde \eta_I (d W)_J\sqrt{g}\, dx.
		\end{multline*}
		
		We further rewrite this as
		\begin{equation}\label{eq:form_Psi}
			\begin{aligned}
				\int\limits_{\mathbb{R}^n_+} \Psi_I \triangle W_I\, dx
				=\int\limits_{\mathbb{R}^n_+}  F^{I\beta} &W_{I,\beta}  dx + \int\limits_{\mathbb{R}^n_+}  C^{I\alpha\beta} W_{I,\alpha\beta} dx \\
				&+\int\limits_{\mathbb{R}^n_+}\xi_* \Psi_J(A^{IJ\alpha\beta}  W_{I,\alpha\beta} +B^{IJ\alpha} W_{I,\alpha} )\, dx,
			\end{aligned}
		\end{equation}
		where $A^{IJ\alpha \beta}$, $B^{IJ\alpha}$ satisfy \eqref{eq:Aest}, the coefficients $C^{I\alpha\beta}$, $F^{I\beta}$ vanish outside $G_{2R}$, and $C^{I\alpha\beta} \in W^{1,p(\cdot)}(G_{2R})$, $F^{I\beta} \in L^{p(\cdot)}(G_{2R})$.
		
		Then by Lemma~\ref{L:VT} and our choice \eqref{eq:W1} of $W_I$ we get
		\begin{align*}
			\Psi_I &= Q_\alpha^D[F^{I\alpha}+B^{IJ\alpha}\xi_*\Psi_J] + H_{\alpha\beta}^D[\xi_* A^{IJ\alpha\beta} \Omega_J + C^{I\alpha \beta}], \quad I \in \mathcal{I}_0,\\
			\Psi_I &= Q_\alpha^N[F^{I\alpha}+B^{IJ\alpha}\xi_*\Omega_J] + H_{\alpha\beta}^N[\xi_* A^{IJ\alpha\beta} \Psi_J + C^{I\alpha\beta}], \quad I \notin \mathcal{I}_0.
		\end{align*}
		We abbreviate this to 
		$$
		\Psi = \mathcal{Q}[F] +\mathcal{H}[C] + \mathcal{Q}[\xi_* B\Psi] + \mathcal{H}[\xi_* A \Psi].
		$$
		Note that by Lemmas~\ref{L:pa1}, \ref{L:DR040} there holds $\mathcal{Q}[F], \mathcal{H}(C) \in W^{1,p(\cdot)}(G_{2R})$. Using the contraction property of the operator $\mathcal{T}_0$ (see \eqref{eq:T0contr} with $p_{-}$ replaced by $p(\cdot)$) we get
		$$
		\Psi = \sum_{j=0}^\infty (\mathcal{T}_0)^j (Q[F] + \mathcal{H}(C))
		$$
		first in $L^{p(\cdot)}(G_{2R})$ and then in $W^{1,p(\cdot)}(G_{2R})$ (with the norm $\|\cdot\|^*_{1,p(\cdot),G_{2R}}$). Thus we get $\delta \Omega = \Psi \in W^{1,p(\cdot)}(M,\Lambda)$, which immediately implies $\delta G_D[\eta]=\delta \omega \in W^{1,p(\cdot)}(M,\Lambda)$.
		
		To prove that $d \omega \in W^{1,p(\cdot)}(M,\Lambda)$ we take first $\zeta = \delta_0 W$, which gives
		\begin{equation}\label{eq:p4}
		(d \Omega, d\delta_0 W) = - (\delta \Omega, \delta \delta_0 W)+ (\widetilde \varphi, d \delta_0 W)+ (\widetilde \psi, \delta \delta_0 W) + (\widetilde \eta, \delta_0 W).
		\end{equation}
		On the other hand, by the integration-by-parts formula,
		\begin{equation}\label{eq:p5}
		(d\Omega, \delta_0 d W)= (d\Omega, (\delta_0-\delta) dW) + (d \Omega_0,\delta d W), \quad \Omega_0 = \xi \omega_0 .
		\end{equation}
		Finally, combining \eqref{eq:p4}, \eqref{eq:p5}, and denoting $d\Omega = \Phi$, we get 
		\begin{equation}\label{eq:pp6}
        \begin{aligned}
			(\Phi, (d\delta_0 W + \delta_0 d W))&=  (\Psi, (\delta_0-\delta) \delta_0 W)+ (\Phi, (\delta_0-\delta) dW) + (\widetilde \varphi, d \delta_0 W) \\
			&\qquad+ (\widetilde \psi, (\delta-\delta_0) \delta_0 W)  + (\widetilde \eta, \delta_0 W) + (d\Omega_0, \delta d W).
		\end{aligned}
        \end{equation}
		In coordinates \eqref{eq:pp6} takes the form similar to \eqref{eq:form_Psi}:
		\begin{equation}\label{eq:form_Phi}
			\begin{aligned}
				\int\limits_{\mathbb{R}^n_+} \Phi_I \triangle W_I\, dx
				=\int\limits_{\mathbb{R}^n_+}  \widetilde F^{I\beta} &W_{I,\beta}  dx + \int\limits_{\mathbb{R}^n_+}  \widetilde C^{I\alpha\beta} W_{I,\alpha\beta} dx \\
				&+\int\limits_{\mathbb{R}^n_+}\xi_* \Phi_J(A^{IJ\alpha\beta}  W_{I,\alpha\beta} +B^{IJ\alpha} W_{I,\alpha} )\, dx,
			\end{aligned}
		\end{equation}
		where the coefficients $\widetilde F^{I\beta}$, $\widetilde C^{I\alpha \beta}$ have the same properties as $F^{I\beta}$, $C^{I\alpha \beta}$. The rest follows as above, and we get $d \Omega = \Phi \in W^{1,p(\cdot)}(M,\Lambda)$. 
		
		Therefore, $d\omega, \delta \omega \in W^{1,p(\cdot)}(M,\Lambda)$. Using the integration-by-parts formula
		we get 
        $$
        (\delta (d\omega) + d (\delta \omega) - \eta - d\psi - \delta \varphi,\zeta) =[\delta \omega-\psi,\zeta] 
        $$
        for all $\zeta \in \mathrm{Lip}_T(M,\Lambda)$ and thus
		\begin{equation}\label{eq:Laplace1}
			\delta (d\omega) + d (\delta \omega) = \eta+d\psi + \delta \varphi, \quad t(\delta \omega-\psi)=0.
		\end{equation}
		
		Now, by Lemma~\ref{L:ort} and \eqref{eq:Laplace1} we have
		$$
		(d(\delta \omega), d\zeta) = (\eta+d\psi + \delta \varphi - \delta(d\omega), d\zeta) = (\eta+d\psi,d\zeta)
		$$
		and
		$$
		(\delta(d\omega), \delta\zeta) = (\eta+d\psi + \delta \varphi - d (\delta \omega), \delta\eta) = (\eta+d\psi + \delta \varphi-d\psi, \delta\zeta) = (\eta +\delta \varphi, \delta \zeta).
		$$
		Since $d(d\omega)=0$, $\delta(\delta \omega)=0$, this proves \eqref{eq:rel0D}.
	\end{proof}

We state the following corollary for Dirichlet potentials. 
    	\begin{lemma}\label{L:addregD}
		If $\eta\in L^{p(\cdot)}(M,\Lambda)$ with $\mathcal{P}_T[\eta]=0$, then for $\alpha =\delta G_D[\eta]$ and $\beta = d G_D[\eta]$ we have $\alpha,\beta\in W_T^{1,p(\cdot)}(M,\Lambda)$ and they satisfy
		\begin{equation}\label{eq:rel1}
		d\alpha + \delta \beta= \eta
		\end{equation}
		and
		\begin{equation}\label{eq:rel2}
			\mathcal{D}(\alpha, \zeta) = (\eta, d\zeta), \quad \mathcal{D}(\beta, \zeta) = (\eta, \delta\zeta)
		\end{equation}
		for all $\zeta \in \mathrm{Lip}_T(M,\Lambda)$. As a corollary, if additionally $\eta \in W^{\delta,p_{-}} (M,\Lambda)$ then $\delta G_D[\eta] = G_D[\delta \eta]$ and if additionally $\eta \in W^{d,p_{-}}_T(M,\Lambda)$ then $d G_D[\eta] = G_D[d\eta]$.
	\end{lemma}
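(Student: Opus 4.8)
The plan is to obtain the whole statement as the special case of Theorem~\ref{T:addregD} in which $\omega_0=0$ and $\varphi=\psi=0$. First I would set $\omega=G_D[\eta]$ and check that this $\omega$ meets the hypotheses of Theorem~\ref{T:addregD}. By the definition of the Dirichlet potential on a $C^{1,1}$ manifold, $\omega\in W^{1,p_{-}}_T(M,\Lambda)$, $\mathcal{P}_T\omega=0$, and $\mathcal{D}(\omega,\zeta)=(\eta,\zeta)$ for all $\zeta\in\mathrm{Lip}_T(M,\Lambda)$, which is exactly relation \eqref{eq1} with $\varphi=\psi=0$; since $\eta\in L^{p(\cdot)}(M,\Lambda)$, Theorem~\ref{T:TrueGaffney} (or Corollary~\ref{L:corr1}) upgrades this to $\omega\in W^{1,p(\cdot)}(M,\Lambda)$, hence $\omega\in W^{1,p(\cdot)}_T(M,\Lambda)=0+W^{1,p(\cdot)}_T(M,\Lambda)$ by the characterization of $W^{1,p(\cdot)}_T$ in Lemma~\ref{L:approx1}. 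The data $\omega_0=0$, $\varphi=\psi=0$ trivially lie in the required spaces. Theorem~\ref{T:addregD} then yields $\alpha=\delta\omega$, $\beta=d\omega\in W^{1,p(\cdot)}(M,\Lambda)$ with $t\alpha=t\psi=0$ and $t\beta=t\,d\omega_0=0$, so $\alpha,\beta\in W^{1,p(\cdot)}_T(M,\Lambda)$; it also gives $d\alpha+\delta\beta=\eta+d\psi+\delta\varphi=\eta$, i.e. \eqref{eq:rel1}, and $\mathcal{D}(\alpha,\zeta)=(\eta+d\psi,d\zeta)=(\eta,d\zeta)$, $\mathcal{D}(\beta,\zeta)=(\eta+\delta\varphi,\delta\zeta)=(\eta,\delta\zeta)$ for all $\zeta\in\mathrm{Lip}_T(M,\Lambda)$, i.e. \eqref{eq:rel2}.

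For the commutation relations I would argue as in the proof of Corollary~\ref{C:commT}. Assume $\eta\in W^{\delta,p_{-}}(M,\Lambda)$. By Corollary~\ref{C:approx2}(iii) the identity $(\eta,d\zeta)=(\delta\eta,\zeta)$ holds for every $\zeta\in W^{d,p_{-}'}_T(M,\Lambda)$; applied to $\zeta=h_T\in\mathcal{H}_T(M)$ (for which $dh_T=0$) it gives $(\delta\eta,h_T)=0$, so $\mathcal{P}_T[\delta\eta]=0$ and $G_D[\delta\eta]$ is defined. Applied to $\zeta\in\mathrm{Lip}_T(M,\Lambda)$ and combined with \eqref{eq:rel2}, it gives $\mathcal{D}(\alpha,\zeta)=(\delta\eta,\zeta)$ for all $\zeta\in\mathrm{Lip}_T(M,\Lambda)$. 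Moreover $\alpha\in W^{1,p(\cdot)}_T(M,\Lambda)\subset W^{1,p_{-}}_T(M,\Lambda)$ and $(\alpha,h_T)=(\delta\omega,h_T)=-[h_T,\omega]=0$ by the integration-by-parts formula (using $dh_T=0$ and $th_T=0$). By the uniqueness in the definition of the Dirichlet potential, $\alpha=G_D[\delta\eta]$. The case $\eta\in W^{d,p_{-}}_T(M,\Lambda)$ is dual: Corollary~\ref{C:approx2}(ii) gives $(\eta,\delta\zeta)=(d\eta,\zeta)$ for all $\zeta\in\mathrm{Lip}_T(M,\Lambda)$, hence $\mathcal{P}_T[d\eta]=0$ and $\mathcal{D}(\beta,\zeta)=(d\eta,\zeta)$; together with $t\beta=0$ and $(\beta,h_T)=(d\omega,h_T)=[\omega,h_T]=0$ (since $t\omega=0$), uniqueness gives $\beta=G_D[d\eta]$.

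The argument has no genuinely hard step, since all the analytic work is done by Theorem~\ref{T:addregD}; the points requiring care are purely bookkeeping: verifying that $G_D[\eta]$ qualifies as an admissible $\omega$ there (which is where the a priori $W^{1,p(\cdot)}$ regularity of Theorem~\ref{T:TrueGaffney} enters), checking the relevant orthogonality to $\mathcal{H}_T(M)$ so that the potentials $G_D[\delta\eta]$, $G_D[d\eta]$ are defined, and confirming that the integration-by-parts identities used are valid at the regularity available on a $C^{1,1}$ manifold --- for which the integration-by-parts lemma for $W^{1,p(\cdot)}$ forms and Corollary~\ref{C:approx2} are exactly the tools needed, harmonic fields lying in $W^{1,q}(M,\Lambda)$ for every $q<\infty$.
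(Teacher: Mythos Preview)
Your proposal is correct and follows essentially the same route as the paper's own proof: apply Theorem~\ref{T:addregD} with $\omega_0=0$, $\varphi=\psi=0$ to $\omega=G_D[\eta]$, and then for the commutation statements check $\mathcal{P}_T\alpha=\mathcal{P}_T\beta=0$ and identify \eqref{eq:rel2} with the variational relations defining $G_D[\delta\eta]$ and $G_D[d\eta]$. Your write-up is more explicit than the paper's (in particular you spell out the $W^{1,p(\cdot)}$ upgrade of $G_D[\eta]$ via Theorem~\ref{T:TrueGaffney} needed to meet the hypothesis $\omega\in W^{1,p(\cdot)}_T(M,\Lambda)$ of Theorem~\ref{T:addregD}, and you verify the orthogonalities $(\alpha,h_T)=(\beta,h_T)=0$ by integration by parts), but the argument is the same.
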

\begin{proof} 
From Theorem~\ref{T:addregD} it follows that $\alpha,\beta \in W_T^{1,p(\cdot)}(M,\Lambda)$ and satisfy \eqref{eq:rel1},\eqref{eq:rel2}. It remains to see that $\mathcal{P}_T\alpha, \mathcal{P}_T\beta=0$, the first relation in \eqref{eq:rel2} coincides with the relation defining the Dirichlet potential of $\delta \eta$ and the second relation in \eqref{eq:rel2} coincides with the relation defining the Dirichlet potential of $d\eta$ if $t\eta=0$. 
\end{proof}

The following result and its corollary for Neumann potentials is obtained in exactly the same manner or by the Hodge duality 

\begin{theorem}\label{T:addregN}
Let $\omega_0,\delta \omega_0 \in W^{1,p(\cdot)}(M,\Lambda)$, $\omega\in \omega_0+ W_N^{1,p(\cdot)}(M,\Lambda)$, $\eta \in L^{p(\cdot)}(M,\Lambda)$ and $\varphi,\psi \in W^{1,p(\cdot)}(M,\Lambda)$ satisfy \eqref{eq1}, that is
$$
\mathcal{D}(\omega,\zeta) = (\eta,\zeta) + (\varphi,d\zeta) + (\psi,\delta \zeta),
$$
for all $\zeta \in \mathrm{Lip}_N(M,\Lambda)$. Then $\alpha =\delta \omega$ and $\beta = d\omega$ belong to $W^{1,p(\cdot)}(M,\Lambda)$ and satisfy $n\alpha =n \delta \omega_0$, $n\beta = n\varphi$, \eqref{eq:rel0Da}, and \eqref{eq:rel0D} for all $\zeta \in \mathrm{Lip}_N(M,\Lambda)$. 
\end{theorem}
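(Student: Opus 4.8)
The plan is to deduce Theorem~\ref{T:addregN} from Theorem~\ref{T:addregD} by Hodge duality, and to note that the direct argument of the proof of Theorem~\ref{T:addregD} also applies \emph{mutatis mutandis}. Recall that on a $C^{1,1}$ manifold the Hodge star $\ast$ is a bounded invertible linear operator on $W^{1,p(\cdot)}(M,\Lambda)$ which maps $W^{1,p(\cdot)}_N(M,\Lambda)$ onto $W^{1,p(\cdot)}_T(M,\Lambda)$ and $\mathrm{Lip}_N(M,\Lambda)$ onto $\mathrm{Lip}_T(M,\Lambda)$, intertwines $d$ and $\delta$ up to a degree-dependent sign ($d\ast=\pm\ast\delta$, $\delta\ast=\pm\ast d$), preserves the pairing $(\cdot,\cdot)$, and hence satisfies $\mathcal{D}(\ast f,\ast g)=\mathcal{D}(f,g)$. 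Moreover $n\sigma=n\tau$ is equivalent to $t(\ast\sigma)=t(\ast\tau)$ for forms $\sigma,\tau$, since $\ast$ carries forms with vanishing normal part to forms with vanishing tangential part.

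Given $\omega,\omega_0,\eta,\varphi,\psi$ as in Theorem~\ref{T:addregN}, put $\tilde\omega=\ast\omega$, $\tilde\omega_0=\ast\omega_0$, and (absorbing the signs into the definitions) $\tilde\eta=\pm\ast\eta$, $\tilde\varphi=\pm\ast\psi$, $\tilde\psi=\pm\ast\varphi$. From $\omega_0,\delta\omega_0\in W^{1,p(\cdot)}(M,\Lambda)$ we get $\tilde\omega_0\in W^{1,p(\cdot)}(M,\Lambda)$ and $d\tilde\omega_0=\pm\ast\delta\omega_0\in W^{1,p(\cdot)}(M,\Lambda)$, while $\tilde\omega\in\tilde\omega_0+W^{1,p(\cdot)}_T(M,\Lambda)$. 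Testing the identity \eqref{eq1} for $\omega$ against $\zeta=\ast\xi$ with $\xi\in\mathrm{Lip}_T(M,\Lambda)$ and using the properties of $\ast$ above turns it into the Dirichlet identity \eqref{eq1} for $\tilde\omega$ with data $\tilde\eta,\tilde\varphi,\tilde\psi$ and test forms ranging over $\mathrm{Lip}_T(M,\Lambda)$ (note how the slots of $\varphi$ and $\psi$ are exchanged, since $d\ast\xi=\pm\ast\delta\xi$ and $\delta\ast\xi=\pm\ast d\xi$).

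Theorem~\ref{T:addregD} applied to $\tilde\omega$ now gives $\delta\tilde\omega,d\tilde\omega\in W^{1,p(\cdot)}(M,\Lambda)$ with $t(\delta\tilde\omega)=t\tilde\psi=t(\pm\ast\varphi)$ and $t(d\tilde\omega)=t(d\tilde\omega_0)=t(\pm\ast\delta\omega_0)$, together with the relations \eqref{eq:rel0Da} and \eqref{eq:rel0D} for test forms in $\mathrm{Lip}_T(M,\Lambda)$. Since $\delta\tilde\omega=\pm\ast d\omega=\pm\ast\beta$ and $d\tilde\omega=\pm\ast\delta\omega=\pm\ast\alpha$, boundedness and invertibility of $\ast$ yield $\alpha=\delta\omega,\ \beta=d\omega\in W^{1,p(\cdot)}(M,\Lambda)$; the two boundary identities become $t(\ast\beta)=t(\ast\varphi)$ and $t(\ast\alpha)=t(\ast\delta\omega_0)$, i.e. $n\beta=n\varphi$ and $n\alpha=n\delta\omega_0$; and applying $\ast^{-1}$ to \eqref{eq:rel0Da}, \eqref{eq:rel0D} (and re-running the final integration-by-parts exactly as at the end of the proof of Theorem~\ref{T:addregD}) gives $d\alpha+\delta\beta=\eta+d\psi+\delta\varphi$ and $\mathcal{D}(\alpha,\zeta)=(\eta+d\psi,d\zeta)$, $\mathcal{D}(\beta,\zeta)=(\eta+\delta\varphi,\delta\zeta)$ for all $\zeta\in\mathrm{Lip}_N(M,\Lambda)$, as claimed.

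The alternative is to repeat the proof of Theorem~\ref{T:addregD} line by line: localize $\omega$ to $\Omega=\xi\omega$, and in an admissible boundary chart test the localized identity against $\zeta=dW$ and $\zeta=\delta_0W$ with $W\in C_0^\infty(\overline{\mathbb{R}^n_+})$ now satisfying the \emph{normal} conditions ($W_I=0$ on $\{x^n=0\}$ when $n\in I$, $\partial W_I/\partial x^n=0$ when $n\notin I$), so that the index set $\mathcal{I}_0$ consists of the multi-indices containing $n$; summing the two and peeling off the Euclidean Laplacian via $\delta_0 d+d\delta_0=-\triangle$ represents $\delta\Omega$ and then $d\Omega$ (this last step again using $\Omega_0=\xi\omega_0$) through the half-space potentials $P^D,P^N$ of Lemma~\ref{L:VT}, with the assignment of $P^D$ versus $P^N$ to each component interchanged relative to the Dirichlet case, and the argument closes via the contraction $\mathcal{T}_0$ on $L^{p(\cdot)}(G_{2R})$ and then on $W^{1,p(\cdot)}(G_{2R})$. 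I expect the only genuine obstacle to be the bookkeeping of the degree-dependent signs in $d\ast=\pm\ast\delta$ etc. (equivalently, the correct matching of $P^D/P^N$ to each multi-index $I$ and of the transformed data $\tilde\varphi,\tilde\psi$ to the $d\zeta$- and $\delta\zeta$-slots), so that the boundary conditions and the identities \eqref{eq:rel0Da}, \eqref{eq:rel0D} come out precisely as stated; once those signs are pinned down everything is mechanical.
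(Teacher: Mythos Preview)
Your proposal is correct and matches the paper: the paper states that Theorem~\ref{T:addregN} ``is obtained in exactly the same manner or by the Hodge duality'' and then sketches the direct variant, and you give both. One small correction to your direct sketch: in the Neumann case the roles swap compared to Theorem~\ref{T:addregD}. With $W$ satisfying \eqref{eq:W2} one has $n\,dW=0$ and $n\,\delta_0 W=0$, so Lemma~\ref{L:ort} gives $(d\Omega,\delta dW)=0$ outright (no $\Omega_0$ needed for $\Phi=d\Omega$), whereas $(\delta\Omega,d\delta_0 W)=(\delta\Omega_0,d\delta_0 W)$ is where $\Omega_0=\xi\omega_0$ enters (carrying the hypothesis $\delta\omega_0\in W^{1,p(\cdot)}$). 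This is exactly the Hodge-dual of what happens in the tangential proof and is the reason the Neumann hypothesis is on $\delta\omega_0$ rather than $d\omega_0$.
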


In this case $I \in\mathcal{I_0}$ iff $n\in I$ and we take $W\in C_0^\infty(\overline{\mathbb{R}^n_{+}})$ such that
\begin{equation}\label{eq:W2}
W_I(x',0) =0\quad \text{if}\quad n\in I, \quad \frac{\partial W_I}{\partial x^n}(x',0)\quad \text{if}\quad n\notin I.
\end{equation}
Then $n d W =0$, $n \delta_0 W=0$. The relation \eqref{eq:pp1} takes exactly the same form, the relation \eqref{eq:pp2} becomes
$$
(\delta \Omega, d\delta_0 W) = (\delta \Omega_0, d\delta_0 W), \quad \Omega_0 = \xi \omega_0,
$$
and thus instead of \eqref{eq:pp3} we arrive at (denoting $\Psi = \delta \Omega$)
\begin{equation}\label{eq:pp7}
(\Psi, \delta d W + d \delta_0 W)=(\widetilde \psi, \delta dW)+(\widetilde \eta,dW) + (\delta \omega_0, d\delta_0 W). 
\end{equation}
This yields $\Psi \in W^{1,p(\cdot)}(\Omega,\Lambda)$. Now, for $\Phi = d\Omega$ instead of \eqref{eq:p5} we have 
$$
(d\Omega, \delta_0 d W) = (d\Omega, (\delta_0-\delta) d W)
$$ 
and \eqref{eq:pp6} is replaced by 
\begin{multline*}
(\Phi, (d\delta_0 + \delta_0 d)W) 
= - (\Psi, (\delta-\delta_0) \delta_0 W)+ (\widetilde \varphi, d \delta_0 W) \\ + (\widetilde \psi, (\delta-\delta_0) \delta_0 W) + (\widetilde \eta, \delta_0 W).
\end{multline*}
This relation yields $\Phi \in W^{1,p(\cdot)}(\Omega,\Lambda)$.

	\begin{lemma}\label{L:addregN}
		If $\eta\in L^{p(\cdot)}(M,\Lambda)$ with $\mathcal{P}_N[\eta]=0$, then for $\alpha = \delta G_N[\eta]$ and $\beta =  d G_N[\eta]$ we have $\alpha,\beta\in W_N^{1,p(\cdot)}(M,\Lambda)$ and they satisfy \eqref{eq:rel1} and \eqref{eq:rel2} for all $\zeta \in \mathrm{Lip}_N(M,\Lambda)$. As a corollary, if additionally $\eta \in W^{d,p_{-}}(M,\Lambda)$ then $d G_N[\eta] = G_N[d \eta]$ and if additionally $\eta \in W^{\delta,p_{-}}_N(M,\Lambda)$ then $\delta G_N[\eta] = G_N[\delta\eta]$.
	\end{lemma}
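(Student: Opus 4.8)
The plan is to obtain Lemma~\ref{L:addregN} from Theorem~\ref{T:addregN} in exactly the way Lemma~\ref{L:addregD} was obtained from Theorem~\ref{T:addregD}. First I would take $\omega=G_N[\eta]$; by Corollary~\ref{L:corr1} (with $*=N$, $\varphi=\psi=0$) this belongs to $W_N^{1,p(\cdot)}(M,\Lambda)$, and by definition of the Neumann potential it satisfies $\mathcal{D}(\omega,\zeta)=(\eta,\zeta)$ for all $\zeta\in\mathrm{Lip}_N(M,\Lambda)$ together with $\mathcal{P}_N\omega=0$. Applying Theorem~\ref{T:addregN} with $\omega_0=0$ and $\varphi=\psi=0$ then gives immediately that $\alpha=\delta\omega$ and $\beta=d\omega$ lie in $W^{1,p(\cdot)}(M,\Lambda)$, that $n\alpha=n\beta=0$ — hence $\alpha,\beta\in W_N^{1,p(\cdot)}(M,\Lambda)$ by the membership criterion (equivalently Lemma~\ref{L:approx1}(e)) — and that \eqref{eq:rel0Da} and \eqref{eq:rel0D} hold with $\omega_0=\varphi=\psi=0$, which are precisely \eqref{eq:rel1} and \eqref{eq:rel2}.

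For the corollary I would first record the orthogonality $\mathcal{P}_N\alpha=\mathcal{P}_N\beta=0$: for $h_N\in\mathcal{H}_N(M)$ the integration-by-parts formula \eqref{eq:by_parts} gives $(\delta\omega,h_N)=(dh_N,\omega)-[h_N,\omega]=-[h_N,\omega]$ and $(d\omega,h_N)=(\omega,\delta h_N)+[\omega,h_N]=[\omega,h_N]$, and both boundary integrals vanish since $n\omega=0$ and $nh_N=0$. Next, if $\eta\in W^{d,p_{-}}(M,\Lambda)$, then since every $\zeta\in\mathrm{Lip}_N(M,\Lambda)$ belongs to $W^{\delta,p_{-}'}_N(M,\Lambda)$, Corollary~\ref{C:approx2}(i) gives $(\eta,\delta\zeta)=(d\eta,\zeta)$ for all such $\zeta$; combined with \eqref{eq:rel2} this says $\mathcal{D}(\beta,\zeta)=(d\eta,\zeta)$ for all $\zeta\in\mathrm{Lip}_N(M,\Lambda)$, while the same argument with the smooth closed test form $h_N$ shows $\mathcal{P}_N[d\eta]=0$; by the characterization of $G_N[d\eta]$ as the unique $W_N^{1,p_{-}}$-solution of that variational relation with vanishing $\mathcal{P}_N$-part, we conclude $dG_N[\eta]=\beta=G_N[d\eta]$. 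Symmetrically, if $\eta\in W^{\delta,p_{-}}_N(M,\Lambda)$ then $(\eta,d\zeta)=(\delta\eta,\zeta)$ for all $\zeta\in\mathrm{Lip}(M,\Lambda)$ by the very definition of the weak codifferential with vanishing normal part, so $\mathcal{D}(\alpha,\zeta)=(\delta\eta,\zeta)$ on $\mathrm{Lip}_N(M,\Lambda)$, and Corollary~\ref{C:approx2}(iv) applied to $h_N$ gives $\mathcal{P}_N[\delta\eta]=0$; hence $\delta G_N[\eta]=\alpha=G_N[\delta\eta]$.

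At the level of this lemma there is no real obstacle: it is a bookkeeping reduction to Theorem~\ref{T:addregN} plus the routine boundary identities above. The genuine work lies one layer down, in Theorem~\ref{T:addregN} itself, which the excerpt proves as a modification of the proof of Theorem~\ref{T:addregD}: one must choose the local test forms $W$ subject to the Neumann-type constraints \eqref{eq:W2} (so that $ndW=n\delta_0W=0$), represent $\Psi=\delta\Omega$ and $\Phi=d\Omega$ through the appropriate half-space potentials $Q^D_\alpha,Q^N_\alpha$ and $H^D_{\alpha\beta},H^N_{\alpha\beta}$ attached to the index classes $\mathcal{I}_0$ (now $\mathcal{I}_0$ consists of the $I$ containing $n$), and run the contraction argument for the operator $\mathcal{T}_0$ first on $L^{p(\cdot)}(G_{2R})$ and then on $W^{1,p(\cdot)}(G_{2R})$ exactly as in Lemma~\ref{existence of weak derivative tangential} and the proof of Theorem~\ref{T:addregD}. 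The only point requiring care there is keeping track of which form components are reflected evenly and which oddly, but the structure is identical to the tangential case and introduces no new analytic ingredient.
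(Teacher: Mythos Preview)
Your proposal is correct and follows exactly the route the paper indicates: the paper states that Theorem~\ref{T:addregN} ``and its corollary for Neumann potentials is obtained in exactly the same manner or by the Hodge duality,'' and your argument is precisely the Neumann analogue of the short proof given for Lemma~\ref{L:addregD}, applying Theorem~\ref{T:addregN} with $\omega_0=\varphi=\psi=0$ and then verifying $\mathcal{P}_N\alpha=\mathcal{P}_N\beta=0$ and the variational characterization of $G_N[d\eta]$, $G_N[\delta\eta]$. Your additional remarks on where the real analytic work sits (in Theorem~\ref{T:addregN}) are accurate and match the paper's discussion.
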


For the problem with natural boundary conditions we have the following result:
\begin{theorem}\label{L:addregNatur}
Let $\omega\in  W^{1,p(\cdot)}(M,\Lambda)$, $\eta \in L^{p(\cdot)}(M,\Lambda)$ , $\varphi,\psi \in W^{1,p(\cdot)}(M,\Lambda)$ satisfy \eqref{eq1} for all $\zeta \in \mathrm{Lip}(M,\Lambda)$. Then $\alpha =\delta \omega$ and $\beta = d\omega$ both belong to $W^{1,p(\cdot)}(M,\Lambda)$ and satisfy $t\alpha =t \psi$, $n\beta = n\varphi$, \eqref{eq:rel0Da}, and \eqref{eq:rel0D} for all $\zeta \in \mathrm{Lip}(M,\Lambda)$. 
\end{theorem}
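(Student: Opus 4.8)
The plan is to follow the scheme of Theorems~\ref{T:addregD} and \ref{T:addregN} and splice them together, exploiting the fact that for natural boundary conditions the variational relation \eqref{eq1} is tested against \emph{all} of $\mathrm{Lip}(M,\Lambda)$: the codifferential $\alpha=\delta\omega$ will play the role of the "Dirichlet potential" (it carries the tangential datum $t\alpha=t\psi$), while $\beta=d\omega$ plays the role of the "Neumann potential" (it carries the normal datum $n\beta=n\varphi$). First I would reduce to the case of a homogeneous $r$-form $\omega$ and localize: with a cut-off $\xi$ supported in one coordinate chart (admissible if it meets $bM$; the interior case is easier), $\Omega=\xi\omega\in W^{1,p(\cdot)}(M,\Lambda)$ satisfies
\begin{equation*}
\mathcal{D}(\Omega,\zeta)=(\widetilde\eta,\zeta)+(\widetilde\varphi,d\zeta)+(\widetilde\psi,\delta\zeta)\qquad\text{for all }\zeta\in\mathrm{Lip}(M,\Lambda),
\end{equation*}
with $\widetilde\varphi,\widetilde\psi\in W^{1,p(\cdot)}(M,\Lambda)$ and $\widetilde\eta\in L^{p(\cdot)}(M,\Lambda)$ given by the same formulas as in \eqref{eq:localized}; here I use $\omega\in W^{1,p(\cdot)}$ (a hypothesis), so $d\omega,\delta\omega\in L^{p(\cdot)}$ and the tilded data have the claimed regularity. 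Note also that $\xi\omega\in W^{1,1}(M,\Lambda)$, whence $\delta(\delta\Omega)=0$ and $d(d\Omega)=0$ in the weak sense.

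For $\alpha=\delta\omega$ I would repeat the $\delta$-part of the proof of Theorem~\ref{T:addregD} verbatim, working in a boundary admissible chart: take $\zeta=dW$ with $W\in C_0^\infty(\overline{\mathbb{R}^n_+})$ satisfying the tangential-type constraint \eqref{eq:W1} (which in particular forces $t\delta_0W=0$, $\delta_0$ the Euclidean codifferential in the chart). These are legitimate test forms since the relation holds for all Lipschitz $\zeta$. One gets $(\delta\Omega,\delta dW)=(\widetilde\eta,dW)+(\widetilde\psi,\delta dW)$, and, by Corollary~\ref{C:approx2}(iii) applied to $\delta\Omega\in W^{\delta,p(\cdot)}(M,\Lambda)$ together with $t\delta_0W=0$, also $(\delta\Omega,d\delta_0W)=0$; adding yields $(\delta\Omega,(\delta d+d\delta_0)W)=(\widetilde\eta,dW)+(\widetilde\psi,\delta dW)$. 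Replacing $(\delta d+d\delta_0)W$ by $-\triangle W$ plus first-order corrections with $C^{0,1}$ metric coefficients produces, in coordinates, an identity $\int_{\mathbb{R}^n_+}\Psi_I\triangle W_I\,dx=\int(\text{lower order in }W)$ with $\Psi=\delta\Omega$, valid for all such $W$. The representation Lemma~\ref{L:VT} (parts (ii)--(iii), with Dirichlet/Neumann half-space kernels chosen according to the index pattern in \eqref{eq:W1}) then expresses $\Psi$ through the potentials $\mathcal{Q},\mathcal{H}$, and the contraction argument for the operator $\mathcal{T}_0$ used in the proof of Theorem~\ref{T:addregD} (cf.\ \eqref{eq:T0contr}) gives $\delta\Omega\in W^{1,p(\cdot)}$, hence $\delta\omega\in W^{1,p(\cdot)}(M,\Lambda)$.

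For $\beta=d\omega$ I would run the mirror argument of Theorem~\ref{T:addregN}, now with $W\in C_0^\infty(\overline{\mathbb{R}^n_+})$ satisfying the normal-type constraint \eqref{eq:W2} (so $ndW=0$, $n\delta_0W=0$) and the test forms $\zeta=\delta_0W$, $\zeta=dW$ together with the same integration-by-parts identities. The only simplification relative to Theorems~\ref{T:addregD}, \ref{T:addregN} is that $\omega$ carries no prescribed boundary datum, so the terms involving $\Omega_0=\xi\omega_0$ there are absent; the boundary values $t\alpha=t\psi$ and $n\beta=n\varphi$ are produced by integrating $(\psi,\delta\zeta)$ resp.\ $(\varphi,d\zeta)$ by parts, exactly as in the proofs of Theorems~\ref{T:D3} and \ref{T:N3}. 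Finally, once $d\omega,\delta\omega\in W^{1,p(\cdot)}(M,\Lambda)$, integration by parts in \eqref{eq1} (now for all $\zeta\in\mathrm{Lip}(M,\Lambda)$) gives $\delta(d\omega)+d(\delta\omega)=\eta+d\psi+\delta\varphi$ with $t(\delta\omega-\psi)=0$ and $n(d\omega-\varphi)=0$, i.e.\ \eqref{eq:rel0Da} and the claimed boundary conditions; then Lemma~\ref{L:ort} combined with $d^2=0$, $\delta^2=0$ yields \eqref{eq:rel0D}, precisely as at the end of the proof of Theorem~\ref{T:addregD}.

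The only genuinely new bookkeeping — and the main point to get right — is verifying that the two choices of constrained test functions $W$ (tangential-type \eqref{eq:W1} for the $\delta\Omega$-equation, normal-type \eqref{eq:W2} for the $d\Omega$-equation) are \emph{simultaneously} admissible against the unconstrained test class $\mathrm{Lip}(M,\Lambda)$, and that with no $\omega_0$ present the boundary integrals in the various integration-by-parts steps reproduce exactly the data $t\psi$ and $n\varphi$ rather than spurious terms; once these checks are in place, the proof is a transcription of the arguments of Theorems~\ref{T:addregD} and \ref{T:addregN}.
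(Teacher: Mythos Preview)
Your proposal is correct and follows essentially the same approach as the paper's own proof: for $\alpha=\delta\omega$ you replay the $\Psi$-part of Theorem~\ref{T:addregD} with the tangential-type test class \eqref{eq:W1} to reach \eqref{eq:pp3}, and for $\beta=d\omega$ you replay the $\Phi$-part of Theorem~\ref{T:addregN} with the normal-type test class \eqref{eq:W2}, both choices being admissible because the variational relation is tested against all of $\mathrm{Lip}(M,\Lambda)$; the boundary conditions and \eqref{eq:rel0Da}, \eqref{eq:rel0D} then follow by integration by parts exactly as you say. The only (harmless) imprecision is listing $\zeta=dW$ among the test forms for the $\beta$-step --- there the test form plugged into \eqref{eq1} is $\zeta=\delta_0W$, while the complementary identity $(d\Omega,\delta dW)=0$ comes from $d(d\Omega)=0$ together with $ndW=0$, not from testing \eqref{eq1} with $dW$.
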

In this case to establish the regularity of $\delta \omega$ we repeat the first part of the proof of Theorem~\ref{T:addregD} and use the set of test forms described in \eqref{eq:W1}, which gives \eqref{eq:pp3}, and finally leads to $\delta \omega \in W^{1,p(\cdot)}(M,\Lambda)$. To establish the regularity if $d\omega$ we use the set of test forms \eqref{eq:W2} corresponding to the proof of Theorem~\ref{T:addregN} which leads to \eqref{eq:pp6} and by the same arguments this yields $d\omega \in W^{1,p(\cdot)}(M,\Lambda)$. The rest follows from integration-by-parts.

While formally the set of test forms in relations \eqref{eq:rel0D} for $\alpha$ and $\beta$ in Theorem~\ref{L:addregNatur} is free of any boundary values, in fact the relation for $\alpha$ essentially requires only test forms with vanishing tangential part and in the relation for $\beta$ one can use only test forms with vanishing normal part. Indeed, if the first of the relations in \eqref{eq:rel0D} holds for test forms with vanishing tangential component then replacing $\zeta$ by $\zeta-d\gamma$ with $nd\gamma = n\zeta$,  $t\gamma=0$ and $n\gamma=0$ (see Corollary~\ref{C:ext}) we immediately see that it holds for all sufficiently regular test forms without any assumption on the boundary values. The same for the second relation in \eqref{eq:rel0D}, here one can take $\zeta - \delta \gamma$ instead of $\zeta$, with $t\delta \gamma = t\zeta$, $t\gamma=0$, $n\gamma=0$.

In case of the full Dirichlet data the potentials $\alpha, \beta \in W^{1,p(\cdot)}_{\mathrm{loc}}(M,\Lambda)$ and satisfy \eqref{eq:rel0Da}. But for the boundary regularity we do not have results similar to the above. To see what the complication here is, assume that $\omega_0, d\omega_0, \delta \omega_0\in W^{1,p(\cdot)}(M,\Lambda)$, $\omega\in \omega_0+ W_0^{1,p(\cdot)}(M,\Lambda)$,  $\eta \in L^{p(\cdot)}(M,\Lambda)$ and $\varphi,\psi \in W^{1,p(\cdot)}(M,\Lambda)$ satisfy \eqref{eq1} for all $\zeta \in \mathrm{Lip}_0(M,\Lambda)$. If these potentials had the required regularity $\alpha, \beta \in W^{1,p(\cdot)}(M,\Lambda)$ (this is definitely so if $M$ is $C^{2,1}$), we would clearly have $n\alpha =n \delta \omega_0$, $t\beta = td\omega_0$, and 
\begin{gather*}
\mathcal{D}(\alpha,\zeta) = (\eta + d\psi+\delta \varphi-\delta \beta,d\zeta)\quad \text{for all} \quad \zeta \in \mathrm{Lip}(M,\Lambda),\\
\mathcal{D}(\beta,\zeta) = (\eta + d\psi+\delta \varphi-d\alpha,\delta\zeta)\quad \text{for all} \quad \zeta \in \mathrm{Lip}(M,\Lambda).
\end{gather*}
In particular, we would have
\begin{gather*}
\alpha = \delta \omega_0 + \delta G_N[\eta - \mathcal{P}_N\eta+ d (\psi - \delta \omega_0) + \delta (\varphi-\beta)],\\
\beta = d\omega_0 + d G_D[\eta -\mathcal{P}_T \eta + d(\psi-\alpha)+ \delta (\varphi-d\omega_0)].
\end{gather*}
Unlike the relations for the Dirichlet and Neumann potentials, here the right-hand side contains $d\alpha$ and $\delta \beta$.  Thus in this case we state only an interior regularity result.
\begin{theorem}\label{T:addregFD}
Let $\omega\in  W^{1,p(\cdot)}(M,\Lambda)$, $\eta \in L^{p(\cdot)}(M,\Lambda)$ and $\varphi,\psi \in W^{1,p(\cdot)}(M,\Lambda)$ satisfy \eqref{eq1} for all $\zeta \in \mathrm{Lip}_0(M,\Lambda)$. Then $\alpha =\delta \omega$ and $\beta = d\omega$ belong to $W^{1,p(\cdot)}_{\mathrm{loc}}(M,\Lambda)$ and satisfy $d\alpha + \delta \beta = \beta + d\psi + \delta \varphi$ a.e. in $M$.
\end{theorem}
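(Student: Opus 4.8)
The plan is to run the localized potential-theoretic argument from the proof of Theorem~\ref{T:addregD}, but with the cut-off function $\xi$ supported strictly away from $bM$, so that the half-space potentials are replaced by the whole-space Newtonian potentials $P$, $Q$ and no boundary condition on the auxiliary test form is needed. Since $\mathrm{Lip}_0(M,\Lambda)$ contains every Lipschitz form with compact support in the interior of $M$, the variational identity \eqref{eq1} holds in particular for all such test forms, which is all we shall use.

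First I would fix a point $P_0$ in the interior of $M$ and a small ball $\mathcal{B}\ni P_0$ whose closure lies in the interior of $M$ and in a single interior coordinate chart, and choose cut-offs $\xi,\xi_*$ supported in $\mathcal{B}$ as in Sections~\ref{ssec:local}--\ref{ssec:W2p}. As in \eqref{eq:localized}, $\Omega:=\xi\omega\in W^{1,p(\cdot)}(M,\Lambda)$ satisfies $\mathcal{D}(\Omega,\zeta)=(\widetilde\eta,\zeta)+(\widetilde\varphi,d\zeta)+(\widetilde\psi,\delta\zeta)$ with $\widetilde\varphi,\widetilde\psi\in W^{1,p(\cdot)}$ and $\widetilde\eta\in L^{p(\cdot)}$ (the terms $d\xi\wedge\delta\omega$ and $d\xi\lrcorner d\omega$ occurring in $\widetilde\eta$ lie only in $L^{p(\cdot)}$, which is harmless). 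To extract $\Psi:=\delta\Omega$ I would test with $\zeta=dW$, $W\in C_0^\infty(\mathcal{B})$, obtaining $(\Psi,\delta dW)=(\widetilde\psi,\delta dW)+(\widetilde\eta,dW)$, and add the identity $(\Psi,d\delta_0 W)=0$, valid because $\Omega$ and $\delta_0 W$ are compactly supported in the interior so the boundary term in \eqref{eq:by_parts} (cf. Lemma~\ref{L:ort}) vanishes; here $\delta_0$ is the Euclidean codifferential in the chart. Using $\delta_0 d+d\delta_0=-\triangle$ and passing to coordinates exactly as in the derivation of \eqref{eq:form_Psi}, this yields an integral identity for $\Psi$ of the shape $\int\Psi_I\triangle W_I=\int F^{I\beta}W_{I,\beta}+\int C^{I\alpha\beta}W_{I,\alpha\beta}+\int\xi_*\Psi_J(A^{IJ\alpha\beta}W_{I,\alpha\beta}+B^{IJ\alpha}W_{I,\alpha})$, where $A,B$ obey \eqref{eq:Aest} and $C^{I\alpha\beta}\in W^{1,p(\cdot)}$, $F^{I\beta}\in L^{p(\cdot)}$ are supported in $G_{2R}$.

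Then I would invoke Lemma~\ref{L:VT}(i) to solve this identity for $\Psi$ in the form $\Psi=\mathcal{Q}[F]+\mathcal{H}[C]+\mathcal{Q}[\xi_*B\Psi]+\mathcal{H}[\xi_*A\Psi]$, where $\mathcal{Q},\mathcal{H}$ collect the operators $Q_\alpha$ and the second-derivative Newtonian potentials $a\mapsto(P[a])_{,\alpha\beta}$ from \eqref{eq:sol0} as in the proof of Lemma~\ref{existence of weak derivative tangential}. By Lemmas~\ref{L:pa0}, \ref{C:pa0} together with \eqref{eq:Aest}, for $R$ small the operator $\mathcal{T}_0[\Psi]:=\mathcal{Q}[\xi_*B\Psi]+\mathcal{H}[\xi_*A\Psi]$ is a contraction on both $L^{p(\cdot)}(G_{2R})$ and $W^{1,p(\cdot)}(G_{2R})$, exactly as in \eqref{eq:T0contr}, while $\mathcal{Q}[F],\mathcal{H}[C]\in W^{1,p(\cdot)}(G_{2R})$; hence the Neumann series $\Psi=\sum_{j\ge 0}(\mathcal{T}_0)^j(\mathcal{Q}[F]+\mathcal{H}[C])$ converges in $W^{1,p(\cdot)}(G_{2R})$, so $\delta\omega\in W^{1,p(\cdot)}$ on a slightly smaller ball. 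For $\Phi:=d\Omega$ I would test instead with $\zeta=\delta_0 W$, combine it with $(\Phi,\delta_0 dW)=(\Phi,(\delta_0-\delta)dW)$ — the genuine integration-by-parts contribution again vanishes by compact support, and, unlike in Theorem~\ref{T:addregD}, there is no $\omega_0$-correction term — and reach an identity of the form \eqref{eq:form_Phi}, after which the same contraction argument gives $d\omega\in W^{1,p(\cdot)}_{\mathrm{loc}}$. Since $P_0$ was arbitrary, $\alpha=\delta\omega$ and $\beta=d\omega$ belong to $W^{1,p(\cdot)}_{\mathrm{loc}}(M,\Lambda)$; inserting compactly supported Lipschitz test forms into \eqref{eq1} and integrating by parts (legitimate now that $d\omega,\delta\omega$ are locally $W^{1,p(\cdot)}$ and $\zeta$ has compact support) gives $\delta d\omega+d\delta\omega=\eta+d\psi+\delta\varphi$ a.e., which is the asserted identity $d\alpha+\delta\beta=\eta+d\psi+\delta\varphi$.

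The genuine obstacle — and the reason the conclusion is only local — is boundary regularity: near $bM$ the full-Dirichlet condition forces both $t\Omega=0$ and $n\Omega=0$, and, as explained in the remarks preceding the theorem, the resulting half-space representations for $\alpha$ and $\beta$ become coupled (the formula for $\alpha$ carries a $\delta\beta$ term on its right-hand side and that for $\beta$ a $d\alpha$ term), so the fixed-point scheme no longer closes on a single potential. Away from $bM$ this difficulty disappears entirely, and the only remaining work is the routine bookkeeping of the coefficient regularity, identical to the interior part of the proof of Theorem~\ref{T:addregD}.
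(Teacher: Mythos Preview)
Your proposal is correct and follows exactly the route the paper intends: run the localized potential representation of Theorem~\ref{T:addregD} in interior charts only, using the whole-space case of Lemma~\ref{L:VT}(i) and the contraction operator $\mathcal{T}_0$, then read off \eqref{eq:rel0Da} by integration by parts against compactly supported test forms. The paper states the theorem without an explicit proof, but your write-up, including the explanation of why the boundary argument fails (the $\alpha$--$\beta$ coupling noted just before the theorem), matches the paper's discussion precisely.
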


\backmatter

\bibliographystyle{amsalpha}
\bibliography{LG_diff}
    

\printindex
\end{document}